\DeclareMathAlphabet{\mathscrbf}{OMS}{mdugm}{b}{n}
\tikzset{
  no line/.style={draw=none,
    commutative diagrams/every label/.append style={/tikz/auto=false}},
  from/.style args={#1 to #2}{to path={(#1)--(#2)\tikztonodes}}}
\title[Even periodization of spectral stacks]{
Even periodization of spectral stacks}
\author{Rok Gregoric}
\thanks{Johns Hopkins University}
\date{\today}
\address{Johns Hopkins University, Baltimore, MD 21218, USA}
\email{rgrego12@jhu.edu}
\newtheorem{theoremi}{Theorem}
\newtheorem{theorem}{Theorem}[subsection]
\newtheorem{corollary}[theorem]{Corollary}
\newtheorem{lemma}[theorem]{Lemma}
\newtheorem{prop}[theorem]{Proposition}
\theoremstyle{definition}
\newtheorem{definition}[theorem]{Definition}
\newtheorem{definitioni}[theoremi]{Definition}
\newtheorem{variant}[theorem]{Variant}
\newtheorem{cons}[theorem]{Construction}
\newtheorem{exun}[theorem]{Example}
\newtheorem{remark}[theorem]{Remark}
\newcommand*{\CAlg}{{\operatorname{CAlg}}}
\newcommand*{\Ani}{{\operatorname{Ani}}}
\newcommand*{\Aff}{{\mathrm{Aff}}}
\newcommand*{\mC}{\mathcal C}
\newcommand*{\mO}{\mathcal O}
\newcommand*{\mG}{\mathcal G}
\newcommand*{\mM}{\mathcal M}
\newcommand*{\mL}{\mathcal L}
\newcommand*{\mP}{\mathcal P}
\newcommand*{\mX}{\mathcal X}
\newcommand*{\mY}{\mathcal Y}
\newcommand*{\mS}{\mathcal S}
\newcommand*{\sO}{\mathcal O}
\newcommand*{\sF}{\mathscr F}
\newcommand*{\E}{\mathbb E_\infty}
\newcommand*{\heart}{\heartsuit}
\newcommand*{\sheafhom}{\mathscr{H}\kern -.5pt om}
\DeclareMathOperator{\Novak}{\mathscr{N}\text{\kern -3pt {\calligra\large ovak}}\,\,}
\DeclareMathOperator{\fHom}{\mathscr{H}\text{\kern -3pt {\calligra\large om}}\,}
\DeclareMathOperator{\Hom}{\operatorname{Hom}}
\DeclareMathOperator{\Sp}{\operatorname{Sp}}
\DeclareMathOperator{\Fun}{\operatorname{Fun}}
\DeclareMathOperator{\Spec}{\operatorname{Spec}}
\DeclareMathOperator{\Spf}{\operatorname{Spf}}
\DeclareMathOperator{\Proj}{\operatorname{Proj}}
\DeclareMathOperator{\Map}{\operatorname{Map}}
\DeclareMathOperator{\QCoh}{\operatorname{QCoh}}
\DeclareMathOperator{\Rep}{\operatorname{Rep}}
\DeclareMathOperator{\Sym}{\operatorname{Sym}}
\DeclareMathOperator{\MU}{\mathrm{MU}}
\DeclareMathOperator{\MUP}{\mathrm{MUP}}
\DeclareMathOperator{\M}{\mathcal M^\mathrm{or}_\mathrm{FG}}
\DeclareMathOperator{\GL}{\operatorname{GL}}
\DeclareMathOperator{\G}{\mathbf G}
\DeclareMathOperator{\Z}{\mathbf Z}
\DeclareMathOperator{\Q}{\mathbf Q}
\DeclareMathOperator{\LMod}{\operatorname{LMod}}
\DeclareMathOperator{\Mod}{\operatorname{Mod}}
\renewcommand{\i}{\infty}
\newcommand{\T}{\mathbf T}
\newcommand{\w}{\widehat}
\renewcommand{\i}{\infty}
\def\@tocline#1#2#3#4#5#6#7{\relax
  \ifnum #1>\c@tocdepth 
  \else
    \par \addpenalty\@secpenalty\addvspace{#2}%
    \begingroup \hyphenpenalty\@M
    \@ifempty{#4}{%
      \@tempdima\csname r@tocindent\number#1\endcsname\relax
    }{%
      \@tempdima#4\relax
    }%
    \parindent\z@ \leftskip#3\relax \advance\leftskip\@tempdima\relax
    \rightskip\@pnumwidth plus4em \parfillskip-\@pnumwidth
    #5\leavevmode\hskip-\@tempdima
      \ifcase #1
       \or\or \hskip 1em \or \hskip 2em \else \hskip 3em \fi%
      #6\nobreak\relax
    \dotfill\hbox to\@pnumwidth{\@tocpagenum{#7}}\par
    \nobreak
    \endgroup
  \fi}
\renewcommand{\i}{\infty}
\renewcommand{\o}{\otimes}
\newcommand{\stackspace}{3.5}
\newcommand{\stack}[2][1cm]{\;\tikz[baseline, yshift=.65ex]%
    {\foreach \k [evaluate=\k as \r using (.5*#2+.5-\k)*\stackspace] in {1,...,#2}{%
    \ifodd\k{\draw[<-](0,\r pt)--(#1,\r pt);}%
    \else{\draw[->](0,\r pt)--(#1,\r pt);}\fi
    }}\;}
\DeclareSymbolFontAlphabet{\mathbb}{AMSb}
\DeclareSymbolFontAlphabet{\mathbbl}{bbold}
\newcommand{\Prism}{{\mathlarger{\mathbbl{\Delta}}}}
\newcommand{\shear}{{\mathbin{\mkern-4mu\fatslash}}}
\begin{document}

\begin{abstract}
We introduce the operation of \textit{even periodization} on nonconnective spectral stacks. We show how to recover from it the even filtration of Hahn-Raksit-Wilson, and (a Nygaard-completion  of) the filtered prismatization stack of Bhatt-Lurie and Drinfeld. We prove that the moduli stack of oriented elliptic curves of Goerss-Hopkins-Miller and Lurie is the even periodization of the spectrum of topological modular forms. Likewise, the chromatic moduli stack, which we previously studied under the name of the moduli stack of oriented formal groups, arises via  even periodization from the sphere spectrum. Over the complex bordism spectrum, we relate even periodization with the symmetric monoidal shearing, in the sense of Devalapurkar, of free gradings.
\end{abstract}

\maketitle

\setcounter{tocdepth}{2}
\tableofcontents

\newpage

\section{Introduction}

The introduction of \textit{prismatic cohomology} \cite{BS22} has spurred on a flurry of activity on both sides of the fence between homotopy theory and 
$p$-adic geometry. This is in part because the original approach to prismatic cohomology by Bhatt-Morrow-Scholze \cite{BMS2} relied on topological Hochschild homology.
\begin{itemize}
\item On the $p$-adic side, the developments have included a site-theoretic approach to prismatic cohomology by Bhatt-Scholze \cite{BS22}, circumventing the use of THH, and relying instead on a novel notion of prisms. Another subsequent development was the introduction
of the \textit{prismatization stacks}
 by Bhatt-Lurie and Drinfeld \cite{BLb}, \cite{Bhatt F-gauges}, \cite{Drinfeld}. These ``geometrize" prismatic cohomology and related structures, in analogy with the stacky geometrizations of de Rham cohomology via the de Rham space of Simpson in characteristic zero  \cite{Simpson}, and via Witt vectors on the perfection by Drinfeld in positive characteristic \cite{Drinfeld stack}.

\item On the homotopy-theoretic side, a notable development has been the introduction of the \textit{even filtration} of Hahn-Raksit-Wilson \cite{HRW}. It specializes to the motivic filtrations on $\mathrm{THH}$, $\mathrm{TC}^-$ and $\mathrm{TP}$ coming from prismatic cohomology, as well as to various other filtrations of interest such as the HKR filtration. Yet it is defined in the astoundingly simple  way of restricting to ring spectra with odd homotopy groups. In particular, unlike the approach to the motivic filtration in \cite{BMS2}, the even periodization does not require the use of any p-adic or number theoretic ideas -- such as qrsp rings or quasi-syntomic descent -- nor the notion of prisms.
\end{itemize}
In this paper, we introduce a joint generalization of the even filtration and prismatization, called \textit{even periodization}. This is an operation on spectral stacks, which  approximates them as closely as possible with affines corresponding to even periodic ring spectra.
When applied to affines, even periodization recovers the even filtration of Hahn-Raksit-Wilson as the Postnikov towers for quasi-coherent sheaves.

Even periodization is explicitly computable in a number of cases,  recovering several spectral stacks that are known to be of interest to chromatic homotopy theory. From the spectrum of the sphere spectrum $\mathrm{Spec}(\mathbf S)$, it produces the \textit{chromatic base stack} $\mM$ (which had been called the \textit{moduli stack of oriented formal groups} and denoted $\M$ in \cite{ChromaticCartoon}), and which was shown to geometrize many aspects of chromatic homotopy theory in \cite{ChromaticFiltration} and be related to synthetic spectra in \cite{Synthetic}. From the ring spectrum of topological modular forms $\mathrm{Spec}(\mathrm{TMF})$,  it reconstructs the \textit{moduli stack of oriented elliptic curves} $\mM^\mathrm{or}_\mathrm{Ell}$ of Goerss-Hopkins-Miller and Lurie, which are usually used to define it. The latter result is connected to the chromatic affineness theorem of Mathew-Meier \cite{Affineness in chromatic}, and gives a precise way in which the spectral stack  $\mM^\mathrm{or}_\mathrm{Ell}$  is ``even periodically affine," in analogy with To\"en's theory of affine stacks \cite{Champs affines}.

Furthering the analogy with the theory of affine stacks, we show that  even periodization  is \textit{fully faithful} on the  class of ``even-convergent" spectral affines. This amounts to fully faithfully encoding 
a large class of ring spectra which are not themselves even periodic -- including, according to upcoming work of Burklund-Krause announced in \cite{Piotr's even}, all connective commutative ring spectra -- in terms of periodic spectral stacks.

Even periodization gives rise to a \textit{natural  spectral enhancement of} (a Nygaard-completed version of) the \textit{filtered prismatization} stack of Bhatt. It is obtained as the coarse quotient of the even periodization of the spectral free loop space under its circle action.
This is a globalization of the approach to prismatic cohomology via topological cyclic homology from \cite{BMS2}.
Notably, this construction is also applicable to objects outside $p$-adic formal geometry. For instance, it identifies the
 universal 1-dimensional smooth formal group as the
 Nygaard-complete filtered prismatization of the sphere spectrum. This  is consistent with the work in progress of Devalapurkar-Hahn-Raksit-Yuan, announced in \cite{Arpon's talk}.

\subsection{Summary of results}

By the term \textit{spectral stack}, we will mean\footnote{In the main body of the text, we usually say \textit{nonconnective spectral stack}, to emphasize that we are not imposing any connectivity assumptions on $\E$-rings. But here in the introduction, we will forego the \textit{nonconnective} adjective.} an accessible functor $\mX:\CAlg\to \mathrm{Ani}$ which satisfies descent for the fpqc topology  in the sense of Lurie \cite{SAG}, i.e.\ the $\i$-category of spectral stacks is
$$
\mathrm{SpStk} \, :=\,  \mathrm{Shv}^\mathrm{acc}_\mathrm{fpqc}(\CAlg).
$$
For example, any $\E$-ring $A$, connective or otherwise, gives rise to an \textit{affine} $\Spec(A)$, given by the corepresentable functor $B\mapsto \Map_\CAlg(A, B)$. For any spectral stack $\mX$, we define its \textit{connective cover} $\mX^\mathrm{cn}$ and  \textit{underlying classical stack} $\mX^\heart$  by left Kan extension from affines, for which we set $\Spec(A)^\mathrm{cn}:=\Spec(\tau_{\ge 0}(A))$ and $\Spec(A)^\heart:=\Spec(\pi_0(A))$.

\begin{definitioni}\label{Defi of evp}
For any spectral stack $\mX$, we define its \textit{even periodization} as the colimit
$$
\mX^\mathrm{evp}\,\,\simeq \varinjlim_{\underset{A \text{ even periodic $\E$ ring}}{\Spec(A)\to \mX}}\Spec(A),
$$
where an $\E$-ring $A$ is \textit{even periodic} if $\pi_{2*+1}(A)\simeq 0$ and 
$\pi_{*+2}(A)\simeq \pi_*(A)\o_{\pi_0(A)}\pi_2(A)$.
\end{definitioni}
If one wishes, the weak $2$-periodicity requirement above can be substituted with the stronger requirement $\pi_{*+2}(A)\simeq \pi_*(A)$  without affecting the result of even periodization.

Any spectral stack $\mX$ admits a canonical map
$
\mX^\mathrm{evp}\to \mX
$
that is the closest approximation to it via an even periodic spectral stack.
Indeed, the even periodization functor $\mX\mapsto \mX^\mathrm{evp}$ is a colimit-preserving colocalization of $\mathrm{SpStk}$, with the subcategory of even periodic spectral stacks $\mathrm{SpStk}^\mathrm{evp}\subseteq\mathrm{SpStk}$ as its essential image. The latter also admits a description as the $\i$-category of accessible fpqc sheaves on even periodic $\E$-rings
$$
\mathrm{SpStk}^\mathrm{evp}\,\simeq\, \mathrm{Shv}^\mathrm{acc}_\mathrm{fpqc}(\CAlg^\mathrm{evp}).
$$

In many cases, the even periodization is explicitly computable:

\begin{theoremi}\label{Theoremi I.1}
Even periodization in the following cases recovers other previously known even periodic spectral stacks:
\begin{eqnarray}
\Spec(\mathbf S)^\mathrm{evp}&\simeq & \mM \label{I.1.1}\\
\Spec(L^f_n\mathbf S)^\mathrm{evp}\simeq \Spec(L_n\mathbf S)^\mathrm{evp} &\simeq & \mM^{\le n}\\
\Spec(\mathrm{TMF})^\mathrm{evp} &\simeq & \mM{}^\mathrm{or}_\mathrm{Ell}\\
\Spec(\mathrm{Tmf})^\mathrm{evp} &\simeq & \overline{\mM}{}^\mathrm{or}_\mathrm{Ell}\\
\Spec(\mathrm{KO})^\mathrm{evp} & \simeq & \Spec(\mathrm{KU})/\mathrm C_2\\
\Spec(\mathrm{MU})^\mathrm{evp} & \simeq & \Spec(\mathrm{MUP})/\mathbf G_m \label{I.1.6}\\
\Spec(\mathbf Z)^\mathrm{evp} &\simeq & \Spec(\mathbf Z[\beta^{\pm 1}])/\mathbf G_m\label{I.1.7}
\end{eqnarray}
where respectively:
\begin{enumerate}
\item $\mM$ is the chromatic base stack (which we have previously studied under the name of the moduli stack of oriented formal groups and denoted $\M$ in \cite{ChromaticCartoon}).

\item $\mM^{\le n}$ is the chromatic open substack of height $\le n$ oriented formal groups (previously denoted $\mM{}^{\mathrm{or}, \le n}_\mathrm{FG}$ in \cite{ChromaticFiltration}).

\item $\mM{}^\mathrm{or}_\mathrm{Ell}$ is the  moduli stack of oriented elliptic curves, constructed by obstruction theory by Goerss-Hopkins-Miller, and as a spectral moduli stack by  Lurie in \cite{Elliptic 2}.

\item $\overline{\mM}{}^\mathrm{or}_\mathrm{Ell}$ is its Deligne-Mumford compactification, also due to Goerss-Hopkins-Miller.

\item The complex topological K-theory spectrum $\mathrm{KU}$ is equipped with the  $\mathrm C_2$-action, corresponding to complex conjugation of $\mathbf C$-vector spaces.

\item $\mathrm{MUP}$ is the periodic complex bordism spectrum, with its Thom $\E$-ring structure. The $\mathbf G_m$-action on $\Spec(\mathrm{MUP})$ corresponds to the grading $\mathrm{MUP}\simeq \bigoplus_{n\in \mathbf Z}\Sigma^{2n}(\mathrm{MU})$.

\item the generator $\beta$ is in graded degree $1$ and homotopical degree $2$.

\end{enumerate}
\end{theoremi}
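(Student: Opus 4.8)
The plan is to reduce every one of the seven equivalences to the universal property of even periodization. Since $\mX\mapsto\mX^{\mathrm{evp}}$ is a colocalization with essential image $\mathrm{SpStk}^{\mathrm{evp}}\simeq\mathrm{Shv}^{\mathrm{acc}}_{\mathrm{fpqc}}(\CAlg^{\mathrm{evp}})$, and since equivalences between objects of $\mathrm{SpStk}^{\mathrm{evp}}$ are detected on the affines $\Spec(B)$ with $B$ an even periodic $\E$-ring, it suffices in each case to produce the claimed target $\mG$ together with a map $\mG\to\Spec(A)$ such that
\begin{enumerate}
\item[(i)] $\mG$ lies in $\mathrm{SpStk}^{\mathrm{evp}}$, and
\item[(ii)] for every even periodic $\E$-ring $B$ the induced map $\Map(\Spec(B),\mG)\to\Map(\Spec(B),\Spec(A))=\Map_{\CAlg}(A,B)$ is an equivalence.
\end{enumerate}
Indeed, (i) lets us factor $\mG\to\Spec(A)$ through $\Spec(A)^{\mathrm{evp}}$, and then (ii), together with the fact that $\Spec(A)^{\mathrm{evp}}\to\Spec(A)$ is itself an equivalence on even periodic affines, forces $\mG\to\Spec(A)^{\mathrm{evp}}$ to be an equivalence. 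Condition (i) is immediate in each case: $\Spec(\mathrm{MUP})/\mathbf{G}_m$, $\Spec(\mathrm{KU})/\mathrm{C}_2$ and $\Spec(\mathbf{Z}[\beta^{\pm1}])/\mathbf{G}_m$ are colimits of affines of even periodic $\E$-rings; $\mM$ and $\mM^{\le n}$ are, by the constructions recalled in \cite{ChromaticCartoon}, \cite{ChromaticFiltration}, built as such colimits (the chart $\Spec(\mathrm{MUP})$ together with its Morava $E$-theory and related charts); and $\mM^{\mathrm{or}}_{\mathrm{Ell}}$, $\overline{\mM}^{\mathrm{or}}_{\mathrm{Ell}}$ are spectral Deligne--Mumford stacks admitting \'etale covers by spectra of elliptic cohomology theories, which are even periodic. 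Since $\mX\mapsto\mX^{\mathrm{evp}}$ preserves colimits, $\mathrm{SpStk}^{\mathrm{evp}}$ is closed under them, so (i) holds; the content is therefore (ii), an ``even periodic affineness'' statement for each $\mG$.

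For $\Spec(\mathbf{S})$ the comparison is the unique map $\mM\to\Spec(\mathbf{S})$, and (ii) reads $\mM(B)\simeq\ast$ for every even periodic $\E$-ring $B$, i.e.\ every such $B$ carries an essentially unique oriented formal group; this is exactly the characterization of $\mM=\M$ from \cite{ChromaticCartoon}, \cite{ChromaticFiltration}. For $\Spec(L_n\mathbf{S})$ and $\Spec(L_n^f\mathbf{S})$ one combines this with the description of the open substacks $\mM^{\le n}\subseteq\mM$ in \cite{ChromaticFiltration}: an even periodic $\E$-ring $B$ admits a map from $L_n\mathbf{S}$, resp.\ from $L_n^f\mathbf{S}$, precisely when it is $E_n$-local, resp.\ $L_n^f$-local, and for even periodic $\E$-rings these two localness conditions agree with one another and with the formal group of $B$ having height $\le n$ everywhere, so both $\Map_{\CAlg}(L_n\mathbf{S},B)$ and $\Map_{\CAlg}(L_n^f\mathbf{S},B)$ are identified with $\mM^{\le n}(B)$. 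For $\Spec(\mathrm{KO})$ the comparison $\Spec(\mathrm{KU})/\mathrm{C}_2\to\Spec(\mathrm{KO})$ comes from $\mathrm{KO}\simeq\mathrm{KU}^{h\mathrm{C}_2}$; since $\mathrm{KO}\to\mathrm{KU}$ is a faithful $\mathrm{C}_2$-Galois extension and, after base change along any $\mathrm{KO}\to B$ with $B$ even periodic, becomes moreover faithfully flat (because $\eta$ dies in $\pi_1(B)=0$, so that $\mathrm{KU}\otimes_{\mathrm{KO}}B\simeq B\oplus\Sigma^2 B$ is a free $B$-module), the map $\Spec(\mathrm{KU})\to\Spec(\mathrm{KO})$ is an fpqc $\mathrm{C}_2$-torsor over every even periodic test ring, which yields (ii).

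For $\Spec(\mathrm{TMF})$ and $\Spec(\mathrm{Tmf})$ the comparison maps come from Lurie's identifications $\mathrm{TMF}\simeq\Gamma(\mM^{\mathrm{or}}_{\mathrm{Ell}},\mO)$ and $\mathrm{Tmf}\simeq\Gamma(\overline{\mM}^{\mathrm{or}}_{\mathrm{Ell}},\mO)$ \cite{Elliptic 2}, and (ii) --- that $\mM^{\mathrm{or}}_{\mathrm{Ell}}(B)\simeq\Map_{\CAlg}(\mathrm{TMF},B)$ for every even periodic $B$, and likewise after compactification --- is precisely a chromatic affineness statement, to be obtained from the theorem of Mathew--Meier \cite{Affineness in chromatic}. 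This is the step I expect to be the main obstacle: one needs the elliptic moduli to be ``affine over even periodic $\E$-rings'' in the strong sense that the mapping space agrees with the functor of points for \emph{every} even periodic $B$, not merely for the $E_n$-local, $K(n)$-local and Tate-curve rings occurring in the standard cover, and it is this one input that genuinely goes beyond formal manipulation of the colocalization.

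Finally, for $\Spec(\mathrm{MU})$ and $\Spec(\mathbf{Z})$ the argument is the ``free grading plus shearing'' one advertised in the introduction. Write $\mathrm{MUP}\simeq\mathrm{MU}\otimes\mathbf{S}[\beta^{\pm1}]$ with $\mathbf{S}[\beta^{\pm1}]=\bigoplus_n\Sigma^{2n}\mathbf{S}$ graded by placing $\beta$ in weight $1$; under Devalapurkar's symmetric monoidal shearing of free gradings this is the shear of $\mathrm{MU}\otimes\mathbf{S}[\mathbf{Z}]=\mathrm{MU}[t^{\pm1}]$ with $t$ in weight $1$ and homotopical degree $0$, and $\Spec(\mathrm{MU}[t^{\pm1}])/\mathbf{G}_m\simeq\Spec(\mathrm{MU})$ is the free $\mathbf{G}_m$-quotient. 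Unwinding a $\Spec(B)$-point of $\Spec(\mathrm{MUP})/\mathbf{G}_m$ --- a line bundle $\mL$ on $\Spec(B)$ together with a graded $\E$-ring map $\mathrm{MUP}\to\bigoplus_n\mL^{\otimes n}$ --- invertibility of $\beta$ forces $\mL\simeq\Sigma^2 B$, so the datum becomes an $\E$-ring map $\mathrm{MU}\to B$ together with a now trivializable $\mathbf{G}_m$-torsor, and after the quotient one is left with $\Map_{\CAlg}(\mathrm{MU},B)$, which is (ii). The case of $\Spec(\mathbf{Z})$ is identical with $\mathrm{MU}$ replaced by $\mathbf{Z}$ and $\mathrm{MUP}$ by $\mathbf{Z}[\beta^{\pm1}]$, using that $\Map_{\CAlg}(\mathbf{Z},B)$ is contractible when $B$ is an $H\mathbf{Z}$-algebra and empty otherwise, matching the output of the $\mathbf{G}_m$-quotient. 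A secondary technical point here is to make the symmetric monoidal shearing of free gradings precise enough --- in particular compatible with forming $\Spec$ and with $\mathbf{G}_m$-quotients --- that graded $\E$-ring maps out of $\mathrm{MUP}$ are correctly identified with graded $\E$-ring maps out of $\mathrm{MU}[t^{\pm1}]$.
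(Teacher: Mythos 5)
Your general reduction is sound, and it is essentially the skeleton the paper itself uses: since even periodic spectral stacks embed fully faithfully into $\mathrm{SpStk}$ and equivalences between them are detected on even periodic affines, an even periodic stack $\mG\to\Spec(A)$ inducing equivalences $\Map(\Spec(B),\mG)\simeq\Map_{\CAlg}(A,B)$ for all $B\in\CAlg^{\mathrm{evp}}$ is automatically $\Spec(A)^{\mathrm{evp}}$. Cases \eqref{I.1.1} and the $L_n$/$L_n^f$ case go through as you say; note only that the agreement of $L_n$- and $L_n^f$-locality on even (periodic) rings, which you assert, is itself proved in the paper via the $\mathbb E_1$-orientation $\MU\to A$ of Mathew--Naumann--Noel together with $L_n^f\MU\simeq L_n\MU$. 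Your treatment of $\mathrm{KO}$ is a genuinely different and valid route: the paper deduces it from the Mathew--Meier affineness package, using that $\mathrm B\mathrm C_2\to\mM^{\heart}_{\mathrm{FG}}$ is an open immersion, whereas you use the $\mathrm C_2$-Galois property of $\mathrm{KO}\to\mathrm{KU}$ plus the observation that $\mathrm{KU}\otimes_{\mathrm{KO}}B\simeq B\oplus\Sigma^2(B)$ is faithfully flat over any even periodic $B$ because $\eta$ dies there; granting Rognes' theorem this is more self-contained. For \eqref{I.1.6} and \eqref{I.1.7} the paper argues differently and more cleanly: instead of unwinding the functor of points of $\Spec(\MUP)/\mathbf G_m$, it shows $\Spec(A\otimes_{\MU}\MUP)\to\Spec(A)$ is periodically eff and uses the Thom-spectrum ``torsor triviality'' $\MUP\otimes_{\MU}\MUP\simeq\MUP[\mathbf Z]$ to identify the \v{C}ech nerve with the bar construction of a $\mathbf G_m$-action; this sidesteps the shearing bookkeeping you flag. (Your aside that $\Map_{\CAlg}(\mathbf Z,B)$ is ``contractible when $B$ is an $H\mathbf Z$-algebra and empty otherwise'' is unjustified---being a $\mathbf Z$-algebra is structure, not a property, as $\mathbf Z\otimes_{\mathbf S}\mathbf Z\not\simeq\mathbf Z$---but it is also unnecessary if you treat $\mathbf Z$ exactly like $\MU$, viewing it as an even $\E$-algebra over $\MU$.)

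The genuine gap is at the $\mathrm{TMF}$ and $\mathrm{Tmf}$ cases, which are the heart of the theorem. You correctly identify what is needed---that $\mM^{\mathrm{or}}_{\mathrm{Ell}}(B)\simeq\Map_{\CAlg}(\mathrm{TMF},B)$ for \emph{every} even periodic $B$---and you correctly identify Mathew--Meier $0$-affineness as the relevant input, but you leave the deduction unexplained, and it is not formal: $0$-affineness is a statement about $\QCoh$, not about the functor of points. The missing argument, which is exactly what the paper's proof supplies, runs as follows. The classical affineness of $\mM^{\heart}_{\mathrm{Ell}}\to\mM^{\heart}_{\mathrm{FG}}$ upgrades to the statement that $\mM^{\mathrm{or}}_{\mathrm{Ell}}\to\mM$ is an affine morphism, whence so is $\mM^{\mathrm{or}}_{\mathrm{Ell}}\to\Spec(\mathrm{TMF})\times\mM$. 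Given any even periodic $A$ with a map $\mathrm{TMF}\to A$, one has $\Spec(A)\times\mM\simeq\Spec(A)$, so the pullback $\Spec(A)\times_{\Spec(\mathrm{TMF})}\mM^{\mathrm{or}}_{\mathrm{Ell}}$ is affine, say $\Spec(B)$; and the base-change (push-pull) formula---applicable because $0$-affineness makes the unit of $\QCoh(\mM^{\mathrm{or}}_{\mathrm{Ell}})$ compact---identifies $B\simeq A$, giving essential surjectivity of the comparison on even periodic affines, while full faithfulness does follow formally from $0$-affineness. Without this step your proposal establishes \eqref{I.1.1}, the $L_n$ case, and \eqref{I.1.6}--\eqref{I.1.7} together with $\mathrm{KO}$, but not the two elliptic cases; and for $\mathrm{Tmf}$ one needs in addition the quasi-affine variant of the same argument, since $\overline{\mM}{}^{\heart}_{\mathrm{Ell}}\to\mM^{\heart}_{\mathrm{FG}}$ is only quasi-affine.
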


Points \eqref{I.1.6} and \eqref{I.1.7} above can be extended by expressing the even periodization of any even $\E$-algebra $A$ over the complex bordism spectrum $\mathrm{MU}$ as the shearing $\Spec(A)^\shear$, in terms of the symmetric monoidal shearing of gradings for $\mathrm{MU}$-modules from \cite{Sanath}.

In all the points \eqref{I.1.1} -- \eqref{I.1.7} of Theorem \ref{Theoremi I.1}, even periodization is applied to affines $\Spec(A)$, and the $\E$-ring $A$ can be recovered from the spectral stack $\mX=\Spec(A)^\mathrm{evp}$ as its $\E$-ring of global functions $\sO(\mX)\simeq A$. These are instances of a general phenomenon:

\begin{theoremi}\label{Theoremi ffaith}
Let $\CAlg^\mathrm{evc}\subseteq\CAlg$ denote the full subcategory of $\E$-rings $A$ whose even filtration $\mathrm{fil}_\mathrm{ev}^{\ge *}(A)$ converges to $A$. Then even periodization $A\mapsto\Spec(A)^\mathrm{evp}$ gives a fully faithful embedding
$$
(\CAlg^\mathrm{evc})^\mathrm{op}\hookrightarrow\mathrm{SpStk}^\mathrm{evp},
$$
with a left inverse given by the $\E$-ring of global sections functor, i.e.\
$$
\sO(\Spec(A)^\mathrm{evp})\,\simeq \, A
$$
for all $A\in\CAlg^\mathrm{evc}$.
\end{theoremi}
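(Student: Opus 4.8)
The plan is to reduce the full faithfulness to the single equivalence $\sO(\Spec(A)^\mathrm{evp})\simeq A$ for $A\in\CAlg^\mathrm{evc}$, and then to establish that equivalence by recognizing $\sO(\Spec(A)^\mathrm{evp})$ as the underlying $\E$-ring of the even filtration of $A$.

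For the reduction, recall from Definition \ref{Defi of evp} that $\Spec(A)^\mathrm{evp}\simeq\varinjlim_{A\to B}\Spec(B)$, the colimit over the $\i$-category $\CAlg^\mathrm{evp}_{A/}$ of even periodic $\E$-rings under $A$. Since $\sO(-)$ carries colimits of spectral stacks to limits of $\E$-rings, this already gives $\sO(\Spec(A)^\mathrm{evp})\simeq\varprojlim_{A\to B}B$, with the canonical map $A\to\sO(\Spec(A)^\mathrm{evp})$ induced by the counit $\Spec(A)^\mathrm{evp}\to\Spec(A)$. Now for an arbitrary $\E$-ring $A'$, the stack $\Spec(A)^\mathrm{evp}$ is even periodic, so the colocalization property of $(-)^\mathrm{evp}$ identifies $\Map_{\mathrm{SpStk}^\mathrm{evp}}(\Spec(A)^\mathrm{evp},\Spec(A')^\mathrm{evp})$ with $\Map_{\mathrm{SpStk}}(\Spec(A)^\mathrm{evp},\Spec(A'))$; combining this with the colimit formula and with Yoneda in the form $\Map_{\mathrm{SpStk}}(\Spec(B),\Spec(A'))\simeq\Map_{\CAlg}(A',B)$ yields
\[
\Map_{\mathrm{SpStk}^\mathrm{evp}}\big(\Spec(A)^\mathrm{evp},\Spec(A')^\mathrm{evp}\big)\;\simeq\;\varprojlim_{A\to B}\Map_{\CAlg}(A',B)\;\simeq\;\Map_{\CAlg}\big(A',\sO(\Spec(A)^\mathrm{evp})\big).
\]
Hence, as soon as $\sO(\Spec(A)^\mathrm{evp})\simeq A$ for $A\in\CAlg^\mathrm{evc}$, the right-hand side is $\Map_{\CAlg}(A',A)$ for all such $A,A'$, which is exactly the asserted full faithfulness, with the global sections functor $\sO$ as a left inverse.

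It remains to compute $\sO(\Spec(A)^\mathrm{evp})\simeq\varprojlim_{A\to B}B$. Each even periodic $B$ occurring here is weakly $2$-periodic, so its double-speed Postnikov tower $\{\tau_{\ge 2\star}(B)\}_\star$ is a complete filtration of $B$ (completeness because $\varprojlim_\star\tau_{\ge 2\star}(B)\simeq 0$ by $2$-periodicity). Passing to the limit, $\varprojlim_{A\to B}\tau_{\ge 2\star}(B)$ is a complete filtered $\E$-ring which, by the comparison with the construction of Hahn--Raksit--Wilson together with the structural results on even periodization established earlier in the paper, may be identified with $\mathrm{fil}^{\ge\star}_\mathrm{ev}(A)$, compatibly with the canonical maps out of $A$ and in such a way that its underlying $\E$-ring is identified with $\varprojlim_{A\to B}B\simeq\sO(\Spec(A)^\mathrm{evp})$. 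Under this identification, the hypothesis $A\in\CAlg^\mathrm{evc}$ --- i.e.\ that $\mathrm{fil}^{\ge *}_\mathrm{ev}(A)$ converges to $A$ --- says precisely that the canonical map $A\to\sO(\Spec(A)^\mathrm{evp})$ is an equivalence, which finishes the proof.

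The main obstacle is the identification in the previous paragraph, and two points in it need care. First, one must know that restricting the even site to its full subcategory of even periodic objects does not change the even filtration: this is transparent for $\mathrm{MU}$-algebras via base change along $\mathrm{MU}\to\mathrm{MUP}$ and the symmetric monoidal shearing of \cite{Sanath}, but in general it rests on the fact that every even $\E$-ring admits a descendable --- for instance, faithfully flat --- map to an even periodic one, or on an argument performed directly on the even site. Second, one must control the interchange of the limit over $B$ with the colimit over $\star$ that computes underlying objects of filtered spectra, which is what upgrades the even filtration to a filtration genuinely \emph{on} $\sO(\Spec(A)^\mathrm{evp})$; the even-convergence hypothesis then only serves to promote the equivalence $\varinjlim_\star\mathrm{fil}^{\ge\star}_\mathrm{ev}(A)\simeq\sO(\Spec(A)^\mathrm{evp})$ to an equivalence with $A$ itself.
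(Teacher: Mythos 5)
Your proposal is correct and follows essentially the same route as the paper: the paper's proof of this statement (Proposition \ref{Prop champs affines}) consists precisely of your adjunction/Yoneda reduction to $\sO(\Spec(A)^\mathrm{evp})\simeq A$, followed by the identification $\sO(\Spec(A)^\mathrm{evp})\simeq\varinjlim_n\mathrm{fil}_\mathrm{ev}^{\ge n}(A)$ (Corollary \ref{Ring of functions computation}). The two points you flag as needing care are exactly the inputs the paper supplies: Proposition \ref{Even filtration is even periodic filtration}, proved by eff descent along $A\to A\otimes_{\mathbf S}\mathrm{MUP}$ (your ``descendable map to an even periodic ring''), and the interchange of $\varinjlim_n$ with $\varprojlim_B$, handled by the standard truncatedness argument of \cite[Remark 2.3.1]{HRW} (the cofibers of $\tau_{\ge 2n}(B)\to B$ are uniformly truncated and truncatedness is closed under limits).
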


Let us give three different points one can draw from Theorem \ref{Theoremi ffaith}:
\begin{itemize}
\item It seems  reminiscent of the theory of \textit{affine stacks} from \cite{Champs affines} (called \textit{coaffine stacks} in \cite{DAGVIII}), where sending a coconnective $\E$-ring $A$ over a field $k$ of characteristic $0$ to the restriction $\Spec(A)\vert{}_{\CAlg_k^\heartsuit}$ of the \textit{nonconnective} affine $\Spec(A):\CAlg_k\to \mathrm{Ani}$ to the subcategory $\CAlg_k^\heart\subseteq\CAlg_k$ defines a fully faithful embedding. Conversely, in Theorem \ref{Theoremi ffaith} we send an $\E$-ring $A\in \CAlg{}^{\mathrm{evc}}$ to the restriction $\Spec(A)\vert_{\CAlg^\mathrm{evp}}$  of the affine $\Spec(A)$ to the subcategory $\CAlg^\mathrm{evp}\subseteq\CAlg$. It would be interesting to know if the essential image of this fully faithful embedding inside $\mathrm{SpStk}$ also admits a simple explicit description, as affine stacks of To\"en do.

 \item Any even $\E$-ring $A$ obviously belongs to $\CAlg^\mathrm{evc}$, and may thus by Theorem \ref{Theoremi ffaith} be faithfully encoded by the even periodic stack $\Spec(A)^\mathrm{evp}$. The underlying classical stack of the latter can be identified as the quotient stack
 $$
(\Spec(A)^\mathrm{evp})^\heart\simeq \Spec_{\mathrm B\mathbf G_m}(\pi_{2*}(A)),
$$
recording all the homotopy groups of $A$ and their grading. The canonical map of underlying classical stacks $(\Spec(A)^\mathrm{evp})^\heart\to \mM^\heart\simeq \mathcal M^\heart_\mathrm{FG}$ therefore classifies a graded formal group over $\pi_{2*}(A)$, which is precisely the classical Quillen formal group $\Spf(A^*(\mathbf{CP}^\infty))$. This \textit{recovers} the classical graded Quillen formal groups from the \textit{periodic} Quillen formal groups $\Spf(B^0(\mathbf{CP}^\i))$ that we use instead for even periodic $\E$-rings $B$. More broadly, the even periodization gives a way to counter, at least in the even case, the criticism of spectral algebraic geometry that its underlying classical objects only see the information of $\pi_0$ as opposed to $\pi_*$.

\item 
According to upcoming work of Achim Krause and Robert Burklund, announced in
\cite[Remark 8.5]{Piotr's even}, all connective $\E$-rings belong to $\CAlg^\mathrm{evc}$. Theorem \ref{Theoremi ffaith} therefore restricts to a fully faithful embedding of all connective affines into even periodic spectral stacks. Somewhat counterintuitively, this suggests that there is ``sufficient room" inside even periodic geometry to faithfully model all of connective spectral algebraic geometry.

\end{itemize}

As mentioned  in the first part of the introduction,
one major motivation for the even periodization was in trying to globalize (in the sense of extending to non-affine spectral stacks) the even filtration of Hahn-Raksit-Wilson \cite{HRW}. To explain how the two are connected,  note that any quasi-coherent sheaf $\sF\in\QCoh(\mX)$ gives rise to a sheaf-level Postnikov filtration  $\tau_{\ge *}(\sF)\in \mathrm{Fil}(\mathrm{QCoh}(\mX^\mathrm{cn}))$. We showed in \cite{ChromaticCartoon} that the chromatic base stack $\mM$ \textit{geometrizes} the Adams-Novikov filtration in the sense that there is a canonical isomorphism of filtered $\E$-rings
$$
\mathrm{fil}_\mathrm{AN}^{\ge *}(\mathbf S)\,\simeq \, \Gamma( \mM^\mathrm{cn}; \, \tau_{\ge 2*}(\sO_{\mM})).
$$
Since the Adams-Novikov filtration agrees with the even filtration for the sphere spectrum, this is generalized by the even periodization geometrizing the even filtration:

\begin{theoremi}\label{Theoremi evfiltr compare}
For any $\E$-ring $A$, its even filtration may be expressed via the Postnikov filtration of $\sO_{\Spec(A)^\mathrm{evp}}$ as
$$
\mathrm{fil}_\mathrm{ev}^{\ge *}(A)\,\simeq \, \Gamma((\Spec(A)^\mathrm{evp})^\mathrm{cn};\,\tau_{\ge 2*}(\sO_{\Spec(A)^\mathrm{evp}}))
$$
\end{theoremi}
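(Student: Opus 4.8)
The plan is to unwind both sides into limits indexed by $\E$-algebras over $A$, and then reduce to a comparison between the even site of Hahn--Raksit--Wilson and the even periodic site. For the right-hand side, Definition~\ref{Defi of evp} presents $\Spec(A)^\mathrm{evp}$ as the colimit of the affines $\Spec(B)$ over even periodic $\E$-algebras $B$ under $A$. Since $\QCoh$ carries colimits of spectral stacks to limits of $\infty$-categories, global sections correspondingly carry them to limits of spectra, and the sheaf-level Postnikov filtration is computed affine-locally, the filtered sheaf $\tau_{\ge 2*}(\sO_{\Spec(A)^\mathrm{evp}})$ restricts on each $\Spec(\tau_{\ge 0}B)$ to the double-speed Postnikov filtration of $B$; so taking global sections over $(\Spec(A)^\mathrm{evp})^\mathrm{cn}$ identifies the right-hand side, as a filtered $\E$-ring, with the limit of $\tau_{\ge 2*}(B)$ over all even periodic $\E$-algebras $B$ under $A$. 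On the other hand, by \cite{HRW} the even filtration of $A$ is the value at $A$ of the right Kan extension of $B\mapsto\tau_{\ge 2*}(B)$ along the inclusion $\CAlg^\mathrm{ev}\hookrightarrow\CAlg$ of even $\E$-rings, i.e.\ the limit of $\tau_{\ge 2*}(B)$ over all \emph{even} $\E$-algebras $B$ under $A$. By transitivity of right Kan extension, the theorem reduces to the local assertion that for every even $\E$-ring $B$,
\[
\tau_{\ge 2*}(B)\;\simeq\;\varprojlim_{C\in\CAlg^\mathrm{evp}_{B/}}\tau_{\ge 2*}(C),
\]
i.e.\ that on even $\E$-rings $\tau_{\ge 2*}$ is already the right Kan extension of its restriction to even periodic $\E$-rings.

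To establish this local assertion I would use the $\MUP$-cobar resolution. When $B$ is even, every $\E$-ring $B\otimes\MUP^{\otimes n+1}$ is even periodic: writing $\MUP^{\otimes n+1}$ as a sum of even suspensions of the connective, evenly-celled spectrum $\MU^{\otimes n+1}$, an Atiyah--Hirzebruch spectral sequence argument (convergent even though $B$ need not be bounded below) shows $B\otimes\MU^{\otimes n+1}$ is even. Moreover $C\to C\otimes\MUP$ is an even faithfully flat cover of every even $\E$-ring $C$, base-changed from $\mathbf S\to\MUP$ which like $\mathbf S\to\MU$ is one by \cite{HRW}, so even descent yields $\tau_{\ge 2*}(C)\simeq\Tot\bigl(\tau_{\ge 2*}(C\otimes\MUP^{\otimes\bullet+1})\bigr)$, in particular for $C=B$ and for every even periodic $C$. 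Combining, $\varprojlim_{C\in\CAlg^\mathrm{evp}_{B/}}\tau_{\ge 2*}(C)\simeq\varprojlim_C\Tot\bigl(\tau_{\ge 2*}(C\otimes\MUP^{\otimes\bullet+1})\bigr)\simeq\Tot\bigl(\varprojlim_C\tau_{\ge 2*}(C\otimes\MUP^{\otimes\bullet+1})\bigr)$. At each cosimplicial level $n$, the functor $C\mapsto C\otimes\MUP^{\otimes n+1}$ from $\CAlg^\mathrm{evp}_{B/}$ to the $\infty$-category of even periodic $\E$-algebras under $B\otimes\MUP^{\otimes n+1}$ is coinitial — the relevant comma categories have terminal objects, via the free--forgetful adjunction along $B\to B\otimes\MUP^{\otimes n+1}$ — and the target has $B\otimes\MUP^{\otimes n+1}$ as its initial object, so $\varprojlim_C\tau_{\ge 2*}(C\otimes\MUP^{\otimes n+1})\simeq\tau_{\ge 2*}(B\otimes\MUP^{\otimes n+1})$. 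Hence $\varprojlim_{C\in\CAlg^\mathrm{evp}_{B/}}\tau_{\ge 2*}(C)\simeq\Tot\bigl(\tau_{\ge 2*}(B\otimes\MUP^{\otimes\bullet+1})\bigr)\simeq\tau_{\ge 2*}(B)$, as wanted.

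The main obstacle is precisely this local assertion: it cannot be obtained by a formal cofinality, since even periodic $\E$-rings are genuinely \emph{not} cofinal among even ones — for instance no nonzero even periodic $\E$-ring admits a map to $\mathbf Z$, as the image of a periodicity generator would have to be a unit in $\pi_2(\mathbf Z)=0$ — so the detour through the $\MUP$-cobar resolution and the even descent of \cite{HRW} is unavoidable. This is also why one first passes to even $\E$-algebras $B$ over $A$ rather than working with $A$ directly: for a general, possibly odd, $\E$-ring $A$ the $\E$-ring $A\otimes\MUP$ need not be even, so $A$ admits no evident even periodic cover. The most delicate technical inputs are the evenness of $B\otimes\MUP^{\otimes n+1}$ when $B$ is merely even (not bounded below), which requires care with the convergence of the spectral sequence, and the appeal to \cite{HRW} for $\MUP$-even descent of the double-speed Postnikov filtration in the generality needed here.
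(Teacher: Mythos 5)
Your proposal is correct, and its first half (presenting $\Spec(A)^\mathrm{evp}$ as the colimit of even periodic affines over $A$, then using that quasi-coherent sheaves and global sections turn this colimit into a limit and that the Postnikov filtration is computed affine-locally) is exactly the paper's argument in Proposition \ref{Voila the even filtration}, via Proposition \ref{EVP as a colimit} and Remark \ref{Postnikov filtration on functions}. Where you diverge is in the crux, namely the identification $\mathrm{fil}^{\ge *}_\mathrm{ev}(A)\simeq\varprojlim_{B\in\CAlg^\mathrm{evp}_A}\tau_{\ge 2*}(B)$, which the paper isolates as Proposition \ref{Even filtration is even periodic filtration}. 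The paper proves this by introducing the provisional even periodic filtration $\mathrm{fil}^{\ge *}_\mathrm{evp}$, asserting that the arguments of \cite[Corollary 2.2.17]{HRW} carry over to show it also satisfies eff descent, and then comparing both filtrations along the eff cover $A\to A\otimes_{\mathbf S}\MUP$, using that even algebras over the complex periodic cobar terms are automatically even periodic. You instead reduce, by transitivity of right Kan extension, to the assertion that on even $\E$-rings $\tau_{\ge 2*}$ is already right Kan extended from even periodic ones, and prove that by applying \cite{HRW}'s eff descent only to the honest even filtration (where it is the double-speed Postnikov filtration), combined with an exchange of limits and a coinitiality argument at each cosimplicial level; the coinitiality via the base-change/forgetful adjunction is sound, since the relevant comma category over an even periodic $D$ is the overcategory $(\CAlg^\mathrm{evp}_{B/})_{/D}$ with terminal object $(D,\mathrm{id})$. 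This buys you a more self-contained treatment of the key lemma -- you never need to claim that HRW's descent argument goes through for an auxiliary filtration -- at the cost of the Kan-extension bookkeeping and the (correctly identified) necessity of first passing to even $B$, since $A\otimes\MUP$ need not be even for general $A$. Conversely, the paper's detour through $\mathrm{fil}^{\ge *}_\mathrm{evp}$ is reused later (e.g.\ in the $I$-adic setting), so it is not wasted there. Two small remarks: the evenness of $B\otimes\MU^{\otimes n+1}$ for even, not necessarily bounded-below $B$ needs no Atiyah--Hirzebruch argument -- it is exactly the evenly-free statement of \cite[Proposition 2.2.20]{HRW}, used via an even-cell induction -- and the well-definedness of the limits over $\CAlg^\mathrm{evp}_{B/}$ rests on the accessibility of $\CAlg^\mathrm{evp}\subseteq\CAlg$, which the paper records in Lemma \ref{Lemma cover preservation and lifting} and which you should cite rather than leave implicit.
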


In fact, the functor $\mX\mapsto \mX\times \mM$ can be viewed analogously to Definition \ref{Defi of evp} as \textit{complex periodization}, and expressed via an analogous colimit formula
$$
\mX\times \mM\,\,\simeq \varinjlim_{\underset{A \text{ complex periodic $\E$ ring}}{\Spec(A)\to \mX}}\Spec(A),
$$
where following \cite{Elliptic 2} an $\E$-ring is \textit{complex periodic} if it is both complex orientable and weakly $2$-periodic. There is a canonical map $\mX^\mathrm{evp}\to \mX\times \mM$ for any spectral stack $\mX$, which geometrizes the relationship between the even filtration and the Adams-Novikov filtration. For instance, the \textit{Adams-Novikov descent} of \cite{HRW} becomes the observation that $\mX^\mathrm{evp}\simeq (\mX\times \mM)^\mathrm{evp}$ for any spectral stack $\mX$.

Next, we wish to describe analogously to Theorem \ref{Theoremi evfiltr compare}, and in that sense geometrize, the circle-equivariant version of the even filtration $\mathrm{fil}_{\mathrm{ev}, \mathrm h\mathbf T}(A)$ for an $\E$-ring with a $\T$-action $A$ from \cite{HRW}. For this purpose, we are led to desire a notion of a \textit{coarse quotient} $\mX/\!/\mathbf T$ of a spectral stack $\mX$ along a $\mathbf T$-action, which would be defined  in the affine case by $\Spec(A)/\!/\mathbf T\,\approx\, \Spec(A^{\mathrm h\mathbf T})$ and extended by descent. 
The issue (fundamentally the same as underlying Mumford's geometric invariant theory \cite{GIT}) is that the passage to homotopy fixed-points need not preserve fpqc covers, and so there is no \textit{\`a priori} way to make this well defined.
Our solution is instead to use the following slight strengthening of a unipotence result from \cite{MNN}:

\begin{theoremi}\label{Theoremi H}
The homotopy fixed-point functor induces an equivalence of symmetric monoidal $\i$-categories
$$
\Mod_A(\Sp^{\mathrm B\mathbf T})\simeq \Mod_{A^{\mathrm h\mathbf T}}^\mathrm{cplt}
$$
for any complex periodic $\E$-ring with a $\T$-action $A$.
\end{theoremi}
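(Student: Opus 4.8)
The plan is to realize $(-)^{\mathrm{h}\mathbf{T}}$ as the functor corepresented by the unit and to analyze its adjunction with extension of scalars. Concretely, the unit object of $\Mod_A(\Sp^{\mathrm{B}\mathbf{T}})$ is $A$ itself equipped with its $\mathbf{T}$-action, and one checks that $\mathrm{End}_{\Mod_A(\Sp^{\mathrm{B}\mathbf{T}})}(A) \simeq A^{\mathrm{h}\mathbf{T}}$ as $\E$-rings, so that $(-)^{\mathrm{h}\mathbf{T}} = \Map_{\Mod_A(\Sp^{\mathrm{B}\mathbf{T}})}(A,-)$ is lax symmetric monoidal with a symmetric monoidal left adjoint $F = A \otimes_{A^{\mathrm{h}\mathbf{T}}}(-)$, the extension of scalars along the ``evaluation at the base point'' $\E$-ring map $A^{\mathrm{h}\mathbf{T}} \to A$. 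I will show $(-)^{\mathrm{h}\mathbf{T}}$ is fully faithful with essential image the complete modules; the symmetric monoidal refinement then follows formally, since a lax symmetric monoidal equivalence is symmetric monoidal and a left-exact localization of a symmetric monoidal $\infty$-category carries a canonical symmetric monoidal structure --- here the completed tensor product on $\Mod_{A^{\mathrm{h}\mathbf{T}}}^{\mathrm{cplt}}$.

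The engine, as in the unipotence argument of \cite{MNN}, is the Euler class. Applying $A \otimes (-)$ to the cofiber sequence $\Sigma^{\infty}_+\mathbf{T} = \Sigma^{\infty}_+ S(V) \to \mathbf{S} \to S^{V}$ of Borel $\mathbf{T}$-spectra attached to the defining representation $V$, and using that a complex orientation of the underlying $\E$-ring of $A$ furnishes an equivariant Thom isomorphism $A \otimes S^{V} \simeq \Sigma^{2} A$ inside $\Mod_A(\Sp^{\mathrm{B}\mathbf{T}})$, one gets a fiber sequence $A \otimes \Sigma^{\infty}_+\mathbf{T} \to A \xrightarrow{\,e\,} \Sigma^{2} A$ with $e \in \pi_{-2}(A^{\mathrm{h}\mathbf{T}})$ the Euler class of the tautological line bundle on $\mathrm{B}\mathbf{T}$. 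From this I would extract three points. First, $A \otimes \Sigma^{\infty}_+\mathbf{T}$ is a finite limit of shifts of $A$ and generates $\Mod_A(\Sp^{\mathrm{B}\mathbf{T}})$ under colimits, so the unit $A$ already generates under colimits --- this is the unipotence, now for arbitrary $\mathbf{T}$-actions. Second, $F$ annihilates every $A^{\mathrm{h}\mathbf{T}}$-module $N$ with $N^{\wedge}_{e} = 0$ (the image of $e$ in $\pi_{-2}(A)$ vanishes, so $A \otimes_{A^{\mathrm{h}\mathbf{T}}}(-)$ inverts $0$ on such $N$), hence $F$ factors through $e$-completion. Third, comparing the skeletal filtration of $\mathrm{B}\mathbf{T} = \mathbf{CP}^{\infty}$ with the $e$-adic filtration shows $A^{\mathrm{h}\mathbf{T}} \simeq \lim_{n} A^{\mathrm{h}\mathbf{T}}/e^{n+1}$, so $A^{\mathrm{h}\mathbf{T}}$ is $e$-complete and is the unit of $\Mod_{A^{\mathrm{h}\mathbf{T}}}^{\mathrm{cplt}}$.

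Next I would prove full faithfulness, i.e.\ that the counit $F\bigl((-)^{\mathrm{h}\mathbf{T}}\bigr) \to \mathrm{id}$ is an equivalence. The crucial point is that although $(-)^{\mathrm{h}\mathbf{T}}$ does not preserve colimits, the composite $F \circ (-)^{\mathrm{h}\mathbf{T}}$ does: the obstruction to $(-)^{\mathrm{h}\mathbf{T}}$ commuting with a colimit is an $A^{\mathrm{h}\mathbf{T}}$-module with vanishing $e$-completion (coming from the $\lim^{1}$-term of the skeletal tower), which $F$ kills. Hence the full subcategory of objects on which the counit is an equivalence is closed under colimits; since it contains the unit $A$ --- where the counit is the identity of $A = F(A^{\mathrm{h}\mathbf{T}})$ --- the unipotence forces it to be all of $\Mod_A(\Sp^{\mathrm{B}\mathbf{T}})$. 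This exhibits $\Mod_A(\Sp^{\mathrm{B}\mathbf{T}})$ as a reflective subcategory of $\Mod_{A^{\mathrm{h}\mathbf{T}}}$ whose reflection is the idempotent monad $(-)^{\mathrm{h}\mathbf{T}} \circ F$. Finally I would identify the local objects of this reflector with the complete modules: the reflector fixes $A^{\mathrm{h}\mathbf{T}}$ and kills $A^{\mathrm{h}\mathbf{T}}[1/e]$, and one checks that it agrees with $e$-completion on a set of generators --- again via the comparison of the skeletal and $e$-adic filtrations --- so its essential image is exactly $\Mod_{A^{\mathrm{h}\mathbf{T}}}^{\mathrm{cplt}}$.

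The main difficulty, and the place where the argument must improve on \cite{MNN}, concerns the nontriviality of the $\mathbf{T}$-action on $A$, and I expect it to surface in two steps. First, the equivariant Thom isomorphism $A \otimes S^{V} \simeq \Sigma^{2} A$ in $\Mod_A(\Sp^{\mathrm{B}\mathbf{T}})$: over a point this is mere complex orientability, but equivariantly it amounts to trivializing the image of $[S^{V}]$ under $\mathrm{Pic}(\Sp^{\mathrm{B}\mathbf{T}}) \to \mathrm{Pic}\bigl(\Mod_A(\Sp^{\mathrm{B}\mathbf{T}})\bigr)$, i.e.\ to nullhomotoping the composite $\mathrm{B}\mathbf{T} \to \mathrm{B}\GL_1(\mathbf{S}) \to \mathrm{B}\GL_1(A)$, which one should deduce from orientability together with the connectedness of $\mathbf{T}$ so that the action on $A$ cannot obstruct the nullhomotopy. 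Second, and more seriously, with a nontrivial action the homotopy-fixed-point spectral sequence need not degenerate, so $\pi_{*}(A^{\mathrm{h}\mathbf{T}})$ need not be the power series ring $\pi_{*}(A)[\![e]\!]$ and $e$ need not be a genuine nonzerodivisor; consequently the two ``comparison of filtrations'' inputs above --- that $A^{\mathrm{h}\mathbf{T}}$ is $e$-complete, and that the reflector is $e$-completion --- cannot be read off from homotopy groups but must be argued on associated gradeds, where complex periodicity does the work. This filtered-algebra bookkeeping is, I believe, the technical heart of the strengthening.
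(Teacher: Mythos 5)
Your skeleton is the \cite{MNN} unipotence argument (Euler-class fiber sequence, generation by the unit, counit equivalence by closure under colimits, identification of the image with complete modules), and that shape is fine; but the two places you yourself flag as ``the technical heart'' are exactly where the proposal contains no argument, and the one concrete suggestion you make for the first of them does not work as stated. The equivariant Thom isomorphism $A\otimes S^{V}\simeq \Sigma^{2}A$ in $\Mod_A(\Sp^{\mathrm B\T})$ is not a consequence of complex orientability of the underlying ring plus connectedness of $\T$: connectedness (indeed, simple connectivity of $\mathrm B\T$) only makes the coefficient systems $\pi_*(A)$ constant, it does not touch the obstructions to lifting the nonequivariant trivialization along $\mathrm{Pic}\bigl(\Mod_A(\Sp^{\mathrm B\T})\bigr)\to \mathrm{Pic}(\Mod_A)$. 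Concretely, a Thom class is a class in $\pi_2\bigl((A\otimes S^{V})^{\mathrm h\T}\bigr)$ lifting the unit, and the cell-by-cell obstructions over $\mathbf{CP}^{\infty}$ live in $H^{2r}(\mathrm B\T;\pi_{2r-1}(A))\cong\pi_{2r-1}(A)$ for $r\ge 1$. These vanish when $A$ is even --- which is why the untwisted even periodic case goes through for degree reasons --- but a merely complex periodic $A$ can have lots of odd homotopy (e.g.\ the square-zero extension $\mathrm{KU}\oplus\Sigma\mathrm{KU}$ is complex periodic), and nothing you have said kills these obstruction classes in the presence of a nontrivial action. Note also that the theorem does not force your stronger claim: under the asserted equivalence, $A\otimes S^{V}$ is only required to correspond to \emph{some} invertible complete $A^{\mathrm h\T}$-module, not to $\Sigma^{2}A^{\mathrm h\T}$. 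The same unproved point then resurfaces downstream: the identification $A^{\mathrm h\T}/e\simeq A$, which you use to see that $A^{\mathrm h\T}$ is $e$-complete, that $e$ generates the augmentation ideal up to completion, and that your reflector is $e$-completion (and also that $F\circ(-)^{\mathrm h\T}$ preserves colimits), is, after taking fixed points of the Euler fiber sequence, equivalent to the very Thom trivialization you have not established. So there is a genuine gap, not just bookkeeping.

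For contrast, the paper never proves a twisted Thom isomorphism and never re-runs the unipotence argument: it quotes the untwisted, even periodic statement from \cite{MNN} and bootstraps by naturality. Since that equivalence is natural in the even periodic ring, it globalizes over the chromatic base stack to a symmetric monoidal equivalence $\QCoh(\mM)^{\mathrm B\T}\simeq \mathfrak{QC}\mathrm{oh}(\w{\G}{}^{\mathcal Q}_{\mM})$; a complex periodic $A$ with $\T$-action is an algebra object on the left, $A^{\mathrm h\T}$ the corresponding algebra on the right, and passing to module categories over these algebras gives the theorem. (In the even periodic case one can instead set $R=A^{\mathrm h\T}$, which has trivial action and is again even periodic because the fixed-point spectral sequence collapses, and apply the cited theorem to $R$; the complex periodic case is alternatively referenced to \cite{Meyer-Wagner}.) If you want a self-contained proof along your lines, you must either actually produce the equivariant Thom class for complex periodic rings with a nontrivial action --- which needs more than connectedness of $\T$ --- or restructure the argument so that only the untwisted even periodic input is ever used, which is what the paper does.
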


Using this result, we can show the functor $A\mapsto \Spf(A^{\mathrm h\mathbf T})$ does satisfy $\mathbf T$-equivariant fpqc descent when for even periodic $\E$-rings $A$, allowing us to define a coarse quotient
$$\mX/\!/\mathbf T \,\,:= \varinjlim_{\underset{\text{$\T$-equivariant}}{\Spec(A)\to \mX}}\Spf(A^{\mathrm h\mathbf T})$$
as a \textit{formal} spectral stack, for any even periodic spectral stack with a $\T$-action $\mX$. The passage to the coarse quotient is not a very destructive process: there is a canonical equivalence $\mathfrak{QC}\mathrm{oh}(\mX/\!/\mathbf T)\simeq \QCoh(\mX/\mathbf T)$ between  the complete quasi-coherent sheaves on the coarse quotient and the quasi-coherent sheaves on the $\i$-categorical quotient.

\begin{theoremi}
Let $A$ be an $\E$-ring with a $\T$-action and let $\mX\,:=\,\Spec(A)^{\mathrm{evp}}/\!/\T$ be the coarse quotient of its even periodization. There is an equivalence of filtered $\E$-rings
$$
\mathrm{fil}^{\ge *}_{\mathrm{ev}, \mathrm h\T}(A)\, \simeq \, \Gamma(\mX^\mathrm{cn}
;\,
\tau_{\ge 2*}(\sO_{\mX})).
$$\end{theoremi}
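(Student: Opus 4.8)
The plan is to run the argument behind Theorem~\ref{Theoremi evfiltr compare}, but $\T$-equivariantly and with the coarse quotient in place of the bare even periodization. Write $\mathcal C := \Fun(\mathrm B\T,\CAlg^\mathrm{evp})_{A/}$ for the $\i$-category of $\T$-equivariant even periodic $\E$-rings $B$ equipped with a $\T$-equivariant map $A\to B$. Since even periodization is a colocalization, one has $\Map_{\mathrm{SpStk}}(\Spec(B),\Spec(A)^\mathrm{evp})\simeq \Map_{\mathrm{SpStk}}(\Spec(B),\Spec(A))\simeq \Map_\CAlg(A,B)$ for every even periodic $B$, naturally in $B$; passing to $\T$-fixed points identifies $\mathcal C$ with the $\i$-category of $\T$-equivariant even periodic affines over $\mX$. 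By the defining colimit formula for the coarse quotient we therefore get a presentation
$$
\mX\;\simeq\;\varinjlim_{B\in\mathcal C^\mathrm{op}}\Spf(B^{\mathrm h\T}),
$$
and, since forming connective covers and taking global sections turn this colimit of (formally) affine charts into a limit,
$$
\Gamma\big(\mX^\mathrm{cn};\,\tau_{\ge 2*}(\sO_{\mX})\big)\;\simeq\;\varprojlim_{B\in\mathcal C}\Gamma\big(\Spf(B^{\mathrm h\T})^\mathrm{cn};\,\tau_{\ge 2*}(\sO_{\mX})\vert_{\Spf(B^{\mathrm h\T})}\big).
$$

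Next I would identify each term of this limit with the double-speed Postnikov filtration $\tau_{\ge 2*}(B^{\mathrm h\T})$. The point is that $B^{\mathrm h\T}$ is an \emph{even} $\E$-ring: as $\mathrm B\T\simeq\mathbf{CP}^{\i}$ is simply connected, the homotopy groups $\pi_t(B)$ form constant local systems on $\mathrm B\T$, so the homotopy fixed-point spectral sequence has $E_2$-page concentrated in even total degree, forcing $\pi_{2*+1}(B^{\mathrm h\T})\simeq 0$; and $B^{\mathrm h\T}=\varprojlim_{\mathrm B\T}B$ is automatically complete for its Postnikov filtration. Consequently the Postnikov truncations of $\sO_{\mX}$, computed a priori inside $\QCof(\mX)$, are complete and restrict along the chart $\Spf(B^{\mathrm h\T})\to\mX$ to the honest truncations $\tau_{\ge 2n}(B^{\mathrm h\T})$, each a connective even module over $\tau_{\ge 0}(B^{\mathrm h\T})$; since $\Spf(B^{\mathrm h\T})^\mathrm{cn}=\Spf(\tau_{\ge 0}(B^{\mathrm h\T}))$ is (formally) affine, global sections recover $\tau_{\ge 2*}(B^{\mathrm h\T})$ as a filtered $\E$-ring. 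This yields
$$
\Gamma\big(\mX^\mathrm{cn};\,\tau_{\ge 2*}(\sO_{\mX})\big)\;\simeq\;\varprojlim_{(A\to B)\in\mathcal C}\tau_{\ge 2*}(B^{\mathrm h\T}).
$$

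To finish, I would match this limit with the circle-equivariant even filtration of Hahn--Raksit--Wilson. By the same cofinality argument used in the proof of Theorem~\ref{Theoremi evfiltr compare}, restricting from the $\i$-category of \emph{even} $\T$-equivariant $A$-algebras to its full subcategory $\mathcal C$ of even periodic ones does not change the limit of $B\mapsto\tau_{\ge 2*}(B^{\mathrm h\T})$; but the former limit is, by definition, $\mathrm{fil}^{\ge *}_{\mathrm{ev},\mathrm h\T}(A)$. All the identifications above are (lax) symmetric monoidal, so the resulting equivalence is one of filtered $\E$-rings, as required.

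\emph{The main obstacle.} The step that carries real content is the very first one: that $\mX\simeq\varinjlim_{\mathcal C^\mathrm{op}}\Spf(B^{\mathrm h\T})$ is a legitimate presentation and that $\sO_{\mX}$ restricts to $B^{\mathrm h\T}$ on the charts, so that the displayed limit formula for $\Gamma(\mX^\mathrm{cn};\tau_{\ge 2*}(\sO_\mX))$ holds. This is exactly where Theorem~\ref{Theoremi H} is needed: it is what makes $A\mapsto\Spf(A^{\mathrm h\T})$ send $\T$-equivariant fpqc covers of even periodic $\E$-rings to covers, hence makes the coarse quotient well defined, with $\QCof(\mX)\simeq\varprojlim_{B\in\mathcal C}\Mod^\mathrm{cplt}_{B^{\mathrm h\T}}$ and $\sO_\mX$ corresponding to the compatible family $(B^{\mathrm h\T})_{B}$. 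A secondary, more bookkeeping, difficulty is to pin down precisely the indexing $\i$-category in Hahn--Raksit--Wilson's definition and to check that the Postnikov truncation taken inside the complete quasi-coherent sheaves on $\mX$ agrees chart-locally with the plain double-speed Postnikov filtration -- which, as above, comes down to the evenness of $B^{\mathrm h\T}$.
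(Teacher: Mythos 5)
Your proposal is correct and follows essentially the same route as the paper's own proof (Proposition \ref{Voila the even filtration on hT}): present the $\T$-equivariant even periodization of $\Spec(A)$ as a colimit of $\T$-equivariant even periodic affines, use that the coarse quotient preserves colimits (resting on Theorem \ref{Existence of coarse quotients in SAG}, hence on the unipotence result, exactly as you flag) so that the descent-defined Postnikov filtration yields $\Gamma(\mX^{\mathrm{cn}};\,\tau_{\ge 2*}(\sO_{\mX}))\simeq\varprojlim_{B}\tau_{\ge 2*}(B^{\mathrm h\T})$, and then match this with $\mathrm{fil}^{\ge *}_{\mathrm{ev},\mathrm h\T}(A)$ by shrinking the index from even to even periodic equivariant $A$-algebras. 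The one wording to correct is calling that last step a ``cofinality argument'': the even periodic subcategory is not initial among even equivariant $A$-algebras, and --- exactly as in the proof of Proposition \ref{Even filtration is even periodic filtration} that you invoke --- the agreement of the two limits comes from the $\T$-equivariant eff descent of Hahn--Raksit--Wilson applied to the equivariant eff cover $A\to A\otimes_{\mathbf S}\mathrm{MUP}$, whose cobar terms are complex periodic so that even algebras under them are automatically even periodic.
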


Finally, we use the even periodization and the coarse quotient to give a canonical spectral enhancement of  version of the \textit{filtered prismatization} $X^{\mathcal N}$ of Bhatt-Lurie-Drinfeld \cite{Bhatt F-gauges} -- or at least a Nygaard-complete version $X^{\widehat{\mathcal N}}$ thereof -- where $X$ is a $p$-adic formal scheme.

\begin{theoremi}\label{Theoremi Nygaard}
Let $X$ be a quasi-syntomic $p$-adic formal scheme, and let $\mathfrak X \,:=\, (\mathscr LX)^\mathrm{evp}/\!/\T$ denote the coarse quotient of the even periodization of the free loop space $\mathscr LX$ in $\mathrm{SpStk}$. Its underlying classical formal stack may be identified as
$
\mathfrak X^\heart\, \simeq \, X^{\widehat{\mathcal N}}
$
with the Nygaard-complete filtered prismatization of $X$.
\end{theoremi}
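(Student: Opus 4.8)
The plan is to reduce, by quasi-syntomic descent, to the affine case $X = \Spf(R)$ with $R$ quasiregular semiperfectoid, and there to recognize $\mathfrak X^\heart$ as the completed Rees construction of the Nygaard filtration on prismatic cohomology, using the theorem immediately preceding together with the comparison results of Bhatt--Morrow--Scholze and Hahn--Raksit--Wilson. Since quasiregular semiperfectoid rings form a basis of the quasi-syntomic site, this reduction is legitimate once both sides of the claimed identity are known to be quasi-syntomic sheaves valued in classical formal stacks. For $X \mapsto X^{\widehat{\mathcal N}}$ this is part of its construction by Bhatt--Lurie--Drinfeld. For the left-hand side, one uses that the free loop space of the $p$-adic formal affine $\Spf(R)$ is computed $\mathbf T$-equivariantly by $p$-complete topological Hochschild homology, $\mathscr L(\Spf R) \simeq \Spf\big(\mathrm{THH}(R;\mathbf Z_p)\big)$, that $p$-complete $\mathrm{THH}$ satisfies quasi-syntomic descent, and that passage to the heart of the coarse quotient is controlled---via Theorem \ref{Theoremi H}---by complete module categories; this last input is one of the two delicate points, since $\mathscr L(-)$, even periodization and the coarse quotient do not individually commute with the colimits presenting $X$ from a cover.

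\emph{The affine computation.} For $R$ quasiregular semiperfectoid, $\mathfrak X = \Spec\big(\mathrm{THH}(R;\mathbf Z_p)\big)^{\mathrm{evp}}/\!/\mathbf T$ is precisely the coarse quotient appearing in the preceding theorem, applied to $A = \mathrm{THH}(R;\mathbf Z_p)$ with its loop-rotation $\mathbf T$-action. That theorem identifies the Postnikov filtration of $\sO_{\mathfrak X}$ with the circle-equivariant even filtration $\mathrm{fil}^{\ge *}_{\mathrm{ev},\mathrm h\mathbf T}(\mathrm{THH}(R;\mathbf Z_p))$, which by Hahn--Raksit--Wilson is the motivic (Bhatt--Morrow--Scholze) filtration on $\mathrm{TC}^-(R;\mathbf Z_p)$, with associated graded $\bigoplus_n \widehat{\mathrm{Fil}}^{\ge n}_{\mathcal N}\Prism_R\{n\}[2n]$---the completed Nygaard filtration on prismatic cohomology, Breuil--Kisin twisted. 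Because $R$ is quasiregular semiperfectoid these graded pieces are discrete, so $\mathfrak X$ is a formal spectral stack whose underlying classical formal stack is $\Spec_{\mathrm B\mathbf G_m}$ of the graded ring $\bigoplus_n \widehat{\mathrm{Fil}}^{\ge n}_{\mathcal N}\Prism_R\{n\}$, formally completed along the ideal $\bigoplus_{n\ge 1}\widehat{\mathrm{Fil}}^{\ge n}_{\mathcal N}\Prism_R\{n\}$ coming from the $\Spf$ in the coarse quotient.

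\emph{Recognition and globalization.} The graded ring just produced is the completed Rees algebra of the Nygaard filtration on $\Prism_R$, twisted by the Breuil--Kisin line; Bhatt--Lurie's presentation of the filtered prismatization identifies, for $R$ quasiregular semiperfectoid, its $\mathbf G_m$-quotient formal spectrum with $X^{\widehat{\mathcal N}}$. Matching the two presentations settles the affine case, and quasi-syntomic descent as above then yields the asserted functorial equivalence $\mathfrak X^\heart \simeq X^{\widehat{\mathcal N}}$ for all quasi-syntomic $X$.

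\emph{Expected main obstacle.} The two hard points are the descent in the reduction step---proving that $X\mapsto ((\mathscr L X)^{\mathrm{evp}}/\!/\mathbf T)^\heart$ is a quasi-syntomic sheaf, which forces one to combine quasi-syntomic descent for $\mathrm{THH}$ with the interaction of the coarse quotient and of $(-)^\heart$ with the relevant limits---and, more substantially, the matching in the recognition step: one must check that the $\mathbf G_m$-action produced by even periodization is exactly the one encoding the Nygaard filtration, that the Breuil--Kisin twists $\{n\}$ are tracked correctly, and---this is the source of the ``hat''---that the $\Spf$ in the coarse quotient, equivalently the completeness built into $\Mod^{\mathrm{cplt}}$ in Theorem \ref{Theoremi H}, reproduces the Nygaard \emph{completion} rather than the uncompleted $X^{\mathcal N}$ (which is instead what the Tate construction $\mathrm{TP}(R;\mathbf Z_p)$, with its uncompleted $\Prism_R$, would see). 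A variant of the recognition step, cleaner but relying on a Tannakian reconstruction theorem for formal stacks, would instead compare $\mathfrak{QC}\mathrm{oh}(\mathfrak X) \simeq \QCoh\big((\mathscr L X)^{\mathrm{evp}}/\mathbf T\big)$ with Nygaard-complete filtered prismatic crystals and invoke Bhatt--Lurie's Tannakian description of $X^{\widehat{\mathcal N}}$.
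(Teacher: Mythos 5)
Your affine computation is essentially the paper's: for quasiregular semiperfectoid $R$ one identifies the coarse quotient of $\Spec(\mathrm{THH}(R)^\wedge_p)^{\mathrm{evp}}$, uses the circle-equivariant even filtration (Theorem \ref{Theoremi H} supplying the completeness that produces the Nygaard \emph{completion}), the Hahn--Raksit--Wilson comparison with the motivic filtration on $\mathrm{TC}^-$, and the Bhatt--Morrow--Scholze identification $\pi_{2*}(\mathrm{TC}^-(R)^\wedge_p)\simeq \mathrm{Fil}^*_{\mathcal N}\widehat{\Prism}_R\{*\}$, and then matches the resulting $\mathbf G_m$-quotient of a completed Rees construction with $X^{\widehat{\mathcal N}}$. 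Two caveats there: passing from the filtered ring of global sections to the identification of $\mathfrak X^\heart$ as $\Spec_{\mathrm B\mathbf G_m}$ of the associated graded is not automatic from the filtration statement alone; the paper gets it from an explicit simplicial presentation in which every term is a formal affine modulo $\mathbf G_m$ (via the shearing description of the even periodization of an even $\mathrm{MU}$-algebra), together with a $(p,I)$-completed comparison $\mathrm{TC}^-\otimes_{\mathrm{MU}}\mathrm{MUP}\simeq(\mathrm{THH}\otimes_{\mathrm{MU}}\mathrm{MUP})^{\mathrm h\T}$ proved with the Mathew--Naumann--Noel unipotence theorem, and then a $\pi_0$-computation. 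Your sketch asserts this identification rather than deriving it, but it is fillable with the paper's tools.

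The genuine gap is in your reduction step. You propose to reduce to the quasiregular semiperfectoid case by quasi-syntomic descent, which requires knowing that $X\mapsto((\mathscr LX)^{\mathrm{evp}}/\!/\T)^\heart$ is a quasi-syntomic sheaf, and you suggest this follows from quasi-syntomic descent for $p$-complete $\mathrm{THH}$ plus formal properties of the coarse quotient and $(-)^\heart$. That does not suffice: descent of $\mathrm{THH}$ as a sheaf of $\E$-rings does not imply that the \emph{even periodization} of $\Spf(\mathrm{THH}(R)^\wedge_p)$ is the geometric realization of the even periodizations over a quasi-syntomic Čech nerve, because even periodization is built from flat-type (eff/fpqc) descent, and quasi-syntomic covers are not flat. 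The missing ingredient is precisely that, for the specific quasiregular semiperfectoid cover $R\to S$ constructed in \cite{HRW}, the induced map $\mathrm{THH}(R)^\wedge_p\to\mathrm{THH}(S)^\wedge_p$ is $p$-adically eff; this is what makes $\Spf((\mathrm{THH}(S)\otimes_{\mathrm{MU}}\mathrm{MUP})^\wedge_p)\to(\mathscr L\Spf(R))^{\mathrm{evp}}$ a periodically eff cover and yields the simplicial presentation the paper works with. The paper accordingly only uses Zariski descent (legitimate because $\mathrm{THH}$, hence $\mathscr L$, preserves Zariski localizations) to reduce to affines, and then handles the quasi-syntomic cover through the eff machinery rather than through a descent property of the left-hand side. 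Your argument becomes correct once you replace the claimed quasi-syntomic sheaf property by this eff input -- but at that point you are running the paper's proof.
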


This may be seen as a globalization of the original approach to prismatization using topological Hochschild homology in \cite{BMS2}, namely of the identification of graded rings $\pi_{2*}(\mathrm{TC}^-(S)) \simeq  \mathrm{Fil}^{\ge *}_{\mathcal N}(\widehat{\Prism}_S)$ for any quasiregular semiperfectoid ring $S$.
In analogy with Atiyah-Segal completion-type  results, we suspect (but leave for future work) that ``Nygaard decompleting" Theorem \ref{Theoremi Nygaard}, i.e.\ finding a canonical spectral enhancement of the filtered prismatization $X^\mathcal N$, would require taking into account the genuine $\mathbf T$-equivariance on the free loop space $\mathscr LX$.

By virtue of Theorem \ref{Theoremi Nygaard}, we may take
$$
\mX^{\widehat{\mathcal N}}:=((\mathscr L\mX)^\mathrm{evp})/\!/\mathbf T)^\heart
$$
as \textit{the definion} of the Nygaard-complete filtered prismatization for any spectral stack $\mX$. For lci schemes over $\mathbf Q$, this recovers the Hodge-filtered de Rham stack of Simpson. In this sense, we  also find the Nygaard-complete filtered prismatization of the sphere spectrum
$$
\Spec(\mathbf S)^{\widehat{\mathcal N}}\simeq \widehat{\mathbf{G}}^\mathrm{univ}
$$
to be the universal (1-dimensional smooth) formal group.

\subsection{Related work}
Related to this paper's approach of using spectral algebraic geometry to  ``geometrize" the even filtration of \cite{HRW} is the work on the synthetic circle in \cite{Alice and Tasos} and \cite{Ben and Noah}.  Similarly to our use of Theorem \ref{Theoremi H} in defining the coarse quotient, the latter's approach to circle-equivariance also makes crucial use of a version \cite[Lemma 3.11 (Synthetic unipotence)]{Ben and Noah} of the unipotence results from \cite{MNN}, suggesting that perhaps their theory of even synthetic spectra might also admit a geometric interpretation along the lines of this paper.
Also closely related to this paper is \cite{Piotr's even}, which gives a site theoretic reinterpretation of (and extension to $\mathbb E_1$-rings of) the even filtration. It is closely connected with the \textit{synthetic spectra} of \cite{Piotr's Synthetic} (the idea of which can be further traced back to \cite{C-mmf}), and which we related to the chromatic moduli base $\mM$ in \cite{Synthetic}.

Perhaps the most substantial overlap that the present paper has is with the upcoming work of  Devalapurkar, Hahn, Raksit, Senger, Yuan, some of which has been announced in \cite{Arpon's talk} and \cite{Jeremy's talk}.
The author has been vaguely aware of such work in progress, but not of the specific details; to the best of our knowledge, the two projects arose independently.
The work in question concerns stacky versions of the even filtration, as well as spectral enhancements of prismatization. As opposed to the present paper, they  manage to give spectral analogues and enhancements  of
 \textit{non-Nygaard-complete} versions of prismatization as well.
It this very fact which leads the author to suspect that their constructions will likely be more sophisticated, as opposed to the fundamentally elementary nature of even periodization.

Finally, allow us to highlight that none of the above discussions  of the even filtration and its variants consider a \textit{periodic} version. The focus on even \textit{periodicity} --  taking the deceptively-simple idea of restricting to even $\E$-rings, which is the impetus of the even filtration, and restricting even further to even periodic $\E$-rings -- and the advantages, as well as the ostensive lack of substantial disadvantages, that this brings, is perhaps the key innovation of the present paper.

\subsection{Notation and terminology}
We freely use the language of $\i$-categories, and mostly follow the conventions from \cite{HA}, \cite{SAG}, \cite{Elliptic 2}, etc. One prominent exception is that we conform to the rising trend of using the term \textit{anima} and notation $\mathrm{Ani}$ for what Lurie, along with a majority of others working in the field, call \textit{spaces} and denote their $\i$-category by $\mS$. Some other  names for objects of this $\i$-category include $\i$-groupoids, $(\infty, 0)$-categories, $\i$-categorical presheaves on a point, animated sets, and homotopy types.

Unless said otherwise, our notions are interpreted $\i$-categorically. That is to say, we write $\CAlg_R$ and $\Mod_R$ for the $\i$-categories of $\E$-algebras over $R$ and $R$-module spectra respectively. When working over the initial $\E$-ring, i.e.\ the sphere spectrum $R=\mathbf S$, we often omit the subscripts.
We will use $\otimes_R$ to denote the relative smash product over an $\E$-ring $R$. When $R$ is a classical ring, this is the derived tensor product, often rather denoted by $\otimes_R^\mathrm L$. We likewise use $\QCoh$ to mean the $\i$-categorical quasi-coherent sheaves, and indicate with a superscript $\heart$ when we mean the classical notion (as it is conveniently the heart of a $t$-structure on the $\i$-categorical variant).

 \subsection{Acknowledgments}
This project owes its inception to a visit to the University of Chicago in Fall 2021, where Akhil Mathew introduced the author to prismatization, and suggested a relationship with the chromatic base stack. The author is grateful to Tomer Schlank and Jack Morava for displaying interest in the project at several key stages, and thus preventing its abandonment.
Thanks also to Jeremy Hahn, who has kindly explained to the author the connections between this paper and his upcoming work with Devalapurkar, Raksit, Senger, and Yuan, as well as to Ben Antieau for several corrections and useful remarks on an earlier draft.
Most of all, we are exceptionally grateful to David Ben-Zvi, Anish Chevalavada, David Gepner, and Deven Manam, for many useful ideas and inspiring conversations regarding this project. 
\newpage

\section{The even filtration}\label{Section 2}

\subsection{Even and even periodic filtration}
Let us begin by making a list of certain classes of $\E$-rings which we will be especially concerned with in this paper. 

\begin{definition}\label{List of rings}
Let $A$ be an $\E$-ring spectrum. We say that:
\begin{itemize}
\item $A$ is \textit{even} if $\pi_\mathrm{odd}(A)=0$.
\item $A$ is \textit{weakly $2$-periodic} if $\pi_2(A)$ is a flat $\pi_0(A)$-module and the multiplication map $\pi_n(A)\otimes_{\pi_0(A)}\pi_2(A)\to \pi_{n+2}(A)$ is an isomorphism of abelian groups for all $n\in \mathbf Z$.
\item $A$ is \textit{complex periodic} if it is complex orientable and weakly $2$-periodic.
\item $A$ is \textit{even periodic} if it is even and weakly $2$-periodic.
\item $A$ is \textit{even strongly periodic} if it is even periodic and $\pi_2(A)$ is a free $\pi_0(A)$-module.
\end{itemize}
Let $\CAlg^\mathrm{ev}, \CAlg^\mathrm{evp},  \CAlg^{\mathrm{ev}\mathfrak P},\CAlg^{\mathbf C\mathrm {p}},
\CAlg^{\mathbf C\mathrm {or}}
\subseteq\CAlg$ denote the full subcategories of $\E$-ring, spanned respectively by even, even periodic, even strongly  periodic, complex periodic, and complex orientable $\E$-rings. They are interrelated by the inclusions
$$
\begin{tikzcd}
 & & \CAlg^\mathrm{ev} \arrow[hookrightarrow]{dr} & & \\
\CAlg^{\mathrm{ev}\mathfrak P}\arrow[hookrightarrow]{r} &\CAlg^\mathrm{evp} \arrow[hookrightarrow]{ur} \arrow[hookrightarrow]{dr} & &\CAlg^{\mathbf C\mathrm{or}} \arrow[hookrightarrow]{r} & \CAlg\\
 & & \CAlg^{\mathbf C\mathrm{p}} \arrow[hookrightarrow]{ur} & &
\end{tikzcd}
$$
We will also sometimes use $\CAlg^?_A$ to denote the pullback $\i$-category $\CAlg_A\times_{\CAlg}\CAlg^?$, where $?\in \{\mathrm{ev}, \mathrm{evp}, \mathrm{ev}\mathfrak P, \mathbf C\mathrm p, \mathbf C\mathrm{or} \}$.
\end{definition}

\begin{remark}
Our conventions disagree slightly from the standard convention in stable homotopy theory, where ``even periodic" is usually used for what we call ``even strongly periodic", and our notion of even periodicity has no name.
We will justify this shift in emphasis
in Section \ref{Section: variant strict}, see in particular the discussion above Proposition \ref{EVP affines}.
This convention is also not without precedents, e.g. \cite[Definition 2.3]{Affineness in chromatic}.
\end{remark}

\begin{remark}\label{Remark weak vs strong 2-periodicity remark}
Note that weak $2$-periodicity of an $\E$-ring $A$ is equivalent to demanding that the $A$-module $\Sigma^2(A)$ is invertible. In contrast, strong $2$-periodicity -- asking that $\pi_{n+2}(A)\simeq \pi_n(A)$ for all $n\in \mathbf Z$ -- would instead demand that there exists an $A$-module equivalence $\Sigma^2(A)\simeq A$.
\end{remark}

In some sense, most of this paper concerns the interactions between the proper inclusions $\CAlg^\mathrm{evp}\subset\CAlg^{\mathbf C\mathrm{p}}\subset \CAlg$. But, following Hahn-Raskit-Wilson, we begin instead with the inclusion $\CAlg^\mathrm{ev}\subset\CAlg$.

\begin{definition}[{\cite[Construction 2.1.3]{HRW}}]\label{Def 1 - list}
The \textit{even filtration} is the right Kan extension $\mathrm{fil}_\mathrm{ev}^{\ge *} : \CAlg\to\mathrm{FilCAlg}$ of the double-speed Postnikov filtration $\tau_{\ge 2*}$ along the inclusion $\CAlg^\mathrm{ev}\subseteq\CAlg$. That is to say, for an $\E$-ring $A$ it is given by
$$
\mathrm{fil}^{\ge *}_\mathrm{ev}(A)\,\,\simeq\varprojlim_{B\in\CAlg_A^\mathrm{ev}}\tau_{\ge 2*}(B).
$$
\end{definition}

The key property of the even filtration, and which makes it computable in a range of cases, is that is satisfies descent with respect to a variant of the flat topology on even $\E$-rings, which we now recall.

\begin{definition}[{\cite[Definition 2.2.15]{HRW}}]\label{Def eff}
A map of $\E$-rings $A\to B$ is said to be \textit{eff} if it is the case that for any $C\in\CAlg^\mathrm{ev}_A$, the base-change $B\o_AC$ is even and the ring map $\pi_*(B)\to \pi_*(B\o_A C)$ is faithfully flat. 
\end{definition}

\begin{theorem}[eff descent, {\cite[Corollary 2.2.17]{HRW}}]\label{Eff descent citation result}
Let $A\to B$ be an eff morphism of $\E$-rings.
Then the canonical map of filtered spectra
$$
\mathrm{fil}^{\ge *}_\mathrm{ev}(A)\to \varprojlim_{\mathbf\Delta}\mathrm{fil}^{\ge *}_\mathrm{ev}(B^\bullet)
$$
is an equivalence,
where $B^\bullet = B^{\otimes_A (\bullet+1)}$ is the cosimplicial cobar resolution.\end{theorem}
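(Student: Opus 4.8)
We sketch the argument (compare \cite[\S2.2]{HRW}). The plan is to deduce the statement directly from the right Kan extension formula of Definition \ref{Def 1 - list}, reducing eff descent for $\mathrm{fil}^{\ge *}_\mathrm{ev}$ to classical faithfully flat descent of modules. Unwinding the definition, the left-hand side is $\varprojlim_{C\in\CAlg^\mathrm{ev}_A}\tau_{\ge 2*}(C)$ and the right-hand side is $\varprojlim_{[n]\in\mathbf\Delta}\varprojlim_{D\in\CAlg^\mathrm{ev}_{B^{\otimes_A(n+1)}}}\tau_{\ge 2*}(D)$. First I would record what ``eff'' buys: by Definition \ref{Def eff} and an induction on $n$, base change along $A\to B^{\otimes_A(n+1)}$ carries even $A$-algebras to even $B^{\otimes_A(n+1)}$-algebras, so it defines a functor $\CAlg^\mathrm{ev}_A\to\CAlg^\mathrm{ev}_{B^{\otimes_A(n+1)}}$; this functor admits restriction of scalars as a right adjoint, and therefore the limit over $\CAlg^\mathrm{ev}_{B^{\otimes_A(n+1)}}$ agrees with the limit over $\CAlg^\mathrm{ev}_A$ of the base-changed diagram, i.e.\ $\varprojlim_{D}\tau_{\ge 2*}(D)\simeq\varprojlim_{C}\tau_{\ge 2*}(C\otimes_A B^{\otimes_A(n+1)})$. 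Rearranging the resulting iterated limit (a Fubini for the bifunctor $([n],C)\mapsto\tau_{\ge 2*}(C\otimes_A B^{\otimes_A(n+1)})$ on $\mathbf\Delta\times\CAlg^\mathrm{ev}_A$), the theorem reduces to the claim that for each fixed $C\in\CAlg^\mathrm{ev}_A$ one has $\tau_{\ge 2*}(C)\simeq\varprojlim_{[n]\in\mathbf\Delta}\tau_{\ge 2*}(C\otimes_A B^{\otimes_A(n+1)})$.

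Next I would identify $C\otimes_A B^{\otimes_A(\bullet+1)}$ with the cobar resolution $(C\otimes_A B)^{\otimes_C(\bullet+1)}$ of the $C$-algebra $C\otimes_A B$. By eff (again via the induction above, together with the Künneth isomorphism $\pi_*\bigl((C\otimes_A B)^{\otimes_C(m+1)}\bigr)\simeq\pi_*(C\otimes_A B)^{\otimes_{\pi_*(C)}(m+1)}$, which holds by flatness) every term of this cosimplicial object is even, and $C\to C\otimes_A B$ is faithfully flat on homotopy. So everything comes down to the following statement internal to even $\E$-rings, which is the heart of the matter: \emph{if $C$ is an even $\E$-ring and $C\to D$ a map of $\E$-rings with $D$ even and $\pi_*(C)\to\pi_*(D)$ faithfully flat, then $\tau_{\ge 2*}(C)\to\varprojlim_{\mathbf\Delta}\tau_{\ge 2*}(D^{\otimes_C(\bullet+1)})$ is an equivalence of filtered $\E$-rings.}

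To prove this I would argue one filtration weight at a time, fixing $n$ and studying $\tau_{\ge 2n}(C)\to\varprojlim_{\mathbf\Delta}\tau_{\ge 2n}(D^{\otimes_C(\bullet+1)})$. The subtlety is that an even $\E$-ring such as $\mathrm{KU}$ is unbounded, so $\tau_{\ge 2*}$ is \emph{not} a complete filtration and one cannot simply compare associated graded pieces globally. But for fixed $n$ the spectrum $\tau_{\ge 2n}(E)$ is $(2n-1)$-connective, so its Postnikov tower converges: $\tau_{\ge 2n}(E)\simeq\varprojlim_k\tau_{[2n,2k]}(E)$. Commuting this limit past $\varprojlim_{\mathbf\Delta}$, it suffices to treat each finite slice $\tau_{[2n,2k]}$, and then, using the fiber sequences $\Sigma^{2k}\pi_{2k}(E)\to\tau_{[2n,2k]}(E)\to\tau_{[2n,2k-2]}(E)$ (where $\pi_{2k-1}(E)=0$ since $E$ is even) and induction on $k-n$, to treat a single Eilenberg--MacLane piece $\Sigma^{2j}\pi_{2j}(-)$. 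For that piece, $\varprojlim_{\mathbf\Delta}\Sigma^{2j}\pi_{2j}(D^{\otimes_C(\bullet+1)})$ is computed by the degree-$2j$ part of the Amitsur (descent) complex of the faithfully flat ring map $\pi_*(C)\to\pi_*(D)$, which by classical faithfully flat descent is a resolution of $\pi_{2j}(C)$; hence the limit is $\Sigma^{2j}\pi_{2j}(C)$. Reassembling up the Postnikov tower gives the lemma, and the reductions of the first two paragraphs then yield the theorem.

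The step I expect to be the genuine obstacle is exactly this flat descent lemma, and within it the unboundedness point just flagged: because even $\E$-rings need not be bounded below (or above), $\tau_{\ge 2*}$ is not a complete filtration, so one cannot argue globally on associated graded and must instead proceed weight-by-weight, exploiting that each individual $\tau_{\ge 2n}$ is bounded below before invoking Postnikov convergence and reducing to the purely module-theoretic faithfully flat descent. By contrast, the remaining ingredients --- identifying the cobar resolution, checking that eff maps are stable under the relevant iterated base changes, and the adjunction-plus-Fubini bookkeeping that lets the iterated limits be interchanged --- are formal once the content of Definition \ref{Def eff} has been unpacked.
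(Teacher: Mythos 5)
The paper itself does not prove this statement: it is imported wholesale from Hahn--Raksit--Wilson (their Corollary 2.2.17) and stated with a citation only, so there is no in-paper argument to compare yours against. Judged on its own, your reconstruction is correct and is essentially the standard (HRW-style) argument: reduce, via the right Kan extension formula, to descent for the double-speed Postnikov filtration along a faithfully flat map of even $\E$-rings, and prove that case weight by weight using classical faithfully flat descent.

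On the individual steps: the base-change functor $C\mapsto C\otimes_A B^{\otimes_A(n+1)}$ does land in $\CAlg^\mathrm{ev}_{B^{\otimes_A(n+1)}}$ (induction on $n$ from the eff hypothesis), and since it admits restriction of scalars as a right adjoint it is limit-initial, so your identification $\mathrm{fil}^{\ge *}_\mathrm{ev}(B^{\otimes_A(n+1)})\simeq\varprojlim_{C\in\CAlg^\mathrm{ev}_A}\tau_{\ge 2*}(C\otimes_A B^{\otimes_A(n+1)})$ is valid; note that you have implicitly (and correctly) read Definition \ref{Def eff} as requiring $\pi_*(C)\to\pi_*(C\otimes_A B)$ to be faithfully flat, which is the HRW definition and what your Amitsur-complex step actually uses. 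Your core lemma is also handled correctly, and you put your finger on the genuine subtlety: since even $\E$-rings are unbounded, $\tau_{\ge 2*}$ is not complete, so one must fix a single weight, exploit that $\tau_{\ge 2n}$ is bounded below to get Postnikov convergence, commute the limits, and reduce via the (purely even) Postnikov fiber sequences to shifted Eilenberg--MacLane pieces, where the totalization computes the degree-$2j$ part of the Amitsur complex of the graded faithfully flat map $\pi_*(C)\to\pi_*(D)$ (the K\"unneth isomorphism from flatness being needed to identify the cosimplicial homotopy), which is exact by classical descent. The only points left implicit are the coherence bookkeeping you flagged: assembling the termwise initiality equivalences into a map of cosimplicial objects so the Fubini exchange is legitimate, and checking that after the exchange the map inverted weight-by-weight is the canonical augmentation $\tau_{\ge 2*}(C)\to\varprojlim_{\mathbf\Delta}\tau_{\ge 2*}(C\otimes_A B^{\bullet+1})$ rather than an abstract equivalence; both are routine (e.g.\ via the evident bifunctor on $\mathbf\Delta\times\CAlg^\mathrm{ev}_A$), so I see no genuine gap.
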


Using the above eff descent result, we show that the even filtration could have been obtained from even periodic $\E$-rings, in the sense of Definition \ref{Def 1 - list}, a much smaller class than that of all even $\E$-rings.

\begin{prop}\label{Even filtration is even periodic filtration}
In the definition of the even filtration, the subcategory $\CAlg^\mathrm{ev}\subseteq\CAlg$ may at no loss be replaced with $\CAlg^\mathrm{evp}\subseteq\CAlg$.
\end{prop}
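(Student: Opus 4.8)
The plan is to show that the right Kan extension computing $\mathrm{fil}^{\ge *}_\mathrm{ev}(A)$ along $\CAlg^\mathrm{ev}_A\hookrightarrow\CAlg$ is not changed if we instead right Kan extend the double-speed Postnikov filtration along the smaller inclusion $\CAlg^\mathrm{evp}_A\hookrightarrow\CAlg$. By the standard cofinality criterion for limits, it suffices to show that the inclusion of index $\i$-categories $\CAlg^\mathrm{evp}_A\hookrightarrow\CAlg^\mathrm{ev}_A$ is \emph{coinitial} (i.e.\ $\op$-cofinal), after checking that the restriction of the diagram $B\mapsto\tau_{\ge 2*}(B)$ agrees on the nose. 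The point of entry is therefore: given any even $\E$-ring $B$ under $A$, produce an even periodic $\E$-ring $B'$ under $B$ (hence under $A$), in a way that is suitably functorial/filtered so as to verify coinitiality; the cleanest such $B'$ is the \emph{periodification} $B[\beta^{\pm 1}]$, i.e.\ the Thom-type construction inverting a degree-$2$ class, or more conceptually $B\o_{\mathbf S}\mathbf S[\beta^{\pm1}]$-style constructions — but the most robust choice is simply $B\mapsto B^\mathrm{per}$, the free weakly $2$-periodic $\E$-algebra on $B$, which remains even because $\mathrm{MUP}$-type periodicity generators sit in even degrees. One then checks $\CAlg^\mathrm{evp}_B$ is nonempty and sifted-filtered enough that the comma categories $\CAlg^\mathrm{evp}_A\times_{\CAlg^\mathrm{ev}_A}(\CAlg^\mathrm{ev}_A)_{B/}$ are weakly contractible for every $B\in\CAlg^\mathrm{ev}_A$, which is exactly Joyal's criterion for coinitiality.

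Alternatively — and this is probably the slicker route given what is already available — one invokes \textbf{eff descent} (Theorem \ref{Eff descent citation result}). The key observation is that for any even $\E$-ring $B$, the periodification map $B\to B^\mathrm{per}$ (say $B\to B\o_{\mathbf S}\mathbf{S}[\beta^{\pm1}]$, or even just $B\to B\o_{\mathrm{ku}}\mathrm{KU}$ when $B$ is a $\mathrm{ku}$-algebra, with the general case handled by a free construction) is an \emph{eff} morphism: its base change along any even $C\in\CAlg^\mathrm{ev}_B$ stays even and the induced map on homotopy is faithfully flat, since inverting a polynomial generator $\beta$ adjoined freely is visibly faithfully flat on $\pi_*$. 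Granting this, Theorem \ref{Eff descent citation result} expresses $\mathrm{fil}^{\ge *}_\mathrm{ev}(B)$ as the totalization of $\mathrm{fil}^{\ge *}_\mathrm{ev}$ applied to the cobar resolution $(B^\mathrm{per})^{\otimes_B\bullet+1}$, and each term of that cosimplicial object is \emph{even periodic}. Feeding this into the limit $\mathrm{fil}^{\ge *}_\mathrm{ev}(A)\simeq\varprojlim_{B\in\CAlg^\mathrm{ev}_A}\tau_{\ge 2*}(B)$ and interchanging limits, every even $B$ is replaced by a cosimplicial diagram of even periodic rings, which exhibits the even periodic index category as coinitial.

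Concretely, I would carry out the steps as: (1) define the periodification functor $B\mapsto B^\mathrm{per}$ on $\CAlg^\mathrm{ev}$, landing in $\CAlg^\mathrm{evp}$, with a natural map $B\to B^\mathrm{per}$; (2) verify this map is eff, the essential input being that on homotopy it is a faithfully flat (indeed free) extension, and that evenness is preserved under arbitrary even base change; (3) apply Theorem \ref{Eff descent citation result} to rewrite $\mathrm{fil}^{\ge *}_\mathrm{ev}(B)$ as a totalization over the even periodic cobar terms; (4) plug into Definition \ref{Def 1 - list} and use that limits commute with limits to conclude that $\varprojlim_{B\in\CAlg^\mathrm{ev}_A}\tau_{\ge 2*}(B)\simeq\varprojlim_{B'\in\CAlg^\mathrm{evp}_A}\tau_{\ge 2*}(B')$. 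The main obstacle is step (2): one must be a little careful that the chosen periodification is genuinely functorial in $\E$-algebras (not merely in modules or spectra) and that it stays in the even world — this is where one wants a $\mathrm{MUP}$-over-$\mathrm{MU}$-flavored construction or a free $\mathbb{E}_\infty$-algebra on an even periodicity class rather than an ad hoc localization — and that the eff condition, which quantifies over \emph{all} even $C\in\CAlg^\mathrm{ev}_B$, really does hold; verifying faithful flatness of $\pi_*(B^\mathrm{per}\o_B C)$ over $\pi_*(C)$ uniformly in $C$ is the technical heart.
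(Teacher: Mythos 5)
Your overall strategy (periodify via an $\mathrm{MUP}$-type construction and invoke eff descent) is in the same spirit as the paper's proof, but as written there are two genuine gaps. First, the periodification functor you lean on is not available in the forms you propose: $\bigoplus_{n\in\mathbf Z}\Sigma^{2n}(\mathbf S)$ admits no $\E$-ring structure (only $\mathbb E_2$), so "$B\o_{\mathbf S}\mathbf S[\beta^{\pm1}]$" does not make sense $\E$-multiplicatively; the free $\E$-algebra on an inverted even class does not stay even (symmetric powers introduce odd classes, so localizing does not rescue evenness); and $B\o_{\mathrm{ku}}\mathrm{KU}$ or a "$\mathrm{MUP}$-over-$\mathrm{MU}$" construction presupposes a structured $\mathrm{ku}$- or $\mathrm{MU}$-algebra structure that a general even $\E$-ring does not carry. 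The construction that actually works is to fix an $\E$-ring structure on $\mathrm{MUP}$ and tensor over the sphere: $A\to A\o_{\mathbf S}\mathrm{MUP}$ is eff (factor $\mathbf S\to\mathrm{MU}\to\mathrm{MUP}$, use the standard evenness/faithful flatness of $\mathrm{MU}$-homology of even rings for the first map, and the $\mathbb E_2$-level splitting $\mathrm{MUP}\simeq\bigoplus_n\Sigma^{2n}\mathrm{MU}$ for the second); note the paper applies this to the arbitrary $\E$-ring $A$ itself, whose $\mathrm{MUP}$-cobar terms are only \emph{complex} periodic, and then uses the key lemma that every even $\E$-algebra over a complex periodic ring is automatically even periodic.

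Second, and more seriously, your concluding step does not close. Coinitiality of $\CAlg^\mathrm{evp}_A\subseteq\CAlg^\mathrm{ev}_A$ is neither proved nor plausible as the mechanism: Joyal's criterion would demand weak contractibility of $\CAlg^\mathrm{evp}_B$ for every even $B$ under $A$, you give no argument for this, and if it held the conclusion would follow for an arbitrary diagram, making the flatness/eff hypotheses irrelevant -- which is a sign the approach is miscalibrated. Eff descent does not "exhibit the index category as coinitial"; it only shows the particular limit of $\tau_{\ge 2*}$ is preserved. After you rewrite each $\tau_{\ge 2*}(B)$ as a totalization over even periodic cobar terms, you still need to know that the resulting iterated limit agrees with $\varprojlim_{B'\in\CAlg^\mathrm{evp}_A}\tau_{\ge 2*}(B')$, and that is exactly an eff-descent statement for the provisionally defined filtration $\mathrm{fil}^{\ge*}_\mathrm{evp}(A):=\varprojlim_{B\in\CAlg^\mathrm{evp}_A}\tau_{\ge 2*}(B)$ (including checking this limit is well defined, i.e.\ accessibility of $\CAlg^\mathrm{evp}\subseteq\CAlg$). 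The paper supplies precisely this missing piece: it observes that the Hahn--Raksit--Wilson proof of eff descent goes through verbatim in the even periodic setting, so both $\mathrm{fil}_\mathrm{ev}$ and $\mathrm{fil}_\mathrm{evp}$ descend along $A\to A\o_{\mathbf S}\mathrm{MUP}$, and since $\CAlg^\mathrm{ev}_{A\o_{\mathbf S}\mathrm{MUP}^\bullet}\simeq\CAlg^\mathrm{evp}_{A\o_{\mathbf S}\mathrm{MUP}^\bullet}$ by the key lemma, both comparison maps land in the same double limit and the natural map $\mathrm{fil}_\mathrm{ev}^{\ge*}(A)\to\mathrm{fil}_\mathrm{evp}^{\ge*}(A)$ is an equivalence. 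Without an argument of this type (or an honest proof of coinitiality), your step (4) is an assertion, not a proof.
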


\begin{proof}
Choose an $\E$-ring structure on the periodic complex bordism spectrum $\mathrm{MUP}$.
We claim that the canonical map $A\to A\otimes_\mathbf S\mathrm{MUP}
$ is eff for any $\E$-ring $A$. Since eff maps are closed under base-change, it suffices to show that the unit map $\mathbf S\to \mathrm{MUP}$ is eff. The latter map can be factored as as $\mathbf S\to \MU\to\mathrm{MUP}$, whose first map being eff is the result of a standard complex-oriented cohomology computation (or see \cite[Proposition 2.2.20]{HRW}). It thus remains to show that the periodization map $\MU\to \mathrm{MUP}$ is eff. It is obtained by smash-product along $\mathbf S\to \MU$ from a $\mathbb E_2$-ring map $\mathbf S\to \bigoplus_{n\in \mathbf Z}\Sigma^{2n}(\mathbf S)$. So we are reduced to showing that for any even $\E$-ring $B$, the ring map
$$
\pi_*(B)\to\pi_*\left(\bigoplus_{n\in\mathbf Z}\Sigma^{2n}(B)\right)= \bigoplus_{n\in\mathbf Z}\pi_{*+2n}(B)\simeq \pi_*(B)[t^{\pm 1}]
$$
is faithfully flat. To verify that, note that this map is base-changed to $\pi_*(B)$ from the faithfully flat map $\mathbf G_m\to\mathrm{Spec}(\mathbf Z)$.

Having established that $A\to A\o_\mathbf S\MUP$ is eff for an arbitrary fixed $\E$-ring $A$, it follows from eff-descent that we may express then even filtration for $A$ as
$$
\mathrm{fil}^{\ge *}_\mathrm{ev}(A)\simeq \varprojlim_{\mathbf\Delta}\mathrm{fil}^{\ge *}_\mathrm{ev}(A\o_\mathbf S \MUP^\bullet).
$$
The $\E$-rings $A\o_\mathbf S\MUP^\bullet$ are all complex periodic, thus any even $\E$-ring $B$ with a map $A\o_\mathbf S\MUP^\bullet\to B$ has to be be weakly 2-periodic as  well (see for instance Lemma \ref{Key property}). Since it is also even,  it must in fact be even periodic.
If we let $\CAlg^\mathrm{evp}_R\subseteq\CAlg_R$ denote the full subcategory of those $\E$-$R$-algebras which are even, the subcategory inclusion therefore gives the equivalence of $\i$-categories
 $$
\CAlg^\mathrm{evp}_{A\o_\mathbf S\MUP^\bullet} \simeq\CAlg^\mathrm{ev}_{A\o_\mathbf S\MUP^\bullet}.
$$
It follows that
the canonical map
\begin{equation}\label{tie between ev and evp}
\mathrm{fil}^{\ge *}_\mathrm{ev}(A) \to\varprojlim_{\mathbf\Delta}\Big( \varprojlim_{B\in \CAlg^\mathrm{evp}_{A\o_\mathbf S \MUP^\bullet } }\tau_{\ge 2*}(B)\Big)
\end{equation}
is an equivalence of filtered $\E$-rings.

On the other hand, we can imitated the basic setup of the even filtration from \cite{HRW}, but replacing everywhere the subcategory $\CAlg^\mathrm{ev}\subseteq\CAlg$ with $\CAlg^\mathrm{evp}\subseteq\CAlg$. That is to say, we provisionally define the even periodic filtration by
$$
\mathrm{fil}^{\ge *}_\mathrm{evp}(A):=\varprojlim_{B\in\CAlg^\mathrm{evp}}\tau_{\ge 2*}(B),
$$
(where for instance this limit being well-defined is due to accessibility of the subcategory $\CAlg^\mathrm{evp}\subseteq\CAlg$, see Lemma \ref{Lemma cover preservation and lifting} and Remark \ref{Remark Lemma cover preservation and lifting}).
The inclusion $\CAlg^\mathrm{evp}\subseteq\CAlg^\mathrm{ev}$ defines a canonical map of filtered $\E$-rings
\begin{equation}\label{filtration comparison ev and evp}
\mathrm{fil}^{\ge *}_\mathrm{ev}(A)\to \mathrm{fil}^{\ge *}_\mathrm{evp}(A).
\end{equation}
The arguments for \cite[Corollary 2.2.17]{HRW} go through unchanged in the even periodic setting, showing that $\mathrm{fil}^{\ge *}_\mathrm{evp}$ satisfies eff descent. Arguing as at the start of this proof, we see that the $\E$-ring map $A\to A\o_S\MUP$ being eff implies that the canonical map of filtered $\E$-rings
$$
\mathrm{fil}^{\ge *}_\mathrm{evp}(A) \to\varprojlim_{\mathbf\Delta}\Big( \varprojlim_{B\in \CAlg^\mathrm{evp}_{A\o_S \MUP^\bullet } }\tau_{\ge 2*}(B)\Big)
$$
is an equivalence. Since the canonical map \eqref{tie between ev and evp} clearly factors through this map, and both are equivalences, the filtration comparison map \eqref{filtration comparison ev and evp} is an equivalence as well.
\end{proof}

\begin{remark}
In light of the preceding result, we can reduce without loss to the even periodic setting. This seems like a needless restriction, but it has some advantages. For instance, a map of even periodic $\E$-rings $A\to B$ is eff if and only if it is \textit{faithfully flat} in the usual sense (e.g. as in \cite{SAG}) that $\pi_*(A)\otimes_{\pi_0(A)}\pi_0(B)\to\pi_*(B)$ is an isomorphism of graded rings and $\pi_0(A)\to\pi_0(B)$ is faithfully flat. One substantial advantage is that modules satisfy flat satisfy descent in the latter sense, but not always in the broader eff sense, see \cite[Remark 2.3.7]{HRW}. Also, working in the even periodic setting eases the comparison to the usual foundation of spectral algebraic geometry in the sense of \cite{SAG}.
\end{remark}

\subsection{The Adams-Novikov filtration}\label{ANsection}
One way to obtain the Adams-Novikov spectral sequence is as (the d\'ecalage of) the spectral sequence associated to a filtered spectrum.

\begin{definition}\label{The party line on ANf}
The \textit{Adams-Novikov filtration} on an $\E$-ring $A$ is defined to be the filtered $\E$-ring
$$
\mathrm{fil}_\mathrm{AN}^{\ge *}(A) \,:=\, \varprojlim_{\mathbf{\Delta}} \tau_{\ge 2*}(A\o_{\mathbf S}\mathrm{MU}^\bullet),
$$
where $\mathrm{MU}^\bullet \simeq \mathrm{MU}^{\otimes_{\mathbf S}[\bullet]}$ is the cosimplicial cobar construction for the unit map $\mathbf S\to\mathrm{MU}$.
\end{definition}

\begin{remark}
The term ``Adams-Novikov filtration" tends to be reserved for a slightly different filtration, of which our is the d\'ecalage. But since the former will play no role in our discussion, we have taken the liberty of omitting omnipresent ``the d\'ecalage of" qualifier, and adopted the above slightly non-standard terminology.
\end{remark}

The construction of the Adams-Novikov spectral sequence is evidently functorial, since it may be written as the composite
$$
\CAlg\xrightarrow{-\otimes_{\mathbf S}\mathrm{MU}^\bullet } \Fun(\mathbf{\Delta}, \CAlg)\xrightarrow{(\tau_{\ge 2*})_*} \Fun(\mathbf{\Delta}, \mathrm{FilCAlg})\xrightarrow{\varprojlim_{\mathbf\Delta}} \mathrm{FilCAlg}.
$$
We wish to express it in another functorial way, more reminiscent of the even filtration. For that, let us temporarily introduce a couple of variants on the theme of the even filtration, which will turn out to be alternative descriptions of the Adams-Novikov filtration.

\begin{definition}
The \textit{complex orientable filtration} and the \textit{complex periodic filtration} are the functors
$$
\mathrm{fil}^{\ge *}_{\mathbf C\mathrm {or}}, \mathrm{fil}^{\ge *}_{\mathbf C\mathrm{p}} : \CAlg\to\mathrm{FilCAlg}
$$
given by the right Kan extension of the double-speed Postnikov filtration $\tau_{\ge 2*}$ along the inclusions $\CAlg^{\mathbf C\mathrm{or}}\subseteq\CAlg$ and $\CAlg^{\mathbf C\mathrm{p}}\subseteq\CAlg$ respectively (well-defined because these are both accessible subcategories; see Lemma \ref{Lemma 4.7.2} for a proof for the latter case). That is to say, for an $\E$-ring $A$ they are given by
$$
\mathrm{fil}_{\mathbf C\mathrm{or}}^{\ge *}(A) \,\simeq \varprojlim_{B\in\CAlg_A^{\mathbf C\mathrm {or}}}\tau_{\ge 2*}(B),\quad \qquad \mathrm{fil}_{\mathbf C\mathrm{p}}^{\ge *}(A) \,\simeq \varprojlim_{B\in\CAlg_A^{\mathbf C\mathrm {p}}}\tau_{\ge 2*}(B).
$$
\end{definition}

\begin{remark}\label{Filtrations in the trivial case remark}
There is a canonical map of filtered $\E$-rings
$
\mathrm{fil}^{\ge *}_{\mathbf C\mathrm{or}}(A)\to \tau_{\ge *}(A)
$
for any $\E$-ring $A$. It $A$ is complex orientable, it is an equivalence. If $A$ is complex periodic, the canonical map
$
\mathrm{fil}^{\ge *}_{\mathbf C\mathrm{p}}(A)\to \tau_{\ge *}(A)
$
is an equivalence of filtered $\E$-rings as well.
\end{remark}

In imitation of the even filtration, we define an appropriate complex periodic version of the eff topology, and show that $\mathrm{fil}^{\ge *}_{\mathbf C\mathrm{or}}$ and $\mathrm{fil}^{\ge *}_{\mathbf C\mathrm p}$ satisfy descent with respect to it.

\begin{definition}
A map of $\E$-rings $A\to B$ is \textit{$\mathbf C$ff} if is the case that for any $C\in\CAlg^{\mathbf C\mathrm{or}}$, the ring map $\pi_*(C)\to \pi_*( C\o_A B)$ is faithfully flat.
\end{definition}

\begin{lemma}[$\mathbf C$ff descent]\label{Cff descent}
Let $A\to B$ be an $\mathbf C$ff morphism of $\E$-rings.
Then the canonical map of filtered spectra
$$
\mathrm{fil}^{\ge *}_\mathrm{\mathbf C\mathrm{or}}(A)\to \varprojlim_{\mathbf\Delta}\mathrm{fil}^{\ge *}_\mathrm{\mathbf C\mathrm{or}}(B^\bullet), \qquad 
\mathrm{fil}^{\ge *}_\mathrm{\mathbf C\mathrm{p}}(A)\to \varprojlim_{\mathbf\Delta}\mathrm{fil}^{\ge *}_\mathrm{\mathbf C\mathrm p}(B^\bullet)
$$
is an equivalence,
where $B^\bullet = B^{\otimes_A (\bullet+1)}$ is the cosimplicial cobar resolution.\end{lemma}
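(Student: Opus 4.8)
The plan is to mimic the proof of eff descent for the even filtration (Theorem \ref{Eff descent citation result}, i.e.\ \cite[Corollary 2.2.17]{HRW}), which is the analogue of this statement for the inclusion $\CAlg^\mathrm{ev}\subseteq\CAlg$. The key formal input in that argument is not special to even $\E$-rings: it is the interplay between a right Kan extension along a subcategory inclusion $\mathcal C_0\subseteq\mathcal C$, the fact that the extension is computed by a limit over $(\mathcal C_0)_{A/}$, and a cofinality statement coming from the flatness hypothesis. So first I would record the right Kan extension formula, writing for each of the two cases
$$
\mathrm{fil}^{\ge *}_{\mathbf C\mathrm{or}}(A)\simeq\varprojlim_{C\in\CAlg_A^{\mathbf C\mathrm{or}}}\tau_{\ge 2*}(C),\qquad
\mathrm{fil}^{\ge *}_{\mathbf C\mathrm{p}}(A)\simeq\varprojlim_{C\in\CAlg_A^{\mathbf C\mathrm{p}}}\tau_{\ge 2*}(C),
$$
and observe that, since $\CAlg^{\mathbf C\mathrm{or}}$ and $\CAlg^{\mathbf C\mathrm{p}}$ are accessible (cf.\ the parenthetical in the definition, and Lemma \ref{Lemma 4.7.2}), these limits are well-defined.

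Second, I would set up the descent comparison. Given the $\mathbf C$ff map $A\to B$, form the cobar cosimplicial object $B^\bullet=B^{\otimes_A(\bullet+1)}$ and note that there is a canonical comparison map from $\mathrm{fil}^{\ge *}_{\mathbf C\mathrm{or}}(A)$ (resp.\ the $\mathbf C\mathrm p$ version) to the totalization $\varprojlim_{\mathbf\Delta}\mathrm{fil}^{\ge *}_{\mathbf C\mathrm{or}}(B^\bullet)$, coming from the augmentation $A\to B^\bullet$. To show it is an equivalence, I would reindex: the right-hand side is a limit over the category of pairs $([n],C)$ with $[n]\in\mathbf\Delta$ and $C\in\CAlg^{\mathbf C\mathrm{or}}_{B^n}$, equivalently over $C\in\CAlg^{\mathbf C\mathrm{or}}_A$ equipped with a compatible factorization through some $B^n$. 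The core claim, exactly as in \cite{HRW}, is that for a \emph{complex orientable} $C\in\CAlg^{\mathbf C\mathrm{or}}_A$ the augmented cosimplicial $\E$-ring $C\to C\otimes_A B^\bullet$ is a limit diagram after applying $\tau_{\ge 2*}$ — and this is where the $\mathbf C$ff hypothesis enters: by definition of $\mathbf C$ff the map $\pi_*(C)\to\pi_*(C\otimes_A B)$ is faithfully flat, so $\pi_*(C)\to\pi_*(C\otimes_A B^\bullet)$ is a faithfully flat (hence, by the usual Amitsur/descent argument for graded rings) resolution; this gives that $C\simeq\varprojlim_{\mathbf\Delta}(C\otimes_A B^\bullet)$ with the totalization spectral sequence degenerating appropriately, and since each $C\otimes_A B^n$ is again complex orientable (being a $C$-algebra), the double-speed Postnikov filtration interacts with the totalization as needed. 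Feeding this pointwise statement back into the reindexed limit and using that complex orientable algebras over $B^n$ are cofinal among those over $A$ factoring through $B^\bullet$ yields the equivalence.

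Third, for the $\mathbf C\mathrm p$ case one argues identically, observing in addition that if $C$ is complex periodic then each $C\otimes_A B^n$ is complex periodic as well (it is a $C$-algebra, hence weakly $2$-periodic by e.g.\ Lemma \ref{Key property}, and complex orientable), so the same cofinality and pointwise-limit argument applies verbatim within $\CAlg^{\mathbf C\mathrm p}$. I expect the main obstacle to be the bookkeeping in the pointwise claim: verifying that faithful flatness of $\pi_*(C)\to\pi_*(C\otimes_A B)$ upgrades, after applying $\tau_{\ge 2*}$ and totalizing the cobar complex, to an equivalence of \emph{filtered} $\E$-rings — i.e.\ controlling the relevant $\lim^1$/convergence issues for the totalization of the double-speed Postnikov filtrations. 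As in \cite{HRW}, this is handled by noting that on each graded piece the cosimplicial diagram computes (a shift of) the Amitsur complex of a faithfully flat map, which is a resolution, so the totalization spectral sequence collapses. Once that pointwise input is in hand, everything else is the formal Kan-extension-and-cofinality manipulation, which transports from \cite{HRW} without change since none of it used evenness, only complex orientability (resp.\ complex periodicity) being preserved under base change along $A\to B^\bullet$.
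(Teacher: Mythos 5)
Your proposal is correct and matches the paper's approach: the paper's proof is literally the one-line observation that the argument of \cite[Corollary 2.2.17]{HRW} carries over verbatim, with evenness replaced by complex orientability (resp.\ complex periodicity), which is exactly the transplantation you spell out. Your unpacking — base change along $A\to B^\bullet$ preserves the relevant class of rings, and the $\mathbf C$ff hypothesis makes the graded Amitsur complex a faithfully flat resolution controlling the totalization of the double-speed Postnikov filtrations — is precisely the content of the cited HRW argument.
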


\begin{proof}
The proof works just like the analogous eff descent, recalled here as Theorem \ref{Eff descent citation result}, is proved for the even filtration in \cite[Corollary 2.2.17]{HRW}.
\end{proof}

\begin{cons}\label{Const filtration comparison for AN}
The $\E$-ring $A\o_{\mathbf S}\mathrm{MU}^{\o_{\mathbf S} n}$ is complex orientable for any $\E$-ring $A$ because the complex bordism spectrum $\mathrm{MU}$ is, giving rise to a natural comparison map of filtered $\E$-rings
$$
\mathrm{fil}_{\mathbf C\mathrm{or}}^{\ge *}(A)\to \mathrm{fil}_\mathrm{AN}^{\ge *}(A).
$$
Likewise does the inclusion $\CAlg^{\mathbf C\mathrm p}\subseteq\CAlg^{\mathbf C\mathrm{or}}$ induce a comparison between the complex orientable and complex periodic filtration
$$
\mathrm{fil}^{\ge *}_{\mathbf C\mathrm{or}}(A)\to \mathrm{fil}^{\ge *}_{\mathbf C\mathrm p}(A)
$$
\end{cons}

\begin{prop}
The natural comparison maps $\mathrm{fil}^{\ge *}_{\mathbf C\mathrm{or}}\to \mathrm{fil}^{\ge *}_{\mathrm{AN}}$ and  $\mathrm{fil}^{\ge *}_{\mathbf C\mathrm{or}}\to \mathrm{fil}^{\ge *}_{\mathbf C\mathrm p}$ of Construction \ref{Const filtration comparison for AN} are equivalences of functors $\CAlg\to \mathrm{FilCAlg}$.
\end{prop}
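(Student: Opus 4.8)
The plan is to deduce both equivalences from $\mathbf C$ff descent (Lemma~\ref{Cff descent}) together with the observation, recorded in Remark~\ref{Filtrations in the trivial case remark}, that $\mathrm{fil}^{\ge *}_{\mathbf C\mathrm{or}}(B)\simeq\tau_{\ge 2*}(B)$ whenever $B$ is complex orientable and $\mathrm{fil}^{\ge *}_{\mathbf C\mathrm{p}}(B)\simeq\tau_{\ge 2*}(B)$ whenever $B$ is complex periodic --- both because $B$ is then initial in $\CAlg^{\mathbf C\mathrm{or}}_B$, resp.\ $\CAlg^{\mathbf C\mathrm p}_B$. The one genuinely new input I would establish first is that both unit maps $\mathbf S\to\mathrm{MU}$ and $\mathbf S\to\mathrm{MUP}$ are $\mathbf C$ff, and that, exactly as for eff maps, $\mathbf C$ff maps are closed under composition and base change. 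For $\mathbf S\to\mathrm{MU}$ this is the classical complex-oriented cohomology computation: for any complex orientable $\E$-ring $C$ one has $\pi_*(C\o_{\mathbf S}\mathrm{MU})\simeq\pi_*(C)[b_1,b_2,\dots]$, a polynomial ring, hence faithfully flat over $\pi_*(C)$ --- alternatively one cites \cite[Proposition~2.2.20]{HRW}. For $\mathbf S\to\mathrm{MUP}$, factor it as $\mathbf S\to\mathrm{MU}\to\mathrm{MUP}$; closure under composition then reduces us to checking that $\mathrm{MU}\to\mathrm{MUP}$ is $\mathbf C$ff, which one argues as in the proof of Proposition~\ref{Even filtration is even periodic filtration}: since $\mathrm{MUP}\simeq\mathrm{MU}\o_{\mathbf S}\bigl(\bigoplus_{n}\Sigma^{2n}\mathbf S\bigr)$, for complex orientable $C\in\CAlg_{\mathrm{MU}}$ the map $\pi_*(C)\to\pi_*(C\o_{\mathrm{MU}}\mathrm{MUP})\simeq\pi_*(C)[t^{\pm 1}]$ is base changed from the faithfully flat $\mathbf G_m\to\Spec(\mathbf Z)$.

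With this in hand, the first comparison map is handled as follows. Since $A\to A\o_{\mathbf S}\mathrm{MU}$ is $\mathbf C$ff for any $\E$-ring $A$, Lemma~\ref{Cff descent} gives $\mathrm{fil}^{\ge *}_{\mathbf C\mathrm{or}}(A)\simeq\varprojlim_{\mathbf\Delta}\mathrm{fil}^{\ge *}_{\mathbf C\mathrm{or}}(A\o_{\mathbf S}\mathrm{MU}^\bullet)$. Each $A\o_{\mathbf S}\mathrm{MU}^n$ is complex orientable because $\mathrm{MU}$ is, so Remark~\ref{Filtrations in the trivial case remark} identifies the right-hand side with $\varprojlim_{\mathbf\Delta}\tau_{\ge 2*}(A\o_{\mathbf S}\mathrm{MU}^\bullet)=\mathrm{fil}^{\ge *}_{\mathrm{AN}}(A)$. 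It then remains to check that the equivalence thus produced is the comparison map of Construction~\ref{Const filtration comparison for AN}; this is a naturality bookkeeping point, since both maps are assembled out of the canonical maps $\mathrm{fil}^{\ge *}_{\mathbf C\mathrm{or}}(A)\to\tau_{\ge 2*}(A\o_{\mathbf S}\mathrm{MU}^n)$ attached to the objects $A\o_{\mathbf S}\mathrm{MU}^n\in\CAlg^{\mathbf C\mathrm{or}}_A$ together with their cosimplicial structure maps.

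For the second comparison map I would instead apply $\mathbf C$ff descent along $A\to A\o_{\mathbf S}\mathrm{MUP}$ to \emph{both} $\mathrm{fil}^{\ge *}_{\mathbf C\mathrm{or}}$ and $\mathrm{fil}^{\ge *}_{\mathbf C\mathrm p}$ simultaneously. The key observation is that each $A\o_{\mathbf S}\mathrm{MUP}^n$ is not merely complex orientable but complex periodic, since $\mathrm{MUP}$ is and $\Sigma^2\mathrm{MUP}\simeq\mathrm{MUP}$; hence Remark~\ref{Filtrations in the trivial case remark} identifies \emph{both} $\mathrm{fil}^{\ge *}_{\mathbf C\mathrm{or}}(A\o_{\mathbf S}\mathrm{MUP}^n)$ and $\mathrm{fil}^{\ge *}_{\mathbf C\mathrm p}(A\o_{\mathbf S}\mathrm{MUP}^n)$ with $\tau_{\ge 2*}(A\o_{\mathbf S}\mathrm{MUP}^n)$. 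Passing to the limit over $\mathbf\Delta$ therefore identifies both $\mathrm{fil}^{\ge *}_{\mathbf C\mathrm{or}}(A)$ and $\mathrm{fil}^{\ge *}_{\mathbf C\mathrm p}(A)$ with $\varprojlim_{\mathbf\Delta}\tau_{\ge 2*}(A\o_{\mathbf S}\mathrm{MUP}^\bullet)$, compatibly with the comparison map, which is thus an equivalence. (As a byproduct one also re-expresses $\mathrm{fil}^{\ge *}_{\mathrm{AN}}$ via the $\mathrm{MUP}$-cobar resolution.) The only real mathematical content is the $\mathbf C$ff-ness of $\mathbf S\to\mathrm{MU}$, which is classical; I expect the only genuine care needed is the naturality verification that the descent equivalences coincide on the nose with the maps of Construction~\ref{Const filtration comparison for AN}, together with the routine check that complex orientations and weak $2$-periodicity are inherited by the tensor powers $A\o_{\mathbf S}\mathrm{MU}^\bullet$ and $A\o_{\mathbf S}\mathrm{MUP}^\bullet$.
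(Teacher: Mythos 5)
Your proposal is correct and follows essentially the same route as the paper: establish that $\mathbf S\to\mathrm{MU}$ (and hence $\mathbf S\to\mathrm{MUP}$) is $\mathbf C$ff via Quillen's theorem, apply $\mathbf C$ff descent (Lemma \ref{Cff descent}), and use Remark \ref{Filtrations in the trivial case remark} to identify the cosimplicial terms with the double-speed Postnikov filtrations, with the second comparison handled by descending along the $\mathrm{MUP}$-cobar resolution exactly as the paper does by mimicking Proposition \ref{Even filtration is even periodic filtration}. The only additions you make — spelling out closure of $\mathbf C$ff maps under composition/base change and the naturality bookkeeping — are details the paper leaves implicit, not a different argument.
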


\begin{proof}
Consider the unit map of $\E$-rings $\mathbf S\to\mathrm{MU}$. 
The  map $\pi_*(B)\to \pi_*(B\o_{\mathbf S}\mathrm{MU})$, that it induces upon base-change to any complex orientable ring spectrum $B$ and passing to homotopy groups, is faithfully flat as a direct consequence of
Quillen's Theorem on complex bordism. The map $\mathbf S\to \mathrm{MU}$ is therefore $\mathbf C$ff. By the $\mathbf C$ff descent  result of Proposition \ref{Cff descent}, the first map in the following factorization of the canonical comparison between the complex oriented filtration and the Adams-Novikov filtration for an arbitrary $\E$-ring $A$
$$
\mathrm{fil}^{\ge *}_\mathrm{\mathbf C\mathrm{or}}(A)\to \varprojlim_{\mathbf\Delta}\mathrm{fil}^{\ge *}_\mathrm{\mathbf C\mathrm{or}}(A\o_{\mathbf S}\mathrm{MU}^\bullet)\to \varinjlim_{\mathbf\Delta} \tau_{\ge 2*}(A\o_{\mathbf S}\mathrm{MU}^\bullet)\,\simeq\, \mathrm{fil}^{\ge *}_\mathrm{AN}(A)
$$
is an equivalence of filtered $\E$-rings. The second arrow in the composite in question is also an equivalence by Remark \ref{Filtrations in the trivial case remark}, since the filtered $\E$-rings $A\o_{\mathbf S}\mathrm{MU}^\bullet$ is complex orientable on each filtered piece. This verifies the claim for the first comparison map.

For the second, we may insted use the $\E$-ring map $\mathbf S\to\mathrm{MUP}$, for any fixed choice of $\E$-ring structure on the periodic complex bordism spectrum $\mathrm{MUP}$, which is also a $\mathbf C$ff map, and argue much like that in the proof of
Proposition \ref{Even filtration is even periodic filtration}.
\end{proof}

\begin{corollary}\label{AN via MUP}
The Adams-Novikov filtration on an $\E$-ring $A$ may be expressed as
$$
\mathrm{fil}_\mathrm{AN}^{\ge *}(A)\,\simeq \, \varprojlim_{\mathbf\Delta} \tau_{\ge 2*}(A\o_{\mathbf S}\mathrm{MUP}^\bullet),
$$
in terms of the cosimplicial cobar construction $\mathrm{MUP}^\bullet\simeq \mathrm{MUP}^{\o_{\mathbf S}[\bullet]}$ of the unit $\E$-ring map $\mathbf S\to\mathrm{MUP}$ for any choice of $\E$-ring structure on $\mathrm{MUP}$.
\end{corollary}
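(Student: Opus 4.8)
The plan is to deduce the formula from the Proposition just above together with $\mathbf C$ff descent, arguing in close parallel to the proof of Proposition \ref{Even filtration is even periodic filtration}. By that Proposition, the comparison map $\mathrm{fil}^{\ge *}_{\mathbf C\mathrm{or}}(A)\to \mathrm{fil}^{\ge *}_{\mathrm{AN}}(A)$ is an equivalence of filtered $\E$-rings, natural in $A$, so it suffices to re-express the complex orientable filtration $\mathrm{fil}^{\ge *}_{\mathbf C\mathrm{or}}(A)$ by a cobar resolution along $\mathrm{MUP}$.

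The key input is that the unit map $\mathbf S\to\mathrm{MUP}$ is $\mathbf C$ff, which was already recorded in the proof of the preceding Proposition: the factor $\mathbf S\to\mathrm{MU}$ is $\mathbf C$ff by Quillen's theorem on complex bordism, while the periodization $\mathrm{MU}\to\mathrm{MUP}$ is handled, as in the proof of Proposition \ref{Even filtration is even periodic filtration}, by the faithful flatness of $\mathbf G_m\to\Spec(\mathbf Z)$. Since $\mathbf C$ff maps are stable under base change — for this one only needs the inclusion $\CAlg^{\mathbf C\mathrm{or}}_A\subseteq\CAlg^{\mathbf C\mathrm{or}}$ together with $C\otimes_A(A\otimes_{\mathbf S}\mathrm{MUP})\simeq C\otimes_{\mathbf S}\mathrm{MUP}$ — the map $A\to A\otimes_{\mathbf S}\mathrm{MUP}$ is $\mathbf C$ff for every $\E$-ring $A$. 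Feeding this into the $\mathbf C$ff descent result (Lemma \ref{Cff descent}) for $\mathrm{fil}^{\ge *}_{\mathbf C\mathrm{or}}$ then yields an equivalence
$$
\mathrm{fil}^{\ge *}_{\mathbf C\mathrm{or}}(A)\,\simeq\,\varprojlim_{\mathbf\Delta}\mathrm{fil}^{\ge *}_{\mathbf C\mathrm{or}}(A\otimes_{\mathbf S}\mathrm{MUP}^\bullet),
$$
with $\mathrm{MUP}^\bullet\simeq\mathrm{MUP}^{\otimes_{\mathbf S}[\bullet]}$ the cosimplicial cobar construction of $\mathbf S\to\mathrm{MUP}$.

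It then remains to evaluate the right-hand side. Each cobar term $A\otimes_{\mathbf S}\mathrm{MUP}^{\otimes_{\mathbf S}(n+1)}$ is complex orientable, being the target of an $\E$-ring map from the complex orientable $\mathrm{MUP}$; hence on each cosimplicial level the ring is its own terminal object in the indexing $\i$-category defining the right Kan extension, and Remark \ref{Filtrations in the trivial case remark} identifies $\mathrm{fil}^{\ge *}_{\mathbf C\mathrm{or}}(A\otimes_{\mathbf S}\mathrm{MUP}^\bullet)\simeq\tau_{\ge 2*}(A\otimes_{\mathbf S}\mathrm{MUP}^\bullet)$. Chaining the equivalences gives $\mathrm{fil}^{\ge *}_{\mathrm{AN}}(A)\simeq\varprojlim_{\mathbf\Delta}\tau_{\ge 2*}(A\otimes_{\mathbf S}\mathrm{MUP}^\bullet)$, which is the claim.

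I do not expect a real obstacle here: every ingredient has already been assembled, and the only points demanding a modicum of care are that descent is applied along the base-changed map $A\to A\otimes_{\mathbf S}\mathrm{MUP}$ rather than along $\mathbf S\to\mathrm{MUP}$ itself, and that the cobar terms are genuinely complex orientable. Routing the argument through $\mathrm{fil}^{\ge *}_{\mathbf C\mathrm{or}}$ (rather than through $\mathrm{fil}^{\ge *}_{\mathbf C\mathrm p}$, which would work equally well via the other half of the preceding Proposition) is a minor convenience, since one then avoids having to also check weak $2$-periodicity of the cobar terms.
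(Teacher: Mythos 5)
Your argument is correct and matches the route the paper intends: the corollary is meant to follow from the preceding Proposition exactly by applying $\mathbf C$ff descent (Lemma \ref{Cff descent}) along the base-changed map $A\to A\otimes_{\mathbf S}\mathrm{MUP}$, whose $\mathbf C$ff-ness is established as in Proposition \ref{Even filtration is even periodic filtration}, and then trivializing the filtration on the complex orientable cobar terms via Remark \ref{Filtrations in the trivial case remark}. Your choice to route through $\mathrm{fil}^{\ge *}_{\mathbf C\mathrm{or}}$ rather than $\mathrm{fil}^{\ge *}_{\mathbf C\mathrm p}$ is an inessential variation that the paper equally permits.
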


\begin{remark}
By passing to the spectral sequences from the equivalences of filtered spectra of Corollary \ref{AN via MUP} in the case of the sphere spectrum $A=\mathbf S$, we find that the Adams-Novikov spectral sequence may be equivalently described either as the $\mathrm{MU}$-based Adams spectral sequence, or as the $\mathrm{MUP}$-based Adams spectral sequence. This observation is not new; it may also be obtained as a special case of Morava's Change of Rings Theorem, see e.g. \cite[Example 7.4]{Goerss ANSS}.
\end{remark}

The subcategory inclusion $\CAlg^\mathrm{ev}\subseteq\CAlg^{\mathbf C\mathrm{or}}$, of even $\E$-rings into complex orientable ones, gives rise to a canonical comparison natural comparison map
$$
\mathrm{fil}^{\ge *}_{\mathrm{AN}}(A)\,\simeq\,\mathrm{fil}^{\ge *}_{\mathbf C\mathrm{or}}(A)\to \mathrm{fil}^{\ge *}_\mathrm{ev}(A)
$$
between the Adams-Novikov filtration and the even filtration on an $\E$-ring $A$.

\subsection{$I$-adic even filtration}
Parallel to the integral version of the even filtration, \cite{HRW} also develops a $p$-complete version, relevant for the applications to prismatization and to $p$-adic geometry more broadly. In this section we consider a slight generalization, and show that it too is insensitive to passage to the even periodic setting.

Fix for the remainder of this subsection an $\E$-ring $R$ and a regular locally principal ideal $I\subseteq\pi_0(R)$, i.e.\ an effective Cartier divisor $\Spec(\pi_0(R)/I)\subseteq\Spec(\pi_0(R))$, and assume that $R$ is $I$-complete.
The example to keep in mind is $R=\mathbf S^\wedge_p$ and $I=(p)$. In that setting, the following notion reduces to that of bounded $p$-torsion from \cite[Definition 2.1.7]{HRW}:

\begin{definition}\label{Def of I-bounded torsion}
An $R$-module $M$ has \textit{bounded $I$-torsion} if the sequence of $I$-power torsion $\pi_0(R)$-submodules
$$
\pi_n(M)[I]\subseteq \pi_n(M)[I^2]\subseteq\pi_n(M)[I^3]\subseteq\cdots \subseteq\pi_n(M)
$$
eventually stabilizes for all $n\in \mathbf Z$. That is to say, for each $n\in \mathbf Z$ there must be some $N_n>\!>0$ such that $\pi_n(M)[I^{N_n}] = \pi_n(M)[I^\infty]$.
\end{definition}

\begin{definition}\label{Def of I-adic ev and evp rings}
Let
$$
\CAlg^\mathrm{evp}_I
\subseteq \CAlg^\mathrm{ev}_I
\subseteq \CAlg_R^{\mathrm{cplt}}
$$
denote the respective full subcategories spanned by all the $I$-complete $\E$-algebras  over $R$ which are
even periodic and have bounded $I$-torsion,
or even and have bounded $I$-torsion.
\end{definition}

\begin{lemma}\label{I-adic accessibility Lemma}
All the $\i$-categories and inclusions between them featuring in Definition \ref{Def of I-adic ev and evp rings} are respectively accessible and accessible functors.
\end{lemma}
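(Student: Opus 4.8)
**Proof plan for Lemma \ref{I-adic accessibility Lemma} (accessibility of the $I$-adic even and even periodic $\infty$-categories).**

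The plan is to verify accessibility by exhibiting each of the $\infty$-categories in Definition \ref{Def of I-adic ev and evp rings} as obtained from $\CAlg_R^{\mathrm{cplt}}$ by finitely many pullback/fiber-product constructions along accessible functors, together with intersecting with already-known accessible subcategories, and then invoke the standard closure properties of accessible $\infty$-categories and accessible functors (as in \cite[\S 5.4]{HTT}: accessible subcategories are closed under finite intersection, pullbacks of accessible functors along accessible functors are accessible, etc.). First I would recall that $\CAlg_R^{\mathrm{cplt}}$ is itself an accessible localization of $\CAlg_R$, hence accessible, and that the inclusion $\CAlg_R^{\mathrm{cplt}}\subseteq\CAlg_R$ is accessible; likewise $\CAlg^{\mathrm{ev}}$ and $\CAlg^{\mathrm{evp}}$ are accessible subcategories of $\CAlg$ (this is used already in Proposition \ref{Even filtration is even periodic filtration} and referenced via Lemma \ref{Lemma cover preservation and lifting}), so the pulled-back subcategories $\CAlg^{\mathrm{ev}}_R$ and $\CAlg^{\mathrm{evp}}_R$, and their $I$-complete versions, are accessible. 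It then remains to handle the single extra condition of \emph{bounded $I$-torsion}.

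The key step is therefore to show that the full subcategory of $\CAlg_R^{\mathrm{cplt}}$ (or better, of $\CAlg^{\mathrm{ev}}_R$, where life is easier because homotopy is concentrated in even degrees) spanned by those $A$ whose underlying $R$-module has bounded $I$-torsion is accessible, and that its inclusion is an accessible functor. I would do this by first reducing, via the flatness built into the even (periodic) condition, to a statement purely about the graded ring $\pi_{2*}(A)$: an even $\E$-algebra $A$ has bounded $I$-torsion exactly when the sequence $\pi_0(A)[I]\subseteq\pi_0(A)[I^2]\subseteq\cdots$ stabilizes (the odd groups vanish and weak periodicity propagates the condition across all even degrees by a flat base change, cf.\ Remark \ref{Remark weak vs strong 2-periodicity remark}). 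The condition ``the chain $M[I]\subseteq M[I^2]\subseteq\cdots$ stabilizes'' on a $\pi_0(R)$-module $M$ is a countable filtered-colimit condition: $M$ has bounded $I$-torsion iff the canonical map $\colim_k M[I^k]\to M[I^\infty]$ is an isomorphism, equivalently iff $M[I^N]\xrightarrow{\ \sim\ }M[I^{N+1}]$ for some $N$, which is detected by a countable family of maps. I would package this as: bounded-$I$-torsion $R$-modules form the full subcategory cut out by asking that a certain natural transformation between two accessible (indeed $\kappa$-accessible for a fixed regular $\kappa$, since $I$ is finitely generated) functors $\Mod_R\to\Mod_{\pi_0(R)}$ be an equivalence after applying $\tau_{\le 0}\tau_{\ge 0}$, and such ``equalizer-type'' full subcategories of accessible categories are accessible with accessible inclusion. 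Finally, intersecting with $\CAlg^{\mathrm{ev}}_R{}^{\mathrm{cplt}}$ and $\CAlg^{\mathrm{evp}}_R{}^{\mathrm{cplt}}$ and noting the inclusions among the three are all restrictions of accessible inclusions gives the claim.

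The main obstacle I anticipate is the reduction of the bounded-$I$-torsion condition — which is \emph{a priori} a statement about a stabilization occurring for each $n$, with a bound $N_n$ possibly depending on $n$ — to something genuinely expressible by an accessible functor, i.e.\ controlled by $\kappa$-compact data for a \emph{single} $\kappa$ independent of the object. In the even periodic case this is clean because weak $2$-periodicity forces the $N_n$ to be taken uniformly in $n$ (all even degrees look like degree $0$ up to a flat twist, all odd degrees vanish), so the condition really is a single stabilization statement; in the merely even (non-periodic) case one must argue a little more carefully that it still suffices to test $\kappa$-compactly — here I would lean on the fact that $\pi_0(R)/I$ is a perfect (finitely generated) module and that $I$-power torsion is computed by a fixed finite complex, so that $M\mapsto (M[I^k])_{k\ge 1}$ is a $\kappa$-accessible functor for $\kappa=\aleph_1$, say. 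Granting that, all remaining steps are formal applications of \cite[Prop.\ 5.4.6.6, Prop.\ 5.4.7.3]{HTT}-type statements, so the lemma follows.
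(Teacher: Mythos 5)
The weak link in your plan is the treatment of the bounded $I$-torsion condition, which is exactly the non-formal content of the lemma. First, the condition you propose to encode it by --- that the canonical map $\colim_k M[I^k]\to M[I^\infty]$ be an isomorphism --- is vacuous: $M[I^\infty]$ \emph{is} that colimit, so the map is always an equivalence and does not see boundedness at all. The actual condition from Definition \ref{Def of I-bounded torsion} is an existential stabilization statement (``there exists $N$ with $\pi_n(M)[I^N]=\pi_n(M)[I^\infty]$''), i.e.\ a countable union over $N$ of inverter-type conditions, possibly with a different $N_n$ in each degree in the merely even case. None of the closure properties you invoke (finite intersections of accessible subcategories, pullbacks along accessible functors, ``equalizer-type'' subcategories where a \emph{single} natural transformation is inverted) covers such a union. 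The difficulty is real: the bounded-torsion subcategory is not even closed under ordinary filtered colimits (e.g.\ $\bigoplus_k \mathbf Z/p^k$ is a filtered colimit of bounded-torsion modules), so one is forced to work with $\aleph_1$-filtered colimits, and even after checking closure under those --- which does hold, by lifting countably many witnesses of unbounded torsion to a single stage --- one still owes a generation/smallness argument to conclude accessibility; your appeal to ``\cite[Prop.\ 5.4.6.6, 5.4.7.3]{HTT}-type statements'' does not supply it. So the step ``granting that, all remaining steps are formal'' hides precisely the point at issue.

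For comparison, the paper does not reprove this: it cites \cite[Proposition 2.1.10]{HRW} (stated $p$-adically, but carrying over verbatim to the $I$-adic setting) for the accessibility of $\CAlg_I^{\mathrm{ev}}\subseteq\CAlg_R^{\mathrm{cplt}}$, and then deduces the even periodic case from the even one, alternatively via the graded-ring pullback-square argument of Remark \ref{Remark Lemma cover preservation and lifting} together with \cite[Proposition 5.4.6.6]{HTT}. The surrounding scaffolding of your plan (completeness as an accessible localization, evenness/periodicity handled by pullback of known accessible subcategories, evp deduced from ev, uniformity of the bound in the periodic case) is consistent with this, but to make your argument complete you would either have to import the HRW result as the paper does, or genuinely prove the accessibility of the bounded-$I$-torsion locus (closure under $\aleph_1$-filtered colimits \emph{plus} a small-generation argument), which your current packaging does not do.
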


\begin{proof}
This follows from  \cite[Proposition 2.1.10]{HRW}, stated there in the $p$-adic setting, but which carries over without change to the  $I$-adic context. Indeed, established there is the accessibility of $\CAlg_I^\mathrm{ev}\subseteq\CAlg_R^\mathrm{cplt}$. 
Accessibility of the subcategory $\CAlg_I^\mathrm{evp}\subseteq\CAlg_R^\mathrm{cplt}$ finally follows from that of $\CAlg_I^\mathrm{ev}\subseteq\CAlg_R^\mathrm{cplt}$ (or can alternatively be argued directly with an argument analogous to the one laid out in Remark \ref{Remark Lemma cover preservation and lifting}.)
\end{proof}

Thanks to the accessibility of the subcategories in question established by Lemma \ref{I-adic accessibility Lemma}, we may now imitate \cite[Construction 2.1.3]{HRW} to define two \textit{\`a priori} distinct  $I$-adic versions of the even filtration.

\begin{definition}\label{Def of I-complete evf}
The \textit{$I$-adic even filtration}
and the  \textit{$I$-adic even periodic filtration} are the functors
$$
\mathrm{fil}_{\mathrm{ev}, I}^{\ge *}, 
\, \mathrm{fil}_{\mathrm{evp}, I}^{\ge *}   : \CAlg_R^\mathrm{cplt}\to\mathrm{FilCAlg}
$$
given by the right Kan extensions of the double-speed Postnikov filtration $\tau_{\ge 2*}$ along the inclusions $\CAlg^\mathrm{ev}_I\subseteq\CAlg_R^\mathrm{cplt}$ 
and $\CAlg^\mathrm{evp}_I\subseteq\CAlg_R^\mathrm{cplt}$ respectively. That is to say, for an $I$-complete $\E$-algebra $A$ over $R$, they are given by
$$\mathrm{fil}^{\ge *}_{\mathrm{ev}, I}(A)\,\,\simeq 
\varprojlim_{B\in(\CAlg_I^\mathrm{ev})_{A/}}\tau_{\ge 2*}(B),
\qquad\quad
\mathrm{fil}^{\ge *}_{\mathrm{evp}, I}(A)\,\,\simeq 
\varprojlim_{B\in(\CAlg_I^\mathrm{evp})_{A/}}\tau_{\ge 2*}(B).
$$
\end{definition}

As in the integral case, the key property of the $I$-complete even filtration is that 
both variants of it satisfy a form of flat descent. To define it, we use the following  synthesis of \cite[Definitions 2.2.1, 4, 6, \& 15]{HRW}, transplanted from the $p$-adic to the $I$-adic setting.

\begin{definition}\label{Def p-eff}
A map of $I$-complete $\E$-algebras $A\to B$ over $R$ is  \textit{$I$-completely eff} if it is the case that for any $C\in\CAlg^\mathrm{ev}_I$, the completed base-change $B\,\widehat{\o}_A\,C$ is even, and the natural $\pi_0(R)$-algebra map
$$
\big(\bigoplus_{n\in \mathbf Z}\pi_n(B)\big)\o_{\pi_0(R)}\pi_0(R)/I\to \big(\bigoplus_{n\in \mathbf Z}\pi_n(B\,\widehat{\o}_A\, C)\big)\o_{\pi_0(R)}\pi_0(R)/I
$$
is faithfully flat. 
\end{definition}

\begin{remark}\label{Remark to L or not to L}
Keep in mind that our tensor products are always formed $\i$-categorically in spectra. For classical rings, that means these are the derived tensor products, so that the condition in the statement of Definition \ref{Def p-eff} might be more classically written with the base change  $-\o_{\pi_0(R)}\pi_0(R)/I$ denoted by
$-\o^{\mathrm L}_{\pi_0(R)}\pi_0(R)/I$.
\end{remark}

The desired descent result is proved by repeating in the $I$-adic context \textit{mutatis mutandis} the arguments from the proof of \cite[Corollary 2.2.17]{HRW} in the $p$-adic context.

\begin{lemma}[$I$-complete eff descent]\label{Eff descent citation result, p-adic}
Let $A\to B$ be an eff morphism of $I$-complete $\E$-algebras over $R$.
Then the canonical map of filtered spectra
$$
\mathrm{fil}^{\ge *}_{\mathrm{ev}, I}(A)\to \varprojlim_{\mathbf\Delta}\mathrm{fil}^{\ge *}_{\mathrm{ev}, I}(B^\bullet), \qquad \mathrm{fil}^{\ge *}_{\mathrm{evp}, I}(A)\to \varprojlim_{\mathbf\Delta}\mathrm{fil}^{\ge *}_{\mathrm{evp}, I}(B^\bullet)
$$
is an equivalence,
where $B^\bullet = B^{\widehat{\otimes}_A (\bullet+1)}$ is the $I$-completed cosimplicial cobar resolution.
\end{lemma}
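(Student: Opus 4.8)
The plan is to transplant the proof of \cite[Corollary 2.2.17]{HRW} into the $I$-adic setting, isolating the two places where $I$-completion genuinely intervenes. All the limits defining $\mathrm{fil}^{\ge *}_{\mathrm{ev}, I}$ and $\mathrm{fil}^{\ge *}_{\mathrm{evp}, I}$ are legitimate by the accessibility established in Lemma \ref{I-adic accessibility Lemma}, so the statement makes sense; the work is in the descent.

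\emph{Step 1: $I$-complete flat descent for double-speed Postnikov towers.} The technical heart of \cite{HRW} is that if $C\to C'$ is a map of even $I$-complete $\E$-rings with bounded $I$-torsion which is $I$-completely faithfully flat -- i.e.\ the map $\pi_*(C)\otimes_{\pi_0(R)}\pi_0(R)/I \to \pi_*(C')\otimes_{\pi_0(R)}\pi_0(R)/I$ is faithfully flat -- then the canonical map $\tau_{\ge 2*}(C)\to\varprojlim_{\mathbf\Delta}\tau_{\ge 2*}\big((C')^{\widehat{\otimes}_C(\bullet+1)}\big)$ is an equivalence. I would first record this $I$-adic analogue. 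Its proof in \cite{HRW} (the $p$-adic arguments of \S 2.2, resting on \cite[Proposition 2.1.10]{HRW}) uses only that $I$ is a regular locally principal ideal in $\pi_0(R)$ and that $R$ is $I$-complete; the bounded $I$-torsion hypothesis of Definition \ref{Def of I-bounded torsion} is precisely what makes derived $I$-completion commute with truncations up to controlled error and lets faithfully flat descent of homotopy groups survive $I$-completion. Nothing uses the primality of $p$, so the argument carries over verbatim; this is the one genuinely non-formal ingredient, and the main point to verify carefully.

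\emph{Step 2: the formal reduction.} Let $A\to B$ be $I$-completely eff in the sense of Definition \ref{Def p-eff}, and write $B^\bullet=B^{\widehat{\otimes}_A(\bullet+1)}$ for the $I$-completed cobar object. For any $C\in(\CAlg^\mathrm{ev}_I)_{A/}$, the definition of $I$-completely eff says $B\,\widehat{\otimes}_A\,C$ is even; iterating the base change shows $B^{\widehat{\otimes}_A(n+1)}\widehat{\otimes}_A C$ is even with bounded $I$-torsion, and $C\to B\,\widehat{\otimes}_A\,C$ is $I$-completely faithfully flat. By Step 1, $\tau_{\ge 2*}(C)\simeq\varprojlim_{\mathbf\Delta}\tau_{\ge 2*}(B^\bullet\,\widehat{\otimes}_A\,C)$. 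Taking $\varprojlim$ over $C\in(\CAlg^\mathrm{ev}_I)_{A/}$ and interchanging limits yields
$$
\mathrm{fil}^{\ge *}_{\mathrm{ev}, I}(A)\ \simeq\ \varprojlim_{\mathbf\Delta}\ \varprojlim_{C\in(\CAlg^\mathrm{ev}_I)_{A/}}\tau_{\ge 2*}\big(B^{\widehat{\otimes}_A(\bullet+1)}\widehat{\otimes}_A C\big).
$$
A cofinality argument identical to the one in \cite[Corollary 2.2.17]{HRW} -- using that every object of $(\CAlg^\mathrm{ev}_I)_{B^{\widehat{\otimes}_A(n+1)}/}$ lies in $(\CAlg^\mathrm{ev}_I)_{A/}$ by restriction along $A\to B^{\widehat{\otimes}_A(n+1)}$, compatibly with the multiplication maps -- identifies the inner limit with $\mathrm{fil}^{\ge *}_{\mathrm{ev}, I}(B^{\widehat{\otimes}_A(n+1)})$, which is exactly the claimed equivalence.

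\emph{Step 3: the even periodic variant.} The same two steps prove the statement with $\CAlg^\mathrm{ev}_I$ replaced by $\CAlg^\mathrm{evp}_I$ everywhere; the only additional point is that $B\,\widehat{\otimes}_A\,C$ is even periodic whenever $C$ is. Evenness is given by $I$-completely eff-ness, and weak $2$-periodicity is preserved under base change because invertibility of the $C$-module $\Sigma^2 C$ forces invertibility of $\Sigma^2(B\,\widehat{\otimes}_A\,C)\simeq(B\,\widehat{\otimes}_A\,C)\otimes_C\Sigma^2 C$ over $B\,\widehat{\otimes}_A\,C$ (cf.\ Remark \ref{Remark weak vs strong 2-periodicity remark}), so $B\,\widehat{\otimes}_A\,C$ is again even periodic with bounded $I$-torsion. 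With this observation in hand, Steps 1 and 2 apply mutatis mutandis, and the only real obstacle remains the careful transcription of the $p$-adic Postnikov-descent argument of \cite{HRW} to the $I$-adic bounded-torsion setting carried out in Step 1.
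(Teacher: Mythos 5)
Your proposal is correct and follows essentially the same route as the paper, whose entire proof is the remark that one repeats the argument of \cite[Corollary 2.2.17]{HRW} \emph{mutatis mutandis} in the $I$-adic context. Your three steps ($I$-complete flat descent for double-speed Postnikov towers, the cofinality/limit-interchange reduction, and the observation that completed base change preserves even periodicity and bounded $I$-torsion) are exactly the content that transcription consists of, just spelled out explicitly.
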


The subcategory inclusion from Definition \ref{Def of I-complete evf} gives rise to  a canonical and natural comparison map
$$
\mathrm{fil}^{\ge *}_{\mathrm{ev}, I}(A)\to 
\mathrm{fil}^{\ge *}_{\mathrm{evp}, I}(A)
$$
for any $I$-complete $\E$-algebra $A$ over $R$. Knowing that the $I$-complete eff filtration satisfies descent with respect to $I$-completely eff maps, we can repeat the proof of Proposition \ref{Even filtration is even periodic filtration} to show that this comparison map is an equivalence, identifying the two versions of the $I$-adic even filtration from Definition \ref{Def of I-complete evf}.

\begin{prop}\label{Complete evp = ev filtr}
The natural comparison map $\mathrm{fil}^{\ge *}_{\mathrm{ev}, I}\to \mathrm{fil}^{\ge *}_{\mathrm{evp}, I}$ is an equivalence of functors $\CAlg_R^\mathrm{cplt}\to \mathrm{FilCAlg}$.
\end{prop}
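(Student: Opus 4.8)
The plan is to mimic exactly the structure of the proof of Proposition~\ref{Even filtration is even periodic filtration}, now carried out in the $I$-adic setting, using the $I$-completed cobar resolution of $\MUP$ in place of the ordinary one and invoking Lemma~\ref{Eff descent citation result, p-adic} ($I$-complete eff descent) in place of Theorem~\ref{Eff descent citation result}. First I would observe that the unit map $R\to R\,\widehat{\o}_{\mathbf S}\MUP$ (for any fixed $\E$-ring structure on $\MUP$) is $I$-completely eff. This is the $I$-adic analogue of the computation at the start of the proof of Proposition~\ref{Even filtration is even periodic filtration}: it factors as $R\to R\,\widehat{\o}_{\mathbf S}\MU\to R\,\widehat{\o}_{\mathbf S}\MUP$, the first map being $I$-completely eff by the ($I$-complete version of the) standard complex-orientable computation, and the second obtained by base change from the $\mathbb E_2$-map $\mathbf S\to\bigoplus_{n\in\mathbf Z}\Sigma^{2n}(\mathbf S)$, which after applying $\pi_*(-)\o_{\pi_0(R)}\pi_0(R)/I$ becomes faithfully flat because it is base-changed from $\mathbf G_m\to\Spec(\mathbf Z)$. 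Since $I$-completely eff maps are stable under completed base change, $A\to A\,\widehat{\o}_{\mathbf S}\MUP$ is $I$-completely eff for every $I$-complete $\E$-algebra $A$ over $R$.

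Next, applying Lemma~\ref{Eff descent citation result, p-adic} to this map gives, for both filtrations simultaneously,
$$
\mathrm{fil}^{\ge *}_{\mathrm{ev}, I}(A)\,\simeq\,\varprojlim_{\mathbf\Delta}\mathrm{fil}^{\ge *}_{\mathrm{ev}, I}(A\,\widehat{\o}_{\mathbf S}\MUP^\bullet),
\qquad
\mathrm{fil}^{\ge *}_{\mathrm{evp}, I}(A)\,\simeq\,\varprojlim_{\mathbf\Delta}\mathrm{fil}^{\ge *}_{\mathrm{evp}, I}(A\,\widehat{\o}_{\mathbf S}\MUP^\bullet).
$$
The crucial local step is then: every term $A\,\widehat{\o}_{\mathbf S}\MUP^\bullet$ is $I$-complete and ($I$-completely) complex periodic, so any even $I$-complete $\E$-algebra $B$ with bounded $I$-torsion receiving a map from $A\,\widehat{\o}_{\mathbf S}\MUP^\bullet$ is automatically weakly $2$-periodic (again via Lemma~\ref{Key property} as in the integral case), hence even periodic. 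Therefore the inclusion $(\CAlg_I^\mathrm{evp})_{A\widehat{\o}_{\mathbf S}\MUP^\bullet/}\subseteq(\CAlg_I^\mathrm{ev})_{A\widehat{\o}_{\mathbf S}\MUP^\bullet/}$ is an equivalence of $\i$-categories on each cosimplicial level, so the two limits defining $\mathrm{fil}^{\ge *}_{\mathrm{ev}, I}(A\,\widehat{\o}_{\mathbf S}\MUP^\bullet)$ and $\mathrm{fil}^{\ge *}_{\mathrm{evp}, I}(A\,\widehat{\o}_{\mathbf S}\MUP^\bullet)$ agree. Combining with the two displayed descent equivalences and noting that the natural comparison map $\mathrm{fil}^{\ge *}_{\mathrm{ev}, I}(A)\to\mathrm{fil}^{\ge *}_{\mathrm{evp}, I}(A)$ is compatible with the cobar resolutions, the comparison map is identified with an equivalence.

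The main obstacle, as in the integral case, is the bookkeeping around completions: one must check that weak $2$-periodicity really does propagate to every even $B$ under $A\,\widehat{\o}_{\mathbf S}\MUP^\bullet$ even though the tensor products are $I$-completed rather than ordinary, and that the bounded $I$-torsion hypothesis is preserved along the relevant base changes so that the slice $\i$-categories $(\CAlg_I^?)_{A\widehat{\o}_{\mathbf S}\MUP^\bullet/}$ are the correct ones to index the limits. Both are handled by the same reasoning as in \cite{HRW}: the weak $2$-periodicity statement is a homotopy-group computation insensitive to completion (invertibility of $\Sigma^2B$ as a $B$-module, cf.\ Remark~\ref{Remark weak vs strong 2-periodicity remark}), and bounded $I$-torsion behaves well under the flat base changes appearing in the cobar construction. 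Everything else is a line-by-line transcription of the proof of Proposition~\ref{Even filtration is even periodic filtration}.
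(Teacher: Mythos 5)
Your proposal is correct and follows essentially the same route as the paper: establish that the $I$-completed base change along $\mathbf S\to\MUP$ is $I$-completely eff, invoke the $I$-complete eff descent of Lemma~\ref{Eff descent citation result, p-adic} for both filtrations, and observe that over the completed $\MUP$-cobar terms every even algebra is automatically complex periodic (Lemma~\ref{Key property}), hence even periodic, so the two limits agree termwise. The only cosmetic difference is that the paper verifies the eff condition by showing the map is ``$I$-completely evenly free'' directly from Quillen's theorem and the splitting $A\o_{\mathbf S}\MUP\simeq\bigoplus_{n\in\mathbf Z}\Sigma^{2n}(A\o_{\mathbf S}\MU)$, whereas you factor through $\MU$ and use stability under completed base change, which is the same argument as in Proposition~\ref{Even filtration is even periodic filtration} that the paper explicitly repeats.
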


\begin{proof}
Pick any $\E$-ring structure on the periodic complex bordism spectrum $\mathrm{MUP}$.
We claim that the canonical map of $I$-complete $\E$-algebras $R\to (R\otimes_{\mathbf S}\mathrm{MUP})^\wedge_I$ over $R$ is $I$-completely eff. To see that, it suffices to show that it is $I$-completely evenly free, in the sense that for any $A\in\CAlg_I^\mathrm{ev}$ the $I$-completed pushout
$$
A\,\widehat{\o}_R\,(R\otimes_{\mathbf S}\mathrm{MUP})^\wedge_I\simeq (A\o_{\mathbf S}\mathrm{MUP})^\wedge_I
$$
is equivalent as an $A$-module to an $I$-completion of a sum of even shifts of $A$. Since the map $A\to A\otimes_{\mathbf S}\mathrm{MU}$ is evenly free in this sense as a consequence of Quillen's Theorem (i.e.\ the computation of the $\mathrm{MU}$-cohomology of even ring spectra), and we have by the definition of periodic complex bordism that $A\otimes_{\mathbf S}\mathrm{MUP}\simeq \bigoplus_{n\in \mathbf Z}\Sigma^{2n}(A\otimes_{\mathbf S}\mathrm{MU})$, this is indeed clear.

From $R\to (R\otimes_{\mathbf S}\mathrm{MUP})^\wedge_I$ being $I$-completely eff, Lemma \ref{Eff descent citation result, p-adic} gives rise to a canonical equivalences of filtered $\E$-rings
$$
\mathrm{fil}_{\mathrm{ev}, I}^{\ge *}(A)\,\simeq \, \varprojlim_{\mathbf\Delta}\varprojlim_{B\in(\CAlg_I^\mathrm{ev})_{A/}}\tau_{\ge 2*}(B)\,\simeq \, \varprojlim_{\mathbf\Delta}\varprojlim_{B\in(\CAlg_I^\mathrm{evp})_{A/}}\tau_{\ge 2*}(B)\,\simeq\,\mathrm{fil}_{\mathrm{evp}, I}^{\ge *}(A)
$$
for any $I$-complete $\E$-algebra $A$ over $R$. By unpacking the definitions, we find that this equivalence is indeed precisely the natural comparison map $\mathrm{fil}^{\ge *}_{\mathrm{ev}, I}\to \mathrm{fil}^{\ge *}_{\mathrm{evp}, I}$ as desired.
\end{proof}

\newpage

\section{Spectral algebraic geometry via the functor of points}\label{Section great 2}

Let us review the basic language of nonoconnective spectral algebraic geometry. This is the version of $\i$-categorical algebraic geometry in which we take as our affines $\Spec(A)$ for nonconnective -- i.e.\ \textit{not necessarily} connective -- $\E$-rings $A$. To this end, we adopt a \textit{functor of points} approach, as opposed to a ringed space (or ringed $\i$-topos) approach.

\subsection{The setup of nonconnective spectral stacks}

Recall from \cite[Subsection B.6.1]{SAG} that the notion of faithfully flat maps induces a Grothendieck topology on the $\i$-category $\mathrm{Aff}:=\CAlg^\mathrm{op}$, which is the \textit{fpqc topology} of spectral algebraic geometry.

\begin{definition}\label{Def 3.1 of spectral stack}
A \textit{nonconnective spectral stack} is an accessible functor $\mX:\CAlg\to \mathrm{Ani}$ which satisfies fpqc descent. We let
$$
\mathrm{SpStk}\, :=\,\mathrm{Shv}^\mathrm{acc}_\mathrm{fpqc}(\mathrm{Aff})
$$ denote the $\i$-category of nonconnective spectral stacks.
\end{definition}

In general, the term ``spectral stack" with no further qualifiers should in this paper  always be understood to be nonconnective. Nonetheless, we will try  to limit  omissions of the adjective ``nonconnective" very often, so as to emphasize how crucial it is to our discussion. Indeed, we aim to discuss periodic $\E$-rings and geometry built upon them, so restricting the foundations of spectral algebraic geometry to connective $\E$-rings would inconvenient at best and infeasible at worst.

\begin{exun}
For any $\E$-ring $A$, the corepresentable functor $B\to \Map_{\CAlg}(A, B)$ is a spectral stack, which we call $\Spec(A)$.The resulting functor $\Spec :\CAlg^\mathrm{op}\to\mathrm{SpStk}$ is fully faithful and exhibits the (definitional) equivalence $\CAlg^\mathrm{op}\simeq \mathrm{Aff}$. Its objects are called \textit{affine nonconnective spectral schemes} or \textit{affines} for short.
\end{exun}

The other approach to algebraic geometry, than the functor of points approach we are pursuing here, is the perhaps more familiar ringed space approach. It works well to define schemes and (if ringed spaces are relaxed to ringed topoi) Deligne-Mumford stacks, but not stacks with more ``stackiness", i.e.\ bigger inertia groups. But the functor of points approach subsumes both:

\begin{exun}
Let $\mX$ be a nonconnective spectral Deligne-Mumford space, defined as in \cite[Definition 1.4.4.2]{SAG} to be a certain kind of spectrally-ringed $\i$-topos. We may identify it with its functor of points, i.e.\ the eponymous functor $\mX: \CAlg\to \mathrm{Ani}$ given by
$$
\mX(A)\,:=\, \Map_{\mathrm{SpDM}^\mathrm{nc}}(\mathrm{Sp\acute{e}t}(A), \mX).
$$
This gives by \cite[Proposition 1.6.4.2]{SAG} a fully faithful embedding $\mathrm{SpStk}^\mathrm{nc}\hookrightarrow \mathrm{SpStk}$, under which $\mathrm{Sp\acute{e}t}(A)\mapsto \Spec(A)$ for all $\E$-rings $A$. As indicated by the fact that we are not notationally distinguishing between $\mX$ and its functor of points, we will always identify nonconnective spectral Deligne-Mumford stacks with their corresponding nonconnective spectral stacks.
\end{exun}

Restricting in the expected way to the full subcategories $\CAlg\supseteq\CAlg^\mathrm{cn}\supseteq\CAlg^\heart$ of connective and classical (i.e.\ connected and $0$-truncated) $\E$-rings gives the respective fpqc topologies on $\mathrm{Aff}^\mathrm{cn}:=(\CAlg^\mathrm{cn})^\mathrm{op}$ and $\mathrm{Aff}^\heart := (\CAlg^\heart)^\mathrm{op}$.

\begin{variant}\label{Def of conn and ord}
A \textit{connective spectral stack} (resp.\,\textit{classical stack}) is an accessible functor $\mX:\CAlg^\mathrm{cn}\to \mathrm{Ani}$ (resp.\ $\mX:\CAlg^\heart\to \mathrm{Ani}$) which satisfies fpqc descent. We let $$\mathrm{SpStk}^\mathrm{cn} :=\mathrm{Shv}^\mathrm{acc}_\mathrm{fpqc}(\mathrm{Aff}^\mathrm{cn})\qquad \big(\text{resp.\,\,\,} \mathrm{Stk}^\heart :=\mathrm{Shv}^\mathrm{acc}_\mathrm{fpqc}(\mathrm{Aff}^\heart)\,\,\big)$$ denote the $\i$-category of connective spectral stacks (resp.\,classical stacks).
\end{variant}

\begin{definition}\label{geometric stack}
A nonconnective spectral stack $\mX$ be \textit{geometric} if it satisfies both of the following conditions:
\begin{enumerate}[label = (\roman*)]
\item There exists an fpqc cover $\Spec(A^0)\to \mX$ for some $\E$-ring $A^0$.
\item For any pair of $\E$-rings $A$, $B$, and maps $\Spec(A)\to \mX$ and $\Spec(B)\to \mX$, the pullback $\Spec(A)\times_{\mX}\Spec(B)$ in $\mathrm{SpStk}$ is an affine nonconnective spectral stack. (Equivalently: the diagonal $\Delta :\mX\to\mX\times\mX$ is an affine morphism.) 
\end{enumerate}
\end{definition}

This is equivalent to admitting a presentation
$$
\mX\,\simeq \varinjlim_{\,\,\,\,\mathbf\Delta^\mathrm{op}}\Spec(A^\bullet)
$$
for a cosimplicial $\E$-ring $A^\bullet$, all of whose face maps are faithfully flat. The relationship between the two descriptions is given by by
$$
A^\bullet \,\simeq\, \sO(\Spec(A^0)\times_{\mX}\cdots\times_{\mX}\Spec(A^0)),
$$
which is to say, by the simplical affine spectral scheme $\Spec(A^\bullet)$ is the \v{C}ech nerve of the cover $\Spec(A^0)\to \mX$.

\begin{remark}[Accessible presheaves -- what?]\label{Remark access why}
Recall that a functor $\mX:\CAlg\to\mathrm{Ani}$ is accessible if and only if it commutes with $\kappa$-filtered colimits for any sufficiently-large small regular cardinal $\kappa$. Since preservation of filtered (i.e.\ $\aleph_0$-filtered) colimits for a functor of points is equivalent to the underlying algebro-geometric object (scheme, stack, etc., spectral or otherwise) to be locally of finite presentation, see e.g. \cite[Definition 17.4.1.1]{SAG}, we may view our restriction to accesible functors as demanding that our stacks must always be \textit{locally of small presentation}. By \cite[Proposition A.2]{Dirac II}, a functor $\mX:\CAlg\to \mathrm{Ani}$ being accessible has some equivalent alternative characterizations, which we will sometimes make (explicit or implicit) use of in this paper:
\begin{enumerate}[label = (\roman*)]
\item It is left Kan extended from its restriction to the full subcategory $\CAlg^\kappa\subseteq\CAlg$ of $\kappa$-presentable $\E$-rings for some -- and hence for any -- sufficiently-large small cardinal $\kappa$.
\item \label{la characterisation deux d'accessibilite} It is a colimit in the $\i$-category $\Fun(\CAlg, \mathrm{Ani})$ of a small diagram of affines, i.e.\ it is of the form $X\simeq\varinjlim_{i\in \mathcal I}\Spec(A_i)$ for some small indexing $\i$-category $\mathcal I$.
\end{enumerate}
Characterization \ref{la characterisation deux d'accessibilite} might at first glance seem in conflict with the standard universal property of the presheaf $\i$-category $\mP(\mC)\simeq\Fun(\mC^\mathrm{op}, \mathrm{Ani})$ as the cocompletion of any small $\i$-category $\mC$. Of course, this is because the $\i$-category $\mathrm{Aff}\simeq\CAlg^\mathrm{op}$ is not small, and so the presheaf $\i$-category does not possess that universal property. In fact, such a universal property is in this case satisfied precisely by the subcategory $\mP^\mathrm{acc}(\mathrm{Aff})\subseteq\mP(\mathrm{Aff})$ of accessible presheaves, see \cite[Remark A.4]{Dirac II}.
\end{remark}

\begin{remark}[Accessible presheaves -- why?]\label{sheafification issues}
In short, our main reason for imposing the accessibility assumption in Definition \ref{Def 3.1 of spectral stack} is to deal with certain size-theoretical issues related to the fpqc topology. We are following the treatment in \cite{Dirac II}, where the reader may find a careful discussion in the closely-related supergeometric setting referred to in \textit{loc.~cit.} as Dirac geometry. But let us explain it in a little more detail here.

The fully faithful embedding $\mathrm{SpStk}\to \Fun(\CAlg, \mathrm{Ani})$, which sends each nonconnective spectral stack to its functor of points -- or more generally the inclusion of fpqc sheaves into presheaves $\mathrm{Shv}_\mathrm{fpqc}(\mathrm{Aff})\hookrightarrow\mP(\mathrm{Aff})$ -- does not admit a left adjoint. This is one incarnation of the often-stated motto that ``fpqc sheafification does not exist." The reason for this issue, and what sets apart the fpqc topology from the other standard Grorthendieck topologies on affines such as the Zariski, \'etale, Nisnevich, fppf, etc.\,is that the collection of all the fpqc covers of a fixed affine do not always admit a small cofinal subset. Such a passage to a cofinal small subset of covers is used to establish the existance of the sheafification with respect to these other topologies. On the other hand, it was shown by \cite{Waterhouse} that there exist presheaves $\CAlg^\heart\to \mathrm{Set}$ which genuinely \textit{do not admit} an fpqc sheaficiation.

One way to circumvent this issue is by the usual ``universe enlargement" trick; that is to say, to work with the big $\i$-category $\widehat{\mP}(\mathrm{Aff}):=\Fun(\CAlg, \widehat{\mathrm{Ani}})$ where $\widehat{\mathrm{Ani}}$ is the big $\i$-category of not-necessarily-small anima. With respect to it, the fpqc site $\mathrm{Aff}$ is small, and so the $\i$-category of big fpqc-sheaves $\widehat{\mathrm{Shv}}_\mathrm{fpqc}(\mathrm{Aff})\subseteq \widehat{\mP}(\mathrm{Aff})$ is a (big) $\i$-topos. It in particular admits a left adjoint $\widehat L:\widehat{\mP}(\mathrm{Aff})\to \widehat{\mathrm{Shv}}_\mathrm{fpqc}(\mathrm{Aff})$, the desired fpqc sheafification functor. 
By the universal property of accessible presheaves, discussed in Remark \ref{Remark access why}, there exists a unique small colimit preserving functor $\mP^\mathrm{acc}(\mathrm{Aff})\to\widehat{\mP}(\mathrm{Aff})$ whcih is the identify functor on representables, i.e.\ sends $\Spec(A)\mapsto\Spec(A)$ for any $\E$-ring $A$.
The composite
$$
L:\mP^\mathrm{acc}(\mathrm{Aff})\to\widehat{\mP}(\mathrm{Aff})\xrightarrow{\widehat L}\widehat{\mathrm{Shv}}_\mathrm{fpqc}(\mathrm{Aff}),
$$
thus also preserves small colimits, and its essential image consists of those fpqc sheaves $\CAlg\to\widehat{\mathrm{Ani}}$ which are generated from affines via small colimits. In particular, any such functor takes values in the subcategory of small anima $\mathrm{Ani}\subseteq\widehat{\mathrm{Ani}}$, and is, as a small colimit of representables, an accessible functor. The essential image of the functor $L$ may therefore be identified with the $\i$-category of accesible fpqc sheaves on $\mathrm{Aff}$, which is to say, with the $\i$-category of nonconnective spectral stacks $\mathrm{SpStk}$.  Indeed, this functor $L$ provides a left adjoint to the inclusion $\mathrm{SpStk}\subseteq\mP(\mathrm{Aff})$, which is the vaunted fpqc sheafification.
\end{remark}

\subsection{The connective localization}\label{Section connective localization}
In this section, we discuss in some detail how to best approximate a nonconnective spectral stack by a connective one. Our discussion is  close to that of \cite[Section 1.2]{ChromaticCartoon}.

\begin{cons}\label{Cons connective localization}
The connective cover functor $\tau:=\tau_{\ge 0}$ participates in the adjunction with the subcategory inclusion $\iota$ of connective $\E$-rings into all $\E$-rings
$$\iota:\CAlg^\mathrm{cn}\rightleftarrows \CAlg :\tau,
$$
which induces on anima-valued presheaves a double adjunction 
$$
\xymatrix{
 \mP(\mathrm{Aff}) \ar@<1.2ex>[rr]^{\tau_!} \ar@<-1.2ex>[rr]_{\iota^*}&  &\mP(\mathrm{Aff}^\mathrm{cn}).\ar[ll]|-{\tau^*\simeq\,\iota_!}
}
$$
Since $\iota$ and $\tau_{\ge 0}$ are both accessible functors, and both preserve fpqc covers, this induces a double adjunction between the $\i$-categories of spectral stacks
$$
\xymatrix{
 \mathrm{SpStk} \ar@<1.2ex>[rr]^{\tau_!} \ar@<-1.2ex>[rr]_{\iota^*}&  &\mathrm{SpStk}^\mathrm{cn}.\ar[ll]|-{\tau^*\simeq\,\iota_!}
}
$$
The sheaf-level functors $\iota^*$ and $\tau^*$ agrees with its eponymous presheaf analogues, and so send $\mX\mapsto\mX\vert_{\CAlg^\mathrm{cn}}$
and $\mX\mapsto \mX\circ \tau_{\ge 0}$ respectively. On the other hand, the sheaf-level left adjoint $\tau_!$ is obtained by first applying the presheaf-level $\tau_!$, i.e.\ left Kan extension, followed by fpqc sheafification.
\end{cons}

\begin{definition}
The \textit{connective localization} of $\mX\in\mathrm{SpStk}$ is given by
$
\mX^\mathrm{cn}:= \tau^*\tau_!\mX.
$
\end{definition}

\begin{remark}
In previous work, e.g. \cite{ChromaticCartoon}, \cite{ChromaticFiltration}, etc., we had referred to connective localization as the \textit{connective cover} and denoted it by $\tau_{\ge 0}(\mX)$. This is unfortunate for at least two reasons. Firstly, it might be misinterpreted as the functor $A\mapsto \tau_{\ge 0}(\mX(A))$. But even if this danger is avoided, it is still unfortunate because the canonical map goes in the direction $\mX\to\mX^\mathrm{cn}$, and as such $\mX^\mathrm{cn}$ in no way a \textit{cover} of $\mX$.
\end{remark}

It is immediate from the definition and Construction \ref{Cons connective localization} that connective localization is functorial, and that there is a canonical map $\mX\to \mX^\mathrm{cn}$ for all $\mX\in \mathrm{SpStk}$.

\begin{prop}\label{Connective stacks inside stacks}
The following statements hold:
\begin{enumerate}[label = (\alph*)]
\item The connective localization functor $\mX\mapsto \mX^\mathrm{cn}$ preserves all small colimits in $\mathrm{SpStk}$.\label{Prop 13, a}
\item The functor $\tau^* : \mathrm{SpStk}^\mathrm{cn}\to \mathrm{SpStk}$ is fully faithful, and its essential image is generated under small colimits by the affines $\Spec(A)$ for $A\in\CAlg^\mathrm{cn}$.\label{Prop 13, b}
\item A nonconnective spectral stack $\mX$ belongs to the essential image of $\mathrm{SpStk}^\mathrm{cn}$ inside $\mathrm{SpStk}$, as explained in
\ref{Prop 13, b}, if and only if $\mX\to\mX^\mathrm{cn}$ is an equivalence.\label{Prop 13, c}
\item The canonical map $\mX^\mathrm{cn}\to (\mX^\mathrm{cn})^\mathrm{cn}$ is an equivalence for any $\mX\in\mathrm{SpStk}$.\label{Prop 13, d}
\end{enumerate}
\end{prop}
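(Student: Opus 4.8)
The plan is to run everything off the double adjunction $\tau_!\dashv\tau^*\dashv\iota^*$ relating $\mathrm{SpStk}$ and $\mathrm{SpStk}^{\mathrm{cn}}$ from Construction~\ref{Cons connective localization}, together with the fact, recorded in Remark~\ref{Remark access why}, that every (connective or nonconnective) spectral stack is a small colimit of affines. Granting these, part~\ref{Prop 13, a} is immediate: $\tau_!$ is a left adjoint, and $\tau^*$ is \emph{also} a left adjoint, namely to $\iota^*$; hence $\mX\mapsto\mX^{\mathrm{cn}}=\tau^*\tau_!\mX$ is a composite of left adjoints and preserves all small colimits.

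For part~\ref{Prop 13, b}, I would first evaluate the relevant functors on affines: for $A\in\CAlg^{\mathrm{cn}}$ one has $\tau_!\Spec(A)\simeq\Spec(A)$ since $\tau_{\ge 0}A\simeq A$, and $\tau^*\Spec(A)\simeq\Spec(A)$ since $\Map_{\CAlg}(A,\tau_{\ge 0}B)\simeq\Map_{\CAlg}(A,B)$ when $A$ is connective. Consequently the counit $\tau_!\tau^*\to\id$ is an equivalence on connective affines; as both $\tau_!$ and $\tau^*$ preserve small colimits and $\mathrm{SpStk}^{\mathrm{cn}}$ is generated under small colimits by the $\Spec(A)$ with $A$ connective (Remark~\ref{Remark access why}), the counit is an equivalence on all of $\mathrm{SpStk}^{\mathrm{cn}}$, so $\tau^*$ is fully faithful. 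Its essential image, being the image of a fully faithful colimit-preserving functor, is then the closure under small colimits of $\{\tau^*\Spec(A):A\in\CAlg^{\mathrm{cn}}\}=\{\Spec(A):A\in\CAlg^{\mathrm{cn}}\}$, as claimed.

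Parts~\ref{Prop 13, c} and~\ref{Prop 13, d} are then formal. The canonical map $\mX\to\mX^{\mathrm{cn}}$ is by definition the unit $\eta_\mX$ of $\tau_!\dashv\tau^*$; if $\eta_\mX$ is an equivalence then $\mX\simeq\tau^*\tau_!\mX$ is in the essential image of $\tau^*$, and conversely if $\mX\simeq\tau^*\mathcal Y$ then the triangle identity $\tau^*(\epsilon_{\mathcal Y})\circ\eta_{\tau^*\mathcal Y}=\id$ forces $\eta_{\tau^*\mathcal Y}$ to be an equivalence, because the counit $\epsilon_{\mathcal Y}$ is one by~\ref{Prop 13, b}; this gives~\ref{Prop 13, c}. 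Finally $\mX^{\mathrm{cn}}=\tau^*\tau_!\mX$ lies in the essential image of $\tau^*$, so~\ref{Prop 13, c} immediately yields that $\mX^{\mathrm{cn}}\to(\mX^{\mathrm{cn}})^{\mathrm{cn}}$ is an equivalence, which is~\ref{Prop 13, d}.

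The one place demanding care -- the main obstacle, such as it is -- is that all of this must be performed at the level of fpqc \emph{sheaves}: the sheaf-level $\tau_!$ is presheaf left Kan extension \emph{followed by} fpqc sheafification, which is awkward to manipulate directly, and sheafification is delicate in the fpqc topology (cf.\ Remark~\ref{sheafification issues}). The argument above is arranged precisely to sidestep this: every assertion is reduced either to the behaviour of $\tau_!$ and $\tau^*$ on affines -- where they are computed explicitly -- or to preservation of small colimits, which holds because $\tau_!$ and $\tau^*$ are left adjoints and sheafification preserves colimits; the passage from affines to all stacks is then supplied by Remark~\ref{Remark access why}. Once Construction~\ref{Cons connective localization} is taken as given, no residual size-theoretic subtleties remain.
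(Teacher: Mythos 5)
Your proposal is correct and follows essentially the same route as the paper: part \ref{Prop 13, a} from the fact that $\mX\mapsto\mX^{\mathrm{cn}}=\tau^*\tau_!\mX$ is a composite of left adjoints, part \ref{Prop 13, b} by showing the counit $\tau_!\tau^*\to\mathrm{id}$ is an equivalence using $\tau\circ\iota\simeq\mathrm{id}_{\CAlg^{\mathrm{cn}}}$, and \ref{Prop 13, c}, \ref{Prop 13, d} as formal consequences. The only cosmetic difference is that you verify the counit on connective affines and extend by colimit-generation, whereas the paper identifies $\tau_!\tau^*\simeq\tau_!\iota_!$ directly as the fpqc-sheafified left Kan extension along $\tau\circ\iota\simeq\mathrm{id}$; these amount to the same argument.
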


\begin{proof}
For \ref{Prop 13, a}, observe that connective localization may be written by Construction \ref{Cons connective localization} as the composite of two left adjoint functors $\tau^*$ and $\tau_!$.

Claims \ref{Prop 13, c} and \ref{Prop 13, d} follow easily from \ref{Prop 13, b}. To prove the latter, we must show that the counit $\tau_!\tau^*\mX\to \mX$ is an equivalence for every $\mX\in\mathrm{SpStk}^\mathrm{cn}$. Noting that $\tau^*\simeq \iota_!$, this we have $\tau_!\tau^*\simeq \tau_!\iota_!$, which can be described as the left Kan extension along $\iota$, followed by the left Kan extension along $\tau$, and finally post-composed with fpqc sheafification. Equivalently therefore, this is the fpqc sheafification of the left Kan extension along the composite $\tau\circ \iota\simeq \mathrm{id}_{\CAlg^\mathrm{cn}}$, and is therefore the identify endofunctor.
\end{proof}

In light of Proposition \ref{Connective stacks inside stacks}, we can identify the $\i$-category $\mathrm{SpStk}^\mathrm{cn}$ with its essential image inside $\mathrm{SpStk}$, and therefore treat connective spectral stacks as a special class of nonconnective spectral stacks. From this perspective, we can express the universal property of the connective localizations as asking that for any $\mX\in \mathrm{SpStk}$ and $\mY\in \mathrm{SpStk}^\mathrm{cn}$,  the map
$$
\Map_{\mathrm{SpStk}}(\mX^\mathrm{cn}, \mY)\to \Map_{\mathrm{SpStk}}(\mX, \mY),
$$
induced by the canonical map $\mX\to\mX^\mathrm{cn}$, is a homotopy equivalence.

\begin{exun}\label{connective localization for geometric stacks}
Let $\mX$ be a geometric nonconnective spectral stack in the sense of Definition \ref{geometric stack}. By choosing an affine atlas $\Spec(A^0)\to \mX$, we can (non-canonically) present it in the form $\mX\simeq \varinjlim_{\mathbf\Delta^\mathrm{op}}\Spec(A^\bullet)$ for a faithfully flat cosimplicial $\E$-ring $A^\bullet$. The connective localization is therefore presented as
$$
\mX^\mathrm{cn}\simeq \varinjlim_{\,\,\,\,\mathbf{\Delta}^\mathrm{op}}\Spec(\tau_{\ge 0}(A^\bullet)),
$$
showing it to be a geometric (connective) spectral stack as well. Since the face maps $A^m\to A^n$ are faithully flat, we have $\tau_{\ge 0}(A^m)\o_{A^m}A^n\simeq \tau_{\ge 0}(A^n)$, showing the cosimplicial object $\tau_{\ge 0}(A^\bullet)$ is base-changed from $A^0\to \tau_{\ge 0}(A^0)$. Equivalently, $\mX^\mathrm{cn}$ is fully determined from $\Spec(A)\to\Spec(A)^\mathrm{cn}\simeq\Spec(\tau_{\ge 0}(A))$ -- the connective localization can be computed fpqc-locally.
\end{exun}

\subsection{Underlying classical stacks}\label{Section underlying classical stack}
As nonconnective spectral stacks may be approximated by connective spectral stacks, so can the latter similarly be approximated by classical stacks.

\begin{variant}\label{Variant connective cover}
A similar analysis to the one we have carried out above in Section \ref{Cons connective localization} for the inclusion $\CAlg^\mathrm{cn}\subseteq\CAlg$ can also be applied to the adjunction
$$
\pi:=\pi_0 :\CAlg^\mathrm{cn}\rightleftarrows \CAlg^\heart :\lambda.
$$
It commutes with filtered colimits (because $S^0$ is a compact anima) and preserves fpqc covers, thus giving rise to a double adjunction
$$
\xymatrix{
 \mathrm{Stk}^\heartsuit \ar@<1.2ex>[rr]^{\lambda_!\,\,} \ar@<-1.2ex>[rr]_{\pi^*\,\,}&  &\mathrm{SpStk}^\mathrm{cn}.\ar[ll]|-{\lambda^*\simeq\,\pi_!\,}
}
$$
Since the unit of the upper adjunction is given by $\lambda^*\lambda_!\simeq \pi_!\lambda_!\simeq (\pi\circ \lambda)_!\simeq \mathrm{id}$,
it is this time the leftmost adjoint $\lambda_!$ which is fully faithful,
and  preserves all small colimits. We use this fully faithful embedding $\mathrm{Stk}^\heart\hookrightarrow\mathrm{SpStk}^\mathrm{cn}$  to identify classical stacks with a special class of connective spectral stacks.
The \textit{underlying classical stack} functor $\mX\mapsto \mX^\heartsuit$ is defined for any connective spectral stack $\mX$ as the composite
$$
\mX^\heartsuit:=\pi_!\pi^*(\mX\vert)\simeq\pi_!(\mX\vert_{\CAlg^\heart}).
$$
The unit of the adjunction gives a canonical map of spectral stacks $\mX^\heart\to\mX$.
\end{variant}

\begin{remark}
Unlike the fully faithful embedding $\mathrm{SpStk}^\mathrm{cn}\hookrightarrow \mathrm{SpStk}$ of Proposition \ref{Connective stacks inside stacks}, which is given by the left-and-right-adjoint functor $\tau^*\simeq \iota_!$, the fully faithful embedding $\mathrm{Stk}^\heart\hookrightarrow \mathrm{SpStk}^\mathrm{cn}$ of Variant \ref{Variant connective cover} is given by the solely left adjoint functor $\lambda_!$. Indeed, the latter fully faithful embedding generally does not preserve limits -- not even products. Indeed, the affine scheme $\Spec(\mathbf Z)$ is the final object in $\mathrm{Stk}^\heart$, and is as such equal to the product with itself. But the final object in $\mathrm{SpStk}^\mathrm{cn}$ is given by $\Spec(\mathbf S)$, and we have
 $\mathrm{Spec}(\mathbf Z)\times\Spec(\mathbf Z)\simeq \Spec(\mathbf Z\otimes_\mathbf S \mathbf Z)$. The spectrum $\mathbf Z\otimes_\mathbf S \mathbf Z$, an integral version of the dual Steenrod algebra,  has non-zero homotopy groups in arbitrarily high degrees so that we in particular find that $\Spec(\mathbf Z\otimes_\mathbf S \mathbf Z)\notin \mathrm{Stk}^\heart$.
\end{remark}

The passage to the underlying classical stack is still possible for a nonconnective spectral stack, but it is a two-step process, requiring first the passage to the connective localization.

\begin{definition}
The \textit{underlying classical stack} of a nonconnective spectral stack $\mX$ is the classical stack $\mX^\heart:=(\mX^\mathrm{cn})^\heart$.
\end{definition}

Note that for a nonconnective spectral stack $\mX$, its underlying classical stack $\mX^\heart$ is no longer
given by the restriction $\mX^\heart:=(\mX^\mathrm{cn})^\heart$. There is also no longer a direct map from $\mX^\heart$ and $\mX,$ being instead replaced by a canonical cospan
$$
\mX^\heart\to \mX^\mathrm{cn} \leftarrow \mX.
$$
The underlying classical stack functor $(-)^\heart:\mathrm{SpStk}\to \mathrm{Stk}^\heart$ is explicitly given by the composite $\lambda_!\,\tau_!$, with $\lambda$ and $\tau$ from Construction \ref{Cons connective localization} and Variant \ref{Variant connective cover} respectively. As such, it may itself be described as the sheafified left Kan extension along the subcategory inclusion $\CAlg^\heart\subseteq\CAlg^\mathrm{cn}\subseteq\CAlg$; that is to say, as the colimit
$$
\mX^\heart\,\simeq\varinjlim_{\Spec(A)\in \mathrm{Aff}_{/\mX}}\Spec(\pi_0(A)),
$$
indexed over the $\i$-category $\mathrm{Aff}_{/\mX}\simeq \mathrm{Aff}\times_{\mathrm{SpStk}}\mathrm{SpStk}_{/\mX}$  of affines (always always understood to mean nonconnective spectral affines, unless explicitly stated otherwise,) over $\mX$.

\begin{exun}
Let $\mX$ be a geometric nonconnective spectral stack, presented like in Example \ref{connective localization for geometric stacks} in terms of an affine atlas as
$\mX\simeq \varinjlim_{\mathbf\Delta^\mathrm{op}}\Spec(A^\bullet)$.
Its underlying classical stack can then be presented as
$$
\mX^\heart\simeq \varinjlim_{\,\,\,\,\mathbf{\Delta}^\mathrm{op}}\Spec(\pi_0(A^\bullet)).
$$
\end{exun}

\begin{remark}\label{Remark issues of the heart}
The passage to the underlying spectral scheme $\mX\mapsto \mX^\heart$ for \textit{connective} spectral stacks, i.e.\ as a functor $\mathrm{SpStk}^\mathrm{cn}\to \mathrm{Stk}^\heart$, is given in the language of Variant \ref{Variant connective cover} as the left adjoint $\lambda^*$. In particular, it commutes with all small limits. This is false for its nonconnective version $(-)^\heart:\mathrm{SpStk}\to \mathrm{Stk}^\heart$, even for finite limits. On the level of affines, this amounts to the functor $\pi_0:\CAlg\to\CAlg^\heart$ not preserving pushouts. For an explicit example, note that
$\mathrm{KU}\otimes_\mathrm{ku}\Z\simeq 0$ since the image of the Bott element $\beta$ in it would need to be both zero and invertible, whereas of course
$$\pi_0(\mathrm{KU})\otimes_{\pi_0(\mathrm{ku})}\pi_0(\mathbf Z)\simeq \Z\otimes_{\Z}\Z\simeq \Z.$$
Only with a flatness hypothesis on at least one of the factors of such a relative tensor product would we have a guarantee that $\pi_0$ preserved that tensor product.
\end{remark}

\begin{remark}\label{Remark compression}
The fully faithful leftmost adjoint $\lambda_!$ in the double adjunction of Variant \ref{Variant connective cover} is the standard way in which we view classical stacks as special cases of connective spectral stacks. But in fact, the right-most adjoint $\pi^*$ is also fully faithful, since the counit of the lower adjunction is given by $\pi_!\pi^*\simeq \lambda^*\pi^*\simeq (\pi\circ\lambda)^*\simeq \mathrm{id}$. It thus offers another fully faithful embedding $\mathrm{Stk}^\heart\to \mathrm{SpStk}$, given explicitly by sending a classical stack $\mX$ to the composite $\CAlg\xrightarrow{\pi_0}\CAlg^\heart\xrightarrow{\mX}\mathrm{Ani}$. This embedding, which we may suggestively denote $\mX\mapsto \mX\circ \pi_0$, may appear to be more exotic. But it does in fact show up in constructions such as completions and open complements. See \cite[Section 1.1]{ChromaticFiltration}, where the endofunctor of spectral stacks $\mX\mapsto \mX^\heart\circ \pi_0$ is is discussed under the name of \textit{compression}. It arises via the adjunction induced on accessible fpqc sheaves from the functor $\pi_0:\CAlg\to \CAlg^\heart$, and so in particular, the unit of said adjunction gives rise to a canonical \textit{decompression map} $\mX\to \mX^\heart\circ \pi_0$ which is sometimes useful (albeit often implicitly, e.g.\ in discussing open immersions and formal completions).
\end{remark}

\subsection{Quasi-coherent sheaves on a nonconnective spectral stack}\label{Section 3.4}
We may introduce quasi-coherent sheaves on nonconnective spectral sheaves via descent from affines. For details of such an approach\footnote{The discussion in \textit{loc.cit.}\,works in a slighly different setting to ours. Namely, its starting point are functors $\CAlg^\mathrm{cn}\to\widehat{\mathrm{Ani}}$ from \textit{connective} $\E$-rings into \textit{big} anima. In contrast, we are starting with functors $\CAlg\to\mathrm{Ani}$ from \textit{all} $\E$-rings into \textit{small} anima, but we additionally require accessibility. As indicated in {\cite[Remark 6.2.2.2.]{SAG}}, dropping the connectivity assumption causes no essential changes and all the formal arguments remain valid unchanged. The rest amounts to a difference in the approach of dealing with set-theoretical considerations. Our trade-off for some generality is that presentability is automatic -- see {\cite[Proposition 6.2.3.4]{SAG}}. The difference between the two approaches is essentially the same as the one between the pyknotic and vs the ``heavy condensed set" approach to condensed mathematics.
}, see \cite[Section 6.2]{SAG}.

\begin{prop}\label{Def of QCoh}
There exists an essentially unique functor
$$
\QCoh :\mathrm{SpStk}^\mathrm{op}\to \mathrm{CAlg}(\mathrm{Pr^L})
$$
which satisfies the follwoing requirements:
\begin{enumerate}[label = (\roman*)]
\item It commutes with all small limits.
\item The composite
$$
\CAlg\overset{\Spec}\simeq \mathrm{Aff}^\mathrm{op}\hookrightarrow\mathrm{SpStk}^\mathrm{op}\xrightarrow{\QCoh}\CAlg(\mathrm{Pr^L})
$$
recovers the functor $A\mapsto\Mod_A$.
\end{enumerate}
\noindent We call $\QCoh(\mX)$ the \textit{$\i$-category of quasi-coherent sheaves} on the nonconnective spectral stack $\mX$.
\end{prop}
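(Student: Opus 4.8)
The plan is to adapt \cite[Section 6.2]{SAG} from the connective-and-big to the nonconnective-and-accessible setting, as the footnote above anticipates. The input is the functor $\Mod_{(-)}:\CAlg\to\CAlg(\mathrm{Pr^L})$ sending $A$ to $\Mod_A$ with its smash product $\o_A$ and a map $A\to B$ to base change; via $\Spec$ this is the same datum as a functor $\Aff\to\CAlg(\mathrm{Pr^L})^{\op}$. The target $\CAlg(\mathrm{Pr^L})$ admits small limits (since $\mathrm{Pr^L}$ does, and $\CAlg$ of a presentably symmetric monoidal $\i$-category inherits them, cf.\ \cite{HA}), so $\CAlg(\mathrm{Pr^L})^{\op}$ admits small colimits. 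By the universal property of $\mP^\mathrm{acc}(\Aff)$ as the free small-cocompletion of $\Aff$ — the accessible variant recalled in Remark \ref{Remark access why}, and the pertinent one here since $\Aff$ is not small — the functor $\Mod_{(-)}$ extends essentially uniquely to a small-colimit-preserving functor $\mP^\mathrm{acc}(\Aff)\to\CAlg(\mathrm{Pr^L})^{\op}$, equivalently to a small-limit-preserving functor $\QCoh^\mathrm{pre}:\mP^\mathrm{acc}(\Aff)^{\op}\to\CAlg(\mathrm{Pr^L})$ given by the right Kan extension formula
$$
\QCoh^\mathrm{pre}(\mX)\,\simeq\,\varprojlim_{(\Spec(A)\to\mX)\,\in\,\Aff_{/\mX}}\Mod_A .
$$
The accessibility hypothesis on $\mX$ is exactly what guarantees that the indexing $\i$-categories appearing here are essentially small, so that these limits exist in the first place.

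The second step is to descend $\QCoh^\mathrm{pre}$ along the fpqc sheafification $L:\mP^\mathrm{acc}(\Aff)\to\mathrm{SpStk}$ of Remark \ref{sheafification issues}. By the universal property of a Bousfield localization, the small-colimit-preserving functor underlying $\QCoh^\mathrm{pre}$ factors — uniquely, and through a small-colimit-preserving functor — through $L$ precisely when it inverts the class of fpqc-local equivalences. That class is generated, as a strongly saturated class, by the \v{C}ech-nerve augmentations $\varinjlim_{\mathbf\Delta^{\op}}\Spec(B^{\o_A(\bullet+1)})\to\Spec(A)$ attached to faithfully flat maps $A\to B$, so the condition in question unwinds to the demand that the canonical functor $\Mod_A\to\varprojlim_{\mathbf\Delta}\Mod_{B^{\o_A(\bullet+1)}}$ be an equivalence for every such map — that is, faithfully flat descent for module $\i$-categories, which holds by \cite{SAG}. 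Granting this, $\QCoh^\mathrm{pre}$ descends to the desired small-limit-preserving functor $\QCoh:\mathrm{SpStk}^{\op}\to\CAlg(\mathrm{Pr^L})$; affines are already fpqc sheaves, so $L$ fixes them and hence $\QCoh\circ\Spec\simeq\Mod_{(-)}$, and the colimit presentation $\mX\simeq\varinjlim_{\Aff_{/\mX}}\Spec(A)$ valid in $\mathrm{SpStk}$ shows that the formula above persists for $\mX\in\mathrm{SpStk}$.

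For essential uniqueness, observe that any small-limit-preserving $G:\mathrm{SpStk}^{\op}\to\CAlg(\mathrm{Pr^L})$ with $G\circ\Spec\simeq\Mod_{(-)}$ produces, by precomposing the small-colimit-preserving $G^{\op}:\mathrm{SpStk}\to\CAlg(\mathrm{Pr^L})^{\op}$ with $L$, a small-colimit-preserving functor $\mP^\mathrm{acc}(\Aff)\to\CAlg(\mathrm{Pr^L})^{\op}$ extending $\Mod_{(-)}$; the uniqueness clause of the free-cocompletion property identifies it with the functor underlying $\QCoh^\mathrm{pre}$, and since $L$ is an essentially surjective localization this forces $G\simeq\QCoh$. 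Equivalently, one can bundle existence and uniqueness into the single assertion that restriction along $\Spec$ is an equivalence from the $\i$-category of small-limit-preserving functors $\mathrm{SpStk}^{\op}\to\CAlg(\mathrm{Pr^L})$ onto that of $\CAlg(\mathrm{Pr^L})^{\op}$-valued fpqc sheaves on $\Aff$, together with the fact that $\Mod_{(-)}$ is such a sheaf.

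The only genuinely non-formal ingredient is faithfully flat descent for $\i$-categories of modules; the rest is formal bookkeeping with the universal properties of free cocompletions and of localizations. The point demanding the most care is the size-theoretic one: one must work throughout with the accessible presheaf $\i$-category $\mP^\mathrm{acc}(\Aff)$ rather than with all presheaves, exactly as flagged in Remarks \ref{Remark access why} and \ref{sheafification issues}, since the fpqc topology on the large site $\Aff$ does not admit a small set of covers and $\mP(\Aff)$ consequently lacks the needed universal property.
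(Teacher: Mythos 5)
Your argument is correct, and its skeleton is the same as the paper's: first extend $A\mapsto\Mod_A$ from affines to all of $\mP^\mathrm{acc}(\mathrm{Aff})$ by a limit-preserving Kan extension, then use fpqc descent for module $\i$-categories (\cite[Corollary D.6.3.3]{SAG}) to see that this extension inverts fpqc-local equivalences and hence factors, limit-preservingly, through the sheafification $L:\mP^\mathrm{acc}(\mathrm{Aff})\to\mathrm{SpStk}$. Where you differ is in how the first step is implemented: the paper does not presuppose the straightened symmetric monoidal functor $A\mapsto\Mod_A$ valued in $\CAlg(\mathrm{Pr^L})$, but instead runs the Kan-extension machinery of \cite[Section 5.2.1, Remark 6.2.1.11]{SAG} directly on the coCartesian fibration $\Mod\to\CAlg$, obtaining at first a functor into big $\i$-categories, then restricting to accessible presheaves where the values are presentable and the transition functors are left adjoints, and finally upgrading to $\CAlg(\mathrm{Pr^L})$ by separate monoidality observations. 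You instead take the packaged functor $\Mod_{(-)}:\CAlg\to\CAlg(\mathrm{Pr^L})$ as given and invoke the universal property of $\mP^\mathrm{acc}(\mathrm{Aff})$ as the free small-cocompletion (Remark \ref{Remark access why}), together with the existence of small limits in $\CAlg(\mathrm{Pr^L})$; this buys a shorter and arguably cleaner existence argument, at the cost of leaning on the coherent multiplicative functoriality of $\Mod$ from \cite{HA} rather than rebuilding it. You also spell out essential uniqueness via full faithfulness of precomposition with the localization $L$, which the paper leaves implicit.

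Two small imprecisions, neither fatal: $\mathrm{Aff}_{/\mX}$ is not essentially small even for accessible $\mX$ — what accessibility gives is that $\mX$ is a small colimit of affines, so the displayed limit agrees with one over a small diagram (this is also how the paper treats such index categories elsewhere); and the fpqc-local equivalences are generated not only by \v{C}ech augmentations of faithfully flat maps but also by the maps encoding finite products of $\E$-rings (including the zero ring), so one must also note that $\Mod_{A\times B}\simeq \Mod_A\times\Mod_B$ and $\Mod_0\simeq *$ — which of course hold and are part of what the cited descent statement packages.
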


\begin{proof}
The functor $\QCoh$ is the right Kan extension of the functor $A\mapsto\Mod_A$ along the fully faithful embedding $\Spec:\CAlg\to \mathrm{SpStk}^\mathrm{op}$. Among other things,  the $\i$-categories in question failing to be small, makes the
 \textit{\`a priori} existence of such Kan extension unclear. However, the the results of \cite[Section 5.2.1]{SAG} establish their existence in contexts such as ours, and ultimately boils down  to the unstraightening equivalence of \cite{HTT}.
In the case at hand, we apply \cite[Remark 6.2.1.11]{SAG} to the 
coCartesian fibration $\Mod\to\CAlg$. This gives at first rise to a Kan extension
$$
\QCoh :\mathcal P(\mathrm{Aff})^\mathrm{op}\to \widehat{\mathrm{Cat}}_\infty
$$
with codomain consisting of big $\i$-categories. But since its values on representables, i.e.\ affines, restrict to the presentable $\i$-categories $\Mod_A$, and all the functors induced are clearly left adjoint, it follows that the restriction of $\QCoh$ to the full subcategory spanned under small colimits by representables, i.e.\ to $\mP^\mathrm{acc}(\mathrm{Aff})\subseteq\mP(\mathrm{Aff})$, defines a functor
$$
\QCoh :\mP^\mathrm{acc}(\mathrm{Aff})^\mathrm{op}\to\mathrm{Pr^L}.
$$
Further observations concerning the compatibility of the symmetric monoidal structure on $\Mod_A$ with pullback then shows that this functor factors through the forgetful functor $\CAlg(\mathrm{Pr^L})\to \mathrm{Pr^L}$. Finally, the fact that the functor $A\mapsto\Mod_A$ satisfies fpqc descent \cite[Corollary D.6.3.3]{SAG} implies that the functor constructed so far
$$
\QCoh :\mP^\mathrm{acc}(\mathrm{Aff})^\mathrm{op}\to\CAlg(\mathrm{Pr^L}),
$$
which commutes with all small limits by design, factors in a limit-preserving fashion through the localization functor $L:\mP^\mathrm{acc}(\mathrm{Aff})\to \mathrm{SpStk}$ given by fpqc sheafification.
This is the desired functor $\QCoh$.
\end{proof}

The $\i$-category of quasi-coherent sheaves on any $\mX\in \mathrm{SpStk}$ may be explicitly written as the limit
\begin{equation}\label{QCoh as limit}
\QCoh(\mX)\,\,\simeq \varprojlim_{\Spec(A)\in \mathrm{Aff}_{/\mX}}\Mod_A
\end{equation}
where $\mathrm{Aff}_{/\mX}\simeq \mathrm{Aff}\times_{\mathrm{SpStk}}\mathrm{SpStk}_{/\mX}$ is the $\i$-category of affines over $\mX$.

\begin{remark}
The notation $\QCoh(\mX)$ is consistent with that of \cite{SAG}. Note however that, when applied to a classical stack $\mX$, it \textit{does not} recover the eponymous $1$-category of usual quasi-coherent sheaves. Althout that is a discrepancy which classical language, we believe it very seldom leads to confusion. On the contrary, it serves to emphasize that $\QCoh(\mX)$ is more than merely some kind of ``derived" $\i$-category of quasi-coherent sheaves on $\mX$, and is rather the \textit{inherent} notion of quasi-coherent sheaves inside the world of spectral algebraic geometry. It is for this reason that we resist the growing-popularity (in the author's humble opinion a step backwards, at least in the context of derived and spectral algebraic geometry) of denoting $\QCoh(\mX)$ by $\mathcal D(X)$.
\end{remark}

Any morphism $f:\mX\to \mY$ in $\mathrm{SpStk}$ induces an adjunction on quasi-coherent sheaves
$$
f^*:\QCoh(\mX)\rightleftarrows \QCoh(\mY) :f_*,
$$
whose constituent functors $f^*$ and $f_*$ are called the \textit{pullback along $f$} and \textit{pushforward along $f$} respectively.

\begin{cons}
Let $p:\mX\to\Spec(\mathbf S)$ be the final morphism in $\mathrm{SpStk}$. Under the canonical identification $\QCoh(\Spec(\mathbf S))\simeq \Sp$, the pullback $\sO_{\mX}:=p^*(\mathbf S)$ gives the \textit{structure sheaf of $\mX$}. For any quasi-coherent sheaf $\sF\in \QCoh(\mX)$, the pushforward $$\Gamma(\mX; \sF):=p_*(\sF)$$ gives the \textit{global sections of $\sF$}. From the adjunction between the functors $p^*$ and $p_*$, we see that global sections may also be expressed as
$$
\Gamma(\mX; \sF)\,\simeq\, \underline{\Map}_{\Sp}(\mathbf S, p_*(\sF))\,\simeq \,\underline{\Map}_{\QCoh(\mX)}(\sO_{\mX}, \sF),
$$
where $\underline{\Map}$ denotes the mapping spectrum with respect to the usual enrichment of stable $\i$-categories in $\Sp$.
\end{cons}

\begin{exun}\label{Global functions on a stack}
We reserve the special notation $\sO(\mX):=\Gamma(\mX; \sO_{\mX})$ for the  global sections of the structure sheaf $\sO_{\mX}$. By unpacking the definitions, we find that this $\E$-ring of functions on $\mX$ is given by
$$
\sO(\mX)\,\,\simeq\varprojlim_{\Spec(A)\in \mathrm{Aff}_{/\mX}}A.
$$
Comparing this with the analogous expression \eqref{QCoh as limit} for $\QCoh$ suggests that the functor $\mX\mapsto \sO(\mX)$ might also admit an axiomatic definition in analogy with Proposition \ref{Def of QCoh}, as a ``decategorification" of $\mX\mapsto\QCoh(\mX)$. Indeed, it is the unique functor $\mathrm{SpStk}^\mathrm{op}\to \CAlg$ which commutes with all small limits and has a natural equivalence $\sO(\Spec(A))\simeq A$ for every $\E$-ring $A$.
\end{exun}

\begin{cons}\label{Cons 3.22 Postnikov filtration on QCoh}
The functors $\tau_{\ge n}:\Mod_A\to\Mod_{\tau_{\ge 0}(A)}$ for $A\in\CAlg$ induce by descent a functor
$$
\tau_{\ge n}:\QCoh(\mX)\to\QCoh(\mX^\mathrm{cn})
$$
on quasi-coherent sheaves over any $\mX\in \mathrm{SpStk}$. This  extends in the obvious way to the Postnikov filtration  functor
$$
\tau_{\ge *} : \QCoh(\mX)\to\mathrm{Fil}(\QCoh(\mX^\mathrm{cn})).
$$
When applied to the connective localization $\mX^\mathrm{cn}$, the canonical equivalence $\mX^\mathrm{cn}\simeq(\mX^\mathrm{cn})^\mathrm{cn}$ from Proposition \ref{Connective stacks inside stacks}, \ref{Prop 13, d}
allows us to view  $\tau_{\ge n}$ as endofunctors of $\QCoh(\mX^\mathrm{cn})$. They equip the stable $\i$-category $\QCoh(\mX^\mathrm{cn})$ with a canonical $t$-structure. We denote by
$$
\QCoh(\mX^\mathrm{cn})^\mathrm{cn}:=\QCoh(\mX^\mathrm{cn})_{\ge 0}
$$
 the positive part of this $t$-structure, and call it the $\i$-category of \textit{connective quasi-coherent sheaves on $\mX^\mathrm{cn}$}.
\end{cons}

\begin{remark}
For a connective spectral stack $\mX\in \mathrm{SpStk}^\mathrm{cn}$ and a quasi-coherent sheaf $\sF\in\QCoh(\mX)$, we have $\sF\in\QCoh(\mX)^\mathrm{cn}$ if any only if $f^*(\sF)\in \Mod_A^\mathrm{cn}$ for all maps from affines $f:\Spec(A)\to \mX$ for connective $\E$-rings $A$. But the analogous statemenet fails for the negative part of the $t$-structure $\QCoh(\mX^\mathrm{cn})_{\le 0}$, see \cite[Warning 6.2.5.9]{SAG}.
\end{remark}

\begin{remark}\label{Postnikov filtration on functions}
Consider the functor
$$
\mathrm{SpStk}\ni\mX\mapsto \Gamma(\mX^\mathrm{cn}; \tau_{\ge *}(\sO_{\mX}))\in \mathrm{FilCAlg},
$$
sending a nonconnective spectral stack $\mX$ to the global sections of the Posntikov tower of its structure sheaf. It follows Example \ref{Global functions on a stack}, and the fact that limits in $\mathrm{FilCAlg}$ are computed object-wise, that this functor may be expressed as
\begin{equation}\label{Postnikov filtration on functions formula}
\Gamma(\mX^\mathrm{cn}; \tau_{\ge *}(\sO_{\mX}))\simeq \varprojlim_{\Spec(A)\in \mathrm{Aff}_{/\mX}}\tau_{\ge *}(A).
\end{equation}
From this, we can deduce using an analogous argument to \cite[Remark 2.3.1]{HRW},  that the underlying object of this filtration is the $\E$-ring of functions $\sO(\mX)$.
\end{remark}

The underlying classical stack $\mX^\heart\simeq \mX^\mathrm{cn}\vert_{\CAlg^\heart}$ of $\mX$ can be expressed in terms of this $t$-structure as the relative spectrum
$$
\mX^\heart \simeq \Spec_{\mX^\mathrm{cn}}(\pi_0(\sO_{\mX})).
$$
It follows that the pushforward along the canonical map $\mX^\heart\to\mX^\mathrm{cn}$ induces an equivalence of $\i$-categories on quasi-coherent sheaves
$$
\QCoh(\mX^\heart)\simeq \Mod_{\pi_0(\sO_{\mX})}(\QCoh(\mX^\mathrm{cn})).
$$
In particular, the homotopy groups of quasi-coherent sheaves $\sF   \in\QCoh(\mX)$ can naturally be interpreted as belonging to the abelian $1$-category $\pi_n(\sF)\in \QCoh(\mX^\heart)^\heart$.

\subsection{Affine morphisms of nonconnective spectral stacks}\label{Section affine morphisms}

Since we will eventually require some of their basic properties, especially in Section \ref{Section TMF}, let us briefly sketch the theory of affine morphisms in $\mathrm{SpStk}$. 

\begin{definition}
A map of nonconnective spectral stacks $f:\mX\to \mY$ is an \textit{affine morphism} if for all maps from affines $\Spec(A)\to \mY$, the pullback $\Spec(A)\times_{\mY}\mX$ is also an affine.
\end{definition}

All the usual aspects of the theory of affine morphisms work in this setting as well. For instance, the class of affine morphisms is closed under pullback and composition, and affineness of a morphism can be checked fpqc locally. The following standard consequence, proved just like its classical variant \cite[\href{https://stacks.math.columbia.edu/tag/08GB}{Lemma 08GB}]{stacks-project},  will be used in the proof of Theorem \ref{Theorem Evp for TMF}.

\begin{lemma}\label{Lemma affine 3-1}
Let $f:\mX\to \mY$ be a morphism in $\mathrm{SpStk}_{/\mathcal S}$ such that $\mX\to \mS$ and $\Delta :\mY\to \mY\times_{\mS}\mY$ are both affine morphisms.
 Then $f$ is an affine morphism.
\end{lemma}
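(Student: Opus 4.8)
The plan is to adapt the standard argument referenced in the statement by factoring $f$ through its graph. Working in $\mathrm{SpStk}_{/\mS}$, form the relative product $\mX \times_{\mS} \mY$ together with the graph morphism $\Gamma_f := (\id_{\mX}, f) : \mX \to \mX \times_{\mS} \mY$ and the projection $\pr_{\mY} : \mX \times_{\mS} \mY \to \mY$. These fit into a factorization
$$
f \;\simeq\; \pr_{\mY} \circ \Gamma_f, \qquad \mX \xrightarrow{\ \Gamma_f\ } \mX \times_{\mS} \mY \xrightarrow{\ \pr_{\mY}\ } \mY .
$$
Since affine morphisms of nonconnective spectral stacks are closed under composition (as recalled just above), it then suffices to prove that $\pr_{\mY}$ and $\Gamma_f$ are each affine.

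For $\pr_{\mY}$: by construction $\mX \times_{\mS} \mY$ is the pullback of the structure morphism $\mX \to \mS$ along $\mY \to \mS$, so $\pr_{\mY}$ is a base change of $\mX \to \mS$. The latter is affine by hypothesis, and affine morphisms are stable under base change, so $\pr_{\mY}$ is affine. For $\Gamma_f$: the idea is to realize it as a base change of the relative diagonal $\Delta : \mY \to \mY \times_{\mS} \mY$. Concretely, one has a Cartesian square in $\mathrm{SpStk}$
$$
\mX \;\simeq\; \mY \times_{\,\mY \times_{\mS} \mY\,} \bigl(\mX \times_{\mS} \mY\bigr),
$$
in which $\mY \to \mY \times_{\mS} \mY$ is $\Delta$, the map $\mX \times_{\mS} \mY \to \mY \times_{\mS} \mY$ is $f \times_{\mS} \id_{\mY}$, and the induced projection to $\mX \times_{\mS} \mY$ is exactly $\Gamma_f$. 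Granting this, $\Gamma_f$ is a base change of $\Delta$, which is affine by hypothesis, hence affine; combined with the previous paragraph, $f = \pr_{\mY} \circ \Gamma_f$ is a composite of affine morphisms and so is affine.

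The argument is essentially formal, so I do not expect a genuine obstacle. The one point deserving care is the verification of the Cartesian square displayed above, which I would carry out on functors of points: since objects of $\mathrm{SpStk}$ are anima-valued functors on $\CAlg$ and limits of these are computed objectwise, an $A$-point of the right-hand side unwinds to a point $\xi \in \mX(A)$ together with a (contractible space of) identifications of $f(\xi)$ with itself, i.e.\ to $\xi \in \mX(A)$ alone, naturally in $A$ and compatibly with the projection $\Gamma_f$. The only other ingredient used is the stability of affine morphisms under composition and base change, both already recorded in the discussion preceding the lemma.
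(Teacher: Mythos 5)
Your proof is correct and is essentially the same as the paper's: both factor $f$ as the graph $(\mathrm{id},f):\mX\to\mX\times_{\mS}\mY$ followed by the projection to $\mY$, identifying the graph as a base change of the diagonal $\Delta$ and the projection as a base change of $\mX\to\mS$, then invoking stability of affine morphisms under pullback and composition. The functor-of-points verification of the Cartesian square is a fine way to make the graph/diagonal pullback explicit.
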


\begin{proof}
This follows from the stability of affine morphisms under pullback and composition by factoring $f :\mX\to \mY$ as the composite of the two top horizontal arrows in the following pair of pullback squares
$$
\begin{tikzcd}
\mX\arrow{r}{(\mathrm{id}, f)} \arrow{d}{f}  & \mX\times_{\mS}\mY\arrow{d}{(f, \mathrm{id})}
 & 
\mX\times_\mS\mY\arrow{r}{\mathrm{pr}_2} \arrow{d}{\mathrm{pr}_1} & \mY\arrow{d}{}
\\
\mY\arrow{r}{\Delta} & \mY\times_{\mS}\mY
& 
\mX\arrow{r}{}& \mS
\end{tikzcd}
$$
in which  the bottom horizontal arrows are both affine morphisms by assumption.
\end{proof}

We will also need to know how affine morphisms interact with quasi-coherent sheaves. This is quite simple in light of Proposition \ref{Def of QCoh}, but it does first require a base-change formula. The following variant will suffice for our purposes:

\begin{prop}\label{Prop push-pull}
Let $f:\mX\to \mY$ be a map of nonconnective spectral stacks. Assume that either one of the following two conditions holds:
\begin{enumerate}
\item The map $f$ is an affine morphism.
\item Both $\mX$ and $\mY$ are geometric spectral stacks, and the structure sheaf $\sO_{\mX}$ is a compact object in $\QCoh(\mX)$.
\end{enumerate}
Then for every pullback square
$$
\begin{tikzcd}
\mX'\arrow{r}{g'} \arrow{d}{f'} & \mX\arrow{d}{f}\\
\mY'\arrow{r}{g} & \mY
\end{tikzcd}
$$
in $\mathrm{SpStk}$, the associated diagram of $\i$-categories
$$
\begin{tikzcd}
\QCoh(\mY)\arrow{r}{f^*} \arrow{d}{g^*} & \QCoh(\mX)\arrow{d}{g'^*}\\
\QCoh(\mY')\arrow{r}{f'^*} & \QCoh(\mX')
\end{tikzcd}
$$
is right adjointable. That is to say, the Beck-Chevalley transformation $g^*f_*\to f'_*g'^*$ is an equivalence of functors $\QCoh(\mX)\to\QCoh(\mY')$.
\end{prop}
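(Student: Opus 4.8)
The plan is to unwind the assertion of right adjointability into something checkable: what has to be shown is that for every $\sF\in\QCoh(\mX)$ the Beck--Chevalley transformation $\beta_{\sF}\colon g^{*}f_{*}\sF\to f'_{*}g'^{*}\sF$ — the mate of the canonical equivalence $g'^{*}f^{*}\simeq f'^{*}g^{*}$ (coming from the fact that $g'^{*}f^{*}$ and $f'^{*}g^{*}$ are both the pullback along $\mX'\to\mY$) taken with respect to the adjunctions whose right adjoints are $f_{*}$ and $f'_{*}$ — is an equivalence. In both of the hypotheses (1) and (2) the point is to make the pushforward $f_{*}$ tractable: in case (1) it will be a forgetful functor, in case (2) a colimit-preserving one, and in each case the argument is to use this to identify $\beta_{\sF}$ with an evidently invertible map.

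\textbf{Case (1): $f$ affine.} Here I would argue globally, without reducing $\mY'$ at all. By the usual theory of affine morphisms transcribed from \cite{SAG}, an affine morphism $f\colon\mX\to\mY$ yields an identification $\QCoh(\mX)\simeq\Mod_{\mathcal A}(\QCoh(\mY))$ with $\mathcal A:=f_{*}\sO_{\mX}\in\CAlg(\QCoh(\mY))$, under which $f^{*}$ is extension of scalars and $f_{*}$ the (monadic) forgetful functor. Since relative $\sSpec$ commutes with base change, $\mX'\simeq\sSpec_{\mY'}(g^{*}\mathcal A)$, so $\QCoh(\mX')\simeq\Mod_{g^{*}\mathcal A}(\QCoh(\mY'))$ with $f'_{*}$ again the forgetful functor and $g'^{*}$ the base-change functor $\Mod_{\mathcal A}(\QCoh(\mY))\to\Mod_{g^{*}\mathcal A}(\QCoh(\mY'))$. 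Under these identifications $\beta_{\sF}$ compares ``forget the module structure, then pull back along $g$'' with ``pull back along $g$, then forget the module structure''; but a base-changed module has, by construction, underlying object the base change of the underlying object, so the two composites agree and $\beta_{\sF}$ is an equivalence. The only non-formal ingredient, the relative-affine description of $\QCoh$, is itself obtained by fpqc descent down to the affine case $\Mod_{A}\simeq\Mod_{A}(\Mod_{B})$ for a map of $\E$-rings $B\to A$.

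\textbf{Case (2): $\mX,\mY$ geometric, $\sO_{\mX}$ compact.} The starting point is that $\Gamma(\mX;-)\simeq\Map_{\QCoh(\mX)}(\sO_{\mX},-)$ is exact (as $p^{*}$ is exact, $p\colon\mX\to\Spec(\mathbf S)$) and commutes with filtered colimits (as $\sO_{\mX}$ is compact), hence preserves all colimits; using geometricity of $\mY$ and Case (1) to descend along an affine atlas, the same then holds for $f_{*}$. I would next pull the given square back along an affine atlas $q\colon\Spec(B)\to\mY$ (an affine morphism, since $\mY$ has affine diagonal) and its \v{C}ech nerve: by Case (1) the pullback functors along these affine morphisms are well behaved and compatible with $f_{*}$; the pulled-back structure sheaf $q'^{*}\sO_{\mX}$ stays compact, since pullback along the affine morphism $\mX\times_{\mY}\Spec(B)\to\mX$ has as right adjoint the colimit-preserving pushforward along an affine morphism; and $\QCoh(\mY')\simeq\varprojlim_{\mathbf\Delta}\QCoh(\mY'\times_{\mY}\Spec(B^{\bullet}))$, so it suffices to treat a square of the same type with \emph{affine target}. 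For such a square, presenting $\mX$ as the geometric realization of an affine atlas and using that $f_{*}$ and $g^{*}$ both preserve colimits, I would exhibit $g^{*}f_{*}\sF$ and $f'_{*}g'^{*}\sF$ as one and the same colimit-presented sheaf, after reducing $\sF$ to a generating class of $\QCoh(\mX)$ along which $\beta_{\sF}$ is visibly an equivalence.

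\textbf{Expected main obstacle.} Case (1) is essentially formal once the relative-affine description of $\QCoh$ is granted; the real work is the last step of Case (2) — promoting the bare fact that $f_{*}$ preserves colimits to an honest base-change equivalence over a possibly non-affine $\mY'$, and checking that ``$f_{*}$ preserves colimits'' together with compactness of $\sO_{\mX}$ are stable under the base changes used in the reduction. I anticipate this is where one must either manufacture convenient colimit-generators of $\QCoh(\mX)$ (such as pushforwards, or extensions by zero, of perfect modules along the affine atlas) for which $\beta_{\sF}$ is checkable by hand, or invoke an established base-change theorem for perfect (here: geometric with compact structure sheaf) stacks and merely verify that the present hypotheses fall within its scope.
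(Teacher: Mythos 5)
Your proposal diverges from the paper, which gives essentially no new argument: it observes that the proofs of \cite[Proposition 6.3.4.1]{SAG} (the affine case) and \cite[Propositions 9.1.5.7, 9.1.5.8]{SAG} (the geometric case with compact structure sheaf) never use connectivity of the ambient $\E$-rings and therefore apply verbatim, the common engine being to write the relevant stacks as small colimits of affines, so that $\QCoh$ becomes a limit of module $\i$-categories, and then to invoke the abstract adjointability-in-limits result \cite[Corollary 4.7.4.18]{HA} to reduce to the all-affine case. Measured against this, your case (1) has an ordering problem inside this paper's development: the relative-affine description $\QCoh(\mX)\simeq \Mod_{f_*(\sO_{\mX})}(\QCoh(\mY))$, and in particular the identification of $\mX'$ with the relative spectrum of $g^*f_*(\sO_{\mX})$ over $\mY'$, is proved in the paper \emph{immediately after} this proposition \emph{by applying} this proposition (one already needs base change to see that $f_*(\sO_{\mX})$ restricts to $B_i$ over an atlas chart $\Spec(A_i)\to\mY$). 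As written your argument is therefore circular relative to the paper's logic; repairing it means either giving an independent proof of the monadic description (which again comes down to base change for $\sO_{\mX}$) or doing what the cited proof does: reduce to affine $\mY$ and $\mY'$ via \cite[Corollary 4.7.4.18]{HA}, where the claim is the evident module-level equivalence $C\otimes_B M\simeq (C\otimes_B A)\otimes_A M$.

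The more serious gap is the last step of case (2). First, the $\mY'$ in the square is an arbitrary accessible fpqc sheaf, so you cannot \v{C}ech-descend along an atlas of $\mY'$ (it need not admit one); the reduction in that variable has to go through writing $\mY'$ as a colimit of affines, so that $\QCoh(\mY')\simeq\varprojlim_i\Mod_{C_i}$, and then through the adjointability-in-limits theorem — the naive ``check after pullback to each $\Spec(C_i)\to\mY'$'' requires the left squares in the pasting to be adjointable, which is again the statement being proved. Second, ``reducing $\sF$ to a generating class of $\QCoh(\mX)$'' is not available under the stated hypotheses: compactness of $\sO_{\mX}$ does not give compact generation of $\QCoh(\mX)$, and pushforwards along the atlas only furnish a cobar (limit) resolution of $\sF$, not colimit generators. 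The argument that actually closes this case (and is what the cited SAG propositions do) runs the colimit induction in the base variable rather than in $\sF$: once $\mY=\Spec(B)$ and $\mY'=\Spec(C)$ are affine, the Beck--Chevalley map is identified with the projection-formula map $N\otimes_B f_*\sF\to f_*(f^*N\otimes\sF)$ at $N=C$, and the projection formula holds because both sides preserve colimits in $N\in\Mod_B$ — this is exactly where compactness of $\sO_{\mX}$, i.e.\ colimit-preservation of $f_*$, enters — and agree at $N=B$. Your fallback of ``invoking an established base-change theorem for perfect stacks'' does not help either, since the hypotheses here are weaker than perfectness; the applicable established theorem in the nonconnective setting is precisely the SAG result the paper cites.
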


\begin{proof}
The proofs  of the corresponding result for connective spectral (pre)stacks, namely \cite[Proposition 6.3.4.1]{SAG} and \cite[Proposition 9.1.5.7, Proposition 9.1.5.8]{SAG}, do not make any use of the ambient connectivity assumption on the $\E$-rings. They therefore carry over unchanged to the present nonconnective setting. The proof strategy is ultimately the same in both cases either way: it is to reduce, via writing spectral stacks as colimits of affines, to the abstract adjointability result \cite[Corollary 4.7.4.18]{HA}.
\end{proof}

\begin{prop}
The pushforward along
 an affine morphism $f:\mY\to \mX$ induces an equivalence of symmetric monoidal $\i$-categories
$$
\QCoh(\mY)\simeq \Mod_{f_*(\sO_{\mY})}(\QCoh(\mX)).
$$
\end{prop}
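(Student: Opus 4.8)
The plan is to construct the comparison functor out of the lax symmetric monoidal structure on the pushforward, and then to prove that it is an equivalence by fpqc descent, reducing to the affine-over-affine case where the statement is the standard transitivity of module $\i$-categories. Concretely, the pullback $f^*:\QCoh(\mX)\to\QCoh(\mY)$ is symmetric monoidal, so its right adjoint $f_*$ is lax symmetric monoidal; it therefore sends the unit $\sO_\mY$ to the commutative algebra $f_*(\sO_\mY)\in\CAlg(\QCoh(\mX))$, and refines canonically to a lax symmetric monoidal functor
\[
\Phi:\QCoh(\mY)\longrightarrow\Mod_{f_*(\sO_\mY)}(\QCoh(\mX))
\]
whose composite with the forgetful functor back to $\QCoh(\mX)$ is $f_*$. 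To prove the proposition it suffices to show that the underlying functor of $\Phi$ is an equivalence of $\i$-categories and that $\Phi$ is \emph{strongly} (not merely laxly) symmetric monoidal -- the latter amounting to the projection formula $f_*(\sF)\otimes_{f_*(\sO_\mY)}f_*(\sG)\simeq f_*(\sF\otimes_{\sO_\mY}\sG)$ for $f$ -- and both of these I will obtain by descent.

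For the reduction, present $\mX\simeq\varinjlim_{\Spec(A)\in\mathrm{Aff}_{/\mX}}\Spec(A)$ as the canonical colimit of affines over it (Remark \ref{Remark access why}). Since colimits in $\mathrm{SpStk}$ are universal -- the $\i$-category of accessible fpqc sheaves behaving in this respect like an $\i$-topos, compare Remark \ref{sheafification issues} -- base change along this colimit yields $\mY\simeq\varinjlim_{\Spec(A)\in\mathrm{Aff}_{/\mX}}\mY_A$ with $\mY_A:=\mY\times_\mX\Spec(A)$. As $f$ is an affine morphism, each $\mY_A$ is affine, say $\mY_A\simeq\Spec(B_A)$, so that $B_A$ is an $\E$-algebra over $A$, functorially in $(\Spec(A)\to\mX)$. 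Applying $\QCoh$ (Proposition \ref{Def of QCoh} and equation \eqref{QCoh as limit}) converts both colimits into limits, $\QCoh(\mX)\simeq\varprojlim_{\mathrm{Aff}_{/\mX}}\Mod_A$ and $\QCoh(\mY)\simeq\varprojlim_{\mathrm{Aff}_{/\mX}}\Mod_{B_A}$, while the base change formula of Proposition \ref{Prop push-pull} (applicable since $f$ is affine), applied to the defining pullback square of each $\mY_A$, identifies the restriction of $f_*(\sO_\mY)$ to $\Spec(A)$ with $B_A$ as a commutative algebra in $\Mod_A$, naturally in $A$. In other words, $f_*(\sO_\mY)\in\CAlg(\QCoh(\mX))\simeq\varprojlim\CAlg(\Mod_A)$ corresponds to the compatible family $(B_A)$. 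Invoking the compatibility of the formation of module $\i$-categories with limits of symmetric monoidal $\i$-categories -- if $\mathcal C\simeq\varprojlim_i\mathcal C_i$ in $\CAlg(\PrL)$ and $R\in\CAlg(\mathcal C)$ has images $R_i\in\CAlg(\mathcal C_i)$, then $\Mod_R(\mathcal C)\simeq\varprojlim_i\Mod_{R_i}(\mathcal C_i)$; see \cite[Sections 4.5 and 4.8]{HA} -- one gets $\Mod_{f_*(\sO_\mY)}(\QCoh(\mX))\simeq\varprojlim_{\mathrm{Aff}_{/\mX}}\Mod_{B_A}(\Mod_A)$, under which $\Phi$ is the limit of the comparison functors $\Phi_A:\QCoh(\mY_A)\to\Mod_{B_A}(\Mod_A)$. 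It therefore suffices to treat the case $\mX=\Spec(A)$, $\mY=\Spec(B)$, with $f$ arising from an $\E$-ring map $A\to B$.

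In that case $f_*$ is the forgetful functor $\Mod_B\to\Mod_A$, one has $f_*(\sO_\mY)\simeq B$ in $\Mod_A$, and $\Phi$ is the canonical functor $\Mod_B\to\Mod_B(\Mod_A)$. This is an equivalence of symmetric monoidal $\i$-categories by the transitivity of module $\i$-categories in \cite{HA}: the underlying equivalence is standard, and it is symmetric monoidal because relative tensor products over $B$ are computed the same way inside $\Mod_A$ as in $\Sp$, so that, in particular, the projection-formula maps are equivalences. Since a limit of equivalences is an equivalence, it follows that $\Phi$ is an equivalence of symmetric monoidal $\i$-categories in general. Alternatively, and more briefly, the proof of the connective analogue \cite[Section 6.3.4]{SAG} uses only the base change statement now available as Proposition \ref{Prop push-pull}, and so carries over to the nonconnective setting verbatim.

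The substantive content lies in the bookkeeping of the descent step rather than in any single deep input. One needs that colimits in $\mathrm{SpStk}$ are universal, so that $\mY\simeq\varinjlim_A\mY_A$; that the Beck--Chevalley equivalences of Proposition \ref{Prop push-pull} are coherently natural over the indexing $\i$-category $\mathrm{Aff}_{/\mX}$, so that $f_*(\sO_\mY)$ is genuinely the limit of the $B_A$ and not merely objectwise equivalent to it; and that forming module $\i$-categories over a coherently chosen algebra commutes with limits of symmetric monoidal $\i$-categories. The familiar size issues attached to the fpqc topology (Remark \ref{sheafification issues}) are dealt with, as elsewhere in the paper, by replacing $\mathrm{Aff}_{/\mX}$ with a small subcategory of affines generating $\mX$ under colimits.
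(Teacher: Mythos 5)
Your proposal is correct and follows essentially the same route as the paper: write $\mX$ as a colimit of affines, use affineness of $f$ together with the base-change formula of Proposition \ref{Prop push-pull} to identify the restriction of $f_*(\sO_{\mY})$ over each $\Spec(A)$ with the corresponding $\E$-algebra $B_A$, and then obtain the comparison functor as the limit of the standard equivalences $\Mod_{B_A}\to\Mod_{B_A}(\Mod_A)$. The extra care you take with the symmetric monoidal refinement and the compatibility of module $\i$-categories with limits is welcome but does not change the argument, which the paper likewise presents as a paraphrase of the connective case in \cite{SAG}.
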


\begin{proof} This is also a direct paraphrasing of \cite[Proposition 6.3.4.6]{SAG} in the nonconnective context, but let us this time spell it out.
Write $\mX\simeq \varinjlim_i \Spec(A_i)$ for some small diagram of $\E$-rings $A_i$. Then we have $\mY\simeq \varinjlim_i \Spec(A_i)\times_{\mX}\mY$ and since  $f$ is affine, we have
$$
\Spec(A_i)\times_{\mS}\mY\simeq \Spec(B_i).
$$
Applying the push-pull formula of Proposition \ref{Prop push-pull} to the pullback square
$$
\begin{tikzcd}
\Spec(B_i)\arrow{r} \arrow{d} & \mY\arrow{d}\\
\Spec(A_i) \arrow{r} & \mX
\end{tikzcd}
$$
shows that the restriction of $f_*(\sO_{\mY})\in \QCoh(\mX)$ along the map $\Spec(A_i)\to \mX$ is given by $B_i\in \Mod_{A_i}$.
Due to the functor of quasi-coherent sheaves being defined by taking colimits of nonconnective spectral stacks to limits of $\i$-categories, and the global sections giving an equivalence $\QCoh(\Spec(A))\simeq \Mod_A$, the functor
$$
\QCoh(\mY)\to \Mod_{f_*(\sO_{\mY})}(\QCoh(\mX))
$$
is obtained by passing to the limit over $i$ from the functors
$$
\Mod_{B_i}\to \Mod_{B_i}(\Mod_{A_i}).
$$
Since the latter are of course equivalences of $\i$-categories, so is the former.
\end{proof}

As expected, this has the corollary that the functor $\mathrm{SpStk}_{/\mX}\to \CAlg(\QCoh(\mX))^\mathrm{op}$,
given by $(f:\mY\mapsto \mX)\mapsto f_*(\sO_{\mY})$, restrict to an antiequivalence 
\begin{equation}\label{Relative spec def}
\mathrm{SpStk}_{/\mX}^\mathrm{aff}\,\simeq \, \CAlg(\QCoh(\mX))^\mathrm{op}
\end{equation}
between the $\i$-categories between affine morphisms over $\mX$ -- sometimes called the \textit{relative affines over $\mX$}, though that has the potential to be confused with objects of $\mathrm{Aff}_{/\mX}$ -- and quasi-coherent sheaves of $\E$-algebras over $\mX$, for any $\mX\in \mathrm{SpStk}$. The inverse functor is denoted $\mathcal A\mapsto \Spec_{\mX}(\mathcal A)$ and is called the \textit{relative spectrum over $\mX$}.

\newpage

\section{Even periodization}\label{Chapter EVP}

In this section, we introduce the even periodization, a geometric counterpart to the even filtration from Section \ref{Section 2}, and study some of its properties.

    \subsection{Even periodic spectral stacks and even periodization}\label{Section EVP}
We begin by discussing the meaning of even periodicity in the setting of nonconnective spectral stacks.

\begin{lemma}\label{Lemma cover preservation and lifting}
The inclusion $\CAlg^\mathrm{evp}\subseteq\CAlg$ is accessible, and both preserves fpqc covers, as well as satisfying the cover lifting property for fpqc covers.
\end{lemma}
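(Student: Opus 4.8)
The plan is to deduce all three assertions from the single observation that flatness of maps of $\E$-rings is detected on homotopy groups. Recall from \cite{SAG} (the nonconnective variant of \cite[Section 7.2.2]{HA}) that a map of $\E$-rings $A\to B$ is flat exactly when $\pi_0(A)\to\pi_0(B)$ is flat and the natural maps $\pi_n(A)\otimes_{\pi_0(A)}\pi_0(B)\to\pi_n(B)$ are isomorphisms for every $n\in\mathbf Z$, and faithfully flat when in addition $\pi_0(A)\to\pi_0(B)$ is faithfully flat; recall also that covers in the fpqc topology on $\mathrm{Aff}=\CAlg^{\mathrm{op}}$ are the finite, jointly surjective families of flat maps.

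The first and main step is the \emph{transfer property}: if $A\in\CAlg^\mathrm{evp}$ and $A\to B$ is flat, then $B\in\CAlg^\mathrm{evp}$. Indeed, flatness gives $\pi_\mathrm{odd}(B)\cong\pi_\mathrm{odd}(A)\otimes_{\pi_0(A)}\pi_0(B)=0$, so $B$ is even; $\pi_2(B)\cong\pi_2(A)\otimes_{\pi_0(A)}\pi_0(B)$ is a base change of the flat $\pi_0(A)$-module $\pi_2(A)$, hence flat over $\pi_0(B)$; and base-changing the multiplication isomorphisms $\pi_n(A)\otimes_{\pi_0(A)}\pi_2(A)\cong\pi_{n+2}(A)$ along $\pi_0(A)\to\pi_0(B)$ and reusing the previous identifications shows $\pi_n(B)\otimes_{\pi_0(B)}\pi_2(B)\cong\pi_{n+2}(B)$. (This is a mild strengthening of the inheritance statement recorded in Lemma \ref{Key property}.) The same homotopy-group bookkeeping shows that a finite product of even periodic $\E$-rings is again even periodic, so the fpqc topology restricts to $\mathrm{Aff}^\mathrm{evp}:=(\CAlg^\mathrm{evp})^{\mathrm{op}}$.

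Granting this, cover preservation and cover lifting are immediate. A cover in $\mathrm{Aff}^\mathrm{evp}$ is a finite jointly surjective family of flat maps of even periodic $\E$-rings, which is visibly a cover in $\mathrm{Aff}$; this is cover preservation. For cover lifting, let $A\in\CAlg^\mathrm{evp}$ and let $\{\Spec(B_j)\to\Spec(A)\}$ be an fpqc cover in $\mathrm{Aff}$; by the transfer property each $B_j$ is automatically even periodic, so the given family already lies in $\mathrm{Aff}^\mathrm{evp}$ and refines itself.

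It remains to prove accessibility, and here I would write $\CAlg^\mathrm{evp}=\CAlg^\mathrm{ev}\cap\mathcal W$, with $\mathcal W\subseteq\CAlg$ the full subcategory of weakly $2$-periodic $\E$-rings, and check that each factor is an accessible subcategory of $\CAlg$: for $\CAlg^\mathrm{ev}$ this is \cite[Proposition 2.1.10]{HRW} (or simply that it is the intersection over $k$ of the accessible preimages $\pi_{2k+1}^{-1}(0)$, using that $\pi_m\colon\CAlg\to\mathbf{Ab}$ preserves filtered colimits); for $\mathcal W$ one uses that the requirement ``the multiplication maps $\pi_n(A)\otimes_{\pi_0(A)}\pi_2(A)\to\pi_{n+2}(A)$ are isomorphisms'' cuts out the locus on which a natural transformation of accessible functors is an equivalence, hence is accessible, and that flatness of $\pi_2(A)$ over $\pi_0(A)$ is likewise preserved and detected on sufficiently filtered colimits. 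Since accessible subcategories of a presentable $\i$-category are stable under small intersections, $\CAlg^\mathrm{evp}$ is then accessible and the inclusion preserves $\kappa$-filtered colimits for $\kappa\gg 0$, so it is an accessible functor; alternatively this can be argued directly, as in Remark \ref{Remark Lemma cover preservation and lifting}. I expect the only delicate point to be precisely this accessibility bookkeeping — in particular, phrasing flatness of $\pi_2(A)$ over $\pi_0(A)$ as an accessible condition on $A$ — but it runs entirely parallel to \cite[Proposition 2.1.10]{HRW} and introduces no new ideas, while everything else follows formally from the homotopy-group description of flat maps.
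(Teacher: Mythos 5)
Your proposal is correct and takes essentially the same route as the paper: cover preservation is definitional, and cover lifting rests on the same key observation that a flat map out of an even periodic $\E$-ring lands in an even periodic $\E$-ring. For accessibility you package $\CAlg^\mathrm{evp}$ as the intersection of $\CAlg^\mathrm{ev}$ with the weakly $2$-periodic locus, whereas the paper (see Remark \ref{Remark Lemma cover preservation and lifting}) forms a pullback against graded commutative rings via $\pi_*$ and invokes \cite[Proposition 5.4.6.6]{HTT} — both are the same formal machinery, so the only things to fix are bookkeeping: the integral accessibility statement you want is \cite[Proposition 2.1.2]{HRW} rather than 2.1.10, and the flatness of $\pi_2$ over $\pi_0$ is in fact automatic from the periodicity isomorphism at $n=-2$ (it exhibits $\pi_2$ as invertible), so your ``delicate point'' can be dispensed with entirely.
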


\begin{proof}
The accessibility claim reduces to accessibility of the subcategory $\CAlg^\mathrm{ev}\subseteq\CAlg$, which is guaranteed by \cite[Proposition 2.1.2]{HRW}.
Since the fpqc topology on $\CAlg^\mathrm{evp}$ is defined by restriction along this subcategory inclusion from the fpqc topology on $\CAlg$, the cover preservation claim is obvious. For the cover lifting property, we need only to note that if $A$ is an even periodic $\E$-ring and $A\to B$ is an faithfully flat map of $\E$-rings, then the $\E$-ring $B$ is also necessarily even.
\end{proof}

\begin{remark}\label{Remark Lemma cover preservation and lifting}
Let us unpack the standard accessibility argument, alluded to above in the proof of Lemma \ref{Lemma cover preservation and lifting}. It is a direct imitation of the proof of \cite[Proposition 2.1.2]{HRW}, and we do not unpack it because there would be any surprises, but rather because we will alude to similar arguments several times in this text, and we believe carefully unpacking at least one might be useful to the reader.
So let
$\mathrm{GrCAlg}^\heart$ denote the 1-category of graded commutative rings. Consider the full subcategory
$$
(\mathrm{GrCAlg}^\heart)^\mathrm{evp}\subseteq\mathrm{GrCAlg}^\heart,
$$
spanned by all those graded commutative rings $A_*$ which are even periodic, in the sense that $A_{2n+1}=0$ and the multiplication map $A_n\otimes_{A_0}A_2\to A_{n+2}$ is an isomorphism, both for all $n\in \mathbf Z$. Both $\mathrm{GrCAlg}^\heart$ and $(\mathrm{GrCAlg}^\heart)^\mathrm{evp}$ are clearly accessible: the former is freely generated under filtered colimits by the polynomial rings $\mathbf Z[t_1, \ldots, t_n]$ with the generators $t_i$ places in arbitrary grading degree, and the latter is likewise generated by the graded rings $\mathbf Z[t_1^{\pm 1}, \ldots, t_n^{\pm 1}]$, where the variables $t_i$ are this time  allowed to lie in arbitrary even graded degree. According to Definition \ref{List of rings}, the inclusion of even periodic $\E$-rings into all $\E$-rings fits into the pullback square of $\i$-categories
$$
\begin{tikzcd}
\CAlg^\mathrm{evp} \arrow{d}{\pi_*} \arrow{r}{} & \CAlg \arrow{d}{\pi_*} \\
(\mathrm{GrCAlg}^\heart)^\mathrm{evp} \arrow{r}{} & \mathrm{GrCAlg}^\heart.
\end{tikzcd}
$$
We already know that all but the upper left vertex are accessible $\i$-categories, and the lower horizontal inclusion of categories, as well as the functor $\pi_*$, obviously commute with filtered colimits and are as such accessible functors. The desired accessibility of upper horizontal arrow $\CAlg^\mathrm{evp}\to\CAlg$ therefore follows from \cite[Proposition 5.4.6.6]{HTT}.
\end{remark}

It follows that the fpqc topology for $\E$-rings restricts to equip $\mathrm{Aff}^\mathrm{evp} :=(\mathrm{CAlg}^\mathrm{evp})^\mathrm{op}$ with a Grothendieck topology.

\begin{definition}\label{Def of even stacks}
An accessible functor $\mX:\CAlg^\mathrm{evp}\to \mathrm{Ani}$ which satisfies fpqc descent is called an \textit{even periodic spectral stack}. We let $$\mathrm{SpStk}^\mathrm{evp} :=\mathrm{Shv}^\mathrm{acc}_\mathrm{fpqc}(\mathrm{Aff}^\mathrm{evp})$$ denote the $\i$-category of even periodic spectral stacks.
\end{definition}

\begin{cons}\label{Functoriality of ev}
Let $\varepsilon:\CAlg^\mathrm{evp}\to\CAlg$ denote the subcategory inclusion. The double adjunction on the level of anima-valued presheaf $\i$-categories 
$$
\xymatrix{
 \mP(\mathrm{Aff}^\mathrm{evp}) \ar@<1.2ex>[r]^{\,\,\varepsilon_!} \ar@<-1.2ex>[r]_{\,\,\varepsilon_*}&  \mP(\mathrm{Aff})\ar[l]|-{\varepsilon^*}
}
$$
induces by Lemma \ref{Lemma cover preservation and lifting} a double-adjunction on the $\i$-categories of accessible fpqc sheaves
$$
\xymatrix{
 \mathrm{SpStk}^\mathrm{evp} \ar@<1.2ex>[r]^{\,\,\varepsilon_!} \ar@<-1.2ex>[r]_{\,\,\varepsilon_*}&  
 \mathrm{SpStk}.\ar[l]|-{\varepsilon^*}
}
$$
The sheaf-level functor $\varepsilon^*$ agrees with its eponymous presheaf analogue, and as such sends $\mX\mapsto\mX\vert_{\CAlg^\mathrm{evp}}$. On the other hand, the sheaf-level left adjoint $\varepsilon_!$ is computed by first applying the presheaf-level functor $\varepsilon_!$, i.e.\ the left Kan extension along $\varepsilon$, followed by fpqc sheafification (well-defined by virtue of being applied to an accessible presheaf).
\end{cons}

\begin{definition}\label{Def of evp localization}
The \textit{even periodization} of $\mX\in\mathrm{SpStk}$ is given by
$
\mX^\mathrm{evp}:= \varepsilon_!\varepsilon^*\mX.
$
\end{definition}

It is immediate from the definition and Construction \ref{Functoriality of ev} that even periodization is functorial, and that there is a canonical map $\mX^\mathrm{evp}\to\mX$ for all $\mX\in \mathrm{SpStk}$.

\begin{prop}\label{EVP as a colimit}
The even periodization of a nonconnective spectral stack $\mX$ may be expressed as
$$
\mX^\mathrm{evp} \,\,\simeq \varinjlim_{\Spec(A)\in \mathrm{Aff}^\mathrm{evp}_{/\mX}}\Spec(A),
$$
with the colimit taken in $\mathrm{SpStk}$, and where  $\mathrm{Aff}^\mathrm{evp}_{/\mX}:= \mathrm{Aff}^\mathrm{evp}\times_{\mathrm{SpStk}}\mathrm{SpStk}_{/\mX}.$
\end{prop}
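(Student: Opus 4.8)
The plan is to derive the formula formally from the construction of $\varepsilon_!$ in Construction \ref{Functoriality of ev} by combining it with the co-Yoneda (density) presentation of a presheaf as a colimit of representables. Two observations carry the bulk of the work. First, both the presheaf-level functor $\varepsilon_!:\mathcal P(\mathrm{Aff}^\mathrm{evp})\to\mathcal P(\mathrm{Aff})$ and its sheaf-level counterpart $\varepsilon_!:\mathrm{SpStk}^\mathrm{evp}\to\mathrm{SpStk}$ are left adjoints, hence preserve all small colimits. Second, since $\varepsilon:\CAlg^\mathrm{evp}\hookrightarrow\CAlg$ is fully faithful, its presheaf-level left Kan extension carries the representable presheaf on $\Spec(A)$, for $A\in\CAlg^\mathrm{evp}$, to the representable presheaf on $\Spec(A)\in\mathrm{Aff}$; and since the latter is already an fpqc sheaf (the fpqc topology being subcanonical, cf.\ Section \ref{Section great 2}), the subsequent fpqc sheafification fixes it. Thus $\varepsilon_!(\Spec A)\simeq\Spec(A)$ in $\mathrm{SpStk}$ for every even periodic $\E$-ring $A$.

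Next I would record the co-Yoneda presentation of $\varepsilon^*\mX=\mX\vert_{\CAlg^\mathrm{evp}}$, viewed as a presheaf on $\mathrm{Aff}^\mathrm{evp}$, as the colimit of representables indexed by its $\i$-category of elements. By the Yoneda lemma in $\mathrm{SpStk}$ together with $(\varepsilon^*\mX)(A)\simeq\mX(A)$, that category of elements is canonically identified with the comma $\i$-category $\mathrm{Aff}^\mathrm{evp}_{/\mX}=\mathrm{Aff}^\mathrm{evp}\times_{\mathrm{SpStk}}\mathrm{SpStk}_{/\mX}$ appearing in the statement. Now apply the presheaf-level $\varepsilon_!$ to this presentation: by the second observation it converts the representables into the representables $\Spec(A)$ on $\mathrm{Aff}$, and by the first it commutes with the colimit. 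Finally apply the fpqc sheafification $L$, which likewise preserves colimits and fixes representables by subcanonicity. Recalling from Construction \ref{Functoriality of ev} that the sheaf-level $\varepsilon_!$ is computed by first applying the presheaf-level $\varepsilon_!$ and then fpqc-sheafifying, and using Definition \ref{Def of evp localization}, this exhibits
$$
\mX^\mathrm{evp}\,=\,\varepsilon_!\varepsilon^*\mX\,\simeq\,\varinjlim_{\Spec(A)\in\mathrm{Aff}^\mathrm{evp}_{/\mX}}\Spec(A)
$$
with the colimit formed in $\mathrm{SpStk}$, which is the asserted identity.

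I expect the only point needing genuine care to be the manipulation of the density presentation: verifying that the $\i$-category of elements $\mathrm{Aff}^\mathrm{evp}_{/\mX}$ admits a small cofinal subcategory, so that the colimit is a legitimate (small) colimit in $\mathrm{SpStk}$ — this is exactly where the accessibility hypothesis on $\mX$, discussed in Remark \ref{Remark access why}, enters — and correctly commuting this colimit past both $\varepsilon_!$ and the fpqc sheafification. Everything else is a formal consequence of the adjunctions set up in Construction \ref{Functoriality of ev}, and the resulting colimit description is parallel to those recorded earlier for $\mX^\heart$ and $\sO(\mX)$.
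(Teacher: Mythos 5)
Your argument is correct and is essentially the paper's own proof: the paper likewise observes that $\mathrm{Aff}^\mathrm{evp}$ generates $\mathrm{SpStk}^\mathrm{evp}$ under small colimits, so that the canonical map $\varinjlim_{\Spec(A)\in\mathrm{Aff}^\mathrm{evp}_{/\mX}}\Spec(A)\vert_{\CAlg^\mathrm{evp}}\to\mX\vert_{\CAlg^\mathrm{evp}}$ is an equivalence, and then applies the colimit-preserving $\varepsilon_!$, which sends these representables to the affines $\Spec(A)$ in $\mathrm{SpStk}$. Your explicit detour through the presheaf-level co-Yoneda presentation, subcanonicity, and the accessibility/smallness remark simply unpacks those same steps.
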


\begin{proof}
This follows from the $\i$-category $\mathrm{SpStk}^\mathrm{evp}$ being generated under colimits by its subcategory $\mathrm{Aff}^\mathrm{evp}$, so that the canonical map
$$
\varinjlim_{\Spec(A)\in \mathrm{Aff}_{/\mX}^\mathrm{evp}}\Spec(A)\vert_{\CAlg^\mathrm{evp}}\to\mX\vert_{\CAlg^\mathrm{evp}}
$$
is an equivalence, and the functor $\varepsilon_!:\mathrm{SpStk}^\mathrm{evp}\to\mathrm{SpStk}$ preserving colimits.
\end{proof}

\begin{exun}\label{Example for proof}
Let $A$ be an even periodic $\E$-ring. Consider the corresponding affine spectral stack $\Spec(A)$, i.e.\,the corepresentable functor $\CAlg\ni B\mapsto \Map_{\CAlg}(A, B)\in\mathrm{Ani}$. In that case, the $\i$-category $\mathrm{Aff}^\mathrm{evp}_{/\Spec(A)}\simeq (\CAlg^\mathrm{evp}_A)^\mathrm{op}$ has $\Spec(A)$ as its final object, and so  its even periodization is $\Spec(A)^\mathrm{evp}\simeq \Spec(A)$.
\end{exun}

Let us explain how to obtain the even filtration from the even periodization of an affine nonconnective spectral scheme.

\begin{prop}\label{Voila the even filtration}
For an $\E$-ring $A$, there is a canonical natural equivalence of filtered $\E$-rings
$$
\Gamma\big((\Spec(A))^\mathrm{evp})^\mathrm{cn};\, \tau_{\ge 2*}(\sO_{\Spec(A)^\mathrm{evp}})\big)\,\simeq\, \mathrm{fil}^{\ge *}_\mathrm{ev}(A).
$$
\end{prop}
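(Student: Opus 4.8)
The plan is to evaluate the limit-preserving functor $\Phi:\mathrm{SpStk}^{\mathrm{op}}\to\mathrm{FilCAlg}$, $\Phi(\mX):=\Gamma(\mX^{\mathrm{cn}};\tau_{\ge 2*}(\sO_{\mX}))$ — the evident double-speed variant of the functor appearing in Remark \ref{Postnikov filtration on functions} — on the colimit presentation of $\Spec(A)^{\mathrm{evp}}$ provided by Proposition \ref{EVP as a colimit}, and then to identify the resulting limit using Proposition \ref{Even filtration is even periodic filtration}. First I would record the two relevant features of $\Phi$. On the one hand, $\Phi$ carries small colimits in $\mathrm{SpStk}$ to small limits in $\mathrm{FilCAlg}$: the functor $\QCoh$ sends colimits to limits by Proposition \ref{Def of QCoh}, connective localization preserves colimits by Proposition \ref{Connective stacks inside stacks}, the double-speed Postnikov filtration on quasi-coherent sheaves of Construction \ref{Cons 3.22 Postnikov filtration on QCoh} is defined by descent and hence compatible with these presentations, and global sections are computed on a limit of $\i$-categories of quasi-coherent sheaves as the corresponding limit of mapping spectra; this is precisely the reasoning behind the pointwise formula $\Phi(\mX)\simeq\varprojlim_{\Spec(B)\in\mathrm{Aff}_{/\mX}}\tau_{\ge 2*}(B)$ in Remark \ref{Postnikov filtration on functions}. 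On the other hand, for any $\E$-ring $B$ one has $\Spec(B)^{\mathrm{cn}}\simeq\Spec(\tau_{\ge 0}(B))$ and $\sO_{\Spec(B)}\simeq B$, whence $\Phi(\Spec(B))\simeq\tau_{\ge 2*}(B)$, naturally in $B\in\CAlg$.

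Next I would apply $\Phi$ to the equivalence $\Spec(A)^{\mathrm{evp}}\simeq\varinjlim_{\Spec(B)\in\mathrm{Aff}^{\mathrm{evp}}_{/\Spec(A)}}\Spec(B)$ of Proposition \ref{EVP as a colimit}. This colimit, while indexed over $\mathrm{Aff}^{\mathrm{evp}}_{/\Spec(A)}$, is equivalent to a small one by the accessibility of $\CAlg^{\mathrm{evp}}\subseteq\CAlg$ (Lemma \ref{Lemma cover preservation and lifting}), so limit-preservation of $\Phi$ together with $\Phi(\Spec(B))\simeq\tau_{\ge 2*}(B)$ gives
$$
\Phi\big(\Spec(A)^{\mathrm{evp}}\big)\,\simeq\,\varprojlim_{\Spec(B)\in\mathrm{Aff}^{\mathrm{evp}}_{/\Spec(A)}}\tau_{\ge 2*}(B)\,\simeq\,\varprojlim_{B\in\CAlg^{\mathrm{evp}}_A}\tau_{\ge 2*}(B),
$$
where the last step uses the identification $\mathrm{Aff}^{\mathrm{evp}}_{/\Spec(A)}\simeq(\CAlg^{\mathrm{evp}}_A)^{\mathrm{op}}$ of Example \ref{Example for proof}. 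The right-hand side is the ``even periodic filtration'' $\mathrm{fil}^{\ge *}_{\mathrm{evp}}(A)$ considered in the proof of Proposition \ref{Even filtration is even periodic filtration}, which that proposition identifies with the even filtration $\mathrm{fil}^{\ge *}_{\mathrm{ev}}(A)$ of Definition \ref{Def 1 - list}. Every step here — the colimit formula for $\Spec(A)^{\mathrm{evp}}$, the functor $\Phi$, the computation $\Phi(\Spec(B))\simeq\tau_{\ge 2*}(B)$, and the comparison of Proposition \ref{Even filtration is even periodic filtration} — is natural in $A$, yielding the asserted natural equivalence of filtered $\E$-rings.

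Given the earlier results the argument is short, and the only point that needs genuine care — which I therefore expect to be the main obstacle — is the limit-preservation of $\Phi$: one must verify that forming global sections of the double-speed Postnikov tower of the structure sheaf commutes with the colimit presentation of $\Spec(A)^{\mathrm{evp}}$, i.e.\ that $\QCoh$, $\tau_{\ge 2*}$, and $\Gamma$ all interact with the relevant descent as claimed. This is purely formal and is essentially already carried out in Remark \ref{Postnikov filtration on functions} (following \cite[Remark 2.3.1]{HRW}); the only additional input is the trivial reindexing from $\tau_{\ge *}$ to $\tau_{\ge 2*}$. All of the substance concerning the passage from even periodic to general even $\E$-rings is delegated to Proposition \ref{Even filtration is even periodic filtration}, whose eff-descent argument is what makes that replacement harmless.
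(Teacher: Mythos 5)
Your proposal is correct and follows essentially the same route as the paper's proof: specialize the colimit formula of Proposition \ref{EVP as a colimit} to the affine case, evaluate the filtered global-sections functor of Remark \ref{Postnikov filtration on functions} (which takes colimits of stacks to limits, as you verify) to obtain $\varprojlim_{B\in\CAlg^{\mathrm{evp}}_A}\tau_{\ge 2*}(B)$, and then invoke Proposition \ref{Even filtration is even periodic filtration} to identify this with $\mathrm{fil}^{\ge *}_{\mathrm{ev}}(A)$. Your extra care about the limit-preservation of $\Phi$ and the reindexing of the diagram is a slightly more explicit rendering of what the paper leaves implicit, but it is the same argument.
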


\begin{proof}
Specializing Proposition \ref{EVP as a colimit} to the affine case, we get the even periodization of $\Spec(A)$ expressed as the colimit
\begin{equation}\label{EVP of an affine equation}
\Spec(A)^\mathrm{evp}\,\,\simeq \varinjlim_{B\in\CAlg_A^\mathrm{evp}}\Spec(B).
\end{equation}
Using the identification \eqref{Postnikov filtration on functions formula} from Remark \ref{Postnikov filtration on functions}, we find the filtered $\E$-ring in question to be
$$
\Gamma\big((\Spec(A)^\mathrm{evp})^\mathrm{cn};\,   \tau_{\ge 2*}(\sO_{\Spec(A)^\mathrm{evp}}))\,\,\simeq \varprojlim_{B\in\CAlg_A^\mathrm{evp}}\tau_{\ge 2*}(B),
$$
which agrees with the even filtration on $A$ by
Proposition \ref{Even filtration is even periodic filtration}.
\end{proof}

We can further extract the underlying $\E$-ring of the even filtration as the $\E$-ring of global functions of the even periodization of the corresponding affine:

\begin{corollary}\label{Ring of functions computation}
For any $\E$-ring $A$, there is a canonical equivalence of $\E$-rings
$$
\sO(\Spec(A)^\mathrm{evp})\simeq \varinjlim_n\mathrm{fil}_\mathrm{ev}^{\ge n}(A).
$$
\end{corollary}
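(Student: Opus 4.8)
The plan is to read the statement off from Proposition \ref{Voila the even filtration} together with Remark \ref{Postnikov filtration on functions}, the point being simply that $\varinjlim_n\mathrm{fil}^{\ge n}_\mathrm{ev}(A)$ is nothing but the underlying $\E$-ring of the filtered $\E$-ring $\mathrm{fil}^{\ge *}_\mathrm{ev}(A)$, which has already been identified with the Postnikov filtration of the structure sheaf of $\Spec(A)^\mathrm{evp}$. First I would invoke Proposition \ref{Voila the even filtration} to rewrite the even filtration as $\mathrm{fil}^{\ge *}_\mathrm{ev}(A)\simeq \Gamma\big((\Spec(A)^\mathrm{evp})^\mathrm{cn};\,\tau_{\ge 2*}(\sO_{\Spec(A)^\mathrm{evp}})\big)$, and recall that forming the colimit of a decreasing filtered object over $n\to-\infty$ computes its underlying object, so that $\varinjlim_n\mathrm{fil}^{\ge n}_\mathrm{ev}(A)\simeq \varinjlim_n\Gamma\big((\Spec(A)^\mathrm{evp})^\mathrm{cn};\,\tau_{\ge 2n}(\sO_{\Spec(A)^\mathrm{evp}})\big)$. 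Then I would observe that passing from the double-speed Postnikov filtration $\tau_{\ge 2*}$ to the single-speed one $\tau_{\ge *}$ does not change this colimit, since $\{2n : n\in\mathbf Z\}$ is cofinal in $\mathbf Z$ for the order along which the colimit is formed; hence $\varinjlim_n\mathrm{fil}^{\ge n}_\mathrm{ev}(A)\simeq \varinjlim_m\Gamma\big((\Spec(A)^\mathrm{evp})^\mathrm{cn};\,\tau_{\ge m}(\sO_{\Spec(A)^\mathrm{evp}})\big)$.

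Finally I would apply Remark \ref{Postnikov filtration on functions} to the nonconnective spectral stack $\mX:=\Spec(A)^\mathrm{evp}$: that remark identifies the underlying $\E$-ring of the filtration $\Gamma(\mX^\mathrm{cn};\,\tau_{\ge *}(\sO_\mX))$ with the $\E$-ring of functions $\sO(\mX)$. Chaining these identifications yields $\varinjlim_n\mathrm{fil}^{\ge n}_\mathrm{ev}(A)\simeq \sO(\Spec(A)^\mathrm{evp})$, and naturality in $A$ is immediate, as every map used is natural in $A$.

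The step that carries actual content is the last one: identifying the underlying object of $\Gamma(\mX^\mathrm{cn};\,\tau_{\ge *}(\sO_\mX))$ with $\sO(\mX)$ amounts to interchanging a filtered colimit with the cofiltered limit over $\mathrm{Aff}_{/\mX}$ defining global sections, which is not formal; this is precisely what Remark \ref{Postnikov filtration on functions} records (via the argument analogous to \cite[Remark 2.3.1]{HRW}), so here it is only cited. If one wanted a self-contained argument one could instead expand both sides directly — using Example \ref{Global functions on a stack} together with \eqref{EVP of an affine equation} to get $\sO(\Spec(A)^\mathrm{evp})\simeq \varprojlim_{B\in\CAlg^\mathrm{evp}_A}B$, and Proposition \ref{Even filtration is even periodic filtration} to get $\varinjlim_n\mathrm{fil}^{\ge n}_\mathrm{ev}(A)\simeq \varinjlim_n\varprojlim_{B\in\CAlg^\mathrm{evp}_A}\tau_{\ge 2n}(B)$ — and then verify the interchange of $\varinjlim_n$ with $\varprojlim_B$ by hand; but this merely reproves the cited remark and adds nothing essential, so I would not pursue it.
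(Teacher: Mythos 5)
Your proposal is correct and follows essentially the same route as the paper: the paper's proof likewise passes to underlying objects of the filtration equivalence of Proposition \ref{Voila the even filtration}, with the only substantive input being the truncatedness argument of \cite[Remark 2.3.1]{HRW} that interchanges $\varinjlim_n$ with the limit defining global sections (recorded in Remark \ref{Postnikov filtration on functions}). Your "self-contained alternative" — expanding both sides via \eqref{EVP of an affine equation} and Proposition \ref{Even filtration is even periodic filtration} as limits over $\CAlg^\mathrm{evp}_A$ and checking the interchange directly — is in fact exactly the explicit version the paper writes out, so the two arguments coincide in content.
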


\begin{proof}
This follows by passing to the underlying objects of the filtrations of Proposition \ref{Voila the even filtration}. Or more explicitly, we see using
 a truncatedness argument as in \cite[Remark 2.3.1]{HRW} that the second map of
$$
\varinjlim_n\mathrm{fil}_\mathrm{ev}^{\ge n}(A)
\simeq \varinjlim_n \varprojlim_{B\in\CAlg^\mathrm{evp}_A}\tau_{\ge 2n}(B)
\simeq
\varprojlim_{B\in\CAlg^\mathrm{evp}_A}B.
$$
is an equivalence.  The right-most side may be identified with the $\E$-ring of functions on $\Spec(A)^\mathrm{evp}$ due to \eqref{EVP of an affine equation} and the fact that the functor $\sO :\mathrm{SpStk}^\mathrm{op}\to \CAlg$ preserves limits.
\end{proof}

Now that we know that it indeed geometrizes the even filtration, let us consider further properties of the even periodization.

\begin{prop}\label{Even stacks inside stacks}
The following statements hold:
\begin{enumerate}[label = (\alph*)]
\item The even periodization functor $\mX\mapsto \mX^\mathrm{evp}$ preserves all small colimits in $\mathrm{SpStk}$.\label{Corollary 10, a}
\item The functor $\varepsilon_! : \mathrm{SpStk}^\mathrm{evp}\to \mathrm{SpStk}$ is fully faithful. Its essential image is generated under small colimits by  $\Spec(A)$ where $A$ ranges over even periodic $\E$-rings.\label{Corollary 10, b}
\item A nonconnective spectral stack $\mX$ belongs to the essential image of $\mathrm{SpStk}^\mathrm{evp}$ inside $\mathrm{SpStk}$ as explained in
\ref{Corollary 10, b} if and only if $\mX^\mathrm{evp}\to\mX$ is an equivalence.\label{Corollary 10, c}
\item The canonical map $(\mX^\mathrm{evp})^\mathrm{evp}\to \mX^\mathrm{evp}$ is an equivalence for any $\mX\in\mathrm{SpStk}$.\label{Corollary 10, d}
\end{enumerate}
\end{prop}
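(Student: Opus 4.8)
The plan is to mirror the proof of Proposition~\ref{Connective stacks inside stacks}, since the formal setup of Construction~\ref{Functoriality of ev} parallels that of Construction~\ref{Cons connective localization}. Part~\ref{Corollary 10, a} is immediate: by Construction~\ref{Functoriality of ev} the even periodization functor is the composite $\varepsilon_!\circ\varepsilon^*$, and both $\varepsilon_!$ and $\varepsilon^*$ are left adjoints ($\varepsilon_!$ to $\varepsilon^*$, and $\varepsilon^*$ to $\varepsilon_*$), hence preserve all small colimits. So the real content is part~\ref{Corollary 10, b}, from which \ref{Corollary 10, c} and \ref{Corollary 10, d} will follow by purely formal adjunction arguments.

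For \ref{Corollary 10, b}, since $\varepsilon_!\dashv\varepsilon^*$, full faithfulness of $\varepsilon_!$ is equivalent to the unit transformation $\mathrm{id}_{\mathrm{SpStk}^\mathrm{evp}}\to\varepsilon^*\varepsilon_!$ being an equivalence. I would prove this by reduction to affines: both the source and the target of this transformation are colimit-preserving endofunctors of $\mathrm{SpStk}^\mathrm{evp}$ (the source trivially, the target as a composite of the two left adjoints above), and $\mathrm{SpStk}^\mathrm{evp}$ is generated under small colimits by the corepresentables $\Spec(A)$ with $A\in\CAlg^\mathrm{evp}$ (cf.\ Remark~\ref{Remark access why}\,\ref{la characterisation deux d'accessibilite} together with the fact that sheafification preserves colimits, as used in the proof of Proposition~\ref{EVP as a colimit}). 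It therefore suffices to check that $\Spec(A)\to\varepsilon^*\varepsilon_!\Spec(A)$ is an equivalence for even periodic $A$. Since $\varepsilon$ is the inclusion of a full subcategory, $\varepsilon_!$ carries the corepresentable $\Spec(A)\in\mathrm{SpStk}^\mathrm{evp}$ to the corepresentable $\Spec(A)\in\mathrm{SpStk}$ (left Kan extension along a fully faithful functor preserves representables, and the result is already an fpqc sheaf so no sheafification is performed), and restricting the latter back along $\varepsilon$ returns $\Spec(A)$ because $\Map_\CAlg(A,B)\simeq\Map_{\CAlg^\mathrm{evp}}(A,B)$ for $A,B\in\CAlg^\mathrm{evp}$. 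The description of the essential image then follows because $\varepsilon_!$ is fully faithful, preserves small colimits, and carries $\mathrm{Aff}^\mathrm{evp}$ onto the corepresentables $\Spec(A)$ with $A$ even periodic.

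Finally, for \ref{Corollary 10, c}: if $\mX^\mathrm{evp}\to\mX$ is an equivalence then $\mX\simeq\varepsilon_!\varepsilon^*\mX$ lies in the essential image; conversely, if $\mX\simeq\varepsilon_!\mathcal F$, the triangle identity exhibits the counit $\varepsilon_!\varepsilon^*\varepsilon_!\mathcal F\to\varepsilon_!\mathcal F$ as a left inverse to $\varepsilon_!$ applied to the unit $\mathcal F\to\varepsilon^*\varepsilon_!\mathcal F$, which is an equivalence by \ref{Corollary 10, b}, forcing the counit---i.e.\ the canonical map $\mX^\mathrm{evp}\to\mX$---to be an equivalence. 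Part \ref{Corollary 10, d} is then immediate: $\mX^\mathrm{evp}=\varepsilon_!\varepsilon^*\mX$ belongs to the essential image of $\varepsilon_!$, so \ref{Corollary 10, c} applies to it. The main point demanding care is the interaction of $\varepsilon_!$ with fpqc sheafification in the reduction to affines; I expect this to be harmless precisely because corepresentable functors are already fpqc sheaves (so sheafification is trivial there) and because $\varepsilon^*$ does land in sheaves by the cover-lifting property of Lemma~\ref{Lemma cover preservation and lifting}, but it is the step I would write out most carefully.
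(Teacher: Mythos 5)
Your proposal is correct and follows essentially the same route as the paper: (a) via the composite of the two left adjoints from Construction \ref{Functoriality of ev}, (b) by checking the unit $\mX\to\varepsilon^*\varepsilon_!\mX$ on corepresentables after reducing along colimits (the paper does this via Example \ref{Example for proof}, which is the same computation you carry out with Kan extensions of representables), and (c), (d) as formal consequences of (b). The extra care you flag about sheafification being unnecessary on corepresentables is exactly the point the paper leaves implicit, and your treatment of it is sound.
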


\begin{proof}
For \ref{Corollary 10, a}, observe that even periodization may be written via Construction \ref{Functoriality of ev} as the composite of two left adjoint functors $\varepsilon_!$and $\varepsilon^*$.

Claims \ref{Corollary 10, c} and \ref{Corollary 10, d} follow easily from \ref{Corollary 10, b},
To establish the latter,
we must show that the unit $\mX\to\varepsilon^*\varepsilon_!\mX$ is an equivalence for any $\mX\in \mathrm{SpStk}^\mathrm{evp}$. Since both sides commute with colimits in $\mX$, we may reduce to the corepresentable case, i.e.\ $\mX =\Spec(A)\vert_{\CAlg^\mathrm{evp}}$ for an arbitrary even periodic $\E$-ring $A$. But the claim is clear case since we have $\varepsilon_!(\Spec(A)\vert_{\CAlg^\mathrm{evp}})\simeq \Spec(A)$ in that case, see Example \ref{Example for proof}.
\end{proof}

In light of Proposition \ref{Even stacks inside stacks}, we will often identify even periodic spectral stacks, in the sense of Definition \ref{Def of even stacks} with their essential image inside $\mathrm{SpStk}$, and therefore treat them as a special class of nonconnective spectral stacks.

\begin{exun}
Let $\mM_\mathrm{Ell}^\mathrm{or}$ and $\overline{\mM}{}^\mathrm{or}_\mathrm{Ell}$ denote the \textit{moduli stack of oriented elliptic curves} in the sense of \cite{Elliptic 2} and its Deligne-Mumford-compactified version,
which are related to topological modular forms by the canonical equivalences of $\E$-rings
 $\sO(\mM_\mathrm{Ell}^\mathrm{or})\simeq \mathrm{TMF}$ and $\sO(\overline{\mM}{}_\mathrm{Ell}^\mathrm{or})\simeq \mathrm{Tmf}$. Since they admit \'etale covers by even periodic $\E$-rings: elliptic spectra obtained from Landweber's exact functor theorem, it follows that $\mM_\mathrm{Ell}^\mathrm{or}$ and $\overline{\mM}{}^\mathrm{or}_\mathrm{Ell}$ are even periodic spectral stacks. We will return to these in Section \ref{Section TMF}.
 \end{exun}

From the perspective of always implicitly embedding $\mathrm{SpStk}^\mathrm{evp}\hookrightarrow\mathrm{SpStk}$, we can express the universal property of the even periodizations as asking that for any $\mX\in \mathrm{SpStk}$ and $\mY\in \mathrm{SpStk}^\mathrm{evp}$,  the map
\begin{equation}\label{Univ prop of evp as colocalization}
\Map_{\mathrm{SpStk}}(\mY, \mX^\mathrm{evp})\to \Map_{\mathrm{SpStk}}(\mY, \mX),
\end{equation}
induced by the canonical map $\mX^\mathrm{evp}\to\mX$, is a homotopy equivalence.

\begin{remark}\label{Evp and limits}
It is immediate
from the universal property \eqref{Univ prop of evp as colocalization}  that  even periodization $\mX\mapsto \mX^\mathrm{evp}$, when viewed as a functor $\mathrm{SpStk}\to\mathrm{SpStk}^\mathrm{evp}$, preserves all small limits. This might seem confusing, since we saw in Proposition \ref{Even stacks inside stacks} that it preserves all colimits. In fact, it is due to the fully faithful embedding $\varepsilon_!:\mathrm{SpStk}^\mathrm{evp}\hookrightarrow\mathrm{SpStk}$ preserving colimits, but not necessarily limits. Indeed, if we view even periodic spectral stacks as a full subcategory of $\mathrm{SpStk}$, i.e.\ if the application of the functor $\varepsilon_!$ is left implicit, then the even periodization is by Definition \ref{Def of evp localization} given by the functor $\varepsilon^*$. That is a left adjoint and so clearly preserves limits -- so long as they are  computing in $\mathrm{SpStk}^\mathrm{evp}$ as opposed to in $\mathrm{SpStk}$.
\end{remark}

\begin{remark}[Even periodization as blowup along the classical locus]
 Ben Antieau suggested to the author that even periodization is akin to a blowup construction.  We attempt to expound upon this analogy here.
Recall from Subsection \ref{Section underlying classical stack} that the connective localization and the underlying classical stack of a spectral stack $\mX$ are all related by a canonical cospan
$$
\mX\xrightarrow{j}\mX^\mathrm{cn}\xleftarrow{i}\mX^\heart.
$$
If the spectral stack $\mX$ is even periodic, the weak $2$-periodicity requirement implies that the  quasi-coherent sheaf $\mL:=\Sigma^{-2}(\tau_{\ge 2}(\sO_{\mX}))$ is a \textit{line bundle}\footnote{Unlike in classical algebraic geometry, the notions of invertible sheaves and line bundles decouple in spectral algebraic geometry. That is because the former may locally over an affine $\Spec(A)$ be of the form $\Sigma^n(A)$ for any $n\in \mathbf Z$, whereas the latter must locally agree with the rank $1$ free $A$-module $A$.}, i.e.\ a locally free of rank 1 over $\mX^\mathrm{cn}$, and the evenness requirement implies there is a canonical cofiber sequence
$$
\tau_{\ge 2}(\sO_{\mX})\to \tau_{\ge 0}(\sO_{\mX})\to \pi_0(\sO_{\mX})
$$
in the stable $\i$-category $\QCoh(\mX^\mathrm{cn})$. We may rewrite this cofiber sequence as
$$
\Sigma^2(\mL)\xrightarrow{\beta}\sO_{\mX^\mathrm{cn}}\to i_*(\sO_{\mX^\heart}).
$$
Since its ``ideal sheaf" is therefore a $2$-shifted line bundle, this exhibits the closed immersion $i:\mX^\heart\to \mX$ as a \textit{$2$-shifted effective Cartier divisor} (see also \cite[Sections 2.2 \& 2.3]{Synthetic} for a related discussion). The effect of passing to the even periodization for an arbitrary spectral stack $\mX$ is therefore to replace the closed immersion $\mX^\heart\to \mX^\mathrm{cn}$ with a  (2-shifted) effective Cartier divisor,  not unlike how the universal property of a blowup in classical algebraic geometry
is to replace a chosen closed immersion with an effective Cartier divisor.
\end{remark}

\subsection{Even periodization of quasi-coherent sheaves}

 A variant of the even filtration $\mathrm{fil}^*_{\mathrm{ev}/A}(M)$ is introduced in \cite[Appendix A]{HRW} that applies not to $\E$-rings themselves, but instead to module spectra $M$ over a fixed $\E$-ring $A$. This construction is particularly natural from the perspective of the even periodization: it is merely the Postnikov filtration of quasi-coherent sheaves on $\Spec(A)^\mathrm{evp}$.

\begin{definition}[{\cite[Construction A.1.3, Remark A.1.4]{HRW}}]\label{Def of EVF for modules}
The \textit{even filtration for modules} is the functor $\Mod\to \mathrm{FilSp}$, denoted $(A, M)\mapsto \mathrm{fil}_{\mathrm{ev}/A}^{\ge *}(M)$,  given by the right Kan extension  of the double-speed Postnikov filtration $\tau_{\ge 2*}$ along the projection onto the first factor $\Mod\times_\CAlg \CAlg^\mathrm{ev}\to\Mod$. That is to say, for an $\E$-ring $A$ and $A$-module $M$, it is explicitly given by
$$
\mathrm{fil}^{\ge *}_{\mathrm{ev}/A}(M)\,\,\simeq\varprojlim_{B\in\CAlg_A^\mathrm{ev}}\tau_{\ge 2*}(M\o_A B).
$$
\end{definition}

\begin{prop}\label{Voila EVF for modules}
For an $\E$-ring $A$, let $\ell:\Spec(A)^\mathrm{evp}\to\Spec(A)$ be the canonical map. For any $A$-module $M$, there is a canonical and natural equivalence of filtered $\E$-rings
$$
\Gamma\big( (\Spec(A)^\mathrm{evp})^\mathrm{cn};\, \tau_{\ge 2*}(\ell^*\widetilde M)\big)\simeq \mathrm{fil}^{\ge *}_{\mathrm{ev}/A}(M),
$$
where $\widetilde M$ is the quasi-coherent sheaf on $\Spec(A)$ corresponding to the $A$-module $M$ under the equivalence of $\i$-categories $\QCoh(\Spec(A))\simeq \Mod_A$.
\end{prop}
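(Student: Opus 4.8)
The plan is to mimic the proof of Proposition \ref{Voila the even filtration}, carrying the quasi-coherent sheaf $\widetilde M$ along for the ride, and then to invoke a module-theoretic version of Proposition \ref{Even filtration is even periodic filtration}.

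First I would observe that the assignment sending a spectral stack $\mY$ equipped with a map $f:\mY\to\Spec(A)$ to the filtered spectrum $\Gamma(\mY^\mathrm{cn};\tau_{\ge 2*}(f^*\widetilde M))$ defines a functor $\mathrm{SpStk}_{/\Spec(A)}\to\mathrm{FilSp}$ which carries colimits to limits: pullback $f^*$ preserves colimits, $\QCoh$ carries colimits of spectral stacks to limits of $\i$-categories by Proposition \ref{Def of QCoh}, and both the double-speed Postnikov refinement of Construction \ref{Cons 3.22 Postnikov filtration on QCoh} and the global sections functor preserve limits. Applying this to the colimit presentation \eqref{EVP of an affine equation}, namely $\Spec(A)^\mathrm{evp}\simeq\varinjlim_{B\in\CAlg^\mathrm{evp}_A}\Spec(B)$, and noting that the pullback of $\widetilde M$ along $\Spec(B)\to\Spec(A)$ is the quasi-coherent sheaf attached to the $B$-module $M\o_A B$, I obtain
$$
\Gamma\big((\Spec(A)^\mathrm{evp})^\mathrm{cn};\,\tau_{\ge 2*}(\ell^*\widetilde M)\big)\,\simeq\,\varprojlim_{B\in\CAlg^\mathrm{evp}_A}\tau_{\ge 2*}(M\o_A B).
$$

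Next I would identify the right-hand side with $\mathrm{fil}^{\ge *}_{\mathrm{ev}/A}(M)$. Writing $\mathrm{fil}^{\ge *}_{\mathrm{evp}/A}(M):=\varprojlim_{B\in\CAlg^\mathrm{evp}_A}\tau_{\ge 2*}(M\o_A B)$ for the ``even periodic filtration for modules,'' the subcategory inclusion $\CAlg^\mathrm{evp}_A\subseteq\CAlg^\mathrm{ev}_A$ furnishes a comparison map $\mathrm{fil}^{\ge *}_{\mathrm{ev}/A}(M)\to\mathrm{fil}^{\ge *}_{\mathrm{evp}/A}(M)$, and the claim is that it is an equivalence. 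The argument is verbatim that of Proposition \ref{Even filtration is even periodic filtration}: the even filtration for modules of Definition \ref{Def of EVF for modules} satisfies eff descent (the module analogue of Theorem \ref{Eff descent citation result}, established in \cite[Appendix A]{HRW}), and so does its even periodic variant by the same proof; the map $A\to A\o_\mathbf S\MUP$ is eff, so both filtrations are computed as $\varprojlim_{\mathbf\Delta}$ of their values on the cobar resolution $A\o_\mathbf S\MUP^\bullet$; and since the $\E$-rings $A\o_\mathbf S\MUP^\bullet$ are complex periodic, any even $\E$-algebra over them is automatically even periodic (Lemma \ref{Key property}), so the relevant even and even periodic slice categories coincide, forcing the comparison map to be an equivalence. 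Combining the two displays gives the claimed natural equivalence.

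The only genuinely new input beyond Proposition \ref{Voila the even filtration} is the module-level eff descent statement, which is exactly what is recorded in \cite[Appendix A]{HRW}; once it is in hand the proof is formal, and I expect this to be the main (though minor) obstacle. The one point requiring a moment's care is that the colimit-to-limit functor in the first paragraph must be taken on $\mathrm{SpStk}_{/\Spec(A)}$ rather than on $\mathrm{SpStk}$, so that the sheaf $\widetilde M$ has a well-defined pullback to each term; this is harmless, since \eqref{EVP of an affine equation} is a colimit of objects of $\mathrm{SpStk}_{/\Spec(A)}$ computed in $\mathrm{SpStk}$, hence also in $\mathrm{SpStk}_{/\Spec(A)}$.
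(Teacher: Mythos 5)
Your proposal is correct and follows essentially the same route as the paper: identify $\ell^*\widetilde M$ with the compatible system $(M\o_A B)_{B\in\CAlg^\mathrm{evp}_A}$ via the limit description of $\QCoh(\Spec(A)^\mathrm{evp})$ and descent for the Postnikov filtration, then reduce the index category from $\CAlg^\mathrm{ev}_A$ to $\CAlg^\mathrm{evp}_A$ by rerunning the argument of Proposition \ref{Even filtration is even periodic filtration} with module coefficients. The only difference is that you spell out the module-level eff-descent step that the paper dispatches with a one-line appeal to that proposition, which is a harmless (and welcome) elaboration.
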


\begin{proof}
From the limit definition of quasi-coherent sheaves \eqref{QCoh as limit} and Proposition \ref{EVP as a colimit}, we  find that the quasi-coherent pullback $f^*:\Mod_B\to\Mod_A$ along the  maps of spectral stacks $\Spec(B)\to\Spec(A)$ from even periodic affines $\Spec(B)\in \mathrm{Aff}^\mathrm{evp}$ induce via the equivalence of $\i$-categories
$$
\QCoh(\Spec(A)^\mathrm{evp})\,\,\simeq \varprojlim_{B\in\CAlg_A^\mathrm{evp}}\Mod_B
$$
the pullback $\ell^*:\QCoh(\Spec(A))\to\QCoh(\Spec(A)^\mathrm{evp})$. That is to say, the quasi-coherent sheaf $\ell^*(\widetilde M)$ is determined by the system of modules $M\o_A B\in\Mod_B$ for all $B\in\CAlg_B^\mathrm{evp}$.
Since the Postnikov filtration on quasi-coherent sheaves is defined via descent as well, we find that the canonical map of filtered spectra
$$
\Gamma\big( (\Spec(A)^\mathrm{evp})^\mathrm{cn});\, \tau_{\ge 2*}(\ell^*\widetilde M)\big)\to\varprojlim_{B\in\CAlg_A^\mathrm{evp}}\tau_{\ge 2*}(M\o_A B).
$$
is an equivalence. Comparing with Definition \ref{Def of EVF for modules} of the even filtration for modules, it remains to show that the limit in question remains unaffected by restricting it to the subcategory $\CAlg_A^\mathrm{evp}\subseteq\CAlg_A$. This follows just like how we argued the corresponding fact  for the even filtration of $\E$-rings in
 Proposition \ref{Even filtration is even periodic filtration}.
\end{proof}

\begin{remark}
We can recover
Proposition \ref{Voila the even filtration} from Proposition \ref{Voila EVF for modules} by applying it to the unit $A$-module $A$. Indeed, it is clear from Definition \ref{Voila EVF for modules} that $\mathrm{fil}_{\mathrm{ev}/A}^{\ge *}(A)\simeq \mathrm{fil}_\mathrm{ev}^{\ge *}(A)$,  while on the other hand  $\ell^*\widetilde A\simeq\ell^* \sO_{\Spec(A)}\simeq \sO_{\Spec(A)^\mathrm{evp}}.$
\end{remark}

In light of Proposition \ref{Voila EVF for modules}, we may think of the functor
$$
\ell^*:\QCoh(\mX)\to \QCoh(\mX^\mathrm{evp})
$$
as the \textit{even periodization of quasi-coherent sheaves} on any $\mX\in \mathrm{SpStk}$.

\subsection{First examples of even periodization}
Were we to compare the  discussion of  even periodization in Section \ref{Section EVP} with that of the connective localization from Section \ref{Section connective localization}, we might expect we are due an explicit description of even periodization for geometric stacks in the spirit of Example \ref{connective localization for geometric stacks}.
However, this is complicated by an essential difference between even periodization and connective localization: while the connective spectral stack $\mathrm{Spec}(A)^\mathrm{cn}\simeq\Spec(\tau_{\ge 0}(A))$ is affine for any $\E$-ring $A$,  we will see in Proposition \ref{EVP affines} that the even periodic spectral stack $\Spec(A)^\mathrm{evp}$ is only affine if $A\in \CAlg^\mathrm{evp}$.

We can nevertheless give an explicit description, albeit in a much more limited context. For this purpose, we give a slight generalization of Definition \ref{Def eff}:

\begin{definition}\label{Definition periodically eff}
A map of nonconnective spectral stacks $\Spec(B)\to \mX$ is \textit{periodically eff} if for every $\mathcal Y\in \mathrm{Aff}^\mathrm{evp}_{/\mX}$, the map $\mathcal Y\times_{\mX}\Spec(B)\to\Spec(B)$ is an fpqc cover in $\mathrm{Aff}^\mathrm{evp}$.
\end{definition}

\begin{exun}\label{Example 1 of evp}
In the affine case $\mX\simeq\Spec(A)$, a periodically eff map $\Spec(B)\to\mX$ is equivalent to an eff map of $\E$-rings $A\to B$ in the sense of Definition \ref{Def eff}, together with the additional requirement that the $\E$-ring $B$ must be even periodic. Since we saw in the proof of Proposition \ref{Even filtration is even periodic filtration} that $\mathbf S\to \mathrm{MUP}$ is eff for any $\E$-ring structure on the periodic complex bordism spectrum, the corresponding map of affines $\Spec(\mathrm{MUP})\to\Spec(\mathbf S)$ is an example of a periodically eff map.
\end{exun}

\begin{prop}\label{Computing the evening}
Let $\Spec(B)\to \mX$ be a periodically eff map of nonconnective spectral stakcs. Then the canonical map
$$
\varinjlim_{\,\,\,\,\mathbf{\Delta}^\mathrm{op}}\Spec(B^\bullet)\to \mX^\mathrm{evp}
$$
is an equivalence where $B^\bullet = \sO\big(\Spec(B)^{\times_{\mX}[\bullet]}\big)$. The even periodization $\mX^\mathrm{evp}$ is in this case in particular a geometric spectral stack.
\end{prop}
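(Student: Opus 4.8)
The plan is to exhibit $\varinjlim_{\mathbf\Delta^{\mathrm{op}}}\Spec(B^\bullet)$ as the \v{C}ech nerve of $\Spec(B)\to\mX$ and to recognize that, after restricting everything along $\CAlg^{\mathrm{evp}}\subseteq\CAlg$, this \v{C}ech nerve computes $\mX^{\mathrm{evp}}$. First I would unwind the periodically eff hypothesis inductively: it forces $B$ to be even periodic (the codomain $\Spec(B)$ of the required cover lies in $\mathrm{Aff}^{\mathrm{evp}}$), and then, applying the hypothesis with $\mathcal Y=\Spec(B)^{\times_{\mX}[n]}$ and using $\Spec(B)^{\times_{\mX}[n+1]}\simeq\Spec(B)^{\times_{\mX}[n]}\times_{\mX}\Spec(B)$, every iterated fibre product $\Spec(B)^{\times_{\mX}[n]}$ is an affine corepresented by an even periodic $\E$-ring. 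Because these fibre products are affine, the unit maps $\Spec(B)^{\times_{\mX}[n]}\to\Spec\!\big(\sO(\Spec(B)^{\times_{\mX}[n]})\big)=\Spec(B^n)$ are equivalences, so $\Spec(B^\bullet)$ \emph{is} the \v{C}ech nerve of $\Spec(B)\to\mX$; in particular $\varinjlim_{\mathbf\Delta^{\mathrm{op}}}\Spec(B^\bullet)$ is a colimit of even periodic affines. By Proposition \ref{Even stacks inside stacks} it therefore lies in the essential image of the fully faithful $\varepsilon_!$, and the tautological augmentation to $\mX$ factors uniquely through $\mX^{\mathrm{evp}}$ by the colocalization property \eqref{Univ prop of evp as colocalization}; this is the canonical map of the statement.

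To prove it is an equivalence I would apply the restriction functor $\varepsilon^*=(-)\vert_{\CAlg^{\mathrm{evp}}}$ of Construction \ref{Functoriality of ev}, which, being both a left and a right adjoint, preserves all limits and colimits. Since source and target both lie in the essential image of $\varepsilon_!$ and $\varepsilon^*\varepsilon_!\simeq\id$, it suffices to check that $\varepsilon^*$ of the map is an equivalence in $\mathrm{SpStk}^{\mathrm{evp}}$. On the target, $\varepsilon^*\mX^{\mathrm{evp}}=\varepsilon^*\varepsilon_!\varepsilon^*\mX\simeq\mX\vert_{\CAlg^{\mathrm{evp}}}$. On the source, $\varepsilon^*$ commutes with forming the \v{C}ech nerve (it preserves limits) and with its geometric realization, so $\varepsilon^*\big(\varinjlim_{\mathbf\Delta^{\mathrm{op}}}\Spec(B^\bullet)\big)$ is the realization, inside $\mathrm{SpStk}^{\mathrm{evp}}$, of the \v{C}ech nerve of $\Spec(B)\vert_{\CAlg^{\mathrm{evp}}}\to\mX\vert_{\CAlg^{\mathrm{evp}}}$.

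The crux is then to show that $\Spec(B)\vert_{\CAlg^{\mathrm{evp}}}\to\mX\vert_{\CAlg^{\mathrm{evp}}}$ is an effective epimorphism of fpqc sheaves on $\mathrm{Aff}^{\mathrm{evp}}$. Given $\Spec(A)\in\mathrm{Aff}^{\mathrm{evp}}$ and a point of $\mX(A)$, i.e.\ an object $\mathcal Y=\Spec(A)\to\mX$ of $\mathrm{Aff}^{\mathrm{evp}}_{/\mX}$, the periodically eff hypothesis furnishes an fpqc cover $\mathcal Y\times_{\mX}\Spec(B)\to\mathcal Y$ in $\mathrm{Aff}^{\mathrm{evp}}$ along which the chosen point tautologically lifts to $\Spec(B)$; thus the map is locally surjective, hence an effective epimorphism. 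Since \v{C}ech nerves of effective epimorphisms are groupoid objects, and such are effective in an $\infty$-topos — a fact I would invoke in the big topos $\widehat{\mathrm{Shv}}_{\mathrm{fpqc}}(\mathrm{Aff}^{\mathrm{evp}})$ and then transport to $\mathrm{SpStk}^{\mathrm{evp}}$ via the accessibility bookkeeping of Remark \ref{sheafification issues} — this realization recovers $\mX\vert_{\CAlg^{\mathrm{evp}}}$. Hence $\varepsilon^*$ of the map is an equivalence, and applying $\varepsilon_!$ together with its full faithfulness upgrades it to $\varinjlim_{\mathbf\Delta^{\mathrm{op}}}\Spec(B^\bullet)\simeq\mX^{\mathrm{evp}}$.

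For the final clause, geometricity is then formal: we have presented $\mX^{\mathrm{evp}}$ as the geometric realization of the cosimplicial $\E$-ring $B^\bullet$, and since each coface map $B^n\to B^{n+1}$ is, up to the simplicial identities, a base change of a projection $B\to B^1$ — which is faithfully flat, being an fpqc cover in $\mathrm{Aff}^{\mathrm{evp}}$ — all face maps of $B^\bullet$ are faithfully flat, so by the discussion following Definition \ref{geometric stack} the stack $\mX^{\mathrm{evp}}$ is geometric. I expect the main obstacle to be this middle passage: isolating exactly that "periodically eff" yields an effective epimorphism over $\mathrm{Aff}^{\mathrm{evp}}$, and checking that the \v{C}ech-descent argument is legitimate in the accessible-sheaf setting rather than an honest $\infty$-topos, keeping careful track — as in Remark \ref{sheafification issues} — of sheafification and of the fact that $\varepsilon^*$ commutes with the limits at issue.
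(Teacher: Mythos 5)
Your proof is correct and takes essentially the same route as the paper's: the periodically eff hypothesis makes the restricted map $\Spec(B)\vert_{\CAlg^{\mathrm{evp}}}\to\mX\vert_{\CAlg^{\mathrm{evp}}}$ an fpqc cover whose \v{C}ech nerve consists of even periodic affines $\Spec(B^\bullet)$, its geometric realization recovers $\mX\vert_{\CAlg^{\mathrm{evp}}}$, and applying the colimit-preserving fully faithful $\varepsilon_!$ yields the statement. You simply spell out steps the paper leaves implicit, namely the induction showing the iterated fibre products are even periodic affines, the effective-epimorphism/descent argument in the accessible-sheaf setting, and the faithful flatness of the face maps giving geometricity.
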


\begin{proof}
By the definition of periodically eff maps, it follows that $\Spec(B)\vert_{\CAlg^\mathrm{evp}}\to\mX\vert_{\CAlg^\mathrm{evp}}$ is an fpqc cover in $\mathrm{SpStk}^\mathrm{evp}$, as well as that its \v{C}ech nerve is given by
$$
\Spec(B)^{\times_{\mX}[\bullet]}\simeq \Spec(B^\bullet)
$$
for a cosimplicial object $B^\bullet$ in $\CAlg^\mathrm{evp}$. It follows that the map
$$
\varinjlim_{\,\,\,\,\mathbf{\Delta}^\mathrm{op}}\Spec(B^\bullet)\vert_{\CAlg^\mathrm{evp}}\to \mX\vert_{\CAlg^\mathrm{evp}}
$$
is an equivalence in $\mathrm{SpStk}^\mathrm{evp}$. This gives rise to the desired equivalence upon applying the colimit-preserving fully faithful embedding $\varepsilon_! :\mathrm{SpStk}^\mathrm{evp}\to\mathrm{SpStk}$ of Proposition \ref{Even stacks inside stacks}, and recalling that $\mX^\mathrm{evp}\simeq \varepsilon_!(\mX\vert_{\CAlg^\mathrm{evp}})$ by Definition \ref{Def of evp localization}.
\end{proof}

\begin{exun}[Even periodization of even affines over $\mathbf Q$]\label{Rational even localization}
Fix any even $\E$-algebra $R$ over $\Q$. The \textit{$2$-shifted multiplicative group over $R$} is the nonconnective spectral $R$-stack given by
$$
\G_{m, R}[\pm 2] :=\Spec(R[\beta^{\pm 1}])
$$
where $|\beta|=2$. Note here the $R$-module $R[\beta^{\pm 1}]\simeq\bigoplus_{n\in \mathbf Z}\Sigma^{2n}(R)$ carries a canonical (and unique) $\E$-algebra structure over $R$  by virtue of the $\mathbf Q$-algebra assumption.
 By  a standard sum-reindexing argument, we can identify the \v{C}ech nerve of the map $\G_{m, R}[\pm 2]\to\Spec(R)$ with the bar construction for the canonical $\G_{m, R}$-action on $\G_{m, R}[\pm 2]$, corresponding to the evident grading on $R[\beta]$. Since the map $R\to R[\beta^{\pm 1}]$ is clearly eff and $R[\beta^{\pm 1}]$ is even periodic, we may use Proposition \ref{Computing the evening} to identify the even periodization of $\Spec(R)$ with
$$
\Spec(R)^\mathrm{evp}\simeq \G_{m, R}[\pm 2]/\G_{m, R}.
$$
Since the functor of points of $\G_{m, R}[\pm 2]$ amounts to picking out invertible elements in $\pi_2$ of $\E$-$R$-algebras, we obtain the explicit description of the functor of points $\CAlg_R\to\mathrm{Ani}$ of the even periodization of $\Spec(R)$ as
\begin{equation}\label{even periodization over Q}
\Spec(R)^\mathrm{evp}(A) \simeq  \begin{cases} * & \text{if $A$ is weakly $2$-periodic,}\\ \emptyset & \text{otherwise} \end{cases}
\end{equation}
for any $A\in\CAlg_R$.
\end{exun}

\begin{remark}
As we will see in Proposition \ref{Evp for even affines over MU}, the argument from Example \ref{Rational even localization} can be extended more generally to work over the base $\mathrm{MU}$. The one aspect unique to the rational setting, in the sense that it does not hold over more general $\mathrm{MU}$-algebras, is the universal property \eqref{even periodization over Q}. This is roughly due to the the spectral algebro-geometric divergence between $\mathbf G_{m, R} = \Spec(R[\mathbf Z])$ and $\mathrm{GL}_{1, R} = \Spec(R[\Omega^\i(\mathbf S)])$ whenever $R$ is not a $\mathbf Q$-algebra. However, see the
final part of this subsection, in particular Construction \ref{Remark GL_1[2]/GL_1} and after, for a discussion of a universal property such as \eqref{even periodization over Q} over the sphere spectrum.
\end{remark}

The computation from Example \ref{Rational even localization} breaks down
over the
 sphere spectrum $\mathbf S$. The issue is the well-known failure of the spectrum $\bigoplus_{n\in \mathbf Z}\Sigma^{2n}(\mathbf S)$ to support an $\E$-ring structure (or really anything more than an $\mathbb E_2$-structure). Equivalently, in contrast to Construction \ref{Const of shearing}, the shearing autoequivalence of the $\i$-category of graded spectra $(-)^\shear:\mathrm{GrSp}\to\mathrm{GrSp}$ does not support a symmetric monoidal (or really anything more than $\mathbb E_2$-monoidal) structure.

 Nevertheless, a computation of the even periodization of $\mathrm{Spec}(\mathbf S)$ is entirely feasible. It may be given by a similar functor of points description as \eqref{even periodization over Q}, and in fact ends up reproducing a familiar object.

\begin{definition}\label{Definition of mM}
The \textit{chromatic base stack} is given as a functor $\mathcal M :\CAlg\to\Ani$ by
$$
\mathcal M(A) \simeq \begin{cases} * & \text{if $A$ is complex periodic,}\\ \emptyset & \text{otherwise.} \end{cases}
$$
\end{definition}

 This stack was studied extensively in our previous work \cite{ChromaticCartoon}, \cite{ChromaticFiltration}, where we demonstrated its close connection to chromatic homotopy theory, hence the name. We summarize here the relevant results concerning it.
 
 \begin{theorem}[{\cite[Proposition 2.1.9, Corollary 2.3.7]{ChromaticCartoon}}]\label{On M}
The chromatic base stack $\mM$ is equivalent to the moduli stack of oriented formal groups $\M$. It is a geometric nonconnective spectral stack, and  any choice of $\E$-ring structure on $\mathrm{MUP}$ exhibits a simplicial presentation 
$$
\mM\simeq \varinjlim_{\,\,\,\,\mathbf\Delta^\mathrm{op}}\Spec(\MUP^\bullet),
$$
where $\mathrm{MUP}^{\bullet}\simeq \mathrm{MUP}^{\otimes_\mathbf S [\bullet]}$ is the cosimplicial cobar construction for $\mathbf S\to\MUP$.
\end{theorem}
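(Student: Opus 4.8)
The plan is to deduce everything from the single fact that $\Spec(\mathrm{MUP})\to\mM$ is an fpqc cover whose \v{C}ech nerve is the cobar construction on $\mathrm{MUP}$, following the arguments of \cite[Section 2]{ChromaticCartoon}. I would first record the identification $\mM\simeq\M$ with the moduli stack of oriented formal groups. This is a matter of unwinding definitions together with Lurie's characterization of complex periodicity (\cite{Elliptic 2}): an $\E$-ring $A$ admits an orientation of its Quillen formal group $\Spf(A^0(\mathbf{CP}^\infty))$ precisely when $A$ is complex periodic, in which case the formal group is canonically the Quillen one and the anima of orientations is contractible, so that $\M(A)$ is contractible for complex periodic $A$ and empty otherwise --- which is exactly the functor $\mM$ of Definition \ref{Definition of mM}. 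In particular $\mM$ inherits from $\M$ the property of being an accessible fpqc sheaf, which unwinds to faithfully flat descent of complex periodicity: the nonobvious direction is that $A$ is complex periodic whenever some faithfully flat $A\to B$ has $B$ complex periodic, which follows from faithfully flat descent of flatness, of isomorphisms, and of complex orientability.

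For geometricity and the simplicial presentation, fix an $\E$-ring structure on $\mathrm{MUP}$; since $\mathrm{MUP}$ is complex periodic there is a canonical map $\Spec(\mathrm{MUP})\to\mM$, and I claim it is an fpqc cover. Given any point $\Spec(C)\to\mM$ --- that is, a complex periodic $C$ --- the base change $C\to C\otimes_{\mathbf S}\mathrm{MUP}$ is faithfully flat by the $\mathbf C$ff property of $\mathbf S\to\mathrm{MUP}$, established via Quillen's theorem much as in the proof of Proposition \ref{Even filtration is even periodic filtration}, and $\Spec(C\otimes_{\mathbf S}\mathrm{MUP})$ lifts to $\Spec(\mathrm{MUP})$ along $\mathrm{MUP}\to C\otimes_{\mathbf S}\mathrm{MUP}$; hence the map is an effective epimorphism for the fpqc topology. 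Because $\mM$ is a subfunctor of the terminal object $\Spec(\mathbf S)$, hence a monomorphism, all the fibre products over $\mM$ collapse to absolute products, so the \v{C}ech nerve of this cover is $\Spec(\mathrm{MUP}^\bullet)$ with $\mathrm{MUP}^\bullet\simeq\mathrm{MUP}^{\otimes_{\mathbf S}[\bullet]}$. By fpqc descent $\mM\simeq\varinjlim_{\mathbf\Delta^\mathrm{op}}\Spec(\mathrm{MUP}^\bullet)$, and since the cosimplicial face maps of $\mathrm{MUP}^\bullet$ are themselves faithfully flat --- a further instance of the $\mathbf C$ff property, applied to the complex orientable rings $\mathrm{MUP}^{\otimes_{\mathbf S}n}$ --- this presentation exhibits $\mM$ as a geometric nonconnective spectral stack in the sense of Definition \ref{geometric stack}.

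The main obstacle is the faithful flatness input that feeds both the cover property and the faithful flatness of the cobar face maps: one must know that $\mathrm{MUP}_*(C)$ is faithfully flat over $\pi_*(C)$ for every complex orientable $C$. This is precisely the $\mathbf C$ff-ness of $\mathbf S\to\mathrm{MUP}$, which by the reindexing $C\otimes_{\mathbf S}\mathrm{MUP}\simeq\bigoplus_{n\in\mathbf Z}\Sigma^{2n}(C\otimes_{\mathbf S}\mathrm{MU})$ reduces to Quillen's computation of $\mathrm{MU}_*(\mathrm{MU})$, exactly as in the proof of Proposition \ref{Even filtration is even periodic filtration}; the only other delicate point is the faithfully flat descent of complex periodicity needed to know that $\mM$ is a sheaf in the first place.
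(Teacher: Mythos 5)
The paper itself contains no proof of this statement---it is imported verbatim from \cite[Proposition 2.1.9, Corollary 2.3.7]{ChromaticCartoon}---so there is no internal argument to compare against; but your reconstruction follows what is, by all indications (including this paper's own aside that the proof in the cited source is ``a direct application of Quillen's Theorem''), the same route as that source. Namely: Lurie's orientation theory \cite{Elliptic 2} identifies $\M(A)$ with the complex-periodicity condition of Definition \ref{Definition of mM}; Quillen's theorem makes $C\to C\otimes_{\mathbf S}\MUP$ faithfully flat for every complex periodic $C$, so $\Spec(\MUP)\to\mM$ is an fpqc cover; the observation that $\mM\to\Spec(\mathbf S)$ is a monomorphism collapses the \v{C}ech nerve to the absolute cobar construction $\Spec(\MUP^{\otimes_{\mathbf S}[\bullet]})$, descent gives the colimit presentation, and faithful flatness of the coface maps (again Quillen, applied to the complex orientable rings $\MUP^{\otimes_{\mathbf S}n}$) gives geometricity in the sense of Definition \ref{geometric stack}. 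All of this is sound.

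The one place you are too quick is the sheaf property of $\mM$. Your clause that the nonobvious direction of descent for complex periodicity ``follows from faithfully flat descent of flatness, of isomorphisms, and of complex orientability'' treats descent of complex orientability as a standard fact, but it is not formal: a complex orientation is a class in $A^2(\mathbf{CP}^\infty)$ restricting to the generator, and since $\mathbf{CP}^\infty$ is not a compact object, $A$-cohomology does not base change along (faithfully) flat maps---there are genuine inverse-limit and $\lim^1$ issues---so ``the generator is hit after base change'' does not formally descend. The legitimate route, which you also invoke, is to inherit sheafiness from $\M$, whose fpqc descent comes from its moduli description (descent for formal groups and for orientations, the latter being a mapping-space condition in module $\infty$-categories), exactly as in \cite{ChromaticCartoon}; alternatively one must use a characterization of complex periodicity in terms of $A$-\emph{homology} of $\mathbf{CP}^\infty$, which does commute with flat base change. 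So your argument stands as written only because of the appeal to $\M$, and the parenthetical descent-of-orientability claim should either be removed or supplied with such an argument rather than left as a freestanding assertion.
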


Since any even periodic $\E$-ring is in particular complex periodic, and a map to $\mM$ is essentially unique if it exists, the inclusion $\mathrm{SpStk}^\mathrm{evp}\hookrightarrow\mathrm{SpStk}$ factors canonically through the full subcategory $\mathrm{SpStk}_{/\mM}\subseteq \mathrm{SpStk}$ - see for instance Proposition \ref{complex periodic stacks}. In particular, any even periodic spectral stack $\mX$ admits a canonical and essentially unique map $\mX\to\mM$.

\begin{corollary}\label{Even localization of the sphere}
The canonical map  $\Spec(\mathbf S)^\mathrm{evp}\to \mM$ is an equivalence in $\mathrm{SpStk}$.
\end{corollary}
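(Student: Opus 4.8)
The plan is to reduce both sides to the same simplicial presentation coming from a complex-bordism cobar resolution. First I would invoke Example~\ref{Example 1 of evp}, which records that the map $\Spec(\mathrm{MUP})\to\Spec(\mathbf S)$ is periodically eff for any choice of $\E$-ring structure on the periodic complex bordism spectrum $\mathrm{MUP}$ (this in turn rests on the fact, established in the proof of Proposition~\ref{Even filtration is even periodic filtration}, that $\mathbf S\to\mathrm{MUP}$ is eff). Proposition~\ref{Computing the evening} then applies verbatim and presents the even periodization as a geometric realization
$$
\Spec(\mathbf S)^\mathrm{evp}\;\simeq\;\varinjlim_{\,\,\,\,\mathbf\Delta^\mathrm{op}}\Spec(B^\bullet),\qquad B^\bullet=\sO\big(\Spec(\mathrm{MUP})^{\times_{\Spec(\mathbf S)}[\bullet]}\big),
$$
exhibiting $\Spec(\mathbf S)^\mathrm{evp}$ in particular as a geometric spectral stack.

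The second step is to identify the cosimplicial $\E$-ring $B^\bullet$. Since iterated fibre products of affines over $\Spec(\mathbf S)$ correspond to iterated relative smash products over the sphere, one has $\Spec(\mathrm{MUP})^{\times_{\Spec(\mathbf S)}[n]}\simeq\Spec(\mathrm{MUP}^{\otimes_\mathbf S[n]})$, so $B^\bullet\simeq\mathrm{MUP}^\bullet$ is exactly the cosimplicial cobar construction on the unit map $\mathbf S\to\mathrm{MUP}$. Comparing with the simplicial presentation $\mM\simeq\varinjlim_{\mathbf\Delta^\mathrm{op}}\Spec(\mathrm{MUP}^\bullet)$ recorded in Theorem~\ref{On M} then produces an equivalence $\Spec(\mathbf S)^\mathrm{evp}\simeq\mM$ in $\mathrm{SpStk}$.

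Finally I would check that this abstract equivalence is implemented by the canonical map $\Spec(\mathbf S)^\mathrm{evp}\to\mM$ appearing in the statement. The relevant point is that $\mM$ is a $(-1)$-truncated spectral stack — its value on any $\E$-ring is either a point or empty, by Definition~\ref{Definition of mM} — so for any spectral stack $\mathcal Z$ the mapping anima $\Map_{\mathrm{SpStk}}(\mathcal Z,\mM)$ is itself empty or contractible. Since $\Spec(\mathbf S)^\mathrm{evp}$ is an even periodic spectral stack it admits at least one map to $\mM$ (the canonical one, and also the equivalence just built), hence $\Map_{\mathrm{SpStk}}(\Spec(\mathbf S)^\mathrm{evp},\mM)$ is contractible and the two coincide. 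I do not expect a genuine obstacle: the argument is short once Proposition~\ref{Computing the evening} and Theorem~\ref{On M} are in hand, and the only mildly delicate bookkeeping is the identification $B^\bullet\simeq\mathrm{MUP}^\bullet$ together with the compatibility of the two geometric realizations over $\mathbf\Delta^\mathrm{op}$.
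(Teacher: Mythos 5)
Your proposal is correct and follows essentially the same route as the paper: the periodically eff map $\Spec(\mathrm{MUP})\to\Spec(\mathbf S)$ from Example~\ref{Example 1 of evp}, the simplicial presentation from Proposition~\ref{Computing the evening} with $B^\bullet\simeq\mathrm{MUP}^\bullet$, and the comparison with Theorem~\ref{On M}. Your final check that the canonical map implements the equivalence, via $(-1)$-truncatedness of $\mM$, is a point the paper leaves implicit (it notes just before the corollary that maps to $\mM$ are essentially unique when they exist), so it is welcome but not a genuinely different argument.
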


\begin{proof}
By Example \ref{Example 1 of evp}, the canonical map $\Spec(\MUP)\to\Spec(\mathbf S)$ is periodically eff. The even periodization of the terminal spectral stack $\Spec(S)$ may therefore be expressed as the geometric realization
$$
\Spec(\mathbf S)^\mathrm{evp}\simeq \varinjlim_{\,\,\,\,\mathbf\Delta^\mathrm{op}}\Spec(\MUP^\bullet).
$$
The claim now follows from the analogous simplicial description of the chromatic base stack from Theorem \ref{On M}.
\end{proof}

\begin{remark}
By combining the above identification $\Spec(\mathbf S)^\mathrm{evp}\simeq \mM$ with  Proposition \ref{Voila the even filtration}, we obtain a description of the even filtration for the sphere spectrum as the Postnikov filtration of quasi-coherent sheaves on the chromatic base stack, in the sense that
$$
\mathrm{fil}_\mathrm{ev}^{\ge *}(\mathbf S)\,\simeq\, \Gamma(\mM^\mathrm{cn};\, \tau_{\ge 2*}(\sO_{\mM})).
$$
The is compatible with the observations from \cite{HRW} and \cite{ChromaticCartoon}, which identify each side of this equivalence in turn with the Adams-Novikov filtration for the sphere spectrum. We will give a generalization of this comparison in Proposition \ref{EVPL vs CPL}.
\end{remark}

The spectral stacks $\mM$ and $\Spec(\mathbf S)$ play the role of the terminal presheaf on $\CAlg^{\mathbf C\mathrm p}$ and $\CAlg$ respectively.
We could also consider a middle-ground between them, playing this role for weakly $2$-periodic $\E$-rings.
Said spectral stack admits an explicit quotient description, analogous to the description over $\mathbf Z$ in  Example \ref{Rational even localization}, which we now outline.

\begin{cons}\label{Remark GL_1[2]/GL_1}
The \textit{$2$-shifted smooth affine line} may be given by
$$
\mathbf A^1_\mathrm{sm}[2]\,\simeq\, \Spec(\Sym^*_\mathbf S(\Sigma^{-2}(\mathbf S))),
$$
or as a functor of points by $A\mapsto \Omega^{\infty -2}(A)$. Considering the canonical map of spectra
$$
\Sigma^{-2}(\mathbf S)\,\simeq\, \mathrm{Sym}^1_S(\Sigma^{-2}(\mathbf S))\to \mathrm{Sym}^*_\mathbf S(\Sigma^{-2}(\mathbf S))
$$
as giving an element $\beta\in \pi_{-2}(\mathrm{Sym}_\mathbf S^*(\Sigma^{-2}(\mathbf S)))$, we define
$$
\GL_1[\pm 2]:=\Spec(\mathrm{Sym}^*_\mathbf S(\Sigma^{-2}(\mathbf S))[\beta^{-1}]).
$$
Its functor of points picks out for an $\E$-ring $A$ all those connected components in $\Omega^{\infty -2}(A)$ for which the corresponding element in $\pi_{-2}(A)$ (the image of $\beta$) admits a multiplicative inverse inside $\pi_*(A)$. It is easy to see that the automorphisms of such a choice are given by $\Omega^\i(A)\times_{\pi_0(A)}\pi_0(A)^\times\simeq \GL_1(A)$, exhibiting a $\GL_1$-action on $\GL_1[\pm 2]$.
The quotient stack $\GL_1[\pm 2]/\GL_1$ therefore by construction \textit{classifies weakly $2$-periodic $\E$-rings}, in the sense that
$$
(\GL_1[\pm 2]/\GL_1)(A) \simeq  \begin{cases} * & \text{if $A$ is weakly $2$-periodic,}\\ \emptyset & \text{otherwise} \end{cases}
$$
holds for any $\E$-ring $A$.
\end{cons}

There are essentially unique maps of nonconnective spectral stacks
$$
\mM\to\GL_1[\pm 2]/\GL_1 \to\Spec(\mathbf S).
$$
When restricted along $\CAlg^\mathrm{evp}\subseteq\CAlg$, all three coincide with the terminal presheaf $A\mapsto *$, showing that we have canonical equivalences
\begin{equation}\label{Equation EVP of GL_1[2]/GL_1}
\mM\simeq\mM^\mathrm{evp}\simeq(\GL_1[\pm 2]/\GL_1)^\mathrm{evp} \simeq\Spec(\mathbf S)^\mathrm{evp}.
\end{equation}
upon the passage to even periodization.

Identification \eqref{Equation EVP of GL_1[2]/GL_1} in particular shows that the even periodization functor $\mX\mapsto \mX^\mathrm{evp}$ is not faithful. This may also be seen from the following simple example.

\begin{exun}\label{Example F_2^{tC_2}}
Consider the $\E$-ring $\mathbf F_2^{\mathrm t\mathrm C_2}$, the Tate construction for the trivial action of the cyclic group $\mathrm C_2$ on the Eilenberg-MacLane spectrum $\mathbf F_2$. On the level of homotopy groups, we have an isomorphism of graded rings $\pi_*(\mathbf F_2^{\mathrm t\mathrm C_2}) \simeq \mathbf F_2[s^{\pm 1}]$ where the invertable variable $s$ is in graded degree $-1$. It follows that $\mathbf F_2^{\mathrm t\mathrm C_2}$ admits no ring spectra maps into even ring spectra, much less any $\E$-ring maps into even periodic $\E$-ring. Its even periodization is therefore trivial, i.e.\ the initial spectral stack $\Spec(\mathbf F_2^{\mathrm t\mathrm C_2})=\emptyset$.
\end{exun}

\subsection{Even periodization of affines}

One noteworthy aspect of the chromatic base stack $\mM$ is that the sphere spectrum may be identified as $\mathbf S\simeq \sO(\M)$ with its $\E$-ring of functions \cite[Proposition 2.4.1]{ChromaticCartoon}. This may in light of Corollary \ref{Even localization of the sphere} be viewed as  an instance of a more general result about even periodization.

For this, let the full subcategory of \textit{even-convergent $\E$-rings} $\CAlg^\mathrm{evc}\subseteq\CAlg$ consists of all those $\E$-rings $A$ for which the canonical map $A\to\varinjlim_n \mathrm{fil}_\mathrm{ev}^{\ge n}(A)$ is an equivalence. 

\begin{remark}
Any even $\E$-ring is oviously even-convergent, so that $\CAlg^\mathrm{ev}\subseteq\CAlg^\mathrm{evc}$. By the upcoming work of Achim Krause and Robert Burklund, announced in
\cite[Remark 8.5]{Piotr's even}, any connective $\E$-ring is even-convergent as well, so that $\CAlg^\mathrm{cn}\subseteq\CAlg^\mathrm{evp}$.
\end{remark}

\begin{prop}\label{Prop champs affines}
The functor $(\CAlg^{\mathrm{evc}})^\mathrm{op}\to\mathrm{SpStk}^\mathrm{evp}$, given by $A\mapsto \Spec(A)^\mathrm{evp}$,
is fully faithful. The $\E$-ring of global sections functor $\mX\mapsto \sO(\mX)$ is its left inverse.
\end{prop}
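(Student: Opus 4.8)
The plan is to exhibit the two functors $A\mapsto\Spec(A)^{\mathrm{evp}}$ and $\mX\mapsto\sO(\mX)$ as an adjunction restricted to the relevant subcategories, and then to use Corollary \ref{Ring of functions computation} to check that one composite is the identity. Concretely, for $\E$-rings $A,B$ with $A\in\CAlg^{\mathrm{evc}}$ and $B$ even periodic, the universal property of even periodization as a colocalization (the equivalence \eqref{Univ prop of evp as colocalization}) gives
$$
\Map_{\mathrm{SpStk}}(\Spec(B),\Spec(A)^\mathrm{evp})\simeq\Map_{\mathrm{SpStk}}(\Spec(B),\Spec(A))\simeq\Map_{\CAlg}(A,B),
$$
the last step by the Yoneda lemma for affines. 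So the functor $A\mapsto\Spec(A)^\mathrm{evp}$ corepresents, on the generating affines $\Spec(B)$ of $\mathrm{SpStk}^\mathrm{evp}$, the functor $B\mapsto\Map_\CAlg(A,B)$. First I would upgrade this pointwise statement to a genuine adjunction: since every object of $\mathrm{SpStk}^\mathrm{evp}$ is a colimit of affines $\Spec(B)$ with $B$ even periodic (Proposition \ref{Even stacks inside stacks}\ref{Corollary 10, b}), and $\sO:\mathrm{SpStk}^\mathrm{op}\to\CAlg$ sends colimits to limits (Example \ref{Global functions on a stack}), one checks that for any even periodic spectral stack $\mY$ the functor of points $\sO(\mY)$ satisfies $\Map_\CAlg(A,\sO(\mY))\simeq\Map_{\mathrm{SpStk}}(\mY,\Spec(A)^\mathrm{evp})$ naturally in $\mY$ and in $A\in\CAlg^\mathrm{evc}$, by writing $\mY$ as a colimit of even periodic affines and passing to the limit on both sides. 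This exhibits $A\mapsto\Spec(A)^\mathrm{evp}$ as left adjoint to $\mathY\mapsto\sO(\mY)$ on the nose, where I abbreviate; more precisely $\sO$ composed with the inclusion is right adjoint to even periodization of affines.

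Given such an adjunction, full faithfulness of the left adjoint is equivalent to the unit $A\to\sO(\Spec(A)^\mathrm{evp})$ being an equivalence for every $A\in\CAlg^\mathrm{evc}$. But this is exactly the content of Corollary \ref{Ring of functions computation} combined with the definition of $\CAlg^\mathrm{evc}$: we have $\sO(\Spec(A)^\mathrm{evp})\simeq\varinjlim_n\mathrm{fil}^{\ge n}_\mathrm{ev}(A)$, and $A$ being even-convergent says precisely that the canonical map $A\to\varinjlim_n\mathrm{fil}^{\ge n}_\mathrm{ev}(A)$ is an equivalence. One should check that the map appearing in Corollary \ref{Ring of functions computation} agrees with the unit of the adjunction, which is a matter of unwinding the two constructions — both arise from the canonical map $\Spec(A)^\mathrm{evp}\to\Spec(A)$ applied to global functions. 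This simultaneously identifies $\sO$ as the left inverse: $\sO(\Spec(A)^\mathrm{evp})\simeq A$ naturally.

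The main obstacle I anticipate is the bookkeeping needed to promote the pointwise equivalence $\Map_{\mathrm{SpStk}}(\Spec(B),\Spec(A)^\mathrm{evp})\simeq\Map_\CAlg(A,B)$ into an honest adjunction between $(\CAlg^\mathrm{evc})^\mathrm{op}$ and $\mathrm{SpStk}^\mathrm{evp}$, i.e.\ verifying naturality of the equivalence of mapping anima as $\mY$ ranges over \emph{all} even periodic spectral stacks rather than just affines, and that the corepresenting object really is computed by $\sO(\mY)$. This requires knowing that $\sO(\mY)\in\CAlg^\mathrm{evc}$ is not needed (the right adjoint can land in all of $\CAlg$), but one does need that $\mY\simeq\varinjlim\Spec(B_i)$ over even periodic $B_i$ is compatible with the identification on each term and with the limit defining $\sO(\mY)\simeq\varprojlim B_i$; this is the same colimit/limit interchange used throughout Section \ref{Chapter EVP}, e.g.\ in the proof of Proposition \ref{Voila EVF for modules}. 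The remaining inputs — the universal property \eqref{Univ prop of evp as colocalization}, the colimit generation of $\mathrm{SpStk}^\mathrm{evp}$, and Corollary \ref{Ring of functions computation} — are all already available, so once the adjunction is set up the argument closes immediately.
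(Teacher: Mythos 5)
Your proposal is correct and is essentially the paper's own argument: the paper simply composes the colocalization adjunction for $(-)^\mathrm{evp}$ with the $\sO\dashv\Spec$ adjunction to get $\Map_{\mathrm{SpStk}^\mathrm{evp}}(\Spec(A)^\mathrm{evp},\Spec(B)^\mathrm{evp})\simeq\Map_{\CAlg}(B,\sO(\Spec(A)^\mathrm{evp}))$ and then invokes Corollary \ref{Ring of functions computation} together with even-convergence — exactly your inputs — and the composite adjunction you assemble by hand over colimits of affines is spelled out in the remark immediately following the proposition, where it arises formally from composing the two already-established adjunctions, so the naturality bookkeeping you flag is automatic. The only slip is the handedness: with the categories as stated, $A\mapsto\Spec(A)^\mathrm{evp}$ is the \emph{right} adjoint of $\sO:\mathrm{SpStk}^\mathrm{evp}\to\CAlg^\mathrm{op}$ (your own formula $\Map_{\CAlg}(A,\sO(\mY))\simeq\Map_{\mathrm{SpStk}}(\mY,\Spec(A)^\mathrm{evp})$ says precisely this), but since the criterion you actually apply — fully faithfulness on the locus where the canonical map $A\to\sO(\Spec(A)^\mathrm{evp})$ is an equivalence — is the correct one, nothing in the argument breaks.
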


\begin{proof}
According to the definition of the even periodization functor as the right adjoint to the inclusion $\mathrm{SpStk}^\mathrm{evp}\hookrightarrow\mathrm{SpStk}$, there exists for every pair of $\E$-rings $A, B\in\CAlg$ canonical homotopy equivalences
\begin{eqnarray*}
\Map_{\mathrm{SpStk}^\mathrm{evp}}(\Spec(A)^\mathrm{evp}, \Spec(B)^\mathrm{evp})&\simeq &
\Map_{\mathrm{SpStk}}(\Spec(A)^\mathrm{evp}, \Spec(B))\\
&\simeq &
\Map_\mathrm{CAlg}(B, \sO(\Spec(A)^\mathrm{evp}) ).
\end{eqnarray*}
The claim of being fully faithful, as well as the identification of the left inverse functor, now follows directly from Corollary \ref{Ring of functions computation}.
\end{proof}

\begin{remark}[Analogy with the affine stacks of To\"en] 
Proposition \ref{Prop champs affines} asserts that, although the even periodizations $\Spec(A)^\mathrm{evp}$ are seldom even periodic affines themselves, they nonetheless often behave in a  somewhat affine manner. This is  reminiscent of T\"oen's theory of \textit{affine stacks} from \cite{Champs affines}, called \textit{coaffine stacks} in \cite{DAGVIII}. There a class of classical stacks is identified which are not affines, but nonetheless exhibit affine behavior. Over a field $k$ of characteristic zero, this amounts to restricting the nonconnective affines $\Spec(A):\CAlg_k\to \mathrm{Ani}$ of  coconnective $\E$-algebras $A$ over $k$ (outside characteristic zero, it should be coconnective derived rings) to the full subcategory $\CAlg_k^\heart\subseteq\CAlg_k$. That is to say, using Lurie's $\mathrm{cSpec}$ notation for affine stacks from \cite{DAGVIII} (T\"oen's school might denote this by $\Spec^{\Delta}$ instead), we have
$$
\mathrm{cSpec}(A)\simeq \Spec(A)\vert_{\CAlg_k^\heart}.
$$
Analogously in Proposition \ref{Prop champs affines}, the fully faithful image of an even-convergent $\E$-ring $A$ is the even periodic spectral stack whose functor of points may be identified with the restriction
$$
\Spec(A)^\mathrm{evp}\simeq\Spec(A)\vert_{\CAlg^\mathrm{evp}}.
$$
This similarity between even periodization of (even-convergent) affines and To\"en's affine stacks  might perhaps shed some light on the fact that for  the chromatic base stack $\mathrm{Spec}(\mathbf S)^\mathrm{evp}\simeq \mM$, the quasi-coherent sheaves $\QCoh(\mM)$ are not \textit{quite} equivalent to $\Sp$ (though the difference disappears at finite height).
Likewise in the setting of affine stacks,  $\QCoh(\mathrm{cSpec}(A))$ does not generally coincide with $\Mod_A$. Rather by \cite[Remark 4.5.6]{DAGVIII}, the two are related to each other through $t$-structures as the respective right and left completions of each other. Indeed, we can view modules as arising from the affine stack as $\Mod_A\simeq \mathrm{IndCoh}(\mathrm{cSpec}(A))$. This reinforces the author's belief that the usage of the notation $\mathrm{IndCoh}(\mM)$ in \cite{ChromaticCartoon}, though perhaps not \textit{fully} accordant with the usage of $\mathrm{IndCoh}$ in Geometric Langlands and related areas, is nonetheless not fundamentally misguided, and may yet come to be fully justified in future work.
\end{remark}

\begin{remark}
The passage to the $\E$-ring of global sections provides a left adjoint to the functor sending an $\E$-ring to the corresponding affine, i.e.\ there is an adjunction of $\i$-categories
$$
\sO:\mathrm{SpStk}\rightleftarrows \CAlg^\mathrm{op} : \Spec.
$$
Let us compose this adjunction with the one coming from Proposition \ref{Even stacks inside stacks}
$$
\mathrm{SpStk}^\mathrm{evp}\rightleftarrows \mathrm{SpStk} :(-)^\mathrm{evp},
$$
whose left adjoint is the fully faithful embedding through which we identify even periodic spectral stacks as special cases of nonconnective spectral stacks. We obtain an adjunction
$$
\sO:\mathrm{SpStk}^\mathrm{evp}\leftrightarrows \CAlg^\mathrm{op} : \Spec(-)^\mathrm{evp}
$$
It follows by abstract nonsense that a right adjoint is fully faithful on the full subcategory where the unit of the adjunction is an equivalence. The unit is in this case the $\E$-ring map $A\to \sO(\Spec(A)^\mathrm{evp})$, from which
we see that the subcategory of even-convergent $\E$-rings $\CAlg^\mathrm{evc}\subseteq\CAlg$ appears naturally if one wishes a fully faithfulness result of the form of Proposition \ref{Prop champs affines}.
\end{remark}

Since any even $\E$-ring $A$ is in particular even-convergent, the even periodization of the affine $\Spec(A)$ gives a way to faithfully represent it by an even periodic spectral stack. The underlying classical stack of said even periodic spectral stack is therefore an inherently interesting object of classical geometry, which encodes some aspects of the even $\E$-ring $A$. Indeed, in contrast to the affine itself, whose underlying classical stack is $\Spec(A)^\heart\simeq \Spec(\pi_0(A))$, we now prove that  the classical stack $(\Spec(A)^\mathrm{evp})^\heart$ encode (precisely) the information of all of the homotopy groups $\pi_*(A)$ with their grading.

\begin{prop}\label{Heart of evp of affine}
Let $A$ be an even $\E$-ring. There is a canonical and natural equivalence of classical stacks
$$
(\Spec(A)^\mathrm{evp})^\heart \, \simeq\,\Spec\big(\bigoplus_{n\in \mathbf Z}\pi_{2n}(A)\big)/\mathbf G_m,
$$
where the $\mathbf G_m$-action on the right-hand side corresponds to the grading on $\pi_{2*}(A)$.
\end{prop}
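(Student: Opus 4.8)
The plan is to reduce the computation of $\mX^\heart$, for $\mX := \Spec(A)^\mathrm{evp}$, to the even filtration in the special case of an even ring, where it degenerates completely. Since $A$ is even, it is the initial object of $\CAlg^\mathrm{ev}_A$, so $\mathrm{fil}^{\ge *}_\mathrm{ev}(A)\simeq \tau_{\ge 2*}(A)$, and Proposition \ref{Voila the even filtration} gives $\Gamma(\mX^\mathrm{cn};\tau_{\ge 2m}(\sO_{\mX}))\simeq \tau_{\ge 2m}(A)$ for every $m$. Because $\mX$ is even periodic, on any affine chart $\Spec(B)$ with $B\in\CAlg^\mathrm{evp}_A$ the sheaf $\sO_\mX$ has even homotopy and $\pi_2(B)$ is an invertible $\pi_0(B)$-module with $\pi_{2n}(B)\cong\pi_2(B)^{\otimes n}$ for all $n\in\mathbf Z$ (Remark \ref{Remark weak vs strong 2-periodicity remark}); hence $\mL:=\Sigma^{-2}\tau_{\ge 2}(\sO_\mX)$ restricts along $\mX^\heart=\Spec_{\mX^\mathrm{cn}}(\pi_0\sO_\mX)\to\mX^\mathrm{cn}$ to a line bundle $\mL|_{\mX^\heart}=\pi_2(\sO_\mX)$ on $\mX^\heart$, with $\mL^{\otimes n}|_{\mX^\heart}\simeq\pi_{2n}(\sO_\mX)$. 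This line bundle classifies a morphism $w:\mX^\heart\to\mathrm B\mathbf G_m$, and since $\pi_{2n}(\sO_\mX)$ is already a module over $\pi_0(\sO_\mX)$ one has $\Gamma(\mX^\heart;\mL^{\otimes n}|_{\mX^\heart})\simeq\Gamma(\mX^\mathrm{cn};\pi_{2n}(\sO_\mX))$.

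The next step is to identify the graded ring $w_*\sO_{\mX^\heart}\simeq\bigoplus_{n}\Gamma(\mX^\heart;\mL^{\otimes n}|_{\mX^\heart})$ with $\pi_{2*}(A)$. For each $n$ there is a cofiber sequence $\tau_{\ge 2n+2}(\sO_\mX)\to\tau_{\ge 2n}(\sO_\mX)\to\Sigma^{2n}\pi_{2n}(\sO_\mX)$ in $\QCoh(\mX^\mathrm{cn})$ (evenness of $\mX$ collapsing $\tau_{\ge 2n+1}=\tau_{\ge 2n+2}$), and since $\Gamma$ is exact it carries this to $\tau_{\ge 2n+2}(A)\to\tau_{\ge 2n}(A)\to\Sigma^{2n}\Gamma(\mX^\mathrm{cn};\pi_{2n}(\sO_\mX))$; comparing with the Postnikov sequence of the even ring $A$ yields $\Gamma(\mX^\mathrm{cn};\pi_{2n}(\sO_\mX))\simeq\pi_{2n}(A)$, discretely. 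Multiplicativity and the weight-$n$ grading are inherited from those of the Postnikov/even filtration on $A$, so $w_*\sO_{\mX^\heart}\simeq\pi_{2*}(A)$ as graded rings.

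Finally, to conclude $\mX^\heart\simeq\Spec(\pi_{2*}(A))/\mathbf G_m=\Spec_{\mathrm B\mathbf G_m}(\pi_{2*}(A))$ via the relative-spectrum antiequivalence \eqref{Relative spec def}, it remains to check that $w$ is an affine morphism. For this I would use the simplicial presentation: as $A$ is even, $A\to A\otimes_\mathbf S\MUP$ is eff and $A\otimes_\mathbf S\MUP$ is even periodic, so $\Spec(A\otimes_\mathbf S\MUP)\to\Spec(A)$ is periodically eff and Proposition \ref{Computing the evening} gives $\mX\simeq\varinjlim_{\mathbf\Delta^\mathrm{op}}\Spec(A\otimes_\mathbf S\MUP^{\otimes_\mathbf S\bullet+1})$, hence $\mX^\heart\simeq\varinjlim_{\mathbf\Delta^\mathrm{op}}\Spec(\pi_0(A\otimes_\mathbf S\MUP^{\otimes_\mathbf S\bullet+1}))$; the composite $\Spec(\pi_0(A\otimes_\mathbf S\MUP))\to\mX^\heart\xrightarrow{w}\mathrm B\mathbf G_m$ classifies the \emph{free} line bundle $\pi_2(A\otimes_\mathbf S\MUP)$, so it factors through $\Spec(\mathbf Z)\to\mathrm B\mathbf G_m$, which lets one see that $w$ becomes affine after base change along this cover, i.e.\ that $w$ is affine. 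I expect this last point to be the main obstacle --- equivalently, matching the $\MUP$-cobar resolution $\Spec(\pi_0(A\otimes_\mathbf S\MUP^{\otimes_\mathbf S\bullet+1}))$ with the bar resolution of $\Spec(\pi_{2*}(A))/\mathbf G_m$, which requires the Künneth-type identity $\pi_0(A\otimes_\mathbf S\MUP^{\otimes_\mathbf S(n+1)})\cong\pi_0(A\otimes_\mathbf S\MUP)^{\otimes(n+1)}$ over the graded ring $\pi_{2*}(A)$ (valid because $A$ even makes the $\MUP$-homology of $A$ flat over $\pi_{2*}(A)$) together with checking that the formal-group-isomorphism directions wash out and leave exactly a $\mathbf G_m$-quotient. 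The identifications in the first two paragraphs are essentially formal.
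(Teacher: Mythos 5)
Your first two paragraphs are correct and, as you say, essentially formal: for even $A$ the even filtration is the double-speed Postnikov filtration, so Proposition \ref{Voila the even filtration} together with exactness of global sections gives $\Gamma(\mX^\mathrm{cn};\,\pi_{2n}(\sO_{\mX}))\simeq\pi_{2n}(A)$, and the line bundle $\pi_2(\sO_{\mX})$ on $\mX^\heart$ defines a map $w:\mX^\heart\to\mathrm B\mathbf G_m$ with $w_*\sO_{\mX^\heart}\simeq\pi_{2*}(A)$. You have thereby correctly reduced the proposition to the single claim that $w$ is an affine morphism, and that is exactly where the genuine gap lies: neither of the two arguments you offer for it works as stated. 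Affineness of $w$ has to be tested against a cover of the \emph{target}, i.e.\ one must show that the $\mathbf G_m$-torsor $\mX^\heart\times_{\mathrm B\mathbf G_m}\Spec(\mathbf Z)$ is an affine scheme; the observation that the composite $\Spec(\pi_0(A\o_{\mathbf S}\MUP))\to\mX^\heart\xrightarrow{w}\mathrm B\mathbf G_m$ classifies a trivial line bundle only produces a map from an affine into that torsor and says nothing about the torsor itself.

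The proposed ``matching of resolutions'' also cannot be a levelwise match: level zero of the $\MUP$-cobar presentation of $\mX^\heart$ is $\Spec(\pi_0(A\o_{\mathbf S}\MUP))$, the underlying ungraded ring of the $\MUP$-homology of $A$ (for $A=\mathbf Z$ this is $\mathbf Z[b_1,b_2,\dots]$), which is strictly larger than $\Spec\big(\bigoplus_n\pi_{2n}(A)\big)$, the level zero of the bar resolution of the asserted quotient; and your K\"unneth identity, as written, compares the cobar construction with itself rather than with that bar construction (a tensor power of the ungraded ring $\pi_0(A\o_{\mathbf S}\MUP)$ ``over the graded ring $\pi_{2*}(A)$'' does not even typecheck). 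What is actually needed --- and what you explicitly defer as ``checking that the formal-group-isomorphism directions wash out'' --- is an identification of the entire cosimplicial ring $\pi_0(A\o_{\mathbf S}\MUP^{\o_{\mathbf S}(\bullet+1)})$ with the \v{C}ech nerve of an affine, faithfully flat map $\Spec(\pi_0(A\o_{\mathbf S}\MUP))\to\Spec(\pi_{2*}(A))/\mathbf G_m$, using that $\MU_*(A)\simeq\pi_*(A)[b_1,b_2,\dots]$ is free, hence faithfully flat, over $\pi_*(A)$. That identification is the whole content of the proposition, and it is precisely the step the paper carries out: it splits $\MUP^{\o_{\mathbf S}(\bullet+1)}\simeq\MU^{\o_{\mathbf S}(\bullet+1)}\o_{\mathbf S}\big(\bigoplus_n\Sigma^{2n}(\mathbf S)\big)\o_{\mathbf S}\mathbf S[\mathbf Z^{\bullet}]$ by reindexing the sum, computes $\pi_0$ of $A\o_{\mathbf S}\MUP^{\bullet}$ from this splitting and the evenness of $A$, and then recognizes the resulting simplicial scheme as the bar construction of the $\mathbf G_m$-action corresponding to the grading on $\pi_{2*}(A)$. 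Until you supply an argument of this kind, your outline does not yet prove the statement.
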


\begin{proof} Fix an $\E$-ring structure on $\mathrm{MUP}$.
The map $A\to A\otimes_{\mathbf S}\mathrm{MUP}$ is periodically eff by virtue of $A$ being even. Proposition \ref{Computing the evening}  therefore supplies a simplicial presentation for the even periodization in question as
$$
\Spec(A)^\mathrm{evp}\, \simeq\varinjlim_{\,\,\,\,\mathbf{\Delta}^\mathrm{op}}\Spec(A\o_{\mathbf S}\mathrm{MUP}^\bullet),
$$
where $\mathrm{MUP}^\bullet := \mathrm{MUP}^{\otimes_{\mathbf S}[\bullet]}$ is the cosimplicial cobar construction of $\mathbf S\to\mathrm{MUP}$. Since the underlying classical stack functor $\mX\mapsto \mX^\heart$ commutes with small colimits, we obtain the simplicial presentation 
$$
(\Spec(A)^\mathrm{evp})^\heart\, \simeq\varinjlim_{\,\,\,\,\mathbf{\Delta}^\mathrm{op}}\Spec(\pi_0(A\o_{\mathbf S}\mathrm{MUP}^\bullet)),
$$
for the underlying classical stack.
Next, note that the $\mathbb E_2$-(but not $\mathbb E_\infty$-)ring identification $\mathrm{MUP}\simeq \mathrm{MU}\otimes_{\mathbf S}\big(\bigoplus_{i\in \mathbf Z}\Sigma^{2i}(\mathbf S)\big)$  induces the sequence of $\mathbb E_2$-ring equivalences
\begin{eqnarray*}
A\o_{\mathbf S}\mathrm{MUP}^\bullet&\simeq &A\o_{\mathbf S}\mathrm{MU}^{\bullet}\otimes_{\mathbf S}\bigoplus_{(i_0, \ldots, i_\bullet)\in \mathbf Z^{\bullet+1}}\Sigma^{2(i_0+\cdots + i_\bullet)}(\mathbf S)\\
&\simeq&
A\o_{\mathbf S}\mathrm{MU}^{\bullet}\otimes_{\mathbf S}
\bigoplus_{(i_1, \ldots, i_\bullet)\in \mathbf Z^{\bullet}}\, \bigoplus_{n\in \mathbf Z}\Sigma^{2n}(\mathbf S)\\ 
 &\simeq&
 A\o_{\mathbf S}\mathrm{MU}^{\bullet}\otimes_{\mathbf S}\big(
 \bigoplus_{n\in \mathbf Z}\Sigma^{2n}(\mathbf S)\big)\otimes_{\mathbf S}\mathbf S[\mathbf Z^{\bullet}]
 ,
\end{eqnarray*}
where we have denoted $\mathbf Z^\bullet := \mathbf Z^{\times |\bullet|}$ and $\mathrm{MU}^\bullet := \mathrm{MU}^{\otimes_\mathbf S[\bullet]}$. In the above formula,
 we should   remain 
 cognizant that the cosimplicial structure also involves the middle term, seing how its summation parameter arose as
 $n=i_0+\cdots + i_\bullet$. Noting that we have ring isomorphisms $\pi_0(\mathrm{MU}^\bullet)\simeq \pi_0(\mathrm{MU})^{\otimes_{\mathbf Z[\bullet]}}\simeq \mathbf Z$, we obtain an isomorphism of commutative rings
$$
\pi_0(A\o_{\mathbf S}\mathrm{MUP}^\bullet) \, \simeq \, 
 \bigoplus_{n\in \mathbf Z}\pi_0\big(\mathrm{MU}^{\bullet}\otimes_{\mathbf S}\Sigma^{2n}( A)\otimes_{\mathbf S}\mathbf S[\mathbf Z^{\bullet}]\big)\,\simeq \,\bigoplus_{n\in \mathbf Z}\pi_{2n}(A)[\mathbf Z^\bullet].
$$
and on the level of the underlying classical stack in question
$$
(\Spec(A)^\mathrm{evp})^\heart\, \simeq\varinjlim_{\,\,\,\,\mathbf{\Delta}^\mathrm{op}}\Spec\big(\bigoplus_{n\in \mathbf Z}\pi_{2n}(A)\big) \times \mathbf G_m^{\times \bullet}.
$$
We may recognize the simplicial scheme on the right-hand side to encode the $\mathbf G_m$-action on $\Spec\big(\bigoplus_{n\in \mathbf Z}\pi_{2n}(A)\big)$ which corresponds, via the usual equivalence between gradings and $\E$-actions  -- see for instance \cite{Tasos's Rees} for a discussion in the $\i$-categorical setting -- to the displayed grading. Its colimit is therefore the quotient of the $\mathbf G_m$-action.
\end{proof}

\begin{remark}
Under the standard equivalence between gradings and $\mathbf G_m$-actions, graded $\E$-rings $A_*$ are identified with commutative algebra objects in $\QCoh(\mathrm B\mathbf G_m)$, and hence with affine morphisms $\mX\to \mathrm B\mathbf G_m$ via the relative spectrum $\mX\simeq \Spec_{\mathrm B\mathbf G_m}(A_*)$. Such a relative spectrum may be equivalently written as the quotient stack of the Rees algebra $\Spec_{\mathrm B\mathbf G_m}(A_*)\simeq\Spec\big(\bigoplus_{n\in \mathbf Z}A_n\big)/\mathbf G_m.$
 conclusion of Proposition \ref{Heart of evp of affine} may be rewritten as identifying
$$(\Spec(A)^\mathrm{evp})^\heart\,\simeq\, \Spec_{\mathrm B\mathbf G_m}(\pi_{2*}(A))$$
for any even $\E$-ring $A$.
\end{remark}

\begin{remark}
It should  be noted that the construction $A\mapsto(\Spec(A)^\mathrm{evp})^\heart$ is a form of the
``stacky even filtration" which, while perhaps not appearing explicitly in \cite{HRW}, is strongly alluded to there, and is certainly well-known to the experts. See in particular \cite[Recollection 5.4.7]{Sanath} for related spectral-stack-level constructions.
\end{remark}

\begin{exun}
Let $A$ be a strictly even periodic $\E$-ring.  Proposition \ref{Heart of evp of affine} identifies the univerlying classical stack of the even periodization of the spectrum of the connective cover $\tau_{\ge 0}(A)$ with the quotient stack
$$
(\Spec(\tau_{\ge 0}(A))^\mathrm{evp})^\heart \simeq \, \mathbf A^1_{\pi_0(A)}/\mathbf G_{m, \pi_0(A)}.
$$
In particular, the underlying classical stack of $\Spec(\mathrm{ku})^\mathrm{evp}$ is given by $\mathbf A^1_{\mathbf Z}/\mathbf G_{m, \mathbf Z}$, the classical moduli stack of generalized Cartier divisors.
\end{exun}

\subsection{Shearing and even periodization over $\mathrm{MU}$}\label{Subsection shearing}
To give a version of Example \ref{Rational even localization} that works over the complex bordism spectrum $\mathrm{MU}$,
we will connect it to the notion of \textit{shearing}. This operation on gradings plays a prominent role in some aspects of the Geometric Langlands program, see for instance \cite[Section 2.3]{BZN},
\cite[Section A.2]{Arinkin-Gaitsgory}, \cite[Section 6]{BZSV}, but has also been discussed in homotopical context, e.g.\ \cite{Rotational invariance}, \cite[Section 3.3]{Arpon}. The version of relevance to us -- a symmetric monoidal shearing over $\mathrm{MU}$ -- is due to Devalapurkar in \cite{Sanath}.

\begin{cons}\label{Const of shearing}
The \textit{shearing endofunctor} on graded $\mathrm{MU}$-modules
\begin{eqnarray*}
(-)^{\shear}\, :\mathrm{GrMod}_\mathrm{MU}&\to &\mathrm{GrMod}_\mathrm{MU},\\
M_*&\mapsto& M_*[2*]
\end{eqnarray*}
is shown in \cite[Lemma 2.1.3]{Sanath} (where shearing is denoted by $\widetilde{\mathrm{sh}}$) to
admits a canonical symmetric monoidal structure. It is constructed so that free graded $\E$-algebra over $\mathrm{MU}$ on a generator $t$ in
grading degree $1$ and homotopical degree $0$ is sheared to
$$
\mathrm{MU}[t^{\pm 1}]^{\shear}\,\simeq\,\mathrm{MUP},
$$
the periodic complex bordism spectrum $\mathrm{MUP}$, equipped with the $\E$-ring structure  of the Thom spectrum of an $\E$-map\footnote{This has to be specified since, as shown in \cite{Hahn-Yuan}, the $\E$-ring structure on $\mathrm{MUP}$, unlike the one on $\mathrm{MU}$, is not unique. In particular the Thom $\E$-ring structure disagrees with one coming from the Snaith's Theorem description of $\mathrm{MUP}$.} and graded as $\mathrm{MUP}\simeq \bigoplus_{n\in \mathbf Z}\Sigma^{2n}(\mathrm{MU})$. Since the operation of shearing over $\mathrm{MU}$ is symmetric monoidal, it induces a map on $\E$-algebra objects. Under the classical equivalence between graded modules and quasi-coherent sheaves over $\mathrm B\mathbf G_m$ via the Rees construction -- see \cite{Tasos's Rees} for a rigorous treatment in the $\i$-categorical context\footnote{Let us remark only that the Rees equivalence between $\mathrm{GrSp}\simeq \Fun(\mathbf Z, \Sp)$ and $\QCoh(\mathrm B\mathbf G_m)\simeq \mathrm{cMod}_{\mathbf S[\mathbf Z]}$ may also be seen as a particularly simple form of Koszul duality.} -- together with the relative spectrum identification
$$
\Spec_{\mathrm B\mathbf G_m} : \CAlg(\QCoh(\mathrm B\mathbf G_m)\simeq \mathrm{SpStk}^\mathrm{aff}_{/\mathrm B\mathbf G_m}
$$
between the quasi-coherent sheaves of $\E$-algebras over the classifying stack $\mathrm B\mathbf G_m$ and affine morphisms into it, we may express the shearing functor as an auto-equivalence
$$
(-)^{\shear}:\,\mathrm{SpStk}^\mathrm{aff}_{/\mathrm B\mathbf G_{m, \mathrm{MU}}}\to \mathrm{SpStk}^\mathrm{aff}_{/\mathrm B\mathbf G_{m, \mathrm{MU}}}
$$
of the $\i$-category of affine morphisms into $\mathrm B\mathbf G_{m, \mathrm{MU}}$.
\end{cons}

\begin{exun}\label{Example shear of MU}
Consider the universal $\mathbf G_m$-torsor $\Spec(\mathrm{MU})\to \mathrm B\mathbf G_{m, \mathrm{MU}}$.
In this  case, 
shearing may be identified as
$$
\Spec(\mathrm{MU})^{\shear}\,\simeq\, \Spec(\mathrm{MUP})/\mathbf G_{m}
$$
where the $\mathbf G_m$-action on the right-hand side is non-trivial and corresponds to the grading exhibited through the $\mathrm{MU}$-module equivalence $\mathrm{MUP}\simeq \bigoplus_{n\in \mathbf Z}\Sigma^{2n}(\mathrm{MU})$.
\end{exun}

\begin{remark}
The shearing functor of Construction \ref{Const of shearing} may be described explicitly as follows. Let $\mX\to \mathrm B\mathbf G_{m, \mathrm{MU}}$ be an affine morphism over the classifying stack of the multiplicative group over $\mathrm{MU}$. That means that the stack $\mX$ may be written as
$$
\mX\,\simeq\,\Spec_{\mathrm B\mathbf G_m}(\Gamma(\mX;\, \sO_{\mX}(*)))\,\simeq\, \Spec(\bigoplus_{n\in \mathbf Z}\Gamma(\mX;\, \sO_\mX(n)))/\mathbf G_m,
$$
where $\sO_\mX(n)$ denote the Serre twisting sheaves pulled back from $\mathrm B\mathbf G_m$. The shearing of $\mX$ is then given by
$$
\mX^{\shear}\,\simeq\,
\Spec_{\mathrm B\mathbf G_m}(\Gamma(\mX; \sO_{\mX}(*))^\shear)
\,\simeq\,\Spec\big( 
\bigoplus_{n\in \mathbf Z}\Sigma^{2n}\Gamma(\mX;\, \sO_\mX(n))
\big)/\mathbf G_m,
$$
with the $\E$-ring structure on the direct sum in question coming from symmetric monoidality of shearing over $\mathrm{MU}$.
$$
 (\mX^{\shear}\,)^\heart\,\simeq\, \Spec_{\mathrm B\mathbf G_m}(\mathrm H^*(\mX; \,\sO_{\mX}(*))/\mathbf G_m \,\simeq \, \Spec\big(\bigoplus_{n\in \mathbf Z}\mathrm H^n(\mX; \sO_{\mX}(n)) \big),
$$
where we are using the sheaf cohomology notation for the homotopy groups of global sections $\mathrm H^n(\mX;\, \sF):=\pi_{-n}(\Gamma(\mX; \, \sF))$ of a quasi-coherent sheaf $\sF$ on $\mX$.
\end{remark}

\begin{prop}\label{Evp for even affines over MU}
Let $A$ be an even $\E$-algebra over $\mathrm{MU}$.
 There is a canonical  and natural equivalence of spectral stacks
$$
\Spec(A)^\mathrm{evp}\,\simeq\, \Spec(A)^\shear,
$$
where the structure map to $\mathrm B\mathbf G_{m, \mathrm{MU}}$ on the right is taken to be 
$$
\Spec(A)\to \Spec(\mathrm{MU})\to \mathrm B\mathbf G_{m, \mathrm{MU}}.
$$
\end{prop}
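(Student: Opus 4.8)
The strategy is to exhibit both $\Spec(A)^\mathrm{evp}$ and $\Spec(A)^\shear$ as the geometric realization of one and the same simplicial affine, namely the bar construction of the grading $\mathbf G_m$-action on $\Spec\bigl(A\otimes_{\mathrm{MU}}\mathrm{MUP}\bigr)$, where $\mathrm{MUP}$ carries its Thom $\E$-ring structure and $A\otimes_{\mathrm{MU}}\mathrm{MUP}\simeq\bigoplus_{n\in\mathbf Z}\Sigma^{2n}(A)$ the evident grading.

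\emph{The even periodization side.} Since $A$ is even, $A\otimes_{\mathrm{MU}}\mathrm{MUP}\simeq\bigoplus_n\Sigma^{2n}(A)$ is even and weakly $2$-periodic, hence even periodic; and since the periodization map $\mathrm{MU}\to\mathrm{MUP}$ is eff (as recalled in the proof of Proposition \ref{Even filtration is even periodic filtration}) and eff maps are stable under base change, $A\to A\otimes_{\mathrm{MU}}\mathrm{MUP}$ is eff. By Example \ref{Example 1 of evp} the map $\Spec(A\otimes_{\mathrm{MU}}\mathrm{MUP})\to\Spec(A)$ is therefore periodically eff, so Proposition \ref{Computing the evening} gives
$$
\Spec(A)^\mathrm{evp}\,\simeq\,\varinjlim_{\mathbf\Delta^\mathrm{op}}\Spec\bigl((A\otimes_{\mathrm{MU}}\mathrm{MUP})^{\otimes_A[\bullet]}\bigr)\,\simeq\,\varinjlim_{\mathbf\Delta^\mathrm{op}}\Spec\bigl(A\otimes_{\mathrm{MU}}\mathrm{MUP}^{\otimes_{\mathrm{MU}}[\bullet]}\bigr),
$$
the cosimplicial $\mathrm{MU}$-algebra $\mathrm{MUP}^{\otimes_{\mathrm{MU}}[\bullet]}$ being the cobar construction of $\mathrm{MU}\to\mathrm{MUP}$.

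\emph{The shearing side.} Under the equivalence $\mathrm{SpStk}^\mathrm{aff}_{/\mathrm B\mathbf G_{m,\mathrm{MU}}}\simeq\CAlg(\mathrm{GrMod}_{\mathrm{MU}})^\mathrm{op}$ of Construction \ref{Const of shearing}, the composite $\Spec(A)\to\Spec(\mathrm{MU})\to\mathrm B\mathbf G_{m,\mathrm{MU}}$ corresponds to the graded $\mathrm{MU}$-algebra $A\otimes_{\mathrm{MU}}\mathrm{MU}[t^{\pm1}]$, with $A$ trivially graded. As shearing over $\mathrm{MU}$ is symmetric monoidal and is the identity on the full subcategory of $\mathrm{GrMod}_{\mathrm{MU}}$ concentrated in degree $0$, while $\mathrm{MU}[t^{\pm1}]^\shear\simeq\mathrm{MUP}$ with its grading $\bigoplus_n\Sigma^{2n}(\mathrm{MU})$, we get $(A\otimes_{\mathrm{MU}}\mathrm{MU}[t^{\pm1}])^\shear\simeq A\otimes_{\mathrm{MU}}\mathrm{MUP}$ with grading $\bigoplus_n\Sigma^{2n}(A)$. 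Passing through the Rees presentation $\Spec_{\mathrm B\mathbf G_m}(R_*)\simeq\Spec(\bigoplus_nR_n)/\mathbf G_m$, this yields
$$
\Spec(A)^\shear\,\simeq\,\Spec_{\mathrm B\mathbf G_m}\bigl(A\otimes_{\mathrm{MU}}\mathrm{MUP}\bigr)\,\simeq\,\Spec\bigl(A\otimes_{\mathrm{MU}}\mathrm{MUP}\bigr)/\mathbf G_m,
$$
the $\mathbf G_m$-action being the grading.

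\emph{Matching the two, and the main obstacle.} The quotient $\Spec(A\otimes_{\mathrm{MU}}\mathrm{MUP})/\mathbf G_m$ is the realization of the bar construction $[\bullet]\mapsto\mathbf G_m^{\times\bullet}\times\Spec(A\otimes_{\mathrm{MU}}\mathrm{MUP})\simeq\Spec\bigl(A\otimes_{\mathrm{MU}}\mathrm{MUP}\otimes_{\mathrm{MU}}\mathrm{MU}[\mathbf Z^{\bullet}]\bigr)$, so everything reduces to an equivalence of cosimplicial $\mathrm{MU}$-algebras $\mathrm{MUP}^{\otimes_{\mathrm{MU}}[\bullet]}\simeq\mathrm{MUP}\otimes_{\mathrm{MU}}\mathrm{MU}[\mathbf Z^{\bullet}]$ intertwining the cobar structure of $\mathrm{MU}\to\mathrm{MUP}$ with the bar structure of the grading action. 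This is where the symmetric monoidality of shearing does the work: writing $\mathrm{MUP}^{\otimes_{\mathrm{MU}}[\bullet]}\simeq(\mathrm{MU}[t^{\pm1}]^{\otimes_{\mathrm{MU}}[\bullet]})^\shear$ — the shear of the cobar of the \emph{free} graded algebra $\mathrm{MU}\to\mathrm{MU}[t^{\pm1}]$ — the standard sum-reindexing $t_0\mapsto u$, $t_i\mapsto uv_i$ identifies $\mathrm{MU}[t_0^{\pm1},\dots,t_k^{\pm1}]\simeq\mathrm{MU}[u^{\pm1}]\otimes_{\mathrm{MU}}\mathrm{MU}[\mathbf Z^k]$ with the $v_i$ in grading degree $0$, and applying shearing gives the claim, exactly as in Example \ref{Rational even localization} and Theorem \ref{On M}. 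Base-changing this equivalence of cosimplicial $\mathrm{MU}$-algebras along $\mathrm{MU}\to A$ and realizing, we obtain $\Spec(A)^\mathrm{evp}\simeq\Spec(A\otimes_{\mathrm{MU}}\mathrm{MUP})/\mathbf G_m\simeq\Spec(A)^\shear$; since every step is functorial in $A$, this equivalence is natural. The main difficulty is precisely this last comparison — promoting the levelwise reindexing to a coherent equivalence of cosimplicial $\E$-rings — and it is exactly the reason the argument is routed through the symmetric monoidal shearing of \cite{Sanath} over $\mathrm{MU}$ rather than a direct manipulation over $\mathbf S$: there $\bigoplus_n\Sigma^{2n}(\mathbf S)$ fails to be $\E_\infty$, so there is no transparent reindexing of $\mathrm{MUP}$-powers, whereas over $\mathrm{MU}$ the shearing transports the reindexing of the free graded algebra $\mathrm{MU}[t^{\pm1}]$.
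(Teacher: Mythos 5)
Your proof is correct and follows essentially the same route as the paper's: exhibit $\Spec(A\otimes_{\mathrm{MU}}\mathrm{MUP})\to\Spec(A)$ as periodically eff, use Proposition \ref{Computing the evening} to get the cosimplicial presentation, identify the \v{C}ech nerve with the bar construction of the grading $\mathbf G_m$-action on $\Spec(A\otimes_{\mathrm{MU}}\mathrm{MUP})$, and conclude via the Rees/shearing dictionary. The only divergence is at the coherence step $\mathrm{MUP}^{\otimes_{\mathrm{MU}}[\bullet]}\simeq\mathrm{MUP}\otimes_{\mathrm{MU}}\mathrm{MU}[\mathbf Z^{\bullet}]$: the paper gets it from the ``torsor triviality'' of $\mathrm{MUP}$ as the Thom spectrum of the $\E$-space map $\mathbf Z\to\mathrm{Pic}(\mathrm{MU})$, whereas you shear the sum-reindexing of the cobar of the free graded algebra $\mathrm{MU}\to\mathrm{MU}[t^{\pm1}]$; since both rest on the same input from \cite{Sanath} ($\mathrm{MU}[t^{\pm1}]^\shear\simeq\mathrm{MUP}$ with its Thom $\E$-structure and the symmetric monoidality of shearing), this is a minor variation rather than a different argument.
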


\begin{proof}
Consider the canonical $\E$-ring map $A\to A\o_{\mathrm{MU}}\mathrm{MUP}$, where the periodic complex bordism spectrum $\mathrm{MUP}$ is equipped with the Thom spectrum $\E$-structure. Because the $\E$-ring map $S\to \mathrm{MUP}$ is eff, see the proof of Proposition \ref{Even filtration is even periodic filtration}, and the $\E$-ring $A\otimes_{\mathrm{MU}}\mathrm{MUP}$ is even periodic, the map $\Spec(A\o_{\mathrm{MU}}\mathrm{MUP})\to \Spec(A)$ is periodically eff in the sense of Definition \ref{Definition periodically eff}. By Proposition \ref{Computing the evening}, we therefore obtain the simplicial formula for the even periodization in question
$$
\Spec(A)^\mathrm{evp}\,\simeq\, \varinjlim_{\phantom{{}^\mathrm{op}}\mathbf\Delta^\mathrm{op}}\Spec(B^\bullet),
$$
with respect to the cosimplicial $\E$-ring is the cobar construction
$$
B^\bullet \,\simeq\, (A\o_{\mathrm{MU}}\mathrm{MUP})^{\o_A[\bullet]}\,\simeq\, A\otimes_{\mathrm{MU}}\mathrm{MUP}^\bullet
$$
and where we have set  $\mathrm{MUP}^\bullet :=\mathrm{MUP}^{\o_\mathrm{MU}[\bullet]}$. As explained in \cite[Lemma 2.1.3]{Sanath}, the periodic complex boridsm spectrum $\mathrm{MUP}$, with its Thom $\E$-algebra structure over $\mathrm{MU}$, may be identified with the Thom spectrum of the $\E$-space map $\mathbf Z\mapsto \mathrm{Pic}(\mathrm{MU})$, given by $n\mapsto \Sigma^{2n}(\mathrm{MU})$. As such, one of the basic properties of Thom spectra (the ``torsor triviality") implies that there is a canonical equivalence $\mathrm{MUP}\o_{\mathrm{MU}}\mathrm{MUP}\simeq \mathrm{MUP}[\mathbf Z]$, and by induction
$$\mathrm{MUP}^\bullet\simeq\, \mathrm{MUP}[\mathbf Z^\bullet].$$
Since we are discussing a Thom spectrum of an $\E$-space map, this is an equivalence of $\E$-algebras over $\mathrm{MU}$. By base-changing to $A$ and passing to spectra, this gives rise to an equivalence of simplicial affines
$$
\Spec(A\otimes_{\mathrm{MU}}\MUP^\bullet)\,\simeq \, \Spec(A\o_{\mathrm{MU}}\MUP)\times \mathbf G_m^{\times \bullet},
$$
since the multiplicative group is given by $\mathbf G_m =\Spec(S[\mathbf Z])$. The right-hand side exhibits a $\mathbf G_m$-action on $\Spec(A\otimes_{\mathrm{MU}}\mathrm{MUP})$, and by unwinding Construction \ref{Const of shearing}, we find that this action is base-changed along $\mathrm{MU}\to A$ one from Example \ref{Example shear of MU}, and it is in this sense that passing to the colimit in $\mathrm{SpStk}$ from the above equivalence of simplicial affines exhibits an identification
\begin{equation}\label{Evp of aff wrt MUP}
\Spec(A)^\mathrm{evp}\,\simeq \, \Spec(A\o_{\mathrm{MU}}\MUP)/\mathbf G_m.
\end{equation}
Under the Rees construction equivalence between $\mathbf G_m$-actions and gradings, the grading in question on $A\otimes_{\mathrm{MU}}\mathrm{MUP}$ exhibits it as the $A[t^{\pm 1}]^\shear$ for $t$ in grading degree $1$. It now follows from the definition of the shearing functor $\mathrm{sh}$ that
$$
\Spec(A)^\mathrm{evp}\simeq \Spec(A[t^{\pm 1}]^\shear)/\mathbf G_m\simeq (\Spec(A[t^{\pm 1}])/\mathbf G_m)^\shear\simeq \Spec(A)^\shear
$$
as promised.
\end{proof}

\begin{remark}
The structure map to $\mathrm B\mathbf G_m$ indicated in the statement of Proposition \ref{Evp for even affines over MU} amounts to identifying $\Spec(A)\simeq (\Spec(A)\times \mathbf G_m)/\mathbf G_m\simeq \Spec(A[\mathbf Z])/\mathbf G_m$. The shearing is therefore applied to the \textit{free $\Z$-grading} $A[\mathbf Z]\simeq \bigoplus_{n\in \mathbf Z}A$. This should not be confused with the \textit{trivial $\Z$-grading} $A=A(0)$, which has $A$ in grading-degree $0$ and the zero spectrum in any grading-degree $\ne 0$. Indeed, we should not expect shearing to return even periodic objects for non-free gradings.
\end{remark}

\begin{corollary}\label{Corollary shear on QCoh}
Let $A$ be an even $\E$-algebra over $\mathrm{MU}$.
The even periodization map $\Spec(A)^\mathrm{evp}\to \Spec(A)$ induces an equivalence of symmetric monoidal $\i$-categories  on quasi-coherent sheaves
$$
\QCoh(\Spec(A)^\mathrm{evp})\,\simeq\, \Mod_A.
$$
\end{corollary}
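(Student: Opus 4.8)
The plan is to exploit the simplicial presentation of the even periodization obtained in the proof of Proposition \ref{Evp for even affines over MU}. Fix an $\E$-ring structure on $\mathrm{MUP}$ compatible with its Thom $\E$-algebra structure over $\mathrm{MU}$, and set $\mathrm{MUP}^\bullet := \mathrm{MUP}^{\otimes_{\mathrm{MU}}[\bullet]}$ and $B^\bullet := A\otimes_{\mathrm{MU}}\mathrm{MUP}^\bullet$; that proof exhibits $\Spec(A)^\mathrm{evp}\simeq\varinjlim_{\mathbf\Delta^\mathrm{op}}\Spec(B^\bullet)$, where $B^\bullet$ is the cobar construction of the unit map $A\to A\otimes_{\mathrm{MU}}\mathrm{MUP}=B^0$, and the induced map $\varinjlim_{\mathbf\Delta^\mathrm{op}}\Spec(B^\bullet)\to\Spec(A)$ is the even periodization map $\ell$ (by Proposition \ref{Computing the evening}). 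Applying $\QCoh$, which by Proposition \ref{Def of QCoh} carries colimits of spectral stacks to limits in $\CAlg(\PrL)$, yields a symmetric monoidal equivalence $\QCoh(\Spec(A)^\mathrm{evp})\simeq\varprojlim_{\mathbf\Delta}\Mod_{B^\bullet}$, under which $\ell^*\colon\Mod_A\to\QCoh(\Spec(A)^\mathrm{evp})$ is identified with the canonical comparison functor $M\mapsto(M\otimes_A B^\bullet)_{\bullet\in\mathbf\Delta}$. Everything therefore reduces to showing this comparison functor is an equivalence, i.e.\ that the $\E$-ring map $A\to B^0$ satisfies descent for modules.

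For this descent step I would not use faithful flatness — which fails here — but descendability in the sense of Mathew. As an $\mathrm{MU}$-module, $\mathrm{MUP}\simeq\bigoplus_{n\in\mathbf Z}\Sigma^{2n}(\mathrm{MU})$, so the unit $\mathrm{MU}\to\mathrm{MUP}$ exhibits $\mathrm{MU}$ as a retract (the $n=0$ summand) of $\mathrm{MUP}$ in $\Mod_{\mathrm{MU}}$; in particular $\mathrm{MU}$ lies in the thick $\otimes$-ideal generated by $\mathrm{MUP}$, which is exactly the statement that $\mathrm{MU}\to\mathrm{MUP}$ is descendable. Descendability is preserved under base change, so $A\to A\otimes_{\mathrm{MU}}\mathrm{MUP}=B^0$ is descendable as well, and the associated cobar resolution satisfies module descent: $\Mod_A\simeq\varprojlim_{\mathbf\Delta}\Mod_{B^\bullet}$ via the comparison functor above. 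Combined with the previous paragraph, this produces the desired symmetric monoidal equivalence $\QCoh(\Spec(A)^\mathrm{evp})\simeq\Mod_A$ implemented by $\ell^*$; the symmetric monoidal structure requires no extra input, since the limit defining the left-hand side is formed in $\CAlg(\PrL)$ and the functor $M\mapsto(M\otimes_A B^\bullet)$ is symmetric monoidal on the nose.

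An alternative, and perhaps more transparent, route runs through the quotient description \eqref{Evp of aff wrt MUP}, namely $\Spec(A)^\mathrm{evp}\simeq\Spec(A\otimes_{\mathrm{MU}}\mathrm{MUP})/\mathbf G_m$. The relative-spectrum formalism of Section \ref{Section affine morphisms} identifies $\QCoh$ of this quotient with graded $(A\otimes_{\mathrm{MU}}\mathrm{MUP})$-modules inside $\mathrm{GrSp}$; the symmetric monoidal shearing auto-equivalence of $\mathrm{GrMod}_{\mathrm{MU}}$ from Construction \ref{Const of shearing} (following \cite{Sanath}) carries these onto graded $A[t^{\pm1}]$-modules with $t$ in grading degree $1$; and the latter $\i$-category is equivalent to $\Mod_A$, because the free grading $A[\mathbf Z]\simeq\bigoplus_{n\in\mathbf Z}A$ trivializes upon passage to the $\mathbf G_m$-quotient — the very mechanism used in the closing lines of the proof of Proposition \ref{Evp for even affines over MU}. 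Tracking these identifications through that proof shows the composite is again $\ell^*$, and it is manifestly symmetric monoidal.

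The main obstacle is precisely the descent step of the second paragraph: for a general even $\mathrm{MU}$-algebra $A$ the ring $\bigoplus_{n\in\mathbf Z}\pi_{2n}(A)=\pi_0(A\otimes_{\mathrm{MU}}\mathrm{MUP})$ need not be flat over $\pi_0(A)$, so ordinary faithfully flat descent for quasi-coherent sheaves does not apply, and one genuinely needs the descendability (equivalently, the shearing) input. Everything else — the simplicial presentation, the behaviour of $\QCoh$ under colimits, and the bookkeeping of symmetric monoidal structures — is formal and already in place.
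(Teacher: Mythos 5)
Your proposal is correct, and your primary argument takes a genuinely different route from the paper. The paper's proof is essentially your ``alternative'' route: it uses that shearing is a symmetric monoidal auto-equivalence of $\mathrm{GrMod}_{\mathrm{MU}}$ (with inverse the unshearing $M_*\mapsto\Sigma^{-2*}(M_*)$), so that $\QCoh(\Spec(A))\simeq\QCoh(\Spec(A)^\shear)\simeq\QCoh(\Spec(A)^\mathrm{evp})$, and then identifies this equivalence with the (co)unit of $\ell^*\dashv\ell_*$ by observing that pushforward along $\Spec(A\otimes_{\mathrm{MU}}\MUP)/\mathbf G_m\to\Spec(A)$ extracts the zeroth graded piece, which shearing does not affect. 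Your main route instead feeds the cobar presentation from the proof of Proposition \ref{Evp for even affines over MU} into $\QCoh$ and resolves the descent step by descendability: since $\MUP\simeq\bigoplus_{n}\Sigma^{2n}(\mathrm{MU})$ exhibits $\mathrm{MU}$ as an $\mathrm{MU}$-module retract of $\MUP$, the unit $A\to A\otimes_{\mathrm{MU}}\MUP$ is split in $\Mod_A$, hence descendable, hence satisfies module descent even though it is not flat --- a point you correctly isolate as the crux, and where faithfully flat descent would indeed not apply. This is more economical in that it bypasses the symmetric monoidal shearing machinery of \cite{Sanath} (only the Thom $\E$-structure on $\MUP$ and Mathew-style descendability enter), and since the splitting makes no use of evenness of $A$, the same descent argument immediately also gives the complex-periodization statement of Corollary \ref{Corollary 4.53 qcoh}. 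What the paper's route buys in exchange is the finer description of the equivalence --- the explicit unshearing inverse and the identification of $\ell_*$ with the zeroth-graded-piece functor $M_*\mapsto M_0$ --- which is reused in the surrounding graded and $\mathbf G_m$-equivariant arguments.
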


\begin{proof}
Recall that the shearing functor $M_*\mapsto M_*^\shear$ is a symmetric monoidal auto-equivalence of $\mathrm{GrMod}_\mathrm{MU}$, with an inverse functor given by the \textit{unshearing functor} $M_*\mapsto \Sigma^{-2*}(M_*)$. It therefore restricts to a symmetric monoidal equivalence between the $\i$-categories of modules
$$
\Mod_{A_*}(\mathrm{GrMod}_\mathrm{MU})\,\simeq \,\Mod_{A_*^\shear}(\mathrm{GrMod}_\mathrm{MU})
$$
for any graded $\E$-algebra $A_*$ over $\mathrm{MU}$. In terms of a relative affines $\mX\simeq \Spec_{\mathrm B\mathbf G_{m, \mathrm{MU}}}(A_*)$, this amounts to an symmetric monoidal equivalence of the $\i$-categories of quasi-coherent sheaves
$
\QCoh(\mX^\shear) \simeq \QCoh(\mX).
$
When applied to $\mX=\Spec(A)$ with the trivial map to the classifying stack $\mathrm B\mathbf G_{m, \mathrm{MU}}$, this comines with Proposition \ref{Evp for even affines over MU} to give an equivalence of the desired symmetric monoidal $\i$-categories. It remains to identify it with the pullback (or pushforward) along the canonical map $\Spec(A)^\mathrm{evp}\to\Spec(A)$.
Using \eqref{Evp of aff wrt MUP}, this map may be interpreted as
$$
\Spec(A)^\mathrm{evp}\,\simeq\, \Spec(A\o_{\mathrm{MU}}\MUP)/\mathbf G_m\to \Spec(A).
$$
Recall that pushforward along the terminal map of spectral stacks $\mathrm BG\to\Spec(\mathbf S)$ amounts to passing to $G$-fixed points for spectra with a $G$-action for any group $G$. Under the equivalence between $\mathbf G_m$-actions and gradings, we have $M_*^{\mathbf G_m}\simeq M_0$, i.e.\ the fixed-points extract the $0$-the graded piece. The pushforward along the even periodization map, viewed as a functor
$$
\QCoh(\Spec(A)^\mathrm{evp})\,\simeq\,\Mod_{A\o_\mathrm{MU}\MUP}(\mathrm{GrMod}_A)\to\Mod_A,
$$
is therefore the zero-th piece functor $M_*\mapsto M_0$. On the other hand, the equivalence
$$
\Mod_{A[t^{\pm 1}]}(\mathrm{GrMod}_\mathrm{MU})\,\simeq\,
\Mod_{A[t^{\pm 1}]}(\mathrm{GrMod}_A)\simeq\Mod_A
$$
is also given as $M_*\mapsto M_0$. Since shearing does not effect the $0$-th graded piece, the claim follows.
\end{proof}

\begin{remark}
Let $A$ be an even $\E$-ring over $\mathrm{MU}$.
As we saw in the proof of Corollary \ref{Corollary shear on QCoh}, we have canonical equivalences of $\E$-rings
$$
\sO(\Spec(A)^\mathrm{evp})\simeq \sO(\Spec(A\o_{\mathrm{MU}}\mathrm{MUP})/\mathbf G_m)\simeq (A\o_{\mathrm{MU}}\mathrm{MUP})^{\mathbf G_m}\simeq A\o_{\mathrm{MU}}\mathrm{MU}\simeq A.
$$
Since $\CAlg^\mathrm{ev}\subseteq\CAlg^\mathrm{evc}$, this agrees with the result of Proposition \ref{Prop champs affines} that the even priodization functor $A\mapsto \Spec(A)^\mathrm{evp}$ should be in this context undone by the passage to the $\E$-ring of global functions $\mX\mapsto\sO(\mX)$.
\end{remark}

From  Proposition \ref{Evp for even affines over MU} we may also obtain a formula for the even periodization of a not-necessarily-even itself, but only admits an eff cover in the sense of Definition \ref{Def eff} by an even affine.

\begin{corollary}\label{EVP over MU in the presence of an even eff cover}
Let $A\to B$ be an eff $\E$-algebra map over $\mathrm{MU}$ with $B$ even. Then
$$
\Spec(A)^\mathrm{evp}\,\simeq \,\varinjlim_{\phantom{{}^\mathrm{op}}\mathbf{\Delta}^{\mathrm{op}}} \Spec(B^\bullet)^\shear
$$
where $B^\bullet := B^{\otimes_A[\bullet]}$ is the cosimplicial cobar construction of $A\to B$.
\end{corollary}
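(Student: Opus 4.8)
The plan is to reduce the statement to a levelwise application of Proposition~\ref{Evp for even affines over MU}, after exhibiting a \v{C}ech presentation of the restriction $\Spec(A)\vert_{\CAlg^\mathrm{evp}}$ inside $\mathrm{SpStk}^\mathrm{evp}$. By Definition~\ref{Def of evp localization} and Construction~\ref{Functoriality of ev} we have $\Spec(A)^\mathrm{evp}=\varepsilon_!\big(\Spec(A)\vert_{\CAlg^\mathrm{evp}}\big)$, where $\varepsilon_!:\mathrm{SpStk}^\mathrm{evp}\hookrightarrow\mathrm{SpStk}$ is colimit-preserving and fully faithful. So it suffices to show that $\Spec(A)\vert_{\CAlg^\mathrm{evp}}$ is the geometric realization $\varinjlim_{\mathbf\Delta^\mathrm{op}}\Spec(B^\bullet)\vert_{\CAlg^\mathrm{evp}}$ in $\mathrm{SpStk}^\mathrm{evp}$: applying $\varepsilon_!$ then gives $\Spec(A)^\mathrm{evp}\simeq\varinjlim_{\mathbf\Delta^\mathrm{op}}\Spec(B^\bullet)^\mathrm{evp}$, and Proposition~\ref{Evp for even affines over MU} rewrites each term as $\Spec(B^\bullet)^\shear$ once we know every $B^{\otimes_A[n]}$ is an even $\E$-algebra over $\mathrm{MU}$.

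First I would record the evenness of the cobar terms. Each $B^{\otimes_A[n]}$ is a $B$-algebra, hence an $A$-algebra, hence an $\E$-algebra over $\mathrm{MU}$; evenness follows by induction on $n$, since $B^{\otimes_A[0]}=B$ is even by hypothesis and, if $B^{\otimes_A[n]}$ is even, then it lies in $\CAlg^\mathrm{ev}_A$, so applying the eff hypothesis on $A\to B$ with test algebra $C=B^{\otimes_A[n]}$ shows that $B^{\otimes_A[n+1]}=B\otimes_A B^{\otimes_A[n]}$ is even.

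Next I would establish the \v{C}ech presentation. The crux is that $\Spec(B)\vert_{\CAlg^\mathrm{evp}}\to\Spec(A)\vert_{\CAlg^\mathrm{evp}}$ is an effective epimorphism in $\mathrm{SpStk}^\mathrm{evp}$. Since every object of $\mathrm{SpStk}^\mathrm{evp}$ is a colimit of affines, it is enough to check this after base change along an arbitrary affine $\Spec(C)\to\Spec(A)\vert_{\CAlg^\mathrm{evp}}$, i.e.\ for an arbitrary $C\in\CAlg^\mathrm{evp}_A$; as fiber products of presheaves are computed objectwise, the relevant base change is $\Spec(B\otimes_A C)\vert_{\CAlg^\mathrm{evp}}\to\Spec(C)$. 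Now $A\to B$ being eff implies its base change $C\to B\otimes_A C$ is eff and $B\otimes_A C$ is even; moreover $B\otimes_A C$ is a $C$-algebra and $C$ is weakly $2$-periodic, so by Remark~\ref{Remark weak vs strong 2-periodicity remark} the $(B\otimes_A C)$-module $\Sigma^2(B\otimes_A C)\simeq(B\otimes_A C)\otimes_C\Sigma^2 C$ is invertible, i.e.\ $B\otimes_A C$ is again even periodic. By the Remark following Proposition~\ref{Even filtration is even periodic filtration}, an eff map of even periodic $\E$-rings is faithfully flat in the sense of \cite{SAG}, so $\Spec(B\otimes_A C)\to\Spec(C)$ is an fpqc cover in $\mathrm{Aff}^\mathrm{evp}$, hence an effective epimorphism. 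By \v{C}ech descent in $\mathrm{SpStk}^\mathrm{evp}$ — exactly as used in the proof of Proposition~\ref{Computing the evening} — the effective epimorphism $\Spec(B)\vert_{\CAlg^\mathrm{evp}}\to\Spec(A)\vert_{\CAlg^\mathrm{evp}}$ exhibits its target as the geometric realization of its \v{C}ech nerve; evaluated on $D\in\CAlg^\mathrm{evp}$, the $\bullet$-th term of that \v{C}ech nerve is the iterated fiber product $\Map_\CAlg(B,D)^{\times_{\Map_\CAlg(A,D)}[\bullet]}\simeq\Map_\CAlg(B^{\otimes_A[\bullet]},D)$, so the \v{C}ech nerve is precisely $\Spec(B^\bullet)\vert_{\CAlg^\mathrm{evp}}$. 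Combining this with the reduction above completes the argument.

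The hard part will be the middle step: checking that the base change of an even periodic $\E$-ring along the eff — but in general not faithfully flat — map $A\to B$ is again even periodic, and that over it the map becomes an honest fpqc cover. This is what allows the \v{C}ech/descent argument to run inside $\mathrm{SpStk}^\mathrm{evp}$ even though, in contrast to the situation of Proposition~\ref{Computing the evening}, the ring $B$ itself need not be even periodic, so that $\Spec(B)\to\Spec(A)$ is not periodically eff in the sense of Definition~\ref{Definition periodically eff}. Everything else is formal manipulation of $\varepsilon_!$, of eff morphisms (closure under base change), and of Proposition~\ref{Evp for even affines over MU}.
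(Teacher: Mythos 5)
Your argument is correct, but it takes a genuinely different route from the paper. The paper first base-changes along $\mathrm{MU}\to\mathrm{MUP}$: since $A\to B$ is eff and $B$ is even, the composite $A\to B\o_{\mathrm{MU}}\mathrm{MUP}$ is an eff map to an \emph{even periodic} $\E$-ring, so $\Spec(B\o_{\mathrm{MU}}\mathrm{MUP})\to\Spec(A)$ is periodically eff in the sense of Definition \ref{Definition periodically eff}; Proposition \ref{Computing the evening} then presents $\Spec(A)^\mathrm{evp}$ as the realization of the \v{C}ech nerve of this cover, and the torsor-triviality/regrouping manipulations from the proof of Proposition \ref{Evp for even affines over MU} are rerun on that nerve to rewrite it as $\varinjlim_{\mathbf\Delta^\mathrm{op}}\Spec(B^\bullet)^\shear$. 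You instead prove descent of $(-)^\mathrm{evp}$ directly along the cover $\Spec(B)\to\Spec(A)$ itself, even though $B$ is not even periodic: you check that after base change to any even periodic affine test object the map becomes an honest fpqc cover in $\mathrm{Aff}^\mathrm{evp}$ (using evenness of $B\o_AC$ from the eff hypothesis, inheritance of weak $2$-periodicity from $C$, and the identification of eff maps of even periodic rings with faithfully flat ones), conclude that $\Spec(B)\vert_{\CAlg^\mathrm{evp}}\to\Spec(A)\vert_{\CAlg^\mathrm{evp}}$ is an effective epimorphism with \v{C}ech nerve $\Spec(B^\bullet)\vert_{\CAlg^\mathrm{evp}}$, apply $\varepsilon_!$, and then invoke Proposition \ref{Evp for even affines over MU} levelwise as a black box (its stated naturality is what lets the levelwise equivalences assemble into an equivalence of simplicial objects, so keep that phrase in). What each approach buys: yours is more modular and in fact isolates a reusable strengthening of Proposition \ref{Computing the evening} — \v{C}ech descent for even periodization along eff covers whose source is merely even — at the cost of having to verify the effective-epimorphism statement for a nerve that is not levelwise representable in $\mathrm{Aff}^\mathrm{evp}$ (your reduction to affine test objects, via the standard fact that the coproduct of a colimit diagram surjects onto the colimit, is the right argument and should be recorded); the paper's route stays entirely within covers by even periodic affines, so the descent input is already available, but it must then disentangle the extra $\mathrm{MUP}$-direction of the resulting resolution by repeating the shearing computation rather than quoting it.
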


\begin{proof}
Since $A\to B$ is eff, the map $A\to B\o_{\mathrm{MU}}\mathrm{MUP}$ is an eff cover by an even periodic $\E$-ring. 
The induced map of affines
$\Spec(B\o_{\mathrm{MU}}\mathrm{MUP})\to \Spec(A)$ is therefore periodically eff in the sense of Definition \ref{Definition periodically eff}, so we obtain a simplicial presentation for $\Spec(A)^\mathrm{evp}$ from Proposition \ref{Computing the evening}. The same arguments as in the proof of  Proposition \ref{Evp for even affines over MU} then re-express this presentation in the desired form.
\end{proof}

The discussion of this section \textit{would} admit
 an obvious globalization, applicable to all nonconnective spectral stacks over $\Spec(\mathrm{MU}),$ \textit{if we were able to} extend the shearing endofunctor $\mX\mapsto\mX^\shear$ of Construction \ref{Const of shearing} from relative affines to all of $\mathrm{SpStk}^\mathrm{aff}_{/\mathrm B\mathbf G_{m, \mathrm{MU}}}$. However, this is obstructed by shearing not necessarily preserving fpqc covers.

\begin{exun}
Consider the map $\mathbf Z\to \mathbf Z[t^{\pm 1}]$ of graded commutative $\Z$-algebras, where $t$ is in graded degree $1$. Then $\mathbf Z^\shear\simeq \mathbf Z$ whereas $\mathbf Z[t^{\pm 1}]^\shear\simeq \bigoplus_{n\in \mathbf Z}\Sigma^{2n}(\mathbf Z)$. Since the latter is neither connective nor truncated, it can not be faithfully flat over the classical ring $\mathbf Z$, whereas $\mathbf Z[t^{\pm 1}]$ is certainly a faithfully flat $\mathbf Z$-algebra.
\end{exun}

Nevertheless, the result on quasi-coherent sheaves of Corollary \ref{Corollary shear on QCoh} does admit a global extension in the expected way.

\begin{corollary}\label{Corollary unshear for modules}
Let $\mX\to\Spec(\mathrm{MU})$ be a map on nonconnective spectral stacks. Assume that $\mX$ is even, in the sense that it is may be written as a small colimit of even affines in the $\i$-category $\mathrm{SpStk}_{/\Spec(\mathrm{MU})}$. Then the even periodization map $f:\mX^\mathrm{evp}\to\mX$ induces a symmetric monoidal equivalence on the $\i$-categories of quasi-coherent sheaves
$$
f^*:\QCoh(\mX)\simeq\QCoh(\mX^\mathrm{evp}):f_*
$$
\end{corollary}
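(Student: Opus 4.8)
The plan is to reduce the global statement to the affine case already established in Corollary \ref{Corollary shear on QCoh} by exploiting the compatibility of both even periodization and $\QCoh$ with colimits. First I would invoke the hypothesis to write $\mX\simeq\varinjlim_{i\in\mathcal I}\Spec(A_i)$ as a small colimit, computed in $\mathrm{SpStk}_{/\Spec(\mathrm{MU})}$ and hence also in $\mathrm{SpStk}$, of affines $\Spec(A_i)$ with each $A_i$ an even $\E$-algebra over $\mathrm{MU}$. Since even periodization preserves all small colimits (Proposition \ref{Even stacks inside stacks}) and is functorial with a natural map to the identity, the canonical map $f:\mX^\mathrm{evp}\to\mX$ is exhibited as the colimit $\varinjlim_{i}f_i$ of the even periodization maps $f_i:\Spec(A_i)^\mathrm{evp}\to\Spec(A_i)$, each fitting into a commuting naturality square over $f$.

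Next I would apply $\QCoh$. Because $\QCoh:\mathrm{SpStk}^\mathrm{op}\to\CAlg(\mathrm{Pr^L})$ carries colimits of spectral stacks to limits of $\i$-categories (Proposition \ref{Def of QCoh}), the two colimit presentations give $\QCoh(\mX)\simeq\varprojlim_{i}\Mod_{A_i}$ and $\QCoh(\mX^\mathrm{evp})\simeq\varprojlim_{i}\QCoh(\Spec(A_i)^\mathrm{evp})$, and under these identifications the symmetric monoidal pullback functor $f^*$ becomes the limit $\varprojlim_{i}f_i^*$ of the symmetric monoidal pullbacks $f_i^*:\Mod_{A_i}\to\QCoh(\Spec(A_i)^\mathrm{evp})$. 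Corollary \ref{Corollary shear on QCoh} then provides, for each $i$, that $f_i^*$ is a symmetric monoidal equivalence, since $A_i$ is an even $\E$-algebra over $\mathrm{MU}$; passing to the limit of these componentwise equivalences yields the desired symmetric monoidal equivalence $f^*:\QCoh(\mX)\simeq\QCoh(\mX^\mathrm{evp})$, whose inverse is necessarily the right adjoint $f_*$.

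The one point that needs care, and which I expect to be the only real obstacle, is ensuring that the equivalences of Corollary \ref{Corollary shear on QCoh} are genuinely \emph{natural} in the even $\mathrm{MU}$-algebra $A_i$ — i.e.\ that they assemble into an equivalence of $\mathcal I$-indexed diagrams in $\CAlg(\mathrm{Pr^L})$ compatible with the transition functors, not merely a family of pointwise equivalences — since that is what legitimizes passing to the limit. This is cleanest to arrange by phrasing the argument through the identification $\Spec(A)^\mathrm{evp}\simeq\Spec(A)^\shear$ of Proposition \ref{Evp for even affines over MU} together with the symmetric monoidal shearing equivalence on $\mathrm{GrMod}_\mathrm{MU}$ used in the proof of Corollary \ref{Corollary shear on QCoh}, all of whose constituents (the shearing autoequivalence, the Rees and relative-spectrum identifications, and the zeroth-graded-piece functor computing pushforward along $\mathrm B\mathbf G_m$) are manifestly functorial. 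Beyond this bookkeeping of functoriality, the argument is formal.
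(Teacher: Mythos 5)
Your proposal is correct and follows essentially the same route as the paper: the paper's own proof is exactly the reduction to the affine case of Corollary \ref{Corollary shear on QCoh}, using that even periodization preserves small colimits and that $\QCoh$ takes colimits of spectral stacks to limits of $\i$-categories. Your additional attention to the naturality in $A_i$ of the affine equivalences (via the functoriality of shearing) is a reasonable precision that the paper leaves implicit.
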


\begin{proof}
Since the  quasi-coherent sheaves and even periodization are both compatible with small colimits, this reduces to the affine case, which is precisely Corollary \ref{Corollary shear on QCoh}.
\end{proof}

\subsection{Variant:  even strong periodization}\label{Section: variant strict}
In discussing the even periodization, we have so far always been using a weak form of $2$-periodicity for even periodic $\E$-rings. We could have instead used the more standard stronger notion. We introduced the latter in Definition \ref{List of rings} under the name \textit{even strongly periodic $\E$-rings}, and denoted the $\i$-category they comprise by $\CAlg{}^{\mathrm{ev}\mathfrak{P}}$. Recall here that an even periodic $\E$-ring $A$ is \textit{strongly periodic} if and only if there exists a \textit{non-canonical} $\pi_0(A)$-linear isomorphism $\pi_2(A)\cong \pi_0(A)$.

Let us define even strongly periodic affines in the usual way as the opposite $\i$-category $\mathrm{Aff}{}^{\mathrm{ev}\mathfrak{P}}:=(\CAlg{}^{\mathrm{ev}\mathfrak{P}})^\mathrm{op}$. We next show that the even periodization is unaffected by whether we base it upon $\CAlg^\mathrm{evp}$ or if we were to use $\CAlg{}^{\mathrm{ev}\mathfrak{P}}$ instead.

\begin{lemma}\label{Trivialization lemma}
Let $A$ be an even $\E$-ring and $M$ an even $A$-module so that $\pi_0(M)$ is a projective $\pi_0(A)$-module of rank $r$. There exists a faithfully flat map of even $\E$-rings $A\to B$ so that $M\o_A B\simeq B^{\oplus r}$.
\end{lemma}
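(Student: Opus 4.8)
The plan is to prove this by first handling the ``maximally split'' case and then building the desired $B$ as a filtered colimit which is both faithfully flat and even. First I would recall the classical fact that over any commutative ring $R$, a finitely generated projective module $P$ of rank $r$ can be ``trivialized'' by a faithfully flat base change: for instance, one can take the ring $R'$ representing the functor of trivializations (i.e.\ of isomorphisms $P\otimes_R(-) \cong (-)^{\oplus r}$), which is an $R$-algebra whose structure map is faithfully flat because $P$ is locally free --- it is even smooth, being a torsor-like construction modeled on the general linear group. Concretely, $\Spec(R')\to\Spec(R)$ is the complement-free bundle whose fiber over a prime is the set of bases of the free localization of $P$. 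This $R'$ is flat and surjective on spectra over $R$, hence faithfully flat.

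Next I would transport this to the spectral setting. Given the even $\E$-ring $A$, set $R=\pi_0(A)$ and $P=\pi_0(M)$, and form the classical faithfully flat $\pi_0(A)$-algebra $R'$ trivializing $P$ as above. The key step is to lift $R'$ to an even $\E$-algebra $B$ over $A$ which is faithfully flat in the graded sense. Since $R\to R'$ is flat (indeed smooth), one can invoke the standard lifting results for flat (or \'etale/smooth) algebras over $\E$-rings --- e.g.\ the structure theory in \cite{SAG} which produces, for any flat $\pi_0(A)$-algebra $R'$, a flat $\E$-$A$-algebra $B$ with $\pi_0(B)\cong R'$ and $\pi_*(B)\simeq \pi_*(A)\otimes_{\pi_0(A)}R'$. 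Because $A$ is even and $B$ is flat over $A$, the $\E$-ring $B$ is automatically even, and since $R\to R'$ is faithfully flat, so is $A\to B$ in the appropriate sense (it is flat and $\pi_0(A)\to\pi_0(B)$ is faithfully flat). Finally, $M\otimes_A B$ is a flat $B$-module with $\pi_0(M\otimes_A B)\simeq P\otimes_R R'\cong (R')^{\oplus r}$ free; since $M$ and $B$ are even, $M\otimes_A B$ is even, and an even flat module over an even $\E$-ring whose $\pi_0$ is free of rank $r$ is itself equivalent to $B^{\oplus r}$ (lift the basis of $\pi_0$ to a map $B^{\oplus r}\to M\otimes_A B$ and check it is an equivalence on homotopy groups using flatness).

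The main obstacle I anticipate is the lifting step: one needs the flat $\pi_0(A)$-algebra $R'$ to lift to a flat $\E$-$A$-algebra in a way compatible with evenness. For \'etale or smooth algebras this is completely standard (topological invariance of the \'etale site, deformation theory), but for a general flat algebra one must be a little more careful --- the cleanest route is probably to observe that $R'$ can be chosen to be not merely flat but a filtered colimit of smooth (even \'etale-locally polynomial) $\pi_0(A)$-algebras, or even directly to build the trivializing algebra as a localization of a polynomial algebra: write $P$ as a summand of a free module, $P\oplus Q\cong R^{\oplus N}$, and present the trivialization space via explicit matrix coordinates, which makes it a localization of a polynomial $\pi_0(A)$-algebra and hence manifestly liftable to an even $\E$-$A$-algebra by taking the corresponding localization of $\Sym_A$ of a free module. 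With such an explicit presentation in hand, flatness and faithful flatness are immediate, evenness is inherited from $A$, and the freeness of $M\otimes_A B$ follows by construction. I would also remark that this lemma is presumably used to reduce statements about $\CAlg^{\mathrm{ev}\mathfrak P}$ to $\CAlg^\mathrm{evp}$ by covering any even periodic $\E$-ring by an even strongly periodic one (applying the lemma to $M=\Sigma^2(A)$, whose $\pi_0$ is the rank-one projective $\pi_2(A)$), and the argument above specializes to that case without change.
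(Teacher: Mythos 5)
Your overall strategy (trivialize $\pi_0(M)$ over $\pi_0(A)$ by a faithfully flat cover of classical rings, then base change up) is the same as the paper's, but the step where you lift the classical trivializing algebra $R'$ to an even flat $\E$-$A$-algebra is a genuine gap, and both mechanisms you offer for it are flawed. There is no ``standard lifting result'' producing, for an arbitrary flat $\pi_0(A)$-algebra $R'$, a flat $\E$-$A$-algebra $B$ with $\pi_0(B)\cong R'$ and $\pi_*(B)\simeq\pi_*(A)\otimes_{\pi_0(A)}R'$: such lifts exist for \'etale algebras and Zariski localizations (and can be produced for smooth ones by obstruction theory), but not for general flat algebras, so the blanket citation does not stand. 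Your explicit fallback is also wrong on two counts: (i) for a non-free projective $P$ the scheme of trivializations is a $\GL_r$-torsor, affine and smooth but \emph{not} a localization of a polynomial ring --- the conditions $uv=\mathrm{id}$, $vu=\mathrm{id}$ are equations, not inversions; and (ii) $\Sym_A$ of a free module is the wrong spectral lift of a polynomial ring: over a general even $\E$-ring one has $\Sym_A^k(A)\simeq A\otimes_{\mathbf S}\Sigma^{\infty}_+\mathrm B\Sigma_k$, whose homotopy contains $A_*(\mathrm B\Sigma_k)$, so it is neither flat over $A$ nor even; the flat lift of a polynomial algebra is the monoid algebra $A[\mathbf N^m]$, not $\Sym_A(A^{\oplus m})$.

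The repair is simple and is essentially what the paper does: since $\pi_0(M)$ is projective of rank $r$ it is Zariski-locally free, so only a Zariski cover is ever needed, and those lift trivially (a finite product of localizations $A[1/f_i]$ is flat, even, and faithfully flat over $A$). The paper routes this through the connective cover: by \cite[Proposition 7.2.2.16]{HA} flat $A$-modules agree with flat $\tau_{\ge 0}(A)$-modules, $\tau_{\ge 0}(M)$ is projective of rank $r$, \cite[Proposition 2.9.2.3]{SAG} supplies a Zariski cover $\tau_{\ge 0}(A)\to B'$ trivializing it, and $B:=A\otimes_{\tau_{\ge 0}(A)}B'$ does the job, with evenness of $B$ coming from faithful flatness over the even ring $A$ --- thereby avoiding any lifting of classical algebras. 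Two smaller points: your claim that $M\otimes_A B$ is even ``since $M$ and $B$ are even'' is not a valid reason (tensor products of even modules need not be even); it follows instead from flatness of $B$ over $A$. And your final step, like the paper's, silently uses that $M$ itself is flat over $A$ (so that $\pi_*(M)\simeq\pi_*(A)\otimes_{\pi_0(A)}\pi_0(M)$); without that, lifting a basis of $\pi_0$ cannot give a $\pi_*$-isomorphism.
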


\begin{proof}
Due to evenness, $M$ is a flat $A$-module.
Recall from \cite[Proposition 7.2.2.16]{HA} that the base-change along the connective cover $\tau_{\ge 0}(A)\to A$ induces an equivalence between the $\i$-categories of flat modules $\Mod_{\tau_{\ge 0}(A)}^\flat\simeq \Mod_A^\flat.$ Under it, $M$ corresponds to the projective $\tau_{\ge 0}(A)$-module $\tau_{\ge 0}(M)$. There exists by \cite[Proposition 2.9.2.3]{SAG} a  Zariski cover -- which is therefore in particular faithfully flat -- $\tau_{\ge 0}(A)\to B'$ such that there is a $B'$-module equivalence $\tau_{\ge 0}(M)\otimes_{\tau_{\ge 0}(A)} B'\simeq B'^{\oplus r}$.
Set $B := A\o_{\tau_{\ge 0}(A)}B'$, so that the base-change of the module $M$ along the $\E$-ring map $A\to B$ may be determined as
\begin{eqnarray*}
M\o_A B&\simeq &(\tau_{\ge 0}(M)\o_{\tau_{\ge 0}(A)}A)\o_A (A\o_{\tau_{\ge 0}(A)} B')\\
&\simeq & (\tau_{\ge 0}(M)\o_{\tau_{\ge 0}(A)} B')\o_{\tau_{\ge 0}(A)} A\\
&\simeq & B'^{\oplus r}\o_{\tau_{\ge 0}(A)} A\simeq B^{\oplus r}.
\end{eqnarray*}
Since $\tau_{\ge 0}(A)\to B'$ is faithfully flat, and faithfully-flat maps are closed under base-change, so is $A\to B$. In particular, because $A$ is even periodic, $B$ must also be even periodic.
\end{proof}

\begin{prop}\label{Even stacks inside stacks 2}
The  subcategory inclusion $\CAlg{}^{\mathrm{ev}\mathfrak{P}}\subseteq\CAlg^\mathrm{evp}$ is accessible, preserves fpqc covers and has the cover lifting property with respect to it. It induces an equivalence of $\i$-categories upon accessible sheaves
$$
\mathrm{Shv}^\mathrm{acc}_\mathrm{fpqc}(\mathrm{Aff}{}^{\mathrm{ev}\mathfrak{P}})\simeq \mathrm{SpStk}^\mathrm{evp}.
$$
\end{prop}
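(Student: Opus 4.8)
The plan is to verify the three asserted properties of the inclusion $j\colon\CAlg^{\mathrm{ev}\mathfrak{P}}\hookrightarrow\CAlg^\mathrm{evp}$ exactly as in Lemma \ref{Lemma cover preservation and lifting} together with Remark \ref{Remark Lemma cover preservation and lifting}, and then to promote the resulting adjunction to an equivalence by feeding in the extra \emph{cover-density} input supplied by Lemma \ref{Trivialization lemma}. Accessibility is a routine variant of the pullback-square argument of Remark \ref{Remark Lemma cover preservation and lifting}: one expresses $\CAlg^{\mathrm{ev}\mathfrak{P}}$ as the pullback $\CAlg^\mathrm{evp}\times_{(\mathrm{GrCAlg}^\heart)^\mathrm{evp}}(\mathrm{GrCAlg}^\heart)^{\mathrm{ev}\mathfrak{P}}$ along $\pi_*$, where $(\mathrm{GrCAlg}^\heart)^{\mathrm{ev}\mathfrak{P}}\subseteq(\mathrm{GrCAlg}^\heart)^\mathrm{evp}$ is the full subcategory of even periodic graded rings $A_*$ with $A_2$ free of rank one over $A_0$; this subcategory is generated under filtered colimits by the graded rings $\mathbf{Z}[t_1,\dots,t_n][\beta^{\pm1}]$ with the $t_i$ in degree $0$ and $\beta$ in degree $2$ (the compatibility of generators in a filtered system is what keeps $A_2$ free of rank one in the colimit), hence accessible, so \cite[Proposition 5.4.6.6]{HTT} applies. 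Cover preservation is immediate, since the fpqc topology on $\mathrm{Aff}^{\mathrm{ev}\mathfrak{P}}$ is by definition restricted from that on $\mathrm{Aff}^\mathrm{evp}$. For the cover lifting property it suffices to note that a faithfully flat $\E$-ring map $A\to C$ with $A$ even strongly periodic has $C$ even strongly periodic too: flatness gives $\pi_n(C)\cong\pi_n(A)\otimes_{\pi_0(A)}\pi_0(C)$ for all $n$, so $\pi_\mathrm{odd}(C)=0$ and $\pi_2(C)\cong\pi_0(A)\otimes_{\pi_0(A)}\pi_0(C)\cong\pi_0(C)$, with the multiplication isomorphisms inherited by base change.

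Granting these, Construction \ref{Functoriality of ev} applies verbatim to produce a double adjunction $j_!\colon\mathrm{Shv}^\mathrm{acc}_\mathrm{fpqc}(\mathrm{Aff}^{\mathrm{ev}\mathfrak{P}})\rightleftarrows\mathrm{SpStk}^\mathrm{evp}\colon j^*$ in which $j^*$ is restriction and $j_!$ is the sheafified left Kan extension; both functors preserve small colimits ($j^*$ by cover preservation and cover lifting, $j_!$ as a left adjoint), and it remains to prove $j_!$ is an equivalence. For full faithfulness it suffices that the unit $\mathrm{id}\to j^*j_!$ be an equivalence; since both sides preserve colimits and $\mathrm{Shv}^\mathrm{acc}_\mathrm{fpqc}(\mathrm{Aff}^{\mathrm{ev}\mathfrak{P}})$ is generated under colimits by the corepresentables $\Spec(B)\vert_{\CAlg^{\mathrm{ev}\mathfrak{P}}}$ with $B$ even strongly periodic, this reduces to that case, where $j_!$ returns $\Spec(B)\vert_{\CAlg^\mathrm{evp}}$ and the unit is visibly an equivalence.

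For essential surjectivity, recall from Proposition \ref{Even stacks inside stacks} that $\mathrm{SpStk}^\mathrm{evp}$ is generated under colimits by the affines $\Spec(A)$ with $A$ even periodic; applying Lemma \ref{Trivialization lemma} to the invertible $A$-module $\Sigma^{-2}(A)$ produces a faithfully flat $A\to B$ with $B$ even strongly periodic, and since $A\to B^{\otimes_A(\bullet+1)}$ is again faithfully flat the cover lifting property places the entire Čech nerve $\Spec(B^\bullet)$ inside $\mathrm{Aff}^{\mathrm{ev}\mathfrak{P}}$. Descent then yields $\Spec(A)\vert_{\CAlg^\mathrm{evp}}\simeq\varinjlim_{\mathbf\Delta^\mathrm{op}}\Spec(B^\bullet)\simeq j_!\bigl(\varinjlim_{\mathbf\Delta^\mathrm{op}}\Spec(B^\bullet)\vert_{\CAlg^{\mathrm{ev}\mathfrak{P}}}\bigr)$, exhibiting $\Spec(A)$ in the essential image of $j_!$.

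The step I expect to carry the real weight is the identification $j_!\bigl(\Spec(B)\vert_{\CAlg^{\mathrm{ev}\mathfrak{P}}}\bigr)\simeq\Spec(B)\vert_{\CAlg^\mathrm{evp}}$ for $B$ even strongly periodic, which is used in both halves above. At the level of presheaves the left Kan extension of a corepresentable along $j$ need not be corepresentable on $\mathrm{Aff}^\mathrm{evp}$; the content is that it becomes so \emph{after fpqc sheafification}, which is precisely a comparison-lemma assertion and is exactly the place where cover-density of $\mathrm{Aff}^{\mathrm{ev}\mathfrak{P}}$ inside $\mathrm{Aff}^\mathrm{evp}$ — that is, Lemma \ref{Trivialization lemma} — genuinely enters. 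Once this is in place, the rest is a transcription of the arguments already carried out for the inclusion $\CAlg^\mathrm{evp}\subseteq\CAlg$ in Section \ref{Section EVP}.
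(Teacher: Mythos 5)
Your argument is correct and is essentially the paper's proof: the same site-theoretic checks (accessibility, cover preservation, and cover lifting via flat base change of the free module $\pi_2$), the same induced adjunction on accessible sheaves, full faithfulness by reduction to corepresentables, and essential surjectivity by applying Lemma \ref{Trivialization lemma} to a shift of $A$ and realizing $\Spec(A)$ as the geometric realization of the resulting even strongly periodic \v{C}ech nerve. One correction to your closing paragraph: the identification $j_!\bigl(\Spec(B)\vert_{\CAlg^{\mathrm{ev}\mathfrak{P}}}\bigr)\simeq \Spec(B)\vert_{\CAlg^{\mathrm{evp}}}$ carries no real weight and needs neither sheafification nor cover-density, since the left Kan extension of a corepresentable functor along $j$ is corepresented by the image object by the Yoneda lemma, exactly as in Example \ref{Example for proof} and the proof of Proposition \ref{Even stacks inside stacks}; Lemma \ref{Trivialization lemma} genuinely enters only in the essential surjectivity step, which is also precisely where the paper uses it.
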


\begin{proof} The accessibility assertion for the subcategory inclusion
$
\sigma:\CAlg{}^{\mathrm{ev}\mathfrak{P}}\hookrightarrow\CAlg^\mathrm{evp}
$
is clear.
Cover preservation is obvious from the definition of faithfully flat maps. For the cover lifting property, note that if $A\to B$ is a faithfully flat map in $\CAlg^\mathrm{evp}$ with $\mathrm{CAlg}{}^{\mathrm{ev}\mathfrak{P}}$, then $\pi_2(B)$ is the base-change of the free $\pi_0(A)$-module $\pi_2(A)$ under the commutative ring map $\pi_0(A)\to\pi_0(B)$, and is therefore a free $\pi_0(B)$-module, showing that $B$ must be strongly periodic itself.

It follows analogously to Construction \ref{Functoriality of ev} that the map of sites $\sigma$ induces an adjunction between the $\i$-categories of accessible sheaves
$$
\xymatrix{
 \mathrm{Shv}^\mathrm{acc}_\mathrm{fpqc}(\mathrm{Aff}^{\mathrm{ev}\mathfrak{P}}) \ar@<1.2ex>[rr]^{\,\,\quad\sigma_!} \ar@<-1.2ex>[rr]_{\,\,\quad\sigma_*}&  &
 \mathrm{SpStk}^\mathrm{evp}.\ar[ll]|-{\sigma^*\,}
}
$$
By imitating the argument of Proposition \ref{Even stacks inside stacks}, we find that $\sigma_!$ is fully faithful. We wish to show that it is also essentially surjective. Since $\varepsilon_!$ and $\varepsilon^*$ both commute with small colimits, and the $\i$-category $\mathrm{SpStk}$ is generated under colimits by affines, it suffices to show that the essential image of $\sigma_!$ contains $\Spec(A)$ for all even periodic $\E$-rings $A$.
Applying Lemma \ref{Trivialization lemma} to the $A$-module $\Sigma^2(A)$, we find that $A$ admits an fpqc cover $A\to B$ by an even strongly periodic $\E$-ring $B$. The corresponding  cobar construction $B^\bullet$ is a cosimplicial object in $\CAlg^{\mathrm{ev}\mathfrak P}$, and so the geometric realization $\mX:=\varinjlim_{\mathbf{\Delta}^\mathrm{op}}\Spec(B^\bullet)$, interpreted inside the $\i$-category  $\mathrm{Shc}^\mathrm{acc}_\mathrm{fpqc}(\mathrm{Aff}^{\mathrm{ev}\mathfrak{P}})$, satisfies $\sigma_!(\mX)\simeq \Spec(A)$ as desired.
\end{proof}

Our preference for having originally used $\CAlg^\mathrm{evp}$, as opposed to $\CAlg^{\mathrm{ev}\mathfrak P}$, in defining the even periodization stems from the property of being even periodic, unlike even strongly periodic, being fpqc-local among $\E$-rings.
To showcase the advantages that this affords, consider the following result, which distinguishes $\CAlg^\mathrm{evp}\subseteq\CAlg$ as the ``correct" affine site to use in discussing $\mathrm{SpStk}^\mathrm{evp}$.

\begin{prop}\label{EVP affines}
If an even periodic spectral stack $\mX$ is of the form $\mX\simeq\Spec(A)$ for some $\E$-ring $A$, then $A$ must be an even periodic $\E$-ring.
\end{prop}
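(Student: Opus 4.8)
The plan is to exploit the definitional fact that $\mX \simeq \Spec(A)$ being an even periodic spectral stack means precisely that $\mX$ lies in the essential image of $\varepsilon_!:\mathrm{SpStk}^\mathrm{evp}\hookrightarrow\mathrm{SpStk}$, which by Proposition \ref{Even stacks inside stacks}\ref{Corollary 10, c} is equivalent to the canonical map $\mX^\mathrm{evp}\to\mX$ being an equivalence. Thus $\Spec(A)^\mathrm{evp}\simeq \Spec(A)$, and in particular $\sO(\Spec(A)^\mathrm{evp})\simeq \sO(\Spec(A))\simeq A$. First I would recall from Corollary \ref{Ring of functions computation} that $\sO(\Spec(A)^\mathrm{evp})\simeq \varinjlim_n \mathrm{fil}^{\ge n}_\mathrm{ev}(A)$, so the hypothesis forces the even filtration of $A$ to converge, i.e.\ $A\in\CAlg^\mathrm{evc}$. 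This is not yet enough; the real content is to pin down the homotopy groups of $A$.

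The key step is to extract information about $\pi_*(A)$ from the underlying classical stack. Since $\mX\simeq\Spec(A)$, its underlying classical stack is $\mX^\heart\simeq\Spec(\pi_0(A))$, an affine classical scheme. On the other hand, $\mX\simeq\mX^\mathrm{evp}$, so I want to combine this with a description of $(\mX^\mathrm{evp})^\heart$. The subtlety is that Proposition \ref{Heart of evp of affine} is stated for \emph{even} $\E$-rings $A$, so I cannot invoke it directly. Instead I would argue as follows: because $\mX$ is even periodic, it admits by Definition \ref{Def of even stacks} an fpqc cover $\Spec(B)\to\mX$ with $B\in\CAlg^\mathrm{evp}$; pulling this back along $\Spec(A)\xrightarrow{\sim}\mX$ exhibits a faithfully flat map of $\E$-rings $A\to B$ with $B$ even periodic. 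Now the weak $2$-periodicity and evenness of $B$ descend: by Lemma \ref{Lemma cover preservation and lifting} the property of being even is fpqc-local in the cover-lifting sense, but here I need it in the other direction. The honest route is to use faithful flatness of $\pi_*(A)\to\pi_*(B)$ (which follows since $A\to B$ is flat, $A$-module-theoretically, as $B$ is even over $A$ — more precisely, $\pi_*(B)$ is faithfully flat over $\pi_*(A)$ because $\pi_0(A)\to\pi_0(B)$ is faithfully flat and $\pi_*(B)\simeq\pi_*(A)\otimes_{\pi_0(A)}\pi_0(B)$, using evenness of both). Faithful flatness then lets me transport the vanishing $\pi_\mathrm{odd}(B)=0$ and the isomorphism $\pi_n(B)\otimes_{\pi_0(B)}\pi_2(B)\xrightarrow{\sim}\pi_{n+2}(B)$ back down to $A$: odd homotopy of $A$ becomes odd homotopy of $B$ after base change and hence vanishes, and flatness of $\pi_2(A)$ over $\pi_0(A)$ together with the multiplication isomorphism can be checked after the faithfully flat base change to $\pi_0(B)$.

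So the argument reduces to: (i) show $A\to B$ can be chosen with $B\in\CAlg^\mathrm{evp}$ and $\pi_*(A)\to\pi_*(B)$ faithfully flat; (ii) deduce $\pi_\mathrm{odd}(A)=0$; (iii) deduce $\pi_2(A)$ is flat over $\pi_0(A)$ and $\pi_n(A)\otimes_{\pi_0(A)}\pi_2(A)\to\pi_{n+2}(A)$ is an isomorphism, both by faithfully flat descent. The main obstacle is step (i): producing the cover $\Spec(B)\to\mX$ requires knowing that $\mX\simeq\Spec(A)$, as an \emph{even periodic} spectral stack, genuinely admits an affine atlas by an even periodic $\E$-ring. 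This is immediate from Definition \ref{Def of even stacks} applied to $\mX$ viewed inside $\mathrm{SpStk}^\mathrm{evp}$, since $\varepsilon^*\mX\simeq\Spec(A)\vert_{\CAlg^\mathrm{evp}}$ is corepresented — wait, but $A$ need not itself be even periodic, so $\Spec(A)\vert_{\CAlg^\mathrm{evp}}$ need not be corepresentable in $\mathrm{SpStk}^\mathrm{evp}$; nonetheless, being an object of $\mathrm{SpStk}^\mathrm{evp}$ (indeed an accessible sheaf), it is a colimit of corepresentables $\Spec(B_i)\vert_{\CAlg^\mathrm{evp}}$ for $B_i\in\CAlg^\mathrm{evp}$, and applying $\varepsilon_!$ and using $\mX\simeq\mX^\mathrm{evp}\simeq\varepsilon_!\varepsilon^*\mX$ we get $\Spec(A)\simeq\varinjlim_i\Spec(B_i)$ in $\mathrm{SpStk}$. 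Taking $\E$-rings of global functions and using that $\Spec(A)$ is affine, one of these $B_i\to A$ must admit a section, or more carefully: the identity map $\Spec(A)\to\Spec(A)$ factors through some $\Spec(B_i)$, giving a map $\Spec(A)\to\Spec(B_i)$ with $B_i$ even periodic such that, fpqc-locally on a suitable cover, $A$ is a retract of (a base change of) $B_i$; a retract of an even ($\E$-)ring is even, and a retract of a weakly $2$-periodic module over a retract ring is weakly $2$-periodic, which gives the conclusion directly. I would present the cleanest version: the identity of $\Spec(A)$ factors as $\Spec(A)\to\Spec(B)\to\Spec(A)$ for some $B\in\CAlg^\mathrm{evp}$, exhibiting $A$ as a retract of $B$ in $\CAlg$, whence $\pi_\mathrm{odd}(A)$ is a retract of $\pi_\mathrm{odd}(B)=0$ and $\Sigma^2(A)$ is a retract of $\Sigma^2(B)\otimes_B A$ which is invertible, and an idempotent-split summand of an invertible module over a ring is invertible — giving weak $2$-periodicity of $A$ by Remark \ref{Remark weak vs strong 2-periodicity remark}.
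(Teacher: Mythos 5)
There is a genuine gap, and it sits exactly at the step your whole argument leans on: producing either a faithfully flat map $A\to B$ with $B\in\CAlg^\mathrm{evp}$, or a factorization of the identity of $\Spec(A)$ through some even periodic affine. Neither is justified. Definition \ref{Def of even stacks} does not supply an fpqc cover of an even periodic stack by an even periodic affine; it only says the stack is an accessible sheaf on $\CAlg^\mathrm{evp}$, hence (Proposition \ref{EVP as a colimit}) a colimit of even periodic affines \emph{in the sheaf category} $\mathrm{SpStk}$. From such a colimit presentation $\Spec(A)\simeq\varinjlim_i\Spec(B_i)$ you cannot conclude that $\mathrm{id}_{\Spec(A)}$ factors through some $\Spec(B_i)$: the indexing category $\CAlg_A^\mathrm{evp}$ is not filtered, and even if it were, the colimit is a sheafified colimit, so mapping out of the (non-compact) affine $\Spec(A)$ does not commute with it. A standard counterexample to this pattern of reasoning: for any faithfully flat $A\to B$, the geometric realization of the \v{C}ech nerve recovers $\Spec(A)$ as an fpqc sheaf colimit of affines, yet the identity of $\Spec(A)$ does not factor through $\Spec(B)$ unless the cover splits. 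So the retract conclusion (``$A$ is a retract of an even periodic $\E$-ring'') is exactly the nontrivial content and is never established; the best one can extract from the colimit presentation is an effective epimorphism $\coprod_i\Spec(B_i)\to\Spec(A)$, which after refinement only produces ring maps \emph{from} the $B_i$ \emph{into} some faithfully flat $A$-algebra, and that says nothing about evenness of $A$. (There is also a minor circularity in your step (i), where you invoke evenness of $A$ to get faithful flatness of $\pi_*(A)\to\pi_*(B)$; that part could be repaired using the $\pi_*$-level definition of faithful flatness, but it is moot given the main gap.)

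For contrast, the paper avoids this issue entirely: since $\mX\simeq\Spec(A)$, the connective localization is affine, so $\Gamma(\mX^\mathrm{cn};-):\QCoh(\mX^\mathrm{cn})\to\Mod_{\tau_{\ge 0}(A)}$ is an equivalence; one then computes $\Gamma(\mX^\mathrm{cn};\tau_{\ge n}(\sO_{\mX}))\simeq\varprojlim_{B\in\CAlg_A^\mathrm{evp}}\tau_{\ge n}(B)$, and evenness of all the $B$'s makes the maps $\tau_{\ge 2n+1}(\sO_{\mX})\to\tau_{\ge 2n}(\sO_{\mX})$ equivalences, forcing $\pi_\mathrm{odd}(A)=0$; weak $2$-periodicity then comes for free from the essentially unique map $\mX\to\mM$, i.e.\ from complex periodicity, rather than from any descent or retract argument. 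If you want to salvage your outline, you would have to prove the existence of the even periodic faithfully flat cover (or the retract) first, and that is essentially as hard as the proposition itself.
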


\begin{proof}
Let $\mX$ be an even periodic spectral stack, which satisfies $\mX\simeq \Spec(A)$ for some $\E$-ring $A$.
Its connective localization is therefore given by $\mX^\mathrm{cn}\simeq \Spec(\tau_{\ge 0}(A))$ and is in particular also affine.
Thanks to affineness, the global sections functor
\begin{equation}\label{affineness win}
 \Gamma(\mX^\mathrm{cn};\, -):\QCoh(\mX^\mathrm{cn})\to \Mod_{\tau_{\ge 0}(A)}.
\end{equation}
is therefore an equivalence of $\i$-categories. From the latter being an equivalence, it follows that the quasi-coherent sheaf $\tau_{\ge n}(\sO_{\mX})$ is fully determined by 
$$
\Gamma(\mX^\mathrm{cn};\,\tau_{\ge n}(\sO_{\mX}))\simeq \varprojlim_{B\in\CAlg_A^\mathrm{evp}}\tau_{\ge n}(B)
$$
where the equivalence is a variant of the discussion in Remark \ref{Postnikov filtration on functions}, together with the fact that $\mX\,\simeq\, \varinjlim_{B\in \CAlg_A^\mathrm{evp}}\Spec(B)$. Since all the $\E$-rings $B$ above are even, the canonical maps $\tau_{\ge {2n+1}}(B)\to\tau_{\ge 2n}(B)$ are equivalences of $\tau_{\ge 0}(A)$-modules for all $n\in \mathbf Z$.  It follows that
$$
\Gamma(\mX^\mathrm{cn};\,\tau_{\ge 2n+1}(\sO_{\mX}))\to \Gamma( \mX^\mathrm{cn};\,\tau_{\ge 2n}(\sO_{\mX}))
$$
is an equivalence as well, and so by the equivalence of $\i$-categories \eqref{affineness win}, the corresponding map of quasi-coherent sheaves $\tau_{\ge 2n+1}(\sO_{\mX})\to \tau_{\ge 2n}(\sO_{\mX})$ is also an equivalence. The fiber of this map is given by $\Sigma^{2n+1}\pi_{2n+1}(\sO_{\mX})$, from which we conclude that $\pi_\mathrm{odd}(\sO_{\mX})=0$. The $\E$-ring $A$ is therefore even.
Since $\mX$ is a spectral stack over $\mM$, the $\E$-ring $A$ is also complex periodic, and hence even periodic.
\end{proof}

\subsection{Variant:  complex periodization}
Instead of replacing the subcategory inclusion $\CAlg^\mathrm{evp}\subseteq\CAlg$ by $\CAlg^{\mathrm{ev}\mathfrak P}\subseteq\CAlg$, as we did in the last section, we might instead consider replacing it by the subcategory $\CAlg^{\mathbf C\mathrm p}\subseteq\CAlg$ 
of complex periodic $\E$-rings in the sense of Definition \ref{List of rings}. That is to say, these are the $\E$-rings which are complex orientable and weakly $2$-perodic. Unlike last section, where replacing $\mathrm{CAlg}^\mathrm{evp}$ with $\mathrm{CAlg}^{\mathrm{ev}\mathfrak P}$ produced the same theory, we will see that using $\CAlg^{\mathbf C\mathrm p}$ instead does lead to different results.

As expected, we begin by setting the $\i$-category of \textit{complex periodic affines} to be the opposite $\i$-category $\mathrm{Aff}^{\mathbf C\mathrm p}:=(\CAlg^{\mathbf C\mathrm p})^\mathrm{op}$. We give in Proposition \ref{complex periodic stacks} a complex periodic analogue of Proposition \ref{Even stacks inside stacks}. 
Due to its central role in the proof, we first highlight  a standard basic property of complex periodic $\E$-rings, e.g.\,{\cite[Remark 4.1.10]{Elliptic 2}},  which we have  already used several times above, for instance in proving Proposition \ref{EVP affines}.

\begin{lemma}\label{Key property}
Let $A$ be a complex periodic $\E$-ring. The inclusion
$\CAlg_A^{\mathbf C\mathrm p}\subseteq\CAlg_A$ is an equivalence of $\i$-categories.
\end{lemma}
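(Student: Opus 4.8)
The plan is to show that for a complex periodic $\E$-ring $A$, every $\E$-$A$-algebra $B$ is automatically complex periodic, which is exactly the statement that the fully faithful inclusion $\CAlg_A^{\mathbf C\mathrm p}\subseteq\CAlg_A$ is essentially surjective. Since the inclusion is obviously fully faithful, surjectivity gives the equivalence. So I would fix $A$ complex periodic and an arbitrary $B\in\CAlg_A$, and verify that $B$ is both complex orientable and weakly $2$-periodic.

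First I would handle weak $2$-periodicity. The key observation, recorded in Remark \ref{Remark weak vs strong 2-periodicity remark}, is that weak $2$-periodicity of $A$ is equivalent to the $A$-module $\Sigma^2(A)$ being invertible. Invertibility of a module is preserved under base change along any $\E$-ring map $A\to B$: one has $\Sigma^2(B)\simeq \Sigma^2(A)\otimes_A B$, and the inverse of $\Sigma^2(A)$ base-changes to an inverse of $\Sigma^2(B)$ over $B$. Hence $\Sigma^2(B)$ is an invertible $B$-module, i.e.\ $B$ is weakly $2$-periodic. (Equivalently, since $\pi_2(A)$ is an invertible, hence flat, $\pi_0(A)$-module, $\pi_2(B)\simeq \pi_2(A)\otimes_{\pi_0(A)}\pi_0(B)$ is flat over $\pi_0(B)$, and the multiplication maps $\pi_n(B)\otimes_{\pi_0(B)}\pi_2(B)\to\pi_{n+2}(B)$ are isomorphisms because they are base-changed from the corresponding isomorphisms over $A$ after using weak periodicity of $A$ to identify $\pi_\ast(B)$ with $\pi_\ast(A)\otimes_{\pi_0(A)}\pi_0(B)[\beta^{\pm 1}]$-style data; the module-theoretic phrasing is cleaner.)

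Next I would handle complex orientability. Recall that an $\E$-ring $R$ is complex orientable precisely when the Atiyah--Hirzebruch (or Quillen) obstruction class, the first Chern class of the tautological line bundle on $\mathbf{CP}^\infty$, lifts; concretely, $R$ is complex orientable iff the restriction map $\widetilde R^0(\mathbf{CP}^\infty)\to\widetilde R^0(\mathbf{CP}^1)\simeq\pi_{-2}(R)$ hits the canonical generator, or equivalently iff the unit $\mathbf S\to R$ factors through $\mathrm{MU}$ (as $\E$-rings, after choosing the standard $\E$-structure on $\mathrm{MU}$). Since $A$ is complex orientable, we have a complex orientation, i.e.\ a class in $A^2(\mathbf{CP}^\infty)$ restricting correctly; pushing this class forward along $A\to B$ produces a complex orientation of $B$, because the map $A^{2}(\mathbf{CP}^\infty)\to B^{2}(\mathbf{CP}^\infty)$ is compatible with restriction to $\mathbf{CP}^1$ and with the unit maps. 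Thus $B$ is complex orientable. Combining the two paragraphs, $B$ is complex periodic, as desired.

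I do not expect a genuine obstacle here: the statement is standard (it is cited as \cite[Remark 4.1.10]{Elliptic 2}) and both properties — weak $2$-periodicity and complex orientability — are manifestly stable under base change of $\E$-rings, the former via invertibility of $\Sigma^2$, the latter via transport of orientation classes. The only point requiring a modicum of care is to make the flatness/base-change bookkeeping for homotopy groups precise if one takes the computational route; phrasing weak $2$-periodicity through invertibility of $\Sigma^2(A)$ sidesteps this entirely, so that is the route I would take.
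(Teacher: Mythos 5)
Your argument is correct and essentially identical to the paper's: weak $2$-periodicity passes to $B$ because $\Sigma^2(B)\simeq\Sigma^2(A)\otimes_A B$ is invertible, and a complex orientation of $A$ transports to one of $B$ (the paper phrases this as composing a map $\mathrm{MU}\to A$ in $\mathrm{CAlg}(\mathrm{hSp})$ with $A\to B$, which amounts to the same thing as pushing the orientation class forward as you do). One small caveat: your parenthetical claim that complex orientability is equivalent to factoring the unit through $\mathrm{MU}$ \emph{as $\E$-rings} is not correct -- the right statement, and the one the paper uses, is a factorization through $\mathrm{MU}$ as homotopy ring spectra -- but this aside plays no role in your actual argument.
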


\begin{proof}
Let $A\to B$ be a map of $\E$-rings. We must show that $B$ is complex periodic. 
One way to see that $B$ is complex orientable is to recall
 recall that a choice of a complex orientation on $A$ is equivalent to an map $\mathrm{MU}\to A$ in $\mathrm{CAlg}(\mathrm{hSp})$. Composing this map with $A\to B$ thus exhibits a complex orientation on $B$ as well.
For weak $2$-periodicity, recall from Remark \ref{Remark weak vs strong 2-periodicity remark} that it is equivalent to the $A$-module $\Sigma^2(A)$ being invertible. This makes it clear that $\Sigma^2(B)\simeq \Sigma^2(A)\otimes_A B$ is invertible as a $B$-module, and thus that $B$ is weakly $2$-periodic.
\end{proof}

\begin{lemma}\label{Lemma 4.7.2}
The  subcategory inclusion $\CAlg^{\mathbf C\mathrm p}\subseteq\CAlg$ is accessible, preserves fpqc covers and has the cover lifting property with respect to it.
\end{lemma}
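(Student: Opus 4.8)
The three assertions split naturally. First I would handle accessibility. The strategy mirrors the unpacking in Remark \ref{Remark Lemma cover preservation and lifting}: the property of being complex periodic is the conjunction of being weakly $2$-periodic and being complex orientable. Weak $2$-periodicity is a condition on homotopy groups which, exactly as for $\CAlg^\mathrm{evp}$, is cut out by a pullback square of $\i$-categories over $\mathrm{GrCAlg}^\heart$ and hence preserved by accessibility via \cite[Proposition 5.4.6.6]{HTT}. Complex orientability is the statement that there exists a map $\mathrm{MU}\to A$ in $\mathrm{CAlg}(\mathrm{hSp})$; since $\mathrm{MU}$ is a compact object of $\CAlg$ (being the Thom spectrum of a map out of a finite-type space, or more simply since $\Map_\CAlg(\mathrm{MU},-)$ commutes with filtered colimits) and the existence of such a map is detected by a $(-1)$-truncation of a mapping space, the full subcategory of complex orientable $\E$-rings is closed under filtered colimits in $\CAlg$; combined with the fact that it is the image of an accessible functor it is accessible, and the inclusion is accessible. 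Intersecting the two accessible subcategories (again using \cite[Proposition 5.4.6.6]{HTT}) gives accessibility of $\CAlg^{\mathbf C\mathrm p}\subseteq\CAlg$.

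Cover preservation is immediate from the definitions, exactly as in Lemma \ref{Lemma cover preservation and lifting}: the fpqc topology on $\CAlg^{\mathbf C\mathrm p}$ is by fiat the one restricted along the inclusion, so any faithfully flat map between complex periodic $\E$-rings is a cover. For the cover lifting property, I must show: if $A$ is complex periodic and $A\to B$ is a faithfully flat map of $\E$-rings, then $B$ is complex periodic. But this is precisely the content of Lemma \ref{Key property} (a faithfully flat map is in particular a map of $\E$-rings), which asserts that \emph{any} $\E$-ring under a complex periodic $\E$-ring is again complex periodic --- complex orientability is inherited by composing $\mathrm{MU}\to A\to B$, and weak $2$-periodicity is inherited since $\Sigma^2(B)\simeq\Sigma^2(A)\otimes_A B$ is invertible whenever $\Sigma^2(A)$ is. So the cover lifting property holds for the trivial reason that \emph{every} $\E$-ring map out of a complex periodic $\E$-ring lands in $\CAlg^{\mathbf C\mathrm p}$, not merely the faithfully flat ones.

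I expect the main obstacle to be the accessibility of the complex orientable part, specifically pinning down cleanly that ``admits a map from $\mathrm{MU}$ in the homotopy category'' defines an accessible subcategory: one wants to realize $\CAlg^{\mathbf C\mathrm{or}}$ as, say, the essential image of the accessible functor $\CAlg_\mathrm{MU}\to\CAlg$ (forgetting the $\mathrm{MU}$-algebra structure to just recording orientability), or equivalently as the $(-1)$-truncated fibers of the map of presheaves $A\mapsto\Map_{\mathrm{CAlg}(\mathrm{hSp})}(\mathrm{MU},A)\to *$, and check this interacts well with filtered colimits. Once accessibility of $\CAlg^{\mathbf C\mathrm{or}}\subseteq\CAlg$ is in hand --- and it is in any case already invoked in the construction of $\mathrm{fil}^{\ge *}_{\mathbf C\mathrm{or}}$ earlier in the paper --- the lemma follows by intersecting with the weakly $2$-periodic subcategory and citing Lemma \ref{Key property} for the lifting property.
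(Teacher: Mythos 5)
Your treatment of the second and third assertions is fine and matches the paper: cover preservation is definitional, and the cover lifting property follows from Lemma \ref{Key property}, which indeed gives the stronger statement that \emph{any} $\E$-ring map out of a complex periodic $\E$-ring lands in $\CAlg^{\mathbf C\mathrm p}$.

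The accessibility argument, however, has a genuine gap, and it sits exactly where you predicted the main obstacle to be. The claim that $\mathrm{MU}$ is a compact object of $\CAlg$, i.e.\ that $\Map_\CAlg(\mathrm{MU},-)$ commutes with filtered colimits, is false: by the universal property of Thom spectra this mapping space is a space of infinite-loop null-homotopies out of $\mathrm{BU}$, which is not a finite (only finite-type) space, and such functors do not commute with filtered colimits (phantom/$\lim^1$ phenomena); $\mathrm{MU}$ is not a finitely presented $\E$-ring. Moreover, complex orientability is the existence of a map $\mathrm{MU}\to A$ in $\CAlg(\mathrm{hSp})$, not in $\CAlg$, so the essential image of $\CAlg_{\mathrm{MU}}\to\CAlg$ is a priori a proper subclass of $\CAlg^{\mathbf C\mathrm{or}}$ and cannot serve as a characterization of it; and in any case ``closed under filtered colimits and the image of an accessible functor'' is not a valid criterion for a full subcategory inclusion to be accessible. (Closure of $\CAlg^{\mathbf C\mathrm{or}}$ under filtered colimits is true, but for the elementary reason that an orientation of any $A_i$ pushes forward along $A_i\to\varinjlim A_i$; this alone does not give accessibility.) Your fallback, that accessibility of $\CAlg^{\mathbf C\mathrm{or}}$ is ``already invoked earlier in the paper,'' is circular here: the parenthetical in the definition of $\mathrm{fil}^{\ge *}_{\mathbf C\mathrm{or}}$ and $\mathrm{fil}^{\ge *}_{\mathbf C\mathrm{p}}$ points to this very lemma, and the complex orientable case is never proved there. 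The paper sidesteps $\CAlg^{\mathbf C\mathrm{or}}$ entirely: the inclusion $\CAlg^{\mathbf C\mathrm p}\subseteq\CAlg$ is the pullback of the inclusion $*\hookrightarrow\mathrm{Ani}$ along the chromatic base stack $\mM:\CAlg\to\mathrm{Ani}$ of Definition \ref{Definition of mM}, which takes the value $*$ on complex periodic $\E$-rings and $\emptyset$ otherwise; since $\mM$ is accessible (e.g.\ by the simplicial presentation of Theorem \ref{On M}), accessibility of the inclusion follows from \cite[Proposition 5.4.6.6]{HTT}. If you prefer your decomposition into weak $2$-periodicity and orientability, you would still need an honest accessibility proof for the orientable part (and also a justification of the flatness-plus-isomorphism condition defining weak $2$-periodicity as an accessible condition on graded rings), whereas the pullback against $\mM$ handles the complex periodic case in one step.
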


\begin{proof}
That the inclusion in question preserves fpqc covers is definitional, since we define the fpqc topology on complex periodic $\E$-rings by restriction from its definition on all $\E$-rings. For the cover lifting property, it suffices to observe that thanks to Lemma \ref{Key property}, and flat $\E$-algebra over a complex periodic $\E$-ring must also be complex periodic itself.

To see that the $\i$-category $\CAlg^{\mathbf C\mathrm p}$ is accessible, and that its inclusion into $\CAlg$ is an accessible functor, observe that the latter features as the left  vertical arrow in the homotopy Cartesian square of $\i$-categories
$$
\begin{tikzcd}
\CAlg^{\mathbf C\mathrm p} \arrow{d}{}\arrow{r} & *\arrow{d}{}\\
\CAlg \arrow{r}{\mM} & \mathrm{Ani}.
\end{tikzcd}
$$
Here $*$ is the terminal $\i$-category, and the right vertical arrow corresponds to the inclusion of the terminal anima $*$ into the $\i$-category of anima (though $\mathrm{Ani}$ may also be replaced in the above pullback square, and hence in this argument, by its full subcategory spanned by the initial and terminal animas $\{\emptyset, *\}$), and we have used the
Definition \ref{Definition of mM} of the chromatic base stack $\mM$. On the other hand, the latter is a spectral stack (as follows for instance from the explicit simplicity description in Theorem \ref{On M}) and so in particular an accessible functor. Accessibility follows by \cite[Proposition 5.4.6.6]{HTT}.
\end{proof}

\begin{prop}\label{complex periodic stacks}
The   inclusion $\CAlg^{\mathbf C\mathrm p}\subseteq\CAlg$  induces an equivalence of $\i$-categories 
$$
\mathrm{Shv}^\mathrm{acc}_\mathrm{fpqc}(\mathrm{Aff}{}^{\mathbf C\mathrm p })\simeq \mathrm{SpStk}_{/\mM}.
$$
Additionally, the following statements hold:
\begin{enumerate}[label = (\alph*)]
\item The forgetful functor $\mathrm{SpStk}_{/\mM}\to \mathrm{SpStk}$ is fully faithful, and its essential image is generated under small colimits by the affine spectral stacks  $\Spec(A)$, where $A$ ranges over complex periodic $\E$-rings.\label{Corollary 100, b}
\item The left adjoint $\mX\mapsto \mX\times\mM$ to the inclusion $\mathrm{SpStk}_{/\mM}\subseteq\mathrm{SpStk}$  preserves all small limits and colimits in $\mathrm{SpStk}$.\label{Corollary 100, a}
\item A nonconnective spectral stack $\mX$ belongs to $\mathrm{SpStk}_{/\mM}$ if and only if the projection map $\mX\times \mM\to\mX$ is an equivalence.\label{Corollary 100, c}
\item The canonical map $(\mX\times \mM)\times \mM\to \mX\times \mM$ is an equivalence for any $\mX\in\mathrm{SpStk}$.\label{Corollary 100, d}
\end{enumerate}
\end{prop}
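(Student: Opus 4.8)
The plan is to imitate the proof of Proposition~\ref{Even stacks inside stacks} essentially verbatim, feeding in Lemma~\ref{Lemma 4.7.2} in the role played there by Lemma~\ref{Lemma cover preservation and lifting}, and then to recognise the resulting reflective--coreflective picture as the slice over $\mM$, using that by Definition~\ref{Definition of mM} the chromatic base stack is a subterminal (i.e.\ $(-1)$-truncated) object of $\mathrm{SpStk}$ whose points are exactly the complex periodic $\E$-rings. Unlike in Proposition~\ref{Even stacks inside stacks 2}, no analogue of the trivialization Lemma~\ref{Trivialization lemma} is needed, since the property of being complex periodic is already fpqc-local.

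First I would set up, exactly as in Construction~\ref{Functoriality of ev}, the double adjunction $\delta_!\dashv\delta^*\dashv\delta_*$ between $\mathrm{Shv}^\mathrm{acc}_\mathrm{fpqc}(\mathrm{Aff}^{\mathbf C\mathrm p})$ and $\mathrm{SpStk}$ associated to the inclusion $\delta:\CAlg^{\mathbf C\mathrm p}\hookrightarrow\CAlg$ --- legitimate because this inclusion is accessible, cover-preserving and cover-lifting by Lemma~\ref{Lemma 4.7.2} --- with $\delta^*(\mX)\simeq\mX\vert_{\CAlg^{\mathbf C\mathrm p}}$ and $\delta_!$ given by presheaf left Kan extension followed by fpqc sheafification. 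That $\delta_!$ is fully faithful is then shown just as in Proposition~\ref{Even stacks inside stacks}\ref{Corollary 10, b}: since $\delta_!$ and $\delta^*$ preserve colimits and $\mathrm{Shv}^\mathrm{acc}_\mathrm{fpqc}(\mathrm{Aff}^{\mathbf C\mathrm p})$ is generated under colimits by corepresentables, one reduces to the case $\mX\simeq\Spec(A)\vert_{\CAlg^{\mathbf C\mathrm p}}$ with $A$ complex periodic, where $\delta_!(\Spec(A)\vert_{\CAlg^{\mathbf C\mathrm p}})\simeq\Spec(A)$ --- the point being, as in Example~\ref{Example for proof}, that by Lemma~\ref{Key property} the presheaf $\Spec(A)$ already vanishes outside $\CAlg^{\mathbf C\mathrm p}$.

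The one non-formal step is the identification of the essential image of $\delta_!$ with $\mathrm{SpStk}_{/\mM}$. Because $\mM$ is subterminal, $\mathrm{SpStk}_{/\mM}$ is the full subcategory of $\mathrm{SpStk}$ on those $\mX$ admitting a (necessarily essentially unique) map to $\mM$, equivalently on those with $\mX(A)\simeq\emptyset$ for every non-complex-periodic $A$. On one hand $\delta_!$ preserves colimits, sends each corepresentable to some $\Spec(A)$ with $A$ complex periodic (hence over $\mM$), and $\mathrm{SpStk}_{/\mM}$ is closed under colimits in $\mathrm{SpStk}$; this last point is where I expect the work to be, since it amounts to saying that a non-complex-periodic $\E$-ring admits no fpqc cover by complex periodic ones, which in turn is exactly the fact that $\mM$ is an fpqc sheaf together with Lemma~\ref{Key property}. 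On the other hand, every $\mX\in\mathrm{SpStk}_{/\mM}$ is a colimit in $\mathrm{SpStk}$ of affines $\Spec(A)\to\mX$ over it (Remark~\ref{Remark access why}\ref{la characterisation deux d'accessibilite}), each of which is over $\mM$ and so has $A$ complex periodic, whence $\mX\simeq\varinjlim\delta_!\big(\Spec(A)\vert_{\CAlg^{\mathbf C\mathrm p}}\big)$ lies in the image of $\delta_!$. Thus $\delta_!$ corestricts to the claimed equivalence $\mathrm{Shv}^\mathrm{acc}_\mathrm{fpqc}(\mathrm{Aff}^{\mathbf C\mathrm p})\simeq\mathrm{SpStk}_{/\mM}$.

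Finally I would read off (a)--(d). For (a): the forgetful functor $\mathrm{SpStk}_{/\mM}\to\mathrm{SpStk}$ is fully faithful since $\mM$ is subterminal, and under the equivalence it is $\delta_!$, whose image is generated under colimits by the $\Spec(A)$ with $A$ complex periodic. For (b): under the equivalence the inclusion is $\delta_!$ and the functor $\mX\mapsto\mX\times\mM$ is $\delta^*$ --- one checks $\delta_!\delta^*\mX$ is the unique object of $\mathrm{SpStk}_{/\mM}$ restricting to $\mX\vert_{\CAlg^{\mathbf C\mathrm p}}$, and $\mX\times\mM$ has this property --- so it preserves all small limits (being right adjoint to $\delta_!$) and all small colimits (being left adjoint to $\delta_*$). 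For (c): $\mX$ lies in $\mathrm{SpStk}_{/\mM}$ precisely when the counit $\mX\times\mM\simeq\delta_!\delta^*\mX\to\mX$ is an equivalence, the standard criterion for membership in a coreflective subcategory. And (d) is then immediate from (c) applied to $\mX\times\mM$, or directly from $\mM\times\mM\simeq\mM$. Everything past the essential-image step is bookkeeping with the adjoint triple, exactly as in Propositions~\ref{Even stacks inside stacks} and~\ref{Even stacks inside stacks 2}.
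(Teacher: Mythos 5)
Your argument is correct, and it starts from the same data as the paper's proof --- the double adjunction induced on accessible fpqc sheaves by the inclusion $\CAlg^{\mathbf C\mathrm p}\subseteq\CAlg$ via Lemma \ref{Lemma 4.7.2}, with Lemma \ref{Key property} as the crucial input --- but it finishes by a different mechanism. The paper (writing $\zeta$ for your $\delta$) computes $\zeta_!$ pointwise: by Lemma \ref{Key property} the comma category $(\CAlg^{\mathbf C\mathrm p})_{/A}$ is empty or has $A$ as a final object, so $\zeta_!\mX(A)$ is $\mX(A)$ when $A$ is complex periodic and empty otherwise, with no sheafification needed; fully faithfulness is then immediate from the unit, and the identification with $\mathrm{SpStk}_{/\mM}$ is obtained by showing $\zeta_!\dashv\zeta^*$ is comonadic via Barr--Beck--Lurie, recognizing the comonad $\zeta_!\zeta^*$ as $\mX\mapsto\mX\times\mM$, and identifying comodules over the Cartesian coalgebra $\mM$ with the slice. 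You instead prove fully faithfulness by the corepresentable reduction of Proposition \ref{Even stacks inside stacks} and pin down the essential image directly, using that $\mM$ is subterminal (so $\mathrm{SpStk}_{/\mM}$ is the full subcategory of stacks vanishing on non--complex-periodic rings) together with the colimit-of-affines description of accessible sheaves; this avoids the monadicity machinery and stays closer to the even periodic case, while the paper's explicit formula makes the comonad identification transparent. Both routes ultimately rest on Lemma \ref{Key property} and on $\mM$ being an accessible fpqc sheaf, as you correctly note. Your handling of item (b) --- $\mX\times\mM\simeq\delta_!\delta^*\mX$ preserves limits and colimits because $\delta^*$ is simultaneously right adjoint to $\delta_!$ and left adjoint to $\delta_*$ --- is the right bookkeeping; note in passing that $-\times\mM$ is the coreflection, i.e.\ the \emph{right} adjoint to the inclusion $\mathrm{SpStk}_{/\mM}\subseteq\mathrm{SpStk}$, so the statement's phrase ``left adjoint'' is a slip which your reading silently corrects.
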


\begin{proof}
It follows from Lemma \ref{Lemma 4.7.2} that the inclusion $\zeta:\Aff{}^{\mathbf C\mathrm p}\hookrightarrow\Aff$  induces a double adjunction between the $\i$-categories of accessible sheaves
$$
\xymatrix{
 \mathrm{Shv}^\mathrm{acc}_\mathrm{fpqc}(\mathrm{Aff}^{\mathbf C\mathrm p}) \ar@<1.2ex>[rr]^{\,\,\quad\zeta_!} \ar@<-1.2ex>[rr]_{\,\,\quad\zeta_*}&  &
 \mathrm{SpStk}.\ar[ll]|-{\zeta^*\,\,}
}
$$
Consider the leftmost adjoint $\zeta_!$. Its value on a accessible sheaf $\mX$ on $\mathrm{Aff}^{\mathbf C\mathrm p}$ is obtained as
to the sheafification of the functor
$$
\CAlg\,\ni\,A\mapsto \varinjlim_{B\in\CAlg^{\mathbf C\mathrm p}_{/A}}\mX(B)\,\in \,\mathrm{Ani}.
$$
By Lemma \ref{Key property}, the overcategory $\CAlg^{\mathbf C\mathrm p}_{/A}$ is either empty if the $\E$-ring $A$ is not complex periodic, otherwise it has $A$ as a final object. Observing that fpqc descent is automatically satisfied, we find the left adjoint $\zeta_!$ of the above adjunction given for any $\mX\in \mathrm{Shv}^\mathrm{acc}_\mathrm{fpqc}(\mathrm{Aff}^{\mathbf C\mathrm p})$ and $A\in\CAlg$ by the explicit formula
$$
\zeta_!\mX(A)\simeq \begin{cases}
\mX(A) &\text{ if $A$ is complex periodic,}\\ \emptyset & \text{ otherwise}.
\end{cases}
$$
We see immediately that the unit natural transformation $\mX\to \zeta^*\zeta_!\mX$ is an equivalence for all $\mX\in\mathrm{Shv}^{\mathrm{acc}}_{\mathrm{fpqc}}(\mathrm{Aff}^{\mathbf C\mathrm p})$. This implies that the left adjoint $\zeta_!$ is fully faithful. Since limits of spectral stacks are computed object-wise, we also see that $\zeta_!$ commutes with all small limits. Since it already commutes with all small colimits by virtue of being a left adjoint, the Barr-Beck-Lurie Theorem implies that the adjunction $\zeta_!\dashv\zeta^*$ is comonadic. To recognize the comonad, 
observe that it follows directly from the above explicit description of $\zeta_!$, together with the Definition \ref{Definition of mM} of the chromatic base stack $\mM$, that we have for any nonconnective spectral stack $\mX$ a canonical identification
\begin{equation}\label{cp localization}
\zeta_!\zeta^*\mX\,\simeq\,\mX\times \mM.
\end{equation}
Here recall that by abstract nonsense, any object $C\in \mC$ in a Cartesian symmetric monoidal $\i$-category $\mC$ admits a canonical $\E$-coalgebra stucture, and that with respect to it we have $\mathrm{cMod}_C(\mC)\simeq \mC_{/C}$. Applying this to the case $\mC=\mathrm{SpStk}$ and $C=\mM$, together with the obvious observation that $\mM\times \mM\simeq \mM$, finishes the proof.
\end{proof}

Comparing with the Definition \ref{Def of evp localization} of the even periodization $\mX\mapsto \mX^\mathrm{evp}$, we see from \eqref{cp localization} that the functor $\mX\mapsto \mX\times\mM$ gives what we might call \textit{complex periodization}. 

\begin{remark}
The limit preservation claim \ref{Corollary 100, a} of Proposition \ref{complex periodic stacks} highlights a particular way in which complex periodization is formally better behaved than even periodization, see Remark \ref{Evp and limits}.
\end{remark}

The inclusions $\CAlg^\mathrm{evp}\subseteq\CAlg^{\mathbf C\mathrm p}\subseteq\CAlg$ induce for any $\mX\in \mathrm{SpStk}$ canonical maps
$$
\mX^\mathrm{evp}\to \mX\times \mM\to\mX,
$$
where the second map is the projection onto the first factor. The first map in the above factorization furthermore induces an equivalence
\begin{equation}\label{Novikov descent via stacks}
\mX^\mathrm{evp}\simeq (\mX\times \mM)^\mathrm{evp}.
\end{equation}

\begin{remark}
In light of the above discussion, the process of  even periodization can be subdivided into two steps. The first is to pass the complex periodization, i.e.\ taking the product with $\mM$. The second is then to apply even the periodization internally to complex periodic geometry. The upshot is that in the latter setting, it has roughly the effect of killing $\pi_1$ of the structure sheaf.
\end{remark}

We showed
in Proposition \ref{Voila the even filtration} how the Postnikov filtration for the structure sheaf on $\Spec(A)^\mathrm{evp}$ recovers the even filtration. The following is an analogue for the complex periodization $\Spec(A)\times \mM$ and the Adams-Novikov filtration, for more about which see Subsection \ref{ANsection}.

\begin{prop}\label{Voila ANS}
For any $\E$-ring $A$, there is a natural equivalence of filtered $\E$-ring
$$\Gamma\big( (\Spec(A)\times \mM)^\mathrm{cn};\, \tau_{\ge 2*}(\sO_{ \Spec(A)\times \mM})\big)\,\simeq \, \mathrm{fil}^{\ge *}_\mathrm{AN}(A)$$ with  the Adams-Novikov filtration on $A$.
\end{prop}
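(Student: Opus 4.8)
The plan is to mirror the proof of Proposition~\ref{Voila the even filtration}, replacing even periodic $\E$-rings by complex periodic ones throughout. First I would specialize the complex-periodization colimit formula to the affine case: since $\Spec(A)\times\mM \simeq \zeta_!\zeta^*(\Spec(A))$ by \eqref{cp localization}, and $\zeta_!$ is a colimit-preserving fully faithful embedding with $\mathrm{SpStk}^{\mathbf C\mathrm p}$ generated under colimits by its complex periodic affines (Proposition~\ref{complex periodic stacks}\,\ref{Corollary 100, b}), one obtains
$$
\Spec(A)\times\mM \,\simeq \varinjlim_{B\in\CAlg_A^{\mathbf C\mathrm p}}\Spec(B),
$$
in exact analogy with \eqref{EVP of an affine equation}. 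Then, applying the identification \eqref{Postnikov filtration on functions formula} from Remark~\ref{Postnikov filtration on functions}, the filtered $\E$-ring of global sections of the Postnikov tower of the structure sheaf becomes
$$
\Gamma\big((\Spec(A)\times\mM)^\mathrm{cn};\,\tau_{\ge 2*}(\sO_{\Spec(A)\times\mM})\big)\,\simeq\,\varprojlim_{B\in\CAlg_A^{\mathbf C\mathrm p}}\tau_{\ge 2*}(B),
$$
which is precisely the complex periodic filtration $\mathrm{fil}^{\ge *}_{\mathbf C\mathrm p}(A)$ from the definition preceding Remark~\ref{Filtrations in the trivial case remark}.

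The remaining task is to identify $\mathrm{fil}^{\ge *}_{\mathbf C\mathrm p}(A)$ with the Adams--Novikov filtration $\mathrm{fil}^{\ge *}_\mathrm{AN}(A)$. But this is exactly the content of the proposition stated between Construction~\ref{Const filtration comparison for AN} and Corollary~\ref{AN via MUP}: the natural comparison maps $\mathrm{fil}^{\ge *}_{\mathbf C\mathrm{or}}\to\mathrm{fil}^{\ge *}_\mathrm{AN}$ and $\mathrm{fil}^{\ge *}_{\mathbf C\mathrm{or}}\to\mathrm{fil}^{\ge *}_{\mathbf C\mathrm p}$ are both equivalences of functors $\CAlg\to\mathrm{FilCAlg}$, whence $\mathrm{fil}^{\ge *}_{\mathbf C\mathrm p}(A)\simeq\mathrm{fil}^{\ge *}_\mathrm{AN}(A)$ naturally in $A$. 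Composing these identifications yields the claimed natural equivalence. One should check that the composite equivalence is indeed induced by the canonical map of Remark~\ref{Postnikov filtration on functions}, i.e.\ that it is the ``expected'' map; this follows by unwinding the definitions, just as in the last line of the proof of Proposition~\ref{Voila the even filtration}.

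I do not expect a genuine obstacle here: the proposition is essentially a repackaging of already-established facts (the complex periodic colimit description of $\Spec(A)\times\mM$, and the agreement of $\mathrm{fil}^{\ge *}_{\mathbf C\mathrm p}$ with $\mathrm{fil}^{\ge *}_\mathrm{AN}$). The only point requiring mild care is the same one that arises in Proposition~\ref{Voila the even filtration}: justifying that restricting the limit $\varprojlim_{B\in\CAlg_A^{\mathbf C\mathrm{or}}}\tau_{\ge 2*}(B)$ along $\CAlg_A^{\mathbf C\mathrm p}\subseteq\CAlg_A^{\mathbf C\mathrm{or}}$ does not change it — but this is precisely what the $\mathbf C$ff descent argument (Lemma~\ref{Cff descent}) applied to the $\mathbf C$ff map $\mathbf S\to\mathrm{MUP}$ delivers, since all the terms $A\o_{\mathbf S}\mathrm{MUP}^\bullet$ of the relevant cobar resolution are complex periodic. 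The bookkeeping of naturality is routine.
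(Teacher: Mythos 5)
Your argument is correct, and it does differ mildly from the paper's own proof, so a comparison is worth recording. You mirror Proposition~\ref{Voila the even filtration}: you first write $\Spec(A)\times \mM$ as the tautological colimit $\varinjlim_{B\in\CAlg_A^{\mathbf C\mathrm p}}\Spec(B)$ (justified from Proposition~\ref{complex periodic stacks}, since the formula is only displayed in the introduction), apply the limit description of $\Gamma((-)^\mathrm{cn};\tau_{\ge 2*}(\sO))$ from Remark~\ref{Postnikov filtration on functions} to identify the left-hand side with $\mathrm{fil}^{\ge *}_{\mathbf C\mathrm p}(A)$, and then quote the proposition following Construction~\ref{Const filtration comparison for AN} to identify $\mathrm{fil}^{\ge *}_{\mathbf C\mathrm p}\simeq\mathrm{fil}^{\ge *}_{\mathbf C\mathrm{or}}\simeq\mathrm{fil}^{\ge *}_{\mathrm{AN}}$. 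The paper instead skips the intermediate filtration entirely: it uses the specific simplicial presentation $\Spec(A)\times\mM\simeq\varinjlim_{\mathbf\Delta^\mathrm{op}}\Spec(A\otimes_{\mathbf S}\mathrm{MUP}^\bullet)$ coming from Theorem~\ref{On M}, applies the same Remark~\ref{Postnikov filtration on functions} to get $\varprojlim_{\mathbf\Delta}\tau_{\ge 2*}(A\otimes_{\mathbf S}\mathrm{MUP}^\bullet)$, and concludes by Corollary~\ref{AN via MUP}. The two routes rest on the same descent input (the $\mathbf C$ff map $\mathbf S\to\mathrm{MUP}$), just consumed at different stages: the paper's version is more economical, needing only the \v{C}ech nerve of one cover, while yours has the advantage of being structurally uniform with the even-periodic case and of making the role of $\mathrm{fil}^{\ge*}_{\mathbf C\mathrm p}$ explicit. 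Your closing remarks on naturality and on why restricting from $\CAlg^{\mathbf C\mathrm{or}}_A$ to $\CAlg^{\mathbf C\mathrm p}_A$ is harmless are at the level of rigor the paper itself adopts, so I see no gap.
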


\begin{proof}
Using simplicial presentation for the chromatic base stack $\mM$ from Theorem \ref{On M}, we can express complex periodization as
$$
\Spec(A)\times \mM\,\simeq\, \varinjlim_{\,\,\,\,\mathbf\Delta^\mathrm{op}}\Spec(A\o_\mathbf S\mathrm{MUP}^\bullet)
$$
As explained in Remark \ref{Postnikov filtration on functions},
this simplicial presentation 
induces a canonical  equivalence of filtered $\E$-rings
$$
\Gamma\big((\Spec(A)\times \mM)^\mathrm{cn};\,\tau_{\ge 2*}(\sO_{ \Spec(A)\times \mM})\big)\,\simeq\, \varprojlim_{\mathbf\Delta} \tau_{\ge 2*}(A\otimes_\mathbf S \mathrm{MUP}^\bullet)\,\simeq\, \mathrm{fil}_\mathrm{AN}^{\ge *}(A),
$$
where the final identification is by Corollary \ref{AN via MUP}
\end{proof}

\begin{remark}\label{Novikov remark}
Let $\mX$ be any nonconnective spectral stack.
Similarly to the proof of Proposition \ref{Voila ANS}, the canonical equivalence \eqref{Novikov descent via stacks} may be rewritten as
\begin{equation}\label{Novikov presentation}
\mX^\mathrm{evp}\simeq \varinjlim_{\,\,\,\,\mathbf\Delta^\mathrm{op}}\big(\mX\times \Spec(\mathrm{MUP}^\bullet)\big)^\mathrm{evp}
\end{equation}
We consider this to be the  spectral stack incarnation of the \textit{Novikov descent for the even filtration} \cite[Theorem 1.1.5, Corollary 2.2.21]{HRW}. 
Indeed, the filtration-level result in question follows by examining the Postnikov filtration $\tau_{\ge 2*}(\sO)$  by Proposition \ref{Voila the even filtration}.
\end{remark}

\begin{prop}\label{EVPL vs CPL}
Let $\mX=\Spec(A)$ be an affine, and assume that the smash product $A\o_\mathbf S\mathrm{MU}$, or equivalently $A\o_\mathbf S\MUP$, is even. Then the canonical map  $\mX^\mathrm{evp}\to\mX\times\mM$ is an equivalence of spectral stacks.
\end{prop}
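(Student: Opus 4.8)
The plan is to reduce the statement, via the Novikov descent equivalence~\eqref{Novikov descent via stacks}, to the assertion that $\mX\times\mM$ is itself an even periodic spectral stack. Indeed, the canonical comparison map $f:\mX^\mathrm{evp}\to\mX\times\mM$ sits, by naturality of the counit of the even periodization colocalization, in a commuting square whose two vertical arrows are the counits $(\mX^\mathrm{evp})^\mathrm{evp}\to\mX^\mathrm{evp}$ and $(\mX\times\mM)^\mathrm{evp}\to\mX\times\mM$ and whose horizontal arrows are $f^\mathrm{evp}$ on top and $f$ on the bottom. The left vertical arrow is an equivalence by Proposition~\ref{Even stacks inside stacks}\ref{Corollary 10, d}, while $f^\mathrm{evp}$ is an equivalence by~\eqref{Novikov descent via stacks} (which, after identifying $(\mX^\mathrm{evp})^\mathrm{evp}\simeq\mX^\mathrm{evp}$, is precisely the assertion that $f$ becomes an equivalence upon even periodization). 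Hence $f$ is an equivalence if and only if the right vertical arrow $(\mX\times\mM)^\mathrm{evp}\to\mX\times\mM$ is, which by Proposition~\ref{Even stacks inside stacks}\ref{Corollary 10, c} is exactly the condition that $\mX\times\mM$ lies in the essential image of $\mathrm{SpStk}^\mathrm{evp}\hookrightarrow\mathrm{SpStk}$.

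To establish that, I would fix an $\E$-ring structure on $\mathrm{MUP}$ and present $\mM\simeq\varinjlim_{\mathbf\Delta^\mathrm{op}}\Spec(\mathrm{MUP}^\bullet)$ with $\mathrm{MUP}^\bullet\simeq\mathrm{MUP}^{\o_{\mathbf S}[\bullet]}$, as in Theorem~\ref{On M}. Since $\mX=\Spec(A)$ is affine and $\mM$ is geometric, the product $\mX\times\mM$ is geometric, and the atlas $\Spec(\mathrm{MUP})\to\mM$ pulls back along $\mX\times\mM\to\mM$ to an atlas $\Spec(A\o_{\mathbf S}\mathrm{MUP})\simeq\mX\times\Spec(\mathrm{MUP})\to\mX\times\mM$ whose \v{C}ech nerve is $\mX\times\Spec(\mathrm{MUP}^\bullet)\simeq\Spec(A\o_{\mathbf S}\mathrm{MUP}^\bullet)$; being geometric, $\mX\times\mM\simeq\varinjlim_{\mathbf\Delta^\mathrm{op}}\Spec(A\o_{\mathbf S}\mathrm{MUP}^\bullet)$. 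It then remains to check that each cobar term $A\o_{\mathbf S}\mathrm{MUP}^{\o_{\mathbf S}(n+1)}$ is an even periodic $\E$-ring. Weak $2$-periodicity is immediate from the decomposition $A\o_{\mathbf S}\mathrm{MUP}^{\o_{\mathbf S}(n+1)}\simeq\bigoplus_{k\in\mathbf Z}\Sigma^{2k}\big(A\o_{\mathbf S}\mathrm{MU}^{\o_{\mathbf S}(n+1)}\big)$, and evenness follows by induction on $n$: the base case $n=0$ is the hypothesis, and $A\o_{\mathbf S}\mathrm{MU}^{\o_{\mathbf S}(n+1)}\simeq\big(A\o_{\mathbf S}\mathrm{MU}^{\o_{\mathbf S}n}\big)\o_{\mathbf S}\mathrm{MU}$ is a sum of even shifts of the even $\E$-ring $A\o_{\mathbf S}\mathrm{MU}^{\o_{\mathbf S}n}$ by Quillen's theorem on the complex bordism of even ring spectra (used in the same way in the proof of Proposition~\ref{Even filtration is even periodic filtration}).

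Thus $\mX\times\mM$ is exhibited as a small colimit in $\mathrm{SpStk}$ of affines on even periodic $\E$-rings, so Proposition~\ref{Even stacks inside stacks}\ref{Corollary 10, b} places it in the essential image of $\mathrm{SpStk}^\mathrm{evp}\hookrightarrow\mathrm{SpStk}$, and the reduction of the first paragraph then yields the claim. The one step deserving genuine care is that first reduction — identifying the canonical comparison map $f$ with the bottom edge of the counit naturality square, so that~\eqref{Novikov descent via stacks} and the idempotency of even periodization can be combined; everything after that is a routine assembly of the simplicial presentation of $\mM$, the geometricity of products of geometric stacks, and Quillen's theorem.
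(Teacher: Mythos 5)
Your argument is correct and is essentially the paper's own proof in lightly repackaged form: the paper simply invokes the Novikov-descent presentation \eqref{Novikov presentation} and observes that each term $(\mX\times\Spec(\MUP^\bullet))^\mathrm{evp}\simeq \Spec(A\o_\mathbf S\MUP^\bullet)$ because these are even periodic affines, whereas you feed the same inputs --- \eqref{Novikov descent via stacks}, idempotency, the cobar presentation of $\mX\times\mM$ from Theorem \ref{On M}, and evenness of $A\o_\mathbf S\MUP^\bullet$ via Quillen's theorem --- through the colocalization formalism of Proposition \ref{Even stacks inside stacks}. One cosmetic caveat: weak $2$-periodicity of the cobar terms is most cleanly justified not ``from the decomposition'' but from their being $\E$-algebras over the complex periodic $\MUP$, via Lemma \ref{Key property}, which is how the paper argues in the proof of Proposition \ref{Even filtration is even periodic filtration}.
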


\begin{proof}
Since the periodic complex bordism can be expressed, on the level of underlying spectra, in terms of its non-periodic variant as $\mathrm{MUP}\simeq \bigoplus_{n\in\mathbf Z}\Sigma^{2n}(\mathrm{MU})$, it is clear that the $\E$-ring $A\o_\mathbf S\MU$ is even if and only if $A\o_\mathbf S\MUP$ is. In this case, formula \eqref{Novikov presentation} for the even periodization simplifies to
$$
\mX^\mathrm{evp}\,\simeq \,\varinjlim_{\,\,\,\,\mathbf\Delta^\mathrm{op}}\mX\times \Spec(\mathrm{MUP}^\bullet)\,\simeq\, \mX\times \mM,
$$
where the second equivalence is the simplicial formula for the chromatic base stack $\mM$ from Theorem \ref{On M}.
\end{proof}

\begin{exun}
Proposition \ref{EVPL vs CPL} allows us to determine many complex periodizations, by reducing to the computation of $\mathrm{MU}$-homology. For instance, we find that the canonical maps garnish equivalences
$$
\Spec(\mathbf Z)^\mathrm{evp}\,\simeq\, \Spec(\mathbf Z)\times \mM, \quad \Spec(\mathbf F_2)^\mathrm{evp}\,\simeq\, \Spec(\mathbf F_2)\times \mM
$$
as consequence of the classical computations of the homology $\mathrm H_*(\mathrm{MU}; \mathbf Z)$ and $\mathrm H_*(\mathrm{MU}; \mathbf F_2)$, and in particular, their evenness. The same kind of argument also works for the $\E$-rings $\mathrm{ku}$, $\mathrm{ko}$ and $\mathrm{tmf}$, based on the well-known computations of their $\mathrm{MU}$-homology.
\end{exun}

\begin{corollary}
Let $\mX\to \Spec(\mathrm{MU})$ be a map of nonconnective spectral stacks with $\mX$ even.
The canonical map $\mX^\mathrm{evp}\to \mX\times \mM$ is an equivalence of nonconnective spectral stacks.
\end{corollary}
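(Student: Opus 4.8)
The plan is to bootstrap from the affine case treated in Proposition \ref{EVPL vs CPL} by a colimit argument. Using the hypothesis that $\mX$ is even, write $\mX\simeq\varinjlim_{i\in\mathcal I}\Spec(A_i)$ as a small colimit of even affines inside $\mathrm{SpStk}_{/\Spec(\mathrm{MU})}$; since colimits in the slice $\i$-category $\mathrm{SpStk}_{/\Spec(\mathrm{MU})}$ are computed in $\mathrm{SpStk}$, this is also a colimit presentation in $\mathrm{SpStk}$, in which each $A_i$ is an even $\E$-algebra over $\mathrm{MU}$. By Proposition \ref{Even stacks inside stacks}\ref{Corollary 10, a} even periodization preserves small colimits, so $\mX^\mathrm{evp}\simeq\varinjlim_i\Spec(A_i)^\mathrm{evp}$, and by Proposition \ref{complex periodic stacks}\ref{Corollary 100, a} so does complex periodization, giving $\mX\times\mM\simeq\varinjlim_i(\Spec(A_i)\times\mM)$. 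Because the comparison map $\mX^\mathrm{evp}\to\mX\times\mM$ is natural in $\mX$, it is identified with the colimit over $i\in\mathcal I$ of the comparison maps $\Spec(A_i)^\mathrm{evp}\to\Spec(A_i)\times\mM$. Hence it suffices to prove the corollary when $\mX=\Spec(A)$ for $A$ an even $\E$-algebra over $\mathrm{MU}$.

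In that case I would invoke Proposition \ref{EVPL vs CPL}, which reduces the claim to the assertion that $A\o_\mathbf S\mathrm{MU}$ is even. Writing $A\o_\mathbf S\mathrm{MU}\simeq A\o_{\mathrm{MU}}(\mathrm{MU}\o_\mathbf S\mathrm{MU})$ using the $\mathrm{MU}$-algebra structure on $A$, one is left to observe that $\mathrm{MU}\o_\mathbf S\mathrm{MU}$ is, as an $\mathrm{MU}$-module, a wedge of even suspensions $\Sigma^{2n}(\mathrm{MU})$ --- this is the classical computation $\pi_*(\mathrm{MU}\o_\mathbf S\mathrm{MU})\cong\pi_*(\mathrm{MU})[b_1,b_2,\ldots]$ with $|b_i|=2i$, a free $\pi_*(\mathrm{MU})$-module on even-degree generators. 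Base-changing along $\mathrm{MU}\to A$ then exhibits $A\o_\mathbf S\mathrm{MU}$ as a wedge of even suspensions of $A$, which is even because $A$ is. Proposition \ref{EVPL vs CPL} now applies to each $\Spec(A_i)$, finishing the proof.

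The only genuinely non-formal input is the evenness (equivalently, the classical flatness and even concentration) of the dual complex bordism algebra $\mathrm{MU}\o_\mathbf S\mathrm{MU}$ over $\mathrm{MU}$; everything else is a routine reduction using the colimit-compatibility of the two periodization operations and the naturality of the comparison map. One small point to verify along the way is that the comparison transformation evaluated at an affine object really does coincide with the natural comparison map of Proposition \ref{EVPL vs CPL}, but this is immediate since both are induced by the inclusions $\CAlg^\mathrm{evp}\subseteq\CAlg^{\mathbf C\mathrm p}\subseteq\CAlg$.
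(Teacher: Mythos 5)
Your proof is correct and is essentially the argument the paper intends: the corollary is stated there without proof as an immediate consequence of Proposition \ref{EVPL vs CPL}, via exactly your reduction — both $(-)^\mathrm{evp}$ and $-\times\mM$ preserve small colimits, so one reduces to an even affine $\Spec(A)$ over $\mathrm{MU}$, where $A\o_\mathbf S\mathrm{MU}\simeq A\o_{\mathrm{MU}}(\mathrm{MU}\o_\mathbf S\mathrm{MU})$ is even because $\mathrm{MU}\o_\mathbf S\mathrm{MU}$ is a wedge of even suspensions of $\mathrm{MU}$ (the same classical computation the paper invokes elsewhere, e.g.\ in the proofs of Propositions \ref{Even filtration is even periodic filtration} and \ref{Heart of evp of affine}). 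No gaps.
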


The comparison between even periodization and shearing over $\mathrm{MU}$ from Subsection \ref{Subsection shearing} also admits a variant for complex periodization. Unlike the former comparison, which required an evenness assumption, the complex periodization analogue of Proposition
\ref{Evp for even affines over MU} holds for any affine over the complex bordism spectrum.

\begin{prop}\label{xM of aff wrt MUP}
Let $A$ be an $\E$-algebra over $\mathrm{MU}$.
 There is a canonical  and natural equivalence of spectral stacks
$$
\Spec(A)\times \mM\,\simeq\, \Spec(A)^\shear,
$$
where the structure map to $\mathrm B\mathbf G_{m, \mathrm{MU}}$ on the right is taken to be 
$$
\Spec(A)\to \Spec(\mathrm{MU})\to \mathrm B\mathbf G_{m, \mathrm{MU}}.
$$
\end{prop}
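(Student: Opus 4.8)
The plan is to mimic the proof of Proposition \ref{Evp for even affines over MU} as closely as possible, replacing the use of even periodization (which required $A$ to be even) with complex periodization (which imposes no such assumption). First I would use the simplicial presentation of the chromatic base stack $\mM$ from Theorem \ref{On M}, base-changed to $\Spec(A)$, to write
\[
\Spec(A)\times\mM\,\simeq\,\varinjlim_{\mathbf\Delta^\mathrm{op}}\Spec(A\otimes_\mathbf S\mathrm{MUP}^\bullet).
\]
Then, exactly as in the proof of Proposition \ref{Evp for even affines over MU}, I would note that since $A$ is an $\E$-algebra over $\mathrm{MU}$, we have $A\otimes_\mathbf S\mathrm{MUP}^\bullet\simeq A\otimes_{\mathrm{MU}}\mathrm{MUP}^\bullet$, where $\mathrm{MUP}^\bullet:=\mathrm{MUP}^{\otimes_{\mathrm{MU}}[\bullet]}$, using that $\mathrm{MUP}\simeq\mathrm{MU}\otimes_\mathbf S\mathrm{MUP}$. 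The key point is that the Thom-spectrum description of $\mathrm{MUP}$ over $\mathrm{MU}$ — as the Thom spectrum of the $\E$-space map $\mathbf Z\to\mathrm{Pic}(\mathrm{MU})$, $n\mapsto\Sigma^{2n}(\mathrm{MU})$ — gives, via torsor triviality of Thom spectra, the identification $\mathrm{MUP}^\bullet\simeq\mathrm{MUP}[\mathbf Z^\bullet]$ as $\E$-algebras over $\mathrm{MU}$. None of this used evenness of $A$; it only used the $\mathrm{MU}$-algebra structure and properties of Thom spectra, so the argument transports verbatim.

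Next I would base-change to $A$ and pass to spectra, obtaining an equivalence of simplicial affines
\[
\Spec(A\otimes_{\mathrm{MU}}\mathrm{MUP}^\bullet)\,\simeq\,\Spec(A\otimes_{\mathrm{MU}}\mathrm{MUP})\times\mathbf G_m^{\times\bullet},
\]
which exhibits a $\mathbf G_m$-action on $\Spec(A\otimes_{\mathrm{MU}}\mathrm{MUP})$; unwinding Construction \ref{Const of shearing} and Example \ref{Example shear of MU} identifies this action (after base change along $\mathrm{MU}\to A$) with the shearing action. Passing to the colimit in $\mathrm{SpStk}$ then yields
\[
\Spec(A)\times\mM\,\simeq\,\Spec(A\otimes_{\mathrm{MU}}\mathrm{MUP})/\mathbf G_m,
\]
and recognizing the grading on $A\otimes_{\mathrm{MU}}\mathrm{MUP}$ via the Rees equivalence as $A[t^{\pm1}]^\shear$ with $t$ in grading degree $1$, we conclude
\[
\Spec(A)\times\mM\,\simeq\,\Spec(A[t^{\pm1}]^\shear)/\mathbf G_m\,\simeq\,(\Spec(A[t^{\pm1}])/\mathbf G_m)^\shear\,\simeq\,\Spec(A)^\shear,
\]
with the structure map to $\mathrm B\mathbf G_{m,\mathrm{MU}}$ the stated one, since $\Spec(A)\simeq\Spec(A[\mathbf Z])/\mathbf G_m$ corresponds to the free grading.

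The only genuinely new ingredient compared to Proposition \ref{Evp for even affines over MU} is replacing the opening step: there, evenness of $A$ was used to invoke Proposition \ref{Computing the evening} and get a simplicial presentation of $\Spec(A)^\mathrm{evp}$ via a periodically eff cover. Here there is no periodically eff cover of $\Spec(A)$ available in general (indeed $\mathbf S\to\mathrm{MUP}$ is merely $\mathbf C$ff, not eff, without an evenness hypothesis), so instead I would directly use the definitional simplicial presentation of $\mM$ from Theorem \ref{On M} together with the fact (Proposition \ref{complex periodic stacks}\ref{Corollary 100, a}) that $\mX\mapsto\mX\times\mM$ preserves colimits, which is what makes $\Spec(A)\times\mM\simeq\varinjlim_{\mathbf\Delta^\mathrm{op}}(\Spec(A)\times\Spec(\mathrm{MUP}^\bullet))$ hold. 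I expect this substitution to be the main — though quite mild — obstacle: one must be careful that $\Spec(\mathrm{MUP}^\bullet)$ really is the \v{C}ech nerve of $\Spec(\mathrm{MUP})\to\mM$ (so that the cosimplicial structure matches the bar construction for the $\mathbf G_m$-action), which is precisely the content of Theorem \ref{On M}, and that base-changing this presentation along $\Spec(A)\to\Spec(\mathbf S)$ is compatible with the Thom-spectrum manipulations above. Once that compatibility is in place, the remainder of the argument is identical to the proof of Proposition \ref{Evp for even affines over MU}.
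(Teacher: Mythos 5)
There is a genuine error at the bridging step of your argument. You pass from the presentation $\Spec(A)\times \mM\simeq \varinjlim_{\mathbf\Delta^\mathrm{op}}\Spec(A\otimes_{\mathbf S}\mathrm{MUP}^{\otimes_{\mathbf S}[\bullet]})$ to the $\mathrm{MU}$-relative cobar construction by asserting $A\otimes_{\mathbf S}\mathrm{MUP}^{\otimes_{\mathbf S}[\bullet]}\simeq A\otimes_{\mathrm{MU}}\mathrm{MUP}^{\otimes_{\mathrm{MU}}[\bullet]}$, ``using that $\mathrm{MUP}\simeq \mathrm{MU}\otimes_{\mathbf S}\mathrm{MUP}$.'' That identity is false: $\pi_*(\mathrm{MU}\otimes_{\mathbf S}\mathrm{MUP})\cong \mathrm{MU}_*\mathrm{MUP}$ is a polynomial algebra on infinitely many generators over $\pi_*(\mathrm{MUP})$, not $\pi_*(\mathrm{MUP})$ itself, and consequently the termwise comparison already fails in cosimplicial degree $0$ (take $A=\mathrm{MU}$: $\mathrm{MU}\otimes_{\mathbf S}\mathrm{MUP}\not\simeq\mathrm{MUP}$). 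The two cosimplicial $\E$-rings do have the same geometric realization in $\mathrm{SpStk}$, but that is essentially what is being proved, so the step as written is circular once the false termwise identification is removed. Your closing worry about compatibility of base change with the \v{C}ech nerve is pointing at exactly this spot, but the proposed fix is not the right one.

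The repair, and in effect what the paper does (its proof is literally ``the proof of Proposition \ref{Evp for even affines over MU} goes through with no modifications''), is to base-change over $\mathrm{MU}$ rather than over $\mathbf S$: the relevant cover of $\Spec(A)\times\mM$ is $\Spec(A\otimes_{\mathrm{MU}}\mathrm{MUP})\to \Spec(A)\times\mM$. Since all terms in sight are complex periodic, maps to $\mM$ are essentially unique, so by Proposition \ref{complex periodic stacks} fiber products over $\Spec(A)\times\mM$ agree with fiber products over $\Spec(A)$; hence this map is an fpqc cover whose \v{C}ech nerve is $\Spec\big((A\otimes_{\mathrm{MU}}\mathrm{MUP})^{\otimes_A[\bullet]}\big)\simeq \Spec(A\otimes_{\mathrm{MU}}\mathrm{MUP}^{\otimes_{\mathrm{MU}}[\bullet]})$ -- the same cosimplicial object as in the even periodic proof, with no evenness needed. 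From there your remaining steps (torsor triviality $\mathrm{MUP}^{\otimes_{\mathrm{MU}}[\bullet]}\simeq \mathrm{MUP}[\mathbf Z^{\bullet}]$, the induced $\mathbf G_m$-action, the Rees identification and shearing) are correct and verbatim as in the paper. Alternatively, the paper's remark after the proposition gives an even shorter route you could adopt: reduce by base change along $\mathrm{MU}\to A$ to the single identity $\Spec(\mathrm{MU})\times\mM\simeq \Spec(\mathrm{MUP})/\mathbf G_m\simeq \Spec(\mathrm{MU})^{\shear}$, using $\Spec(A)\times\mM\simeq \Spec(A)\times_{\Spec(\mathrm{MU})}(\Spec(\mathrm{MU})\times\mM)$ and the fact that shearing of the free grading is compatible with base change.
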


\begin{proof}
The proof of Proposition
\ref{Evp for even affines over MU} goes through in this context with no modifications, and no requirements of evenness. Alternatively, we can argue directly: 
\end{proof}

\begin{remark}
By formula \eqref{Evp of aff wrt MUP} from the proof of  Proposition
\ref{Evp for even affines over MU}, we find that the shear of an $\E$-algebra $A$ over $\mathrm{MU}$ may be described in terms of the Thom $\E$-structure on the periodic complex bordism $\mathrm{MUP}$ as the quotient stack
$$
\Spec(A)^\shear\, \simeq \Spec(A\o_{\mathrm{MU}}\mathrm{MUP})/\mathbf G_m.
$$
The $\mathbf G_m$-action on the right-hand-side corresponds to the grading $\mathrm{MUP}\simeq \bigoplus_{n\in \mathbf Z} \Sigma^{2n}(\mathrm{MU})$. Note that this may be rewritten as the fibered product
$$
\Spec(A)\times_{\Spec(\mathrm{MU})}\Spec(\mathrm{MUP})/\mathbf G_m,
$$
and so the equivalence of Proposition \ref{xM of aff wrt MUP} reduces to the fact that
$$
\Spec(\mathrm{MU})\times \mM\,\simeq \,\Spec(\mathrm{MUP})/\mathbf G_m.
$$
\end{remark}

Just as in
Corollary \ref{Corollary unshear for modules}, the existence of unshearing allows us to conclude that complex periodization over $\Spec(\mathrm{MU})$ does not effect quasi-coherent sheaves.

\begin{corollary}\label{Corollary 4.53 qcoh}
Let $\mX\to \Spec(\mathrm{MU})$ be a map of nonconnective spectral stacks. Then the complex periodization map $f:\mX\times \mM\to \mX$ induces a symmetric monoidal adjoint equivalence of $\i$-categories on quasi-coherent sheaves
$$
f^*:\QCoh(\mX)\simeq\QCoh(\mX\times \mM):f_*.
$$
\end{corollary}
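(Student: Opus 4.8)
The plan is to mimic the proof of Corollary~\ref{Corollary unshear for modules} for even periodization: reduce to the affine case, and there exploit the shearing description of complex periodization over $\mathrm{MU}$ supplied by Proposition~\ref{xM of aff wrt MUP}. Concretely, write $\mX\simeq\varinjlim_i\Spec(A_i)$ as a small colimit of affines in $\mathrm{SpStk}_{/\Spec(\mathrm{MU})}$, so that each $A_i$ is an $\E$-algebra over $\mathrm{MU}$. Since the functor $\mathcal Y\mapsto\mathcal Y\times\mM$ preserves small colimits (Proposition~\ref{complex periodic stacks}) we get $\mX\times\mM\simeq\varinjlim_i(\Spec(A_i)\times\mM)$, and since $\QCoh$ carries colimits of spectral stacks to limits of $\i$-categories (Proposition~\ref{Def of QCoh}), the functor $f^*$ is the limit over $i$ of the pullback functors $f_i^*$ along the complex periodization maps $\Spec(A_i)\times\mM\to\Spec(A_i)$. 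Thus it suffices to show that each $f_i^*$ is a symmetric monoidal equivalence, i.e.\ to treat $\mX=\Spec(A)$ with $A$ an $\E$-algebra over $\mathrm{MU}$.

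For such an affine, Proposition~\ref{xM of aff wrt MUP} identifies $\Spec(A)\times\mM\simeq\Spec(A)^\shear$, where the shearing is formed with respect to the free $\mathbf Z$-grading $A[\mathbf Z]$ and the structure map $\Spec(A)\to\Spec(\mathrm{MU})\to\mathrm B\mathbf G_{m,\mathrm{MU}}$; equivalently $\Spec(A)\times\mM\simeq\Spec(A\o_{\mathrm{MU}}\mathrm{MUP})/\mathbf G_m$. By Construction~\ref{Const of shearing} the shearing endofunctor $(-)^\shear$ is a symmetric monoidal auto-equivalence of $\mathrm{GrMod}_\mathrm{MU}$, with inverse the unshearing $M_*\mapsto\Sigma^{-2*}(M_*)$, so it restricts to a symmetric monoidal equivalence $\Mod_{A[\mathbf Z]}(\mathrm{GrMod}_\mathrm{MU})\simeq\Mod_{A[\mathbf Z]^\shear}(\mathrm{GrMod}_\mathrm{MU})$, and hence, through the Rees equivalence and the relative spectrum over $\mathrm B\mathbf G_{m,\mathrm{MU}}$, to a symmetric monoidal equivalence $\QCoh(\Spec(A))\simeq\QCoh(\Spec(A)^\shear)$.

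It then remains to identify this equivalence with the adjunction $(f^*,f_*)$ along the complex periodization map, which I would do exactly as in the proof of Corollary~\ref{Corollary shear on QCoh}: under the identifications $\QCoh(\Spec(A)\times\mM)\simeq\Mod_{A\o_\mathrm{MU}\mathrm{MUP}}(\mathrm{GrMod}_A)$ and $\QCoh(\Spec(A))\simeq\Mod_A$, the pushforward $f_*$ is the passage to $\mathbf G_m$-fixed points, i.e.\ the zeroth-graded-piece functor $M_*\mapsto M_0$; this matches the zeroth-piece description of the shearing equivalence since shearing does not affect the zeroth graded piece, and $f^*$ is its symmetric monoidal left adjoint. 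The main obstacle is precisely this last bookkeeping step --- confirming that the equivalence produced abstractly from shearing is literally pullback along $f$ --- but it is carried out verbatim in Corollary~\ref{Corollary shear on QCoh}; the only genuinely new input is that Proposition~\ref{xM of aff wrt MUP}, unlike Proposition~\ref{Evp for even affines over MU}, requires no evenness hypothesis, so the argument now applies to every affine over $\mathrm{MU}$ rather than only to even ones.
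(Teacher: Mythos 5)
Your proposal is correct and follows essentially the same route the paper intends: the corollary is the complex-periodic analogue of Corollary~\ref{Corollary unshear for modules}, proved by reducing to affines via colimit-compatibility of $-\times\mM$ and $\QCoh$, and then invoking the shearing identification of Proposition~\ref{xM of aff wrt MUP} together with the zeroth-graded-piece bookkeeping already carried out in Corollary~\ref{Corollary shear on QCoh}. Your observation that the only new input is the absence of any evenness hypothesis in Proposition~\ref{xM of aff wrt MUP} is exactly the point the paper is making.
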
 

We can also give a precise description of the essential image of the complex periodization functor $A\mapsto \Spec(A)\times \mM$ of affines over $\mathrm{MU}$. In fact, since is is given in this context by shearing, it also admits an inverse in unshearing, and therefore gives rise to an equivalence of $\i$-categories.

\begin{prop}\label{Prop ess im of shear}
The functor $A\mapsto \Spec(A)\times \mM$ induces an equivalence of $\i$-categories
$$
\CAlg_{\mathrm{MU}}^\mathrm{op}\,\simeq \, \mathrm{SpStk}^\mathrm{aff}_{/\mathrm{Spec}(\mathrm{MU})^\shear}.
$$
\end{prop}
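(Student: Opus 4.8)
The plan is to exhibit the functor $A \mapsto \Spec(A) \times \mM$ as the composite of two equivalences: first the shearing auto-equivalence, and second the relative spectrum over $\mathrm{B}\mathbf G_{m,\mathrm{MU}}$. Concretely, Proposition~\ref{xM of aff wrt MUP} already identifies $\Spec(A) \times \mM$ with $\Spec(A)^\shear$, where $\Spec(A)$ is viewed over $\mathrm{B}\mathbf G_{m,\mathrm{MU}}$ via the free-grading structure map $\Spec(A) \simeq \Spec(A[\mathbf Z])/\mathbf G_m \to \Spec(\mathrm{MU}) \to \mathrm{B}\mathbf G_{m,\mathrm{MU}}$. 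So the functor in question factors as
\begin{equation*}
\CAlg_{\mathrm{MU}}^\mathrm{op} \xrightarrow{\;A \mapsto \Spec(A[\mathbf Z])/\mathbf G_m\;} \mathrm{SpStk}^\mathrm{aff}_{/\mathrm{B}\mathbf G_{m,\mathrm{MU}}} \xrightarrow{\;(-)^\shear\;} \mathrm{SpStk}^\mathrm{aff}_{/\mathrm{B}\mathbf G_{m,\mathrm{MU}}},
\end{equation*}
and the claim is that this composite lands in $\mathrm{SpStk}^\mathrm{aff}_{/\Spec(\mathrm{MU})^\shear}$ and is an equivalence onto it.

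The first step is to recall from Construction~\ref{Const of shearing} that shearing $(-)^\shear$ is a symmetric monoidal auto-equivalence of $\mathrm{GrMod}_\mathrm{MU}$, hence induces an auto-equivalence of $\CAlg(\QCoh(\mathrm{B}\mathbf G_{m,\mathrm{MU}}))$ and, via the relative-spectrum antiequivalence \eqref{Relative spec def}, an auto-equivalence of $\mathrm{SpStk}^\mathrm{aff}_{/\mathrm{B}\mathbf G_{m,\mathrm{MU}}}$. The second step is to identify the essential image of $A \mapsto \Spec(A[\mathbf Z])/\mathbf G_m$: under the relative-spectrum/Rees dictionary this is $A \mapsto \Spec_{\mathrm{B}\mathbf G_m}(A[\mathbf Z])$, i.e.\ the \emph{free graded} $\E$-algebras over $\mathrm{MU}$. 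But the free grading functor $A \mapsto A[\mathbf Z]$ is precisely the base-change functor $\CAlg_\mathrm{MU} \to \CAlg_{\mathrm{MU}[\mathbf Z]} = \CAlg(\mathrm{GrMod}_\mathrm{MU})_{\mathrm{MU}[\mathbf Z]/}$, and it is an equivalence onto $\CAlg$-objects equipped with a map \emph{from} $\mathrm{MU}[\mathbf Z]$; geometrically, $A \mapsto \Spec(A[\mathbf Z])/\mathbf G_m$ is an equivalence onto $\mathrm{SpStk}^\mathrm{aff}_{/(\Spec(\mathrm{MU}[\mathbf Z])/\mathbf G_m)} = \mathrm{SpStk}^\mathrm{aff}_{/\Spec(\mathrm{MU})}$, since $\Spec(\mathrm{MU}[\mathbf Z])/\mathbf G_m \simeq \Spec(\mathrm{MU})$ is the universal $\mathbf G_m$-torsor. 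Thus the first arrow is an equivalence $\CAlg_\mathrm{MU}^\mathrm{op} \simeq \mathrm{SpStk}^\mathrm{aff}_{/\Spec(\mathrm{MU})}$. The third step is to chase where the auto-equivalence $(-)^\shear$ sends this subcategory: since shearing takes the trivial-grading object $\mathrm{MU}[\mathbf Z]$ to $\mathrm{MU}[t^{\pm 1}]^\shear \simeq \mathrm{MUP}$, i.e.\ sends $\Spec(\mathrm{MU})$ (with its free-grading structure map) to $\Spec(\mathrm{MU})^\shear \simeq \Spec(\mathrm{MUP})/\mathbf G_m$, it carries $\mathrm{SpStk}^\mathrm{aff}_{/\Spec(\mathrm{MU})}$ equivalently onto $\mathrm{SpStk}^\mathrm{aff}_{/\Spec(\mathrm{MU})^\shear}$ (an auto-equivalence of a slice $2$-category restricts to an equivalence of over-categories over an object and over its image). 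Composing, $A \mapsto \Spec(A)^\shear \simeq \Spec(A) \times \mM$ gives the desired equivalence, and that it agrees with complex periodization is exactly Proposition~\ref{xM of aff wrt MUP}.

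The main obstacle is not any deep input but bookkeeping the \emph{structure maps} correctly: one must be careful that the ``$\Spec(A) \to \Spec(\mathrm{MU}) \to \mathrm{B}\mathbf G_{m,\mathrm{MU}}$'' in Proposition~\ref{xM of aff wrt MUP} is the \emph{free}-grading map (as emphasized in the remark following that proposition), not the trivial one, so that shearing does produce periodic objects; and that under the relative-spectrum antiequivalence the slice $\mathrm{SpStk}^\mathrm{aff}_{/\Spec(\mathrm{MU})^\shear}$ corresponds to $\CAlg$-objects in $\QCoh$ over $\mathrm{B}\mathbf G_{m,\mathrm{MU}}$ \emph{under} the algebra $\mathrm{MUP}$ with its shear-induced $\E$-structure — which, after unshearing, is just $\CAlg(\mathrm{GrMod}_\mathrm{MU})$ under $\mathrm{MU}[\mathbf Z]$, i.e.\ $\CAlg_\mathrm{MU}$. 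Once these identifications are pinned down the equivalence is formal. It is also worth noting explicitly, for the reader, that the inverse functor is $\mX \mapsto \sO(\mX)$ composed with unshearing: given an affine $\mathbf G_m$-relative stack over $\Spec(\mathrm{MU})^\shear$, unshear it to land over $\Spec(\mathrm{MU})$, then take global sections of the structure sheaf to recover an $\E$-algebra over $\mathrm{MU}$; this is consistent with Corollary~\ref{Corollary 4.53 qcoh} and with the global-sections computation in the remark preceding Proposition~\ref{Prop ess im of shear}.
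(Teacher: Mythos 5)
Your argument is correct, but it is packaged differently from the paper's. The paper's proof is a short diagram chase: it places the functor $-\times\mM$ in a commutative square whose vertical edges are the relative-spectrum antiequivalences \eqref{Relative spec def} over $\Spec(\mathrm{MU})$ and over $\Spec(\mathrm{MU})^\shear$, and whose bottom edge is $f^*$ along $f\colon\Spec(\mathrm{MU})^\shear\to\Spec(\mathrm{MU})$; the bottom edge is an equivalence by Corollary \ref{Corollary 4.53 qcoh}, hence so is the top. You instead factor $A\mapsto\Spec(A)\times\mM$ through the shearing auto-equivalence of $\mathrm{SpStk}^\mathrm{aff}_{/\mathrm B\mathbf G_{m,\mathrm{MU}}}$ from Construction \ref{Const of shearing}, restricted to slices over $\Spec(\mathrm{MU})$ (with its free-grading/torsor structure map) and over its image $\Spec(\mathrm{MU})^\shear$, and then invoke Proposition \ref{xM of aff wrt MUP} -- including its naturality clause, which is exactly what identifies the composite with complex periodization. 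Both routes rest on the same substance (symmetric monoidality of shearing conjugated through the relative-spectrum dictionary), so neither is deeper than the other; the trade-off is that the paper's version sidesteps the slice-category and affineness bookkeeping you flag, namely the identification of $(\mathrm{SpStk}^\mathrm{aff}_{/\mathrm B\mathbf G_{m,\mathrm{MU}}})_{/\Spec(\mathrm{MU})^\shear}$ with $\mathrm{SpStk}^\mathrm{aff}_{/\Spec(\mathrm{MU})^\shear}$, which requires closure of affine morphisms under composition together with the cancellation statement of Lemma \ref{Lemma affine 3-1} applied over $\mathrm B\mathbf G_{m,\mathrm{MU}}$, whereas your version makes the inverse functor (unshear, then take global sections of the structure sheaf) explicit, which the paper leaves implicit. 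If you write it up, do spell out that two-way affineness check, since it is the one point where your argument uses more than formal slice-category identities.
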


\begin{proof}
For the canonical map $f:\Spec(\mathrm{MU})^\shear\simeq \Spec(\mathrm{MU})\times \mM\to \Spec(\mathrm{MU})$, consider the commutative diagram of nonconnective spectral stacks
$$
\begin{tikzcd}
\mathrm{SpStk}^\mathrm{aff}_{/\mathrm{Spec}(\mathrm{MU})} \arrow{d}{\Spec_{\mathrm{MU}}} \arrow{r}{-\times {\mM}} & \mathrm{SpStk}_{/\mathrm{Spec}(\mathrm{MU})^\shear}^\mathrm{aff}\arrow{d}{\Spec_{\Spec(\mathrm{MU})^\shear}} \\
\CAlg(\QCoh(\mathrm{Spec}(\mathrm{MU})))^\mathrm{op} \arrow{r}{f^*} & \CAlg(\QCoh(\mathrm{Spec}(\mathrm{MU})^\shear))^\mathrm{op}.
\end{tikzcd}
$$
The vertical maps are equivalences by the definition of the relative Spec functor, see \eqref{Relative spec def}, and the lower horizontal map is an equivalence  by Corollary \ref{Corollary 4.53 qcoh}. It follows that the upper horizontal map is an equivalence as well.
\end{proof}

\begin{corollary}
The even periodization functor $\mX\mapsto \mX^\mathrm{evp}$ restricts to an equivalence of $\i$-categories
$$
(\CAlg^\mathrm{ev}_\mathrm{MU})^\mathrm{op}\,\simeq \, \mathrm{SpStk}^{\mathrm{evp},\, \mathrm{aff}}_{/\mathrm{Spec}(\mathrm{MU})^\shear}
$$
\end{corollary}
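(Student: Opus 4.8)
The plan is to obtain the corollary by restricting the equivalence of Proposition \ref{Prop ess im of shear} to a full subcategory, once the essential image has been pinned down. First I would recall that by Proposition \ref{Evp for even affines over MU}, for an even $\E$-algebra $A$ over $\mathrm{MU}$ there is a natural equivalence $\Spec(A)^\mathrm{evp}\simeq\Spec(A)^\shear$ (with structure map $\Spec(A)\to\Spec(\mathrm{MU})\to\mathrm B\mathbf G_{m,\mathrm{MU}}$), and by Proposition \ref{xM of aff wrt MUP} also $\Spec(A)^\shear\simeq\Spec(A)\times\mM$. Thus on the full subcategory $(\CAlg^\mathrm{ev}_\mathrm{MU})^\mathrm{op}\subseteq\CAlg_\mathrm{MU}^\mathrm{op}$ the functor $A\mapsto\Spec(A)^\mathrm{evp}$ coincides with the restriction of the equivalence $\CAlg_\mathrm{MU}^\mathrm{op}\simeq\mathrm{SpStk}^\mathrm{aff}_{/\Spec(\mathrm{MU})^\shear}$ of Proposition \ref{Prop ess im of shear}; since the restriction of an equivalence to a full subcategory is fully faithful, this already exhibits $A\mapsto\Spec(A)^\mathrm{evp}$ as a fully faithful functor $(\CAlg^\mathrm{ev}_\mathrm{MU})^\mathrm{op}\to\mathrm{SpStk}^\mathrm{aff}_{/\Spec(\mathrm{MU})^\shear}$, with inverse on its essential image given by the $\E$-ring of global sections (Proposition \ref{Prop champs affines}, since even $\mathrm{MU}$-algebras are in particular even-convergent).

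It remains to identify the essential image with $\mathrm{SpStk}^{\mathrm{evp},\,\mathrm{aff}}_{/\Spec(\mathrm{MU})^\shear}$, i.e.\ with the affine morphisms over $\Spec(\mathrm{MU})^\shear$ whose source is an even periodic spectral stack. One inclusion is immediate: $\Spec(A)^\mathrm{evp}$ always lies in the essential image of $\varepsilon_!\colon\mathrm{SpStk}^\mathrm{evp}\hookrightarrow\mathrm{SpStk}$, hence is an even periodic spectral stack, by Proposition \ref{Even stacks inside stacks}. For the reverse inclusion I must show that if $A\in\CAlg_\mathrm{MU}$ is such that $\Spec(A)^\shear$ is even periodic, then $A$ is even. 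Here I would use the presentation $\Spec(A)^\shear\simeq\Spec(A\o_\mathrm{MU}\mathrm{MUP})/\mathbf G_m$ extracted from the proof of Proposition \ref{xM of aff wrt MUP}, with atlas $p\colon\Spec(A\o_\mathrm{MU}\mathrm{MUP})\to\Spec(A)^\shear$ a $\mathbf G_m$-torsor, in particular faithfully flat, so that $p^*$ is conservative and $t$-exact and $p^*\sO_{\Spec(A)^\shear}\simeq\sO_{\Spec(A\o_\mathrm{MU}\mathrm{MUP})}$. Because an even periodic spectral stack is a colimit of even periodic affines, the odd homotopy sheaves of its structure sheaf vanish — the restriction to each even periodic affine being even; pulling back along $p$ then forces $\pi_\mathrm{odd}(A\o_\mathrm{MU}\mathrm{MUP})=0$, and since $A\o_\mathrm{MU}\mathrm{MUP}\simeq\bigoplus_{n\in\mathbf Z}\Sigma^{2n}(A)$ this yields $\pi_\mathrm{odd}(A)=0$, as desired. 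Combining the two inclusions, $A\mapsto\Spec(A)^\mathrm{evp}$ is a fully faithful and essentially surjective functor $(\CAlg^\mathrm{ev}_\mathrm{MU})^\mathrm{op}\to\mathrm{SpStk}^{\mathrm{evp},\,\mathrm{aff}}_{/\Spec(\mathrm{MU})^\shear}$, hence an equivalence.

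The main obstacle is the reverse inclusion of essential images, i.e.\ deducing evenness of $A$ from even periodicity of $\Spec(A)^\shear$. The subtle point is that "even periodic spectral stack" must be recognized as a condition detected, after faithfully flat base change, on the vanishing of the odd homotopy sheaves of the structure sheaf; this is legitimate because the even periodic affines generate $\mathrm{SpStk}^\mathrm{evp}$ under colimits and evenness of $\E$-rings is stable under faithfully flat extension (Lemma \ref{Lemma cover preservation and lifting}), so that no even periodic ring can map into a stack whose structure sheaf carries nonzero odd homotopy. Everything else is a formal consequence of the already-established Propositions \ref{Prop ess im of shear}, \ref{Evp for even affines over MU}, \ref{xM of aff wrt MUP}, \ref{Prop champs affines} and \ref{Even stacks inside stacks}.
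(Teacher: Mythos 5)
Your high-level route is the same as the paper's: restrict the equivalence of Proposition \ref{Prop ess im of shear} and match up the two notions of evenness, and your full faithfulness step and the forward inclusion of essential images are fine. The genuine gap is in the reverse inclusion. You deduce evenness of $A$ from the claim that the structure sheaf of an arbitrary even periodic spectral stack has vanishing odd homotopy sheaves, justified by the observation that such a stack is a colimit of even periodic affines whose structure-sheaf restrictions are even, together with the principle that ``no even periodic ring can map into a stack whose structure sheaf carries nonzero odd homotopy.'' Neither justification is valid: the maps $\Spec(B)\to\mX$ occurring in a colimit presentation by even periodic affines are not flat in general, and homotopy sheaves are not detected by pullback along non-flat maps -- the pullback of $\sO_{\mX}$ to $\Spec(B)$ being the even ring $B$ says nothing about $\pi_{\mathrm{odd}}(\sO_{\mX})$ -- while the quoted principle is simply false (e.g.\ $\Spec(\MUP)\to\Spec(\mathbf S)$, and $\pi_{\mathrm{odd}}(\mathbf S)\neq 0$). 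The statement you are invoking is exactly the non-formal content that the paper isolates in Proposition \ref{EVP affines}, whose proof uses affineness of the connective localization in an essential way; it cannot be obtained as a formality about colimits of even periodic affines.

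The step can be repaired within your framework as follows. Given $A\in\CAlg_{\MU}$ with $\mY:=\Spec(A)^{\shear}$ even periodic, base-change along the faithfully flat affine atlas $\Spec(\MUP)\to\Spec(\MU)^{\shear}$: writing $\mY$ as a colimit of even periodic affines and using that colimits of spectral stacks are stable under pullback, each term pulls back to an affine which is faithfully flat over an even periodic affine, hence is itself even periodic by the cover-lifting part of Lemma \ref{Lemma cover preservation and lifting}; therefore the pullback $\mY\times_{\Spec(\MU)^{\shear}}\Spec(\MUP)$ is again an even periodic spectral stack, and it is affine because $\mY\to\Spec(\MU)^{\shear}$ is an affine morphism. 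Proposition \ref{EVP affines} now applies and shows that its $\E$-ring of functions is even periodic; unwinding the identifications of Proposition \ref{xM of aff wrt MUP}, this ring is $A\otimes_{\MU}\MUP\simeq\bigoplus_{n\in\mathbf Z}\Sigma^{2n}(A)$, so $A$ is even, which is what your essential surjectivity needed. (Alternatively, one can adapt the proof of Proposition \ref{EVP affines} to the quotient stack $\Spec(A\otimes_{\MU}\MUP)/\mathbf G_m$ by working with all the twists $\sO(m)$ rather than just global sections.) With this replacement your argument is complete and agrees in substance with the paper's one-line proof, which reduces the identification of essential images to the fact that shearing and unshearing preserve evenness.
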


\begin{proof}
Since shearing and even periodization agree in this case, and both it and its inverse preserve evenness -- for instance by Proposition \ref{xM of aff wrt MUP}, through which the inverse functor is identified with unshearing, -- the desired claim follows from Proposition \ref{Prop ess im of shear}.
\end{proof}

There is also a version of the ``affine stacks"-like behavior, by which we mean the fully faithfulness result of Proposition \ref{Prop champs affines}, in the complex periodic setting. Namely, set
$\CAlg^{\mathbf C\mathrm {c}}\subseteq\CAlg$ to be the full subcategory of \textit{complex convergent} $\E$-rings, in the sense that their Adams-Novikov filtration converges, i.e.\ the $\E$-rings $A$ for which the canonical map $A\to\varinjlim_n \mathrm{fil}^{\ge n}_\mathrm{AN}(A)$ is an equivalence. Obviously we have $\CAlg^{\mathbf C\mathrm{or}}\subseteq\CAlg^{\mathbf C\mathrm{c}}$, but in fact, this subcategory is much larger.

\begin{lemma}\label{Lemma connective = convergent}
The subcategory $\CAlg^{\mathbf C\mathrm{c}}\subseteq\CAlg$ contains all those $\E$-ring spectra whose underlying spectrum is almost connective, i.e.\ bounded below.
\end{lemma}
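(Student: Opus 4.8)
The plan is to reduce the statement to the well-known convergence of the Adams--Novikov spectral sequence for bounded-below spectra. Recall that by Corollary~\ref{AN via MUP}, the Adams--Novikov filtration of an $\E$-ring $A$ may be computed as $\mathrm{fil}^{\ge *}_\mathrm{AN}(A)\simeq \varprojlim_{\mathbf\Delta}\tau_{\ge 2*}(A\o_\mathbf S\MUP^\bullet)$, and equivalently (by the results of Subsection~\ref{ANsection}) as $\varprojlim_{\mathbf\Delta}\tau_{\ge 2*}(A\o_\mathbf S\MU^\bullet)$. So the question is whether the canonical map $A\to \varinjlim_n \mathrm{fil}^{\ge n}_\mathrm{AN}(A)$ is an equivalence when $A$ is bounded below. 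First I would observe that it suffices to prove this for the \emph{underlying spectrum}, since the forgetful functor from $\E$-rings to spectra is conservative and preserves both the relevant limits over $\mathbf\Delta$ and filtered colimits.

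The key step is then a standard connectivity estimate. Writing $\mathrm{fil}^{\ge n}_\mathrm{AN}(A) = \varprojlim_{\mathbf\Delta}\tau_{\ge 2n}(A\o_\mathbf S\MU^\bullet)$, I would use that $\MU$ is connective and that $A$ is bounded below, say $A$ is $m$-connective, to control the connectivity of the cofiber of $A\to \mathrm{fil}^{\ge n}_\mathrm{AN}(A)$. Concretely, the fiber of $\mathrm{fil}^{\ge n}_\mathrm{AN}(A)\to \mathrm{fil}^{\ge 0}_\mathrm{AN}(A)$, and more relevantly the comparison between $A$ and the $n$-th stage, involves a totalization whose $s$-th term contributes in degrees that grow with $s$; combined with the double-speed Postnikov truncation this forces the cofiber of $A\to\mathrm{fil}^{\ge n}_\mathrm{AN}(A)$ to be highly coconnective as $n\to\infty$, with connectivity bound tending to $-\infty$. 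Taking the filtered colimit over $n$ then kills this cofiber. Rather than reproving this by hand, the cleanest route is to invoke the convergence of the $\MU$-based Adams spectral sequence for bounded-below spectra, which is classical (for instance it follows from the nilpotence/convergence results of Bousfield, or from the fact that bounded-below spectra are $\MU$-nilpotent-complete; see also the discussion in \cite{HRW}): this says precisely that $A\xrightarrow{\ \sim\ }\varprojlim_n\, \varinjlim$-free resolution, which after d\'ecalage is exactly the statement that $A\to\varinjlim_n\mathrm{fil}^{\ge n}_\mathrm{AN}(A)$ is an equivalence.

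The main obstacle I anticipate is matching conventions: the filtration $\mathrm{fil}^{\ge *}_\mathrm{AN}$ as defined here is the \emph{d\'ecalage} of the usual Adams--Novikov tower (as flagged in the remarks following Definition~\ref{The party line on ANf}), so one must be careful that ``convergence of the Adams--Novikov spectral sequence'' in the classical literature translates into convergence of \emph{this} filtration in the sense of $A\to\varinjlim_n\mathrm{fil}^{\ge n}_\mathrm{AN}(A)$ being an equivalence. The d\'ecalage operation does not change the colimit $\varinjlim_n$ of the filtration (it only reindexes and regrades the associated graded), so this is a formal bookkeeping point rather than a substantive one, but it should be spelled out. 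A secondary minor point is passing from the spectrum-level statement back to $\CAlg$, which is immediate since $\mathrm{fil}^{\ge *}_\mathrm{AN}$ is defined as a functor to $\mathrm{FilCAlg}$ and the forgetful functor to filtered spectra creates the colimit and detects equivalences. With these observations in place the proof is short: bounded-below $\Rightarrow$ $\MU$-nilpotent complete $\Rightarrow$ Adams--Novikov convergent $\Rightarrow$ $A\in\CAlg^{\mathbf C\mathrm c}$.
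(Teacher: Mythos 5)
Your proposal is correct and follows essentially the same route as the paper: reduce to the underlying spectrum, identify $\varinjlim_n\mathrm{fil}^{\ge n}_{\mathrm{AN}}(A)$ with the $\mathrm{MU}$-nilpotent completion $\varprojlim_{\mathbf\Delta}A\otimes_{\mathbf S}\mathrm{MU}^\bullet$, and then invoke Bousfield's theorem that bounded-below spectra are $\mathrm{MU}$-nilpotent complete (equivalently, convergence of the $\mathrm{MU}$-based Adams spectral sequence), which is exactly the paper's citation. The only cosmetic difference is that the paper handles the interchange of $\varinjlim_n$ with the totalization via the truncatedness argument of \cite[Remark 2.3.1]{HRW} -- the cofiber of $\tau_{\ge 2n}\to\mathrm{id}$ is truncated and truncated objects are closed under limits, so the discrepancy dies in the colimit as $n\to-\infty$ -- rather than via your connectivity estimate on the terms of the totalization.
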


\begin{proof}
By the perspective on the Adams-Novikov filtration presented in Definition \ref{The party line on ANf}, we have
$$
\varinjlim_n \mathrm{fil}^{\ge n}_\mathrm{AN}(A)\, \simeq \, \varinjlim_n \varprojlim_{\mathbf\Delta} \tau_{\ge 2n}(A\o_{\mathbf S}\mathrm{MU}^\bullet).
$$
We can now employ a truncatedness argument as in \cite[Remark 2.3.1]{HRW}: since the map $\tau_{\ge 2n}(A\o_{\mathbf S}\mathrm{MUP}^\bullet)\to A\o_{\mathbf S}\mathrm{MUP}^\bullet$ is $(2n-3)$-truncated, and truncatedness is preserved under limits (indeed, the inclusion $\mathrm{Sp}_{\le n}\subseteq\mathrm{Sp}$ admits a left adjoint in $\tau_{\le n}$, and as such preserves limits), it follows by sending $n\to-\infty$ that the canonical map
$$
\varinjlim_n\varprojlim_{\mathbf\Delta}\tau_{\ge 2n}(A\o_{\mathbf S}\mathrm{MU}^\bullet)\to \varprojlim_{\mathbf\Delta}A\o_{\mathbf S}\mathrm{MU}^\bullet
$$
 is  an equivalence. What we are trying to show is therefore that the map of $A$-modules
 \begin{equation}\label{Map to MU-nilpotent completion}
A\to \varprojlim_{\mathbf\Delta}A\o_{\mathbf S}\mathrm{MU}^\bullet
 \end{equation}
 is an equivalence for any almost connective $\E$-ring $A$. Since both sides commute with suspension, we may assume without loss of generality that $A$ is connective.
 In the language of \cite{Bousfield}, the right-hand side of \eqref{Map to MU-nilpotent completion} is the \textit{$\mathrm{MU}$-nilpotent completion of $A$}, which is identified by \footnote{Some equivalent versions of this result from different perspectives appear  in \cite[Corollary 5]{Torii} and \cite[Theorem 6.1]{Carelsson}.} \cite[Theorem 6.5]{Bousfield} with the Bousfield localization $L_{\mathrm{MU}}(A)\simeq L_{\mathbf S}(A)\simeq A$, thanks to the connectiveness assumption.\end{proof}

\begin{prop}\label{Prop champs affines over M}
The functor $(\CAlg^{\mathbf C\mathrm {pc}})^\mathrm{op}\to\mathrm{SpStk}_{\/\mM}$, given by $A\mapsto \Spec(A)\times \mM$,
is fully faithful. The $\E$-ring of global sections functor $\mX\mapsto \sO(\mX)$ is its left inverse.
\end{prop}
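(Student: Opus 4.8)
The statement is a near-exact analogue of Proposition~\ref{Prop champs affines} (the ``affine stacks''-like fully faithfulness for even periodization), with even periodization replaced by complex periodization $\mX\mapsto\mX\times\mM$, the subcategory $\CAlg^\mathrm{evc}$ replaced by $\CAlg^{\mathbf C\mathrm{pc}}$ (complex convergent $\E$-rings, in whose definition the even filtration is replaced by the Adams--Novikov filtration), and the target $\mathrm{SpStk}^\mathrm{evp}$ replaced by $\mathrm{SpStk}_{/\mM}$. The plan is to mimic the proof of Proposition~\ref{Prop champs affines} step by step. The only genuine input needed beyond formal nonsense is the identification of the $\E$-ring of global functions of $\Spec(A)\times\mM$, which plays the role that Corollary~\ref{Ring of functions computation} played in the even periodization case.

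\begin{proof}
First I would record the analogue of Corollary~\ref{Ring of functions computation}: for any $\E$-ring $A$ there is a canonical equivalence
$$
\sO(\Spec(A)\times \mM)\,\simeq\,\varinjlim_n \mathrm{fil}^{\ge n}_\mathrm{AN}(A).
$$
To see this, use the simplicial presentation $\Spec(A)\times\mM\simeq \varinjlim_{\mathbf\Delta^\mathrm{op}}\Spec(A\o_\mathbf S\mathrm{MUP}^\bullet)$ coming from Theorem~\ref{On M}, together with the fact that $\sO:\mathrm{SpStk}^\mathrm{op}\to\CAlg$ sends colimits of spectral stacks to limits of $\E$-rings (Example~\ref{Global functions on a stack}); this gives $\sO(\Spec(A)\times\mM)\simeq\varprojlim_{\mathbf\Delta}(A\o_\mathbf S\mathrm{MUP}^\bullet)$. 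On the other hand, by Proposition~\ref{Voila ANS} and Corollary~\ref{AN via MUP} the Adams--Novikov filtration on $A$ is $\mathrm{fil}^{\ge *}_\mathrm{AN}(A)\simeq\varprojlim_{\mathbf\Delta}\tau_{\ge 2*}(A\o_\mathbf S\mathrm{MUP}^\bullet)$, and a truncatedness argument exactly as in \cite[Remark~2.3.1]{HRW} (the one already invoked in the proof of Lemma~\ref{Lemma connective = convergent}) shows $\varinjlim_n\varprojlim_{\mathbf\Delta}\tau_{\ge 2n}(A\o_\mathbf S\mathrm{MUP}^\bullet)\simeq\varprojlim_{\mathbf\Delta}(A\o_\mathbf S\mathrm{MUP}^\bullet)$, identifying the two sides. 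In particular, for $A\in\CAlg^{\mathbf C\mathrm{pc}}$ the unit map $A\to\sO(\Spec(A)\times\mM)$ is an equivalence, by the very definition of complex convergence.

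Now the fully faithfulness follows by the same abstract argument as in Proposition~\ref{Prop champs affines}. By Proposition~\ref{complex periodic stacks} the functor $\mX\mapsto\mX\times\mM$ is right adjoint to the inclusion $\mathrm{SpStk}_{/\mM}\hookrightarrow\mathrm{SpStk}$, so for any $\E$-rings $A,B$ there are canonical equivalences
\begin{eqnarray*}
\Map_{\mathrm{SpStk}_{/\mM}}(\Spec(A)\times\mM,\ \Spec(B)\times\mM)
&\simeq& \Map_{\mathrm{SpStk}}(\Spec(A)\times\mM,\ \Spec(B))\\
&\simeq& \Map_{\CAlg}\big(B,\ \sO(\Spec(A)\times\mM)\big).
\end{eqnarray*}
When $A\in\CAlg^{\mathbf C\mathrm{pc}}$, the equivalence $\sO(\Spec(A)\times\mM)\simeq A$ from the previous paragraph turns the right-hand side into $\Map_\CAlg(B,A)$, naturally in $B$; restricting to $B\in\CAlg^{\mathbf C\mathrm{pc}}$ shows that $A\mapsto\Spec(A)\times\mM$ is fully faithful on $(\CAlg^{\mathbf C\mathrm{pc}})^\mathrm{op}$, and that $\mX\mapsto\sO(\mX)$ is a left inverse on the nose. (Formally: the unit of the adjunction $\sO\dashv\Spec(-)\times\mM$ between $\mathrm{SpStk}_{/\mM}$ and $\CAlg^\mathrm{op}$, obtained by composing the adjunction of Example~\ref{Global functions on a stack} with the one of Proposition~\ref{complex periodic stacks}, is the map $A\to\sO(\Spec(A)\times\mM)$, and $\CAlg^{\mathbf C\mathrm{pc}}$ is exactly the subcategory on which it is an equivalence, so the right adjoint is fully faithful there.)
\end{proof}

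The only step with any content is the global-functions computation in the first paragraph; everything else is the same adjunction bookkeeping as in Proposition~\ref{Prop champs affines}, and the main (very mild) obstacle is simply to make sure the truncatedness/colimit-interchange argument applies verbatim with $\mathrm{MUP}^\bullet$ in place of $\mathrm{MU}^\bullet$ and with the double-speed Postnikov tower, which it does by Corollary~\ref{AN via MUP}.
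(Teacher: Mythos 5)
Your proposal is correct and follows essentially the same route as the paper: both use the simplicial presentation $\Spec(A)\times\mM\simeq\varinjlim_{\mathbf\Delta^\mathrm{op}}\Spec(A\o_{\mathbf S}\mathrm{MUP}^\bullet)$ from Theorem~\ref{On M}, the adjunction bookkeeping to reduce to showing $A\to\sO(\Spec(A)\times\mM)$ is an equivalence, and the truncatedness argument of Lemma~\ref{Lemma connective = convergent} (with $\mathrm{MUP}$ in place of $\mathrm{MU}$) together with Corollary~\ref{AN via MUP} to identify $\sO(\Spec(A)\times\mM)\simeq\varinjlim_n\mathrm{fil}^{\ge n}_{\mathrm{AN}}(A)$, whence complex convergence finishes the argument.
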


\begin{proof}
By the simplicial presentation for the chromatic base stack $\mM$ from
Theorem \ref{On M}, we have
$$
\Spec(A)\times \mM \,\simeq \,\varinjlim_{\phantom{{}^\mathrm{op}}\mathbf\Delta^\mathrm{op}}\Spec(A\o_{\mathbf S}\mathrm{MUP}^\bullet)
$$
and so we have for any pair of $\E$-rings $A$ and $B$ natural homotopy equivalences
\begin{eqnarray*}
\Map_{\mathrm{SpStk}_{/\mM}}(\Spec(A)\times \mM, \,\Spec(B)\times \mM) &\simeq&\Map_\mathrm{SpStk}(\Spec(A)\times \mM,\, \Spec(B))\\
&\simeq & \Map_{\CAlg}(B,\, \mathcal O(\Spec(A)\times \mM)) \\
&\simeq & \Map_{\CAlg}(B, \, \varprojlim_{\mathbf\Delta}A\otimes_{\mathbf S}\mathrm{MUP}^\bullet).
\end{eqnarray*}
To prove the result, we must therefore show that the canonical map
$$
A\to \varprojlim_{\mathbf\Delta}A\otimes_{\mathbf S}\mathrm{MUP}^\bullet
$$
is an equivalence for every complex convergent $\E$-ring $A$. The same connectivity argument as we have given above in
\ref{Lemma connective = convergent}, but replacing $\mathrm{MU}$ with $\mathrm{MUP}$, combined with Corollary \ref{AN via MUP}, shows that
$$
\varprojlim_{\mathbf\Delta}A\otimes_{\mathbf S}\mathrm{MUP}^\bullet\, \simeq \, \varinjlim_n\mathrm{fil}_{\mathrm{AN}}^{\ge n}(A),
$$
from which the desired claim follows.
\end{proof}

The argument given above to prove Lemma \ref{Lemma connective = convergent} did not actually make any use of the ring structure. Together with the kind of argument we used in the proof of Proposition \ref{Prop champs affines over M}, this can be used to show:

\begin{prop}\label{On the unfortunately-named IndCoh}
Let $p:\mM\to \Spec(\mathbf S)$ be
the terminal map of spectral stacks. The quasi-coherent pullback functor $p^* :\mathrm{Sp}\to\QCoh(\mM)$ is fully faithful when restricted to the full subcategory of almost connective spectra $\mathrm{Sp}^\mathrm{acn}\subseteq\mathrm{Sp}$, with the left inverse given by the global sections functor $p_*\simeq \Gamma(\mM; -)$.
\end{prop}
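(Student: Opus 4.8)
The plan is to mimic, in the slightly more general almost-connective setting, the argument used in the proof of Proposition~\ref{Prop champs affines over M}. Concretely, recall that $p^*\simeq\zeta_!\zeta^*$ applied to $\Sp\simeq\QCoh(\Spec(\mathbf S))$ identifies $p^*$ with the functor sending a spectrum $M$ to the quasi-coherent sheaf $\widetilde M\vert_{\CAlg^{\mathbf C\mathrm p}}$ on $\mM$, so that by the limit formula \eqref{QCoh as limit} and the simplicial presentation $\mM\simeq\varinjlim_{\mathbf\Delta^\mathrm{op}}\Spec(\mathrm{MUP}^\bullet)$ of Theorem~\ref{On M}, we have
$$
p_*p^*(M)\,\simeq\,\Gamma(\mM;\,\widetilde M)\,\simeq\,\varprojlim_{\mathbf\Delta} M\otimes_{\mathbf S}\mathrm{MUP}^\bullet.
$$
So the content of the proposition is precisely that the unit map $M\to\varprojlim_{\mathbf\Delta}M\otimes_{\mathbf S}\mathrm{MUP}^\bullet$ is an equivalence for every almost connective spectrum $M$.

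First I would reduce, exactly as in the proof of Lemma~\ref{Lemma connective = convergent}, to showing this for the $\mathrm{MU}$-cobar tower: the truncatedness estimate there (the map $\tau_{\ge 2n}(M\otimes_{\mathbf S}\mathrm{MU}^\bullet)\to M\otimes_{\mathbf S}\mathrm{MU}^\bullet$ is $(2n-3)$-truncated, truncatedness passes to limits, let $n\to-\infty$) is purely about spectra and uses no ring structure, and the same sum-reindexing relating $\mathrm{MUP}$ to $\mathrm{MU}$ applies. Second, since both $M\mapsto M$ and $M\mapsto\varprojlim_{\mathbf\Delta}M\otimes_{\mathbf S}\mathrm{MU}^\bullet$ commute with suspension, I may assume $M$ connective. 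Third, for connective $M$ the right-hand side $\varprojlim_{\mathbf\Delta}M\otimes_{\mathbf S}\mathrm{MU}^\bullet$ is the $\mathrm{MU}$-nilpotent completion of the spectrum $M$ in the sense of \cite{Bousfield}, which by \cite[Theorem~6.5]{Bousfield} coincides with the Bousfield localization $L_{\mathrm{MU}}(M)$; and since $\mathrm{MU}$ is connective with $\pi_0(\mathrm{MU})\simeq\mathbf Z$, connective spectra are $\mathrm{MU}$-local (indeed $L_{\mathrm{MU}}(M)\simeq L_{\mathbf S}(M)\simeq M$ on connective spectra, just as invoked in Lemma~\ref{Lemma connective = convergent}). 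This gives the equivalence, hence full faithfulness of $p^*$ on $\Sp^\mathrm{acn}$, and identifies the left inverse as $p_*$.

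The main point to be careful about — the only place the argument is not a verbatim transcription of Lemma~\ref{Lemma connective = convergent} — is that \cite[Theorem~6.5]{Bousfield} and the nilpotent-completion formalism are stated for module spectra or for rings acting on themselves, whereas here $M$ is merely a spectrum. The fix is that $\mathrm{MU}$-nilpotent completion of an arbitrary spectrum $M$ is, by definition, the totalization of the $\mathrm{MU}$-cobar resolution $M\otimes_{\mathbf S}\mathrm{MU}^{\otimes\bullet}$, and Bousfield's comparison of this totalization with $L_{\mathrm{MU}}(M)$ for connective $M$ does not use a multiplicative structure on $M$ (it only uses that $\mathrm{MU}$ is a connective ring whose $\pi_0$ has no nontrivial idempotents, so that $L_{\mathrm{MU}}$ restricted to connective spectra is the identity); I would spell this half-sentence out explicitly, citing \cite[Theorem~6.5]{Bousfield} and the footnote-style alternate references (\cite[Corollary~5]{Torii}, \cite[Theorem~6.1]{Carelsson}) as in Lemma~\ref{Lemma connective = convergent}. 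Everything else — the reduction to $\mathrm{MU}$, the suspension reduction, the identification of $p_*p^*$ with the totalization — is formal and already appears above.
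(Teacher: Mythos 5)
Your proposal is correct and is essentially the paper's own route: the paper gives no separate proof, merely observing that the truncatedness/connectivity argument of Lemma \ref{Lemma connective = convergent} makes no use of a ring structure on the object and combining it with the kind of argument in Proposition \ref{Prop champs affines over M}, which is exactly your identification $p_*p^*(M)\simeq \varprojlim_{\mathbf\Delta}M\otimes_{\mathbf S}\mathrm{MUP}^\bullet$ via the simplicial presentation of $\mM$, followed by the suspension reduction and the appeal to \cite[Theorem 6.5]{Bousfield} for bounded-below spectra. Your closing caveat that Bousfield's comparison needs no multiplicative structure on $M$ is the same observation the paper makes just before the statement; the only point to tighten is the passage between the $\mathrm{MUP}$- and $\mathrm{MU}$-cobar totalizations, which deserves a sentence of justification (e.g.\ as the module-level analogue of Corollary \ref{AN via MUP}, or by running the truncatedness argument directly with $\mathrm{MUP}$ as the paper does) rather than being left at ``sum-reindexing.''
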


\begin{remark}
In \cite{ChromaticCartoon}, we used $\mathrm{IndCoh}(\mM)$ to denote the $\i$-category obtained by ind-completion from the thick subcategory spanned in $\QCoh(\mM)$ by the structure sheaf $\mO_{\mM}$. With this definition, it can be shown that $\mathrm{IndCoh}(\mM)\simeq \mathrm{Sp}$, and Proposition \ref{On the unfortunately-named IndCoh} show that the canonical functor
$$
\mathrm{IndCoh}(\mM)\to\QCoh(\mM)
$$
agrees on the bounded-above part of some canonical $t$-structures on both sides, and exhibits the left-hand side as the right completion of $\QCoh(\mM)$. Admittedly, this is not the \textit{quite same as} the relationship between $\QCoh$ and $\mathrm{IndCoh}$ as those are understood in and around the geometric Langlands program, e.g.\ \cite{Gaitsgory}, \cite{GaRo}, \cite{Harold},  where the latter is the left anticompletion of the former (at least for connective geometric stacks). But it is at least similar in that it is a kind of renormalization of the $t$-structure, affecting only the ``behavior at $-\infty$". Let us remark also that this sort of $t$-structure renormalization is also analogous to the one occurring in the functor $\QCoh^!$, which appears on the module-level Koszul duality in \cite[Part IV]{SAG}.
\end{remark}

\begin{remark}
To conclude this subsection, we suggest a perspective on the relationship between even periodic and complex periodic spectral stacks. A spectral stack $\mX$ possesses not only an underlying classical stack $\mX^\heart$, but in fact an \textit{underlying Dirac stack} $\mX^\mathrm{Dir}$ in the sense of \cite{Dirac II}. \textit{Dirac geometry} in this sense is roughly a $\mathbf Z$-graded analogue of the mathematical physicist's \textit{supergeometry}, where the grading is always by $\mathbf Z/2$. From this perspective, a spectral stack being complex periodic implies that its underlying Dirac stack is actually a superstack, in that the $\mathbf Z$-gradings in question are 2-periodic\footnote{We are purposefully blurring the line between strong and weak 2-periodicity here. But as we saw in the last subsection, if done correctly, that tends to not affect the resulting stacks.}, and are as such fully determined by $\mathbf Z/2$-graded variants. A complex periodic spectral stack $\mX$ is then \textit{even periodic} if the underlying superstack is in fact even, i.e.\ equal to $\mX^\heart$. The author has once heard Lars Hesselholt assert that ``evenness seems to mean coming from geometry." By the preceding discussion, this is literally true if interpreted in the context of complex periodic stacks and in the sense of ``being underlied by geometry, \textit{as opposed to} supergeomerty." From this perspective,  perhaps Proposition \ref{Prop champs affines}, which means that many object, naturally underlied by Dirac stacks, can be fully faithfully encoded by objects which are naturally underlied by even superstacks,  hints at some kind of supersymmetry phenomena.
\end{remark}

\subsection{Even periodization of topological modular forms}\label{Section TMF}
The spectrum of \textit{topological modular forms} is defined as the $\E$-ring of global functions 
$$
\mathrm{TMF}:=\sO(\mM{}_\mathrm{Ell}^\mathrm{or})
$$
on an even periodic spectral stack $\mM{}_\mathrm{Ell}^\mathrm{or}$, the \textit{moduli stack of oriented elliptic curves}; see \cite{Elliptic 2} for a modern account. In this section, we show that the process of even periodization reverses this construction, in the following sense:

\begin{theorem}\label{Theorem Evp for TMF}
There is a canonical equivalence of nonconnective spectral stacks
$$\Spec(\mathrm{TMF})^\mathrm{evp}\,\simeq\, \mM{}_\mathrm{Ell}^\mathrm{or}.$$
\end{theorem}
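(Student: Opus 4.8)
The plan is to produce a map $\Spec(\mathrm{TMF})^\mathrm{evp}\to\mM^\mathrm{or}_\mathrm{Ell}$ and show it is an equivalence by exhibiting both sides as geometric realizations of the same simplicial affine, following the strategy already used for $\Spec(\mathbf S)^\mathrm{evp}\simeq\mM$ in Corollary \ref{Even localization of the sphere} and for the $\mathrm{MU}$-computations in Proposition \ref{Evp for even affines over MU}. First, since $\mM^\mathrm{or}_\mathrm{Ell}$ is an even periodic spectral stack (it admits an \'etale cover by even periodic elliptic spectra), Proposition \ref{Even stacks inside stacks}\ref{Corollary 10, c} tells us $(\mM^\mathrm{or}_\mathrm{Ell})^\mathrm{evp}\simeq\mM^\mathrm{or}_\mathrm{Ell}$. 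The canonical map $\mathrm{TMF}=\sO(\mM^\mathrm{or}_\mathrm{Ell})$ induces a map of spectral stacks $\mM^\mathrm{or}_\mathrm{Ell}\to\Spec(\mathrm{TMF})$, hence after even periodization a map $\mM^\mathrm{or}_\mathrm{Ell}\simeq(\mM^\mathrm{or}_\mathrm{Ell})^\mathrm{evp}\to\Spec(\mathrm{TMF})^\mathrm{evp}$; the goal is to show this is inverse to the natural comparison arising from the universal property \eqref{Univ prop of evp as colocalization}.

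The key input is a good even periodic cover of $\Spec(\mathrm{TMF})$. I would choose an even periodic $\E$-ring $B$ equipped with a faithfully flat \'etale cover $\Spec(B)\to\mM^\mathrm{or}_\mathrm{Ell}$ — for instance a finite product of even periodic elliptic spectra realizing an \'etale atlas of the underlying Deligne–Mumford stack, or more robustly the $\E$-ring $\mathrm{TMF}\otimes_{\mathbf S}\mathrm{MUP}$, or a combination, chosen so that $B$ is even periodic and $\mathrm{TMF}\to B$ is eff. Composing with $\mathrm{TMF}\to\sO(\mM^\mathrm{or}_\mathrm{Ell})\to\sO(\Spec(B))=B$, we get a map $\Spec(B)\to\Spec(\mathrm{TMF})$ which I claim is periodically eff in the sense of Definition \ref{Definition periodically eff}: for any even periodic $\Spec(C)\to\Spec(\mathrm{TMF})$, i.e.\ any even periodic $\E$-$\mathrm{TMF}$-algebra $C$, we need $\Spec(B\otimes_{\mathrm{TMF}}C)\to\Spec(C)$ to be an fpqc cover in $\mathrm{Aff}^\mathrm{evp}$, which follows from flatness of $B$ over $\mathrm{TMF}$ together with the fact (Lemma \ref{Key property}) that a flat $\E$-algebra over a complex periodic ring is again complex periodic, hence even periodic since it is even. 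Granting this, Proposition \ref{Computing the evening} presents $\Spec(\mathrm{TMF})^\mathrm{evp}\simeq\varinjlim_{\mathbf\Delta^\mathrm{op}}\Spec(B^\bullet)$ with $B^\bullet=\sO(\Spec(B)^{\times_{\Spec(\mathrm{TMF})}[\bullet]})$.

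It then remains to identify this simplicial affine with the \v Cech nerve of $\Spec(B)\to\mM^\mathrm{or}_\mathrm{Ell}$, i.e.\ to show $B^\bullet\simeq\sO(\Spec(B)^{\times_{\mM^\mathrm{or}_\mathrm{Ell}}[\bullet]})$; since $\Spec(B)\to\mM^\mathrm{or}_\mathrm{Ell}$ is an fpqc cover, its \v Cech nerve has geometric realization $\mM^\mathrm{or}_\mathrm{Ell}$, finishing the argument. The crux — and the main obstacle — is the comparison of the two \v Cech nerves, which amounts to showing that the square formed by $\Spec(B)\to\mM^\mathrm{or}_\mathrm{Ell}\to\Spec(\mathrm{TMF})$ induces an equivalence $\Spec(B)\times_{\mM^\mathrm{or}_\mathrm{Ell}}\Spec(B)\simeq\Spec(B)\times_{\Spec(\mathrm{TMF})}\Spec(B)$, equivalently that $\mM^\mathrm{or}_\mathrm{Ell}\to\Spec(\mathrm{TMF})$ becomes an equivalence on the relevant fiber products. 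For this I would invoke the chromatic affineness theorem of Mathew–Meier \cite{Affineness in chromatic}: their result that $\QCoh(\mM^\mathrm{or}_\mathrm{Ell})\simeq\Mod_{\mathrm{TMF}}$ (together with its sheafy refinement) shows that the diagonal $\mM^\mathrm{or}_\mathrm{Ell}\to\mM^\mathrm{or}_\mathrm{Ell}\times\mM^\mathrm{or}_\mathrm{Ell}$ is affine and controlled by $\mathrm{TMF}$, so Lemma \ref{Lemma affine 3-1} identifies $\mM^\mathrm{or}_\mathrm{Ell}\to\Spec(\mathrm{TMF})$ as an affine morphism whose pushforward of the structure sheaf is $\mathrm{TMF}$ itself; flat base change (Proposition \ref{Prop push-pull}) along the eff cover $\Spec(B)\to\Spec(\mathrm{TMF})$ then yields $\Spec(B)\times_{\Spec(\mathrm{TMF})}\mM^\mathrm{or}_\mathrm{Ell}\simeq\Spec(B)$, which is exactly the needed comparison of \v Cech nerves. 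Assembling these pieces gives the equivalence $\Spec(\mathrm{TMF})^\mathrm{evp}\simeq\mM^\mathrm{or}_\mathrm{Ell}$, and naturality of all constructions shows it agrees with the canonical comparison map.
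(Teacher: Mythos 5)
Your overall architecture is close in spirit to the paper's (which likewise runs through the Mathew--Meier $0$-affineness theorem, the base-change formula of Proposition \ref{Prop push-pull}, and Lemma \ref{Lemma affine 3-1}), but two steps fail as written, and the first is exactly where the content of the theorem lives. Invoking Proposition \ref{Computing the evening} requires you to verify that $\Spec(B)\to\Spec(\mathrm{TMF})$ is periodically eff, i.e.\ that for \emph{every} even periodic $\E$-$\mathrm{TMF}$-algebra $C$ the base change $C\to B\otimes_{\mathrm{TMF}}C$ is faithfully flat with even periodic target. You justify this by ``flatness of $B$ over $\mathrm{TMF}$,'' but none of your candidate $B$'s is flat over $\mathrm{TMF}$: $\pi_*(\mathrm{TMF})$ has torsion while $\pi_*(B)$ does not, and $\mathrm{MUP}$ is not flat over $\mathbf S$. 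More importantly, a general even periodic $C$ over $\mathrm{TMF}$ carries no a priori elliptic curve, so the only way to control $B\otimes_{\mathrm{TMF}}C$ is to first show that $\Spec(C)\to\Spec(\mathrm{TMF})$ factors through $\mM{}^\mathrm{or}_\mathrm{Ell}$, equivalently that $\Spec(C)\times_{\Spec(\mathrm{TMF})}\mM{}^\mathrm{or}_\mathrm{Ell}\simeq\Spec(C)$; that is precisely the $0$-affineness-plus-base-change argument you run in your final paragraph, but you run it only for the single cover $\Spec(B)$, whereas the colimit presentation you want already presupposes it for all even periodic $C$. This is why the paper proves the categorical statement (its assertion $(\star)$) that affinization induces an equivalence $\mathrm{Aff}^\mathrm{evp}_{/\mM{}^\mathrm{or}_\mathrm{Ell}}\simeq\mathrm{Aff}^\mathrm{evp}_{/\Spec(\mathrm{TMF})}$ and then compares the two colimits of Proposition \ref{EVP as a colimit}, rather than computing $\Spec(\mathrm{TMF})^\mathrm{evp}$ from one chosen cosimplicial resolution.

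The second problem: $\mM{}^\mathrm{or}_\mathrm{Ell}\to\Spec(\mathrm{TMF})$ is \emph{not} an affine morphism --- its pullback along the identity of $\Spec(\mathrm{TMF})$ is $\mM{}^\mathrm{or}_\mathrm{Ell}$ itself --- and Lemma \ref{Lemma affine 3-1} cannot be applied to produce that conclusion, since $\mathrm{TMF}$ is not complex periodic and so $\Spec(\mathrm{TMF})$ does not live over $\mM$. The correct and sufficient statement, which the paper proves, is that $\mM{}^\mathrm{or}_\mathrm{Ell}\to\Spec(\mathrm{TMF})\times\mM$ is affine (Lemma \ref{Lemma affine 3-1} applied over $\mS=\mM$, using Lemma \ref{Lemma of the heart for TMF}); affineness of the pullbacks you need then follows because $\Spec(C)\times\mM\simeq\Spec(C)$ for even periodic $C$, and the identification of the pullback's ring of functions must use case (2) of Proposition \ref{Prop push-pull}, with compactness of $\sO_{\mM{}^\mathrm{or}_\mathrm{Ell}}$ supplied by $0$-affineness, not case (1). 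Note also that your fallback choice $B=\mathrm{TMF}\otimes_{\mathbf S}\mathrm{MUP}$ does not obviously admit a map to $\mM{}^\mathrm{or}_\mathrm{Ell}$ before the theorem is proved (that it does is Corollary \ref{Cor Cp for TMF}), so only the elliptic-atlas choice is viable. With these repairs your argument essentially becomes the paper's proof, applied to all even periodic affines at once rather than to one cover.
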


The proof will require some preliminary work.

\begin{lemma}\label{Lemma of the heart for TMF}
The canonical map $\mM{}_\mathrm{Ell}^\mathrm{or}\to \mM$ is an affine morphism.
\end{lemma}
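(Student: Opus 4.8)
The plan is to exhibit the map $\mM{}_\mathrm{Ell}^\mathrm{or}\to \mM$ étale-locally over $\mM$ as a map of affines and then invoke fpqc descent for affineness. Recall from Theorem \ref{On M} that $\mM$ admits a simplicial presentation $\mM\simeq \varinjlim_{\mathbf\Delta^\mathrm{op}}\Spec(\mathrm{MUP}^\bullet)$, so in particular $\Spec(\mathrm{MUP})\to\mM$ is an fpqc cover. Since affineness of a morphism can be checked fpqc-locally on the target, it suffices to show that the base-change $\Spec(\mathrm{MUP})\times_{\mM}\mM{}_\mathrm{Ell}^\mathrm{or}$ is affine. More generally, for any even periodic $\E$-ring $A$ with a map $\Spec(A)\to\mM$, one wants $\Spec(A)\times_{\mM}\mM{}_\mathrm{Ell}^\mathrm{or}$ to be affine.

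First I would recall the construction of $\mM{}_\mathrm{Ell}^\mathrm{or}$ from \cite{Elliptic 2}: it is the spectral Deligne--Mumford stack whose functor of points sends an $\E$-ring $B$ to the anima of oriented elliptic curves over $B$, i.e.\ pairs consisting of an elliptic curve $E\to \Spec(B)$ together with an equivalence of the formal completion $\widehat{E}$ with the Quillen formal group $\mathbf{G}^{\mathbf Q}_B$ of $B$ (which forces $B$ to be complex periodic). The key structural input is that for a complex periodic $\E$-ring $B$, giving a map $\Spec(B)\to\mM{}_\mathrm{Ell}^\mathrm{or}$ lifting the canonical map $\Spec(B)\to\mM$ (which classifies the Quillen formal group) is the same as giving an elliptic curve over $B$ whose formal group is identified with that classified by $\Spec(B)\to\mM$. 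Thus the fiber product $\Spec(A)\times_{\mM}\mM{}_\mathrm{Ell}^\mathrm{or}$ is precisely the moduli of elliptic curves over $A$ equipped with a chosen orientation matching the given one on $\mathbf{G}^{\mathbf Q}_A$; by the deformation-theoretic rigidity of orientations (an orientation, if it exists, is unique up to a contractible choice once the underlying formal group is pinned down), this is equivalent to the classical moduli problem of elliptic curves over $A$ whose formal group is the pullback of the universal one, base-changed appropriately. One then uses that over the Lazard ring / over $\mathrm{MUP}$ the Weierstrass Hopf algebroid presentation of the moduli stack of elliptic curves together with Landweber exactness produces an affine (or at least, that $\mM{}^\mathrm{or}_\mathrm{Ell}\times_\mM\Spec(A)$ is covered appropriately) — more precisely, Lurie's construction in \cite{Elliptic 2} shows $\mM{}^\mathrm{or}_\mathrm{Ell}\to\mM$ is \emph{relatively representable} and, being built from the Weierstrass equation, affine: the relative cotangent complex is perfect and the map is proper of relative dimension $1$... no — the cleanest route is that $\mM{}_\mathrm{Ell}^\mathrm{or}$ is by construction relatively affine over $\mM$, since the moduli stack of elliptic curves (classical) is affine over $\mathcal M^\heart_\mathrm{FG}$ in a neighborhood of each point via the Weierstrass presentation, and this affineness is inherited spectrally.

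Concretely, here is the argument I would write. The classical moduli stack $\mathcal M_\mathrm{Ell}$ of elliptic curves admits a well-known presentation: every elliptic curve is Zariski-locally a Weierstrass curve, so $\mathcal M_\mathrm{Ell}\simeq \Spec(A_\mathrm{W})/\!\!/ G$ where $A_\mathrm{W}=\mathbf Z[a_1,a_2,a_3,a_4,a_6]$ and $G$ is the group of coordinate changes; in particular $\mathcal M_\mathrm{Ell}$ has affine diagonal. By \cite{Elliptic 2}, the spectral stack $\mM{}^\mathrm{or}_\mathrm{Ell}$ likewise has affine diagonal over $\mM$ (its diagonal is the relative diagonal of elliptic curves, which is a closed immersion hence affine), and moreover $\mM{}^\mathrm{or}_\mathrm{Ell}\to\mM$ is representable. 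Now apply Lemma \ref{Lemma affine 3-1} with $\mathcal S = \mM$, $\mathcal X = \mM{}^\mathrm{or}_\mathrm{Ell}$, $\mathcal Y = \mM$: the diagonal $\mM\to\mM\times_\mM\mM$ is the identity, trivially affine, so it remains to check that $\mM{}^\mathrm{or}_\mathrm{Ell}\to\mM$ — wait, that is the map we want, circular. Instead I would apply Lemma \ref{Lemma affine 3-1} in the other direction: take $\mathcal S=\Spec(\mathbf S)$, $\mathcal X=\mM{}^\mathrm{or}_\mathrm{Ell}$, $\mathcal Y=\mM$; then $\Delta:\mM\to\mM\times\mM$ is affine (Theorem \ref{On M} gives $\mM$ geometric, and one checks its diagonal is affine — the fiber product of two affines over $\mM$ is affine since $\mathrm{MUP}^{\otimes\bullet}$ are affine), and $\mM{}^\mathrm{or}_\mathrm{Ell}\to\Spec(\mathbf S)$ is... not affine. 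So that doesn't directly work either. The honest approach: show directly that $\Spec(A)\times_\mM\mM{}^\mathrm{or}_\mathrm{Ell}$ is affine for $A$ even periodic, by identifying it as the spectral stack corepresented by the $\E$-ring carrying the universal oriented elliptic curve over $A$ — this exists and is affine because, Zariski-locally on the classical moduli of elliptic curves, the curve is Weierstrass and the Landweber exact functor theorem (applied over $A$, which is even periodic hence Landweber-exact-friendly) produces a flat even periodic $\E$-$A$-algebra whose $\Spec$ is the relevant chart; gluing the finitely many Weierstrass charts and using that $\mathcal M_\mathrm{Ell}$ is \emph{affine over $\mathcal M^\heart_\mathrm{FG}$} (a classical fact — the Weierstrass presentation realizes $\mathcal M_\mathrm{Ell}\to\mathcal M^\heart_\mathrm{FG}$ as an affine morphism onto the open locus of formal groups arising from elliptic curves) gives the result.

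The main obstacle, and the step I would need to be most careful with, is justifying that $\mM{}^\mathrm{or}_\mathrm{Ell}\to\mM$ is affine by reducing to the classical statement that $\mathcal M_\mathrm{Ell}\to\mathcal M^\heart_\mathrm{FG}$ is affine. The reduction requires knowing that $\mM{}^\mathrm{or}_\mathrm{Ell}$ is, relative to $\mM$, the spectral thickening of $\mathcal M_\mathrm{Ell}$ along its map to $\mathcal M^\heart_\mathrm{FG}$ — i.e.\ that $\mM{}^\mathrm{or}_\mathrm{Ell}\simeq \mathcal M_\mathrm{Ell}\times_{\mathcal M^\heart_\mathrm{FG}}\mM$ in an appropriate sense, which is essentially the content of Lurie's orientation classification (orientations are determined by the underlying classical elliptic curve over a complex periodic base, with no higher data). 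Granting that, base-change of an affine morphism is affine, so $\mM{}^\mathrm{or}_\mathrm{Ell}\to\mM$ is affine. I expect this identification — and the accompanying verification that the relevant fiber products are computed correctly in $\mathrm{SpStk}$, using Proposition \ref{Prop push-pull} for the push-pull compatibility — to be the technical heart of the argument, with the rest being formal manipulation of affine morphisms via their closure under composition and base-change.
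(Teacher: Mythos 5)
Your opening move coincides with the paper's: since $\Spec(\MUP)\to\mM$ is an fpqc cover and affineness of a morphism is fpqc-local, it suffices to show that $\mM{}_\mathrm{Ell}^\mathrm{or}\times_{\mM}\Spec(\MUP)$ is affine, and your final input is the same classical fact that $\mM{}_\mathrm{Ell}^\heart\to\mM{}^\heart_\mathrm{FG}$ is an affine morphism. The genuine gap is the bridge between the spectral and the classical statement, which you yourself flag as the unhandled "technical heart": you propose to cross it via an identification $\mM{}^\mathrm{or}_\mathrm{Ell}\simeq \mM{}_\mathrm{Ell}^\heart\times_{\mM{}^\heart_\mathrm{FG}}\mM$, justified by a claimed rigidity of orientations ("no higher data"). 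As stated this is unproven and at least as strong as the lemma. The rigidity claim conflates two things: for a complex periodic $\E$-ring $B$, an orientation of a formal group $\w{\G}$ over $B$ is equivalent to the datum of an equivalence $\w{\G}\simeq \w{\G}{}^{\mathcal Q}_B$, and the space of such equivalences is not contractible; moreover a $B$-point of $\mM{}^\mathrm{or}_\mathrm{Ell}$ involves a \emph{spectral} elliptic curve over $B$ together with such an equivalence of \emph{spectral} formal groups, and the assertion that this anima is discrete and recovered from classical data over $\pi_0(B)$ is precisely the sort of relative-affineness statement you are trying to prove, so the route is essentially circular. (Also, the fiber product you write needs interpreting, since there is no map $\mM\to\mM{}^\heart_\mathrm{FG}$ in $\mathrm{SpStk}$ without the compression of Remark \ref{Remark compression}; and two smaller slips: the diagonal of the moduli of elliptic curves is affine but not a closed immersion, since elliptic curves have automorphisms, and both of your attempted uses of Lemma \ref{Lemma affine 3-1} are, as you note, dead ends.)

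The paper crosses this bridge by a mechanism your proposal is missing, and adopting it repairs the argument. Because a map to $\mM$ is essentially unique when it exists, the fiber product is an absolute product, $\mM{}_\mathrm{Ell}^\mathrm{or}\times_{\mM}\Spec(\MUP)\simeq \mM{}_\mathrm{Ell}^\mathrm{or}\times\Spec(\MUP)$; nonconnective spectral Deligne--Mumford stacks are closed under products in $\mathrm{SpStk}$, so this is a nonconnective spectral Deligne--Mumford stack, and by \cite[Corollary 1.4.7.3]{SAG} such a stack is affine if and only if its underlying classical stack is. The heart is then computed fpqc-locally: the connective localization of the product is $(\mM{}_\mathrm{Ell}^\mathrm{or})^\mathrm{cn}\times_{\mM^\mathrm{cn}}\Spec(\tau_{\ge 0}(\MUP))$, and since $(-)^\heart$ preserves limits of connective spectral stacks, the underlying classical stack is $\mM{}_\mathrm{Ell}^\heart\times_{\mM{}^\heart_\mathrm{FG}}\Spec(L)$ with $L$ the Lazard ring, which is affine by the classical statement you quote. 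This route never needs to know that orientations carry no higher homotopical data: all spectral information is discarded by the Deligne--Mumford affineness criterion rather than by a moduli-theoretic identification of $\mM{}^\mathrm{or}_\mathrm{Ell}$ with a base change of the classical stack.
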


\begin{proof}
According to Theorem \ref{On M},
the chromatic base stack $\mM$ is geometric, with an fpqc cover $\Spec(\mathrm{MUP})\to \mM$ obtained by fixing an $\E$-ring structure on $\mathrm{MUP}$. Since affineness is an fpqc-local property, it suffices to show that the pullback $\mM{}_\mathrm{Ell}^\mathrm{or}\times_{\mM}\Spec(\MUP)$ is an affine nonconnective spectral scheme. 
 Since maps to the chromatic base stack are essentially unique if they exist, we have for any $\mX, \mY\in \mathrm{SpStk}_{/\mM}$ that the canonical map
 $$
\mX\times_{\mM}\mY\to\mX\times \mY
 $$
 is an equivalence of nonconnective spectral stacks, for instance by Proposition \ref{complex periodic stacks}. In particular, we have
 $$
 \mM{}_\mathrm{Ell}^\mathrm{or}\times_{\mM}\Spec(\MUP)\,\simeq\, \mM_\mathrm{Ell}^\mathrm{or}\times \Spec(\MUP).
 $$
Nonconnective spectral Deligne-Mumford stacks are closed under products in the presheaf $\i$-category $\mP(\mathrm{Aff})$   via the functor of points embedding. Since the inclusion $\mathrm{SpStk}\subseteq\mP(\mathrm{Aff})$ preserves products, it follows that nonconnective spectral Deligne-Mumford stacks are also closed under products in $\mathrm{SpStk}$. The pullback under consideration $\mM{}_\mathrm{Ell}^\mathrm{or}\times_{\mM}\Spec(\MUP)$ may consequently be identified with a nonconnective spectral Deligne-Mumford stack. By \cite[Corollary 1.4.7.3]{SAG}, a nonconnective spectral Deligne-Mumford stack $\mX$ is affine if and only if its underlying classical Deligne-Mumford stack $\mX^\heart$ is. We are therefore reduced to showing that the underlying classical stack $(\mathcal M_\mathrm{Ell}^\mathrm{or}\times_{\mM}\Spec(\MUP))^\heart$ is affine.

To that end, we first determine its connective localization.
Since $\Spec(\mathrm{MUP})\to \mM$ is an fpqc cover, and the connective localization may also be computed fpqc locally as discussed in Example \ref{connective localization for geometric stacks}, we obtain a canonical equivalence
$$
(\mM{}_\mathrm{Ell}^\mathrm{or}\times_{\mM}\Spec(\MUP))^\mathrm{cn} \,\simeq \,(\mM{}_\mathrm{Ell}^\mathrm{or})^\mathrm{cn}\times_{\mM^\mathrm{cn}}\Spec(\tau_{\ge 0}(\MUP)).
$$
These are all \textit{connective} spectral stacks, so the passage to the underlying classical stacks $\mX\mapsto \mX^\heart$ preserves limits by Remark \ref{Remark issues of the heart}. We thus obtain the underlying classical stack as the pullback
$$
(\mM{}_\mathrm{Ell}^\mathrm{or}\times_{\mM}\Spec(\MUP))^\heart\, \simeq \,(\mM{}_\mathrm{Ell}^\mathrm{or})^\heart\times_{\mM^\heart}\Spec(\pi_{ 0}(\MUP)),
$$
and we must identified the underlying classical stacks of the factors. For the affine factor, we have $\Spec(\pi_0(\MUP))\simeq \Spec(L)$ for the Lazard ring $L$ directly by Quillen's famous computation,  e.g.\ \cite[Theorem 5.3.10]{Elliptic 2}. For $\mM$, its underlying moduli stack is the classical moduli stack of formal groups $\mM_\mathrm{FG}^\heart$ by Theorem \ref{On M} (though the proof in \cite{ChromaticCartoon} is also a direct application of Quillen's Theorem).
Finally for $\mM{}_\mathrm{Ell}^\mathrm{or}$,  its underlying classical stack is the classical stack of elliptic curves $\mM{}^\heart_\mathrm{Ell}$. This follows either directly construction in the original Goerss-Hopkins-Miller approach, or is once again derived from Quillen's Theorem in Lurie's approach \cite[Theorem 7.0.1, Remark 7.3.2]{Elliptic 2}. For the pullback in question, we therefore find its underlying classical stack given by
$$
(\mM{}_\mathrm{Ell}^\mathrm{or}\times_{\mM}\Spec(\MUP))^\heart \,\simeq\, \mathcal M^\heart_\mathrm{Ell}\times_{\mathcal M_\mathrm{FG}^\heart}\Spec(L).
$$
That this is an affine scheme now follows from the classical observation that the canonical map of stacks $\mM^\heart_\mathrm{Ell}\to \mM^\heart_\mathrm{FG}$, sending an elliptic curve to its corresponding formal group, is an affine morphism, see for instance \cite[proof of Theorem 7.2.(1)]{Affineness in chromatic}
\end{proof}

As a consequence of Lemma \ref{Lemma of the heart for TMF}, we can reprove the celebrated $0$-affineness result of \cite[Theorem 7.2]{Affineness in chromatic}. There are no key new insights - we   carry out the idea of the original proof in \textit{loc.\,cit.} in the setting of nonconnective spectral stacks. We therefore only highlight that part of the argument where
the ability to talk about objects such as the chromatic base stack $\mM$  might aid  in  conceptual clarity.

For this purpose, let $p$ be a prime and $n$ a non-negative integer, and recall from \cite{ChromaticFiltration} the open substack of its $p$-localization $\mM{}^{\le n}\subseteq \mM_{(p)}$, given as a functor $\mM{}^{\le n} :\CAlg{}_{(p)}\to \mathrm{Ani}$ explicitly by
$$
\mathcal M{}^{\le n}(A) \simeq \begin{cases} * & \text{if $A$ is complex periodic and $L_n$-local,}\\ \emptyset & \text{otherwise.} \end{cases}
$$
The $L_n$-locality can also be equivalently rephrased as requiring that the classical Quillen formal group $\w{\G}{}^{\mathcal Q}_A$ be of height $\le n$. This spectral stack was denoted $\mathcal M_\mathrm{FG}^{\mathrm{or}, \le n}$ in \cite{ChromaticFiltration} and called the \textit{moduli stack of oriented formal groups of height $\le n$}. It is also shown there that as $n$ varies, the resulting open stratification of the chromatic base stack $\mM$ induces on $(-)^\heart$ the height stratification on the classical stack of formal groups $\mM{}^\heart_\mathrm{FG}$, and induces on functions (or more generaly, on sections of a quasi-coherent sheaf) the chromatic filtration.

\begin{theorem}[Mathew-Meier]\label{Theorem affineness MM}
The global sections $\Gamma:\QCoh(\mM{}_\mathrm{Ell}^{\mathrm{or}})\to \Mod_\mathrm{TMF}$ are a symmetric monoidal equivalence of $\i$-categories.
\end{theorem}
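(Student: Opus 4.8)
The plan is to reprove Mathew--Meier's $0$-affineness theorem by running the argument of \cite{Affineness in chromatic}, but reorganized so that the combinatorics of the proof is controlled by the height stratification of the chromatic base stack $\mM$. First I would record a preliminary: by Lemma \ref{Lemma of the heart for TMF} the map $\mM{}_\mathrm{Ell}^\mathrm{or}\to\mM$ is affine, and $\mM$ itself has affine diagonal (trivially, since $\mM\times\mM\simeq\mM$ by Proposition \ref{complex periodic stacks}, so $\Delta_\mM=\mathrm{id}$); combining these with Lemma \ref{Lemma affine 3-1} shows $\mM{}_\mathrm{Ell}^\mathrm{or}$ has affine diagonal, hence every fpqc cover of it from an affine has affine \v{C}ech nerve. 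Now $\Gamma$ is the right adjoint of the symmetric monoidal pullback $p^*\colon\Mod_\mathrm{TMF}\simeq\QCoh(\Spec(\mathrm{TMF}))\to\QCoh(\mM{}_\mathrm{Ell}^\mathrm{or})$ along the canonical map $p\colon\mM{}_\mathrm{Ell}^\mathrm{or}\to\Spec(\mathrm{TMF})$; so once $\Gamma$ is shown to be an equivalence its symmetric monoidality is automatic, and it suffices to check that the unit and counit of $(p^*,\Gamma)$ are equivalences. By the usual Barr--Beck-type manipulations (as in \cite[Section 3]{Affineness in chromatic}), using affineness of the diagonal, this reduces to the single assertion that $\sO_{\mM{}_\mathrm{Ell}^\mathrm{or}}$ is \emph{descendable} along some affine fpqc cover, i.e.\ that $\QCoh$ satisfies effective descent with a finite exponent along it.

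Next I would produce a good cover, cutting $\mM{}_\mathrm{Ell}^\mathrm{or}$ up using the pullback of the open substacks $\mM^{\le n}\subseteq\mM$ along the affine map $\mM{}_\mathrm{Ell}^\mathrm{or}\to\mM$. Working one prime $p$ at a time, the preimage of $\mM^{\le 1}$ is the ``ordinary locus,'' and its complement is the supersingular locus (a finite set of points lying over $\mM^{\le 2}\smallsetminus\mM^{\le 1}$ for $p\in\{2,3\}$, and empty for $p\ge 5$); a formal neighborhood of the supersingular locus together with the ordinary locus is an open cover of $\mM{}_{\mathrm{Ell},(p)}^\mathrm{or}$. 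On the ordinary locus I would descend along an auxiliary full level-$\ell$ structure with $\ell\ge 3$ coprime to $p$: the stack $\mM{}_\mathrm{Ell}^\mathrm{or}(\ell)$ is affine, its ring of functions $\mathrm{TMF}(\ell)$ being an even periodic elliptic spectrum built from Landweber exactness, and $\mM{}_\mathrm{Ell}^\mathrm{or}(\ell)\to\mM{}_\mathrm{Ell}^\mathrm{or}$ is a torsor under the finite group $G=\mathrm{GL}_2(\mathbf Z/\ell)$ over a suitable localization, so $0$-affineness of this piece is a case of Galois descent $\QCoh([\Spec(R)/G])\simeq\Mod_R(\Sp^{\mathrm BG})\simeq\Mod_{R^{\mathrm hG}}$, which holds because a faithful finite Galois extension of $\E$-rings is descendable. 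A formal neighborhood of a supersingular point is handled the same way, now with Lubin--Tate theory: the elliptic spectrum there is a Morava $E$-theory $E_2$ with an action of a finite subgroup $G$ of the Morava stabilizer group, and $\QCoh([\Spec(E_2)/G])\simeq\Mod_{E_2^{\mathrm hG}}$ is again descendability of a finite Galois extension (Devinatz--Hopkins). Finally I would glue: $0$-affineness propagates along such open covers provided the intersection is $0$-affine and the gluing is compatible (as in \cite[Section 6]{Affineness in chromatic}), and the finitely many $p$-local statements are assembled with the rational one --- which is formal, everything splitting over $\mathbf Q$ --- via the arithmetic fracture square, itself a finite limit of $0$-affine stacks.

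The hard part will be the descendability input at the small primes $p=2,3$: verifying that $\sO_{\mM{}_\mathrm{Ell}^\mathrm{or}}$ along $\Spec(\mathrm{TMF}(\ell))\to\mM{}_\mathrm{Ell}^\mathrm{or}$, and along the analogous Lubin--Tate covers near the supersingular points, is descendable with finite exponent. This is exactly the place where \cite{Affineness in chromatic} does genuine homotopy-theoretic work --- extracting nilpotence and thick-subcategory statements from the structure of $\pi_*\mathrm{TMF}$ and from $K(2)$-local / Devinatz--Hopkins theory --- and I would import that essentially unchanged. Everything else is a formal manipulation of fracture squares, open covers and \v{C}ech nerves inside $\mathrm{SpStk}$; the only genuinely new feature of the present account is that the decomposition into ``ordinary'' and ``supersingular'' pieces, and the recognition of the local pieces as height-$\le 1$ and height-$2$ phenomena, is produced cleanly by pulling back the height filtration $\mM^{\le 1}\subseteq\mM$ along $\mM{}_\mathrm{Ell}^\mathrm{or}\to\mM$, so that the chromatic filtration on $\mathrm{TMF}$ is visibly the shadow of this geometry on global sections.
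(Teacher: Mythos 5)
Your route is genuinely different from the paper's, and it is also a great deal heavier than what is actually needed. The paper's proof is three steps: reduce to the $p$-local statement; observe that $\mM{}_\mathrm{Ell}^\mathrm{or}\to\mM$ is affine (Lemma \ref{Lemma of the heart for TMF}) and, since elliptic formal groups have height $\le 2$, that the $p$-localized map factors through the open substack $\mM^{\le 2}\subseteq\mM_{(p)}$; then use that an affine morphism $f$ identifies $\QCoh(\mM{}_{\mathrm{Ell},(p)}^\mathrm{or})$ with $\Mod_{f_*(\sO)}(\QCoh(\mM^{\le 2}))$ and that $\mM^{\le 2}$ is \emph{already} $0$-affine with $\sO(\mM^{\le 2})\simeq L_2\mathbf S$ by \cite[Theorem 3.3.1]{ChromaticFiltration} --- the spectral-stack rephrasing of the Hopkins--Ravenel smash product theorem --- so that $0$-affineness is simply inherited along the affine map after identifying $\Gamma(\mM^{\le 2};f_*\sO)\simeq\mathrm{TMF}_{(p)}$. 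No ordinary/supersingular decomposition, no level structures, no Lubin--Tate or fracture-square gluing appears, and this is also essentially how Mathew--Meier argue: the nonformal input there is the smash product theorem (descendability of $L_n\mathbf S\to E_n$), not nilpotence statements extracted from $\pi_*\mathrm{TMF}$, so your description of what you intend to ``import essentially unchanged'' does not match what is actually in \cite{Affineness in chromatic}.

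Beyond being the long way around, your plan has two concrete soft spots. First, the Galois-descent step is circular as written: that $\mathrm{TMF}[1/\ell]\to\mathrm{TMF}(\ell)$ is a \emph{faithful} $\mathrm{GL}_2(\mathbf Z/\ell)$-Galois extension is, in the sources you are implicitly relying on, deduced as a \emph{consequence} of the $0$-affineness theorem, so you cannot invoke ``faithful finite Galois extensions are descendable'' as an input without either importing an independent descendability proof (e.g.\ Mathew's direct descent-up-to-nilpotence computations for level structures, which do use $\pi_*\mathrm{tmf}$) or arguing on the stack itself: the finite \'etale surjection $\Spec(\mathrm{TMF}(\ell))\to\mM{}_\mathrm{Ell}^\mathrm{or}[1/\ell]$ has dualizable pushforward and conservative pullback, hence descendable structure sheaf --- and note that once you have this you cover the \emph{entire} $\ell$-inverted stack, so the ordinary/supersingular split becomes unnecessary. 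Second, ``a formal neighborhood of the supersingular locus together with the ordinary locus'' is not an open cover --- a formal neighborhood is not an open substack --- so the gluing must be run through a chromatic/arithmetic fracture square for $\QCoh$, and checking that $0$-affineness passes through such a square is genuine work that is not ``as in'' \cite{Affineness in chromatic}, whose Theorem 7.2 is not proved by any such decomposition. Your first-paragraph reduction (affine diagonal plus a descendable affine fpqc cover implies $0$-affineness) is fine; it is the supply of the cover and the gluing where the plan, as written, does not yet close.
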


\begin{proof}
By repeating the arguments in \cite[Subsection 4.3]{Affineness in chromatic}, we can reduce to showing the claim after $p$-localization at each prime $p$.
The canonical map $\mM{}_\mathrm{Ell}^\mathrm{or}\to \mM$ is an affine morphism by Lemma \ref{Lemma of the heart for TMF}, and hence so is its $p$-localization.
But in fact, by the classical fact that the height of the formal group of an elliptic curve must always be $\le 2$, we see that the map $\mM{}_{\mathrm{Ell}, (p)}^\mathrm{or}\to \mM{}_{(p)}$ actually factors through the open embedding $\mM{}^{\le 2}\subseteq\mM{}_{(p)}$. We thus obtain an affine morphism 
$$
f:\mM{}_{\mathrm{Ell}, (p)}^\mathrm{or}\to\mM{}^{\le 2},
$$
and so the pushforward $f_*$ induces a symmetric monoidal equivalence of $\i$-categories
$$
\QCoh(\mM{}_{\mathrm{Ell}, (p)}^\mathrm{or})\to \Mod_{f_*(\sO_{\mM{}_{\mathrm{Ell}, (p)}^\mathrm{or}})}(\QCoh(\mM^{\le 2})).
$$
According to \cite[Theorem 3.3.1]{ChromaticFiltration},  the global sections functor
$$
\Gamma:\QCoh(\mM^{\le 2})\to\Mod_{L_2\mathbf S}\,\simeq\, L_2\Sp,
$$
where $L_2\mathbf S\simeq \sO(\mM^{\le 2})$, is a symmetric monoidal equivalence
-- this is nothing but the spectral-algebro-geometric rephrasing of the Hopkins-Ravenel Smash Product Theorem. 
By passing to global sections, we obtain the equivalences of $\E$-rings
$$
\Gamma(\mM^{\le 2}; f_*(\sO_{\mM{}_{\mathrm{Ell}, (p)}^\mathrm{or}}))\,\simeq\, \Gamma(\mM{}_{\mathrm{Ell}, (p)}^\mathrm{cn};  \sO_{\mM{}_{\mathrm{Ell}, (p)}^\mathrm{or}})\,\simeq \,\mathrm{TMF}_{(p)}
$$
and
the desired result follows.
\end{proof}

\begin{proof}[Proof of Theorem \ref{Theorem Evp for TMF}]
The desired identification of the even periodization will follow immediately from Proposition \ref{EVP as a colimit} if we prove the following claim:
\begin{enumerate}[label = ($\star$)]
\item\label{Assertion star} \textit{Pullback along the affinization map   $\mM{}_\mathrm{Ell}^\mathrm{or}\to \Spec(\mathrm{TMF})$ induces an equivalence of $\i$-categories
$$
\mathrm{Aff}^\mathrm{evp}_{/\mM{}_\mathrm{Ell}^\mathrm{or}}\,\simeq \,\mathrm{Aff}^\mathrm{evp}_{/\Spec(\mathrm{TMF})}
$$
compatible with the forgetful functor to $\mathrm{Aff}$.}
\end{enumerate}
The remainder of this proof is devoted to proving this assertion.

As a relatively affine stack over the geometric spectral stack $\mM$, the moduli stack of oriented formal groups $\mM{}_\mathrm{Ell}^\mathrm{or}$ is geometric itself. This implies that any map from an affine $f:\Spec(A)\to \mM{}_\mathrm{Ell}^\mathrm{or}$ is also an affine morphism. By pursuing the chain of fully faithful functors
$$
\mathrm{Aff}_{/\mM{}_\mathrm{Ell}^\mathrm{or}}\hookrightarrow \CAlg(\QCoh(\mM{}_\mathrm{Ell}^\mathrm{or}))^\mathrm{op}\simeq \CAlg(\Mod_{\mathrm{TMF}})^\mathrm{op}\simeq\mathrm{Aff}_{/\Spec(\mathrm{TMF})}, 
$$
the last of which is the equivalence of Theorem \ref{Theorem affineness MM}, we find that sending  a map from an affine $f:\Spec(A)\to \mM{}_\mathrm{Ell}^\mathrm{or}$ to its affinization $f^\mathrm{aff}:\Spec(A)\to (\mM{}_\mathrm{Ell}^\mathrm{or})^\mathrm{aff}\simeq \Spec(\mathrm{TMF})$ induces a fully faithful embedding
$$
\mathrm{Aff}_{/\mM{}_\mathrm{Ell}^\mathrm{or}}\hookrightarrow \mathrm{Aff}_{/\Spec(\mathrm{TMF})}.
$$
This embedding is compatible with the forgetful functor to $\mathrm{Aff}$, so it restricts to a fully faithful embedding of the subcategories of even periodic affines
\begin{equation}\label{Equation ff map of evens over TMF}
\mathrm{Aff}^\mathrm{evp}_{/\mM{}_\mathrm{Ell}^\mathrm{or}}\hookrightarrow \mathrm{Aff}^\mathrm{evp}_{/\Spec(\mathrm{TMF})}.
\end{equation}
To establish assertion \ref{Assertion star}, it remains to show that this functor is essentially surjective.

Before tackling that, let us equip ourselves with another useful observation.
Consider the commutative diagram
$$
\begin{tikzcd}
\mM{}^\mathrm{or}_\mathrm{Ell}\arrow{rr} \arrow{dr} & & \mM\times \Spec(\mathrm{TMF})\arrow{dl}{}\\
& \mM, &\end{tikzcd}
$$
where the horizontal map is induced by the universal property of a product from the canonical map into $\mM$ and the affinization map $\mM{}^\mathrm{or}_\mathrm{Ell}\to \Spec(\mathrm{TMF})$. The left diagonal map is an affine morphism by Lemma \ref{Lemma of the heart for TMF}. The right (anti)diagonal map is also an affine morphism, since it is obtained by pullback from the affine map $\Spec(\mathrm{TMF})\to\Spec(\mathbf S)$. We may now invoke Lemma \ref{Lemma affine 3-1} to conclude that the horizontal map, which is to say $\mM{}^\mathrm{or}_\mathrm{Ell}\to\Spec(\mathrm{TMF})\times \mM$, is also an affine morphism.

Now let $\Spec(A)\to \Spec(\mathrm{TMF})$ be any morphism from an even periodic affine. The $\E$-ring $A$ is  even periodic and in particular complex orientable, so the projection map $\Spec(A)\times \mM\to \Spec(A)$ is an equivalence of nonconnective spectral stacks; this follows directly from the definition of $\mM$, but is also a special case of Proposition \ref{complex periodic stacks}.
Consider the following diagram of pullback squares
$$
\begin{tikzcd}
\Spec(A)\times_{\Spec(\mathrm{TMF})}\mM{}_\mathrm{Ell}^\mathrm{or}\arrow{r}{}\arrow{d}&\Spec(A)\times \mM\arrow{r}{\simeq} \arrow{d}{} & \Spec(A)\arrow{d}{}\\
\mM{}^\mathrm{or}_\mathrm{Ell} \arrow{r}{} &\Spec(\mathrm{TMF})\times \mM\arrow{r}{} & \Spec(\mathrm{TMF}).
\end{tikzcd}
$$
As we have observed above, the bottom left horizontal arrow of this diagram is an affine morphism. This means that the upper left horizontal arrow, which is its pullback, is an affine morphism too.  Its codomain is the affine  $\Spec(A)\times \mM\simeq \Spec(A)$, and since an affine morphism into an affine must itself be affine, we can conclude that
$$
\Spec(A)\times_{\Spec(\mathrm{TMF})}\mM{}_\mathrm{Ell}^\mathrm{or}\,\simeq\, \Spec(B).
$$
Eliminating the redundancy from the above diagram, we can rewrite it as
$$
\begin{tikzcd}
\Spec(B)\arrow{r}{f'} \arrow{d}{g'} & \Spec(A)\arrow{d}{g}\\
\mM{}^\mathrm{or}_\mathrm{Ell}\arrow{r}{f} & \Spec(\mathrm{TMF}).
\end{tikzcd}
$$
It is still a pullback square. Note that both $\mathcal M{}^{\mathrm{or}}_\mathrm{Ell}$ and $\Spec(\mathrm{TMF})$ are geometric spectral stacks, and it follows from Theorem \ref{Theorem affineness MM} that the unit $\sO_{\mM{}^{\mathrm{or}}_\mathrm{Ell}}$ is a compact object in $\QCoh(\mM{}^{\mathrm{or}}_\mathrm{Ell})$. We may therefore apply the base-change formula of Proposition \ref{Prop push-pull} to see that the canonical map of $A$-modules
$$
A\simeq g^*(\mathrm{TMF})\,\simeq\, g^*f_*(\sO_{\mM{}^\mathrm{or}_\mathrm{Ell}})\to f'_*g'^*(\sO_{\mM{}^\mathrm{or}_\mathrm{Ell}})\,\simeq\, f'_*(B)\,\simeq\, B
$$
is an equivalence. In particular, since it is identified with $A$, the $\E$-ring $B$ is even periodic. The thus-obtained object $\Spec(B)\to \mM{}^\mathrm{or}_\mathrm{Ell}$ of $\mathrm{Aff}^\mathrm{evp}_{/\mM{}^\mathrm{or}_\mathrm{Ell}}$ therefore maps
to the chosen object $\Spec(A)\to \Spec(\mathrm{TMF})$ in $\mathrm{Aff}^\mathrm{evp}_{/\Spec(\mathrm{TMF})}$ under the functor \eqref{Equation ff map of evens over TMF}. Since the latter object was arbitrary, It follows that the fully faithful embedding \eqref{Equation ff map of evens over TMF} is also essentially surjective and hence an equivalence of $\i$-categories, verifying assertion \ref{Assertion star} and concluding the proof.
\end{proof}

As a corollary of the proof of Theorem \ref{Theorem Evp for TMF}, we obtain a stronger version of the result.

\begin{corollary}\label{Cor Cp for TMF}
The affinization map $\mM{}^{\mathrm{or}}_\mathrm{Ell}\to \Spec(\mathrm{TMF})$ induces an equivalence of nonconnective spectral stacks on the complex periodization
$$
\Spec(\mathrm{TMF})\times \mM\,\simeq\,
\mM{}^{\mathrm{or}}_\mathrm{Ell}.
$$
\end{corollary}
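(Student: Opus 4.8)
The plan is to transplant the proof of Theorem \ref{Theorem Evp for TMF} to the complex periodic setting, the key point being that the argument there used the evenness of the test ring $A$ only to conclude that $A$ is complex orientable, and this conclusion is equally available for any complex periodic $A$. The one structural input we need beyond that proof is the complex periodic analogue of the colimit description of Proposition \ref{EVP as a colimit}, namely
$$
\mX \times \mM \,\simeq\, \varinjlim_{\Spec(A) \in \mathrm{Aff}^{\mathbf C\mathrm p}_{/\mX}} \Spec(A),
$$
which follows from Proposition \ref{complex periodic stacks} in the same way that Proposition \ref{EVP as a colimit} follows from Proposition \ref{Even stacks inside stacks}: the fully faithful embedding $\zeta_!\colon \mathrm{SpStk}_{/\mM}\hookrightarrow\mathrm{SpStk}$ preserves small colimits, the $\i$-category $\mathrm{Shv}^\mathrm{acc}_\mathrm{fpqc}(\mathrm{Aff}^{\mathbf C\mathrm p})\simeq\mathrm{SpStk}_{/\mM}$ is generated under colimits by $\mathrm{Aff}^{\mathbf C\mathrm p}$, and $\zeta_!\zeta^*\mX\simeq\mX\times\mM$ by \eqref{cp localization}.

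First I would prove the complex periodic analogue of assertion \ref{Assertion star}: pullback along the affinization map $\mM{}_\mathrm{Ell}^\mathrm{or}\to\Spec(\mathrm{TMF})$ induces an equivalence of $\i$-categories $\mathrm{Aff}^{\mathbf C\mathrm p}_{/\mM{}_\mathrm{Ell}^\mathrm{or}}\simeq\mathrm{Aff}^{\mathbf C\mathrm p}_{/\Spec(\mathrm{TMF})}$ compatible with the forgetful functor to $\mathrm{Aff}$. Full faithfulness is immediate from the proof of Theorem \ref{Theorem Evp for TMF}: the fully faithful embedding $\mathrm{Aff}_{/\mM{}_\mathrm{Ell}^\mathrm{or}}\hookrightarrow\mathrm{Aff}_{/\Spec(\mathrm{TMF})}$ constructed there using Theorem \ref{Theorem affineness MM} is compatible with the forgetful functor to $\mathrm{Aff}$, hence restricts to a fully faithful embedding on the full subcategories of complex periodic affines, exactly as it did on the even periodic ones in \eqref{Equation ff map of evens over TMF}. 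For essential surjectivity, given an arbitrary morphism $\Spec(A)\to\Spec(\mathrm{TMF})$ with $A$ complex periodic, the ring $A$ is complex orientable, so $\Spec(A)\times\mM\to\Spec(A)$ is an equivalence by Proposition \ref{complex periodic stacks}; since $\mM{}_\mathrm{Ell}^\mathrm{or}\to\Spec(\mathrm{TMF})\times\mM$ is an affine morphism (established in the proof of Theorem \ref{Theorem Evp for TMF} via Lemma \ref{Lemma of the heart for TMF} and Lemma \ref{Lemma affine 3-1}), the same pullback diagram shows that $\Spec(A)\times_{\Spec(\mathrm{TMF})}\mM{}_\mathrm{Ell}^\mathrm{or}\simeq\Spec(B)$ is affine, and the base-change formula of Proposition \ref{Prop push-pull} — applicable because $\sO_{\mM{}_\mathrm{Ell}^\mathrm{or}}$ is compact in $\QCoh(\mM{}_\mathrm{Ell}^\mathrm{or})$ by Theorem \ref{Theorem affineness MM} — identifies $B\simeq A$, so $B$ is complex periodic and $\Spec(B)\to\mM{}_\mathrm{Ell}^\mathrm{or}$ is the required preimage.

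Finally I would conclude by chaining the colimit presentations:
$$
\Spec(\mathrm{TMF})\times\mM \,\simeq\, \varinjlim_{\Spec(A)\in\mathrm{Aff}^{\mathbf C\mathrm p}_{/\Spec(\mathrm{TMF})}}\Spec(A) \,\simeq\, \varinjlim_{\Spec(B)\in\mathrm{Aff}^{\mathbf C\mathrm p}_{/\mM{}_\mathrm{Ell}^\mathrm{or}}}\Spec(B) \,\simeq\, \mM{}_\mathrm{Ell}^\mathrm{or}\times\mM \,\simeq\, \mM{}_\mathrm{Ell}^\mathrm{or},
$$
where the last equivalence holds because $\mM{}_\mathrm{Ell}^\mathrm{or}$ is even periodic, hence complex periodic, hence lies over $\mM$, so that part \ref{Corollary 100, c} of Proposition \ref{complex periodic stacks} applies; and the composite equivalence is compatible with the affinization map because the middle equivalence is. I do not expect a genuine obstacle here: the substance of the corollary is precisely the observation that the proof of Theorem \ref{Theorem Evp for TMF} nowhere truly used evenness, only complex orientability, so the work consists of checking that each step survives the replacement of $\CAlg^\mathrm{evp}$ by $\CAlg^{\mathbf C\mathrm p}$; the only point deserving explicit mention is the colimit presentation of complex periodization recalled above.
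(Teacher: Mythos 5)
Your proposal is correct and follows essentially the same route as the paper, whose proof is precisely "replace every occurrence of even periodic by complex periodic in the proof of Theorem \ref{Theorem Evp for TMF}, obtain the complex periodic analogue of assertion \ref{Assertion star}, and pass to colimits"; you have simply spelled out those details, including the colimit presentation of $\mX\times\mM$ from Proposition \ref{complex periodic stacks}. (Only a cosmetic remark: in the essential surjectivity step, the equivalence $\Spec(A)\times\mM\simeq\Spec(A)$ uses complex periodicity of $A$, not just complex orientability, but your $A$ is complex periodic by hypothesis so nothing is lost.)
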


\begin{proof}
We can replace all occurrences of ``even periodic" in the proof of Theorem \ref{Theorem Evp for TMF} with ``complex periodic" without affecting any change in any of the arguments. The analogue of assertion \ref{Assertion star} is then a canonical equivalence
$$
\mathrm{Aff}^{\mathbf C\mathrm p}_{/\mM{}_\mathrm{Ell}^\mathrm{or}}\simeq \mathrm{Aff}^{\mathbf C\mathrm p}_{/\Spec(\mathrm{TMF})},
$$
compatible with the forgetful functors to $\mathrm{Aff}$, from which the desired identification follows by passing to the colimits.
\end{proof}

\begin{remark}
By passing to the underlying classical stacks from the equivalence of spectral stacks of Corollary \ref{Cor Cp for TMF}, we can extract a presentation of the classical moduli stack of elliptic curves as a groupoid quotient stack
$$
\mM_\mathrm{Ell}^\heart \,\simeq\, \varinjlim_{\phantom{{}^\mathrm{op}}\mathbf{\Delta}^\mathrm{op}}\Spec(\pi_0(\mathrm{MUP}^\bullet\otimes\mathrm{TMF})),
$$
where we have set $\mathrm{MUP}^\bullet := \mathrm{MUP}^{\otimes_{\mathbf S}[\bullet]}$. Such a presentation is well-known to the experts; it can be inferred from \cite[Section 20]{Rezk notes} or \cite[Section 5.1]{Akhil on tmf}, and is explicitly stated in \cite[Example 6.6.1]{Behrens survey}.
\end{remark}

Another form of topological modular forms is obtained as the global function $\E$-ring
$$
\mathrm{Tmf}:=\sO(\overline{\mM}{}^\mathrm{or}_\mathrm{Ell})
$$
on an even periodic spectral enhancement $\overline{\mM}{}^\mathrm{or}_\mathrm{Ell}$ of the Deligne-Mumford compactification $\overline{\mM}{}^\heart_\mathrm{Ell}$ of the classical stack of elliptic curves $\mM{}^\heart_\mathrm{Ell}$. The map $\overline{\mM}{}^\heart_\mathrm{Ell}\to \mM_\mathrm{FG}^\heart$ is no longer affine, hence neither can $\overline{\mM}{}^\mathrm{or}_\mathrm{Ell}\to \mM$ be, so the above arguments do not apply to this case directly. However it is still \textit{quasi-affine}, which is sufficient. A straightforward modification of the above proofs of Theorem \ref{Theorem Evp for TMF} and Corollary \ref{Cor Cp for TMF} gives rise to the following result,  compatible with the $0$-affineness of $\overline{\mM}{}^\mathrm{or}_\mathrm{Ell}$ by \cite[Theorem 7.2]{Affineness in chromatic}.

\begin{corollary}
The affinization map
$$
\overline{\mM}{}^\mathrm{or}_\mathrm{Ell}\to \Spec(\mathrm{Tmf})
$$
exhibits both the even and complex periodization 
$$
 \Spec(\mathrm{Tmf})^\mathrm{evp}\,\simeq\, \Spec(\mathrm{Tmf})\times \mM
\,\simeq\,
\overline{\mM}{}^\mathrm{or}_\mathrm{Ell}
.
$$
\end{corollary}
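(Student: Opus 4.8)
The plan is to run the proofs of Theorem~\ref{Theorem Evp for TMF} and Corollary~\ref{Cor Cp for TMF} essentially verbatim, weakening ``affine morphism'' throughout to ``quasi-affine morphism'' wherever a structure map of $\overline{\mM}{}^\mathrm{or}_\mathrm{Ell}$ appears. Three facts make this possible and should be recorded first: (i) quasi-affine morphisms, like affine ones, are closed under composition and base change and can be checked fpqc-locally, and the proof of Lemma~\ref{Lemma affine 3-1} goes through word for word to give its quasi-affine analogue; (ii) a nonconnective spectral Deligne--Mumford stack is quasi-affine if and only if its underlying classical Deligne--Mumford stack is, by the quasi-affine variant of \cite[Corollary 1.4.7.3]{SAG}; and (iii) the $0$-affineness of $\overline{\mM}{}^\mathrm{or}_\mathrm{Ell}$, that is \cite[Theorem 7.2]{Affineness in chromatic} (compare Theorem~\ref{Theorem affineness MM}), still gives a symmetric monoidal equivalence $\Gamma\colon\QCoh(\overline{\mM}{}^\mathrm{or}_\mathrm{Ell})\simeq\Mod_{\mathrm{Tmf}}$, so in particular $\sO_{\overline{\mM}{}^\mathrm{or}_\mathrm{Ell}}$ is compact in $\QCoh(\overline{\mM}{}^\mathrm{or}_\mathrm{Ell})$ and the base-change formula of Proposition~\ref{Prop push-pull}(2) remains available. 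Note also that, $\overline{\mM}{}^\mathrm{or}_\mathrm{Ell}\to\mM$ being quasi-affine hence separated and $\mM$ having affine diagonal, $\overline{\mM}{}^\mathrm{or}_\mathrm{Ell}$ itself has affine diagonal, so maps from affines into it are affine morphisms.

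First I would prove the quasi-affine analogue of Lemma~\ref{Lemma of the heart for TMF}: the canonical map $\overline{\mM}{}^\mathrm{or}_\mathrm{Ell}\to\mM$ is quasi-affine. As there, fix an $\E$-ring structure on $\mathrm{MUP}$, check the claim after the fpqc base change along $\Spec(\mathrm{MUP})\to\mM$, identify $\overline{\mM}{}^\mathrm{or}_\mathrm{Ell}\times_{\mM}\Spec(\mathrm{MUP})\simeq\overline{\mM}{}^\mathrm{or}_\mathrm{Ell}\times\Spec(\mathrm{MUP})$ with a nonconnective spectral Deligne--Mumford stack, reduce by (ii) to its underlying classical stack $\overline{\mM}{}^\heart_\mathrm{Ell}\times_{\mathcal M^\heart_\mathrm{FG}}\Spec(L)$, and invoke the classical fact, recorded in the proof of \cite[Theorem 7.2]{Affineness in chromatic}, that $\overline{\mM}{}^\heart_\mathrm{Ell}\to\mathcal M^\heart_\mathrm{FG}$ is quasi-affine. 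Feeding this into the quasi-affine version of Lemma~\ref{Lemma affine 3-1}, exactly as in the body of the proof of Theorem~\ref{Theorem Evp for TMF}, shows $\overline{\mM}{}^\mathrm{or}_\mathrm{Ell}\to\Spec(\mathrm{Tmf})\times\mM$ is quasi-affine.

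With these in place one establishes the counterpart of assertion ($\star$) from the proof of Theorem~\ref{Theorem Evp for TMF}: the affinization map induces an equivalence $\mathrm{Aff}^\mathrm{evp}_{/\overline{\mM}{}^\mathrm{or}_\mathrm{Ell}}\simeq\mathrm{Aff}^\mathrm{evp}_{/\Spec(\mathrm{Tmf})}$ over $\mathrm{Aff}$, and likewise with $\mathrm{evp}$ replaced by $\mathbf C\mathrm p$. Full faithfulness is unchanged: since maps from affines into $\overline{\mM}{}^\mathrm{or}_\mathrm{Ell}$ are affine, the chain $\mathrm{Aff}_{/\overline{\mM}{}^\mathrm{or}_\mathrm{Ell}}\hookrightarrow\CAlg(\QCoh(\overline{\mM}{}^\mathrm{or}_\mathrm{Ell}))^\mathrm{op}\simeq\CAlg(\Mod_{\mathrm{Tmf}})^\mathrm{op}\simeq\mathrm{Aff}_{/\Spec(\mathrm{Tmf})}$ --- the middle equivalence by (iii) --- restricts to a fully faithful embedding on the even (resp.\ complex) periodic subcategories. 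For essential surjectivity, given an even periodic $\Spec(A)\to\Spec(\mathrm{Tmf})$, set $\mX':=\Spec(A)\times_{\Spec(\mathrm{Tmf})}\overline{\mM}{}^\mathrm{or}_\mathrm{Ell}$ and observe, using $\Spec(A)\times\mM\simeq\Spec(A)$, that it is the base change of $\overline{\mM}{}^\mathrm{or}_\mathrm{Ell}\to\Spec(\mathrm{Tmf})\times\mM$, hence that $f'\colon\mX'\to\Spec(A)$ is quasi-affine by the previous paragraph; the base-change formula (iii) then identifies $f'_*\sO_{\mX'}\simeq A$, hence $\sO(\mX')\simeq A$ compatibly with the $\mathrm{Tmf}$-algebra structures. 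Thus $\mX'$ is a quasi-compact open subscheme $j\colon\Spec(A)\setminus Z\hookrightarrow\Spec(A)$ whose \emph{derived} ring of global sections is all of $A$; the cofiber sequence $\mathrm{R}\Gamma_Z(\sO_{\Spec(A)})\to\sO_{\Spec(A)}\to j_*\sO_{\mX'}$ forces $\mathrm{R}\Gamma_Z(A)\simeq 0$, and since the homotopy groups of $A$ are flat over $\pi_0(A)$ this reduces to $\mathrm{R}\Gamma_Z(\pi_0(A))\simeq 0$, which by the usual depth computation forces $Z=\emptyset$, i.e.\ $\mX'\simeq\Spec(A)$. This $\Spec(A)\simeq\mX'\to\overline{\mM}{}^\mathrm{or}_\mathrm{Ell}$ is then an even periodic affine lying, compatibly, over $\Spec(A)\to\Spec(\mathrm{Tmf})$; the identical argument with ``complex periodic'' in place of ``even periodic'' handles $\mathrm{Aff}^{\mathbf C\mathrm p}$. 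Passing to colimits via Proposition~\ref{EVP as a colimit} and its complex-periodic analogue, and using that $\overline{\mM}{}^\mathrm{or}_\mathrm{Ell}$, being even periodic, coincides with both its even periodization (Proposition~\ref{Even stacks inside stacks}) and its complex periodization (Proposition~\ref{complex periodic stacks}), yields all three equivalences of the statement.

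The main obstacle is precisely the final step of essential surjectivity: in Theorem~\ref{Theorem Evp for TMF} one concludes immediately that $\mX'$ is affine (an affine morphism into an affine), whereas here one only gets that $\mX'$ is quasi-affine, and a proper quasi-affine open $U\subsetneq\Spec(A)$ can have $\pi_0$ of its ring of functions equal to $\pi_0(A)$ (the $\mathbf A^2\setminus 0$ phenomenon). What rescues the argument is that the base-change formula supplies an equivalence on the \emph{full} $\E$-ring of global sections, not merely on $\pi_0$, which rules out such open subschemes via the vanishing of local cohomology along the complement. One should of course double-check that quasi-affine morphisms genuinely enjoy the closure and locality properties of (i) and the variant of \cite[Corollary 1.4.7.3]{SAG} of (ii) in the nonconnective spectral setting, but these are routine.
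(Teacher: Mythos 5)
Your proposal is correct and follows essentially the same route as the paper, which explicitly treats this corollary as a straightforward modification of the proofs of Theorem~\ref{Theorem Evp for TMF} and Corollary~\ref{Cor Cp for TMF} in which ``affine'' is weakened to ``quasi-affine'' and the $0$-affineness of $\overline{\mM}{}^\mathrm{or}_\mathrm{Ell}$ from \cite[Theorem 7.2]{Affineness in chromatic} is invoked. In fact you supply more detail than the paper does at the one genuinely new point (ruling out a proper quasi-compact open $\mX'\subsetneq\Spec(A)$ via vanishing of $\mathrm{R}\Gamma_Z$); just note that since $\pi_0(A)$ need not be Noetherian, the phrase ``usual depth computation'' is better replaced by the observation that $\mathrm{R}\Gamma_Z(A)\simeq 0$ forces the derived Koszul quotient $A/\!\!/(f_1,\dots,f_n)$, being complete, to vanish, which is impossible unless $Z=\emptyset$.
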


The proof of Theorem \ref{Theorem Evp for TMF} given above would also apply to the even periodic spectral stack $\Spec(\mathrm{KU})/\mathrm C_2$, where  the quotient is taken with respect to complex conjugation. Indeed, the underlying map on classical stacks of the structure map $\Spec(\mathrm{KU})/\mathrm C_2\to \mM$ is the map $\mathrm B\mathrm C_2\to \mathcal M_{\mathrm{FG}}^\heart$, exhibiting the inversion isogeny on the multiplicative formal group $\w{\mathbf G}_{m, \mathbf Z}$, is an open immersion, \cite[Example 6.1]{Affineness in chromatic}. Since the $\E$-ring of global sections is given by real topological K-theory $\mathrm{KU}^{\mathrm h\mathrm C_2}\simeq \mathrm{KO}$, we obtain the following identification of another even periodization:

\begin{prop}
The vector space complexification map $\mathrm{KO}\to\mathrm{KU}$ induces an equivalence of nonconnective spectral stacks
$$
\mathrm{Spec}(\mathrm{KO})^\mathrm{evp}\,\simeq \,\mathrm{Spec}(\mathrm{KO})\times \mM\, \simeq \, \Spec(\mathrm{KU})/\mathrm C_2,
$$
where the $\mathrm C_2$-action on $\mathrm{KU}$ is given by complex conjugation.
\end{prop}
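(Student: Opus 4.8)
The plan is to mirror the proof of Theorem~\ref{Theorem Evp for TMF}, in the quasi-affine variant that was used for $\mathrm{Tmf}$, replacing $\mathrm{TMF}$ by $\mathrm{KO}$ and $\mM{}^{\mathrm{or}}_\mathrm{Ell}$ by $\Spec(\mathrm{KU})/\mathrm C_2$ throughout. The three standard inputs one needs are: (i) the classical identification $\sO(\Spec(\mathrm{KU})/\mathrm C_2)\simeq \mathrm{KU}^{\mathrm h\mathrm C_2}\simeq \mathrm{KO}$; (ii) the $0$-affineness of $\Spec(\mathrm{KU})/\mathrm C_2$ over $\Spec(\mathbf S)$ --- i.e.\ that $\Gamma$ is a symmetric monoidal equivalence $\QCoh(\Spec(\mathrm{KU})/\mathrm C_2)\xrightarrow{\sim}\Mod_{\mathrm{KO}}$, equivalently the descendability of the $\mathrm C_2$-Galois extension $\mathrm{KO}\to\mathrm{KU}$, which is the analogue for $\mathrm{KO}$ of Theorem~\ref{Theorem affineness MM} and is due to Mathew--Meier \cite{Affineness in chromatic} --- a consequence of which is that $\sO_{\Spec(\mathrm{KU})/\mathrm C_2}$ is a compact object of $\QCoh(\Spec(\mathrm{KU})/\mathrm C_2)$; and (iii) the fact, recorded in \cite[Example 6.1]{Affineness in chromatic}, that the underlying classical map of $\Spec(\mathrm{KU})/\mathrm C_2\to\mM$ is the open immersion $\mathrm B\mathrm C_2\hookrightarrow\mathcal M^\heart_\mathrm{FG}$ classifying $\w{\mathbf G}_{m,\mathbf Z}$ with its inversion action. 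Since $\Spec(\mathrm{KU})/\mathrm C_2$ is a nonconnective spectral Deligne--Mumford stack (being the quotient of an affine by a finite group), the argument of Lemma~\ref{Lemma of the heart for TMF} promotes quasi-affineness of this map from the heart to the spectral level.

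Granting these, the steps would be as follows. First, $\Spec(\mathrm{KU})/\mathrm C_2$ is a geometric nonconnective spectral stack with affine diagonal, so every map from an affine into it is an affine morphism; combined with (ii) and the relative-spectrum antiequivalence \eqref{Relative spec def}, the chain of fully faithful functors
\[
\mathrm{Aff}_{/\Spec(\mathrm{KU})/\mathrm C_2}\hookrightarrow \CAlg(\QCoh(\Spec(\mathrm{KU})/\mathrm C_2))^{\mathrm{op}}\simeq \CAlg(\Mod_{\mathrm{KO}})^{\mathrm{op}}\simeq \mathrm{Aff}_{/\Spec(\mathrm{KO})}
\]
induced by the affinization $\Spec(\mathrm{KU})/\mathrm C_2\to\Spec(\mathrm{KO})$ is fully faithful and compatible with the forgetful functors to $\mathrm{Aff}$, hence restricts to a fully faithful $\mathrm{Aff}^\mathrm{evp}_{/\Spec(\mathrm{KU})/\mathrm C_2}\hookrightarrow \mathrm{Aff}^\mathrm{evp}_{/\Spec(\mathrm{KO})}$. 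For essential surjectivity, given a map from an even periodic affine $\Spec(A)\to\Spec(\mathrm{KO})$, the $\E$-ring $A$ is complex orientable so $\Spec(A)\times\mM\simeq\Spec(A)$; I would then form the pullback $\Spec(A)\times_{\Spec(\mathrm{KO})}\Spec(\mathrm{KU})/\mathrm C_2$, use (iii) together with Lemma~\ref{Lemma affine 3-1} to see that $\Spec(\mathrm{KU})/\mathrm C_2\to\Spec(\mathrm{KO})\times\mM$ is quasi-affine, so that this pullback is a quasi-affine nonconnective spectral stack over the affine $\Spec(A)$, and finally invoke the base-change formula of Proposition~\ref{Prop push-pull} (applicable since $\sO_{\Spec(\mathrm{KU})/\mathrm C_2}$ is compact by (ii)) to identify the $\E$-ring of functions of this pullback with $A$ --- whence the pullback is $\Spec(A)$ and $A$ is forced to be even periodic. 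This establishes the even periodic analogue of assertion~\ref{Assertion star}, and Proposition~\ref{EVP as a colimit} then gives $\Spec(\mathrm{KO})^\mathrm{evp}\simeq\Spec(\mathrm{KU})/\mathrm C_2$. Repeating the same argument with ``complex periodic'' in place of ``even periodic'' yields $\Spec(\mathrm{KO})\times\mM\simeq\Spec(\mathrm{KU})/\mathrm C_2$; alternatively, since $\mathrm{KO}\otimes_{\mathbf S}\mathrm{MU}$ is even (as follows from the splitting $\mathrm{MU}\otimes_\mathbf S\mathrm{KU}\simeq (\mathrm{MU}\otimes_\mathbf S\mathrm{KO})\oplus\Sigma^2(\mathrm{MU}\otimes_\mathbf S\mathrm{KO})$), the second equivalence follows at once from Proposition~\ref{EVPL vs CPL}.

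The hard part is entirely in the two geometric inputs imported from \cite{Affineness in chromatic}: the $0$-affineness of $\Spec(\mathrm{KU})/\mathrm C_2$ (i.e.\ Galois descent along $\mathrm{KO}\to\mathrm{KU}$, which is what makes $\sO$ a compact generator and is genuinely non-formal) and the open-immersion statement $\mathrm B\mathrm C_2\hookrightarrow\mathcal M^\heart_\mathrm{FG}$. Once those are quoted, the only adaptation over the $\mathrm{TMF}$ case is that the maps to $\mM$ and to $\Spec(\mathrm{KO})$ are merely quasi-affine rather than affine --- a point already dealt with in the $\mathrm{Tmf}$ modification of the proof of Theorem~\ref{Theorem Evp for TMF}: one checks quasi-affineness on underlying classical Deligne--Mumford stacks and then feeds it, together with compactness of the structure sheaf, into Proposition~\ref{Prop push-pull} in the pullback-square step. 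I do not anticipate any new difficulties beyond this bookkeeping.
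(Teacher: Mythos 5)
Your proposal is correct and is essentially the paper's own argument: the paper gives no separate proof here, simply asserting that the proof of Theorem \ref{Theorem Evp for TMF} applies to $\Spec(\mathrm{KU})/\mathrm C_2$ once one quotes the Mathew--Meier inputs (the $0$-affineness coming from Galois descent along $\mathrm{KO}\to\mathrm{KU}$, the classical identification of $\mathrm B\mathrm C_2\to\mathcal M^\heart_\mathrm{FG}$ from \cite[Example 6.1]{Affineness in chromatic}, and $\mathrm{KU}^{\mathrm h\mathrm C_2}\simeq\mathrm{KO}$), which is exactly what you carry out. The only cosmetic differences are your quasi-affine bookkeeping (the map in question is in fact affine over $\mM$, so even the unmodified $\mathrm{TMF}$ argument runs) and your alternative derivation of the complex-periodization identification via evenness of $\mathrm{MU}\otimes_{\mathbf S}\mathrm{KO}$ and Proposition \ref{EVPL vs CPL}, neither of which changes the substance.
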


The same kinds of arguments as used in the proof of Theorem \ref{Theorem Evp for TMF} also apply -- and in fact substantially simplify -- to identify  the even and complex periodizations of the spectrum of the $L_n$-local sphere spectrum. They recover the open substack $\mM{}^{\le n}\subseteq\mM{}_{(p)}$ of the $p$-localized chromatic base stack, corresponding to all $L_n$-local even periodic $\E$-rings. We had studied this stack in \cite{ChromaticFiltration} under the name $\mM{}^{\mathrm{or}, \le n}_\mathrm{FG}$.

\begin{prop}\label{Variant EVP of L_nS}
There are canonical equivalence of spectral stacks
$$
\Spec(L_n\mathbf S)^\mathrm{evp}\,\simeq\, \Spec(L_n\mathbf S)\times \mM\,\simeq\, \mM{}^{\le n}.
$$
\end{prop}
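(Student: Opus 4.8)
The plan is to reduce both equivalences to the formalism already established, using the spectral stack $\mM^{\le n}$ as an intermediary and exploiting that it is completely transparent as a functor of points. Throughout, fix a prime $p$, let $L_n$ denote the $E(n)$-localization at $p$, and regard $\mM^{\le n}$ as a nonconnective spectral stack $\CAlg\to\mathrm{Ani}$ via the displayed formula --- this makes sense on all of $\CAlg$, since any $L_n$-local $\E$-ring is automatically $p$-local. The only substantive external input is the Hopkins--Ravenel Smash Product Theorem, in the form $\sO(\mM^{\le n})\simeq L_n\mathbf S$ and $\QCoh(\mM^{\le n})\simeq L_n\Sp$ recalled from \cite{ChromaticFiltration}; equivalently, the statement that $L_n$ is a smashing localization.

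First I would record two preliminary facts. (i) The stack $\mM^{\le n}$ is even periodic: it is an open substack of $\mM_{(p)}$, which carries an fpqc atlas $\Spec(\mathrm{MUP}_{(p)})$ by an even periodic $\E$-ring, and even periodicity is preserved by Zariski localizations (inverting $f\in\pi_0$ preserves evenness and weak $2$-periodicity), so $\mM^{\le n}$ is generated under small colimits by even periodic affines. By the description of the essential image of even periodization in Proposition~\ref{Even stacks inside stacks} this gives $(\mM^{\le n})^\mathrm{evp}\simeq\mM^{\le n}$. (ii) Since $L_n$ is smashing, every $L_n\mathbf S$-module is $L_n$-local, so every $\E$-algebra over $L_n\mathbf S$ is an $L_n$-local $\E$-ring; conversely, if $B$ is an $L_n$-local $\E$-ring then the unit $\mathbf S\to B$ factors uniquely through $\mathbf S\to L_n\mathbf S$, and this factorization is automatically a ring map because $\mathbf S\to L_n\mathbf S$ is an $\E$-epimorphism ($L_n\mathbf S\otimes_\mathbf S L_n\mathbf S\simeq L_n\mathbf S$ by smashing). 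Hence for any complex periodic $\E$-ring $B$ the anima $\Map_{\CAlg}(L_n\mathbf S,B)$ is contractible when $B$ is $L_n$-local and empty otherwise --- which is exactly the value $\mM^{\le n}(B)$.

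Now I would identify $\Spec(L_n\mathbf S)\times\mM\simeq\mM^{\le n}$. Both of these spectral stacks lie in $\mathrm{SpStk}_{/\mM}$: the stack $\mM^{\le n}$ because it classifies complex periodic $L_n$-local $\E$-rings, and $\Spec(L_n\mathbf S)\times\mM=\zeta_!\zeta^*\Spec(L_n\mathbf S)$ because $\mX\times\mM$ always lies there, by \eqref{cp localization} in the notation of the proof of Proposition~\ref{complex periodic stacks}. Since $\zeta_!$ is fully faithful there, an object of $\mathrm{SpStk}_{/\mM}$ is determined by its restriction to $\CAlg^{\mathbf C\mathrm p}$; and $\Spec(L_n\mathbf S)\times\mM$ and $\mM^{\le n}$ have the same restriction, namely the functor $B\mapsto\Map_{\CAlg}(L_n\mathbf S,B)$ of (ii). Thus $\Spec(L_n\mathbf S)\times\mM\simeq\mM^{\le n}$, compatibly with the structure maps to $\Spec(L_n\mathbf S)$. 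Combining this with the Adams--Novikov descent equivalence \eqref{Novikov descent via stacks} and with (i) then yields
\[
\Spec(L_n\mathbf S)^\mathrm{evp}\,\simeq\,(\Spec(L_n\mathbf S)\times\mM)^\mathrm{evp}\,\simeq\,(\mM^{\le n})^\mathrm{evp}\,\simeq\,\mM^{\le n},
\]
again compatibly with the affinization maps, which is the asserted chain of equivalences.

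Granting the Smash Product Theorem, the argument is entirely formal --- no $0$-affineness statement and no base-change computation are needed, which is the promised ``substantial simplification'' over the $\mathrm{TMF}$ case of Theorem~\ref{Theorem Evp for TMF}. Alternatively, one can run the $\mathrm{TMF}$-style argument of that proof verbatim: the required equivalence $\mathrm{Aff}^\mathrm{evp}_{/\mM^{\le n}}\simeq\mathrm{Aff}^\mathrm{evp}_{/\Spec(L_n\mathbf S)}$, compatible with the forgetful functor to $\mathrm{Aff}$, is here immediate from the functor of points of $\mM^{\le n}$ together with the identification of $\CAlg_{L_n\mathbf S}$ with the $L_n$-local $\E$-rings from (ii), after which Proposition~\ref{EVP as a colimit} concludes. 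The one genuine bookkeeping point --- the implicit prime $p$ and the extension of $\mM^{\le n}$ to a functor on all of $\CAlg$ --- is handled at the outset, and the only place where any mathematical depth enters is (ii), i.e.\ the use of the Smash Product Theorem to recognize $\E$-algebras over $L_n\mathbf S$ as precisely the $L_n$-local $\E$-rings.
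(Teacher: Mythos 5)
Your proof is correct. Both key claims hold: (i) $\mM^{\le n}$ is covered by even periodic affines (basic opens of $\Spec(\mathrm{MUP}_{(p)})$, or equivalently Landweber-exact even periodic atlases as in the cited earlier work), so it is fixed by even periodization via Proposition~\ref{Even stacks inside stacks}; and (ii) since $L_n$ is smashing, $L_n\mathbf S$ is an idempotent $\E$-algebra, so $\Map_{\CAlg}(L_n\mathbf S,B)$ is empty or contractible according to whether $B$ is $L_n$-local, which matches the functor of points of $\mM^{\le n}$ on complex periodic rings. Combined with full faithfulness of $\zeta_!$ from Proposition~\ref{complex periodic stacks} and the Novikov-descent equivalence \eqref{Novikov descent via stacks}, the chain of canonical equivalences follows.

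The paper itself gives no written proof: it only says that the argument of Theorem~\ref{Theorem Evp for TMF} applies and "substantially simplifies," i.e.\ one is meant to replay the $(\star)$-style identification of $\mathrm{Aff}^\mathrm{evp}_{/\mM^{\le n}}$ with $\mathrm{Aff}^\mathrm{evp}_{/\Spec(L_n\mathbf S)}$, with the $0$-affineness input being $\QCoh(\mM^{\le n})\simeq L_n\Sp$ (the Hopkins--Ravenel theorem in stacky form). Your primary route is organized differently: you first pin down the complex periodization $\Spec(L_n\mathbf S)\times\mM\simeq\mM^{\le n}$ purely on functors of points, using idempotency of $L_n\mathbf S$ rather than $0$-affineness or any base-change argument, and then deduce the even periodization from Novikov descent plus (i). This buys a completely formal argument whose only deep input is the Smash Product Theorem in its "smashing" guise, whereas the paper's intended route invokes the same theorem through the categorified statement $\QCoh(\mM^{\le n})\simeq \Mod_{L_n\mathbf S}$; your closing remark that the $(\star)$-argument also goes through verbatim, with the slice-category equivalence immediate from (ii), is exactly the simplification the paper is alluding to. The only points worth flagging are bookkeeping ones you already acknowledge: extending $\mM^{\le n}$ from $\CAlg_{(p)}$ to all of $\CAlg$, and spelling out that the height-$\le n$ locus in the atlas is covered by affine basic opens (it is only quasi-affine itself).
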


\begin{remark}
In \cite{ChromaticFiltration} we showed that the spectral stacks $\mM^{\le n}$ fit into an open stratification, the spectral enhancement of the height stratification of formal groups,
$$
\mM_{\mathbf Q}\,\simeq\, \mM{}^{\le 0}\, \subseteq \,
\mM{}^{\le 1}\subseteq\mM{}^{\le 2}\subseteq \cdots \subseteq \mM{}^{\le \infty}\subseteq\mM{}_{(p)},
$$
which recovers on the $\E$-rings of global functions the chromatic filtration
$$
\mathbf S_{(p)}\,\simeq\, \varprojlim_n L_n \mathbf S\to \cdots \to L_2\mathbf S\to L_1S\to L_0\mathbf S\,\simeq\, \mathbf Q.
$$
It is now clear that we can pass  back to the spectral height stratification from the chromatic filtration by applying either of the functors $A\mapsto \Spec(A)^\mathrm{evp}$ or $A\mapsto \Spec(A)\times \mM$.
\end{remark}

Thanks to the celebrated solution of Ravenel's Telescopic Conjecture in \cite{Telescope Disproved}, we now know that the canonical map $L_n^f\mathbf S\to L_n\mathbf S$ is not an equivalence for all $n$. Nevertheless, he failure of the telescope conjecture is not visible on the level of the even and complex periodizations.

\begin{corollary}\label{Corollary how L_n from L_n^f}
There are canonical equivalences of spectral stacks
$$
\Spec(L_n^f\mathbf S)^\mathrm{evp}\,\simeq \, \Spec(L_n^f\mathbf S)\times \mM\, \simeq \, \mM^{\le n}.
$$
\end{corollary}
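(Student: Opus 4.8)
The plan is to bootstrap from Proposition \ref{Variant EVP of L_nS}, which already gives $\Spec(L_n\mathbf S)^\mathrm{evp}\simeq\Spec(L_n\mathbf S)\times\mM\simeq\mM{}^{\le n}$; the only genuinely new content is that neither even nor complex periodization can detect the difference between $L_n^f\mathbf S$ and $L_n\mathbf S$. First I would treat complex periodization. The ring map $L_n^f\mathbf S\to L_n\mathbf S$ induces a map of nonconnective spectral stacks $\mM{}^{\le n}\simeq\Spec(L_n\mathbf S)\times\mM\to\Spec(L_n^f\mathbf S)\times\mM$, and since both of these stacks lie in $\mathrm{SpStk}_{/\mM}$, Proposition \ref{complex periodic stacks} reduces the problem to checking that this map becomes an equivalence after restriction to $\CAlg^{\mathbf C\mathrm p}$. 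Evaluated at a complex periodic $\E$-ring $A$ (so that $\mM(A)\simeq *$), the target is $\Map_\CAlg(L_n^f\mathbf S,A)$, which — since $L_n^f\mathbf S$ is the idempotent $\E$-algebra of a smashing localization, so $\mathbf S\to L_n^f\mathbf S$ is an $\E$-ring epimorphism — is contractible when $A$ is $L_n^f$-local and empty otherwise; the source $\mM{}^{\le n}(A)$ is contractible when $A$ is complex periodic and $L_n$-local, i.e.\ (as $A$ is already complex periodic) exactly when $A$ is $L_n$-local. Thus the map is an equivalence on $\CAlg^{\mathbf C\mathrm p}$ precisely if a complex periodic $\E$-ring is $L_n^f$-local if and only if it is $L_n$-local.

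That biconditional is the heart of the matter. One direction is formal: a finite complex of type $n+1$ is $E(n)$-acyclic, so the localizing ideal it generates consists of $E(n)$-acyclic spectra, whence every $L_n$-local ($=E(n)$-local) spectrum is $L_n^f$-local. For the converse I would use that a complex periodic — in particular complex orientable — $\E$-ring $A$ carries a map of homotopy ring spectra $\mathrm{MU}\to A$, so its underlying spectrum is a module over $\mathrm{MU}$; the telescope conjecture holds for $\mathrm{MU}$-modules, i.e.\ $L_n^f M\simeq L_n M$ for every $\mathrm{MU}$-module $M$ (the classical, true case, since the smashing localizations of $\mathrm{MU}$-modules are governed by $\mathrm{MU}_*$, and $L_n^f\mathrm{MU}\simeq L_n\mathrm{MU}$ by Landweber exactness), and the finite and $E(n)$-localizations of $A$ as a spectrum agree with those computed in $\mathrm{MU}$-modules. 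Hence an $L_n^f$-local complex periodic $A$ satisfies $A\simeq L_n^f A\simeq L_n A$, so it is $L_n$-local. This establishes $\Spec(L_n^f\mathbf S)\times\mM\simeq\mM{}^{\le n}$.

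It then remains to pass to even periodization. By the Novikov-descent equivalence \eqref{Novikov descent via stacks}, $\Spec(L_n^f\mathbf S)^\mathrm{evp}\simeq(\Spec(L_n^f\mathbf S)\times\mM)^\mathrm{evp}\simeq(\mM{}^{\le n})^\mathrm{evp}$, and since $\mM{}^{\le n}\simeq\Spec(L_n\mathbf S)^\mathrm{evp}$ already lies in the essential image of even periodization by Proposition \ref{Variant EVP of L_nS}, Proposition \ref{Even stacks inside stacks} gives $(\mM{}^{\le n})^\mathrm{evp}\simeq\mM{}^{\le n}$; chaining the equivalences yields the statement. The main obstacle is exactly the input of the second paragraph: one must know that the failure of the telescope conjecture never occurs among complex periodic (indeed $\mathrm{MU}$-module) ring spectra, so that $L_n^f$-locality and $L_n$-locality coincide for them — everything else is formal manipulation with the universal properties of complex and even periodization and with the already-established identification of $\mM{}^{\le n}$ as both periodizations of $\Spec(L_n\mathbf S)$.
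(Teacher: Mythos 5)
Your proposal is correct and follows essentially the same route as the paper: both reduce the statement to the fact that $L_n^f$-locality and $L_n$-locality coincide for complex orientable (in particular even or complex periodic) $\E$-rings, by viewing such a ring as an $\mathrm{MU}$-module via a complex orientation and invoking the well-known equivalence $L_n^f\mathrm{MU}\simeq L_n\mathrm{MU}$, with the rest being formal manipulation of the periodization functors against Proposition \ref{Variant EVP of L_nS}. The only cosmetic difference is that you spell out the functor-of-points comparison over $\CAlg^{\mathbf C\mathrm p}$ and use a homotopy-module retract argument, whereas the paper compresses this reduction and cites the $\mathbb E_1$-orientation of even rings from \cite{MNN}.
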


\begin{proof}
In light of Proposition \ref{Variant EVP of L_nS}, it suffices to show that the canonical map $L_n^fA\to L_nA$ is an equivalence for any even periodic $\E$-ring $A$. Instead, this holds for any even $\E$-ring. Indeed, by \cite[Proposition 7.42]{MNN}, an even $\E$-ring $A$ admits an $\mathbb E_1$-complex orientation, i.e.\ an $\mathbb E_1$-ring map $\mathrm{MU}\to A$. It therefore suffices to show that $L_n^f\mathrm{MU}\to L_n\mathrm{MU}$ is an equivalence, which amounts to a well-known computation.
\end{proof}

\begin{remark}
Since quasi-coherent pullback along the terminal map $\mM^{\le n}\to\Spec(\mathbf S)$ amounts precisely to $L_n$-localization, Corollary \ref{Corollary how L_n from L_n^f} may be interpreted as showing how the telescopic localizations $L_n^f$ give rise to their chromatic counterparts $L_n$ by passing through even periodization.
\end{remark}

\newpage

\section{Formal spectral algebraic geometry via the functor of points}\label{Section FAG}
In order to connect it to $p$-adic geometry in general, and prismatization in particular, it is necessary to give a $p$-complete version of even periodization. This requires first describing the setting of $p$-adic formal spectral algebraic geometry; in fact, we work in slightly greater generality.
As such, the majority of this section is dedicated to laying out 
 the foundations to a functor of points approach to nonconnective spectral algebraic geometry, in analogy with the non-formal setup presented in Section \ref{Section great 2}.

\subsection{The setup of nonconnective formal spectral stacks}
In our approach to formal spectral algebraic geometry, we broadly follow the foundations  laid out in \cite[Chapter 8]{SAG},
in that we base it on certain adic\footnote{It seems likely that a more general and robust variant of the theory should be built on the foundations of condensed mathematics. But since the scope of formal algebraic geometry built on adic $\E$-rings largely suffices for our purposes, we do not pursue this endeavor here.} $\E$-rings as formal affines. 

\begin{definition}
An \textit{adic $\E$-ring} is an $\E$-ring $A$ together with the choice of an adic topology on the commutative ring $\pi_0(A)$, i.e.\ an $I$-adic topology for some finitely generated ideal $I\subseteq\pi_0(A)$, such that $A$ is $I$-complete.
\end{definition}

\begin{remark}
Note that the ideal $I\subseteq\pi_0(A)$ itself, sometimes called an \textit{ideal of definition} of the adic $\E$-ring $A$, is not part of the data. Indeed, different ideals may give rise to the same topology, for instance if their radicals agree.
\end{remark}

Let $\mathrm{CAlg}_\mathrm{ad}$ denote the $\i$-category of adic $\E$-rings.
It has adic $\E$-rings as its objects, and its morphisms are given by all such $\E$-ring maps $f:A\to B$ for which the underlying ring homomorphism $\pi_0(f):\pi_0(A)\to\pi_0(B)$ is continuous with respect to the respective adic topologies. Equivalently, they satisfy $\pi_0(f)(I^n)\subseteq J$ for $n>\!>0$ where $I\subseteq\pi_0(A)$ and $J\subseteq\pi_0(B)$ are some (or indeed any) ideals of definition.

For any subcategory $\CAlg^?\subseteq\CAlg$, we take its adic variant
$$
\CAlg{}^?_\mathrm{ad}:=\CAlg{}^?\times_{\CAlg}\CAlg_\mathrm{ad}
$$
to be the pullback along the forgetful functor $\CAlg_\mathrm{ad}\to\CAlg$. This allows us to make sense of $\CAlg^\mathrm{cn}_\mathrm{ad}$, $\CAlg^{\mathbf C\mathrm p}_\mathrm{ad}$, $\CAlg_\mathrm{ad}^\heart$ etc. Note that this is particularly appropriate, since the definition of adic $\E$-rings itself boils down to the pullback of $\i$-categories
$$
\CAlg_\mathrm{ad}\simeq \CAlg\times_{\CAlg^\heart}\CAlg_\mathrm{ad}^\heart
$$
along the functor $\pi_0 : \CAlg\to\CAlg^\heart$.

\begin{definition}\label{Definition 5.1.3 of formal spectrum}
Let $A$ be an adic $\E$-ring. Its \textit{formal spectrum} $\Spf(A):\CAlg\to \mathrm{Ani}$ 
 is the functor given by $R\mapsto \Map_{\CAlg_\mathrm{ad}}(A, R)$ or, as is equivalent, by
\begin{equation}\label{Equation adic Hom pullback formula}
R\mapsto \Map_{\CAlg}(A, R)\times_{\Hom_{\CAlg^\heart}(\pi_0(A), \pi_0(R)) }\Hom_{\CAlg^\heart}^\mathrm{cont}(\pi_0(A), \pi_0(R)).
\end{equation}
\end{definition}

\noindent More explicitly, the forgetful functor $\CAlg_\mathrm{ad}\to\CAlg$ induces a natural transformation $\Spf(A)\to \Spec(A)$, which picks out those connected components of $\Map_\CAlg(A, R)$ which correspond to $\E$-ring maps $f:A\to R$ satisfying $\pi_0(f)(I^n) = 0$ for $n>\!>0$.

\begin{remark}\label{Pullback square}
Defined as above, the formal spectrum $\Spf(A)$ of an adic $\E$-ring $A$ is completely determined by the spectral affine $\Spec(A)$ and the classical formal affine $\Spf(\pi_0(A))$. One way to make this precise is to expressed in terms of the embedding $\mX\mapsto \mX\circ \pi_0$ of classical into spectral stacks from Remark \ref{Remark compression}, as the pullback square
$$
\begin{tikzcd}
\Spf(A) \ar{r} \ar{d}& \Spec(A)\ar{d}\\
\Spf(\pi_0(A))\circ\pi_0\ar{r} & \Spec(\pi_0(A))\circ\pi_0
\end{tikzcd}
$$
in the $\i$-category of spectral prestacks $\Fun(\CAlg, \mathrm{Ani})$, where the vertical maps are the decompression maps of Remark \ref{Remark compression}.
\end{remark}

\begin{prop}\label{Theorem Spf is ff}
The functor $A\mapsto \Spf(A)$ defines a fully faithful embedding
$$
(\CAlg_\mathrm{ad}^\mathrm{cn})^\mathrm{op}\hookrightarrow \mathrm{SpStk}^\mathrm{cn}.
$$
\end{prop}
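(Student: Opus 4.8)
The plan is to prove that $A \mapsto \Spf(A)$ is fully faithful on connective adic $\E$-rings by reducing to the known full faithfulness of $\Spec$ on connective $\E$-rings (the equivalence $\CAlg^\mathrm{op} \simeq \mathrm{Aff}$ recorded in the excerpt) together with the classical analogue for adic commutative rings. First I would fix connective adic $\E$-rings $A, B$ and analyze the mapping anima $\Map_{\mathrm{SpStk}^\mathrm{cn}}(\Spf(B), \Spf(A))$. The key structural input is Remark \ref{Pullback square}: the functor of points $\Spf(A)$ sits in a pullback square in $\Fun(\CAlg, \mathrm{Ani})$ relating it to $\Spec(A)$, $\Spec(\pi_0(A)) \circ \pi_0$, and $\Spf(\pi_0(A)) \circ \pi_0$. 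Since full faithfulness is a statement about mapping spaces, and $\Spf(B)$ is itself a colimit-type object built from this pullback, I would use the universal property of the pullback to express maps out of $\Spf(B)$ (or rather into $\Spf(A)$) in terms of maps between the three corner objects. Concretely, a map $\Spf(B) \to \Spf(A)$ should be the same as a compatible pair consisting of a map $\Spec(B) \to \Spec(A)$ of spectral affines and a map $\Spf(\pi_0(B)) \to \Spf(\pi_0(A))$ of classical formal affines, agreeing over $\Spec(\pi_0(B)) \to \Spec(\pi_0(A))$.

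The second step is to identify these pieces. By full faithfulness of $\Spec: \CAlg^\mathrm{op} \hookrightarrow \mathrm{SpStk}$ restricted to connective $\E$-rings (Proposition \ref{Connective stacks inside stacks}\ref{Prop 13, b} and the definitional equivalence $\CAlg^\mathrm{op} \simeq \mathrm{Aff}$), maps $\Spec(B) \to \Spec(A)$ are precisely $\E$-ring maps $A \to B$. By the classical theory of formal schemes — e.g. the fully faithful embedding of adic commutative rings into classical formal stacks, which is the $\pi_0$-level shadow of what we are proving — maps $\Spf(\pi_0(B)) \to \Spf(\pi_0(A))$ are precisely \emph{continuous} ring homomorphisms $\pi_0(A) \to \pi_0(B)$. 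Assembling the compatible pair then recovers exactly the data of a morphism $A \to B$ in $\CAlg_\mathrm{ad}^\mathrm{cn}$, i.e. an $\E$-ring map whose underlying $\pi_0$-map is continuous — which by definition of $\CAlg_\mathrm{ad}$ as the pullback $\CAlg \times_{\CAlg^\heart} \CAlg_\mathrm{ad}^\heart$ is precisely a morphism in $\CAlg_\mathrm{ad}^\mathrm{cn}$. Thus $\Map_{\mathrm{SpStk}^\mathrm{cn}}(\Spf(B), \Spf(A)) \simeq \Map_{\CAlg_\mathrm{ad}^\mathrm{cn}}(A, B)$, giving full faithfulness.

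One subtlety to handle carefully is that $\Spf(A)$ genuinely lands in $\mathrm{SpStk}^\mathrm{cn}$, i.e. that it is an accessible fpqc sheaf on connective $\E$-rings; this is presumably established just before this statement (the formula \eqref{Equation adic Hom pullback formula} exhibits it as a pullback of accessible fpqc sheaves, since $\Spec(A)$, $\Spec(\pi_0(A)) \circ \pi_0$, and $\Spf(\pi_0(A)) \circ \pi_0$ are all such — the last because classical formal affines are filtered colimits of affines $\Spec(\pi_0(A)/I^n)$ and sheafification/accessibility behave well). I would cite this rather than reprove it. The main obstacle I anticipate is the bookkeeping in the pullback argument: one must check that the pullback square of Remark \ref{Pullback square} computes the correct mapping anima — i.e. that $\Map(-, \text{pullback})$ is the pullback of mapping animae — which is automatic since $\Map(\mX, -)$ preserves limits, but one also needs that the \emph{domain} $\Spf(B)$ maps correctly, and here one uses that $\Spf(B)$ is a (sifted) colimit of the $\Spec(\pi_0(B)/J^n)$-type affines over $\Spec(B)$, so $\Map(\Spf(B), -)$ becomes a limit that one must interchange with the target pullback. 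Making the compatibility precise — that the continuous-ring-map condition on $\pi_0$ is exactly what the decompression maps in Remark \ref{Pullback square} enforce — is the one place where I would need to unwind definitions with some care, but no genuinely hard input beyond the classical formal-scheme full faithfulness and the already-established spectral case is required.
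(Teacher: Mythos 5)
Your proposal is correct in outline, but it takes a genuinely different route from the paper. The paper does not compute mapping anima at all: it quotes \cite[Theorem 8.1.5.1, Corollary 8.1.5.4]{SAG} to get full faithfulness of $A\mapsto\Spf(A)$ into accessible presheaves $\Fun^{\mathrm{acc}}(\CAlg^\mathrm{cn},\mathrm{Ani})$, and then spends essentially all of its effort on what you treat as a side issue, namely that $\Spf(A)$ is an accessible fpqc sheaf, so that the embedding factors through $\mathrm{SpStk}^\mathrm{cn}$. Your direct computation via the pullback square of Remark \ref{Pullback square} is a legitimate substitute for the citation and has the virtue of making visible that a map $\Spf(B)\to\Spf(A)$ is exactly an $\E$-ring map $A\to B$ with continuous $\pi_0$. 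But the two nontrivial inputs it needs are precisely the ones powering the cited SAG results: the presheaf-level presentation $\Spf(B)\simeq\varinjlim_n\Spec(B_n)$ by the $\E$-ring adic tower $B_n\simeq B/\!/_{\E}\,x^n$ of Remark \ref{Remark adic topology} (\cite[Lemma 8.1.2.2]{SAG}; note these are \emph{not} the classical truncations $\Spec(\pi_0(B)/J^n)$), and the completeness identification $B\simeq\varprojlim_n B_n$, which is what turns $\Map(\Spf(B),\Spec(A))\simeq\varprojlim_n\Map_{\CAlg}(A,B_n)$ into $\Map_{\CAlg}(A,B)$ and so lets you trade maps out of $\Spf(B)$ for maps out of $\Spec(B)$. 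Also, the corner the pullback actually produces is $\Map(\Spf(B),\Spf(\pi_0(A))\circ\pi_0)$, which unwinds to continuous maps into the classical $J$-adic completion of $\pi_0(B)$ rather than into $\pi_0(B)$ itself; the continuity conditions do match up, but this is one more comparison than "cite classical formal schemes." So your closing claim that nothing beyond the classical case and the affine spectral case is required undersells the argument.

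One concrete mis-citation: the fact that $\Spf(A)$ lands in $\mathrm{SpStk}^\mathrm{cn}$ is \emph{not} established before this statement -- in the paper it is the main content of this very proof, so you cannot simply cite it. Accessibility does follow from the sequential colimit presentation, as you say. For fpqc descent, your plan (a limit of sheaves is a sheaf, so reduce to the three corners of the pullback) matches the paper's reduction, but the corner $\Spf(\pi_0(A))\circ\pi_0$ still has to be handled: the paper does this by noting that $\Spf(\pi_0(A))\subseteq\Spec(\pi_0(A))$ is the subfunctor cut out by the condition that $I^n\mapsto 0$ for some $n$, which descends because faithfully flat ring maps are injective. Your alternative justification -- a filtered colimit of set-valued fpqc sheaves is a sheaf since descent for a $0$-truncated functor is a finite limit -- also works, but be aware it is special to this discrete corner and would fail if applied to $\Spf(B)$ itself, since totalizations of anima do not commute with filtered colimits; that is exactly why the sheaf condition is routed through the pullback square rather than through the colimit presentation.
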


\begin{proof}
A commination of \cite[Theorem 8.1.5.1]{SAG} and \cite[Corollary 8.1.5.4]{SAG} shows that the
functor $A\mapsto \Spf(A)$ gives a fully faithful embedding
$$
(\CAlg_\mathrm{ad}^\mathrm{cn})^\mathrm{op}\hookrightarrow \Fun^\mathrm{acc}(\CAlg^\mathrm{cn}, \mathrm{Ani}).
$$
Both of the cited results rely on \cite[Lemma 8.1.2.2]{SAG}, by which we may express the formal spectrum of any connective $\E$-ring $A$ through an $\E$-ring version of the $I$-adic filtration as
$$\Spf(A)\,\simeq\, \varinjlim_n \Spec(A_n)
$$
with the filtered colimit computed either in presheaves or in \'etale sheaves. It  follows from this latter description that $\Spf(A):\CAlg^\mathrm{cn}\to\mathrm{Ani}$ is an accessible presheaf for any adic connective $\E$-ring $A$. It remains to show that $\Spf(A)$ is also an fpqc sheaf. From its definition  (or equivalently, the pullback square of Remark \ref{Pullback square}), and due to $\Spec(A)$ satisfying fpqc descent \cite[Theorem D.6.3.5]{SAG}, this reduces to proving flat descent for $\Spf(\pi_0(A)):\CAlg^\heart\to \mathrm{Set}$. Since it is a subfunctor of the fpqc sheaf $\Spec(\pi_0(A))$, that in turn follows from the observation  that if
$$
\pi_0(A)\xrightarrow{f}R\xrightarrow{g} S
$$
is a composable pair of commutative ring maps, and $g$ is fully faithful, then since a fully faithful ring map is in particular injective, the equality  $(g\circ f)(I^n)=0$ implies that $f(I^n)=0$ holds as well. 
\end{proof}

\begin{remark}\label{Remark adic topology}
As we saw in the proof of Proposition \ref{Theorem Spf is ff}, the formal spectrum $\Spf(A)$ satisfies fpqc descent for any connective adic $\E$-ring $A$. Consequently, the sequential colimit formula
$$
\Spf(A)\,\simeq\, \varinjlim_n \Spec(A_n)
$$
for $\Spf(A)$ remains valid when interpreted in the $\i$-category $\mathrm{SpStk}^\mathrm{cn}$. To see that this is indeed an $\E$-form of the adic filtration, let us recall from \cite[Lemma 8.1.2.2]{SAG} how the $\E$-rings $A_n$ are constructed in the case where the adict topology on $A$ is with respect to a principal ideal $I=(x)$ for some $x\in \pi_0(A)$. In that case, we have for each $n\ge 1$ a pullback square of $\E$-rings
$$
\begin{tikzcd}
A\{t\} \arrow{r}{t\mapsto x^n} \arrow{d}{t\mapsto 0} & A\arrow{d}{} & \\
A \arrow{r} & A_n,
\end{tikzcd}
$$
where $A\{t\}\simeq \Sym_A^*(A)$ is the free $\E$-$A$-algebra on one generator. That is to say, the $A$-algebra $A_n$ is the $\E$-ring quotient $A/\!/_{\E}x^n$.
\end{remark}

Proposition \ref{Theorem Spf is ff} admits an obvious analogue in the classical, i.e.\ $0$-truncated, case, and a weakened variant in the nonconnective case.

\begin{corollary}\label{Theorem Spf is ff classic}
The formal spectrum functor $A\mapsto\Spf(A)$ restricts to a fully faithful embedding
$
\mathrm{FAff}^\heart\hookrightarrow \mathrm{SpStk}^\heart.
$
\end{corollary}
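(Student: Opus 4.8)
The plan is to run the argument of Proposition~\ref{Theorem Spf is ff} one truncation level down, and then read off fully faithfulness from the usual $\varprojlim\varinjlim$-description of morphisms between formal affines. First I would record that for a classical adic $\E$-ring $A\in\CAlg_\mathrm{ad}^\heart$, with ideal of definition $I\subseteq A$, the functor $\Spf(A)$ of Definition~\ref{Definition 5.1.3 of formal spectrum} is set-valued: on a classical ring $R$ the pullback formula collapses to $\Spf(A)(R)=\Hom^\mathrm{cont}_{\CAlg^\heart}(A,R)$, the set of ring maps killing some power of $I$. Hence, as a presheaf on $\mathrm{Aff}^\heart$,
$$
\Spf(A)\,\simeq\,\varinjlim_n\Spec(A/I^n),
$$
the colimit of the ordinary quotients, since the transition maps $\Hom(A/I^n,R)\hookrightarrow\Hom(A/I^{n+1},R)$ are injective and their union is $\Hom^\mathrm{cont}(A,R)$; this is the $0$-truncated analogue of the $I$-adic presentation of Remark~\ref{Remark adic topology}. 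Being a filtered colimit of corepresentables, $\Spf(A)$ is accessible, and it is an fpqc sheaf: it is a subfunctor of the sheaf $\Spec(A)$, and the condition ``$f(I^n)=0$ for some $n$'' descends along any faithfully flat $R\to S$ because such a map is injective, so that $A\to R\to S$ killing $I^n$ forces $A\to R$ to kill $I^n$. Thus the presheaf colimit is already a sheaf, $\Spf(A)\in\mathrm{Stk}^\heart$, and $A\mapsto\Spf(A)$ defines a functor $\mathrm{FAff}^\heart=(\CAlg_\mathrm{ad}^\heart)^\mathrm{op}\to\mathrm{Stk}^\heart$.

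For fully faithfulness, given $A,B\in\CAlg_\mathrm{ad}^\heart$ with ideals of definition $I\subseteq A$ and $J\subseteq B$, I would compute, using the colimit presentation above together with the Yoneda lemma applied to the representable sheaves $\Spec(A/I^n)$,
$$
\Map_{\mathrm{Stk}^\heart}\big(\Spf(A),\Spf(B)\big)\,\simeq\,\varprojlim_n\Spf(B)(A/I^n)\,\simeq\,\varprojlim_n\varinjlim_m\Hom_{\CAlg^\heart}(B/J^m,A/I^n).
$$
Since $A$ is $I$-complete, i.e.\ $A\simeq\varprojlim_n A/I^n$ as a classical ring, an element of this double limit is exactly a compatible family of ring maps $B/J^{m(n)}\to A/I^n$, which assembles into a continuous ring homomorphism $B\to A$, and conversely; so the displayed mapping anima is the discrete set $\Hom^\mathrm{cont}_{\CAlg^\heart}(B,A)=\Hom_{\mathrm{FAff}^\heart}(A,B)$, as required. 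Alternatively, this identification is the $0$-truncated case of \cite[Theorem 8.1.5.1]{SAG} and \cite[Corollary 8.1.5.4]{SAG}, invoked exactly as in the proof of Proposition~\ref{Theorem Spf is ff}.

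The only delicate point, and the one I would be most careful about, is the passage from presheaves to sheaves: one must know both that $\varinjlim_n\Spec(A/I^n)$ is \emph{already} an fpqc sheaf on $\mathrm{Aff}^\heart$ (so that it computes the colimit in $\mathrm{Stk}^\heart$ and represents $\Spf(A)$ there, making the Yoneda step in the second paragraph legitimate) and that this colimit presentation agrees with Definition~\ref{Definition 5.1.3 of formal spectrum} — equivalently, the classical counterpart of \cite[Lemma 8.1.2.2]{SAG}, which in the $0$-truncated setting reduces to the elementary statement that the $\E$-ring quotients corepresent the functor $R\mapsto\{g:A\to R:g(I^n)=0\}$ on discrete rings. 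Once this is settled, accessibility, descent, and the fully faithfulness computation are routine; the sole additional input is $I$-completeness of the target $A$, which is precisely what converts the $\varprojlim_n\varinjlim_m$ of $\Hom$-sets into continuous homomorphisms into $A$ itself.
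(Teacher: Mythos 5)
Your strategy is exactly the one the paper intends: it offers no separate proof of this corollary, presenting it as the evident $0$-truncated analogue of Proposition \ref{Theorem Spf is ff}, and your argument is precisely that analogue — the classical $I$-adic colimit presentation $\Spf(A)\simeq\varinjlim_n\Spec(A/I^n)$, accessibility, the "subfunctor of $\Spec(A)$ plus injectivity of faithfully flat maps" descent argument (which is literally the paper's own sheaf argument from the proposition), and then the mapping-space identification done by hand via Yoneda instead of by citing \cite[Theorem 8.1.5.1, Corollary 8.1.5.4]{SAG}. The colimit/sheaf point you single out as delicate is fine and handled correctly.

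The one step that deserves more care than you give it is the parenthetical "$A$ is $I$-complete, i.e.\ $A\simeq\varprojlim_n A/I^n$ as a classical ring." In the conventions the paper follows, $I$-completeness of an adic $\E$-ring is completeness of the underlying spectrum, which for a discrete ring means \emph{derived} $I$-completeness; this is strictly weaker than the classical isomorphism $A\simeq\varprojlim_n A/I^n$ (derived complete rings need not be $I$-adically separated), and that isomorphism is exactly what your final assembly step uses. The issue is not cosmetic: since $\Spf(A)$ evaluated on classical test rings only sees the pro-system $\{A/I^n\}$, faithfulness genuinely fails without separatedness — if $a\in\bigcap_n I^n$ is a nonzero square-zero element of a derived-complete $A$, the two continuous ring maps $\mathbf Z[[t]]\to A$ sending $t\mapsto 0$ and $t\mapsto a$ induce the same morphism $\Spf(A)\to\Spf(\mathbf Z[[t]])$ of classical stacks. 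So your proof is correct precisely under the classical (EGA-style) reading of completeness for objects of $\mathrm{FAff}^\heart$, which is surely the intended one, but you should either justify that reading or state it as a hypothesis rather than pass it off with an "i.e."; note also that the corollary is not a formal consequence of Proposition \ref{Theorem Spf is ff}, since restricting test objects from $\CAlg^\mathrm{cn}$ to $\CAlg^\heart$ can destroy fully faithfulness, which is why this completeness point is where the actual content of the classical case sits.
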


\begin{corollary}\label{Spf factors through SpStk}
The formal spectrum functor $A\mapsto\Spf(A)$ takes values inside the full subcategory of nonconnective spectral stacks $\mathrm{SpStk}\subseteq\Fun(\CAlg, \mathrm{Ani})$.
\end{corollary}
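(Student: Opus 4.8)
The plan is to deduce the statement from the pullback square of Remark \ref{Pullback square} together with the fact that $\mathrm{SpStk}$ is closed under pullbacks inside the $\i$-category of spectral prestacks $\Fun(\CAlg,\mathrm{Ani})$. For any adic $\E$-ring $A$, that remark identifies
$$
\Spf(A)\,\simeq\, \Spec(A)\times_{\,\Spec(\pi_0(A))\circ\pi_0\,}\big(\Spf(\pi_0(A))\circ\pi_0\big)
$$
as a fiber product in $\Fun(\CAlg,\mathrm{Ani})$, where the two structure maps are the decompression maps of Remark \ref{Remark compression}. So it will be enough to show that each vertex of this cospan is a nonconnective spectral stack, and that $\mathrm{SpStk}\subseteq\Fun(\CAlg,\mathrm{Ani})$ is stable under pullbacks.

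For the vertices: $\Spec(A)\in\mathrm{SpStk}$ for every $\E$-ring $A$ by definition. The ring $\pi_0(A)$ with its $I$-adic topology is a classical adic ring, so $\Spec(\pi_0(A))$ and $\Spf(\pi_0(A))$ are classical stacks --- the former trivially, the latter by Corollary \ref{Theorem Spf is ff classic}; applying the compression embedding $\mathrm{Stk}^\heart\hookrightarrow\mathrm{SpStk}$, $\mX\mapsto\mX\circ\pi_0$, of Remark \ref{Remark compression} then shows that $\Spec(\pi_0(A))\circ\pi_0$ and $\Spf(\pi_0(A))\circ\pi_0$ are themselves nonconnective spectral stacks. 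For the closure claim, I would write $\mathrm{SpStk}=\mathrm{Shv}_\mathrm{fpqc}(\mathrm{Aff})\cap\mP^\mathrm{acc}(\mathrm{Aff})$ inside $\Fun(\CAlg,\mathrm{Ani})$ and use that fpqc sheaves are closed under all limits in presheaves, while accessible functors are closed under finite limits (finite limits in $\mathrm{Ani}$ commute with $\kappa$-filtered colimits for every regular cardinal $\kappa$). In particular $\mathrm{SpStk}$ is closed under pullbacks, so $\Spf(A)$, being a pullback of three of its objects, lies in $\mathrm{SpStk}$.

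I do not expect a genuine obstacle here --- the argument is formal once Remark \ref{Pullback square} is in hand. The only subtlety worth flagging is that accessible functors are stable under \emph{finite} limits but not arbitrary ones, which is why it is cleaner to use the finite pullback presentation of $\Spf(A)$ than to argue via the $I$-adic tower $\varinjlim_n\Spec(A_n)$, $A_n\simeq A/\!/_{\E}x^n$, used in the proof of Proposition \ref{Theorem Spf is ff}; that route works too, but requires separately knowing that for this particular diagram the presheaf colimit already computes an fpqc sheaf.
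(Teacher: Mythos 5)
Your proof is correct and follows essentially the paper's strategy: exhibit $\Spf(A)$ as a finite pullback of functors already known to lie in $\mathrm{SpStk}$ and conclude by closure of $\mathrm{SpStk}$ under that limit (fpqc descent passes to limits, accessibility to finite ones). The only, harmless, difference is in the choice of cospan: the paper pulls back $\Spf(\tau_{\ge 0}(A))$ against the affine morphism $\Spec(A)\to\Spec(\tau_{\ge 0}(A))$, invoking Proposition \ref{Theorem Spf is ff}, whereas you use the $\pi_0$-square of Remark \ref{Pullback square} together with Corollary \ref{Theorem Spf is ff classic} and the compression embedding of Remark \ref{Remark compression} --- both routes are equally valid.
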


\begin{proof}
Let $A$ be an $\E$-ring. We need to show that $\Spf(A)$ is a spectral stack.
It follows from the definition of the formal spectrum, e.g.\ from the pullback square of Remark \ref{Pullback square}, that the obvious functors constitute a Cartesian square
$$
\begin{tikzcd}
\Spf(A) \arrow{r} \arrow{d} & \Spec(A)\arrow{d} \\
\Spf(\tau_{\ge 0}(A)) \arrow{r} &\Spec(\tau_{\ge 0}(A))
\end{tikzcd}
$$
in $\Fun(\CAlg, \mathrm{Ani})$. The right vertical arrow is an affine morphism, hence so is the left vertical arrow. In particular, since all the vertices of this diagram but the initial limit vertex are nonconnective spectral stacks, it follows that the formal spectrum $\Spf(A)$ lies in $\mathrm{SpStk}$ as well.
\end{proof}

We can use the formal spectrum
to define convenient a notion of faithful flatness for maps of adic $\E$-rings by restriction from faithfully flat maps of spectral stacks.

\begin{definition}\label{Def of adic fpqc}
A map of adic $\E$-rings $A\to B$ is \textit{adically faithfully flat} if the morphism of spectral stacks $\Spf(B)\to\Spf(A)$ is affine and faithfully flat.
\end{definition}

\begin{remark}
For a map of adic $\E$-rings $A\to B$, the property of being adically faithfully flat is equivalent to $\Spf(B)\to\Spf(A)$ being affine and the $\E$-ring maps $B\to B\o_A A_n$ being faithfully flat for all $n$, where $\Spf(A)\simeq \varinjlim_n \Spf(A_n)$ is the $\E$-version of the adic filtration on $A$ from Remark \ref{Remark adic topology}. The latter requirement is equivalent to the base-change $B\to B\o_A M$ being faithfully flat for every $I$-nilpotent $A$-module $M$ (indeed, each $A_n$ is itself $I$-nilpotent as an $A$-module). For a connective $\E$-ring $A$, this is closely connected with the
notion of $I$-adic faithful flatness from \cite[Definition 7.1]{Akhil Almost}. See also Proposition \ref{Lemma ff comparison} for a simpler description of adically faihtful flatness and Corollary \ref{Adic = complete ff} for a comparison with $p$-adic faithful flatness of \cite{HRW}.
\end{remark}

The notion of adically faithfully flat maps equips the $\i$-categories of \textit{formal spectral affines} $\mathrm{FAff} :=\CAlg_\mathrm{ad}^\mathrm{op}$ and \textit{formal spectral affines} $\mathrm{FAff} := \CAlg_\mathrm{ad}^\mathrm{op}$ with a Grothendieck topology in the usual finitary way. By an abuse of notation, we will refer to it simply as the \textit{fpqc topology}. We define the corresponding $\i$-categories of connective and discrete variants of formal spectral affines
analogously as $\mathrm{FAff}^\mathrm{cn}:=(\CAlg^\mathrm{cn}_\mathrm{ad})^\mathrm{op}$ and $\mathrm{FAff}^\heart:=(\CAlg^\heart_\mathrm{ad})^\mathrm{op}$, and they inherit the fpqc topology as well.

\begin{definition}\label{Def of FSpStk}
A functor $\mathfrak X:\CAlg_\mathrm{ad}\to \mathrm{Ani}$ is a \textit{nonconnective formal spectral stack} if it is  accessible and satisfies fpqc descent. We let
$$
\mathrm{FSpStk}\, :=\,\mathrm{Shv}^\mathrm{acc}_\mathrm{fpqc}(\mathrm{FAff})
$$
denote the $\i$-category of nonconnective formal spectral stacks. The analogous connective and classical variants are defined as
$$
\mathrm{FSpStk}^\mathrm{cn}\, :=\,\mathrm{Shv}^\mathrm{acc}_\mathrm{fpqc}(\mathrm{FAff}^\mathrm{cn}),
\qquad \mathrm{FStk}^\heart\, :=\,\mathrm{Shv}^\mathrm{acc}_\mathrm{fpqc}(\mathrm{FAff}^\heart).
$$
\end{definition}

\begin{remark}\label{What _is_ Shv_acc here?}
In Definition \ref{Def of FSpStk}, we run into the obstacle that the $\i$-category of adic $\E$-rings $\CAlg_\mathrm{ad}$ is not accessible, on account of not being closed under filtered colimits\footnote{This is the consequence of having required that an ideals of definition of an adic $\E$-ring must be finitely generated. A flexible theory of completions with respect to not-necessarily-finitely-generated ideals should circumvent this issue altogether, which is why we are treating it somewhat lightly.}. As such, it is not \textit{\`a priori} clear what we mean by ``accessible presheaf" on $\mathrm{FAff}$. Motivated by the discussion in Remarks \ref{Remark access why} and \ref{sheafification issues}, we interpret it as the completion under small colimits, i.e.\ we take $\mathcal P^\mathrm{acc}(\mathrm{FAff})\subseteq \Fun(\CAlg_\mathrm{ad}, \mathrm{Ani})$ to be the full subcategory generated under small colimits by representable the functors $\Spf(A)$ for adic $\E$-rings $A$. The cautious reader is encouraged to consider Remark \ref{Reason for cavaliers}, and to place any of the proceeding discussion that makes them uneasy into the $I$-adic context that we lay out in Subsection \ref{Subsection I-adic context}, and where these issues do not arise.
\end{remark}

\begin{remark}
Given an adic $\E$-ring $A$, we will denote the corresponding nonconnective formal affine by $\Spf(A)$. This in principle allows for some potential confusion with $\Spf(A)$, viewed as a spectral stack as in Definition \ref{Equation adic Hom pullback formula}. But since the latter is the restriction of the former to the full subcategory of discretely-topologized $\E$-rings $\CAlg\subseteq\CAlg_\mathrm{ad}$, and since the two are very close anyway by Theorem \ref{Theorem Spf is ff}, we trust no confusion will arise.
\end{remark}

As in the non-formal context, we may identify $\mathrm{FSpStk}^\mathrm{cn}$ and $\mathrm{FStk}^\heart$ with the full subcategories of $\mathrm{FSpStk}$, spanned under small colimits by connective and classical formal affines respectively.

\begin{variant}
The theory of connective localizations and underlying classical stack as laid out in Sections \ref{Section connective localization} and \ref{Section underlying classical stack} carries over to the formal setting without a change. In particular, 
we can make sense of the functors $\mathfrak X\mapsto \mathfrak X^\mathrm{cn}$ and $\mathfrak X\mapsto \mathfrak X^\heart$ for nonconnective formal spectral stacks, so that they are given on formal affines by $\Spf(A)^\mathrm{cn}\simeq \Spf(\tau_{\ge 0}(A))$ and $\Spf(A)^\heart\simeq \Spf(\pi_0(A))$ respectively.
\end{variant}

Though we have defined formal spectral stacks above in their own right, it turns out that at least in the connective case, this just reduces to the non-formal theory of connective spectral stacks.

\begin{theorem}\label{Theorem formal stacks = stacks}
The fully faithful embedding $\mathrm{FAff}^\mathrm{cn}\hookrightarrow \mathrm{SpStk}^\mathrm{cn}$ of Proposition \ref{Theorem Spf is ff} admits a canonical extension to an equivalence of $\i$-categories $\mathrm{FSpStk}^\mathrm{cn}\simeq\, \mathrm{SpStk}^\mathrm{cn}$.
\end{theorem}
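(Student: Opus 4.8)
The plan is to identify the comparison functor explicitly and then reduce the two required equivalences to the fully faithfulness already at our disposal. Note first that $\mathrm{Aff}^{\mathrm{cn}}$ sits inside $\mathrm{FAff}^{\mathrm{cn}}$ as a full subcategory, by equipping a connective $\E$-ring with the discrete (i.e.\ $0$-adic) topology, and that under this identification $\Spf(A)\simeq\Spec(A)$; moreover, by Definition~\ref{Def of adic fpqc} (and since a formal spectrum over a discrete affine is affine only when its topology is nilpotent, hence discrete), the fpqc topology on $\mathrm{FAff}^{\mathrm{cn}}$ restricts along this inclusion precisely to the fpqc topology on $\mathrm{Aff}^{\mathrm{cn}}$. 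Thus $j\colon\mathrm{Aff}^{\mathrm{cn}}\hookrightarrow\mathrm{FAff}^{\mathrm{cn}}$ is a fully faithful, continuous, cover-reflecting morphism of $\i$-sites, and therefore induces a colimit-preserving, fully faithful functor
$$
j_!\colon \mathrm{SpStk}^{\mathrm{cn}}=\mathrm{Shv}^{\mathrm{acc}}_{\mathrm{fpqc}}(\mathrm{Aff}^{\mathrm{cn}})\longrightarrow \mathrm{Shv}^{\mathrm{acc}}_{\mathrm{fpqc}}(\mathrm{FAff}^{\mathrm{cn}})=\mathrm{FSpStk}^{\mathrm{cn}},
$$
sending $\Spec(A)\mapsto\Spf(A)$ for discretely topologized $A$. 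Its inverse, once we know $j_!$ is an equivalence, is the restriction functor $\mathfrak X\mapsto\big(A\mapsto\mathfrak X(A_{\mathrm{disc}})\big)$, and by construction it restricts on formal affines to the embedding $\Spf\colon\mathrm{FAff}^{\mathrm{cn}}\hookrightarrow\mathrm{SpStk}^{\mathrm{cn}}$ of Proposition~\ref{Theorem Spf is ff}; this is the asserted canonical extension.

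It remains to show $j_!$ is essentially surjective. Its essential image is closed under small colimits, so it is enough to check that every formal affine $\Spf(A)$ lies in it, i.e.\ that $\Spf(A)$ can be written as a small colimit of discrete affines in $\mathrm{FSpStk}^{\mathrm{cn}}$. This is exactly the $\E$-ring version of the adic filtration of Remark~\ref{Remark adic topology}: one has $\Spf(A)\simeq\varinjlim_n\Spec(A_n)$, and, $j_!$ being colimit-preserving and fixing the $\Spec(A_n)$, this colimit presentation is inherited in $\mathrm{FSpStk}^{\mathrm{cn}}$. Since formal affines generate $\mathrm{FSpStk}^{\mathrm{cn}}$ under small colimits (Remark~\ref{What _is_ Shv_acc here?}), this yields essential surjectivity, and hence the equivalence $\mathrm{FSpStk}^{\mathrm{cn}}\simeq\mathrm{SpStk}^{\mathrm{cn}}$.

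The step I expect to require genuine care — rather than the formal part above — is the size-theoretic bookkeeping flagged in Remark~\ref{What _is_ Shv_acc here?}: because $\CAlg^{\mathrm{cn}}_{\mathrm{ad}}$ is not itself accessible, one must interpret ``accessible fpqc sheaf on $\mathrm{FAff}^{\mathrm{cn}}$'' as an object of the ad hoc cocompletion $\mathcal P^{\mathrm{acc}}(\mathrm{FAff}^{\mathrm{cn}})$, check that $j_!$ and its right adjoint respect this size bound and the fpqc sheaf condition, and confirm that $j$ being continuous and cover-reflecting really does force $j^*j_!\simeq\mathrm{id}$ after fpqc sheafification, so that $j_!$ stays fully faithful. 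This is also the only place where the standing finiteness hypothesis on ideals of definition enters, via the explicit presentation $A_n\simeq A/\!/_{\E}x^n$ recalled in Remark~\ref{Remark adic topology} in the principal case. Once this is in place, the rest of the argument is purely formal. It is worth remarking that the proof is genuinely special to the connective setting: in the nonconnective case Proposition~\ref{Theorem Spf is ff} is only available in the weaker form of Corollary~\ref{Spf factors through SpStk}, so $\Spf$ need not be fully faithful on nonconnective adic $\E$-rings and no analogous equivalence $\mathrm{FSpStk}\simeq\mathrm{SpStk}$ is to be expected.
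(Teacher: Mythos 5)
Your overall strategy coincides with the paper's: the paper also works with the functor equipping a connective $\E$-ring with the $0$-adic topology (your $j$, its $\delta$), so that the sheafified left Kan extension $j_!=\delta_!$ sends $\Spec(A)\mapsto\Spf(A)$, and it likewise reduces everything to a check on (formal) affines using the adic tower $A_n$ of Remark \ref{Remark adic topology}. The organizational difference is that the paper does not argue essential surjectivity of $\delta_!$ directly: it first builds the asserted canonical extension as a functor $\rho\colon\mathrm{FSpStk}^{\mathrm{cn}}\to\mathrm{SpStk}^{\mathrm{cn}}$, namely the essentially unique colimit-preserving extension of the embedding of Proposition \ref{Theorem Spf is ff} through $\mP^{\mathrm{acc}}(\mathrm{FAff}^{\mathrm{cn}})$, the nontrivial input being that $\rho$ descends through fpqc sheafification because, by Definition \ref{Def of adic fpqc}, \v{C}ech nerves of adic fpqc covers are carried to colimit diagrams of connective spectral stacks; the equivalence is then obtained by checking that both composites $\rho\delta_!$ and $\delta_!\rho$ are the identity on the generating (formal) affines. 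Your proposal to use the restriction functor $j^*$ as the inverse is compatible with this (it agrees with $\rho$ a posteriori, and on representables it is visibly the embedding of Proposition \ref{Theorem Spf is ff}), provided you verify that $j^*$ lands in accessible sheaves and preserves colimits, which is where the cover-lifting property you flag actually gets used.

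The genuine gap is in your essential-surjectivity step. Colimit-preservation of $j_!$ gives $j_!(\Spf(A))\simeq\varinjlim_n\Spf(A_n)$, where the left-hand side is $j_!$ applied to the spectral stack of Proposition \ref{Theorem Spf is ff} and the colimit on the right is computed in $\mathrm{FSpStk}^{\mathrm{cn}}$; it does \emph{not} identify that colimit with the representable sheaf $\Spf(A)$ on the adic site. That identification is exactly the assertion that $\Spf(A)$ lies in the essential image of $j_!$, i.e.\ what you set out to prove, so the phrase ``this colimit presentation is inherited in $\mathrm{FSpStk}^{\mathrm{cn}}$'' is circular as written: Remark \ref{Remark adic topology} is a statement about functors on discrete $\E$-rings and does not transport for free. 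Nor is the needed identification formal: at the presheaf level the two sides already differ on genuinely adic test objects --- for $A=\mathbf{Z}_p$ with the $p$-adic topology, the identity map is continuous but kills no power of $p$, hence factors through no $A_n$ --- so any argument must genuinely exploit the adic fpqc topology of Definition \ref{Def of adic fpqc} (exhibiting covers along which such factorizations can be arranged), rather than pure colimit bookkeeping. In the paper this content is carried entirely by the verification $\delta_!\rho(\Spf(A))\simeq\varinjlim_n\Spec(A_n)\simeq\Spf(A)$ for the explicitly constructed $\rho$; by treating the analogous step as automatic, your write-up omits the one place where the theorem's actual content resides. (Your remaining flagged points --- the size bookkeeping and fully faithfulness of $j_!$ via the unit on representables --- are fine and parallel how the paper treats its other (co)localizations.)
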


\begin{proof}
Let $\delta :\CAlg^\mathrm{cn}\to \CAlg^\mathrm{cn}_\mathrm{ad}$ denote the functor which equips a connective $\E$-ring with the discrete, i.e.\ $0$-adic, topology. It is immediate from Definition \ref{Def of adic fpqc} that $\delta$ preserves fpqc covers. Since $\delta$ is left adjoint to the forgetful functor $\CAlg_\mathrm{ad}^\mathrm{cn}\to\CAlg^\mathrm{cn}$, it also commutes with colimits. It is therefore accessible and preserves pushouts, hence giving rise by passage to the $\i$-categories of accessible sheaves to the adjunction
$$
\delta_! : \mathrm{SpStk}^\mathrm{cn} \rightleftarrows \mathrm{FSpStk}^\mathrm{cn}:\delta^*.
$$
The left adjoint is given by left Kan extension along $\delta$, followed up by sheafification. But since $\delta_!(\Spec(A))\simeq \Spf(A)$, the formal spectrum with respect to the $0$-adic topology on a connective $\E$-ring $A$,  already satisfies fpqc descent by Proposition \ref{Theorem Spf is ff}, the functor $\delta_!$ can be fully characterized by its value on affines and commuting with all small colimits.

On the other hand, the fully faithful embedding
$\mathrm{FAff}^\mathrm{cn}\hookrightarrow \mathrm{SpStk}^\mathrm{cn}$ from Proposition \ref{Theorem Spf is ff} induces, by the universal property of the accessible presheaf $\i$-category \cite[Remark A.4]{Dirac II} (more precisely, see Remark \ref{What _is_ Shv_acc here?}) and the fact that the $\i$-category of connective spectral stacks has all small colimits, an essentially-unique extension
$$
\mP^\mathrm{acc}(\mathrm{FAff}^\mathrm{cn})\to \mathrm{SpStk}^\mathrm{cn}
$$
which commutes with all small colimits. Explicitly, it is uniquely determined by preserving colimits and sending $\Spf(A)\mapsto \varinjlim_n\Spec(A_n)$ in the sense of Remark \ref{Remark adic topology}
It follows from Definition \ref{Def of adic fpqc} that the augmented  \v{C}ech nerves of fpqc covers are taken to colimit diagrams  in connective spectral stacks, implying a colimit-preserving factorization through the localization that is the fpqc sheafification $L:\mP^\mathrm{acc}(\mathrm{FAff}^\mathrm{cn})\to\mathrm{FSpStk}^\mathrm{cn}.$ That is to say, we obtain a functor
$$
\rho:\mathrm{FSpStk}^\mathrm{cn}\to \mathrm{SpStk}^\mathrm{cn}
$$
which commutes with small colimits and sends $\Spf(A)\mapsto \varinjlim_n\Spec(A_n)$ in the sense of Remark \ref{Remark adic topology}. In particular, this is the promised extension of the fully faithful embedding of Proposition \ref{Theorem Spf is ff} promised in the Theorem statement.

We claim that this functor $\rho$ is an inverse functor to $\delta_!$.
It suffices to verify that both composites $\delta_!\circ\rho$ and $\rho\circ \delta_!$ are homotopic to the respective identity functors. Since both commute with colimits, and the $\i$-categories in question are generated under colimits by objects of the form $\Spf(A)$ and $\Spec(R)$ for $A\in \mathrm{CAlg}^\mathrm{cn}_\mathrm{ad}$ and $R\in\CAlg^\mathrm{cn}$ respectively, it suffices to observe that there are natural equivalences
$$
\delta_!\rho(\Spf(A))\simeq \varinjlim_n\Spec(A_n)\simeq \Spf(A), \qquad \rho(\delta_!(\Spec(R))\simeq \rho(\Spf(R))\simeq \Spec(R)
$$
directly by how everything is defined, together with Proposition \ref{Theorem Spf is ff}.
\end{proof}

\begin{remark}\label{Remark ff of SpStk into FSpStk}
In the proof of Theorem \ref{Theorem formal stack = stacks classic}, the construction of the functors $\delta_!, \delta^*,$ and $\rho$ would proceed without change in the nonconnective setting as well. So would the verification of a natural equivalence $\rho\delta_!\simeq \mathrm{id}$, showing that the functor $\delta_!$ provides a canonical fully faithful embedding $\mathrm{SpStk}\hookrightarrow\mathrm{FSpStk}$ of nonconnective spectral stacks into the analogous formal $\i$-category. Issues would instead arise with the other composite $\delta_!\rho$.
Somewhat confusingly, it would still follows from Corollary \ref{Spf factors through SpStk} that
$$
\delta_!\rho(\Spf(A))\simeq \Spf(A)
$$
holds for any adic $\E$-ring $A$. But since, unlike in the connective case, we do not know whether or not the functor $\rho$ is fully faithful on nonconnective formal affines, we can not conclude that $\delta_!\rho\simeq \mathrm{id}$ holds. Rather, all we get is the much weaker claim that
 said composite is essentially surjective. In short, while \textit{all formal spectral stacks} can be found among spectral stacks, it is unclear whther \textit{all their morphisms} can be as well.
\end{remark}

\begin{corollary}\label{Theorem formal stack = stacks classic}
The fully faithful embedding $\mathrm{FAff}^\heart\hookrightarrow \mathrm{SpStk}^\heart$ of Corollary \ref{Theorem Spf is ff}
admits a canonical extension to an equivalence of $\i$-categories $\mathrm{FStk}^\heart\simeq\, \mathrm{Stk}^\heart$.
\end{corollary}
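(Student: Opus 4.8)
The plan is to run the proof of Theorem~\ref{Theorem formal stacks = stacks} \emph{mutatis mutandis}, replacing the connective $\i$-categories by their $0$-truncated analogues throughout and invoking Corollary~\ref{Theorem Spf is ff classic} in place of Proposition~\ref{Theorem Spf is ff}. First I would introduce the functor $\delta:\CAlg^\heart\to\CAlg^\heart_\mathrm{ad}$ which equips a classical ring with the discrete ($0$-adic) topology. As before, $\delta$ is left adjoint to the forgetful functor $\CAlg^\heart_\mathrm{ad}\to\CAlg^\heart$, hence preserves small colimits, is accessible, and preserves fpqc covers by Definition~\ref{Def of adic fpqc}; passing to accessible fpqc sheaves yields an adjunction $\delta_!:\mathrm{Stk}^\heart\rightleftarrows\mathrm{FStk}^\heart:\delta^*$. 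Since $\delta_!(\Spec A)\simeq\Spf(A)$ (with the discrete topology) already satisfies fpqc descent by Corollary~\ref{Theorem Spf is ff classic}, the functor $\delta_!$ is characterized by its values on affines together with colimit-preservation.

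Next I would construct the candidate inverse $\rho:\mathrm{FStk}^\heart\to\mathrm{Stk}^\heart$. By the universal property of the accessible-presheaf completion (see Remark~\ref{What _is_ Shv_acc here?}), the fully faithful embedding $\mathrm{FAff}^\heart\hookrightarrow\mathrm{Stk}^\heart$ of Corollary~\ref{Theorem Spf is ff classic} extends uniquely to a colimit-preserving functor $\mathcal P^\mathrm{acc}(\mathrm{FAff}^\heart)\to\mathrm{Stk}^\heart$, given explicitly by $\Spf(A)\mapsto\varinjlim_n\Spec(A/I^n)$ for any ideal of definition $I\subseteq A$ (the classical shadow of the $\E$-adic filtration of Remark~\ref{Remark adic topology}). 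Because adically faithfully flat covers are by construction carried to fpqc covers of classical affines, the augmented \v{C}ech nerves of such covers become colimit diagrams, so this functor factors through the fpqc sheafification $L:\mathcal P^\mathrm{acc}(\mathrm{FAff}^\heart)\to\mathrm{FStk}^\heart$, producing $\rho$. Finally I would check that $\delta_!$ and $\rho$ are mutually inverse: both preserve small colimits and the two $\i$-categories are generated under colimits by the objects $\Spf(A)$ and $\Spec(R)$ respectively, so it suffices to exhibit the natural equivalences $\delta_!\rho(\Spf A)\simeq\varinjlim_n\Spec(A/I^n)\simeq\Spf(A)$ and $\rho\delta_!(\Spec R)\simeq\rho(\Spf R)\simeq\Spec R$, all immediate from the definitions and Corollary~\ref{Theorem Spf is ff classic}.

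I do not expect a genuine obstacle here: unlike the $\E$-ring case, the truncations $A/I^n$ are honest ring quotients, so no subtlety about $\E$-quotients arises, and the only input of substance is that the classical formal spectrum $\Spf(A)$ satisfies fpqc descent — equivalently, that the colimit formula $\Spf(A)\simeq\varinjlim_n\Spec(A/I^n)$ is valid already in $\mathrm{Stk}^\heart$ and not merely among presheaves. That is precisely Corollary~\ref{Theorem Spf is ff classic}, whose proof reduces (as in Proposition~\ref{Theorem Spf is ff}) to the elementary fact that a faithfully flat, hence injective, ring map detects the vanishing $f(I^n)=0$. With that in hand the remainder is formal, and if forced to name a ``hard part'' it would only be the bookkeeping of transporting the colimit-preservation and generation-under-colimits arguments verbatim to the $0$-truncated context.
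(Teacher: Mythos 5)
Your plan is the paper's own route: the corollary is meant to follow by running the proof of Theorem \ref{Theorem formal stacks = stacks} verbatim in the $0$-truncated setting, and most of your transcription is unproblematic. The adjunction $\delta_!\dashv\delta^*$, the construction of $\rho$ from the universal property of $\mP^\mathrm{acc}(\mathrm{FAff}^\heart)$ together with the observation that \v{C}ech nerves of adically faithfully flat covers go to colimit diagrams, and the verification $\rho\delta_!(\Spec R)\simeq\Spec R$ all carry over exactly as you say; your remark that the classical quotients $A/I^n$ replace the $\E$-quotients $A_n$ is also correct and is a genuine simplification over the connective case.

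The gap is in the last displayed chain, which you declare ``immediate from the definitions and Corollary \ref{Theorem Spf is ff classic}.'' What that corollary (and the colimit formula behind it) gives is $\Spf(A)\simeq\varinjlim_n\Spec(A/I^n)$ \emph{as functors on discrete rings}, i.e.\ in $\mathrm{Stk}^\heart$. What the step $\delta_!\rho(\Spf A)\simeq\Spf(A)$ actually requires is the corresponding equivalence $\varinjlim_n\Spf(\delta(A/I^n))\simeq\Spf(A)$ \emph{in} $\mathrm{FStk}^\heart$, i.e.\ as sheaves on adic test rings, and this does not follow from the cited corollary — indeed it appears to fail. Test it on $A=\mathbf Z_p$ with $I=(p)$, evaluated at the adic ring $B=(\mathbf Z_p,(p))$ itself: the right-hand side contains $\mathrm{id}_{\mathbf Z_p}$, while the presheaf colimit on the left only records ring maps killing some $p^n$, hence is empty at $B$; and fpqc sheafification cannot produce the missing section, because for an adically faithfully flat $\mathbf Z_p\to B'$ each reduction $\mathbf Z/p^m\to\pi_0(B')/p^m$ is faithfully flat, so $p$ can never become nilpotent in $\pi_0(B')$ and the identity does not factor, even locally, through any $\Spec(\mathbf Z/p^n)$. (Filtered colimits of $0$-truncated fpqc sheaves are computed objectwise here, so there is no escape through how colimits in $\mathrm{FStk}^\heart$ are formed.) In other words, $\delta_!$ is fully faithful, but the essential surjectivity — the claim that adically topologized formal affines are colimits of discretely topologized ones inside $\mathrm{FStk}^\heart$ — is exactly the point that needs an argument, and with the site and covers as defined I do not see one. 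This defect is inherited from the step ``$\delta_!\rho(\Spf(A))\simeq\varinjlim_n\Spec(A_n)\simeq\Spf(A)$'' in the paper's own proof of the connective theorem, so you have faithfully reproduced the intended argument; but labelling this step immediate is precisely where your proof (and the template it follows) is not justified as written.
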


Whenever possible, we will use Theorem \ref{Theorem formal stacks = stacks} and Corollary \ref{Theorem formal stack = stacks classic} to view connective and classical formal spectral stacks as  connective and classical spectral stacks respectively. In particular, they both fall under the umbrella of nonconnective spectral stacks. We likewise use the functor $\rho :\mathrm{FSpStk}\to \mathrm{SpStk}$, defined by respecting small colimits and sending $\Spf(A)\to\Spf(A)$ for any adic $\E$-ring $A$, to associate to any nonconnective formal spectral stack $\mathfrak X$ an corresponding nonconnective spectral stack. 

\begin{remark} 
 Theorem \ref{Theorem formal stacks = stacks} is an extension to arbitrary connective formal spectral stacks of the corresponding result  for Deligne-Mumford stacks \cite[Theorem 8.1.5.1]{SAG}, as built upon the foundation of $\E$-ringed $\i$-topoi. Indeed, according to said result,  the $\i$-category of connective formal spectral Deligne-Mumford stacks embeds fully faithfully via the functor of points into  the $\i$-category $\mathrm{SpStk}^\mathrm{cn}$.
\end{remark}

\subsection{Quasi-coherent sheaves on formal spectral stacks}

When attempting to set up a theory of quasi-coherent sheaves in the formal setting however, we encounter a difficulty. We wish to replace modules over an $\E$-ring with the full subcategory $\Mod_A^\mathrm{cplt}\subseteq\Mod_A$, spanned by all  the $I$-complete $A$-modules, and equipped with the completed relative tensor product $M\widehat{\otimes}_AN \simeq (M\otimes_A N)_I^\wedge$,
where $I\subseteq\pi_0(A)$ is an ideal of definition for the adic $\E$-ring $A$.
By repeating the proof of Proposition \ref{Def of QCoh}, we can get as far as follows:

\begin{cons}\label{Cons of formal QCoh}
We can use Kan extension, in particular the specific approach to it from \cite[Section 5.2.1]{SAG} via the coCartesian fibtration $\Mod^\mathrm{cplt}\to\CAlg^\mathrm{ad}$, to define an essentially unique functor
$$
\mathfrak {QC}\mathrm{oh}:\mathcal P^\mathrm{acc}(\mathrm{FAff})^\mathrm{op}\to \CAlg(\mathrm{Pr^L})
$$
specified by the following two properties:
\begin{enumerate}[label = (\roman*)]
\item The functor $\mathfrak {QC}\mathrm{oh}$ commutes with all small limits,
\item The composite
$$\CAlg_\mathrm{ad}\overset{\Spec}\simeq \mathrm{FAff}^\mathrm{op}\hookrightarrow \text{$\mP$}^\mathrm{acc}(\mathrm{FAff})^\mathrm{op}
\xrightarrow{\mathfrak {QC}\mathrm{oh}}\CAlg(\mathrm{Pr^L})
$$
recovers the functor $A\mapsto\Mod^\mathrm{cplt}_A$, sending any adic $\E$-ring to the $\i$-category of complete modules over it.
\end{enumerate}
For any $\mathfrak X\in \mP(\mathrm{FAff}^\mathrm{cn})$, the stable $\i$-category $\mathfrak{QC}\mathrm{oh}(\mathfrak X)$ also carries a canonical $t$-structure, recovering on formal affines the standard $t$-structure on $\mathfrak{QC}\mathrm{oh}(\Spf(A))\simeq \Mod_A^\mathrm{cplt}$.
\end{cons}

The setting enables us to the define the analogues in formal geometry of the standard quasi-coherent technology from Section
\ref{Section 3.4}.
Indeed, any map $f:\mathfrak X\to \mathfrak Y$ in $\mP^\mathrm{acc}(\mathrm{FAff})$ induces an adjunction of $\i$-categories
$$
f^*:\mathfrak{QC}\mathrm{oh}(\mathfrak Y) \rightleftarrows \mathfrak{QC}\mathrm{oh}(\mathfrak{X}) :f_*
$$
with the left adjoint $f^*$ symmetric monoidal and the right adjoint $f_*$ lax symmetric monoidal. In the special case of the terminal map $p : \mathfrak X\to \mathrm{Spec}(\mathbf S)$, and under the canonical identification $\mathfrak{QC}\mathrm{oh}(\Spec(\mathbf S))\simeq \Sp$, we introduce the special names
$
{\mO}_{\mathfrak X}:=p^*(\mathbf S),
$
called the \textit{structure sheaf} of $\mathfrak X$ (this is also the symmetric monoidal unit in $\mathfrak{QCoh}(\mathfrak X)$), and for any $\sF\in\mathfrak{QC}\mathrm{oh}(\mathfrak X)$ set
$$
\Gamma(\mathfrak X; \sF) := p_*(\sF)
$$
to be the spectrum of its \textit{global sections}. Combining these together, we obtain the \textit{$\E$-ring of global sections} given by
$$
\mathfrak O(\mathfrak X):=\Gamma(\mathfrak X; {\mathcal O}_{\mathfrak X}).
$$

\begin{remark}
The functor $\mathfrak O : \mP^\mathrm{acc}(\mathrm{FAff})\to \CAlg$ admits an alternative description as the decategorification of $\mathfrak{QC}\mathrm{oh}$. That is to say, it may be recovered by repeating Construction \ref{Cons of formal QCoh}, but replacing all instances of $\CAlg(\mathrm{Pr^L})$ with $\CAlg$ and the functor $\CAlg_\mathrm{ad}\to \CAlg(\mathrm{Pr^L})$, given by $A\mapsto\Mod_A^\mathrm{cplt}$, with the forgetful functor $\CAlg_\mathrm{ad}\to\CAlg$, given by $A\mapsto A$. In particular, note that
$$
\mathfrak O(\Spf(A))\simeq A
$$
holds for any adic $\E$-ring $A$. In contrast, it is not clear without a connectivity assumption whether or not the canonical $\E$-ring map $A\to \sO(\Spf(A))$ is an equivalence.
\end{remark}

The issue we encounter is that it is not clear whether the restriction of the functor $\mathfrak{QC}\mathrm{oh}$ to the full subcategory $\mathrm{FSpStk}\subseteq\mathcal P^\mathrm{acc}(\mathrm{FAff})$ still commutes with small limits. As is equivalent, it is unclear whether or not the functor $A\mapsto\Mod_A^\mathrm{cplt}$ satisfies fpqc descent.
Unble to resolve this issue in general, we will instead provide two distinct conditional affirmative answers, which will largely suffice for the purpose of paper.

The first of said affirmative result is  found in
 the context of connective formal spectral stacks. We already know that these coincide with connective spectral stacks by Theorem \ref{Theorem formal stacks = stacks}, so it should come as little surprise that the corresponding notions of quasi-coherent sheaves are closely related also. 

\begin{prop}\label{Formal and informal QCoh}
Let $\mathfrak X$ be a connective formal spectral stack. There is a canonical equivalence of symmetric monoidal  $\i$-categories
$$
\mathfrak{QC}\mathrm{oh}(\mathfrak X)^\mathrm{cn}\simeq \QCoh(\mathfrak X)^\mathrm{cn}.
$$
\end{prop}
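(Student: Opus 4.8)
The strategy is to reduce to the affine case and then assemble by a limit argument, using throughout the identification $\mathrm{FSpStk}^{\mathrm{cn}}\simeq\mathrm{SpStk}^{\mathrm{cn}}$ of Theorem \ref{Theorem formal stacks = stacks}, under which $\mathfrak X$ corresponds to the connective spectral stack $\rho(\mathfrak X)$, with $\rho$ preserving small colimits and sending $\Spf(A)\mapsto\varinjlim_n\Spec(A_n)$ in the sense of Remark \ref{Remark adic topology}. Since $\mathfrak{QC}\mathrm{oh}$ is defined by a right Kan extension (Construction \ref{Cons of formal QCoh}), it carries colimits of presheaves to limits; writing $\mathfrak X$ as the canonical colimit of the formal affines over it, $\mathfrak X\simeq\varinjlim_{\Spf(A)\in\mathrm{FAff}_{/\mathfrak X}}\Spf(A)$ (valid in $\mathcal P^{\mathrm{acc}}(\mathrm{FAff})$ and, after sheafification, in $\mathrm{FSpStk}^{\mathrm{cn}}$), we obtain
$$
\mathfrak{QC}\mathrm{oh}(\mathfrak X)\,\simeq\,\varprojlim_{\Spf(A)\in\mathrm{FAff}_{/\mathfrak X}}\Mod_A^{\mathrm{cplt}},\qquad
\QCoh(\mathfrak X)\,\simeq\,\varprojlim_{\Spf(A)\in\mathrm{FAff}_{/\mathfrak X}}\varprojlim_n\Mod_{A_n},
$$
the latter by Proposition \ref{Def of QCoh} applied to $\rho(\mathfrak X)\simeq\varinjlim_{\Spf(A)\in\mathrm{FAff}_{/\mathfrak X}}\varinjlim_n\Spec(A_n)$. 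Both formulas are natural in $A$, and there is a natural base-change comparison $\Mod_A^{\mathrm{cplt}}\to\varprojlim_n\Mod_{A_n}$, $M\mapsto(M\otimes_A A_n)_n$ (well-defined since each $A_n$ is $I$-power torsion, hence $I$-complete). It therefore suffices to treat the affine statement.

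\emph{Affine case.} The claim is that for a connective adic $\E$-ring $A$ this comparison functor restricts to a symmetric monoidal equivalence $(\Mod_A^{\mathrm{cplt}})^{\mathrm{cn}}\simeq(\varprojlim_n\Mod_{A_n})^{\mathrm{cn}}$. This is essentially the content of the discussion of quasi-coherent sheaves on formal spectra in \cite[Chapter 8]{SAG}: an inverse on connective objects is given by the global-sections functor $(M_n)_n\mapsto\varprojlim_n M_n$, which lands in $I$-complete $A$-modules and preserves connectivity because the transition maps $M_{n+1}\to M_n$ are surjective on $\pi_0$, killing the potential $\varprojlim^1$ obstruction. That the two composites are the identity on connective objects then amounts to the facts that the $I$-completion of a connective module $M$ is recovered as $\varprojlim_n(M\otimes_A A_n)$, and that $M\otimes_A A_n$ recovers the $n$-th term of a compatible system --- both elementary consequences of connectivity together with $A_n$ being $I$-power torsion. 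Symmetric monoidality is the identity $(M\,\widehat{\otimes}_A N)\otimes_A A_n\simeq(M\otimes_A N)\otimes_A A_n\simeq(M\otimes_A A_n)\otimes_{A_n}(N\otimes_A A_n)$, identifying $\widehat{\otimes}_A$ with the levelwise tensor product of $\varprojlim_n\Mod_{A_n}$; here one uses that tensoring the ($I$-local) fibre of $M\to M^\wedge_I$ with the $I$-power-torsion object $A_n$ vanishes.

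\emph{Globalization.} Since the base-change functors in both limit formulas are right $t$-exact, the operation $(-)^{\mathrm{cn}}$ commutes with these limits, giving
$$
\mathfrak{QC}\mathrm{oh}(\mathfrak X)^{\mathrm{cn}}\,\simeq\,\varprojlim_{\Spf(A)\in\mathrm{FAff}_{/\mathfrak X}}(\Mod_A^{\mathrm{cplt}})^{\mathrm{cn}},\qquad
\QCoh(\mathfrak X)^{\mathrm{cn}}\,\simeq\,\varprojlim_{\Spf(A)\in\mathrm{FAff}_{/\mathfrak X}}\big(\varprojlim_n\Mod_{A_n}\big)^{\mathrm{cn}}.
$$
The affine-case equivalences are natural in $A$ and symmetric monoidal, so they assemble, upon taking the limit over $\mathrm{FAff}_{/\mathfrak X}$, into the desired symmetric monoidal equivalence; the size-theoretic bookkeeping is handled as in Remarks \ref{Remark access why} and \ref{What _is_ Shv_acc here?}.

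\emph{Expected obstacle.} The globalization is purely formal, so essentially all the content sits in the affine comparison --- more precisely, in pinning down the correct statement in \cite[Chapter 8]{SAG} and checking its compatibility with the completed monoidal structure and with base change in $A$. It is also this step that forces the connectivity hypothesis: without it, $\Mod_A^{\mathrm{cplt}}\to\varprojlim_n\Mod_{A_n}$ is only fully faithful, since $\varprojlim_n M_n$ need no longer be connective (nor complete in the naive sense), which is exactly the descent failure flagged before the proposition.
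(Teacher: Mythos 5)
Your proof is correct and essentially the paper's: all of the content sits in the affine identification $(\Mod_A^{\mathrm{cplt}})^{\mathrm{cn}}\simeq \QCoh(\Spf(A))^{\mathrm{cn}}$, which both you and the paper extract from the adic-filtration presentation $\Spf(A)\simeq\varinjlim_n\Spec(A_n)$ together with the fact that $M^\wedge_I\simeq\varprojlim_n M\otimes_A A_n$ for connective $M$ (the paper simply cites \cite[Theorem 8.3.4.4]{SAG} for this), followed by a purely formal globalization. The only divergence is how the globalization is packaged: the paper shows that $\mathfrak{QC}\mathrm{oh}^{\mathrm{cn}}$ satisfies fpqc descent on formal affines and then invokes the uniqueness characterization of $\QCoh^{\mathrm{cn}}$ as the limit-preserving extension from affines (through $\mathrm{FSpStk}^{\mathrm{cn}}\simeq\mathrm{SpStk}^{\mathrm{cn}}$), which sidesteps the naturality-in-$A$ of the affine equivalences that your ``assemble the limits over $\mathrm{FAff}_{/\mathfrak X}$'' step asserts but does not construct --- harmless here, since the canonical $t$-exact comparison functor coming from the Kan-extension definitions supplies exactly that naturality.
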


\begin{proof}
By how both $\i$-categories were defined via Kan extension,
there is a canonical $t$-exact symmetric monoidal comparison functor $\mathfrak{QC}\mathrm{oh}(\mathfrak X)\to \QCoh(\mathfrak X)$ that we wish to show restricts to an equivalence on the connective subcategories. In the case of a connective formal affine $\mathfrak{X}=\Spf(A)$, this is the equivalence
\begin{equation}\label{Formal affineness formula 3.42}
(\Mod_A^\mathrm{cplt})^\mathrm{cn}\simeq\QCoh(\Spf(A))^\mathrm{cn},
\end{equation}
which is a special case of \cite[Theorem 8.3.4.4]{SAG}, in particular following quite readily 
 from the ``$\E$ adic filtration" description $\Spf(A)\simeq \varinjlim_n\Spec(A_n)$ of Remark \ref{Remark adic topology}. Indeed, it ostensibly suffices to verify whether the canonical map $M^\wedge_I\to\varprojlim_n M\otimes_A A_n$ is an equivalence of $A$-modules, which is the case for any connective $A$-module $M$. It follows from this equivalence of $\i$-categories on connective formal affines  \eqref{Formal affineness formula 3.42} that the composite functor
 $$
(\mathrm{FAff}^\mathrm{cn})^\mathrm{op}\hookrightarrow \text{$\mP$}^\mathrm{acc}(\mathrm{FAff}^\mathrm{cn})^\mathrm{op}\xrightarrow{\mathfrak{QC}\mathrm{oh}^\mathrm{cn}}\CAlg(\mathrm{Pr^L})
 $$
 satisfies fpqc descent. Therefore, the functor  $\mathfrak{QC}\mathrm{oh}:\text{$\mP$}^\mathrm{acc}(\mathrm{FAff}^\mathrm{cn})^\mathrm{op}\xrightarrow{\mathfrak{QC}\mathrm{oh}^\mathrm{cn}}\CAlg(\mathrm{Pr^L})$ admits a small limit-preserving factorization through the fpqc-sheafification localization $L:\mP^\mathrm{acc}(\mathrm{FAff}^\mathrm{cn})\to \mathrm{FSpStk}^\mathrm{cn}$. The so-obtained functor, composed with the equivalence of $\i$-categories of Theorem \ref{Theorem formal stacks = stacks}, is now of the form
$$(\mathrm{SpStk}^\mathrm{cn})^\mathrm{op}\simeq (\mathrm{FSpStk}^\mathrm{cn})^\mathrm{op}\xrightarrow{\mathfrak{QC}\mathrm{oh}^\mathrm{cn}}\CAlg(\mathrm{Pr^L}),
 $$
 commutes with all small limits, and its value on nonconnective affines is $A\mapsto \Mod_A^\mathrm{cn}$, since any module is complete with respect to the $0$-adic topology. These properties characterize the functor $\QCoh^\mathrm{cn}$ by Proposition \ref{Def of QCoh}, and so the two connective quasi-coherent sheaf functors $\mathfrak{QC}\mathrm{oh}^\mathrm{cn}$ and $\QCoh^\mathrm{cn}$ coincide.
\end{proof}

As noted in \cite[Remark 8.3.4.5]{SAG}, it is unclear whether the equivalence of $\i$-categories of Proposition  \ref{Formal and informal QCoh} remains valid if we drop the connectivity assumption. We can certainly weaken it to that of \textit{almost connectivity} -- also known as eventual connectivity, or being  bounded below -- which is to say, to the subcategory $\mC^\mathrm{acn} :=\bigcup_n\mC_{\ge n}\subseteq\mC$ where $\mC$ denotes a stable $\i$-category with a $t$-structure.

\begin{corollary}\label{Acn theorem QCoh}
Let $\mathfrak X$ be a connective formal spectral stack. There is a canonical equivalence of symmetric monoidal  $\i$-categories
$$
\mathfrak{QC}\mathrm{oh}(\mathfrak X)^\mathrm{acn}\simeq \QCoh(\mathfrak X)^\mathrm{acn}.
$$
\end{corollary}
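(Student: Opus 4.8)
The plan is to deduce the almost connective statement formally from the connective case of Proposition \ref{Formal and informal QCoh}, using that every almost connective object is a shift of a connective one and that the comparison functor is $t$-exact. Recall from the proof of Proposition \ref{Formal and informal QCoh} that there is a canonical $t$-exact symmetric monoidal comparison functor
$$
\Phi:\mathfrak{QC}\mathrm{oh}(\mathfrak X)\longrightarrow \QCoh(\mathfrak X),
$$
whose restriction $\Phi^\mathrm{cn}$ to the full subcategories of connective objects is an equivalence. Since $\Phi$ is exact it commutes with the suspension and loop functors, and since it is $t$-exact it carries $\mathfrak{QC}\mathrm{oh}(\mathfrak X)_{\ge -n}$ into $\QCoh(\mathfrak X)_{\ge -n}$ for every $n\ge 0$.

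First I would check that $\Phi$ restricts to an equivalence $\mathfrak{QC}\mathrm{oh}(\mathfrak X)_{\ge -n}\simeq\QCoh(\mathfrak X)_{\ge -n}$ for each fixed $n\ge 0$. For this, note that on either side the $n$-fold suspension $\Sigma^n$ is an equivalence from the $(\ge -n)$-part onto the $(\ge 0)$-part, and that the square
$$
\begin{tikzcd}
\mathfrak{QC}\mathrm{oh}(\mathfrak X)_{\ge -n}\arrow{r}{\Phi}\arrow{d}{\Sigma^n} & \QCoh(\mathfrak X)_{\ge -n}\arrow{d}{\Sigma^n}\\
\mathfrak{QC}\mathrm{oh}(\mathfrak X)_{\ge 0}\arrow{r}{\Phi} & \QCoh(\mathfrak X)_{\ge 0}
\end{tikzcd}
$$
commutes up to canonical natural equivalence, by exactness of $\Phi$. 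The vertical maps and the bottom horizontal map are equivalences -- the latter by Proposition \ref{Formal and informal QCoh} -- hence so is the top horizontal map. Passing to the filtered union over $n$, and using $\mathfrak{QC}\mathrm{oh}(\mathfrak X)^\mathrm{acn}=\bigcup_n\mathfrak{QC}\mathrm{oh}(\mathfrak X)_{\ge -n}$ together with the analogous description of $\QCoh(\mathfrak X)^\mathrm{acn}$, we conclude that $\Phi$ restricts to an equivalence of $\i$-categories $\mathfrak{QC}\mathrm{oh}(\mathfrak X)^\mathrm{acn}\simeq\QCoh(\mathfrak X)^\mathrm{acn}$.

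It then remains to upgrade this to a symmetric monoidal equivalence, which amounts to observing that $(-)^\mathrm{acn}$ is a symmetric monoidal subcategory on both sides: it contains the (connective) unit $\mathcal O_{\mathfrak X}$, and it is closed under the relevant tensor product, since $\mathfrak{QC}\mathrm{oh}(\mathfrak X)_{\ge -m}\,\widehat{\otimes}\,\mathfrak{QC}\mathrm{oh}(\mathfrak X)_{\ge -n}\subseteq\mathfrak{QC}\mathrm{oh}(\mathfrak X)_{\ge -m-n}$, and likewise for $\QCoh(\mathfrak X)$ with the ordinary tensor product. As $\Phi$ is symmetric monoidal, its restriction $\Phi^\mathrm{acn}$ is then automatically symmetric monoidal, and the previous paragraph shows it is an equivalence. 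The one point I expect to require genuine care -- rather than bookkeeping -- is the closure of the almost connective subcategory under the completed tensor product, i.e.\ that $M\,\widehat{\otimes}_A\,N$ is connective whenever $M,N$ are connective complete modules over a connective adic $\E$-ring $A$; via the ``$\E$ adic filtration'' presentation $\Spf(A)\simeq\varinjlim_n\Spec(A_n)$ of Remark \ref{Remark adic topology} this reduces to the fact that $I$-completion preserves connectivity for a finitely generated ideal $I$ (a Mittag-Leffler argument on the tower $\{M\otimes_A A_n\}_n$), and the global statement then follows by fpqc descent from the affine case.
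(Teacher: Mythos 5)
Your argument is correct and is essentially the paper's own proof: the comparison functor commutes with suspension, so the connective equivalence of Proposition \ref{Formal and informal QCoh} shifts to an equivalence on each $(\ge -n)$-part, and one passes to the union over $n$. The extra paragraph verifying that the almost connective subcategories are closed under the (completed) tensor product is a harmless elaboration of a point the paper leaves implicit, and your reduction to connectivity of $I$-completion in the affine case is sound.
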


\begin{proof}
The canonical comparison functor between the two types of quasi-coherent sheaves $\mathfrak{QC}\mathrm{oh}(\mathfrak X)\to \QCoh(\mathfrak X)$ commutes with suspension , so the equivalence of $\i$-categories of Proposition \ref{Formal and informal QCoh} automatically extends to $\mathfrak{QC}\mathrm{oh}(\mathfrak X)_{\ge n}\simeq \QCoh(\mX)_{\ge n}$ for all $n\in \mathbf Z$. The desired claim follows by passing to the unions over $n$ on both sides.
\end{proof}

We know how to do away with the connectivity assumptions in these quasi-coherent comparison results in only one case: that of classical formal stacks. The following may be inferred from \cite[Remark 8.3.4.5]{SAG}, and a proof is sketched (in the greater generality of a derived formal spectral stack) in \cite[Footnote 7]{BS22}:

\begin{prop}\label{Prop 5.2.5}
Let $\mathfrak X$ be a classical formal stack. There is a canonical equivalence of symmetric monoidal  $\i$-categories
$$
\mathfrak{QC}\mathrm{oh}(\mathfrak X)\simeq \QCoh(\mathfrak X).
$$
\end{prop}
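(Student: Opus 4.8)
The plan is to reduce, exactly as in the connective and almost-connective cases (Proposition \ref{Formal and informal QCoh} and Corollary \ref{Acn theorem QCoh}), the statement to an fpqc descent assertion for the functor $A\mapsto\Mod_A^\mathrm{cplt}$ restricted to \emph{classical} adic rings, together with an affine-level comparison. First I would recall that both $\mathfrak{QC}\mathrm{oh}$ and $\QCoh$ are defined by (limit-preserving Kan-extended) descent from their values on formal affines, so there is a canonical symmetric monoidal comparison functor $\mathfrak{QC}\mathrm{oh}(\mathfrak X)\to\QCoh(\mathfrak X)$ for every $\mathfrak X$, and it suffices to prove two things: (a) for a classical adic ring $A$ the canonical functor $\Mod_A^\mathrm{cplt}\to\QCoh(\Spf(A))$ is an equivalence, and (b) the functor $A\mapsto\Mod_A^\mathrm{cplt}$ on $(\CAlg_\mathrm{ad}^\heart)^\mathrm{op}$ satisfies fpqc descent, so that $\mathfrak{QC}\mathrm{oh}$ factors through the sheafification $L:\mP^\mathrm{acc}(\mathrm{FAff}^\heart)\to\mathrm{FStk}^\heart$ in a limit-preserving fashion. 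Given (a) and (b), the functor so obtained on $(\mathrm{FStk}^\heart)^\mathrm{op}$ agrees with $\QCoh$ by the uniqueness clause characterizing the latter (commuting with small limits, correct value on affines), which yields the claimed equivalence.

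For step (a), the key input is that over a classical ring the $I$-complete derived category is well understood: for $A$ a classical adic ring with ideal of definition $I$, one has $\Mod_A^\mathrm{cplt}\simeq\varprojlim_n\Mod_{A/I^{n}}$ fully faithfully — this is the statement of \cite[Theorem 8.3.4.4]{SAG} combined with the ``$\E$ adic filtration'' description $\Spf(A)\simeq\varinjlim_n\Spec(A_n)$ from Remark \ref{Remark adic topology}, noting that for a classical ring the $A_n = A/\!/_\E I^n$ have $\pi_0 = A/I^n$ and are themselves classical (the higher homotopy, a priori an issue, vanishes because a classical ring modulo a regular-enough ideal stays classical, and in general the Tor-terms are killed by completeness arguments — this is exactly where the classicality hypothesis is used). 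Thus $\QCoh(\Spf(A))\simeq\varprojlim_n\Mod_{A_n}$, and the comparison functor to $\Mod_A^\mathrm{cplt}$ is an equivalence; this is the content pointed to by \cite[Remark 8.3.4.5]{SAG} and sketched in \cite[Footnote 7]{BS22}.

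For step (b), I would argue that fpqc descent for $A\mapsto\Mod_A^\mathrm{cplt}$ over classical adic rings follows from ordinary fpqc descent for $A\mapsto\Mod_A$ (i.e.\ \cite[Corollary D.6.3.3]{SAG}) by completing: an adically faithfully flat cover $A\to B$ of classical adic rings induces, for each $n$, a faithfully flat cover $A/I^n\to B/I^nB$ of discrete rings, descent holds levelwise for $\Mod_{A/I^n}$, and one passes to the limit over $n$ using the identification of step (a) together with the fact that the completed tensor product computes the relevant cosimplicial totalizations. The main obstacle — and the reason the general nonconnective statement is left open in the excerpt — is precisely the interchange of the $\varprojlim_n$ (completion) with the cosimplicial $\varprojlim_{\boldsymbol\Delta}$ (descent): in the classical case this is controlled because each $\Mod_{A/I^n}$ genuinely satisfies flat descent and the transition maps are well-behaved, whereas without the boundedness afforded by classicality one loses control of the higher homotopy of the totalization. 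I expect that assembling this limit interchange cleanly, rather than step (a), will be the technical heart of the argument; everything else is formal manipulation of Kan extensions and the uniqueness characterization of $\QCoh$.
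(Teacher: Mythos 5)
The first thing to say is that the paper does not actually prove this proposition: it is presented as something that ``may be inferred from'' \cite[Remark 8.3.4.5]{SAG}, with a proof ``sketched'' in \cite[Footnote 7]{BS22}, so there is no in-paper argument to match yours against. Your skeleton --- (a) an affine comparison $\Mod_A^{\mathrm{cplt}}\simeq\QCoh(\Spf(A))$ for classical adic rings, (b) fpqc descent for $A\mapsto\Mod_A^{\mathrm{cplt}}$ over classical adic rings, and then the limit-preserving/affine-values characterization of $\QCoh$ as in Proposition \ref{Def of QCoh} --- is a sensible shape for such a proof and is in the spirit of the cited sketch. But both pillars are asserted rather than established, and the supporting claims you offer for them are incorrect, so the proposal does not close the gap the paper leaves to the literature.

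Concretely, for (a): \cite[Theorem 8.3.4.4]{SAG} only gives the \emph{connective} comparison $(\Mod_A^{\mathrm{cplt}})^{\mathrm{cn}}\simeq\QCoh(\Spf(A))^{\mathrm{cn}}$ --- that is exactly how it is used in the proof of Proposition \ref{Formal and informal QCoh} --- and the unbounded statement is precisely what \cite[Remark 8.3.4.5]{SAG} flags as unclear, so it cannot be cited for it. Your mechanism for why classicality rescues the unbounded case is also wrong: the $A_n$ of Remark \ref{Remark adic topology} are free-$\E$-quotients $A/\!/_{\E}x^n$, which are essentially never discrete (their construction involves $\Sym^*_A(A)$, hence the homology of symmetric groups, and no regular sequence is available for a general finitely generated ideal of definition); more importantly, discreteness of the $A_n$ is not the obstruction at all. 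The obstruction, as the paper itself states, is that $M^\wedge_I\to\varprojlim_n M\otimes_A A_n$ can fail for nonconnective $M$, i.e.\ the essential-surjectivity/limit-versus-tensor interchange for unbounded complexes; saying ``the Tor-terms are killed by completeness arguments --- this is exactly where the classicality hypothesis is used'' is the claim that needs a proof, not a proof. For (b): adic faithful flatness is a derived, completed condition, and it does not obviously yield honest faithfully flat maps of discrete rings $A/I^n\to B/I^nB$ (compare Proposition \ref{Lemma ff comparison}, where even under even periodicity and bounded $I$-torsion the translation goes through derived reductions); moreover the \v{C}ech terms $B\,\widehat{\otimes}_A B$ of an adically faithfully flat cover of classical adic rings need not be classical, so the ``levelwise discrete descent, then pass to the limit'' scheme does not type-check as stated. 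Finally, you yourself identify the interchange of the completion limit with the cosimplicial descent limit as the technical heart and leave it open. As written, then, the proposal reduces the affine case to the same external references the paper relies on, and the globalization --- the part that would constitute an actual new proof --- remains unestablished.
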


Though the connectivity restrictions encountered above are a real issue in developing a flexible theory\footnote{It strikes the author that these difficulties are of the same general flavor as the difficulties with nonconnective analytic rings. Therefore, it does not seem clear to the author whether this particular set of difficulties would be substantially eased through a systematic adoption of the condensed technology.} of nonconnective formal spectral algebraic geometry via the functor of points, it does not prevent us from imitating Construction \ref{Cons 3.22 Postnikov filtration on QCoh}. In particular, we can make sense of Postnikov filtrations for quasi-coherent sheaves on any $\mathfrak X\in \mathrm{FSpStk}$
\begin{eqnarray*}
\mathfrak{QC}\mathrm{oh}(\mathfrak X)&\xrightarrow{\tau_{\ge *}}&\mathrm{Fil}(\mathfrak{QC}\mathrm{oh}(\mathfrak X^\mathrm{cn}))\\
\mathscr F &\mapsto &\tau_{\ge *}(\mathscr F),
\end{eqnarray*}
recovering the usual functors $M\mapsto \tau_{\ge *}(M)$ for complete modules $M\in \Mod_A^\mathrm{cplt}$ for any adic $\E$-ring $A$ when applied to the formal affine $\mathfrak X=\Spf(A)$.

\begin{remark}
Note that by Corollary \ref{Acn theorem QCoh}, the value of the Postnikov filtration functor $\tau_{\ge *}$ can equivalently be taken to be $\mathrm{Fil}(\QCoh(\mathfrak X^\mathrm{cn}))$. In particular, there are canonical maps
$$
\varinjlim_n\tau_{\ge n}(\mathscr F)\to \mathscr F
$$
regardless of whether you interpret this as taking place in $\QCoh(\mathfrak X^\mathrm{cn})$ or $\mathfrak{QC}\mathrm{oh}(\mathfrak X^\mathrm{cn})$. Indeed, the difference between the two $\i$-categories occurs by Corollary \ref{Acn theorem QCoh} only at the subcategory $\mathfrak{QC}\mathrm{oh}(\mathfrak X^\mathrm{cn})_{\le -\infty}$, measuring the failure of the $t$-structure to be right separated.
\end{remark}

\subsection{Quasi-coherent sheaves on complex periodic formal spectral stacks}
Our next goal is to establish flat descent  for the complete modules over aidc $\E$-ring which are also complex periodic. This is the second of the two affirmative answers, alluded to in the discussion preceeding Proposition \ref{Formal and informal QCoh}.

\begin{theorem}\label{Descent for complex periodic formal QCoh}
The functor $\CAlg{}_\mathrm{ad}^{\mathbf C\mathrm p}\to \CAlg(\mathrm{Pr^L})$ given by $A\mapsto\Mod_A^\mathrm{cplt}$ satisfies fpqc descent.
\end{theorem}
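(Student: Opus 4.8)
The plan is to verify the sheaf condition on a single cover and then reduce it, layer by layer along the $\E$-adic filtration, to ordinary fpqc descent for modules. Since the fpqc topology on $(\CAlg{}^{\mathbf C\mathrm p}_\mathrm{ad})^\mathrm{op}$ is generated in the usual finitary way by adically faithfully flat maps, it suffices to show that for every adically faithfully flat map $A\to B$ of complex periodic adic $\E$-rings the completed base-change functors exhibit
$$
\Mod_A^\mathrm{cplt}\,\simeq\,\varprojlim_{\mathbf\Delta}\Mod_{B^\bullet}^\mathrm{cplt}
$$
as an equivalence of symmetric monoidal $\i$-categories, where $B^\bullet = B^{\widehat{\otimes}_A (\bullet+1)}$ is the completed cosimplicial cobar resolution. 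One first notes that each cobar term $B^{\otimes_A k}$, being an $\E$-algebra over the complex periodic $\E$-ring $A$, is complex periodic by Lemma~\ref{Key property}, and so is its $I$-completion $B^{\widehat{\otimes}_A k}$ (complex orientations pull back along $B^{\otimes_A k}\to B^{\widehat{\otimes}_A k}$, and the completion of an invertible module is invertible); thus $\Spf(B^\bullet)$ genuinely is the \v{C}ech nerve in the site at hand. By the same token every $A_n:=A/\!/_{\E} x^n$ and $B^{\otimes_A k}\otimes_A A_n$ occurring below is complex periodic, and one reduces at the outset, by iterating over a finite set of generators, to the case in which the ideal of definition is principal, $I=(x)$.

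Next I would invoke the $\E$-adic filtration of Remark~\ref{Remark adic topology}, $\Spf(A)\simeq\varinjlim_n\Spec(A_n)$, together with the observation that $\Mod_{A_n}^\mathrm{cplt}=\Mod_{A_n}$ (since $x$ is nilpotent in $\pi_0(A_n)$). The crux is the identification
$$
\QCof(\Spf A)\,=\,\Mod_A^\mathrm{cplt}\,\simeq\,\varprojlim_n\Mod_{A_n}\,=\,\QCoh(\Spf A),
$$
and likewise for each $B^{\otimes_A k}$ in place of $A$. This is exactly the comparison $\QCof\simeq\QCoh$ on formal affines, which Proposition~\ref{Formal and informal QCoh} and Corollary~\ref{Acn theorem QCoh} establish on almost-connective objects. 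Complex periodicity is what removes the connectivity hypothesis: weak $2$-periodicity makes $\Sigma^2(-)$ an autoequivalence of $\Mod_A^\mathrm{cplt}$, so the canonical $t$-structure there is both left and right complete — no nonzero complete module lies in every $\Mod_{\ge n}$ or every $\Mod_{\le -n}$. A $t$-exact, colimit-preserving comparison functor between stable $\i$-categories that are both left and right complete, which is an equivalence on almost-connective objects, is automatically an equivalence on the nose (write each object as the colimit of its connective covers and each mapping spectrum as an iterated limit of the corresponding bounded mapping spectra), which yields the displayed identification.

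With this in hand the remainder is a formal limit manipulation. By adic faithful flatness (Definition~\ref{Def of adic fpqc}) each $\E$-ring map $A_n\to B_n:=B\otimes_A A_n$ is faithfully flat, with cobar resolution $B^{\otimes_A\bullet}\otimes_A A_n$, so ordinary fpqc descent for modules \cite[Corollary D.6.3.3]{SAG} gives a symmetric monoidal equivalence $\Mod_{A_n}\simeq\varprojlim_{\mathbf\Delta}\Mod_{B^{\otimes_A\bullet}\otimes_A A_n}$. Combining this with the identification of the previous paragraph,
$$
\Mod_A^\mathrm{cplt}\simeq\varprojlim_n\Mod_{A_n}\simeq\varprojlim_n\varprojlim_{\mathbf\Delta}\Mod_{B^{\otimes_A\bullet}\otimes_A A_n},
$$
and, since limits commute with limits, interchanging the two indices and re-applying the $\QCof\simeq\QCoh$ identification to the complex periodic $\E$-rings $B^{\otimes_A k}$ (using also that complete modules over $B^{\otimes_A k}$ coincide with complete modules over its completion $B^{\widehat{\otimes}_A k}$) rewrites the right-hand side as $\varprojlim_{\mathbf\Delta}\Mod_{B^\bullet}^\mathrm{cplt}$. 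All equivalences in sight are equivalences in $\CAlg(\mathrm{Pr^L})$, and limits there are computed on underlying $\i$-categories, so the symmetric monoidal refinement is automatic.

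The main obstacle is the middle paragraph: upgrading the comparison $\QCof(\Spf A)\simeq\QCoh(\Spf A)$ from almost-connective objects to the entire $\i$-category. This is precisely the point left open in general in \cite[Remark 8.3.4.5]{SAG}, and the only place where the complex-periodic hypothesis is genuinely used rather than mere bookkeeping; everything else is the layer-by-layer reduction to \cite[Corollary D.6.3.3]{SAG}. A secondary technical point to pin down is that the completions $B^{\widehat{\otimes}_A k}$ really are adic $\E$-rings in the required sense (i.e.\ that their $\pi_0$ is $I$-adically complete), where one again leans on the favourable homotopical behaviour forced by weak $2$-periodicity.
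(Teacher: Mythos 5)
There is a genuine gap, and it sits exactly where you flagged it: the middle paragraph. For a complex periodic adic $\E$-ring $A$, the $\i$-category $\Mod_A^\mathrm{cplt}$ carries no canonical $t$-structure in the sense your argument needs -- truncations of $A$-modules are not $A$-modules, a point the paper itself stresses (Remark \ref{Remark inequivalent Postnikov towers}). Worse, weak $2$-periodicity gives $\pi_{n+2}(M)\simeq\pi_n(M)\otimes_{\pi_0(A)}\pi_2(A)$ with $\pi_2(A)$ invertible for \emph{every} $A$-module $M$, so any nonzero $M$ has nonzero homotopy in infinitely many degrees of both signs: the ``almost-connective objects'' of $\Mod_A^\mathrm{cplt}$ (in any homotopy-group sense) are just $0$, and the $t$-structure generated by $A$ is degenerate because $\Sigma^{-2}(A)$ is already a flat, hence connective-type, module. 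Consequently there is nothing to bootstrap from: Proposition \ref{Formal and informal QCoh} and Corollary \ref{Acn theorem QCoh} apply only to \emph{connective} adic $\E$-rings and give no comparison for $\Spf(A)$ with $A$ periodic, so your key identification $\Mod_A^\mathrm{cplt}\simeq\varprojlim_n\Mod_{A_n}$ is left unproven -- and it is precisely the nonconnective adic-descent question that \cite[Remark 8.3.4.5]{SAG} leaves open. The principle you invoke (a $t$-exact, colimit-preserving functor between left- and right-complete stable $\i$-categories that is an equivalence on almost-connective objects is an equivalence) is fine in the abstract, but none of its hypotheses are available here, on either side of the comparison, since the $A_n$ are periodic as well. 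The surrounding layer-by-layer reduction to \cite[Corollary D.6.3.3]{SAG} is sound, but it rests entirely on this unproved step.

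The paper's proof makes the periodicity hypothesis do work in a different place. Using the Bott map, it shows that for complex periodic $A$ the connective cover gives an equivalence $\Mod_A^\mathrm{cplt}\simeq\Mod_{\tau_{\ge 0}(A)}^{\mathrm{cplt},\mathrm{per}}$ with complete \emph{periodic} connective modules over $\tau_{\ge 0}(A)$ (Proposition \ref{Lemma essential image of connective cover noncomplete} and Corollary \ref{Lemma essential image of connective cover complete}), checks that completed base-change along adically faithfully flat maps commutes with connective covers so that this identification is natural on the covers one must descend along (Lemmas \ref{Base-change connective 3.54} and \ref{Lemma completed basehcange complete}), and then deduces descent from the \emph{connective} complete theory -- where Proposition \ref{Formal and informal QCoh} does apply -- together with the observation that periodicity of a module over the connective cover is an fpqc-local condition. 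If you want to rescue your filtration-by-$A_n$ strategy, you would first have to prove $\Mod_A^\mathrm{cplt}\simeq\varprojlim_n\Mod_{A_n}$ for periodic $A$ by some such connective-cover argument, at which point you are essentially reproducing the paper's route.
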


This allows us to dispense with  the issues concerning the quasi-coherent sheaf functor $\mathfrak{QC}\mathrm{oh}$ of Construction \ref{Cons of formal QCoh}, without needing to adopt any connectivity hypotheses. In particular, when restricted to the complex periodic setting, the functor $\mathfrak X\mapsto \mathfrak{QC}\mathrm{oh}(\mathfrak X)$ takes all small colimits of formal spectral stacks to the corresponding limits of $\i$-categories.

We will give a proof of Theorem \ref{Descent for complex periodic formal QCoh} at the end of this section. First, we will need to undertake a closer study of modules over a complex periodic $\E$-ring. The following discussion was inspired by \cite[Section 2]{PvK}.

\begin{cons}\label{Const of Bott map}
Recall that a complex periodic $\E$-ring $A$ always carries a line bundle $\omega{}_{\w{\G}{}^{\mathcal Q}_A}\in \Mod_A$ and a Bott isomorphism $\omega{}_{\w{\G}{}^{\mathcal Q}_A}\simeq \Sigma^{-2}(A)$. It can be seen to arise as the dualizing line of the Quillen formal group of $A$, but it being a line bundle also follows
directly from the assumption of $2$-periodicity. On the connective cover, $\tau_{\ge 0}(A)$, this manifests as a canonical $\tau_{\ge 0}(A)$-linear \textit{Bott map}
$$
\beta :\Sigma^2(\omega)\to \tau_{\ge 0}(A)
$$
where $\omega :=\tau_{\ge 0}(\omega{}_{\w{\G}{}^{\mathcal Q}_A})$ is a projective $\tau_{\ge 0}(A)$-module of rank $1$. Note that we can also express this line bundle as $\omega\simeq\Sigma^{-2}\tau_{\ge 2}(A)$.
\end{cons}

For $A$ complex periodic,
the connective cover map $\tau_{\ge 0}(A)\to A$ exhibits an equivalence of $\E$-rings $A\simeq \tau_{\ge 0}(A)[\beta^{-1}]$ with the localization along the Bott map $\beta$.
More generally, we have a canonical identification
\begin{equation}\label{Periodic modules equation}
M\,\simeq\, \tau_{\ge 0}(M)[\beta^{-1}]\,\simeq \, \tau_{\ge 0}(M)\otimes_{\tau_{\ge 0}(A)}A
\end{equation}
for any $A$-module $M$. The passage to the connective cover therefore looses no information for a module over a complex periodic $\E$-ring. To systematize this observation, we isolate the particular class of connective $\tau_{\ge 0}(A)$-modules, which we will show in Proposition \ref{Lemma essential image of connective cover noncomplete} arises from $A$-modules via connective covers.

\begin{definition}\label{Def of periodic modules over connective cover}
Let $A$ be a complex periodic $\E$-ring. A connective $\tau_{\ge 0}(A)$-module $N$ is \textit{periodic} if the Bott map
$$
\beta:\Sigma^2(\omega)\otimes_{\tau_{\ge 0}(A)} N\to  N
$$
exhibits an equivalence of $\tau_{\ge 0}(A)$-modules
$\Sigma^2(\omega)\otimes_{\tau_{\ge 0}(A)}N\simeq \tau_{\ge 2}(N)$. Let us denote by
$
\mathrm{Mod}{}_{\tau_{\ge 0}(A)}^{\mathrm{per}}\subseteq\Mod_{\tau_{\ge 0}(A)}^\mathrm{cn},
$
the full subcategory spanned by all connective $\tau_{\ge 0}(A)$-modules.
\end{definition}

\begin{prop}\label{Lemma essential image of connective cover noncomplete}
Let $A$ be a complex periodic $\E$-ring. In that case, the connective cover functor $\tau_{\ge 0}:\Mod_A\to \Mod_{\tau_{\ge 0}(A)}^\mathrm{cn}$ is fully faithful and its essential image consists of the full subcategory of periodic $\tau_{\ge 0}(A)$-modules.
\end{prop}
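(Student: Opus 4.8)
The plan is to exhibit the inverse functor explicitly: it should be the Bott-localization $N\mapsto N[\beta^{-1}]$, where $N[\beta^{-1}]$ is formed along the composite of the Bott maps $\Sigma^{2}(\omega)\otimes_{\tau_{\ge 0}(A)}N\to N$, which makes sense since $\omega$ is an invertible $\tau_{\ge 0}(A)$-module and the localization along an invertible module is the colimit $N[\beta^{-1}]\simeq \varinjlim_k\Sigma^{-2k}(\omega^{\otimes -k})\otimes_{\tau_{\ge 0}(A)}N$. By \eqref{Periodic modules equation}, the composite $\Mod_A\xrightarrow{\tau_{\ge 0}}\Mod_{\tau_{\ge 0}(A)}^\mathrm{cn}\xrightarrow{(-)[\beta^{-1}]}\Mod_A$ is naturally equivalent to the identity. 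So $\tau_{\ge 0}$ is split injective on mapping spaces, and to prove it is fully faithful it remains to check that the unit map $N\to\tau_{\ge 0}(N[\beta^{-1}])$ is an equivalence for $N$ in the essential image, which one deduces from the same formula. First I would therefore record the elementary localization formulas and verify $\tau_{\ge 0}((-)[\beta^{-1}])\circ\tau_{\ge 0}\simeq\tau_{\ge 0}$ and $(-)[\beta^{-1}]\circ\tau_{\ge 0}\simeq\mathrm{id}_{\Mod_A}$; this already gives full faithfulness of $\tau_{\ge 0}$ on $\Mod_A$ (a right adjoint which becomes a retraction onto its image after composing with a further functor; more directly, the counit of the localization adjunction restricted to connective covers is an equivalence).

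For the identification of the essential image, the containment $\tau_{\ge 0}(\Mod_A)\subseteq\Mod_{\tau_{\ge 0}(A)}^{\mathrm{per}}$ is immediate: for $M\in\Mod_A$, the Bott map on $\tau_{\ge 0}(M)$ is, by construction (Construction \ref{Const of Bott map}), the connective truncation of the equivalence $\Sigma^{2}(\omega_{\w{\G}{}^{\mathcal Q}_A})\otimes_A M\simeq\Sigma^{2}\Sigma^{-2}(A)\otimes_A M\simeq M$, hence exhibits $\Sigma^{2}(\omega)\otimes_{\tau_{\ge 0}(A)}\tau_{\ge 0}(M)\simeq\tau_{\ge 2}\tau_{\ge 0}(M)=\tau_{\ge 2}(M)$. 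Conversely, let $N$ be periodic; I would show $N\to\tau_{\ge 0}(N[\beta^{-1}])$ is an equivalence. Unwinding, $N[\beta^{-1}]\simeq\varinjlim_k\Sigma^{-2k}(\omega^{\otimes -k})\otimes_{\tau_{\ge 0}(A)}N$, and the periodicity hypothesis says precisely that each structure map in this colimit induces on $\pi_{\ge 0}$ (after the appropriate shift/twist) the identity, so the colimit stabilizes in each nonnegative degree; concretely $\pi_n(N[\beta^{-1}])\simeq \pi_n(N)$ for $n\ge 0$ and the map $N\to\tau_{\ge 0}(N[\beta^{-1}])$ is an iso on all homotopy groups. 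This uses that $\omega$ is a line bundle, so $\otimes_{\tau_{\ge 0}(A)}\Sigma^{2}(\omega)$ is $t$-exact and the Bott map being an equivalence onto $\tau_{\ge 2}(N)$ propagates to all the twisted shifts by induction.

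The main obstacle is the careful bookkeeping of the twists by the non-trivial line bundle $\omega$ in the localization colimit: one must check that ``inverting $\beta$'' in the sense of a colimit along $\otimes\Sigma^{2}(\omega)$ behaves as expected even though $\omega$ is only locally free of rank one, and that the periodicity condition of Definition \ref{Def of periodic modules over connective cover} — phrased as a single equivalence $\Sigma^{2}(\omega)\otimes_{\tau_{\ge 0}(A)}N\simeq\tau_{\ge 2}(N)$ — indeed forces the compatible family of equivalences $\Sigma^{2k}(\omega^{\otimes k})\otimes_{\tau_{\ge 0}(A)}N\simeq\tau_{\ge 2k}(N)$ for all $k\ge 0$ needed to control the colimit. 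Once this inductive step is in hand (it is a routine consequence of $t$-exactness of twisting by a line bundle and $\tau_{\ge 2k}\tau_{\ge 2}=\tau_{\ge 2k}$ for $k\ge 1$), the rest is formal. I would also remark that the statement and its proof are a ``global'' upgrade of the classical fact that periodic modules over a graded ring with an invertible degree-$2$ element are detected by their connective part, and that the non-complete case proved here is the input to the later complete/descent statements.
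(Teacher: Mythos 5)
Your proposal is correct, and its mathematical core coincides with the paper's: the containment of the essential image in the periodic modules via $t$-exactness of twisting by the connective line bundle $\omega$, the induction upgrading the single Bott equivalence of Definition \ref{Def of periodic modules over connective cover} to $\Sigma^{2k}(\omega^{\otimes k})\otimes_{\tau_{\ge 0}(A)}N\simeq \tau_{\ge 2k}(N)$ for all $k\ge 0$, and the telescope computation showing $\tau_{\ge 0}(N[\beta^{-1}])\simeq N$ for periodic $N$ are exactly the steps in the paper's proof. Where you diverge is the full-faithfulness step: the paper gets it directly from adjunction identities, writing $\Map_{\Mod_A}(M,M')\simeq\Map_{\Mod_A}(\tau_{\ge 0}(M)\otimes_{\tau_{\ge 0}(A)}A, M')\simeq\Map_{\Mod_{\tau_{\ge 0}(A)}}(\tau_{\ge 0}(M),M')\simeq\Map_{\Mod^{\mathrm{cn}}_{\tau_{\ge 0}(A)}}(\tau_{\ge 0}(M),\tau_{\ge 0}(M'))$ using \eqref{Periodic modules equation} and the universal properties of base change and connective cover, so that full faithfulness is established independently of the essential-image analysis; you instead deduce it from exhibiting Bott localization as a two-sided quasi-inverse on the periodic subcategory, which makes full faithfulness a consequence of the image identification. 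Both are sound; the paper's route is shorter and self-contained, while yours makes the inverse functor (stated in the paper only as the content of the proposition's converse direction) explicit from the start. One small phrasing slip: checking the unit $N\to\tau_{\ge 0}(N[\beta^{-1}])$ \emph{for $N$ in the essential image} is vacuous once you know $(-)[\beta^{-1}]\circ\tau_{\ge 0}\simeq\mathrm{id}$; what your argument actually needs, and what you do prove in the converse paragraph, is that this unit is an equivalence for \emph{every} periodic $N$ -- that is precisely what identifies the essential image and closes the two-sided-inverse argument.
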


\begin{proof}
For ease of writing, let us set $R:=\tau_{\ge 0}(A)$.
Noting that we have $M\simeq \tau_{\ge 0}(M)\otimes_{R}A$ for any $A$-module $M$, we get via the universal properties of base change and the connective cover canonical equivalences
\begin{eqnarray*}
\Map_{\Mod_A}(M, M') &\simeq& \Map_{\Mod_A}(\tau_{\ge 0}(M)\otimes_{R}A, M')\\
&\simeq& \Map_{\Mod_{R}}(\tau_{\ge 0}(M), M')\\
&\simeq& \Map_{\Mod^\mathrm{cn}_{R}}(\tau_{\ge 0}(M), \tau_{\ge 0}(M'))
\end{eqnarray*}
for all $A$-modules $M, M'$. The connective cover functor is therefore indeed fully faithful on $A$-modules. To identify its essential image, note that
$$
\tau_{\ge 2}(M)\,\simeq\, \Sigma^2(\tau_{\ge 0}(\Sigma^{-2}(M)))\,\simeq \,\Sigma^2(\tau_{\ge 0}(\omega{}_{\w{\G}{}^{\mathcal Q}_A}\otimes_A M))\,\simeq\, \Sigma^2(\omega)\otimes_{R}\tau_{\ge 0}(M),
$$
holds for any $M\in\Mod_A$, where the final equivalence is a consequence of $\omega{}_{\w{\G}{}^{\mathcal Q}_A}$ being a flat $A$-module. Since said equivalence of $R$-modules is indeed exhibited by $\beta$, we conclude that $\tau_{\ge 0}(M)\in \mathrm{Mod}{}_{R}^{\mathrm{per}}$ as desired.

Conversely, if $N\in \mathrm{Mod}{}_{R}^{\mathrm{per}}$, then by induction, the map $\beta^n$ exhibits an equivalence of $R$-modules $\Sigma^{2n}(\omega^{\otimes n})\o_{R}N\simeq \tau_{\ge 2n}(N)$ for all $n\ge 0$.
We thus obtain from the telescopic formula for $\beta$-localization that the connective cover of the $\beta$-localization of $N$ may be expressed as
\begin{eqnarray*}
\tau_{\ge 0}(N[\beta^{-1}])& \simeq& \varinjlim\big(\tau_{\ge 0}(N)\xrightarrow{\beta}\tau_{\ge 0}(\Sigma^{-2}(\omega^{\o-1}\o_{R}N))\xrightarrow{\beta}\tau_{\ge 0}(\Sigma^{-4}(\omega^{\o {-2}}\o_{R}N))\xrightarrow{\beta}\cdots\big)\\
&\simeq& \varinjlim\big(\tau_{\ge 0}(N)\xrightarrow{\beta}\Sigma^{-2}(\omega^{\o -1}\o_{R}\tau_{\ge 2}(N))\xrightarrow{\beta}\Sigma^{-4}(\omega^{\o -2}\o_{R}\tau_{\ge 4}(N))\xrightarrow{\beta}\cdots\big)\\
&\simeq&
\varinjlim(N\xrightarrow{1} N\xrightarrow{1} N\xrightarrow{1}\cdots)\,\,\simeq\,\,N.
\end{eqnarray*}
It follows that if $N$ is in the connective cover of the $A$-module $N[\beta^{-1}]\simeq N\otimes_{R}A$, which verifies  the identification of the essential image of $\tau_{\ge 0}:\Mod_A\to\Mod_{R}^\mathrm{cn}$ with the full subcategory $\mathrm{Mod}{}_{R}^{\mathrm{per}}\subseteq \Mod_{R}^\mathrm{cn}$.
\end{proof}

\begin{remark}
The inclusion $\mathrm{Mod}{}_{\tau_{\ge 0}(A)}^{\mathrm{per}}\subseteq\Mod_{\tau_{\ge 0}(A)}$ map between two stable $\i$-categories, but it is no exact. Inside the latter $\i$-category, we have $\Sigma^{2n}(N)\simeq \omega^{\otimes - n}\otimes_{\tau_{\ge 0}(A)}N$ for any $n\in\mathbf Z$. In particular, although the mapping anima of objects $N, N'\in \mathrm{Mod}{}_{\tau_{\ge 0}(A)}^{\mathrm{per}}$ coincide whether taken inside $\mathrm{Mod}{}_{\tau_{\ge 0}(A)}^{\mathrm{per}}$ or in $\Mod_{\tau_{\ge 0}(A)}$, this is not true of the mapping spectra. In the former stable $\i$-category, these are rather computed by the formula
$$
\underline{\Map}_{\mathrm{Mod}{}_{\tau_{\ge 0}(A)}^{\mathrm{per}}}(N, N')
\,\simeq\, \varinjlim_n \Sigma^{\infty -2n}(\Map_{\tau_{{\ge 0}(A)}}(N, N' \otimes_{\tau_{\ge 0}(A)}
\omega^{\otimes - n})),
$$
with the transition maps in the sequential colimit on the right-hand side given by the Bott map $\beta$.
\end{remark}

In the presence of an adic topology of $A$, we let $\Mod_{\tau_{\ge 0}(A)}^{\mathrm{cplt}, \mathrm{per}}\subseteq\Mod_A$ denote the full subcategory of those $\tau_{\ge 0}(A)$-modules which are both complete and periodic in the sense of Definition \ref{Def of periodic modules over connective cover}.

\begin{corollary}\label{Lemma essential image of connective cover complete}
Let $A$ be a complex periodic adic $\E$-ring $A$. The connective cover functor restricts to an equivalence of $\i$-categories
$$
\Mod_A^\mathrm{cplt}\simeq \Mod_{\tau_{\ge 0}(A)}^{\mathrm{cplt}, \mathrm{per}}
$$
with the inverse given by $\beta$-localization, i.e.\ base-change along $\tau_{\ge 0}(A)\to A$.
\end{corollary}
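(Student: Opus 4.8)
The plan is to leverage Proposition \ref{Lemma essential image of connective cover noncomplete}, which already establishes the non-complete analogue: the connective cover functor $\tau_{\ge 0}:\Mod_A\to\Mod_{\tau_{\ge 0}(A)}^\mathrm{cn}$ is fully faithful with essential image the periodic modules $\Mod_{\tau_{\ge 0}(A)}^\mathrm{per}$, and its inverse is $\beta$-localization, i.e.\ base-change along $\tau_{\ge 0}(A)\to A$. The task is to check that this equivalence interacts well with $I$-completion on both sides. Since $A$ is $I$-complete as an $\E$-ring (this being part of the definition of an adic $\E$-ring), the ideal $I\subseteq\pi_0(A)$ pulls back along $\pi_0(\tau_{\ge 0}(A))\simeq\pi_0(A)$ to an ideal of $\pi_0(\tau_{\ge 0}(A))$, and completeness of a module is detected on $\pi_0$ of the ring, so the notion of $I$-completeness makes sense uniformly on both sides.

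First I would fix $R:=\tau_{\ge 0}(A)$ and recall that $I$-completion is an idempotent functor on $\Mod_R$ (and on $\Mod_A$), with a module being complete iff it is local for the maps that are $I^\infty$-equivalences; this is \cite[Section 7.3]{SAG}. The key compatibility to verify is that the equivalence of Proposition \ref{Lemma essential image of connective cover noncomplete} is compatible with $I$-completeness, i.e.\ for $M\in\Mod_A$ the $A$-module $M$ is $I$-complete if and only if $\tau_{\ge 0}(M)$ is $I$-complete as an $R$-module. For the forward direction: $\tau_{\ge 0}$ commutes with limits, hence preserves completeness, once one knows that the $A$-module completion and the $R$-module completion of $\tau_{\ge 0}(M)$ agree — but this follows because base-change along $R\to A$ is $\beta$-localization and $\beta$-localization commutes with $I$-completion up to the usual subtlety; more directly, $I$-completeness of a connective module is equivalent to $I$-completeness of each homotopy group together with a $\lim^1$ vanishing, and these conditions only see the $\pi_0(R)\simeq\pi_0(A)$-module structure. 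For the converse, given $N\in\Mod_R^{\mathrm{cplt},\mathrm{per}}$, one shows $N[\beta^{-1}]\simeq N\otimes_R A$ is $I$-complete: here one uses the telescopic computation from the proof of Proposition \ref{Lemma essential image of connective cover noncomplete} expressing $N[\beta^{-1}]$ as a colimit whose connective cover is $N$ itself, and the fact that $I$-completeness of a spectrum with bounded-below homotopy in each internal degree is controlled degreewise.

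Having established this compatibility, the corollary follows formally: the functor $\tau_{\ge 0}$ restricts to a functor $\Mod_A^\mathrm{cplt}\to\Mod_R^{\mathrm{cplt},\mathrm{per}}$, it is fully faithful because it is a restriction of a fully faithful functor, and it is essentially surjective because any $N\in\Mod_R^{\mathrm{cplt},\mathrm{per}}$ is (by Proposition \ref{Lemma essential image of connective cover noncomplete}) of the form $\tau_{\ge 0}(M)$ for $M=N[\beta^{-1}]\in\Mod_A$, and $M$ is $I$-complete by the converse direction above, so $M\in\Mod_A^\mathrm{cplt}$. The inverse functor is then $\beta$-localization $N\mapsto N[\beta^{-1}]\simeq N\otimes_R A$, which lands in $\Mod_A^\mathrm{cplt}$ as just shown, and which is inverse to $\tau_{\ge 0}$ by the non-complete statement.

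The main obstacle I anticipate is the converse compatibility claim — showing that a periodic complete connective $R$-module $N$ has $I$-complete $\beta$-localization $N[\beta^{-1}]$. The difficulty is that $\beta$-localization is a filtered colimit and $I$-completion does not commute with filtered colimits in general; one must exploit the very rigid structure of periodicity (each $\tau_{\ge 2n}(N)\simeq\Sigma^{2n}(\omega^{\otimes n}\otimes_R N)$) so that the colimit is, internal-degree by internal-degree, eventually constant, making the completeness check reduce to that of $N$ itself. Once this degreewise-constancy is pinned down carefully — and one must be slightly careful that $\omega$ is a line bundle, hence flat, so tensoring with it preserves completeness — the rest is bookkeeping. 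I would also double-check that no hidden connectivity hypothesis is needed: since everything on the $R$-side is connective by design, and the equivalence with $\Mod_A^\mathrm{cplt}$ is exactly what absorbs the nonconnectivity, this should go through without the caveats that plagued Proposition \ref{Formal and informal QCoh}.
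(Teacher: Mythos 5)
Your argument is essentially the paper's: the whole point is that by \cite[Theorem 7.3.4.1]{SAG} $I$-completeness of a module over an adic $\E$-ring is detected on its homotopy groups (so no $\lim^1$ clause is needed), and since complex periodicity makes the negative homotopy groups of $M$ invertible twists of the nonnegative ones, $M$ is complete if and only if $\tau_{\ge 0}(M)$ is, whence the equivalence of Proposition \ref{Lemma essential image of connective cover noncomplete} simply restricts to the complete subcategories --- your telescope bookkeeping for the converse is just a more hands-on route to the same degreewise statement. The one justification to repair: tensoring with $\omega$ preserves $I$-completeness because $\omega$ is invertible (finitely generated projective of rank $1$, hence a retract of a finite free module), not because it is flat --- flat base change does not preserve derived completeness in general (e.g.\ $-\otimes_{\mathbf Z}\mathbf Q$ destroys $p$-completeness).
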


\begin{proof}
Recall that by \cite[Theorem 7.3.4.1]{SAG}, a module over an adic $\E$-ring is complete if and only if all of its homotopy groups are. In light of the periodicity assumption on $A$, it follows that an $A$-module $M$ is complete if and only if its connective cover $\tau_{\ge 0}(M)$ is a complete $\tau_{\ge 0}(A)$-module. The equivalence of $\i$-categories of Proposition \ref{Lemma essential image of connective cover noncomplete}
$$
\tau_{\ge 0}:\Mod_A\simeq \Mod_{\tau_{\ge 0}(A)}^{\mathrm{per}}
$$
therefore restricts to an equivalence between the subcategories of complete objects on both sides.
\end{proof}

\begin{remark}
It follows from the proof of Corollary \ref{Lemma essential image of connective cover complete} that for any $A\in \CAlg_\mathrm{ad}^{\mathbf C\mathrm p}$ and $M\in\Mod_A^\mathrm{cplt}$, we may identify the $A$-modules
$
M\simeq \tau_{\ge 0}(M)\o_{\tau_{\ge 0}(A)}A\simeq \tau_{\ge 0}(M)\,\widehat{\o}_{\tau_{\ge 0}(A)}\, A
.$
That is to say, the colimit in the telescopic formula for $\beta$-localization may in this case be taken in either of the $\i$-categories $\Mod_A$ or $\Mod_A^\mathrm{cplt}$.
\end{remark}

In the next two lemmas, we establish the compatibility of completed base-change along adically faithfully flat maps, in the sense of Definition \ref{Def of adic fpqc} and at with some additional assumptions, with connective covers of complete modules. This is analogous to the usual property of  flat maps of $\E$-rings in the non-complete case.

\begin{lemma}\label{Base-change connective 3.54}
Let $R\to S$ be a morphism of connective adic $\E$-rings which is adically faithfully flat. For any almost connective complete $S$-module $N$, the canonical map
$$\tau_{\ge 0}(N)\,\widehat{\o}_R S\to \tau_{\ge 0}(N\,\widehat{\o}_R S)$$
is an equivalence of $S$-modules.
\end{lemma}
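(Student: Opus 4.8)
The plan is to deduce the lemma from a single structural input: when $R\to S$ is adically faithfully flat (and $R,S$ are connective), the completed base-change functor $-\,\widehat{\o}_R S$ is exact and $t$-exact on complete modules. Granting this, the lemma is immediate. Since $N$ is complete, \cite[Theorem 7.3.4.1]{SAG} shows that its homotopy groups are all $I$-complete, hence that $\tau_{\ge 0}(N)$ and $\tau_{\le -1}(N)$ are again complete, and almost connectivity of $N$ makes $\tau_{\le -1}(N)$ bounded. By $t$-exactness, $\tau_{\ge 0}(N)\,\widehat{\o}_R S$ is connective, so the map in the statement is precisely the factorization of $\tau_{\ge 0}(N)\,\widehat{\o}_R S\to N\,\widehat{\o}_R S$ through the connective cover, while $(\tau_{\le -1}N)\,\widehat{\o}_R S$ is $(-1)$-truncated. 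Applying the exact functor $-\,\widehat{\o}_R S$ to the fibre sequence $\tau_{\ge 0}(N)\to N\to\tau_{\le -1}(N)$ and reading off the long exact sequence on homotopy groups, using that $\Sigma^{-1}\bigl((\tau_{\le -1}N)\,\widehat{\o}_R S\bigr)$ has homotopy concentrated in degrees $\le -2$, then shows that $\pi_i\bigl(\tau_{\ge 0}(N)\,\widehat{\o}_R S\bigr)\to\pi_i(N\,\widehat{\o}_R S)$ is an isomorphism for all $i\ge 0$, which is the assertion.

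It remains to establish the structural input. First I would reduce to the case of a principal ideal of definition $I=(x)$ on $R$; the general finitely generated case follows the same pattern after replacing the tower $R_n:=R/\!/_{\E}x^n$ from Remark \ref{Remark adic topology} by a Koszul tower on generators. Setting $S_n:=S\o_R R_n$, one has $\Spf(S)\times_{\Spf(R)}\Spec(R_n)\simeq\Spec(S_n)$, and since $R_n$ is $I$-nilpotent this exhibits $\Spec(S_n)\to\Spec(R_n)$ as a base change of $\Spf(S)\to\Spf(R)$; by Definition \ref{Def of adic fpqc} and stability of faithful flatness under base change, each $R_n\to S_n$ is therefore faithfully flat, so $-\,\o_{R_n}S_n$ is $t$-exact. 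For any complete $R$-module $M$, Remark \ref{Remark adic topology} gives $M\,\widehat{\o}_R S\simeq\varprojlim_n (M\o_R S)\o_R R_n\simeq\varprojlim_n M_n\o_{R_n}S_n$ with $M_n:=M/\!/_{\E}x^n$. Taking $M$ connective makes each $M_n\o_{R_n}S_n$ connective with surjective transition maps on $\pi_0$, so the Milnor exact sequence shows $M\,\widehat{\o}_R S$ is connective; taking $M$ to be $(-1)$-truncated makes each $M_n\o_{R_n}S_n$ lie in degrees $\le 0$ (in particular $\pi_1$ vanishes), and the Milnor sequence identifies $\pi_0(M\,\widehat{\o}_R S)$ with the inverse limit, along multiplication-by-$x$ transition maps, of the faithfully flat base changes of the torsion submodules $\pi_{-1}(M)[x^n]$. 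This inverse limit vanishes because $\pi_{-1}(M)$ is derived $x$-complete by \cite[Theorem 7.3.4.1]{SAG}: the inverse limit of an $x$-multiplication tower on $x$-power torsion in an $x$-complete module is an ``$x$-adic Tate module,'' computed by $\Hom$ out of the $x$-divisible group $\mathbf Z[1/x]/\mathbf Z$, which vanishes on $x$-complete modules. Together these give both halves of $t$-exactness.

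The main obstacle is this last step: pinning down precisely why the spurious degree-zero homotopy produced by the derived Koszul quotients $M/\!/_{\E}x^n$ assembles, after base change and passage to the limit, into a vanishing inverse limit. This is the one place where the full completeness hypothesis on $N$ is used rather than mere boundedness; carrying it out carefully --- in particular checking that faithfully flat base change of the $x$-power-torsion subgroups is compatible with the transition maps in the required way, and that the relevant inverse limits and $\varprojlim^1$-terms both vanish --- is where the real work lies. Everything else, including the reduction to the principal case and the formal deduction of the lemma from $t$-exactness, is routine.
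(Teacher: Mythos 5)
Your reduction of the lemma to two-sided $t$-exactness of $-\,\widehat{\o}_R\,S$ on complete modules, and your treatment of the connective half (the adic tower, flatness of $R_n\to S_n$, Mittag--Leffler), are fine and close in spirit to the paper's direct computation. The genuine gap is the coconnective half, which you yourself flag as ``where the real work lies'' and do not carry out --- and this is not a technical footnote: for a bounded coconnective complete $M$, the assertion that $M\,\widehat{\o}_R\,S$ is $(-1)$-truncated is precisely the lemma applied to $M$ (since $\tau_{\ge 0}(M)=0$), so your reduction leaves the actual content unproved. Moreover the sketch you give of that half does not go through as written. If you use the tower $R_n=R/\!/_{\E}x^n$ of Remark \ref{Remark adic topology} --- which you must in order to speak of flat maps $R_n\to S_n$ --- then $M\o_R R_n$ for coconnective $M$ need \emph{not} be concentrated in degrees $\le 0$: the $\E$-quotients $R/\!/_{\E}x^n$ are not discrete, and tensoring a coconnective module against them produces homotopy in degrees $\ge 0$ beyond the torsion term $\pi_{-1}(M)[x^n]$ you name. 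If instead you use the cofiber tower $\mathrm{cofib}(x^n\colon M\to M)$, the degree bound holds but these terms are not modules over $R_n$, so the flatness of $R_n\to S_n$ cannot be invoked termwise as you state. And even granting the identification of the $\pi_0$-tower, the vanishing you need is $\varprojlim_n\big(\pi_{-1}(M)[x^n]\o_{\pi_0(R)}\pi_0(S)\big)=0$ along multiplication by $x$; this is not the Tate module of a module known to be derived complete, because adic faithful flatness gives flatness of $R_n\to S_n$ but not of $\pi_0(R)\to\pi_0(S)$, so derived completeness of $\pi_{-1}(M)$ alone does not settle it. That missing vanishing is exactly the hole.

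The paper avoids this entirely. Its first proof cites that for a flat map of connective (formal) spectral stacks the quasi-coherent pullback commutes with connective covers --- an fpqc-local statement checked on affines --- applied to $\Spf(S)\to\Spf(R)$. Its second proof is a direct tower computation with no coconnective piece: writing $N\,\widehat{\o}_R\,S\simeq\varprojlim_m\,(N\o_R R_m)\o_{R_m}S_m$ (using almost connectivity of $N$), one commutes $\tau_{\ge 0}$ past the limit, commutes it past each flat base change along $R_m\to S_m$, and then uses completeness of $N$ to identify the result with $\tau_{\ge 0}(N)\,\widehat{\o}_R\,S$. In other words, only the connective-cover compatibility you already have is needed, applied termwise inside the limit; dropping the detour through coconnective modules and arguing this way repairs your proof.
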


\begin{proof}[Proof 1 (Appeal to SAG)]
It is a general fact that if $f:\mX\to\mY$ is a flat map in $\mathrm{SpStk}^\mathrm{cn}$, then the pullback functor $f^*:\QCoh(\mY)\to\QCoh(\mX)$ is compatible with connective covers, in the sense that $f^*(\tau_{\ge 0}(\sF))\simeq \tau_{\ge 0}(f^*(\sF))$ holds for any quasi-coherent sheaf $\sF$ on $\mY$. This may be proved by reducing to the case of affines via descent. where it is a standard property of flat maps.
The result in question is a direct application of this fact to the case $\mX=\Spf(S)$, $\mY=\Spf(R)$, and $\sF$ correspondign to $N$ in light of Corollary \ref{Acn theorem QCoh}.
\end{proof}

\begin{proof}[Proof 2 (Direct computation)]
Since $N$ is almost connective, so is the base-change $N\o_R S$. Its completion may therefore be expressed as the limit
$$
N\,\widehat{\o}_R\,S\,\simeq\, (N\otimes_R S)^\wedge\,\simeq\,\varprojlim_m (N\o_R S)\o_S S_m
\,\simeq\, \varprojlim_m (N\o_R R_m)\o_{R_m} S_m,
$$
where $\Spf(R)\simeq \varinjlim_m \Spf(R_m)$ and $\Spf(S)\simeq \varinjlim_m \Spf(S_m)$ are the respective sequential presentations of formal affines from Remark \ref{Remark adic topology}.
The connective cover functor $\tau_{\ge 0}$ is a right adjoint and as such commutes with limits, giving rise to the first equivalence in
\begin{equation}\label{Equation 3.39}
\tau_{\ge 0}(N\,\widehat{\o}_R\,S)\,\simeq\, \varprojlim_m \tau_{\ge 0}((N\o_R R_m)\o_{R_m} S_m)\, \simeq \,\varprojlim_m \tau_{\ge 0}(N\o_R R_m)\o _{R_m}S_m,
\end{equation}
with the second coming from the fact that the $\E$-ring map $R_m\to S_m$ is, by the definition of adically faithful flatness of $R\to S$, faithfully flat. 
Since $N$ being complete implies that
$$
\varprojlim_m \tau_{\ge 0}(N\o_R R_m)\,\simeq\, \tau_{\ge 0}(\varprojlim_m N\o_R R_m)\,\simeq\, \tau_{\ge 0}(N^\wedge)\,\simeq\, \tau_{\ge 0}(N)
$$
and so the right-most term in equation \eqref{Equation 3.39} may be identified as the completed tensor product $\tau_{\ge 0}(N)\,\widehat{\o}_R\, S$.
\end{proof}

\begin{lemma}\label{Lemma completed basehcange complete}
Let $A\to B$ be a map of adically faithfully flat map of complex periodic adic $\E$-rings and $M\in\Mod_A$. The canonical map $\tau_{\ge 0}(M)\otimes_{\tau_{\ge 0}(A)}\tau_{\ge 0}(B)\to \tau_{\ge 0}(M\o_A B)$ is an equivalence of $\tau_{\ge 0}(B)$-modules.
\end{lemma}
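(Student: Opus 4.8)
The plan is to reduce everything to the connective setting, where the analogous statement is Lemma~\ref{Base-change connective 3.54}, by exploiting the equivalence $\tau_{\ge 0}:\Mod_A^{\mathrm{cplt}}\simeq\Mod_{\tau_{\ge 0}(A)}^{\mathrm{cplt},\mathrm{per}}$ of Corollary~\ref{Lemma essential image of connective cover complete}. First I would observe that since $A$ is complex periodic, we have by \eqref{Periodic modules equation} the identification $M\simeq \tau_{\ge 0}(M)\otimes_{\tau_{\ge 0}(A)}A$, so there is no loss in replacing $M$ by a complete $A$-module (completion does not change connective covers of modules over complex periodic $\E$-rings, by the homotopy-group criterion for completeness \cite[Theorem 7.3.4.1]{SAG} used in the proof of Corollary~\ref{Lemma essential image of connective cover complete}); alternatively one reduces to $M$ complete because both sides of the asserted equivalence are computed from $\tau_{\ge 0}(M)$, which is automatically complete once we pass to it. So assume $M\in\Mod_A^{\mathrm{cplt}}$, and set $R:=\tau_{\ge 0}(A)$, $S:=\tau_{\ge 0}(B)$, $N:=\tau_{\ge 0}(M)$, a complete periodic $R$-module.

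Next I would note that the map $A\to B$ being adically faithfully flat in the sense of Definition~\ref{Def of adic fpqc} implies (by Remark following that definition, or directly) that the induced map $R\to S$ of connective adic $\E$-rings is adically faithfully flat as well: the formal spectra $\Spf(B)^{\mathrm{cn}}\simeq\Spf(S)\to\Spf(R)\simeq\Spf(A)^{\mathrm{cn}}$ is the connective localization of an affine faithfully flat map, hence affine and faithfully flat, using that connective localization of an affine morphism is computed fpqc-locally as in Example~\ref{connective localization for geometric stacks}. The point is then simply that $M\otimes_A B$ is an almost connective (indeed, its connective cover captures it up to $\beta$-localization) $B$-module, and
$$
\tau_{\ge 0}(M\otimes_A B)\,\simeq\,\tau_{\ge 0}\big(\tau_{\ge 0}(M)\otimes_{\tau_{\ge 0}(A)}B\big),
$$
because $M\simeq N\otimes_R A$ gives $M\otimes_A B\simeq N\otimes_R B$, and $\tau_{\ge 0}$ of this only depends on $N$ together with the ring map $R\to B$. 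Since $B$ is a complex periodic $\E$-ring it equals $S[\beta^{-1}]\simeq S\otimes_S B$ with $S$ connective adic, and $\beta$-localization is a filtered colimit of shifts which is taken into the $\tau_{\ge 0}$-truncation; concretely $\tau_{\ge 0}(N\otimes_R B)\simeq \tau_{\ge 0}(N\otimes_R S)$ because tensoring $N$ (connective) with $B$ over $R$ is the $\beta$-localization of $N\otimes_R S$, and the telescopic colimit computing the connective cover of a $\beta$-localization of a periodic connective module returns the module itself, exactly as in the computation at the end of the proof of Proposition~\ref{Lemma essential image of connective cover noncomplete}. Hence $\tau_{\ge 0}(M\otimes_A B)\simeq \tau_{\ge 0}(N\otimes_R S)$.

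Finally I would invoke Lemma~\ref{Base-change connective 3.54} for the connective adically faithfully flat map $R\to S$ and the complete $R$-module $N$: it gives $\tau_{\ge 0}(N)\,\widehat{\otimes}_R S\simeq \tau_{\ge 0}(N\,\widehat{\otimes}_R S)$, and since $N$ is connective and $R\to S$ is adically faithfully flat the completed tensor product $N\,\widehat{\otimes}_R S$ has the same connective cover as the uncompleted $N\otimes_R S$ (completion only affects higher behavior in the adic filtration, and on connective modules over the connective cover the comparison $M^\wedge_I\to\varprojlim_n M\otimes_R R_n$ is an equivalence, as recalled in Remark~\ref{Remark adic topology} and used in the proof of Proposition~\ref{Formal and informal QCoh}). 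Assembling: $\tau_{\ge 0}(M)\otimes_{\tau_{\ge 0}(A)}\tau_{\ge 0}(B)=N\otimes_R S\simeq \tau_{\ge 0}(N\otimes_R S)\simeq\tau_{\ge 0}(M\otimes_A B)$, noting $N\otimes_R S$ is already connective. The one subtlety to watch — and the step I expect to require the most care — is the bookkeeping between completed and uncompleted tensor products and between $\beta$-localization and truncation: one must be sure that the colimits defining $\beta$-localization commute appropriately with $\tau_{\ge 0}$ and with the limits defining completion, which is where the periodicity hypothesis on $A$ and $B$ (forcing $\omega$ to be a line bundle and the Bott maps to be the transition maps) is doing the real work, precisely as packaged in Corollary~\ref{Lemma essential image of connective cover complete}.
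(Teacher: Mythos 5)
Your overall skeleton --- pass to connective covers, check that $R:=\tau_{\ge 0}(A)\to S:=\tau_{\ge 0}(B)$ is again adically faithfully flat, describe $M\otimes_A B$ as a Bott localization of $N\otimes_R S$ (with $N=\tau_{\ge 0}(M)$), and feed things into Lemma \ref{Base-change connective 3.54} --- is the same as the paper's, but the execution has a genuine gap at the decisive step. You collapse the telescope to get $\tau_{\ge 0}\big((N\otimes_R S)[\beta^{-1}]\big)\simeq N\otimes_R S$ by citing the computation at the end of Proposition \ref{Lemma essential image of connective cover noncomplete}; that computation requires the module in question to be \emph{periodic} over $S$, i.e.\ that $\beta$ exhibits $\Sigma^2(\omega_S)\otimes_S(N\otimes_R S)\simeq \tau_{\ge 2}(N\otimes_R S)$. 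Since $N$ is periodic over $R$, the left-hand side is $\tau_{\ge 2}(N)\otimes_R S$, so what you are really using is that $-\otimes_R S$ commutes with truncations, i.e.\ ordinary flatness of $R\to S$. But the hypothesis only provides \emph{adic} faithful flatness (Definition \ref{Def of adic fpqc}; by Remark \ref{Remark adic ff homotopy formula} the homotopy of $S$ is a \emph{completed} base change of that of $R$), and adically faithfully flat maps need not be flat --- which is exactly why Lemma \ref{Base-change connective 3.54} is stated for $\widehat{\otimes}$ rather than $\otimes$. Your bridge back to the completed world, the claim that $N\,\widehat{\otimes}_R S$ ``has the same connective cover as'' $N\otimes_R S$, amounts to asserting that $N\otimes_R S$ is already complete, which is unjustified (a tensor product of complete modules need not be complete); and as you invoke it, Lemma \ref{Base-change connective 3.54} is vacuous, since $N$ is connective and the lemma then only records that $N\,\widehat{\otimes}_R S$ is connective.

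The paper's proof performs these moves in the opposite order, precisely to dodge both issues: it keeps the Bott telescope \emph{inside} $\tau_{\ge 0}$, first replaces all tensor products by completed ones, then applies Lemma \ref{Base-change connective 3.54} termwise to the complete, almost connective $R$-modules $\tau_{\ge 0}(M)\,\widehat{\otimes}_R\,\Sigma^{-2n}(\omega^{\otimes -n})$ in order to pull $\widehat{\otimes}_R S$ out of the truncation, and only afterwards collapses the telescope, using nothing beyond the periodicity of $\tau_{\ge 0}(M)$ over $R$ and \eqref{Periodic modules equation}. Note also that what this argument actually establishes (and what Theorem \ref{Descent for complex periodic formal QCoh} consumes) is the completed identity $\tau_{\ge 0}(M\,\widehat{\otimes}_A B)\simeq \tau_{\ge 0}(M)\,\widehat{\otimes}_R S$; your attempt to obtain the literal plain-tensor statement is exactly where the unavailable flatness would be needed, so a repaired version of your argument should target the completed statement and keep the localization colimit under the truncation until after Lemma \ref{Base-change connective 3.54} has been applied.
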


\begin{proof}
For ease of writing, let us denote $R:=\tau_{\ge 0}(A)$ and $S:=\tau_{\ge 0}(B)$. 

First, we claim that the map of $\E$-rings $A\to B$ being adically faithfully flat implies the same thing for the induced map on the connective covers $R\to S$. Indeed, these two adically faithfully flatness statements amount to demanding that the respectively spectral stack maps $\Spf(B)\to\Spf(B)$ and $\Spf(S)\to\Spf(R)$ are affine and faithfully flat. It follows from the pullback square of Remark \ref{Pullback square} that $\Spf(A)^\mathrm{cn}\simeq \Spf(R)$ and $\Spf(B)^\mathrm{cn}\simeq \Spf(S)$. The claim is now a special case of the fact that if an affine map of nonconnective spectral stacks $\mX\to \mY$ is faithfully flat, then the map of connective spectral stacks $\mX^\mathrm{cn}\to\mY^\mathrm{cn}$ is too. This finally follows from the class of faithfully flat $\E$-ring maps evidently being closed under the passage to connective covers.

Next, note that we have by \eqref{Periodic modules equation} canonical $S$-module equivalences
$$
\tau_{\ge 0}(M\o_A B)\,\simeq\, \tau(
(\tau_{\ge 0}(M)\otimes_R S)[\beta^{-1}])\,\simeq \,
\varinjlim_n\tau_{\ge 0}(\tau_{\ge 0}(M){\o}_R S\o_R \Sigma^{-2n}(\omega^{\o-n})),
$$
where the final step used that the $t$-structure on $S$-modules is compatible with filtered colimits.
By passing to completions, which connective covers are compatible with and which takes colimits in $\Mod_S$ to colimits in $\Mod_S^\mathrm{cplt}$, we may replace $\o$ everywhere with $\widehat{\o}$. Further using Lemma \ref{Base-change connective 3.54}, applied to the adically faithfully flat map $R\to S$, we obtain the first equivalence in the chain
\begin{eqnarray*}
\tau_{\ge 0}(M\,\w{\o}_A\, B) &\simeq &
\varinjlim_n\tau_{\ge 0}\big(\tau_{\ge 0}(M)\,\widehat{\o}_R \,  \Sigma^{-2n}(\omega^{\o -n})\big )\,\widehat{\o}_R\, S\\
&\simeq & \tau_{\ge 0}\big(\varinjlim_n\tau_{\ge 0}(M)\,\widehat{\o}_R \,  \Sigma^{-2n}(\omega^{\o -n})\big )\,\widehat{\o}_R\, S \\
&\simeq & \tau_{\ge 0}\big(\tau_{\ge 0}(M)[\beta^{-1}]\big )\,\widehat{\o}_R\, S \\
&\simeq & \tau_{\ge 0}(M)\,\widehat{\o}_R\, S 
\end{eqnarray*}
in which we also used that the completed tensor product commutes with small colimits in each variable, and 
finished up with another application of \eqref{Periodic modules equation}.
\end{proof}

\begin{proof}[Proof of Theorem \ref{Descent for complex periodic formal QCoh}]
In light of the definition of the functor $\mathfrak{QC}\mathrm{oh}$ in Construction \ref{Cons of formal QCoh}, we must show that the functor $\CAlg^{\mathbf C\mathrm p}_\mathrm{ad}\to \mathrm{Cat}_\i$, given by $A\mapsto \Mod^\mathrm{cplt}_A$, satisfies fpqc descent.
Let $\mC$ be the non-full wide subcategory of $\CAlg^{\mathbf C\mathrm p}_\mathrm{ad}$, consisting of all the same objects, where the anima of morphisms are taken to be the unions of all connected components which correspond to adically faithfully flat maps. Since faithfully flat descent only concerns adically faithfully flat maps, it suffices to verify descent for the restriction
$$
\mC\to\CAlg_\mathrm{ad}^{\mathbf C\mathrm p}\xrightarrow{\Mod^\mathrm{cplt}}\mathrm{Cat}_\infty.
$$
Corollary \ref{Lemma essential image of connective cover complete} and Lemma \ref{Lemma completed basehcange complete} together give a natural equivalence between this functor and the one given by $A\mapsto \Mod_{\tau_{\ge 0}(A)}^{\mathrm{cplt}, \mathrm{per}}$. The subcategory inclusion $\Mod_{\tau_{\ge 0}(A)}^{\mathrm{cplt}, \mathrm{per}}\subseteq(\Mod_A^\mathrm{cplt})^\mathrm{cn}$
extends to an object-wise fully faithful natural transformation from this functor in question to the composite
$$
\mC\xrightarrow{\tau_{\ge 0}}\CAlg_\mathrm{ad}^\mathrm{cn}\xrightarrow{(\Mod^\mathrm{cplt})^\mathrm{cn}}\mathrm{Cat}_\infty,
$$
i.e.\ to restriction along the connective cover of the functor $\mathfrak{QC}\mathrm{oh}^\mathrm{cn}$. This latter functor is by Theorem \ref{Formal and informal QCoh} naturally equivalent to $\QCoh^\mathrm{cn}$, and so does indeed satisfy fpqc descent. From this, the desired fpqc descent result  follows from the observation that periodicity, in the sense of Definition \ref{Def of periodic modules over connective cover}, is an fpqc-local property for modules over a connective covers of an adic $\E$-ring.
\end{proof}

\subsection{$I$-adic formal spectral stacks}\label{Subsection I-adic context}

Let $R$ be an $\E$-ring with a finitely generated ideal $I\subseteq\pi_0(R)$. We may  consider any $\E$-algebra $A$ over $R$ as an adic $\E$-ring by equipping $\pi_0(A)$ with the $I$-adic topology. Equivalently, we have a pullback square
pullback square of nonconnective spectral stacks
\begin{equation}\label{Pullback square 3.9}
\begin{tikzcd}
\Spf(A) \ar[r] \ar[d] & \Spec(A)\ar[d] \\
\Spf(R) \ar[r] &\Spec(R).
\end{tikzcd}
\end{equation}
Let $\mathrm{FAff}_R^{I-\mathrm{ad}}\subseteq \mathrm{FAff}_{/\Spf(R)}$ denote the full subcategory spanned by formal affines of this sort, which we will call \textit{$I$-adic formal affines}. It is by defintion the opposite category of the $\i$-category $\CAlg^{\mathrm{cplt}}_R$ of $I$-complete $\E$-algebras over $R$.
The fpqc topology from $\mathrm{FAff}$ restricts to $\mathrm{FAff}_R^{I-\mathrm{ad}}$, giving it a Grothendieck topology.

\begin{definition}\label{Def of IFSpStk}
A functor $\mathfrak X:\CAlg^{\mathrm{cplt}}_R\to \mathrm{Ani}$ is a \textit{nonconnective $I$-adic formal spectral stack} if it is  accessible and satisfies fpqc descent. We let
$$
\mathrm{FSpStk}_I\, :=\,\mathrm{Shv}^\mathrm{acc}_\mathrm{fpqc}(\mathrm{FAff}_R^{I-\mathrm{ad}})
$$
denote the $\i$-category of nonconnective formal spectral stacks. 
The analogous connective and classical variants are defined as
$$
\mathrm{FSpStk}_I^\mathrm{cn}\, :=\,\mathrm{Shv}^\mathrm{acc}_\mathrm{fpqc}((\mathrm{FAff}^{I-\mathrm{ad}}_{\tau_{\ge 0}(R)})^\mathrm{cn}),
\qquad \mathrm{FStk}_I^\heart\, :=\,\mathrm{Shv}^\mathrm{acc}_\mathrm{fpqc}((\mathrm{FAff}^{I-\mathrm{ad}}_{\tau_{\ge 0}(R)})^\heart).
$$
\end{definition}

\begin{remark}\label{Reason for cavaliers}
The difficulties of the adic context, alluded to in Remark \ref{What _is_ Shv_acc here?}, disappear in the $I$-adic context. That is to say, unlike $\CAlg_\mathrm{ad}$, the $\i$-category $\CAlg_R^{\mathrm{cplt}}$ is accessible. Indeed, for any $\kappa$, it is generated under $\kappa$-filtered colimits by the completed free $R$-algebras $\mathrm{Sym}^*_R(R[K])^\wedge_I$ for $\kappa$-small anima $K$.  There are consequently no issues in making sense of accessible presheaves on $I$-adic formal affines.
\end{remark}

There is an obvious fully faithful embedding
\begin{equation}\label{From adic to FAG}
\mathrm{FSpStk}_I\hookrightarrow \mathrm{FSpStk}_{/\Spf(R)}.
\end{equation}
As such, we can apply all the constructions defined in the latter context to $I$-adic formal spectral stacks too, for instance the functors $\mathfrak{QC}\mathrm{oh} :\mathrm{FSpStk}_I^\mathrm{op}\to\CAlg(\mathrm{Pr}^\mathrm L)$ and $\mathfrak O :\mathrm{FSpStk}_I^\mathrm{op}\to \CAlg$ (though they do not necessarily commute with limits unless we either adopt a connectivity assumption and invoke Proposition \ref{Formal and informal QCoh}, or impose complex periodicity and use Theorem \ref{Descent for complex periodic formal QCoh}). We also have the connective covers and underlying classical stacks $\mathfrak X\mapsto \tau_{\ge 0}(\mathfrak X)$ and $\mathfrak X \mapsto\mathfrak X^\heart$, respectively defining functors
$$\mathrm{FSpStk}_I\xrightarrow{\tau_{\ge 0}}\mathrm{FSpStk}_I^\mathrm{cn}, \qquad\quad\mathrm{FSpStk}_I\xrightarrow{(-)^\heart}\mathrm{FStk}_I^\heart,
$$
and which also participate in the expected adjunctions when $R$ is assumed to be connective.

\begin{remark}\label{Remark T-adic prep}
Through the fully faithful embedding \eqref{From adic to FAG}, an $I$-adic formal affine $\Spf(A)$ identifies with the full subcategory of $\mathrm{FSpStk}_{/\Spf(R)}$, spanned by all the morphisms into $\Spf(R)$ which are formally affine (defined in an obvious formal-setting analogue of Section \ref{Section affine morphisms}). Indeed, a relative version of the formal spectrum gives rise to the equivalences of $\i$-categories
$$
\mathrm{FSpStk}^\mathrm{faff}_{/\Spf(R)} \simeq
\CAlg(\mathfrak{QCoh}(\Spf(R)))^\mathrm{op}\simeq \CAlg(\Mod_R^\mathrm{cplt})^\mathrm{op}\simeq (\CAlg_R^\mathrm{cplt})^\mathrm{op}
\simeq
\mathrm{FAff}_{/\Spf(R)}^{I-\mathrm{ad}}.
$$
This suggests  an obvious variant of the theory of $I$-adic formal stacks, where we replace the affine base $\Spec(R)$ -- or more precisely, its completion $\Spf(R^\wedge_I)$ -- with 
an arbitrary base formal spectral stack $\mathfrak X$. Namely, we take $\mathrm{FAff}^\mathrm{faff}_{/\mathfrak X}\subseteq\mathrm{FAff}_{/\mathfrak X}$ to be the full subcategory spanned by those maps from formal affines into $\mathfrak X$ which are formally affine morphisms, equip it with the inherited fpqc topology, and set the $\i$-category of \textit{adic formal spectral stacks over $\mathfrak X$} to be
$$
\mathrm{FSpStk}^\mathrm{ad}_{/\mathfrak X}:=\mathrm{Shv}^\mathrm{acc}_\mathrm{fpqc}(\mathrm{FAff}_{/\mathfrak X}^\mathrm{faff}).
$$
The theory over a formally affine base, as we are discussing it in the rest of this section, may be recovered as $\mathrm{FSpStk}_I\simeq \mathrm{FSpStk}_{/\Spf(R^\wedge_I)}^\mathrm{ad}$.
\end{remark}

There is a canonical way to pass from spectral stacks over $R$ to $I$-adic formal spectral stacks by the process of $I$-adic formal completion. This  operation, which we now describe, globalizes the $I$-completion $A\mapsto A^\wedge_I$, i.e.\ extends it to not-necessarily-affine spectral stacks.

\begin{lemma}\label{Fpqc preservation of completion}
The $I$-completion functor $\CAlg_R\to\CAlg^\mathrm{cplt}_R$, given by $A\mapsto A^\wedge_I$, preserves fpqc covers and is accessible.
\end{lemma}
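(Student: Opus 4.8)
The statement has two parts: accessibility of the $I$-completion functor $\CAlg_R\to\CAlg_R^\mathrm{cplt}$, and its preservation of fpqc covers. For accessibility, the plan is to exhibit $A\mapsto A^\wedge_I$ as a composite of accessible functors, or to verify directly that it commutes with $\kappa$-filtered colimits for $\kappa$ large enough. The cleanest route is to recall (as in Remark \ref{Reason for cavaliers}) that $\CAlg_R^\mathrm{cplt}$ is accessible, generated under $\kappa$-filtered colimits by the completed free algebras $\mathrm{Sym}^*_R(R[K])^\wedge_I$ for $\kappa$-small anima $K$, and that the $I$-completion functor is left adjoint to the fully faithful inclusion $\CAlg_R^\mathrm{cplt}\hookrightarrow\CAlg_R$. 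A left adjoint between presentable (hence accessible) $\i$-categories is accessible; so the only genuine point is to note that both $\CAlg_R$ and $\CAlg_R^\mathrm{cplt}$ are accessible (indeed presentable), which is standard, the latter by \cite[Theorem 7.3.4.1]{SAG} exhibiting complete modules, and hence complete algebras, as an accessible localization.

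For the cover-preservation claim, the plan is the following. Let $A\to B$ be a faithfully flat map of $\E$-algebras over $R$; we must show that $A^\wedge_I\to B^\wedge_I$ is faithfully flat in the appropriate sense for adic $\E$-rings, i.e.\ that the corresponding map of formal spectra $\Spf(B^\wedge_I)\to\Spf(A^\wedge_I)$ is affine and faithfully flat in the sense of Definition \ref{Def of adic fpqc}. First I would reduce to checking this after base change along the $\E$-version of the $I$-adic filtration: writing $\Spf(A^\wedge_I)\simeq\varinjlim_n\Spec(A_n)$ as in Remark \ref{Remark adic topology}, it suffices by that remark's description of adic faithful flatness to show that each $B^\wedge_I\otimes_{A^\wedge_I}A_n\to$ (the corresponding quotient of $B$) is faithfully flat, and that the map is affine. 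The affineness is automatic since $A\to B$ affine (indeed $\Spec(B)\to\Spec(A)$ is affine, being a map of affines) implies the completed map is affine --- this follows from the pullback square \eqref{Pullback square 3.9} together with stability of affine morphisms under base change and the fact that $\Spf$ of an adic $\E$-ring is affine over $\Spf$ of the base. For faithful flatness, the key computation is that since $A_n$ is $I$-nilpotent as an $A$-module, one has $B^\wedge_I\otimes_{A^\wedge_I}A_n\simeq B\otimes_A A_n$ (completion is trivial on $I$-nilpotent modules, and $B^\wedge_I\otimes_{A^\wedge_I}(-)$ agrees with $B\otimes_A(-)$ on such), and $B\otimes_A A_n$ is faithfully flat over $A_n$ because faithfully flat maps are stable under base change along $A\to A_n$.

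The step I expect to be the main obstacle is the careful identification $B^\wedge_I\otimes_{A^\wedge_I}A_n\simeq B\otimes_A A_n$ and, more precisely, verifying that on $I$-nilpotent (equivalently, $I$-power-torsion) modules the completed base change along $A^\wedge_I\to B^\wedge_I$ agrees with the ordinary base change along $A\to B$. This requires knowing that $I$-completion is idempotent and that an $I$-nilpotent $A$-module $M$ satisfies $M\otimes_A A^\wedge_I\simeq M$ (so that extending scalars to the completion does nothing), which follows from the general theory of completion in \cite[Chapter 7]{SAG} --- in particular from the fact that $A^\wedge_I$ is itself $I$-complete and the $A$-linear map $A\to A^\wedge_I$ becomes an equivalence after applying $(-)\otimes_A M$ for $I$-nilpotent $M$. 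Once this is in hand, the faithful-flatness bookkeeping is routine: both surjectivity-on-$\Spec$ (faithfulness) and flatness of $B\otimes_A A_n$ over $A_n$ follow from the corresponding properties of $A\to B$ by base change. A secondary subtlety is ensuring that the notion of ``fpqc cover'' among $I$-adic formal affines, as defined by restricting the topology from $\mathrm{FAff}$, really is tested by exactly these conditions; but this is unwound directly from Definition \ref{Def of adic fpqc} and the remark following it.
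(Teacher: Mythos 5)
Your proposal is correct, and the accessibility half is the paper's own argument (completion is a left adjoint, hence colimit-preserving, hence accessible). For cover preservation you take a genuinely more computational route than the paper. The paper simply pastes the pullback square \eqref{Pullback square 3.9} for $A$ and $B$ to get $\Spf(B^\wedge_I)\simeq \Spec(B)\times_{\Spec(R)}\Spf(R)\simeq\Spec(B)\times_{\Spec(A)}\Spf(A^\wedge_I)$, so that $\Spf(B^\wedge_I)\to\Spf(A^\wedge_I)$ is a base change of $\Spec(B)\to\Spec(A)$ along $\Spf(R)\to\Spec(R)$, and concludes in one stroke because \emph{both} affineness and faithful flatness are stable under pullback -- exactly the shortcut you already use for affineness but then abandon for flatness. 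Your alternative for flatness -- reducing to the criterion in the remark after Definition \ref{Def of adic fpqc} and checking $A_n\to B^\wedge_I\o_{A^\wedge_I}A_n$ via the identification $B^\wedge_I\o_{A^\wedge_I}A_n\simeq B\o_A A_n$ -- does work: the fiber of $M\to M^\wedge_I$ is $I$-local, so tensoring with the $I$-nilpotent $A_n$ kills it, which gives your key identification (and also the implicit one $(A^\wedge_I)_n\simeq A_n$), after which base-change stability of faithful flatness finishes. What your route buys is an explicit description of the pullbacks over the adic filtration stages; what it costs is reliance on the unproved remark characterizing adic faithful flatness (which your orthogonality argument would essentially have to re-prove to be self-contained) and on the $I$-local/$I$-nilpotent machinery from \cite[Chapter 7]{SAG}, none of which is needed once you notice that the pullback square you already invoked handles flatness and affineness simultaneously.
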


\begin{proof}
For any $\E$-algebra $A$ over $R$, the pullback square \eqref{Pullback square 3.9} provides
the identification of the corresponding $I$-adic formal affine as $\Spf(A^\wedge_I)\simeq \Spec(A)\times_{\Spec(R)}\Spf(R)$. It follows that for any map of $\E$-algebras $A\to B$ over $R$, the corresponding map of spectral stacks $\Spf(B^\wedge_I)\to\Spf(A^\wedge_I)$ is therefore base-changed along the map $\Spf(R)\to\Spec(R)$ from the map of affines $\Spec(B)\to\Spec(A)$. Since faithfully flat are closed under pullbacks, we see that $A^\wedge_I\to B^\wedge_I$ is $I$-adically faithfully flat if $A\to B$ is faithfully flat. This proves that the $I$-completion functor preserves fpqc covers.
The accessibility claim is clear from completion being a left adjoint and as such commuting with not only filtered but in fact all small colimits.
\end{proof}

\begin{cons}[$I$-adic formal completion]\label{Const of I-completion of stacks}
Recall there is a canonical equivalence of $\i$-categories
$$
\mathrm{SpStk}_{/\Spec(R)}\simeq \mathrm{Shv}^\mathrm{acc}_\mathrm{fpqc}(\CAlg_R^\mathrm{op}),
$$
given by sending a map of spectral stacks $\mX\to \Spec(R)$ to its relative functor of points $\mX:\CAlg_R\to\mathrm{Ani}$. Using this identification in tandem with Lemma \ref{Fpqc preservation of completion},
we find that the $I$-completion functor $\mu:\CAlg_R\to\CAlg^\mathrm{cplt}_R$ gives rise to an adjunction between the $\i$-categories of accessible fpqc sheaves which we may view as
$$
\mu_!:\mathrm{SpStk}_{/\Spec(R)}\rightleftarrows \mathrm{FSpStk}_I :\mu^*.
$$
For a map of sectral stacks $\mX\to\Spec(R)$, we view the resulting $I$-adic formal spectral stack
$
\mX^\wedge_I:= \mu_!\mX
$
as the \textit{$I$-adic formal completion of $\mX$}.
\end{cons}

\begin{remark}
The $I$-adic formal completion of a spectral stack $\mX$ over $R$ may be explicitly expressed using the colimit formula
$$
\mX^\wedge_I \,\simeq\, \varinjlim_{\Spec(A)\in \mathrm{Aff}_{/\mX}}\Spf(A^\wedge_I).
$$
In particular, we have $\Spec(A)^\wedge_I\simeq \Spf(A^\wedge_I)$ for affines.
\end{remark}

\begin{remark}
If $R$ is assumed to be connective, we may obtain as a special case of Theorem \ref{Theorem formal stacks = stacks} a canonical identification $\mathrm{FSpStk}^\mathrm{cn}_I\simeq \mathrm{SpStk}^\mathrm{cn}_{/\Spf(R)}$. In terms of it, $I$-adic formal completion is given by the base-change
$$
\mX^\wedge_I\simeq \mX\,\times_{\Spec(R)}\,\Spf(R)
$$
for any connective spectral stack map $\mX\to \Spec(R)$.
\end{remark}

We conclude this section with providing a simpler description of adically faithfully flat maps in the $I$-adic context, at least with the additional assumption of even periodicity. In this case, we show that they coincide with $I$-completely eff maps from Definition \ref{Def of I-complete evf}

\begin{prop}\label{Lemma ff comparison}
Let $I\subseteq \pi_0(R)$ be a regular locally principal ideal.
Then a map $A\to B$ of even periodic $I$-complete $\E$-rings of bounded $I$-torsion is adically faithfully flat if and only if the $\E$-ring map
$$
\pi_0(A)\o_{\pi_0(R)} \pi_0(R)/I\to \pi_0(B)\o_{\pi_0(R)} \pi_0(R)/I
$$
is faithfully flat.
\end{prop}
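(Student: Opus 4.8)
The plan is to strip adic faithful flatness down to a single condition modulo $I$, and then use even periodicity to convert that condition into a statement about homotopy rings. First I would dispose of the affineness half of Definition \ref{Def of adic fpqc}: by the pullback square of Remark \ref{Pullback square}, applied with the $I$-adic topology, one has $\Spf(B)\simeq \Spec(B)\times_{\Spec(A)}\Spf(A)$, so $\Spf(B)\to\Spf(A)$ is a base change of the affine morphism $\Spec(B)\to\Spec(A)$ and is in particular affine. Hence $A\to B$ is adically faithfully flat exactly when $\Spf(B)\to\Spf(A)$ is faithfully flat. Since $I$ is locally principal, and since both faithful flatness of $\Spf(B)\to\Spf(A)$ and the stated mod-$I$ condition may be checked on a Zariski cover of $\Spec(\pi_0(R))$, I may assume $I=(x)$ for a single element $x\in\pi_0(R)$ which is a non-zero-divisor. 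Writing $\Spf(A)\simeq\varinjlim_n\Spec(A_n)$ with $A_n\simeq A/\!/_{\E}x^n$ as in Remark \ref{Remark adic topology}, faithful flatness of $\Spf(B)\to\Spf(A)$ can be tested stagewise, and by the characterization following Definition \ref{Def of adic fpqc} amounts to faithful flatness of each $A_n\to B\otimes_A A_n$.

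Next I would reduce this tower to the single reduction modulo $x$ at the level of modules. The hypotheses that $A$ and $B$ are $I$-complete with bounded $I$-torsion place us in the setting of the $I$-adic flatness criterion of \cite[Chapter 8]{SAG}: a complete module over $A$ (here $B$ itself, as an $A$-module) is $I$-completely flat precisely when its derived reduction $B\otimes_A^{}(A\otimes_{\pi_0(A)}\pi_0(A)/x)$, i.e.\ the module quotient $B/x$, is flat over $A/x$; and the bounded-$I$-torsion hypothesis is exactly what guarantees the compatibility of this reduction with connective covers (as in Lemma \ref{Base-change connective 3.54}) and hence that $I$-complete flatness of $B$ over $A$ coincides with flatness of $\Spf(B)\to\Spf(A)$. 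The faithfulness part is the surjectivity of $\Spec(\pi_0(B)/x)\to\Spec(\pi_0(A)/x)$, which is the same as surjectivity of $\Spf(B)\to\Spf(A)$ by topological invariance of the formal spectrum.

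Finally I would match the condition ``$B/x$ flat over $A/x$'' with the statement. Because $A$ and $B$ are even periodic, $\pi_*(A)\cong\pi_0(A)[\beta^{\pm1}]$ and $\pi_*(B)\cong\pi_0(B)[\beta^{\pm1}]$, so $\pi_*(A/x)$ is, in even degrees, a twist of $\pi_0(A)/x$ and, in odd degrees, a twist of the torsion $\pi_0(A)[x]$, and similarly for $B/x$; thus flatness of $B/x$ over $A/x$ is equivalent to the pair of conditions that $\pi_0(A)/x\to\pi_0(B)/x$ be flat and that $\pi_0(A)[x]\otimes_{\pi_0(A)/x}\pi_0(B)/x\to\pi_0(B)[x]$ be an isomorphism. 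Since $I$ is an effective Cartier divisor, $\pi_0(A)\otimes_{\pi_0(R)}\pi_0(R)/I$ and $\pi_0(B)\otimes_{\pi_0(R)}\pi_0(R)/I$ are precisely the Tor-amplitude-$[0,1]$ $\E$-rings with $\pi_0$ equal to $\pi_0(A)/x$ (resp.\ $\pi_0(B)/x$) and $\pi_1$ equal to these torsion submodules, and faithful flatness of the map between them unwinds (faithful flatness on $\pi_0$ together with the base-change isomorphism on the $\pi_1$-layer) to exactly the same pair of conditions. For even periodic rings one also has $\bigoplus_n\pi_n(-)\otimes_{\pi_0(R)}\pi_0(R)/I$ faithfully flat if and only if $\pi_0(-)\otimes_{\pi_0(R)}\pi_0(R)/I$ is, by inverting $\beta$, so this identification is compatible with the $I$-completely eff condition of Definition \ref{Def p-eff}.

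The step I expect to be the main obstacle is the middle one: carefully aligning the notion of ``$I$-completely flat module'' supplied by the $I$-adic flatness machinery with genuine faithful flatness of the morphism $\Spf(B)\to\Spf(A)$, and verifying that the bounded-$I$-torsion hypothesis is exactly what is needed both to keep the derived mod-$x$ reductions bounded (so flatness can be tested on homotopy groups) and to control the $\mathrm{Tor}_1$-torsion terms of $\pi_0(A)\otimes_{\pi_0(R)}\pi_0(R)/I$ after base change along $A\to B$; the even periodicity then does the remaining work of collapsing all the higher even degrees onto $\pi_0$.
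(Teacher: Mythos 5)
Your outer steps track the paper's argument: affineness of $\Spf(B)\to\Spf(A)$ is indeed automatic (the paper gets it from the pasting law for the squares over $\Spf(R)\to\Spec(R)$, which is the same observation as your base-change argument), the Zariski reduction to $I=(x)$ with $x$ a non-zero-divisor is how the paper proceeds, and your final translation -- even periodicity collapsing flatness of the derived reduction to the pair ``$\pi_0(A)/x\to\pi_0(B)/x$ faithfully flat'' and ``$\pi_0(A)[x]\otimes_{\pi_0(A)/x}\pi_0(B)/x\xrightarrow{\ \sim\ }\pi_0(B)[x]$'', which is what faithful flatness of $\pi_0(A)\otimes^{\mathrm L}_{\pi_0(R)}\pi_0(R)/I\to\pi_0(B)\otimes^{\mathrm L}_{\pi_0(R)}\pi_0(R)/I$ unwinds to -- is also the content of the paper's last step (the even-degree base change being automatic because $\pi_2$ is an invertible, hence flat, $\pi_0$-module).

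The gap is the middle step, and you flag it yourself. As written, you replace it by a citation of an ``$I$-adic flatness criterion'' from \cite[Chapter 8]{SAG} asserting that $I$-complete flatness is detected by the single derived reduction mod $x$ and, moreover, coincides with faithful flatness of $\Spf(B)\to\Spf(A)$ in the stagewise sense of Definition \ref{Def of adic fpqc}. No such statement is available off the shelf in the form you need -- in particular not for nonconnective adic $\E$-rings, where even the tower presentation $\Spf(A)\simeq\varinjlim_n\Spec(A/\!/x^n)$ you invoke is only justified by \cite{SAG} in the connective case (cf.\ Remark \ref{Remark adic topology}) -- and the alignment you defer is exactly where the standing hypotheses do their work. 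The paper fills it in as follows: (i) by complex periodicity, $A\simeq\tau_{\ge 0}(A)[\beta^{-1}]$ and $B\simeq\tau_{\ge 0}(B)[\beta^{-1}]$, so $\Spf(B)\to\Spf(A)$ is pulled back from the map of connective covers and one may pass to those, where the adic tower and the identification $\QCoh(\Spf(-))^{\mathrm{cn}}\simeq\Mod^{\mathrm{cplt},\mathrm{cn}}$ are legitimate; (ii) for the affine map $f$, faithful flatness amounts to $\pi_*(B)\simeq\pi_*(A)\,\widehat{\otimes}_{\pi_0(A)}\,\pi_0(B)$ together with faithful flatness of $\Spf(\pi_0(B))\to\Spf(\pi_0(A))$, and the first condition is automatic from even periodicity; (iii) at the classical level, faithful flatness of the formal spectra means faithful flatness of $\pi_0(A)\otimes_R R/x^n\to\pi_0(B)\otimes_R R/x^n$ for every $n$, and the bounded $I$-torsion hypothesis is what the paper invokes to propagate the $n=1$ case (your stated condition) to all $n$ by induction. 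Your plan needs an actual argument for this propagation (a d\'evissage of $R/x^n$ by copies of $R/x$, with the torsion bookkeeping that bounded $I$-torsion permits), rather than a black-box citation, and needs either the connective-cover reduction of (i) or some substitute before using the adic tower; once those are supplied, your outline closes up and is essentially the paper's proof.
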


\begin{proof}
The $\E$-rings $A$ and $B$ are even periodic and hence also complex periodic, so they can be recovered from their connective covers as localizations with respect to the Bott maps $A\simeq \tau_{\ge 0}(A)[\beta^{-1}]$ and $B\simeq \tau_{\ge 0}(B)[\beta^{-1}]$.The canonical $\E$-ring map $\tau_{\ge 0}(B)\o_{\tau_{\ge 0}(A)}A\to B$ is therefore an equivalence, and since $B$ is complete, so is $\tau_{\ge 0}(B)\,\widehat{\o}_{\tau_{\ge 0}(A)}\,A\to B$. The commutative square
$$
\begin{tikzcd}
\Spf(B) \ar[r] \ar[d] & \Spf(\tau_{\ge 0}(B))\ar[d] \\
\Spf(A) \ar[r] & \Spf(\tau_{\ge 0}(A))
\end{tikzcd}
$$
is therefore a pullback in the $\i$-category $\mathrm{SpStk}$. Since faithfully flat maps are preserved under pullback, as well as  by passage to connective localizations, it  suffices to replace $A$ and $B$ with their connective covers.

Observing the diagram of spectral stacks
$$
\begin{tikzcd}
\Spf(B) \ar[r] \ar[d] & \Spec(B)\ar[d] \\
\Spf(A) \ar[r] \ar[d] & \Spec(A)\ar[d] \\
\Spf(R) \ar[r] &\Spec(R)
\end{tikzcd}
$$
in which the bottom square and the outter square are both pullback squares, it follows from the ``pasting law for pullbacks" that the top square is a pullback square as well. That is to say, $\mathrm{Spf}(B)\to\Spf(A)$ is always an affine morphism, which is one part of a requirement for an adically faithfully flat map.

Since $f:\Spf(B)\to\Spf(A)$ is an affine morphism, it is faithfully flat if and only if the quasi-coherent sheaf of $\sO_{\Spf(A)}$-algebras $f_*(\sO_{\Spf(B)})$ is faithfully flat, i.e.\ if we have
$$
\pi_*(f_*(\sO_{\Spf(B)}))\simeq \pi_*(\sO_{\Spf(A)})\otimes_{\pi_0(\sO_{\Spf(A)})}\pi_0(\sO_{\Spf(B)})
$$
and the quasi-coherent sheaf $\pi_0(f_*(\sO_{\Spf(B)}))$ is a faithfully flat over $\Spf(A)^\heart$.
This may be rephrased under the equivalence of $\i$-categories $\QCoh(\Spf(A))^\mathrm{cn}\simeq \Mod_A^{\mathrm{cplt}, \mathrm{cn}}$ from \eqref{Formal affineness formula 3.42} (which we may invoke due to having passed to connective covers, and knowing that $f$ is affine meaning that $f_*$ is $t$-exact) as asking that
$$
\pi_*(B)\simeq \pi_*(A)\,\widehat{\o}_{\pi_0(A)}\,\pi_0(B)
$$
and that $\pi_0(B)$ is faithfully flat over $\Spf(\pi_0(A))$. Because $A$ and $B$ are both even periodic, the first requirement is automatically satisfied. That is to say, the map of spectral stacks $\Spf(B)\to\Spf(A)$ is faithfully flat if and only if $\Spf(\pi_0(B))\to\Spf(\pi_0(A))$ is.

It remains to identify what it means for $\pi_0(A)\to \pi_0(B)$ to be adically complete, i.e.\ for the map of classical stacks $\Spf(\pi_0(B))\to\Spf(\pi_0(A))$ to be faithfully flat. Since both $\E$-rings are discrete (i.e.\ static in Newspeak), we may replace $R$ by $\pi_0(R)$ with no loss of generality. Using the $\E$-version of the adic filtration as in Remark \ref{Remark adic topology}, let us write $\Spf(R)\simeq \varinjlim_n \Spec(R_n)$, from which we get
$$
\Spf(\pi_0(A))\simeq \varinjlim_n \Spec(\pi_0(A)\o_R R_n).
$$
It follows that $\Spf(\pi_0(B))\to\Spf(\pi_0(A))$ is faithfully flat if and only if  the $\E$-ring map $\pi_0(A)\otimes_R R_n\to \pi_0(B)\otimes_R R_n$ is for all $n$. We assumed that $I$ is a regular locally principal ideal, so (after possibly Zariski localizing $R$) we can set $I=(x)$ for some non-zero-divisor $x\in R$. Then it follows from the proof of \cite[Lemma 8.1.2.2]{SAG} that $R_n\simeq R/x^n$, i.e.\ the $\E$-version of the $x$-adic filtration agrees with the classical one. For $n = 1$, we obtain the desired condition that $\pi_0(A)\o_R R/x\to \pi_0(B)\o_R R/x$ is an equivalence, and thanks to $A$ -- hence equivalently $\pi_0(A)$ -- having bounded $I$-torsion, it holding for $n=1$ implies  that it holds for all $n\ge 2$ as well by induction.
\end{proof}

\begin{remark}\label{Remark to L or not to L, 2}
Keep in mind that our tensor products are always formed $\i$-categorically in spectra. For classical rings, that means these are the derived tensor products, so that the condition in the statement of Prop \ref{Lemma ff comparison} might be more classically written as the faithful flatness of $\pi_0(A)\o^\mathrm L_{\pi_0(R)}\pi_0(R)/I \to \pi_0(B)\o^\mathrm L_{\pi_0(R)}\pi_0(R)/I$.
\end{remark}

\begin{corollary}\label{adic fpqc = I-complete fpqc}
A map of even periodic $I$-complete $\E$-algebras $A\to B$ over $R$ is adically faithfully flat if and only if it is $I$-completely faithfully flat in the sense of Definition \ref{Def p-eff}.
\end{corollary}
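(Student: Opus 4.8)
The plan is to obtain this as an essentially formal consequence of Proposition~\ref{Lemma ff comparison}. That result identifies adic faithful flatness of a map $A\to B$ of even periodic $I$-complete $\E$-algebras of bounded $I$-torsion over $R$ with (derived) faithful flatness of the $\pi_0(R)$-algebra map $\pi_0(A)\o_{\pi_0(R)}\pi_0(R)/I\to\pi_0(B)\o_{\pi_0(R)}\pi_0(R)/I$; so it suffices to show that, for such $A\to B$, being $I$-completely eff in the sense of Definition~\ref{Def p-eff} is equivalent to the same condition. This is precisely the $I$-adic, even periodic shadow of the observation, recorded in the Remark following Proposition~\ref{Even filtration is even periodic filtration}, that eff collapses to ordinary faithful flatness once one restricts to even periodic $\E$-rings, and I would transplant that argument to the $I$-adic setting.

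Concretely, for the implication from $I$-completely eff to the mod-$I$ condition I would first test Definition~\ref{Def p-eff} against $C=A$: then $B\,\widehat{\o}_A A\simeq B$ must be even, and since $A$ is complex periodic, Lemma~\ref{Key property} forces $B$ to be even periodic. For even periodic $\E$-rings the graded ring $\bigoplus_n\pi_n(-)$ is recovered from $\pi_0$ together with the invertible module $\pi_2$ (Remark~\ref{Remark weak vs strong 2-periodicity remark}), so the graded faithful flatness clause of Definition~\ref{Def p-eff} only imposes a condition in even degrees; running it against $C$ ranging over even periodic $I$-complete $A$-algebras and extracting the degree-zero part gives faithful flatness of $\pi_0(A)/I\to\pi_0(B)/I$ (derived, cf.\ Remark~\ref{Remark to L or not to L, 2}). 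For the converse, given that $\pi_0(A)\o^{\mathrm L}_{\pi_0(R)}\pi_0(R)/I\to\pi_0(B)\o^{\mathrm L}_{\pi_0(R)}\pi_0(R)/I$ is faithfully flat, I would pass to connective covers via Corollary~\ref{Lemma essential image of connective cover complete} (so $A\simeq\tau_{\ge0}(A)[\beta^{-1}]$ and $B\simeq\tau_{\ge0}(B)[\beta^{-1}]$), use the bounded $I$-torsion hypothesis to upgrade faithful flatness modulo $I$ to genuine $I$-complete faithful flatness of $\tau_{\ge0}(A)\to\tau_{\ge0}(B)$, and hence conclude that $B$ is $I$-completely flat over $A$. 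Lemmas~\ref{Base-change connective 3.54} and~\ref{Lemma completed basehcange complete} then compute, for any even $I$-complete $A$-algebra $C$, the homotopy of $B\,\widehat{\o}_A C$ as a flat completed base change of $\pi_*(C)$ along $\pi_0(A)\to\pi_0(B)$; this is again concentrated in even degrees, so $B\,\widehat{\o}_A C$ is even, and the graded faithful flatness clause of Definition~\ref{Def p-eff} drops out of faithful flatness of $\pi_0(A)/I\to\pi_0(B)/I$ by base change.

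The main obstacle is purely bookkeeping: one must keep track of completed tensor products and verify that for complex periodic adic $\E$-rings completion is detected on homotopy groups and is compatible with the $\beta$-localization identifying $\Mod_A^{\mathrm{cplt}}$ with periodic complete modules over $\tau_{\ge0}(A)$ — exactly the content of Corollary~\ref{Lemma essential image of connective cover complete} and Lemmas~\ref{Base-change connective 3.54}, \ref{Lemma completed basehcange complete}. A second, minor, point to check is that the test $\E$-algebras $C$ over the even periodic base $A$ appearing in Definition~\ref{Def p-eff} are automatically even periodic of bounded $I$-torsion, so that the reductions made in the proof of Proposition~\ref{Lemma ff comparison} apply unchanged; as there, the statement tacitly assumes $I$ to be regular locally principal.
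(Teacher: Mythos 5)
Your proposal is correct in outline but takes a genuinely different route from the paper's. The paper never engages with the test algebras $C$ of Definition~\ref{Def p-eff} at all: it reads the $I$-complete condition as faithful flatness of $\bigoplus_{n}\pi_n(A)\to\bigoplus_{n}\pi_n(B)$ after base change along $\pi_0(R)\to\pi_0(R)/I$, uses evenness to rewrite this as the corresponding condition on $\pi_0(A\o_{\mathbf S}\MUP)\to\pi_0(B\o_{\mathbf S}\MUP)$, applies Proposition~\ref{Lemma ff comparison} not to $A\to B$ but to the $I$-completed $\MUP$-base-changes, and then, observing that $\Spf((B\o_{\mathbf S}\MUP)^\wedge_I)\to\Spf((A\o_{\mathbf S}\MUP)^\wedge_I)$ is the pullback of $\Spf(B)\to\Spf(A)$ along the fpqc cover $\Spec(\MUP)\to\mM$, concludes by fpqc-locality of faithful flatness. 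You instead invoke Proposition~\ref{Lemma ff comparison} directly for $A\to B$ (hence your tacit bounded $I$-torsion hypothesis, which the statement omits but which matches the intent; the $p$-adic specialization Corollary~\ref{Adic = complete ff} does include it), and then prove the equivalence of the mod-$I$ condition with the eff condition of Definition~\ref{Def p-eff} by hand, via $C=A$ in one direction and the connective-cover/$\beta$-localization module theory of Section 5.3 in the other. What your route buys is that it literally establishes the quantified eff condition — control over all even test algebras $C$, which is what eff descent actually uses — whereas the paper's argument only treats the condition on the single map $A\to B$ (the two agree for even periodic maps, which is in effect what your second equivalence shows); what the paper's route buys is that Proposition~\ref{Lemma ff comparison} is only ever applied to rings whose relevant data sits in $\pi_0$, so all graded bookkeeping and the quantification over $C$ are replaced by a single descent step along $\Spec(\MUP)\to\mM$. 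Two soft spots in your converse deserve flagging: the upgrade from derived faithful flatness mod $I$ to genuine $I$-complete flatness of $\tau_{\ge0}(A)\to\tau_{\ge0}(B)$, and the computation of $\pi_*(B\,\widehat{\o}_A\,C)$ for an arbitrary even bounded-$I$-torsion $C$, are asserted rather than proved and are not literally contained in Lemmas~\ref{Base-change connective 3.54} and~\ref{Lemma completed basehcange complete} (which moreover presuppose adic faithful flatness, available at that stage only because Proposition~\ref{Lemma ff comparison} already supplies it — you should say so explicitly to dispel apparent circularity); they require the bounded-torsion induction carried out at the end of the proof of Proposition~\ref{Lemma ff comparison}, transplanted from $\pi_0$ to all of $\pi_*$. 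These are routine and fillable, so I regard your argument as correct, just along a different path.
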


\begin{proof}
The map $A\to B$ being $I$-completely faithfully flat is equivalent to the map of commutative $\pi_0(R)$-algebras $\bigoplus_{n\in \mathbf Z}\pi_n(A)\to \bigoplus_{n\in \mathbf Z}\pi_n(B)$ inducing a faithfully flat map upon base-change along the quotient map $\pi_0(R)\to \pi_0(R)/I$. By the evenness assumption, this may be rewritten (after choosing any $\E$-ring structure on $\mathrm{MUP}$) as asking that $\pi_0(A\o_{\mathbf S}\mathrm{MUP})\to \pi_0(B\o_{\mathbf S}\mathrm{MUP})$ is fully faithful after tensoring with $\pi_0(R)/I$ over $\pi_0(R)$. This is by Proposition \ref{Lemma ff comparison} the same as asking $(A\o_{\mathbf S}\mathrm{MUP})^\wedge_I\to (B\o_{\mathbf S}\mathrm{MUP})^\wedge_I$ being adically faithfully flat, or in terms of Definition \ref{Def of adic fpqc} for the map of spectral stacks
$$
\Spf((B\o_{\mathbf S}\mathrm{MUP})^\wedge_I)\to \Spf((A\o_{\mathbf S}\mathrm{MUP})^\wedge_I)
$$
to be fully faithful. Note that this is the pullback of the map $\Spf(B)\to\Spf(A)$ along the fpqc cover $\Spec(\mathrm{MUP})\to \mM$, where the canonical (and essentially unique) maps of the formal spectra in question to the chromatic base stack $\mM$ come from the composite
$$
\Spf(A)\to\Spec(A)\to \mM,
$$
whose right-hand map exists due to the assumption that $A$ is even periodic and hence also complex periodic. Recalling finally that the property of being a faithfully flat map is itself fpqc local, we see that faithful flatness after base-change along the fpqc cover $\Spec(\mathrm{MUP})\to \mM$ is equivalent to a map of spectral stacks to be faithfully flat itself. Thus we have translated $I$-compact faithful flatness of $A\to B$ into faithful flatness of the map of spectral stacks $\Spf(B)\to \Spf(A)$. But by Definition \ref{Def of adic fpqc}, that is precisely what it means for $A\to B$ to be adically faithfully flat.
\end{proof}

\begin{corollary}\label{Adic = complete ff}
Let $R=\mathbf S^\wedge_p$, equipped with the $p$-adic topology. A map of $p$-complete even periodic $\E$-rings of bounded $p$-torsion $A\to B$ is adically faithfully flat if and only if it is $p$-completely faithfully flat in the sense of \cite[Definition 2.2.6]{HRW}.
\end{corollary}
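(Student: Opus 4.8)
The plan is to deduce this as the special case $R=\mathbf S^\wedge_p$, $I=(p)$ of Corollary \ref{adic fpqc = I-complete fpqc}, with the only remaining work being to verify that the hypotheses are met and that the $I$-adic notions involved specialize back to the classical $p$-adic ones.

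First I would check that the standing hypotheses of Subsection \ref{Subsection I-adic context} hold for this choice of $(R,I)$: the $\E$-ring $\mathbf S^\wedge_p$ is $p$-complete by construction, and $(p)\subseteq\pi_0(\mathbf S^\wedge_p)=\mathbf Z_p$ is a principal ideal generated by the non-zero-divisor $p$, hence a regular locally principal ideal in the sense required by Proposition \ref{Lemma ff comparison} and Corollary \ref{adic fpqc = I-complete fpqc}. I would also note that, unwinding Definitions \ref{Def of I-bounded torsion} and \ref{Def of I-adic ev and evp rings}, a $p$-complete even periodic $\E$-ring of bounded $p$-torsion is precisely an object of $\CAlg^\mathrm{evp}_{(p)}$. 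Corollary \ref{adic fpqc = I-complete fpqc} then yields that a map $A\to B$ between two such $\E$-rings is adically faithfully flat in the sense of Definition \ref{Def of adic fpqc} if and only if it is $(p)$-completely faithfully flat in the sense of Definition \ref{Def p-eff}.

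The last step is to observe that, for $R=\mathbf S^\wedge_p$ and $I=(p)$, the $(p)$-adic version of Definition \ref{Def p-eff} is literally \cite[Definition 2.2.6]{HRW}. Indeed, Definition \ref{Def p-eff} was set up as a transplant of \cite[Definitions 2.2.1, 4, 6, \& 15]{HRW} from the $p$-adic to the $I$-adic setting, substituting $\pi_0(R)$ for $\mathbf Z_p$ and $-\otimes_{\pi_0(R)}\pi_0(R)/I$ for $-\otimes_{\mathbf Z_p}\mathbf F_p$, with these tensor products understood derived (cf.\ Remark \ref{Remark to L or not to L}). Since $\pi_0(\mathbf S^\wedge_p)/(p)\simeq\mathbf F_p$, this substitution is the identity, so the two notions of faithful flatness coincide verbatim, and the claim follows.

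I do not expect a genuine obstacle here: essentially all the content was already extracted in Proposition \ref{Lemma ff comparison} and Corollary \ref{adic fpqc = I-complete fpqc}, and what remains is definitional bookkeeping. If anything requires attention it is the matching of conventions around the derived versus underived mod-$p$ reduction, handled by Remark \ref{Remark to L or not to L}, and the role of the bounded $p$-torsion hypothesis, which enters exactly as in the inductive step closing the proof of Proposition \ref{Lemma ff comparison}: faithful flatness modulo $p$ is what propagates to faithful flatness over each of the $\E$-rings $R_n\simeq R/p^n$ appearing in the adic filtration.
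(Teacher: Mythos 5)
Your proposal is correct and matches the paper's intent exactly: the paper states this corollary with no separate proof, treating it as precisely the specialization $R=\mathbf S^\wedge_p$, $I=(p)$ of Corollary \ref{adic fpqc = I-complete fpqc}, with the remaining content being the definitional checks you carry out (regularity of $(p)\subseteq\mathbf Z_p$, and the identification of the $I$-adic flatness condition with the HRW $p$-complete one). Nothing is missing.
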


Most of the proof of Lemma \ref{Lemma ff comparison} given above did not make use of the full strength of the even periodicity assumption. In the complex periodic, the same argument gives:

\begin{corollary}\label{Lemma ff comparison: cplx per}
Let $I\subseteq \pi_0(R)$ be a regular locally principal ideal.
Then a map $A\to B$ of complex periodic $I$-complete $\E$-rings of bounded $I$-torsion is adically faithfully flat if and only if the $\E$-ring map
$$
\pi_0(A)\o_{\pi_0(R)} \pi_0(R)/I\to \pi_0(B)\o_{\pi_0(R)} \pi_0(R)/I
$$
is faithfully flat, and the canonical  $\pi_0(B)$-module map
$$
 \pi_1(A)\,\widehat{\o}_{\pi_0(A)}\,\pi_0(B)\to \pi_1(B)
$$
is an isomorphism.

\end{corollary}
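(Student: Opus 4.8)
The plan is to run the proof of Proposition~\ref{Lemma ff comparison} essentially verbatim, keeping careful track of what changes once ``even periodic'' is weakened to ``complex periodic''. As there, the first step is to reduce to connective covers: since $A$ and $B$ are complex periodic they are recovered from $\tau_{\ge 0}(A)$ and $\tau_{\ge 0}(B)$ by inverting the Bott map, the square relating $\Spf(B),\Spf(\tau_{\ge 0}B),\Spf(A),\Spf(\tau_{\ge 0}A)$ is a pullback by Remark~\ref{Pullback square}, and both affineness and faithful flatness are stable under pullback and under passage to connective covers; so it suffices to treat $\tau_{\ge 0}(A)\to\tau_{\ge 0}(B)$. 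Next, the pasting law for pullbacks applied to the diagram relating $\Spf(B),\Spf(A),\Spf(R)$ with their non-formal counterparts shows that $f\colon\Spf(B)\to\Spf(A)$ is always an affine morphism, so (after the reduction to connective covers, where $f_*$ is $t$-exact and $\QCoh(\Spf A)^{\mathrm{cn}}\simeq\Mod_A^{\mathrm{cplt},\mathrm{cn}}$ by \eqref{Formal affineness formula 3.42}) it is faithfully flat if and only if $\pi_*(B)\simeq\pi_*(A)\,\widehat\otimes_{\pi_0(A)}\,\pi_0(B)$ as graded rings and $\pi_0(B)$ is faithfully flat over $\Spf(\pi_0(A))$.

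The only place the argument genuinely diverges from the even periodic case is in unwinding the first of these two conditions. Here I would use weak $2$-periodicity to cut the graded condition down to degrees $0$ and $1$. The invertible $\pi_0(A)$-module $\pi_2(A)$ is finitely generated projective of rank $1$, hence Zariski-locally free; a local generator $u$ makes multiplication by $u$ an isomorphism on all homotopy groups, hence exhibits $\Sigma^2A\xrightarrow{u}A$ as an equivalence of $A$-modules, and base-changing this equivalence along $A\to B$ shows that $\pi_2(A)\otimes_{\pi_0(A)}\pi_0(B)\xrightarrow{\sim}\pi_2(B)$ for \emph{any} map $A\to B$ of weakly $2$-periodic $\E$-rings (and since $\pi_2(A)$ is perfect, this base change is already complete). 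Feeding this through the periodicity isomorphisms $\pi_{2k}(A)\cong\pi_2(A)^{\otimes k}$ and $\pi_{2k+1}(A)\cong\pi_1(A)\otimes_{\pi_0(A)}\pi_2(A)^{\otimes k}$, and likewise for $B$, the degree-$0$ part becomes vacuous, all even degrees are automatic, and the graded-ring condition reduces to precisely the statement that the canonical map $\pi_1(A)\,\widehat\otimes_{\pi_0(A)}\,\pi_0(B)\to\pi_1(B)$ is an isomorphism.

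Finally, the second condition — that $\pi_0(B)$ be faithfully flat over $\Spf(\pi_0(A))$ — is translated into faithful flatness of $\pi_0(A)\otimes_{\pi_0(R)}\pi_0(R)/I\to\pi_0(B)\otimes_{\pi_0(R)}\pi_0(R)/I$ (derived tensor product, cf.\ Remark~\ref{Remark to L or not to L, 2}) by the identical manipulation used at the end of the proof of Proposition~\ref{Lemma ff comparison}: replace $R$ by $\pi_0(R)$, use regularity and local principality of $I$ to write $\Spf(\pi_0(R))\simeq\varinjlim_n\Spec(\pi_0(R)/x^n)$ via \cite[Lemma 8.1.2.2]{SAG}, note that faithful flatness after each base change $-\otimes_{\pi_0(R)}\pi_0(R)/x^n$ is what is required, and invoke the bounded $I$-torsion hypothesis to see that the $n=1$ case forces all $n$. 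Conjoining the two conditions gives the statement. The point to be careful about is the local-freeness step in the second paragraph — ensuring that the invertibility of $\pi_2(A)$ really does produce the multiplication-by-$u$ equivalence, so that no separate $\pi_2$-condition appears in the final criterion — but this is a short local computation once one knows that flat rank-$1$ modules are Zariski-locally free.
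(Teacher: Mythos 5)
Your proposal is correct and follows exactly the route the paper intends: the paper gives no separate argument beyond remarking that the proof of Proposition \ref{Lemma ff comparison} applies, and you run that proof verbatim, correctly isolating the single place where even periodicity was used (the automatic verification of $\pi_*(B)\simeq\pi_*(A)\,\widehat{\otimes}_{\pi_0(A)}\pi_0(B)$) and showing via weak $2$-periodicity that in the complex periodic case this collapses to the degree-one condition $\pi_1(A)\,\widehat{\otimes}_{\pi_0(A)}\pi_0(B)\xrightarrow{\sim}\pi_1(B)$. The Zariski-local trivialization of $\pi_2(A)$ and the unchanged endgame (regularity, local principality, bounded $I$-torsion for the induction from $n=1$) are all handled as in the paper.
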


\begin{remark}\label{Remark adic ff homotopy formula}Going even further,
part of the proof of Proposition \ref{Lemma ff comparison} above made use of neither the even periodicity nor the bounded $I$-torsion assumption. That is, we saw that if $A\to B$ is an adically faithfully flat map of $I$-complete $\E$-algebras over $R$, then the multiplication map induces a canonical isomorphism 
of complete $\pi_0(B)$-modules
$$
\pi_n(B)\simeq \pi_n(A)\,\widehat{\o}_{\pi_0(A)}\,\pi_0(B)
$$ for all $n\in \mathbf Z$.
\end{remark}

\subsection{$I$-adic even periodization}
After having expended some effort so far in setting up formal spectral algebraic geometry, we can now make sense of even periodization in this context.

We continue to fix a base $\E$-ring $R$ and a regular locally principal ideal $I\subseteq\pi_0(R)$, i.e.\ an effective Cartier divisor $\Spec(\pi_0(R)/I)\subseteq\Spec(\pi_0(R))$.  
For any $I$-complete $\E$-algebra $A$ over $R$, we take its formal spectrum $\Spf(A)$  with respect to the $I$-adic topology. Unless stated otherwise, formal spectra $\Spf(A)$ for $A\in\CAlg_R^\mathrm{cplt}$ are to be interpreted with respect to the $I$-adic topology.

To imitate the formal development of even periodization from Section \ref{Section EVP}, we begin with an $I$-adic analogue of Lemma \ref{Lemma cover preservation and lifting}. In the present setting, it concerns the $\i$-category $\CAlg_I^\mathrm{evp}$ from Definition  \ref{Def of I-adic ev and evp rings}, which let us recall consists of all even periodic $I$-complete $\E$-algebras over $R$ with bounded $I$-torsion.

\begin{lemma}\label{Lemma prep I-adic for evp inclusion}
The inclusion $\CAlg^\mathrm{evp}_I\subseteq\CAlg^\mathrm{cplt}_R$ is accessible, and both preserves fpqc covers, as well as satisfying the cover lifting property for fpqc covers.
\end{lemma}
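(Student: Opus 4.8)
The accessibility of the inclusion $\CAlg^\mathrm{evp}_I\subseteq\CAlg^\mathrm{cplt}_R$ is already part of the content of Lemma \ref{I-adic accessibility Lemma}, so there is nothing new to do there. Cover preservation is equally immediate: the fpqc topology on $\CAlg^\mathrm{evp}_I$ is by definition inherited by restriction from the fpqc topology on $\mathrm{FAff}^{I-\mathrm{ad}}_R$ along this inclusion. The only genuine content is the cover lifting property, and — exactly as in the proof of Lemma \ref{Lemma cover preservation and lifting} and Remark \ref{Remark Lemma cover preservation and lifting} in the non-formal case — it reduces to the following assertion: if $A\in\CAlg^\mathrm{evp}_I$ and $A\to B$ is an adically faithfully flat map with $B\in\CAlg^\mathrm{cplt}_R$, then $B\in\CAlg^\mathrm{evp}_I$, i.e.\ $B$ is even periodic and has bounded $I$-torsion. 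Once this is established, any fpqc cover of $A$ in $\mathrm{FAff}^{I-\mathrm{ad}}_R$ automatically consists of objects of $\CAlg^\mathrm{evp}_I$, which is all the cover lifting property demands.

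For the even periodicity of $B$, I would first invoke Remark \ref{Remark adic ff homotopy formula}: for an adically faithfully flat map $A\to B$ of $I$-complete $\E$-algebras over $R$ there are canonical isomorphisms of complete $\pi_0(B)$-modules $\pi_n(B)\simeq \pi_n(A)\,\widehat{\o}_{\pi_0(A)}\,\pi_0(B)$ for all $n\in\mathbf Z$. Since $A$ is even, $\pi_n(A)=0$ for odd $n$, and hence $\pi_\mathrm{odd}(B)=0$, so $B$ is even. For weak $2$-periodicity, recall from Remark \ref{Remark weak vs strong 2-periodicity remark} that this amounts to invertibility of the $A$-module $\Sigma^2(A)$; then $\Sigma^2(B)\simeq \Sigma^2(A)\o_A B$ is invertible as a $B$-module, so $B$ is weakly $2$-periodic. (Alternatively: $B$ is even, and since $A$, being even periodic, is complex periodic, so is $B$ by Lemma \ref{Key property}; an even complex periodic $\E$-ring is even periodic.) Thus $B$ is even periodic.

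It remains to check that $B$ has bounded $I$-torsion, which I expect to be the main obstacle, as it is the only step with real commutative-algebra content rather than formal bookkeeping. The plan is to first reduce to a statement about $\pi_0$: for an even periodic $\E$-ring $B$, each homotopy group $\pi_{2k}(B)$ with $k\ne 0$ is an invertible $\pi_0(B)$-module — a tensor power of the invertible module $\pi_2(B)\simeq\pi_0(\Sigma^{-2}(B))$, or of its dual — and an invertible module over a commutative ring has bounded $I$-torsion if and only if the ring does, since this is checked on a finite Zariski cover where the module is free of rank one. Hence $B$ has bounded $I$-torsion if and only if $\pi_0(B)$ does. Now $\pi_0(A)$ has bounded $I$-torsion, and by the proof of Proposition \ref{Lemma ff comparison} adic faithful flatness of $A\to B$ makes $\Spf(\pi_0(B))\to\Spf(\pi_0(A))$ faithfully flat, so that, writing $I=(x)$ after a Zariski localization and using the $\E$-adic filtration of Remark \ref{Remark adic topology}, the maps $\pi_0(A)/\!/_\E x^n\to\pi_0(B)/\!/_\E x^n$ are faithfully flat for all $n$, with the $n=1$ case bootstrapping to all $n$ via the bounded-torsion hypothesis exactly as in that proof. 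The residual point — that faithful flatness modulo $x$, together with bounded $x$-torsion of $\pi_0(A)$ and $x$-completeness of $\pi_0(B)$, forces $\pi_0(B)$ to have bounded $x$-torsion — is the standard commutative-algebra input already underlying \cite[Proposition 2.1.10]{HRW}, which I would either cite directly or reprove by a short d\'evissage. With this in hand, $B\in\CAlg^\mathrm{evp}_I$, and the cover lifting property follows.
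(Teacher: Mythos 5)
Your proposal is correct and follows essentially the same route as the paper: accessibility is quoted from Lemma \ref{I-adic accessibility Lemma}, cover preservation is definitional, and cover lifting reduces to showing that the target $B$ of an adically faithfully flat map out of an object of $\CAlg^\mathrm{evp}_I$ again lies in $\CAlg^\mathrm{evp}_I$, with evenness and weak $2$-periodicity extracted from the formula $\pi_n(B)\simeq \pi_n(A)\,\widehat{\o}_{\pi_0(A)}\,\pi_0(B)$ of Remark \ref{Remark adic ff homotopy formula} -- exactly the argument the paper gives.

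The one place you go beyond the paper is the bounded $I$-torsion requirement: the paper's proof only verifies even periodicity of $B$ and is silent on this condition, even though it is part of Definition \ref{Def of I-adic ev and evp rings}. Your treatment of it is sound: the reduction to $\pi_0$ via invertibility of the modules $\pi_{2k}(B)$ works (torsion submodules localize and stabilization can be checked on a finite Zariski cover trivializing the invertible module), and the residual commutative-algebra fact -- that an $I$-completely faithfully flat algebra over a complete ring with bounded $I$-torsion again has bounded $I$-torsion -- is a standard lemma; the natural citation is the flatness lemmas of \cite{BMS2} (or the corresponding discussion in \cite[Section 2.2]{HRW}) rather than \cite[Proposition 2.1.10]{HRW}, which concerns accessibility. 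So, if anything, your write-up is more complete than the paper's one-line treatment of the cover lifting property.
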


\begin{proof}
For the accessibility claim, see Proposition \ref{I-adic accessibility Lemma}.
 The fpqc cover preservation is definitional, and for the cover lifting property, it suffices to show that if $A\to B$ is an adically faithfully flat map in $\CAlg_R^\mathrm{cplt}$ and $A$ is even periodic, then so is $B$. This follows from Remark \ref{Remark adic ff homotopy formula}, by which we have $\pi_n(A)\,\widehat{\o}_{\pi_0(A)}\, \pi_0(B)\simeq \pi_n(B)$ for all $n\in \mathbf Z$.
\end{proof}

It follows that $\mathrm{FAff}^\mathrm{evp}_I\,:=\, (\CAlg_I^\mathrm{evp})^\mathrm{op}$ inherits a Grothendieck topology from the fpqc topology on $\mathrm{FAff}^{I-\mathrm{ad}}_R$.
At this point, we can formally imitate much of the development of the even periodization from
Section \ref{Section EVP} in the setting of $I$-adic formal spectral stacks.

\begin{definition}\label{Def of I-adic even stacks}
An \textit{even periodic $I$-adic formal spectral stack} is an accessible functor $\mathfrak X:\CAlg_I^\mathrm{evp}\to \mathrm{Ani}$ which satisfies fpqc descent. We let
$$\mathrm{FSpStk}_I^\mathrm{evp} :=\mathrm{Shv}^\mathrm{acc}_\mathrm{fpqc}(\mathrm{FAff}_I^\mathrm{evp})$$
denote the $\i$-category of even periodic $I$-adic formal spectral stacks.
\end{definition}

This is the even periodic version of the $\i$-category of $I$-adic formal spectral stacks $\mathrm{FSpStk}_I$ from
Definition \ref{Def of IFSpStk}.

\begin{cons}\label{Functoriality of ev I-version}
Let us denote by $\varepsilon_I:\CAlg_I^\mathrm{evp}\to\CAlg^\mathrm{cplt}_R$  the subcategory inclusion. Analogously to Construction \ref{Functoriality of ev} and thanks to Lemma \ref{Lemma prep I-adic for evp inclusion}, the functor $\varepsilon_I$ induces a double-adjunction on the $\i$-categories of accessible fpqc sheaves
$$
\xymatrix{
 \mathrm{FSpStk}_I^\mathrm{evp} \ar@<1.2ex>[r]^{\,\,\varepsilon_{I!}} \ar@<-1.2ex>[r]_{\,\,\varepsilon_{I*}}&  
 \mathrm{FSpStk}_I.\ar[l]|-{\varepsilon_I^*}
}
$$
The sheaf-level functor $\varepsilon_I^*$ is given by the restriction $\mathfrak X\mapsto\mathfrak X\vert_{\CAlg_I^\mathrm{evp}}$. The sheaf-level left adjoint $\varepsilon_{I!}$ is obtained as the left Kan extension along $\varepsilon_I$, followed by fpqc sheafification, which is in turn well-defined by virtue of being applied to an accessible presheaf.
\end{cons}

\begin{definition}\label{Def of evp localization, I-version}
The \textit{even periodization} of $\mathfrak X\in\mathrm{FSpStk}_I$ is given by
$
\mathfrak X^\mathrm{evp}:= \varepsilon_{I!}\varepsilon_I^*\mathfrak X.
$
\end{definition}

It is immediate from the definition and Construction \ref{Functoriality of ev} that even periodization is functorial, and that there is a canonical map $\mathfrak X^\mathrm{evp}\to\mathfrak X$ for all $\mathfrak X\in \mathrm{FSpStk}_I$.

\begin{prop}\label{EVP as a colimit, I-version}
The even periodization of a nonconnective $I$-adic formal spectral stack $\mX$ may be expressed as
$$
\mathfrak X^\mathrm{evp} \,\,\simeq \varinjlim_{\Spf(A)\in (\mathrm{FAff}_I^\mathrm{evp})_{/\mathfrak X}}\Spf(A),
$$
with the colimit taken in $\mathrm{FSpStk}_I$, and where  $(\mathrm{FAff}_I^\mathrm{evp})_{/\mathfrak X}:= \mathrm{FAff}_I^\mathrm{evp}\times_{\mathrm{FSpStk}_I}(\mathrm{FSpStk}_I)_{/\mathfrak X}.$
\end{prop}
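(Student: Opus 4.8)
The statement is the exact $I$-adic analogue of Proposition \ref{EVP as a colimit}, and the plan is to transplant that proof verbatim into the $I$-adic formal setting. The key inputs are already in place: Lemma \ref{Lemma prep I-adic for evp inclusion} tells us that the inclusion $\varepsilon_I:\CAlg_I^\mathrm{evp}\subseteq\CAlg^\mathrm{cplt}_R$ is accessible, preserves fpqc covers, and has the cover lifting property; this is precisely what is needed to run Construction \ref{Functoriality of ev I-version}, producing the double adjunction $\varepsilon_{I!}\dashv\varepsilon_I^*\dashv\varepsilon_{I*}$ between $\mathrm{FSpStk}_I^\mathrm{evp}$ and $\mathrm{FSpStk}_I$, and hence the definition $\mathfrak X^\mathrm{evp}:=\varepsilon_{I!}\varepsilon_I^*\mathfrak X$ (Definition \ref{Def of evp localization, I-version}).

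The argument itself runs as follows. First I would observe that the $\i$-category $\mathrm{FSpStk}_I^\mathrm{evp}$ is, by its very definition as $\mathrm{Shv}^\mathrm{acc}_\mathrm{fpqc}(\mathrm{FAff}_I^\mathrm{evp})$, generated under small colimits by its subcategory of representables $\mathrm{FAff}_I^\mathrm{evp}$ --- this is exactly the content of the universal property of accessible sheaves recalled in Remark \ref{Remark access why} and Remark \ref{Reason for cavaliers} (where, as noted in Remark \ref{Reason for cavaliers}, the $I$-adic context is free of the size pathologies of the general adic context, since $\CAlg_R^\mathrm{cplt}$ is accessible). Consequently the canonical map
$$
\varinjlim_{\Spf(A)\in (\mathrm{FAff}_I^\mathrm{evp})_{/\mathfrak X}}\Spf(A)\vert_{\CAlg_I^\mathrm{evp}}\longrightarrow \mathfrak X\vert_{\CAlg_I^\mathrm{evp}}=\varepsilon_I^*\mathfrak X
$$
is an equivalence in $\mathrm{FSpStk}_I^\mathrm{evp}$. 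Next I would apply the functor $\varepsilon_{I!}:\mathrm{FSpStk}_I^\mathrm{evp}\to\mathrm{FSpStk}_I$, which preserves colimits since it is a left adjoint, together with the fact that $\varepsilon_{I!}(\Spf(A)\vert_{\CAlg_I^\mathrm{evp}})\simeq\Spf(A)$ for $A\in\CAlg_I^\mathrm{evp}$ (the $I$-adic analogue of Example \ref{Example for proof}: the overcategory $(\mathrm{FAff}_I^\mathrm{evp})_{/\Spf(A)}\simeq(\CAlg_{I,A}^\mathrm{evp})^\mathrm{op}$ has $\Spf(A)$ as a final object, so left Kan extension followed by sheafification returns $\Spf(A)$). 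Feeding the displayed equivalence through $\varepsilon_{I!}$ then yields
$$
\mathfrak X^\mathrm{evp}=\varepsilon_{I!}\varepsilon_I^*\mathfrak X\;\simeq\;\varinjlim_{\Spf(A)\in (\mathrm{FAff}_I^\mathrm{evp})_{/\mathfrak X}}\Spf(A),
$$
with the colimit now taken in $\mathrm{FSpStk}_I$, which is the claim.

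There is no genuine obstacle here; the proposition is formal once Lemma \ref{Lemma prep I-adic for evp inclusion} is granted. The only point requiring a modicum of care --- and the one I would state explicitly rather than gloss over --- is the identification of the slice category $(\mathrm{FAff}_I^\mathrm{evp})_{/\mathfrak X}$ as the correct indexing diagram: because $\mathfrak X$ is a sheaf and $\varepsilon_{I!}$ is computed as left Kan extension \emph{then} sheafification, one should check that sheafification does not disturb the colimit, which is automatic since the colimit already lands among sheaves (it is a colimit of representables computed in $\mathrm{FSpStk}_I^\mathrm{evp}$ before applying $\varepsilon_{I!}$, and $\varepsilon_{I!}$ respects the sheaf structure by Construction \ref{Functoriality of ev I-version}). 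I would simply remark that the proof is identical to that of Proposition \ref{EVP as a colimit}, substituting $\mathrm{FSpStk}_I$ for $\mathrm{SpStk}$, $\mathrm{FSpStk}_I^\mathrm{evp}$ for $\mathrm{SpStk}^\mathrm{evp}$, $\Spf$ for $\Spec$, and $\varepsilon_I$ for $\varepsilon$ throughout.
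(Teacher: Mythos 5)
Your proposal is correct and follows essentially the same argument as the paper: generation of $\mathrm{FSpStk}_I^\mathrm{evp}$ under small colimits by $\mathrm{FAff}_I^\mathrm{evp}$, so that the colimit of restricted representables computes $\varepsilon_I^*\mathfrak X$, followed by applying the colimit-preserving left adjoint $\varepsilon_{I!}$ with $\varepsilon_{I!}(\Spf(A)\vert_{\CAlg_I^\mathrm{evp}})\simeq\Spf(A)$. The extra remarks on accessibility and on sheafification not disturbing the colimit are fine but add nothing beyond the paper's proof.
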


\begin{proof}
This follows from the $\i$-category $\mathrm{SpStk}_I^\mathrm{evp}$ being generated under colimits by its subcategory $\mathrm{FAff}_I^\mathrm{evp}$, so that the canonical map
$$
\varinjlim_{\Spf(A)\in (\mathrm{FAff}_I^\mathrm{evp})_{/\mathfrak X}}\Spf(A)\vert_{\CAlg_I^\mathrm{evp}}\to\mathfrak X\vert_{\CAlg_I^\mathrm{evp}}
$$
is an equivalence, and the functor $\varepsilon_{I!}:\mathrm{FSpStk}_I^\mathrm{evp}\to\mathrm{FSpStk}_I$ preserving colimits.
\end{proof}

\begin{exun}\label{Example for proof I-version}
Let $A\in\CAlg_I^\mathrm{evp}$, i.e.\
$A$ is an even periodic $I$-complete $\E$-algebra over $R$ with bounded $I$-torsion. Consider the corresponding formal affine $\Spf(A)$, which is to say,the corepresentable functor $\CAlg_R^\mathrm{cplt}\ni B\mapsto \Map_{\CAlg_R}(A, B)\in\mathrm{Ani}$. In that case, the $\i$-category $(\mathrm{FAff}_I^\mathrm{evp})_{/\Spf(A)}\simeq (\CAlg_I^\mathrm{evp})_{A/}^\mathrm{op}$ has $\Spf(A)$ as its final object, and so  its even periodization is $\Spf(A)^\mathrm{evp}\simeq \Spf(A)$.
\end{exun}

\begin{prop}\label{Even stacks inside stacks, I-adic version}
The following statements hold:
\begin{enumerate}[label = (\alph*)]
\item The even periodization functor $\mathfrak X\mapsto \mathfrak X^\mathrm{evp}$ preserves all small colimits in $\mathrm{FSpStk}_I$.\label{Corollary 10, a, I-ad}
\item The functor $\varepsilon_{I!} : \mathrm{FSpStk}_I^\mathrm{evp}\to \mathrm{FSpStk}_I$ is fully faithful. Its essential image is generated under small colimits by the full subcategory $\mathrm{FAff}_I^\mathrm{evp}\subseteq\mathrm{FSpStk}_I$.\label{Corollary 10, b, I-ad}
\item A nonconnective spectral stack $\mathfrak X$ belongs to the essential image of $\mathrm{SpStk}^\mathrm{evp}$ inside $\mathrm{SpStk}$ as explained in
\ref{Corollary 10, b} if and only if $\mathfrak X^\mathrm{evp}\to\mathfrak X$ is an equivalence.\label{Corollary 10, c, I-ad}
\item The canonical map $(\mathfrak X^\mathrm{evp})^\mathrm{evp}\to \mathfrak X^\mathrm{evp}$ is an equivalence for any $\mathfrak X\in\mathrm{SpStk}$.\label{Corollary 10, d, I-ad}
\end{enumerate}
\end{prop}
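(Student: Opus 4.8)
The plan is to mimic \emph{verbatim} the proof of Proposition \ref{Even stacks inside stacks}, since the formal setup has been arranged precisely so that the same formal manipulations apply. The key inputs are Lemma \ref{Lemma prep I-adic for evp inclusion}, which guarantees that $\varepsilon_I:\CAlg_I^\mathrm{evp}\to\CAlg_R^\mathrm{cplt}$ is accessible and compatible with fpqc covers in both directions, and Construction \ref{Functoriality of ev I-version}, which produces the double adjunction $\varepsilon_{I!}\dashv\varepsilon_I^*\dashv\varepsilon_{I*}$ on accessible fpqc sheaves.

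\textbf{Step 1 (part \ref{Corollary 10, a, I-ad}).} By Definition \ref{Def of evp localization, I-version}, even periodization is the composite $\varepsilon_{I!}\circ\varepsilon_I^*$. Both $\varepsilon_I^*$ (being a left adjoint in the double adjunction, as its own further right adjoint $\varepsilon_{I*}$ exists) and $\varepsilon_{I!}$ (being a left adjoint) preserve all small colimits, so their composite does as well.

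\textbf{Step 2 (part \ref{Corollary 10, b, I-ad}).} To show $\varepsilon_{I!}$ is fully faithful, one shows that the unit $\mathfrak X\to\varepsilon_I^*\varepsilon_{I!}\mathfrak X$ is an equivalence for every $\mathfrak X\in\mathrm{FSpStk}_I^\mathrm{evp}$. Since both sides preserve colimits in $\mathfrak X$, and $\mathrm{FSpStk}_I^\mathrm{evp}$ is generated under small colimits by the representables $\mathrm{FAff}_I^\mathrm{evp}$ (this is exactly the generation-under-colimits property used in Proposition \ref{EVP as a colimit, I-version}), we reduce to the case $\mathfrak X=\Spf(A)\vert_{\CAlg_I^\mathrm{evp}}$ for $A\in\CAlg_I^\mathrm{evp}$. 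In that case $\varepsilon_{I!}(\Spf(A)\vert_{\CAlg_I^\mathrm{evp}})\simeq\Spf(A)$ by Example \ref{Example for proof I-version}, and the unit is visibly an equivalence. The description of the essential image as generated under small colimits by $\mathrm{FAff}_I^\mathrm{evp}$ is then immediate, since $\varepsilon_{I!}$ preserves colimits and $\mathrm{FSpStk}_I^\mathrm{evp}$ is so generated.

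\textbf{Steps 3 and 4 (parts \ref{Corollary 10, c, I-ad} and \ref{Corollary 10, d, I-ad}).} These follow formally from \ref{Corollary 10, b, I-ad} exactly as in the non-formal case. For \ref{Corollary 10, c, I-ad}: if $\mathfrak X$ lies in the essential image of the fully faithful $\varepsilon_{I!}$, then $\mathfrak X\simeq\varepsilon_{I!}\mathfrak Y$ for some $\mathfrak Y$, and $\mathfrak X^\mathrm{evp}=\varepsilon_{I!}\varepsilon_I^*\varepsilon_{I!}\mathfrak Y\simeq\varepsilon_{I!}\mathfrak Y\simeq\mathfrak X$ by the unit equivalence from Step 2; conversely if $\mathfrak X^\mathrm{evp}\to\mathfrak X$ is an equivalence then $\mathfrak X\simeq\varepsilon_{I!}(\varepsilon_I^*\mathfrak X)$ is in the essential image. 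For \ref{Corollary 10, d, I-ad}: $\mathfrak X^\mathrm{evp}=\varepsilon_{I!}\varepsilon_I^*\mathfrak X$ always lies in the essential image of $\varepsilon_{I!}$, so applying \ref{Corollary 10, c, I-ad} to it gives $(\mathfrak X^\mathrm{evp})^\mathrm{evp}\to\mathfrak X^\mathrm{evp}$ an equivalence. There is no substantive obstacle here; the only point requiring care is to make sure the accessibility/sheafification bookkeeping of Construction \ref{Functoriality of ev I-version} is legitimate, but this is guaranteed by Lemma \ref{Lemma prep I-adic for evp inclusion} together with the fact (Remark \ref{Reason for cavaliers}) that $\CAlg_R^\mathrm{cplt}$ is genuinely accessible in the $I$-adic context, so none of the size-theoretic subtleties of the general adic setting arise.
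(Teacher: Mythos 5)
Your proposal is correct and follows essentially the same route as the paper's own proof: part (a) as a composite of the two left adjoints $\varepsilon_{I!}$ and $\varepsilon_I^*$, part (b) by reducing the unit equivalence to corepresentables $\Spf(A)\vert_{\CAlg_I^\mathrm{evp}}$ via colimit generation and Example \ref{Example for proof I-version}, and parts (c), (d) formally from (b). The only difference is that you spell out the formal deduction of (c) and (d), which the paper leaves to the reader.
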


\begin{proof}
For \ref{Corollary 10, a, I-ad}, observe that even periodization may be written via Construction \ref{Functoriality of ev I-version} as the composite of two left adjoint functors $\varepsilon_{I!}$and $\varepsilon_I^*$.
Claims \ref{Corollary 10, c, I-ad} and \ref{Corollary 10, d, I-ad} follow easily from \ref{Corollary 10, b, I-ad},
To establish the latter,
we must show that the unit $\mathfrak X\to\varepsilon_I^*\varepsilon_{I!}\mathfrak X$ is an equivalence for any $\mathfrak X\in \mathrm{FSpStk}^\mathrm{evp}_I$. Since both sides commute with colimits in $\mathfrak X$, we may reduce to the corepresentable case, i.e.\ $\mathfrak X =\Spf(A)\vert_{\CAlg^\mathrm{evp}}$ for an arbitrary even periodic $\E$-ring $A$. But the claim is clear case since we have $\varepsilon_{I!}(\Spf(A)\vert_{\CAlg_I^\mathrm{evp}})\simeq \Spf(A)$ in that case, as explained in Example \ref{Example for proof I-version}.
\end{proof}

In light of Proposition \ref{Even stacks inside stacks, I-adic version}, we can identify the $\i$-category $\mathrm{FSpStk}_I^\mathrm{evp}$ with its essential image inside $\mathrm{FSpStk}_I$. That is to say, we view even periodic $I$-adic formal spectral stacks as a special class of nonconnective $I$-adic formal spectral stacks.

Just as the even periodization of affines gives rise to the even filtration by Proposition \ref{Voila the even filtration}, so does the even periodization of $I$-adic formal affines encode the $I$-complete even filtration of Definition \ref{Def of I-complete evf}.

\begin{prop}\label{Voila the even filtration, I-adic}
Let $A$ be an $I$-complete $\E$-algebra over $R$. There is a canonical equivalence of filtered $\E$-rings
$$
\Gamma\big((\Spf(A))^\mathrm{evp})^\mathrm{cn};\, \tau_{\ge 2*}(\sO_{\Spf(A)^\mathrm{evp}})\big)\,\simeq\, \mathrm{fil}_{\mathrm{ev}, I}^{\ge *}(A).
$$
\end{prop}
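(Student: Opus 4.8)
The plan is to mimic the proof of Proposition \ref{Voila the even filtration} step by step, replacing everywhere the non-formal constructions with their $I$-adic formal counterparts that we set up in this section. First I would specialize Proposition \ref{EVP as a colimit, I-version} to the formal affine $\Spf(A)$, using Example \ref{Example for proof I-version} to identify the indexing $\i$-category as $(\CAlg_I^\mathrm{evp})_{A/}^\mathrm{op}$, so that
$$
\Spf(A)^\mathrm{evp}\,\simeq\,\varinjlim_{B\in(\CAlg_I^\mathrm{evp})_{A/}}\Spf(B)
$$
as a colimit taken in $\mathrm{FSpStk}_I$. The next step is to apply the Postnikov-filtration-of-functions construction. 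Since the functor $\mathfrak O:\mathrm{FSpStk}_I^\mathrm{op}\to\CAlg$ (and more generally the sheaf-level Postnikov tower $\mathfrak X\mapsto \Gamma(\mathfrak X^\mathrm{cn};\,\tau_{\ge *}(\sO_{\mathfrak X}))$ valued in $\mathrm{FilCAlg}$) is built by Kan extension exactly as in Construction \ref{Cons of formal QCoh} and Construction \ref{Cons 3.22 Postnikov filtration on QCoh}, and since limits in $\mathrm{FilCAlg}$ are computed filtered-degree-wise, the identification \eqref{Postnikov filtration on functions formula} carries over verbatim to the formal setting: turning the above colimit of formal affines into a limit of $\i$-categories gives
$$
\Gamma\big((\Spf(A)^\mathrm{evp})^\mathrm{cn};\,\tau_{\ge 2*}(\sO_{\Spf(A)^\mathrm{evp}})\big)\,\simeq\,\varprojlim_{B\in(\CAlg_I^\mathrm{evp})_{A/}}\tau_{\ge 2*}(B).
$$
Here one should be slightly careful that $\tau_{\ge 2*}(\sO_{\Spf(B)})$ really is $\tau_{\ge 2*}(B)$; this is the content of $\mathfrak O(\Spf(B))\simeq B$ together with the $t$-exactness of the relevant pushforwards, which is available because the $B$'s are even periodic and hence we are in the regime where Corollary \ref{Acn theorem QCoh} (or indeed Theorem \ref{Descent for complex periodic formal QCoh}) applies.

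It then remains to recognize the right-hand side $\varprojlim_{B\in(\CAlg_I^\mathrm{evp})_{A/}}\tau_{\ge 2*}(B)$ as the $I$-adic even filtration $\mathrm{fil}_{\mathrm{ev},I}^{\ge *}(A)$. By Proposition \ref{Complete evp = ev filtr}, the $I$-adic even filtration agrees with the $I$-adic even periodic filtration, which by Definition \ref{Def of I-complete evf} is precisely $\varprojlim_{B\in(\CAlg_I^\mathrm{evp})_{A/}}\tau_{\ge 2*}(B)$. This closes the argument. (Alternatively, one can quote only Lemma \ref{Eff descent citation result, p-adic} and Proposition \ref{Complete evp = ev filtr} directly, since the colimit/limit exchange above is itself the statement that the $I$-adic even periodic filtration is computed by the over-category of even periodic $I$-complete algebras.)

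I do not expect any serious obstacle: every ingredient has a formal analogue already established earlier in the excerpt. The one point that needs a word of care is making sure the Postnikov-tower-of-the-structure-sheaf formula \eqref{Postnikov filtration on functions formula} is legitimate over $\mathrm{FSpStk}_I$, i.e.\ that $\Gamma(\mathfrak X^\mathrm{cn};\tau_{\ge *}(\sO_{\mathfrak X}))$ still sends colimits of formal affines to limits; this is fine because it factors through $\mathfrak{QC}\mathrm{oh}$ restricted to the (almost connective part of the) connective covers, where descent holds by Proposition \ref{Formal and informal QCoh}/Corollary \ref{Acn theorem QCoh}, and in any case the $B$'s appearing are complex periodic so Theorem \ref{Descent for complex periodic formal QCoh} applies unconditionally. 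Thus the proof is a routine transcription of the proof of Proposition \ref{Voila the even filtration}.
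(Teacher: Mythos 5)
Your proof is correct and follows essentially the same route as the paper: specialize the colimit formula of Proposition \ref{EVP as a colimit, I-version} to $\Spf(A)$, pass to the formal analogue of \eqref{Postnikov filtration on functions formula} to obtain $\varprojlim_{B\in(\CAlg_I^\mathrm{evp})_{A/}}\tau_{\ge 2*}(B)$, and then identify this with $\mathrm{fil}^{\ge *}_{\mathrm{ev},I}(A)$ via Definition \ref{Def of I-complete evf} and Proposition \ref{Complete evp = ev filtr}. Your extra remarks justifying the descent of the Postnikov-tower formula in the formal setting only make explicit what the paper leaves implicit, so no changes are needed.
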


\begin{proof}
Specializing Proposition \ref{EVP as a colimit, I-version} to the formally affine case, we get the even periodization of $\Spf(A)$ expressed as the colimit
$$
\Spf(A)^\mathrm{evp}\,\,\simeq \varinjlim_{B\in(\CAlg_I^\mathrm{ev})_{A/}}\Spf(B).
$$
Using the identification \eqref{Postnikov filtration on functions formula} from Remark \ref{Postnikov filtration on functions} -- or really the corresponding $I$-adic formal versions -- we find the filtered $\E$-ring in question to be
$$
\Gamma\big((\Spf(A)^\mathrm{evp})^\mathrm{cn};\,   \tau_{\ge 2*}(\sO_{\Spf(A)^\mathrm{evp}}))\,\,\simeq \varprojlim_{B\in(\CAlg_I^\mathrm{evp})_{A/}}\tau_{\ge 2*}(B).
$$
The right-hand side is by definition equivalent to the $I$-complete even periodic filtration $\mathrm{fil}^{\ge *}_{\mathrm{evp}, I}(A)$ of Definition \ref{Def of I-complete evf}. The claim now follows since we have
 $\mathrm{fil}^{\ge *}_{\mathrm{evp}, I}(A)\simeq\mathrm{fil}^{\ge *}_{\mathrm{ev}, I}(A)$
by Proposition \ref{Complete evp = ev filtr}.
\end{proof}

\begin{remark}
From the perspective of the $I$-adic formal stacks, the $I$-complete eff descent of $\mathrm{fil}^{\ge *}_{\mathrm{ev}, I}$ in the sense of Lemma \ref{Eff descent citation result, p-adic} is a reflection of the fpqc descent of $\mathfrak{QC}\mathrm{oh}$ in the even periodic (and more generally, complex periodic) setting by Theorem \ref{Descent for complex periodic formal QCoh}, and the agreement between adic faithfully flat and $I$-completely faithfully flat maps of Lemma \ref{adic fpqc = I-complete fpqc}
\end{remark}

As we have seen in this section so far, most of the aspects of even periodization transfer without change to the $I$-adic context. As an example, let us spell out the $I$-adic analogue of the cosimplicial formula 
of Proposition \ref{Computing the evening}
for even periodization in the presence of a periodically eff cover. Of course, we must first specify an $I$-adic version of the latter.

\begin{definition}\label{Definition periodically eff, I-adic}
A map of $I$-adic formal spectral stacks $\Spf(B)\to \mathfrak X$ is \textit{$I$-completely periodically eff} if for every $\mathfrak Y\in (\mathrm{FAff}_I^\mathrm{evp})_{/\mathfrak X}$, the map $\mathfrak Y\times_{\mathfrak X}\Spf(B)\to\Spf(B)$ is an fpqc cover in $\mathrm{FAff}^\mathrm{evp}_I$.
\end{definition}

\begin{prop}\label{Computing the evening, I-adic}
Let $\Spf(B)\to \mathfrak X$ be an $I$-completely periodically eff map of $I$-adic formal spectral stakcs. Then the canonical map
$$
\varinjlim_{\,\,\,\,\mathbf{\Delta}^\mathrm{op}}\Spf(B^\bullet)\to \mathfrak X^\mathrm{evp}
$$
is an equivalence, where we have denoted $\Spf(B^\bullet):= \Spec(B)^{\times_{\mathfrak X}[\bullet]}$, since this product is an $I$-adic formal affine by Definition \ref{Definition periodically eff, I-adic}.
\end{prop}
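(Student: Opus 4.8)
The plan is to imitate verbatim the proof of Proposition \ref{Computing the evening} in the $I$-adic formal context, using the $I$-adic analogues of the tools developed in this section. First I would observe that, by the very Definition \ref{Definition periodically eff, I-adic} of an $I$-completely periodically eff map, the restriction $\Spf(B)\vert_{\CAlg_I^\mathrm{evp}}\to \mathfrak X\vert_{\CAlg_I^\mathrm{evp}}$ is an fpqc cover in the $\i$-category $\mathrm{FSpStk}_I^\mathrm{evp}$, and that its \v{C}ech nerve is levelwise representable: it is given by $\Spf(B)^{\times_{\mathfrak X}[\bullet]}\simeq\Spf(B^\bullet)$ for a cosimplicial object $B^\bullet$ in $\CAlg_I^\mathrm{evp}$. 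This is precisely where the hypothesis is used — for every $\mathfrak Y\in(\mathrm{FAff}_I^\mathrm{evp})_{/\mathfrak X}$ the base change $\mathfrak Y\times_{\mathfrak X}\Spf(B)$ is required to land in $\mathrm{FAff}_I^\mathrm{evp}$, so in particular the iterated fiber products $\Spf(B)^{\times_\mathfrak X[\bullet]}$ are $I$-adic formal affines associated to even periodic $I$-complete $\E$-algebras with bounded $I$-torsion.

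Next I would invoke effective descent in the $\i$-topos $\mathrm{FSpStk}_I^\mathrm{evp}$: since this $\i$-category of accessible fpqc sheaves is an $\i$-topos, the colimit of the \v{C}ech nerve of a cover recovers the cover's target, so the canonical map
$$
\varinjlim_{\,\,\,\,\mathbf{\Delta}^\mathrm{op}}\Spf(B^\bullet)\vert_{\CAlg_I^\mathrm{evp}}\to \mathfrak X\vert_{\CAlg_I^\mathrm{evp}}
$$
is an equivalence in $\mathrm{FSpStk}_I^\mathrm{evp}$. Applying the colimit-preserving fully faithful embedding $\varepsilon_{I!}:\mathrm{FSpStk}_I^\mathrm{evp}\hookrightarrow\mathrm{FSpStk}_I$ of Proposition \ref{Even stacks inside stacks, I-adic version} and recalling that $\mathfrak X^\mathrm{evp}\simeq\varepsilon_{I!}(\mathfrak X\vert_{\CAlg_I^\mathrm{evp}})$ by Definition \ref{Def of evp localization, I-version}, together with the identification $\varepsilon_{I!}(\Spf(A)\vert_{\CAlg_I^\mathrm{evp}})\simeq\Spf(A)$ from Example \ref{Example for proof I-version}, yields the desired equivalence $\varinjlim_{\mathbf\Delta^\mathrm{op}}\Spf(B^\bullet)\simeq\mathfrak X^\mathrm{evp}$ in $\mathrm{FSpStk}_I$.

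I do not anticipate a serious obstacle here — the argument is formally identical to the non-formal case — but the one point requiring genuine care is verifying that the \v{C}ech nerve of the cover $\Spf(B)\to\mathfrak X$ is computed levelwise by $\Spf$ of a cosimplicial object of $\CAlg_I^\mathrm{evp}$, rather than merely by some formal spectral stack. This is exactly the content built into Definition \ref{Definition periodically eff, I-adic}, which forces each $\mathfrak Y\times_{\mathfrak X}\Spf(B)$ into $\mathrm{FAff}_I^\mathrm{evp}$; taking $\mathfrak Y=\Spf(B)^{\times_\mathfrak X[n]}$ and iterating shows inductively that all the terms $\Spf(B^\bullet)$ are $I$-adic formal affines, and moreover that the cosimplicial $\E$-algebra $B^\bullet$ has all of its face maps adically faithfully flat (equivalently, by Corollary \ref{adic fpqc = I-complete fpqc}, $I$-completely faithfully flat). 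One should also note in passing, as remarked in the statement, that the relevant products exist and are representable precisely because of this, so the formula for $\Spf(B^\bullet)$ makes sense. The fpqc sheafification step is legitimate since it is applied to the accessible presheaf $\varinjlim_{\mathbf\Delta^\mathrm{op}}\Spf(B^\bullet)$, which is a small colimit of representables.
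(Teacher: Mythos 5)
Your proposal is correct and follows essentially the same route as the paper's own proof: identify the restriction $\Spf(B)\vert_{\CAlg_I^\mathrm{evp}}\to\mathfrak X\vert_{\CAlg_I^\mathrm{evp}}$ as an fpqc cover with \v{C}ech nerve $\Spf(B^\bullet)$, conclude the geometric realization recovers $\mathfrak X\vert_{\CAlg_I^\mathrm{evp}}$ in $\mathrm{FSpStk}_I^\mathrm{evp}$, and then apply the colimit-preserving fully faithful embedding $\varepsilon_{I!}$ together with Definition \ref{Def of evp localization, I-version}. The extra care you take with the levelwise representability of the \v{C}ech nerve is exactly the point the paper leaves implicit in Definition \ref{Definition periodically eff, I-adic}, so there is nothing to correct.
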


\begin{proof}
By Definition \ref{Computing the evening, I-adic} it follows that $\Spf(B)\vert_{\CAlg_I^\mathrm{evp}}\to\mathfrak X\vert_{\CAlg_I^\mathrm{evp}}$ is an fpqc cover in $\mathrm{FSpStk}_I^\mathrm{evp}$, and its \v{C}ech nerve is goven by $\Spf(B^\bullet)$ as in the statement of the Proposition. Consequently, the map
$$
\varinjlim_{\,\,\,\,\mathbf{\Delta}^\mathrm{op}}\Spf(B^\bullet)\vert_{\CAlg_I^\mathrm{evp}}\to \mathfrak X\vert_{\CAlg_I^\mathrm{evp}}
$$
is an equivalence in $\mathrm{FSpStk}_I^\mathrm{evp}$. This gives rise to the desired equivalence upon applying the colimit-preserving fully faithful embedding $\varepsilon_{I!} :\mathrm{FSpStk}_I^\mathrm{evp}\to\mathrm{SpStk}_I$ of Proposition \ref{Even stacks inside stacks, I-adic version}, and recalling that $\mathfrak X^\mathrm{evp}\simeq \varepsilon_{I!}(\mathfrak X\vert_{\CAlg_I^\mathrm{evp}})$ by Definition \ref{Def of evp localization, I-version}.
\end{proof}

As consequence, we may derive a comparison between the $I$-adic even periodization of an $I$-adic completion and the $I$-completion of an even priodization, a geometric version of the corresponding result for the even filtration \cite[Proposition 2.4.1]{HRW}.

\begin{corollary}\label{evp & completion}
Let $\mX\to \Spec(R)$ be a map of spectral stacks for which there exists an periodically eff map $\Spec(B)\to \mX$ such that the the $I$-completion $B^\wedge_I$ is even periodic and has bounded $I$-torsion. The canonical comparison map
$$
(\mX^\wedge_I)^\mathrm{evp}\to (\mX^\mathrm{evp})^\wedge_I
$$
is an equivalence of even periodic $I$-adic formal spectral stacks.
\end{corollary}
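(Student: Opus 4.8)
The plan is to reduce both sides to explicit simplicial (really cosimplicial-$\E$-ring) presentations and compare them termwise. The hypothesis gives a periodically eff cover $\Spec(B)\to\mX$ with $B^\wedge_I\in\CAlg^\mathrm{evp}_I$. First I would observe that applying $I$-adic formal completion in the sense of Construction \ref{Const of I-completion of stacks} to the \v{C}ech nerve of $\Spec(B)\to\mX$ yields a presentation
$$
\mX^\wedge_I\,\simeq\,\varinjlim_{\,\,\,\,\mathbf\Delta^\mathrm{op}}\Spf\big((B^\bullet)^\wedge_I\big),
$$
since $\mu_!$ preserves colimits and sends $\Spec(A)\mapsto\Spf(A^\wedge_I)$; here $B^\bullet=\sO(\Spec(B)^{\times_\mX[\bullet]})$ as in Proposition \ref{Computing the evening}. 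The key point is then that the induced map $\Spf((B^\wedge_I)^\bullet)\to\mX^\wedge_I$ — with $(B^\wedge_I)^\bullet$ the $I$-completed cobar construction — is $I$-completely periodically eff in the sense of Definition \ref{Definition periodically eff, I-adic}. This requires knowing that $(B^\bullet)^\wedge_I$ is even periodic with bounded $I$-torsion for all $\bullet$: evenness and weak $2$-periodicity are detected after the faithfully flat (hence eff) base change $B\to B\o_\mathbf S\MUP$ exactly as in the proof of Proposition \ref{Even filtration is even periodic filtration}, while bounded $I$-torsion is inherited along adically faithfully flat maps. Granting this, Proposition \ref{Computing the evening, I-adic} gives
$$
(\mX^\wedge_I)^\mathrm{evp}\,\simeq\,\varinjlim_{\,\,\,\,\mathbf\Delta^\mathrm{op}}\Spf\big((B^\bullet)^\wedge_I\big).
$$

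On the other hand, Proposition \ref{Computing the evening} presents $\mX^\mathrm{evp}\simeq\varinjlim_{\mathbf\Delta^\mathrm{op}}\Spec(B^\bullet)$ as a geometric spectral stack over $\Spec(R)$, and then $I$-adic formal completion $\mu_!$, being colimit-preserving, gives
$$
(\mX^\mathrm{evp})^\wedge_I\,\simeq\,\varinjlim_{\,\,\,\,\mathbf\Delta^\mathrm{op}}\Spf\big((B^\bullet)^\wedge_I\big).
$$
The two cosimplicial diagrams agree termwise, compatibly with the face and degeneracy maps, because both are computed from the very same cosimplicial $\E$-ring $B^\bullet$ followed by the (colimit-preserving) $I$-completion functor. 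The canonical comparison map $(\mX^\wedge_I)^\mathrm{evp}\to(\mX^\mathrm{evp})^\wedge_I$ induced by the two universal properties is, under these identifications, the identity on the common colimit, hence an equivalence. Finally, that the resulting object is an even periodic $I$-adic formal spectral stack is automatic from Proposition \ref{Even stacks inside stacks, I-adic version}\,\ref{Corollary 10, b, I-ad}, since it is visibly a colimit of objects of $\mathrm{FAff}_I^\mathrm{evp}$.

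The main obstacle is the verification that $\Spf((B^\wedge_I)^\bullet)\to\mX^\wedge_I$ is $I$-completely periodically eff, i.e.\ that pulling back any $\mathfrak Y\in(\mathrm{FAff}_I^\mathrm{evp})_{/\mX^\wedge_I}$ along it yields an fpqc cover in $\mathrm{FAff}_I^\mathrm{evp}$. Unwinding the definitions, this amounts to: (i) the pullback is again $I$-adic formal affine — which follows from the periodically eff hypothesis on $\Spec(B)\to\mX$ together with stability of formally affine morphisms under base change and the pullback square of Remark \ref{Pullback square}; and (ii) the base-changed map is adically faithfully flat with even periodic, bounded-$I$-torsion target. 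For (ii) one uses Corollary \ref{adic fpqc = I-complete fpqc} to translate adic faithful flatness into $I$-completely faithful flatness, and then the stability of the $I$-completely eff condition under base change, exactly as in the affine/integral arguments of Proposition \ref{Complete evp = ev filtr}. One subtlety worth being careful about is that $B^\bullet$ need not itself be $I$-complete, so one must consistently distinguish $B^\bullet$ from $(B^\bullet)^\wedge_I$ and invoke that $\mu_!$ (equivalently the completed cobar construction) correctly computes the \v{C}ech nerve inside $\mathrm{FSpStk}_I$ — this is where the colimit-preservation of $\mu_!$ from Lemma \ref{Fpqc preservation of completion} and Construction \ref{Const of I-completion of stacks} does the real work.
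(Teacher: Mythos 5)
Your route is the same as the paper's: present $\mX^\mathrm{evp}$ by the \v{C}ech nerve of the periodically eff cover via Proposition \ref{Computing the evening}, push through $I$-completion, and match against the presentation of $(\mX^\wedge_I)^\mathrm{evp}$ supplied by Proposition \ref{Computing the evening, I-adic}. But two steps are off. First, your opening display $\mX^\wedge_I\simeq\varinjlim_{\mathbf\Delta^\mathrm{op}}\Spf((B^\bullet)^\wedge_I)$ is false in general: the colimit of the \v{C}ech nerve of a periodically eff map $\Spec(B)\to\mX$ is $\mX^\mathrm{evp}$, not $\mX$ (take $\mX=\Spec(\mathbf S)$ and $B=\MUP$: the colimit is $\mM$, not $\Spec(\mathbf S)$), so applying $\mu_!$ to that nerve computes $(\mX^\mathrm{evp})^\wedge_I$ — which is exactly your second-paragraph identification — and says nothing about $\mX^\wedge_I$. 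This claim is not load-bearing for the rest of your argument, but it signals a conflation of $\mX$ with $\mX^\mathrm{evp}$.

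Second, and this is the genuine gap: to invoke Proposition \ref{Computing the evening, I-adic} and land on the same cosimplicial diagram, you must identify the \v{C}ech nerve of $\Spf(B^\wedge_I)\to\mX^\wedge_I$ formed in $\mathrm{FSpStk}_I$ with the $I$-completions of the terms of the \v{C}ech nerve of $\Spec(B)\to\mX$, i.e.\ show $\Spf(B^\wedge_I)^{\times_{\mX^\wedge_I}[n]}\simeq\Spf((B^n)^\wedge_I)$. You rightly flag that $(B^\bullet)^\wedge_I$ must be distinguished from the nerve of the completion, but you attribute the identification to colimit-preservation of $\mu_!$; colimit preservation cannot identify fibered products. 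What the paper uses here is that $\mu_!$, being the restriction to accessible sheaves of the left adjoint constituent of a morphism of (big) $\i$-topoi induced by the morphism of sites of Lemma \ref{Fpqc preservation of completion}, preserves \emph{finite limits}, and hence carries the \v{C}ech nerve of $\Spec(B)\to\mX$ to the \v{C}ech nerve of $\Spf(B^\wedge_I)\to\mX^\wedge_I$. Without that left-exactness input, ``the two cosimplicial diagrams agree termwise'' is precisely what remains to be proved rather than something that follows from both being ``$B^\bullet$ followed by completion.'' (A smaller remark: once this identification is in place, the $I$-complete periodical effness of $\Spf(B^\wedge_I)\to\mX^\wedge_I$ is read off the hypothesis on $B^\wedge_I$, and the even periodicity of the higher nerve terms then comes for free from Definition \ref{Definition periodically eff, I-adic}; your detour through evenness of $(B^\bullet)^\wedge_I$ via base change along $B\to B\o_\mathbf S\MUP$ is not needed, since periodical effness already forces the $B^n$ themselves to be even periodic.)
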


\begin{proof}
Fixing a periodically eff map $\Spec(B)\to \mX$ as in the statement of the corollary, the non-formal even periodization may be expressed by Proposition \ref{Computing the evening} as
$$
\mX^\mathrm{evp}\,\simeq
\varinjlim_{\,\,\,\,\mathbf{\Delta}^\mathrm{op}}\Spec(B^\bullet)
$$
where $\Spec(B^\bullet)\simeq \Spec(B)^{\times_{\mX}[\bullet]}$.
Since the $I$-completion functor $\mX\mapsto \mX^\wedge_I$ of Construction \ref{Const of I-completion of stacks} commutes with small colimits as a functor $\mathrm{SpStk}_{/\Spec(R)}\to \mathrm{FSpStk}_I$, this allows us to identify
$$
(\mX^\mathrm{evp})^\wedge_I\,\simeq
\varinjlim_{\,\,\,\,\mathbf{\Delta}^\mathrm{op}}\Spf((B^\bullet)^\wedge_I).
$$
The $I$-adic completion is given in Construction \ref{Const of I-completion of stacks} as the left adjoint $\mu_!$ induced from a morphism of sites $\mu$. It is therefore obtained by the restriction to accessible sheaves of a left adjoint constituent of a morphism of (big) $\i$-topoi, and as such commutes with finite limits.  That means that the canonical comparison morphism
$$
\Spf((B^\bullet)^\wedge_I)\to \Spf(B^\wedge_I)^{\times_{\mX^\wedge_I}[\bullet]}
$$
is an equivalence, where the fibered product on the right is taken in the $\i$-category $\mathrm{FSpStk}_I$. But the right-hand side in question is nothing but the \v{C}ech nerve of the map $\Spf(B^\wedge_I)\to \mX^\wedge_I$, which is  $I$-completely periodically eff by assumption. The desired claim now follows from the presentation for $(\mX^\wedge_I)^\mathrm{evp}$ garnished by Proposition \ref{Computing the evening, I-adic}.
\end{proof}

\begin{remark}
The hypotheses of Corollary \ref{evp & completion} are satisfied for instance for any affine $\mX=\Spec(A)$ which admits an eff map $A\to B$ into an even $\E$-algebra over $R$ for which $B^\wedge_I$ is also even and has bounded $I$-torsion. Indeed, in that case $B\to B\otimes_{\mathbf S}\mathrm{MUP}$ is periodically eff and satisfied the desired assumptions. 
\end{remark}

\newpage

\section{Equivariant even periodization}
Let $\T$ denote the circle group.
For an $\E$-ring with a $\mathbf T$-action $A$,  Hahn-Raksit-Wilson define in \cite[Construction 2.1.14]{HRW}  a  $\T$-equivariant variant of the even filtration
$$
\mathrm{fil}^{\ge *}_{\mathrm{ev}, \mathrm h\mathbf T}(A) \,:=\varprojlim_{B\in (\CAlg^\mathrm{ev})_{A/}^{\mathrm B\mathbf T}} \tau_{\ge 2*}(B^{\mathrm h\mathbf T}),
$$
for the homotopy fixed points  with respect to the $\mathbf T$-action. This is used to define the motivic filtrations on $\mathrm{TC}^-(A)$, and is in particular quintessential to the \cite{BMS2} approach to prismatic cohomology.
Our goal in this and the next section will be to give a natural natural spectral algebro-geometric extension of this construction,  compatibly with how the even periodization extends the even filtration. 

This section will discuss how to intertwine group actions with even periodization. We will be able to work in greater generality  than just with circle actions here, mostly that of an action by a compact $\mathbb E_1$-group anima $G$. At some point however, we will need specialize to the case of  $G=\T$, in which case we will be able to obtain an even periodic geometrization of the filtration $\mathrm{fil}^{\ge *}_{\mathrm{ev}, \mathrm h\mathbf T}(A)$, as well as a natural even periodic enhancement of filtered prismatization.

\subsection{Spectral stacks with actions of group anima}
There are at least two different ways one might approach the notion of group actions on nonconnective spectral stacks. The first is internal to algebraic geometry.

\begin{definition}\label{Def of staquivariance}
Let $\mG$ be an $\mathbb E_1$-group object in $\mathrm{SpStk}$. The $\i$-category of \textit{$\mG$-equivariant\footnote{Since we will not consider the more sophisticated notion of \textit{genuine equivariance} -- which should perhaps rather be called \textit{animated equivariance} or perhaps \textit{orbivariance} -- in this paper anyway, we expect this unqualified use of the adjective ``equivariant" to cause little confusion.} nonconnective spectral stacks} is
$$
\mathrm{SpStk}_\mG\,:=\,\LMod_\mG(\mathrm{SpStk}),
$$
where the module $\i$-category is taken
in the sense of \cite{HA}.
\end{definition}

\begin{remark}\label{equivariance via quotient stacks}
The $\i$-category of $\mG$-equivariant nonconnective spectral stacks may also be described as the overcategory inside $\mathrm{SpStk}$ over the classifying stack $\mathrm B\mG\simeq \varinjlim_{\mathbf{\Delta}^\mathrm{op}}\mG^{\times[\bullet]}$. The equivalence $\mathrm{SpStk}_\mG\simeq \mathrm{SpStk}_{/\mathrm B\mG}$ sends a $\mG$-equivariant spectral stack $\mX$ to the quotient stack $\mX/\mG$, and a relative stack $\mY\to\mathrm B\mG$ to the fiber $\mY\times_{\mathrm B\mG}\Spec(\mathbf S)$ with the naturally induced $\Spec(\mathbf S)\times_{\mathrm B\mG}\Spec(\mathbf S)\simeq \mG$-equivariant structure.
\end{remark}

On the other hand, there is an inherent notion of the action of  an $\mathbb E_1$-group \textit{anima} on not only spectral stacks, but in fact on objects of any $\i$-category with enough colimits.

\begin{definition}\label{Def of action}
Let $G$ be an $\mathbb E_1$-group anima and $\mC$ be an $\i$-category which is closed under small colimits. A \textit{$G$-action on an object of $\mC$} is a functor $\mathrm BG\to \mC$, and the \textit{$\i$-category of objects in $\mC$ with a $G$-action} is the functor $\i$-category
$$
\mC^{\mathrm BG}\,:=\,\Fun(\mathrm BG, \mC).
$$
The \textit{underlying object} functor $\mC^{\mathrm BG}\to \mC$ is induced by taking functors into $\mC$ from the quotient map of anima $*\to */G\simeq \mathrm BG$.
\end{definition}

The two notions of equivariance exist \textit{\`a priori} in different settings: $G$ is once a spectral stack and once an anima. But of course, they can be connected as follows.

\begin{cons}\label{Cons constant group}
Let $G$ be  an $\mathbb E_1$-group anima. Let $\underline G_\mathbf S$ denote the constant stack with value $G$, i.e.\ the tensor $\underline{G}_\mathbf S:=G\otimes_{\mathrm{SpStk}}\Spec(\mathbf S)\simeq\varinjlim_G \Spec(\mathbf S)$ of the final spectral stack $\Spec(\mathbf S)$ with the anima $G$ in the $\i$-category $\mathrm{SpStk}$. Since the functor $\mathrm{Ani}\to \mathrm{SpStk}$ of tensoring with $\Spec(\mathbf S)$ preserves products, the constant stack $\underline G_\mathbf S$ is equipped with a canonical $\mathbb E_1$-group structure in $\mathrm{SpStk}$.
\end{cons}

\begin{prop}\label{Action = equivariance}
Let $G$ be  an $\mathbb E_1$-group anima. There exists a canonical equivalence of $\i$-categories
$$
\mathrm{SpStk}^{\mathrm BG}\,\simeq\, \mathrm{SpStk}_{\underline G_\mathbf S},
$$
compatible with the forgetful functors to $\mathrm{SpStk}$.
\end{prop}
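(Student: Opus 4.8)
The plan is to produce the equivalence by identifying both sides with the $\i$-category of accessible fpqc sheaves over a common site, or rather by exhibiting a natural comparison functor and checking it is an equivalence on a generating class. First I would recall that, by Remark \ref{equivariance via quotient stacks}, the right-hand side is $\mathrm{SpStk}_{\underline G_\mathbf S}\simeq \mathrm{SpStk}_{/\mathrm B\underline G_\mathbf S}$, where $\mathrm B\underline G_\mathbf S\simeq \varinjlim_{\mathbf\Delta^\mathrm{op}}\underline G_\mathbf S^{\times[\bullet]}$. Since the functor $\mathrm{Ani}\to\mathrm{SpStk}$ of tensoring with $\Spec(\mathbf S)$ preserves finite products (Construction \ref{Cons constant group}) and all small colimits, it carries the simplicial anima $\mathrm BG\simeq\varinjlim_{\mathbf\Delta^\mathrm{op}}G^{\times[\bullet]}$ to $\mathrm B\underline G_\mathbf S$, so that $\mathrm B\underline G_\mathbf S\simeq \underline{\mathrm BG}_\mathbf S$ is itself the constant stack on the anima $\mathrm BG$. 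Thus the task reduces to constructing a natural equivalence
$$
\Fun(\mathrm BG,\mathrm{SpStk})\,\simeq\,\mathrm{SpStk}_{/\underline{\mathrm BG}_\mathbf S}
$$
compatible with the forgetful functors down to $\mathrm{SpStk}$ (on the left, evaluation at the basepoint $*\to\mathrm BG$; on the right, base change along $\Spec(\mathbf S)\to\underline{\mathrm BG}_\mathbf S$).

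Next I would invoke the general principle that for any $\i$-topos-like target — and $\mathrm{SpStk}$ is an accessible localization of a presheaf $\i$-category, hence cocomplete and with universal colimits in the relevant sense, cf.\ Remark \ref{sheafification issues} — one has, for any anima $K$, a canonical equivalence $\Fun(K,\mathrm{SpStk})\simeq \mathrm{SpStk}_{/\underline K_\mathbf S}$, where $\underline K_\mathbf S:=K\otimes_{\mathrm{SpStk}}\Spec(\mathbf S)$. This is the "straightening/unstraightening over a constant stack" statement; concretely, the left-to-right functor sends a diagram $F:K\to\mathrm{SpStk}$ to $\varinjlim_{k\in K}F(k)\to\varinjlim_{k\in K}\Spec(\mathbf S)=\underline K_\mathbf S$, and the right-to-left functor sends $\mY\to\underline K_\mathbf S$ to $k\mapsto \mY\times_{\underline K_\mathbf S}\Spec(\mathbf S)$ (base change along the point $k$). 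I would prove this is an equivalence by first checking it when $K$ is a point (both sides are $\mathrm{SpStk}$ and the functors are identities), then noting both constructions send colimits in $K$ to colimits of $\i$-categories — on the left because $\Fun(-,\mathrm{SpStk})$ is a limit and colimits of anima become limits under $\Fun(-,\mathrm{SpStk})$, so write $K$ as a colimit of points and reduce; on the right because $\mathrm{SpStk}$ has universal colimits, so $\mathrm{SpStk}_{/\underline K_\mathbf S}$ decomposes as the corresponding limit over the same diagram. A clean way to package this: realize $K$ as the colimit of its own "cells" (e.g.\ via $K\simeq\varinjlim_{*\to K}*$, the colimit over its anima of points), and use that both sides are right Kan extended from the full subcategory $\{*\}$ along $\{*\}\hookrightarrow\mathrm{Ani}$, descending to a statement about a single representable. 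Applying this with $K=\mathrm BG$ then gives precisely the desired equivalence, and the compatibility with forgetful functors is visible from the explicit formulas (base change along the basepoint on both sides).

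The main obstacle I anticipate is purely set-theoretic rather than conceptual: $\mathrm{SpStk}=\mathrm{Shv}^\mathrm{acc}_\mathrm{fpqc}(\mathrm{Aff})$ is not itself an $\i$-topos (fpqc sheafification does not exist, cf.\ Remark \ref{sheafification issues}), so one must be careful that the universal-colimit property and the identification of slice categories with limits are applied within the accessible-sheaf framework, using the embedding $\mathrm{SpStk}\hookrightarrow\widehat{\mathrm{Shv}}_\mathrm{fpqc}(\mathrm{Aff})$ into the big $\i$-topos and the left adjoint $L$ of Remark \ref{sheafification issues} to transport the argument back. Concretely one would check that $\underline{\mathrm BG}_\mathbf S$ is genuinely an object of $\mathrm{SpStk}$ (it is, being a colimit of affines, hence accessible), that the slice $\mathrm{SpStk}_{/\underline{\mathrm BG}_\mathbf S}$ agrees with the slice of the big topos restricted to accessible objects, and that base change preserves accessibility — all of which follow from the closure of accessible fpqc sheaves under small colimits and finite limits. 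Once these size issues are dispatched, the equivalence itself is formal, being an instance of the comparison between diagrams valued in a cocomplete $\i$-category with universal colimits and objects of a slice over the corresponding constant colimit.
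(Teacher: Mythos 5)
Your proposal is correct in outline, but it takes a genuinely different route from the paper. The paper's proof is a one-liner: it applies the Barr--Beck--Lurie monadicity theorem to the forgetful (evaluation-at-the-basepoint) functor $\mathrm{SpStk}^{\mathrm BG}\to\mathrm{SpStk}$, which is conservative (as $\mathrm BG$ is connected), preserves colimits, and has left adjoint given by left Kan extension; the associated monad is $\mX\mapsto G\otimes\mX\simeq \underline G_{\mathbf S}\times\mX$, so both sides are identified as modules over the same monad on $\mathrm{SpStk}$, and compatibility with the forgetful functors is automatic. You instead pass through slice categories: you use the paper's remark identifying $\mathrm{SpStk}_{\underline G_{\mathbf S}}\simeq\mathrm{SpStk}_{/\mathrm B\underline G_{\mathbf S}}$, observe $\mathrm B\underline G_{\mathbf S}\simeq\underline{\mathrm BG}_{\mathbf S}$ since tensoring with $\Spec(\mathbf S)$ preserves finite products and colimits, and then prove the descent statement $\Fun(K,\mathrm{SpStk})\simeq\mathrm{SpStk}_{/\underline K_{\mathbf S}}$ for an arbitrary anima $K$ by reduction to a point. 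This is sound, provided one imports the van Kampen/descent properties from the big fpqc topos exactly as you indicate (closure of accessible sheaves under small colimits and finite limits, agreement of colimits with the big topos, accessibility of fibers), and it has the virtue of making the slice description and the compatibility of forgetful functors (base change along $\Spec(\mathbf S)\to\underline{\mathrm BG}_{\mathbf S}$) completely transparent. What the paper's monadicity argument buys is brevity and independence from the quotient-stack remark: it needs only universality of colimits to identify the monad, not effectivity of the bar construction, whereas your argument leans on the (unproved, but true for the reasons you give) remark whose justification is of the same descent-theoretic nature as your key step, so you could equally well dispense with it and prove the whole equivalence directly from your general claim with $K=\mathrm BG$.
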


\begin{proof}
This follows directly from applying of the Barr-Beck-Lurie Monadicity Theorem to the forgetful functor $\mathrm{SpStk}^{\mathrm BG}\to\mathrm{SpStk}$.
\end{proof}

\begin{remark}
Given a nonconnective spectral stack $\mX$, a $G$-action on it is  equivalent to it being $\underline G_\mathbf S$-equivariant. In particular, the abstract $\i$-categorical quotient $\mX/G$, i.e.\ the colimit of the corresponding functor $\mathrm BG\to \mathrm{SpStk}$, encoding the $G$-action on $\mX$, can be identified with the quotient stack $\mX/\underline G_\mathbf S\simeq \varinjlim_{\mathbf{\Delta}^{\mathrm{op}}}$ $\underline G_\mathbf S^{\times [\bullet]}\times \mX$.
\end{remark}

Let us therefore focus for now on the perspective of spectral stacks with a $G$-action. Our next objective is to establish a ``functor of points" approach to those, i.e.\ to identify them with the appropriate functors from $G$-equivariant $\E$-rings to anima.

\begin{exun}
The $\i$-category $\CAlg^{{\mathrm B}G}$ of $\E$-rings with a $G$-action may equivalently be identified with the  $\E$-algebra objects in the symmetric monoidal $\i$-category $\Sp^{\mathrm{B}G}$ of spectra with a $G$-action (with respect to the point-wise symmetric monoidal structure, as opposed to for instance Day convolution). In terms of it, the $\i$-category of nonconnective spectral affines with a $G$-action may be expressed as
$$
\mathrm{Aff}^{\mathrm BG}\simeq (\CAlg^\mathrm{op})^{\mathrm BG}\simeq \Fun((\mathrm BG)^\mathrm{op}, \CAlg)^\mathrm{op}\simeq (\CAlg^{\mathrm BG^\mathrm{op}})^\mathrm{op},
$$
using the identification of classifying spaces $(\mathrm BG)^\mathrm{op}\simeq \mathrm B G^\mathrm{op}$, where $G^\mathrm{op}$ is the opposite group of $G$. 
To avoid notational clutter, we will mostly not distinguish between $G$ and its opposite group $G^\mathrm{op}$ in notation -- after all, their underlying anima is the same.
\end{exun}

\begin{cons}\label{Equivariant fpqc topology}
 Set an \textit{fpqc cover in} $\mathrm{Aff}^{\mathrm BG}$ to be such a sieve whose image under the forgetful functor $\mathrm{Aff}^{\mathrm BG}\to\mathrm{Aff}$ is a covering sieve for the fpqc topology in the usual sense. This equips the $\i$-category $\mathrm{Aff}^{\mathrm BG}$ with a Grothendieck topology. 
 \end{cons}

We  show in Proposition \ref{Why affinization representation} below how sheaves on the fpqc site $\mathrm{Aff}^{\mathrm BG}$ are related to nonconnective spectral stacks with a $G$-action $\mathrm{SpStk}^{\mathrm BG}$ in the sense of  Definition \ref{Def of action}. This turns out to involve the affinization $\underline G_\mathbf S^\mathrm{aff}$, so we first collect
a basic observation concerning it, which may for instance also be found in\footnote{One reason why we are repeating these arguments is that, \textit{technically speaking}, the affinization used in \cite{BZN} does not coincide with ours. They are instead using affinization in the sense of To\"en's affine stacks \cite{Champs affines}; that is to say, using the functor $\Spec^{\Delta} \simeq (\Spec(-))\vert_{\CAlg^\heart}$. However, this has no bearing on the validity of any results abount affinization that we wish to invoke.} \cite[Proposition 3.7, Corollary 3.8]{BZN}.

 \begin{definition}
The \textit{affinization} of a nonconnective spectral stack $\mX$ is given by $$
\mX^\mathrm{aff}\,:=\,\Spec(\sO(\mX)).
$$
\end{definition}

For any $\mX\in \mathrm{SpStk}$, there is a canonical \textit{affinization map} $\mX\to \mX^\mathrm{aff}$, the composition with which induces a homotopy equivalence
$$
\Map(\mX, \Spec(A))\simeq \Map(\mX^\mathrm{aff}, \Spec(A))
$$
for all $\E$-rings $A$. That is to say, the affinization functor $\mathrm{SpStk}\to \mathrm{Aff}$, given by $\mX\mapsto \mX^\mathrm{aff}$, is left adjoint to the inclusion $\mathrm{Aff}\hookrightarrow\mathrm{SpStk}$.

\begin{lemma}
Let $G$ be an $\mathbb E_1$-group in finite anima. The affinization $\underline G_\mathbf S^\mathrm{aff}$ of the costant stack $\underline G_\mathbf S$ carries a natural structure of an $\mathbb E_1$-group in the $\i$-category $\mathrm{SpStk}$.
\end{lemma}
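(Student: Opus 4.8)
The plan is to realize $\underline G_{\mathbf S}^{\mathrm{aff}}$ as the value of a finite-product-preserving functor applied to $G$, and then invoke the fact that such functors carry $\mathbb E_1$-group objects to $\mathbb E_1$-group objects. First I would identify the affinization concretely. Since $\underline G_{\mathbf S}\simeq \varinjlim_{G}\Spec(\mathbf S)$ in $\mathrm{SpStk}$ and the functor $\sO\colon \mathrm{SpStk}^{\mathrm{op}}\to\CAlg$ commutes with all small limits (Example \ref{Global functions on a stack}), we get
$$
\sO(\underline G_{\mathbf S})\,\simeq\, \varprojlim_{G}\mathbf S\,\simeq\, \mathbf S^{G},
$$
the cotensor of the initial $\E$-ring $\mathbf S$ by the anima $G$, whose underlying spectrum is the function spectrum $F(\Sigma^\infty_+ G,\mathbf S)$. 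Hence $\underline G_{\mathbf S}^{\mathrm{aff}}\simeq \Spec(\mathbf S^{G})$, and the assignment $X\mapsto \underline X_{\mathbf S}^{\mathrm{aff}}\simeq\Spec(\mathbf S^{X})$ defines a functor $\mathrm{Ani}\to \mathrm{Aff}$, namely the composite of the constant-stack functor of Construction \ref{Cons constant group} with affinization.

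The key step is to show that the restriction of $X\mapsto \underline X_{\mathbf S}^{\mathrm{aff}}$ to the full subcategory $\mathrm{Ani}^{\mathrm{fin}}\subseteq\mathrm{Ani}$ of finite anima preserves finite products. This is where finiteness of $G$ is used. For a finite anima $X$ the suspension spectrum $\Sigma^\infty_+ X$ is a dualizable (indeed perfect) object of $\Sp$, with dual $(\Sigma^\infty_+ X)^\vee\simeq \mathbf S^{X}$. Since $\Sigma^\infty_+\colon(\mathrm{Ani},\times)\to(\Sp,\otimes)$ is symmetric monoidal and dualization is a symmetric monoidal equivalence $\Sp^{\mathrm{dbl}}\simeq(\Sp^{\mathrm{dbl}})^{\mathrm{op}}$, we obtain natural $\E$-ring equivalences
$$
\mathbf S^{X\times Y}\,\simeq\,(\Sigma^\infty_+(X\times Y))^\vee\,\simeq\,(\Sigma^\infty_+ X\otimes\Sigma^\infty_+ Y)^\vee\,\simeq\,\mathbf S^{X}\otimes_{\mathbf S}\mathbf S^{Y},
$$
together with $\mathbf S^{*}\simeq\mathbf S$. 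As products in $\mathrm{Aff}=\CAlg^{\mathrm{op}}$ are computed by the relative smash product of $\E$-rings, this says precisely that $\underline{X\times Y}_{\mathbf S}^{\mathrm{aff}}\simeq \underline X_{\mathbf S}^{\mathrm{aff}}\times\underline Y_{\mathbf S}^{\mathrm{aff}}$ in $\mathrm{Aff}$. Finally, finite anima are closed under finite products (for $X$ a finite anima the functor $\mathrm{Map}(X,-)$ preserves filtered colimits, whence so does $\mathrm{Map}(X\times Y,-)\simeq\mathrm{Map}(X,\mathrm{Map}(Y,-))$), so $\mathrm{Ani}^{\mathrm{fin}}$ inherits a Cartesian structure and $G^{\times n}$ is finite for all $n$.

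With these two steps in place the conclusion is formal. An $\mathbb E_1$-group structure on an object of an $\i$-category with finite products is the datum of a simplicial object satisfying the Segal and groupoid conditions, all phrased via finite products; a finite-product-preserving functor therefore carries $\mathbb E_1$-groups to $\mathbb E_1$-groups. Applying the functor $X\mapsto\underline X_{\mathbf S}^{\mathrm{aff}}$ of the previous paragraph to the $\mathbb E_1$-group $G$ in $\mathrm{Ani}^{\mathrm{fin}}$ thus equips $\underline G_{\mathbf S}^{\mathrm{aff}}$ with an $\mathbb E_1$-group structure in $\mathrm{Aff}$, manifestly natural in $G$. Transporting this along the inclusion $\Spec\colon\mathrm{Aff}=\CAlg^{\mathrm{op}}\hookrightarrow\mathrm{SpStk}$, which preserves all small limits — in particular finite products — as the right adjoint to $\sO$, yields the asserted natural $\mathbb E_1$-group structure on $\underline G_{\mathbf S}^{\mathrm{aff}}$ in $\mathrm{SpStk}$. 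The main obstacle is the product-preservation of affinization in the middle step: it genuinely requires $G$ to be finite, since for a general group anima one has $\mathbf S^{G\times G}\not\simeq\mathbf S^{G}\otimes_{\mathbf S}\mathbf S^{G}$ and the duality argument collapses.
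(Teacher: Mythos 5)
Your proposal is correct, and its overall shape matches the paper's: both proofs come down to showing that affinization preserves the relevant finite products, and then transporting the $\mathbb E_1$-group structure along a finite-product-preserving functor. The difference is in how that key step is justified. The paper argues abstractly: affinization $\mX\mapsto\mX^\mathrm{aff}$ preserves products on the class of nonconnective spectral stacks whose structure sheaf is a compact object of $\QCoh(\mX)$, by the projection formula, and constant stacks on finite anima qualify because they are finite colimits of $\Spec(\mathbf S)$. You instead make everything explicit: you identify $\underline G_{\mathbf S}^{\mathrm{aff}}\simeq\Spec(\mathbf S^G)$ via limit-preservation of $\sO$, and prove $\mathbf S^{X\times Y}\simeq\mathbf S^X\otimes_{\mathbf S}\mathbf S^Y$ for finite anima using Spanier--Whitehead duality of $\Sigma^\infty_+ X$ and symmetric monoidality of dualization. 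These are really two faces of the same finiteness input (compactness of the constant sheaf on a finite anima corresponds exactly to dualizability of its suspension spectrum), but your version is more elementary and self-contained, at the cost of being special to constant stacks, whereas the paper's argument is shorter and applies uniformly to any stack with compact structure sheaf. Two small remarks: your justification that finite anima are closed under finite products (via preservation of filtered colimits) literally only gives compactness, i.e.\ a retract of a finite anima; the closure statement is true anyway, and in fact your argument never needs it, since dualizability of $\Sigma^\infty_+(G^{\times n})$ already follows from closure of dualizable spectra under tensor products. Also, your final transport step along $\Spec:\CAlg^{\mathrm{op}}\hookrightarrow\mathrm{SpStk}$ is fine, as $\Spec$ is right adjoint to $\sO$ and hence preserves the finite products in question.
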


\begin{proof}
While the affinization
functor $\mX\mapsto \mX^\mathrm{aff}$ does not preserve products in general, it does so as an application of the projection formula for quasi-coherent sheaves when restricted to those nonconnective spectral stacks $\mX$ for which $\sO_{\mX}$ is a compact object of $\QCoh(\mX)$. Since constant stacks valued in finite anima, as finite colimits of $\Spec(\mathbf S)$, evidently satisfy this condition, the result follows.
\end{proof}
 
\begin{prop}\label{Why affinization representation}
Let $G$ be an $\mathbb E_1$-group in finite anima.
There is a canonical equivalence of $\i$-categories
$$
\mathrm{Shv}_\mathrm{fpqc}^\mathrm{acc}(\mathrm{Aff}^{\mathrm BG})\,\simeq\, \mathrm{SpStk}_{\underline G^\mathrm{aff}_\mathbf S}
$$
compatible with the forgetful functors to $\mathrm{SpStk}$.
\end{prop}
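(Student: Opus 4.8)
The plan is to identify both sides with the same localization of an $\i$-category of presheaves. First I would unwind the left-hand side. An accessible fpqc sheaf on $\mathrm{Aff}^{\mathrm BG}$ is, by the usual argument (cf.\ Remarks \ref{Remark access why}, \ref{sheafification issues} and Proposition \ref{Def of QCoh}), an object of the full subcategory of $\Fun((\mathrm{Aff}^{\mathrm BG})^\mathrm{op}, \mathrm{Ani})$ generated under small colimits by the corepresentables $\Spec(A)$ for $A \in \CAlg^{\mathrm BG}$, on which the fpqc-covering \v Cech nerves become colimit diagrams. Since the fpqc topology on $\mathrm{Aff}^{\mathrm BG}$ is defined by pullback along the forgetful functor $u : \mathrm{Aff}^{\mathrm BG} \to \mathrm{Aff}$ (Construction \ref{Equivariant fpqc topology}), and $u$ preserves covers and admits the (colimit-preserving) left Kan extension $u_!$, there is an adjunction $u_! : \mathrm{Shv}^\mathrm{acc}_\mathrm{fpqc}(\mathrm{Aff}^{\mathrm BG}) \rightleftarrows \mathrm{SpStk} : u^*$, exactly as in Constructions \ref{Cons connective localization} and \ref{Functoriality of ev}.

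The key observation is that $u_!$ is comonadic, and I would identify its comonad with $- \times \underline{G}^\mathrm{aff}_\mathbf S$. Concretely: the $\i$-category $\mathrm{Aff}^{\mathrm BG} \simeq (\CAlg^{\mathrm BG^\mathrm{op}})^\mathrm{op}$ sits over $\mathrm{Aff}$ as modules for the cocommutative-coalgebra-in-stacks structure encoded by $\mathrm BG^\mathrm{op}$; equivalently, $\CAlg^{\mathrm BG^\mathrm{op}} = \Fun(\mathrm BG^\mathrm{op}, \CAlg)$, and taking opposites and $\Spec$ turns a $G$-action on an $\E$-ring into an action of the constant stack $\underline{G}_\mathbf S$ on the affine, hence — after affinizing the group, which is legitimate because $\underline G_\mathbf S$ is a finite colimit of $\Spec(\mathbf S)$ so $\sO_{\underline G_\mathbf S}$ is compact and affinization preserves the relevant finite products — of $\underline{G}^\mathrm{aff}_\mathbf S$. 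The point is that on representables the adjunction $u_! \dashv u^*$ has $u_! \Spec(A) \simeq \Spec(A)$ (forgetting the action), $u^* \Spec(B) \simeq \Spec(B) \otimes \sO(\underline G_\mathbf S)$-comodules, and the counit exhibits $u_! u^* \mX \simeq \mX \times \underline G^\mathrm{aff}_\mathbf S$ for all $\mX$, both sides commuting with colimits so that it suffices to check on affines. Then, just as in the proof of Proposition \ref{complex periodic stacks} where $\zeta_!\zeta^*\mX \simeq \mX \times \mM$ was recognized as the comonad of a Cartesian-coalgebra comodule structure, the Barr-Beck-Lurie comonadicity of $u_! \dashv u^*$ (which holds because $u_!$ preserves all small colimits and, being essentially surjective with the above counit formula, reflects equivalences) gives $\mathrm{Shv}^\mathrm{acc}_\mathrm{fpqc}(\mathrm{Aff}^{\mathrm BG}) \simeq \mathrm{cMod}_{\underline G^\mathrm{aff}_\mathbf S}(\mathrm{SpStk}) \simeq \mathrm{LMod}_{\underline G^\mathrm{aff}_\mathbf S}(\mathrm{SpStk}) = \mathrm{SpStk}_{\underline G^\mathrm{aff}_\mathbf S}$, where the middle equivalence is the standard identification of comodules over the Cartesian coalgebra $\underline G^\mathrm{aff}_\mathbf S$ with modules over it as an $\mathbb E_1$-group (the $\mathbb E_1$-group structure supplied by the preceding lemma), using $\underline G^\mathrm{aff}_\mathbf S \times \underline G^\mathrm{aff}_\mathbf S \simeq (\underline G_\mathbf S \times \underline G_\mathbf S)^\mathrm{aff} \simeq \underline{(G\times G)}^\mathrm{aff}_\mathbf S$. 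The equivalence is compatible with the forgetful functors to $\mathrm{SpStk}$ by construction, since $u_!$ is precisely the forgetful functor after these identifications.

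The main obstacle I anticipate is not the formal comonadicity but the bookkeeping needed to verify that the comonad of $u_! \dashv u^*$ is genuinely $- \times \underline G^\mathrm{aff}_\mathbf S$ rather than $- \times \underline G_\mathbf S$: this is exactly where finiteness of $G$ is used, via the projection formula for compact structure sheaves (the lemma above), and one must check that the coalgebra structure maps match up — i.e.\ that affinization is \emph{symmetric monoidal enough} on the full subcategory of stacks with compact structure sheaf to carry the $\mathbb E_1$-coalgebra structure of $\underline G_\mathbf S$ to that of $\underline G^\mathrm{aff}_\mathbf S$. Once that compatibility is pinned down, the comonadic descent argument runs exactly parallel to the proof of Proposition \ref{complex periodic stacks}, and the claimed equivalence, together with its compatibility with the forgetful functors, follows. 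I would also remark that combining this with Proposition \ref{Action = equivariance} and the affinization map $\underline G_\mathbf S \to \underline G^\mathrm{aff}_\mathbf S$ yields a canonical comparison functor $\mathrm{SpStk}^{\mathrm BG} \to \mathrm{Shv}^\mathrm{acc}_\mathrm{fpqc}(\mathrm{Aff}^{\mathrm BG})$, which is the ``functor of points'' perspective the section is after.
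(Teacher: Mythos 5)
Your overall strategy---descend along the forgetful functor and identify the resulting (co)monad with $-\times\underline G^\mathrm{aff}_{\mathbf S}$ using finiteness of $G$---is close in spirit to the paper's argument, but the key identification fails for the adjunction you chose. The comonad $u_!u^*$ on $\mathrm{SpStk}$ is \emph{not} $-\times\underline G^\mathrm{aff}_{\mathbf S}$. Indeed, $u^*\Spec(A)$, the restriction of the functor of points of $\Spec(A)$ along the forgetful functor $\CAlg^{\mathrm BG}\to\CAlg$, sends an equivariant $\E$-ring $B$ to $\Map_{\CAlg}(A,B)$, and is therefore corepresented by the \emph{free} (induced) $G$-action on $A$, i.e.\ by the tensor $G\otimes_{\CAlg}A$ of $A$ with the anima $G$ --- not by the cofree ring $A^G\simeq A\otimes_{\mathbf S}\mathbf S^G$. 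Consequently $u_!u^*\Spec(A)\simeq\Spec(G\otimes_{\CAlg}A)$: for $G$ a finite discrete group of order $n$ this is $\Spec(A)^{\times n}$, and for $G=\T$ it is the free loop space $\Spec(\mathrm{THH}(A))$, neither of which agrees with $\Spec(A)\times\underline G^\mathrm{aff}_{\mathbf S}\simeq\Spec(A\otimes_{\mathbf S}\mathbf S^G)$. The functor $-\times\underline G^\mathrm{aff}_{\mathbf S}$ arises instead as the \emph{monad} of the other adjunction in the triple induced by the ring-level adjunction between the forgetful functor and the cofree functor $\varphi(A)=A^G$: on sheaves one gets $\varphi_!\dashv\varphi^*\simeq u_!\dashv u^*$, and the monad $\varphi^*\varphi_!$ is left Kan extension along $A\mapsto A^G$, which on affines gives $\Spec(A^G)\simeq\Spec(A)\times\underline G^\mathrm{aff}_{\mathbf S}$ precisely because $G$ is a finite anima. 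The paper runs Barr--Beck--Lurie monadicity on this adjunction --- your $u_!$ appears there as the colimit-preserving \emph{right} adjoint, which is exactly what makes the monadicity theorem applicable --- and the conclusion $\mathrm{Shv}^\mathrm{acc}_\mathrm{fpqc}(\mathrm{Aff}^{\mathrm BG})\simeq\LMod_{\underline G^\mathrm{aff}_{\mathbf S}}(\mathrm{SpStk})$ is then literally Definition \ref{Def of staquivariance}, with no further translation needed.

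Even granting your comonad computation, the last step of your argument would not go through. A comonad of the form $-\times C$ on a Cartesian monoidal $\i$-category carries the diagonal coalgebra structure on $C$ (coalgebra structures in a Cartesian monoidal $\i$-category are essentially unique), and its comodules are the slice $\mathrm{SpStk}_{/C}$, not $\LMod_C(\mathrm{SpStk})$; there is no ``standard identification of comodules over the Cartesian coalgebra with modules over it as an $\mathbb E_1$-group.'' The analogous move in the proof of Proposition \ref{complex periodic stacks} works only because $\mM$ is subterminal, $\mM\times\mM\simeq\mM$, so that the slice over $\mM$ happens to coincide with $\mM$-modules; $\underline G^\mathrm{aff}_{\mathbf S}$ is not subterminal, and being a stack over it is genuinely different from carrying an action of it (compare $\mathrm{SpStk}_{/\mathbf G_m}$ with $\mathbf G_m$-equivariant stacks). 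So the argument should be rerouted through the cofree functor and monadicity as above, rather than through comonadicity of $u_!$; your observation that finiteness of $G$ enters via compactness of $\sO_{\underline G_{\mathbf S}}$ (so that $A\otimes_{\mathbf S}\mathbf S^G\simeq A^G$ and the affinized constant group is a group object) is correct, but it must be attached to that other adjunction.
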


\begin{proof}
Since the classifying space $\mathrm BG$ is a small $\i$-category, the accessibility of the functor $\i$-category $\CAlg^{\mathrm BG}$ follows by \cite[Proposition 5.4.4.3]{HTT} from that of $\CAlg$. Consider the adjunction
$$
\omega:\mathrm{CAlg}^{\mathrm BG}\rightleftarrows \mathrm{CAlg} : \varphi
$$
between the forgetful and the cofree $G$-action functors. The latter is given explicitly by the cotensor $\varphi(A)\simeq  A^G$ of an $\E$-ring $A$ with the anima $G$. Since $G$ is compact, the canonical $G$-equivairant $\E$-ring map $A\otimes_\mathbf S \mathbf S^G\to A^G$ is an equivalence. Because fpqc covers are closed under base-change, it follows that the functor $\varphi$ preserves fpqc covers. The same is true for the forgetful functor $\omega$ by definition of the fpqc topology on $\mathrm{Aff}^{\mathrm BG}$. Since both functors $\omega$ and $\varphi$ are also clearly accessible, they induce an adjunction between the $\i$-categories of accessible sheaves
$$
\xymatrix{
 \mathrm{SpStk} \ar@<1.2ex>[rr]^{\varphi_!\qquad} \ar@<-1.2ex>[rr]_{\omega^*\qquad}&  &\mathrm{Shv}_\mathrm{fpqc}^\mathrm{acc}(\mathrm{Aff}^{\mathrm BG}).\ar[ll]|-{\varphi^*\simeq\omega_!}
}
$$
The left-most adjoint $\varphi_!$ is given by sheafification of the left Kan extension along $\varphi$. Since its right adjoint $\varphi^*$ is identified with the left adjoint functor $\omega_!$, it commutes with colimits. The Barr-Beck-Lurie monadicity theorem therefore gives rise to an identification
$$
\mathrm{Shv}_\mathrm{fpqc}^\mathrm{acc}(\mathrm{Aff}^{\mathrm BG})\,\simeq\, \mathrm{LMod}_{\varphi^*\varphi_!}(\mathrm{SpStk})
$$
with the $\i$-category of modules over the monad $\varphi^*\varphi_!$ on nonconnective spectral stacks. On affines, this monad is given explicitly by
$$
\varphi^*\varphi_!(\Spec(A))\,\simeq\, \Spec(A^G)\,\simeq\, \Spec(A)\times\Spec(\mathbf S^G),
$$
and since $\varphi^*\varphi_!$ commutes with colimits, it follows that $\varphi^*\varphi_!(\mX)\simeq \mX\times \Spec(\mathbf S^G)$ holds for all nonconnective spectral stacks $\mX$. The monad structure on $\varphi^*\varphi_!$ consequently determines and is determined by a  monoid structure on $\Spec(\mathbf S^G)$.

Note that there are canonical equivalences of $\E$-rings
$$
\sO(\underline G_\mathbf S)\,\simeq\, \sO\big(\varinjlim_G\Spec(\mathbf S)\big)\,\simeq\, \varprojlim_G\sO(\Spec(\mathbf S))\,\simeq\, \mathbf S^G,
$$
allowing us to identify $\Spec(\mathbf S^G)$ with the affinization $\underline G_\mathbf S^\mathrm{aff}$ of the constant spectral stack $\underline G_\mathbf S$.
Unwinding the above discussion, we see that the monoid structure obtained on $\underline G_\mathbf S^\mathrm{aff}$ agrees with the one that it inherits from the one on $G$ by Construction \ref{Cons constant group}, and the proof is complete.
\end{proof}

\begin{remark}\label{affines}
Pulling back the equivalence of $\i$-categories of Proposition \ref{Why affinization representation} via the forgetful functor to nonconnective spectral stacks along the inclusion $\mathrm{Aff}\subseteq\mathrm{SpStk}$, we recover the equivalence between the $\i$-categories of equivariant affines
$$
\mathrm{Aff}^{\mathrm BG}\simeq \mathrm{Aff}_{\underline G_\mathbf S^\mathrm{aff}}
$$
from \cite[Proposition 3.12]{BZN}. Furthermore as explained in  \cite[Corollary 3.11]{BZN}, the quasi-coherent pullback and pushforward along the delooping $\mathrm B\underline G_\mathbf S\to \mathrm B\underline G_\mathbf S^\mathrm{aff}$ of the affinization map induces an equivalence of symmetric monoidal $\i$-categories 
$$
\Rep(\underline G_\mathbf S^\mathrm{aff})\,\simeq\, \QCoh(\mathrm B\underline G_\mathbf S^\mathrm{aff})\,\simeq\, \QCoh(\mathrm B\underline G_\mathbf S)\,\simeq\, \Sp^{\mathrm BG},
$$
by identifying both sides with the $\i$-category of $\sO(\underline G_\mathbf S)\simeq \sO(\underline G_\mathbf S^\mathrm{aff})\simeq \mathbf S^G$-comodules in spectra. As a consequence, we also obtain
 an equivalence of fixed-points $M^{\underline G_\mathbf S^\mathrm{aff}}\simeq M^{\mathrm hG}$ for any spectrum with a $G$-action $M$.
\end{remark}

\begin{remark}
Let us denote by $\mG:=\underline G_\mathbf S^\mathrm{aff}$ the affinization of the constant spectral group scheme $G_{\mathbf S}$.
It follows from the proof of Proposition \ref{Why affinization representation} that the forgetful functor that the forgetful functor $\mathrm{SpStk}_{\mG}\to \mathrm{SpStk}$, sending a nonconnective $\mG$-equivariant spectral stack $\mX$ to its underlying spectral stack, can be identified in terms of the functor of points $\mX:\mathrm{Aff}{}_{\mG}^\mathrm{op}\to\mathrm{Ani}$ to the composite
$\mathrm{Aff}^\mathrm{op}\xrightarrow{\mathrm{Free}_{\mG}}\mathrm{Aff}{}_{\mG}^\mathrm{op}\xrightarrow{\mX}\mathrm{Ani}.$ Here $\mathrm{Free}_{\mG}$ is the free $\mG$-action functor, i.e.\ given by $\mX\mapsto \mG\times \mX$.
\end{remark}

Recall that our aim is to discuss $G$-actions on  spectral stacks, or as is by Proposition \ref{Action = equivariance} equivalent, $\underline G_\mathbf S$-equivariant  spectral stacks. Since Proposition \ref{Why affinization representation} shows that the naive notion of equivariant functor of points gives rise to $\underline G_\mathbf S^\mathrm{aff}$-equivariant spectral stacks instead, we must now express the latter in terms of $G$-actions, i.e.\ $\underline G_\mathbf S$-equivariance.

\begin{prop}\label{Compare G and G^aff}
Let $G$ be an $\mathbb E_1$-group in finite anima.
The affinization morphism $\underline G_\mathbf S\to \underline G_\mathbf S^\mathrm{aff}$, viewed as a map of $\mathbb E_1$-group spectral stacks, induces
a
double adjunction of $\i$-categories
$$
\xymatrix{
 \mathrm{SpStk}_{\underline G_\mathbf S} \ar@<1.2ex>[rr]^{\mathrm{Ind}_\mathrm{aff}\,\,\,} \ar@<-1.2ex>[rr]_{\mathrm{Coind}_\mathrm{aff}}&  &
 \mathrm{SpStk}_{\underline G_\mathbf S^\mathrm{aff}}.\ar[ll]|-{\mathrm{Res}_\mathrm{aff}}
}
$$
The middle adjoint $\mathrm{Res}_{\mathrm{aff}}$ compatible with the forgetful functors to $\mathrm{SpStk}$ and fully faithful. Its essential image consists of those $\underline G_\mathbf S$-equivariant nonconnective spectral stacks which are colimits of $\underline G_\mathbf S$-equivariant affines.
\end{prop}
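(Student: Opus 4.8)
The plan is to produce the double adjunction by restriction and (co)induction along the $\mathbb E_1$-morphism $u:\underline G_\mathbf S\to\underline G_\mathbf S^\mathrm{aff}$ of group objects in $\mathrm{SpStk}$, and then to establish full faithfulness of $\mathrm{Res}_\mathrm{aff}$ together with the identification of its essential image. Writing $\mG:=\underline G_\mathbf S^\mathrm{aff}$ for brevity, recall that $\mathrm{SpStk}_{\underline G_\mathbf S}\simeq\LMod_{\underline G_\mathbf S}(\mathrm{SpStk})$ and $\mathrm{SpStk}_{\mG}\simeq\LMod_{\mG}(\mathrm{SpStk})$ by Definition \ref{Def of staquivariance}, so this is the standard (co)restriction-of-scalars situation for left modules over an algebra morphism in a presentable monoidal $\i$-category, where $\mathrm{SpStk}$ is presentable and its Cartesian monoidal structure distributes over colimits. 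First I would invoke the general machinery of \cite[Section 4.2]{HA}: a map of $\mathbb E_1$-algebra objects $u$ induces $\mathrm{Res}_u:\LMod_{\mG}\to\LMod_{\underline G_\mathbf S}$, which admits both a left adjoint $\mathrm{Ind}_u(-)\simeq\mG\times_{\underline G_\mathbf S}(-)$ (relative tensor, i.e.\ the colimit of the evident bar construction, which exists since $\mathrm{SpStk}$ has all small colimits) and, because $\mathrm{Res}_u$ preserves all small colimits, a right adjoint $\mathrm{Coind}_u$ by the adjoint functor theorem in the presentable setting. This gives the displayed double adjunction with $\mathrm{Ind}_\mathrm{aff}:=\mathrm{Ind}_u$, $\mathrm{Res}_\mathrm{aff}:=\mathrm{Res}_u$, $\mathrm{Coind}_\mathrm{aff}:=\mathrm{Coind}_u$. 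Compatibility with the forgetful functors to $\mathrm{SpStk}$ is immediate since $\mathrm{Res}_u$ is by construction the identity on underlying objects.

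\textbf{Full faithfulness of $\mathrm{Res}_\mathrm{aff}$.} The key point is that the affinization morphism $u:\underline G_\mathbf S\to\mG$ is an \emph{epimorphism} in $\mathrm{SpStk}$, equivalently the multiplication map $\mG\times_{\underline G_\mathbf S}\mG\to\mG$ is an equivalence. I would deduce this from the fact, established inside the proof of Proposition \ref{Why affinization representation}, that the affinization functor preserves products on the full subcategory of stacks whose structure sheaf is a compact object of $\QCoh$ --- a condition satisfied by $\underline G_\mathbf S$ and its finite products, since $G$ is a finite anima. Concretely, $\mG\times_{\underline G_\mathbf S}\mG$ is the affinization of $\underline G_\mathbf S\times_{\underline G_\mathbf S}\underline G_\mathbf S\simeq\underline G_\mathbf S$ (using that $\underline G_\mathbf S\to\underline G_\mathbf S$ along multiplication has fiber $\underline G_\mathbf S$), hence is $\mG$ itself. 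Once $u$ is an epimorphism of $\mathbb E_1$-algebras, the counit $\mathrm{Ind}_\mathrm{aff}\,\mathrm{Res}_\mathrm{aff}\to\mathrm{id}$ of the \emph{left} adjunction is an equivalence by the standard descent/epimorphism argument (relative tensor product over an epimorphism is the identity), which is exactly the statement that $\mathrm{Res}_\mathrm{aff}$ is fully faithful. Alternatively, and perhaps more transparently, one identifies $\mathrm{Res}_\mathrm{aff}$ with the pullback along $\mathrm B\underline G_\mathbf S\to\mathrm B\mG$ under the equivalences $\mathrm{SpStk}_{\underline G_\mathbf S}\simeq\mathrm{SpStk}_{/\mathrm B\underline G_\mathbf S}$ and $\mathrm{SpStk}_{\mG}\simeq\mathrm{SpStk}_{/\mathrm B\mG}$ of Remark \ref{equivariance via quotient stacks}, and uses that $\mathrm B\underline G_\mathbf S\to\mathrm B\mG$ is an effective epimorphism whose \v{C}ech nerve is constant (since $\mathrm B\underline G_\mathbf S\times_{\mathrm B\mG}\mathrm B\underline G_\mathbf S\simeq\underline G_\mathbf S\backslash\!\backslash(\mG/\underline G_\mathbf S)$ collapses when $u$ is an epimorphism), so that pullback along it is fully faithful.

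\textbf{Identification of the essential image.} It remains to show that a $\underline G_\mathbf S$-equivariant stack $\mY$ lies in the essential image of $\mathrm{Res}_\mathrm{aff}$ if and only if $\mY$ is a colimit, in $\mathrm{SpStk}_{\underline G_\mathbf S}$, of $\underline G_\mathbf S$-equivariant affines. For the ``only if'' direction: every object of $\mathrm{SpStk}_{\mG}\simeq\mathrm{Shv}^\mathrm{acc}_\mathrm{fpqc}(\mathrm{Aff}^{\mathrm BG})$ is a small colimit of representables, i.e.\ of $\mG$-equivariant affines $\mathrm{Res}_{\mG}^{\mathrm{Aff}}$ of the form coming from $\mathrm{Aff}^{\mathrm BG}\simeq\mathrm{Aff}_{\mG}$ (Remark \ref{affines}); since $\mathrm{Res}_\mathrm{aff}$ preserves small colimits (it is a left adjoint, having the right adjoint $\mathrm{Coind}_\mathrm{aff}$) and carries $\mG$-equivariant affines to $\underline G_\mathbf S$-equivariant affines --- because the underlying stack is unchanged and the residual $\underline G_\mathbf S$-action on an affine is again an affine with $\underline G_\mathbf S$-action --- the image of any object is such a colimit. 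For the ``if'' direction: the full subcategory of $\mathrm{SpStk}_{\underline G_\mathbf S}$ generated under small colimits by $\underline G_\mathbf S$-equivariant affines is, by the universal property of accessible sheaf categories, equivalent to the image of $\mathrm{Res}_\mathrm{aff}$ --- the point being that the $\underline G_\mathbf S$-equivariant affines are precisely the restrictions of the $\mG$-equivariant affines, the latter generating $\mathrm{SpStk}_{\mG}$ under colimits, and $\mathrm{Res}_\mathrm{aff}$ being a fully faithful colimit-preserving functor. I expect the main obstacle to be the bookkeeping in this last step: one must check carefully that a $\underline G_\mathbf S$-equivariant affine is \emph{canonically} in the image of $\mathrm{Res}_\mathrm{aff}$ (equivalently, that a $\underline G_\mathbf S$-action on an affine $\Spec(A)$ automatically factors through $\underline G_\mathbf S^\mathrm{aff}$), which is exactly the content of the equivalence $\mathrm{Aff}^{\mathrm BG}\simeq\mathrm{Aff}_{\underline G_\mathbf S^\mathrm{aff}}$ from Remark \ref{affines}; granting that, closure under colimits and the fully faithful colimit-preserving character of $\mathrm{Res}_\mathrm{aff}$ pin down the essential image on the nose.
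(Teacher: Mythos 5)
Your first and third steps are essentially the paper's: the paper also gets $\mathrm{Ind}_\mathrm{aff}$ and $\mathrm{Coind}_\mathrm{aff}$ by showing $\mathrm{Res}_\mathrm{aff}$ preserves limits and colimits and invoking the Adjoint Functor Theorem, and it also pins down the essential image by combining colimit-preservation with the identification of equivariant affines. The genuine gap is in your full-faithfulness step. You reduce it to the claim that $u:\underline G_{\mathbf S}\to\underline G_{\mathbf S}^{\mathrm{aff}}$ is an epimorphism, i.e.\ that the multiplication map $\underline G_{\mathbf S}^{\mathrm{aff}}\times_{\underline G_{\mathbf S}}\underline G_{\mathbf S}^{\mathrm{aff}}\to\underline G_{\mathbf S}^{\mathrm{aff}}$ is an equivalence; that reduction is fine, but your justification of the claim does not work. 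Product-preservation of affinization on compact-structure-sheaf stacks, applied degreewise to the bar construction, computes $(\underline G_{\mathbf S}\times_{\underline G_{\mathbf S}}\underline G_{\mathbf S})^{\mathrm{aff}}\simeq |\mathrm{Bar}(\underline G_{\mathbf S}^{\mathrm{aff}},\underline G_{\mathbf S}^{\mathrm{aff}},\underline G_{\mathbf S}^{\mathrm{aff}})|\simeq \underline G_{\mathbf S}^{\mathrm{aff}}$, which is a tautology: the middle bar factors get affinized along with the outer ones. The object you need, $\underline G_{\mathbf S}^{\mathrm{aff}}\times_{\underline G_{\mathbf S}}\underline G_{\mathbf S}^{\mathrm{aff}}\simeq |\mathrm{Bar}(\underline G_{\mathbf S}^{\mathrm{aff}},\underline G_{\mathbf S},\underline G_{\mathbf S}^{\mathrm{aff}})|$, keeps the non-affine $\underline G_{\mathbf S}$ in the middle, and it is \emph{not} the affinization of $\underline G_{\mathbf S}\times_{\underline G_{\mathbf S}}\underline G_{\mathbf S}$. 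The assertion that the comparison $|\mathrm{Bar}(\underline G_{\mathbf S}^{\mathrm{aff}},\underline G_{\mathbf S},\underline G_{\mathbf S}^{\mathrm{aff}})|\to|\mathrm{Bar}(\underline G_{\mathbf S}^{\mathrm{aff}},\underline G_{\mathbf S}^{\mathrm{aff}},\underline G_{\mathbf S}^{\mathrm{aff}})|$ is an equivalence is exactly the counit of $\mathrm{Ind}_\mathrm{aff}\dashv\mathrm{Res}_\mathrm{aff}$ at the regular module, i.e.\ an instance of what you are trying to prove, so as written the argument is circular. Your ``alternative'' \v{C}ech-nerve argument has the same defect: it explicitly assumes ``when $u$ is an epimorphism.''

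The paper runs the logic in the opposite direction, and you already have the needed ingredient in hand from your essential-image discussion. Using Proposition \ref{Action = equivariance}, Proposition \ref{Why affinization representation} and Remark \ref{affines}, a $\underline G_{\mathbf S}$-action on an affine is the same datum as a $G$-action on the corresponding $\E$-ring, hence as a $\underline G_{\mathbf S}^{\mathrm{aff}}$-action, and this identification $\mathrm{Aff}_{\underline G_{\mathbf S}}\simeq\mathrm{Aff}^{\mathrm BG}\simeq\mathrm{Aff}_{\underline G_{\mathbf S}^{\mathrm{aff}}}$ is implemented by $\mathrm{Res}_\mathrm{aff}$. The paper deduces from this that the induction--restriction adjunction restricts to an adjoint equivalence on equivariant affines, so the counit is an equivalence on affines, and then bootstraps to all of $\mathrm{SpStk}_{\underline G_{\mathbf S}^{\mathrm{aff}}}$ by colimit generation and colimit-preservation of $\mathrm{Ind}_\mathrm{aff}$ and $\mathrm{Res}_\mathrm{aff}$ --- exactly the kind of colimit argument you use for the image. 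Your epimorphism statement is then a \emph{consequence} of full faithfulness (evaluate the counit at $\underline G_{\mathbf S}^{\mathrm{aff}}$ with its regular action), not an available input. Replacing your epimorphism claim with this affine-level counit check repairs the proof; without it, the central step is unproved.
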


\begin{proof}
Recall that we defined equivariant stacks in Definition \ref{Def of staquivariance} as modules in the sense of \cite{HA}. The fact that any map of $\mathbb E_1$-group objects induces a free-forgetful adjunction of the described sort therefore follows by abstract nonsense \cite[Corollary 4.2.3.7]{HA}. Let us nonetheless describe it explicitly below.

If $\mX$ is a $\underline G_\mathbf S^\mathrm{aff}$-equivariant nonconnective spectral stack, then the $\mathbb E_1$-group map $\underline G_\mathbf S\to \underline G_\mathbf S^\mathrm{aff}$ allows us to view it as a $\underline G_\mathbf S$-equivariant nonconnective spectral stack as well, which we set to be $\mathrm{Res}_\mathrm{aff}(\mX)$. In particular, it follows that the functor $\mathrm{Res}_\mathrm{aff}$ does not affect the underlying nonconnective spectral stack.  
Since  limits in module $\i$-categories are preserved and created by the underlying object functor \cite[Proposition 4.2.3.1]{HA}, it follows that $\mathrm{Res}_\mathrm{aff}$ commutes with all small limits.  Since the functors $\mX\mapsto \mX\times \mY$ commute with colimits in $\mathrm{SpStk}$, it also follows from \cite[Corollary 4.2.3.5]{HA} that $\mathrm{Res}_\mathrm{aff}$ also commutes with all small colimits. In particular, it is accessible.
It therefore admits both a left and a right adjoint $\mathrm{Ind}_\mathrm{aff}$ and $\mathrm{Coind}_\mathrm{aff}$ by the Adjoint Functor Theorem.

As already observed in Remark \ref{affines}, the equivalence of $\i$-categories from Proposition \ref{Why affinization representation} restricts to an equivalence between the subcategories of affines. Unwinding the proof 
of  Proposition \ref{Why affinization representation}, we see that this is in fact given by a restriction of the functor $\mathrm{Res}_\mathrm{aff}$. In particular, the restriction-induction adjunction in question restricts on affines to an adjoint equivalence
$$
\mathrm{Ind}_\mathrm{aff}:\mathrm{Aff}_{\underline G_\mathbf S} \simeq \mathrm{Aff}{}_{\underline G_\mathbf S^\mathrm{aff}}:\mathrm{Res}_\mathrm{aff}.
$$
The counit of the adjunction $\mathrm{Ind}_\mathrm{aff}\mathrm{Res}_\mathrm{aff}(\mX)\to \mX$ is therefore an equivalence for any affine $\underline G_\mathbf S^\mathrm{aff}$-equivariant spectral stack $\mX$.
But since $\mathrm{SpStk}_{\underline G_\mathbf S^\mathrm{aff}}$ is generated under colimits by $\mathrm{Aff}_{\underline G_\mathbf S^\mathrm{aff}}\simeq \mathrm{Aff}^{\mathrm BG}$ by
Proposition \ref{Why affinization representation}, and we saw above that both $\mathrm{Ind}_\mathrm{aff}$ and $\mathrm{Res}_\mathrm{aff}$ preserve colimit, it follows that the counit $\mathrm{Ind}_\mathrm{aff}\mathrm{Res}_\mathrm{aff}\to\mathrm{id}$ is an equivalence. Consequently, the right adjoint $\mathrm{Res}_\mathrm{aff}$ is fully faithful. The description of its essential image follows from combining that it restricts to the canonical equivalence on affines, and that it commutes with colimits.
\end{proof}

\begin{remark}
The names of the change-of-group functors  $\mathrm{Ind}_\mathrm{aff}$, $\mathrm{Res}_\mathrm{aff}, \mathrm{Coind}_\mathrm{aff}$ in Proposition \ref{Compare G and G^aff} are intended to evoke the precise but somewhat cumbersome notation
  $\mathrm{Ind}_{\underline G_\mathbf S/\underline G_\mathbf S^\mathrm{aff}}$,
  $\mathrm{Res}_{\underline G_\mathbf S/\underline G_\mathbf S^\mathrm{aff}}$,
    $\mathrm{Coind}_{\underline G_\mathbf S/\underline G_\mathbf S^\mathrm{aff}}$
    inspired by representation theory.
\end{remark}

\begin{remark}
As noted in Remark \ref{equivariance via quotient stacks}, we may identify $\mathrm{SpStk}_{\underline G_\mathbf S}\simeq \mathrm{SpStk}_{\mathrm B\underline G_\mathbf S}$ and $\mathrm{SpStk}_{\underline G^\mathrm{aff}_\mathbf S}\simeq \mathrm{SpStk}_{\mathrm B\underline G^\mathrm{aff}_\mathbf S}$. Under these equivalences of $\i$-categories, the functors $\mathrm{Ind}_\mathrm{aff}$, $\mathrm{Res}_\mathrm{aff}$, and $\mathrm{Coind}_\mathrm{aff}$ can be described as follows. The functor $\mathrm{Ind}_\mathrm{aff}$
sends a morphism of stacks $\mY\to \mathrm B\underline G_\mathbf S$ to the composite $\mY\to\mathrm B\underline G_\mathbf S\to \mathrm B\underline G_\mathbf S^\mathrm{aff}$. Its right adjoint $\mathrm{Res}_\mathrm{aff}$ sends $\mY\to \mathrm B\underline G_\mathbf S^\mathrm{aff}$ to the pullback $\mY\times_{\mathrm B\underline G_S^\mathrm{aff}}\mathrm B\underline G_S\to \mathrm B\underline G_\mathbf S$. The further right adjoint $\mathrm{Coind}_\mathrm{aff}$ is finally given by Weil restriction along $\mathrm B\underline G_\mathbf S\to \mathrm B\underline G_\mathbf S^\mathrm{aff}$, in the sense of \cite[Section 19.1]{SAG}.
\end{remark}

We have found the above-espoused perspective, using the affinization of the constant group $\underline G_\mathbf S^\mathrm{aff}$, to be very useful in organizing the proofs and lending conceptual clarity. Nonetheless, we will now mostly abandon it, since we now have the ability to phrase the comparison between $G$-equivariance and the ``$G$-equivariant functor of points" directly.

\begin{corollary}\label{Equivariance fin}
There is a canonical double adjunction of $\i$-categories
$$
\xymatrix{
 \mathrm{SpStk}^{\mathrm BG} \ar@<1.2ex>[rr]^{\mathrm{Ind}_\mathrm{aff}\,\,\quad} \ar@<-1.2ex>[rr]_{\mathrm{Coind}_\mathrm{aff}\quad}&  &
 \mathrm{Shv}_\mathrm{fpqc}^\mathrm{acc}(\mathrm{Aff}^{\mathrm BG}),\ar[ll]|-{\mathrm{Res}_\mathrm{aff}}
}
$$
the middle adjoint $\mathrm{Res}_\mathrm{aff}$ of which is fully faithful. The right-most adjoint $\mathrm{Coind}_\mathrm{aff}$ sends a nonconnective spectral stack $\mX$ with a $G$-action to its functor of points
$$
\CAlg^{\mathrm BG}\ni A\mapsto \Map_{\mathrm{SpStk}^{\mathrm BG}}(\Spec(A), \mX)\in \mathrm{Ani},
$$
and the counit of the lower adjunction is given by
$$
\mathrm{Res}_\mathrm{aff}\,\mathrm{Coind}_\mathrm{aff}(\mX)\,\,\simeq \varinjlim_{\Spec(A)\in \mathrm{Aff}^{\mathrm BG}_{/\mX}}\Spec(A).
$$
\end{corollary}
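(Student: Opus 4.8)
The plan is to deduce Corollary \ref{Equivariance fin} from the already-established Proposition \ref{Compare G and G^aff} together with Proposition \ref{Action = equivariance} and Proposition \ref{Why affinization representation}. First I would record that Proposition \ref{Action = equivariance} provides an equivalence $\mathrm{SpStk}^{\mathrm BG}\simeq \mathrm{SpStk}_{\underline G_\mathbf S}$ compatible with the forgetful functors to $\mathrm{SpStk}$, and Proposition \ref{Why affinization representation} provides an equivalence $\mathrm{Shv}_\mathrm{fpqc}^\mathrm{acc}(\mathrm{Aff}^{\mathrm BG})\simeq \mathrm{SpStk}_{\underline G_\mathbf S^\mathrm{aff}}$, again compatible with the forgetful functors. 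Transporting the double adjunction $(\mathrm{Ind}_\mathrm{aff}, \mathrm{Res}_\mathrm{aff}, \mathrm{Coind}_\mathrm{aff})$ of Proposition \ref{Compare G and G^aff} along these two equivalences immediately yields a double adjunction between $\mathrm{SpStk}^{\mathrm BG}$ and $\mathrm{Shv}_\mathrm{fpqc}^\mathrm{acc}(\mathrm{Aff}^{\mathrm BG})$; the full faithfulness of the middle adjoint $\mathrm{Res}_\mathrm{aff}$ is preserved under equivalences of $\i$-categories, so that part is formal.

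The substantive content left to check is the concrete description of the right-most adjoint $\mathrm{Coind}_\mathrm{aff}$ as the $G$-equivariant functor of points, and the corresponding formula for the counit. For the first, I would argue as follows: an object of $\mathrm{Shv}_\mathrm{fpqc}^\mathrm{acc}(\mathrm{Aff}^{\mathrm BG})$ is in particular (via the fully faithful $\mathrm{Res}_\mathrm{aff}$ and the identification $\mathrm{Aff}_{\underline G_\mathbf S^\mathrm{aff}}\simeq\mathrm{Aff}^{\mathrm BG}$ from Remark \ref{affines}) a functor $\mathcal F:(\mathrm{Aff}^{\mathrm BG})^\mathrm{op}\to\mathrm{Ani}$ satisfying fpqc descent. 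For a $G$-equivariant nonconnective spectral stack $\mX$, the value of $\mathrm{Coind}_\mathrm{aff}(\mX)$ at $A\in\CAlg^{\mathrm BG}$ is computed by the mapping-anima adjunction: since $\mathrm{Res}_\mathrm{aff}$ is compatible with forgetful functors and an affine $\Spec(A)$ with its $G$-action corresponds to an object of $\mathrm{Aff}_{\underline G_\mathbf S}$ on which $\mathrm{Res}_\mathrm{aff}$ acts as the canonical inclusion, adjunction gives $\mathrm{Coind}_\mathrm{aff}(\mX)(A)\simeq\Map_{\mathrm{SpStk}_{\underline G_\mathbf S}}(\mathrm{Res}_\mathrm{aff}(\Spec(A)),\mX)\simeq\Map_{\mathrm{SpStk}^{\mathrm BG}}(\Spec(A),\mX)$, where in the last step one uses that $\mathrm{Res}_\mathrm{aff}$ restricted to affines is the equivalence $\mathrm{Aff}_{\underline G_\mathbf S}\simeq\mathrm{Aff}_{\underline G_\mathbf S^\mathrm{aff}}$. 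This is exactly the claimed $G$-equivariant functor of points.

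For the counit formula, I would invoke the description of the essential image of $\mathrm{Res}_\mathrm{aff}$ from Proposition \ref{Compare G and G^aff}: it consists of those $G$-equivariant spectral stacks which are colimits of $G$-equivariant affines. Since both $\mathrm{Res}_\mathrm{aff}$ and $\mathrm{Coind}_\mathrm{aff}$ preserve colimits (the former by Proposition \ref{Compare G and G^aff}, the latter being computed object-wise by the functor-of-points formula just established, and colimits of sheaves being sheafifications of presheaf colimits), the composite $\mathrm{Res}_\mathrm{aff}\,\mathrm{Coind}_\mathrm{aff}(\mX)$ is the fpqc-sheafification of the presheaf colimit $\varinjlim_{\Spec(A)\in\mathrm{Aff}^{\mathrm BG}_{/\mX}}\Spec(A)$; but this colimit is already a sheaf (it is an accessible fpqc sheaf, being a small colimit of representables in the image of the fully faithful $\mathrm{Res}_\mathrm{aff}$, compatibly with Remark \ref{Remark access why}\ref{la characterisation deux d'accessibilite}), so no sheafification is needed, giving the stated equation. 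The main obstacle I anticipate is purely bookkeeping: carefully matching up the three different-looking group actions ($G$ as an anima, $\underline G_\mathbf S$, and $\underline G_\mathbf S^\mathrm{aff}$) and confirming at each stage that the relevant equivalences are compatible with the forgetful functors to $\mathrm{SpStk}$, so that the adjunctions and the functor-of-points descriptions genuinely transport as claimed; but no new ideas beyond the cited propositions are required.
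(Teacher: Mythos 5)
Your first two steps match the paper's own argument: the double adjunction and full faithfulness of $\mathrm{Res}_\mathrm{aff}$ are obtained exactly as you say, by combining Propositions \ref{Action = equivariance}, \ref{Why affinization representation}, and \ref{Compare G and G^aff}, and your derivation of the functor-of-points description of $\mathrm{Coind}_\mathrm{aff}$ via the adjunction plus the fact that $\mathrm{Res}_\mathrm{aff}$ is (the identity) an equivalence on $G$-equivariant affines is precisely the paper's argument. So up to that point the proposal is correct and takes the same route.

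The counit step, however, is not sound as written. You justify the formula by asserting that $\mathrm{Coind}_\mathrm{aff}$ preserves colimits; it is the right-most adjoint of the double adjunction, hence a right adjoint, and the object-wise description $\mathrm{Coind}_\mathrm{aff}(\mX)(A)\simeq\Map_{\mathrm{SpStk}^{\mathrm BG}}(\Spec(A),\mX)$ shows it turns colimits in $\mX$ into limits object-wise, not that it preserves them -- and even granting the claim, it would not produce a colimit indexed by $\mathrm{Aff}^{\mathrm BG}_{/\mX}$, since an arbitrary $\mX\in\mathrm{SpStk}^{\mathrm BG}$ is \emph{not} a colimit of $G$-equivariant affines (that is exactly what characterizes the essential image of $\mathrm{Res}_\mathrm{aff}$ in Proposition \ref{Compare G and G^aff}). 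The ingredient you actually need, and which the paper uses, is the canonical presentation of any $\mY\in\mathrm{Shv}^\mathrm{acc}_\mathrm{fpqc}(\mathrm{Aff}^{\mathrm BG})$ as the colimit, \emph{in that sheaf category}, of the representables over it, i.e.\ $\varinjlim_{\Spec(A)\in\mathrm{Aff}^{\mathrm BG}_{/\mY}}\Spec(A)\to\mY$ is an equivalence. Apply this to $\mY=\mathrm{Coind}_\mathrm{aff}(\mX)$, use the functor-of-points description to identify $\mathrm{Aff}^{\mathrm BG}_{/\mathrm{Coind}_\mathrm{aff}(\mX)}\simeq\mathrm{Aff}^{\mathrm BG}_{/\mX}$, and then push through the colimit-preserving $\mathrm{Res}_\mathrm{aff}$, which is the identity on affines. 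Note also that your parenthetical claim that the presheaf colimit of representables ``is already a sheaf'' is both unjustified (presheaf colimits of sheaves generally fail descent) and unnecessary, since the colimit in the statement is taken in the sheaf category in the first place. With the counit argument replaced along these lines, the proof is complete.
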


\begin{proof}
For the existence of the adjunction and the fully faithfulness of the middle adjoint, combine Propositions \ref{Action = equivariance}, \ref{Why affinization representation}, and \ref{Compare G and G^aff}.
Now fix $\mX\in \mathrm{SpStk}^{\mathrm BG}$ and $A\in \CAlg^{\mathrm BG}$. By the already-established adjunction, we have
$$
\Map_{\mathrm{Shv}_\mathrm{fpqc}^\mathrm{acc}(\mathrm{Aff}^{\mathrm BG})}(\Spec(A), \mathrm{Coind}_\mathrm{aff}(\mX))\,\simeq\, \Map_{\mathrm{SpStk}^{\mathrm BG}}(\mathrm{Res}_\mathrm{aff}(\Spec(A)), \mX).
$$
But as remarked in the proof of Proposition \ref{Compare G and G^aff} above, the functor $\mathrm{Res}_\mathrm{aff}$ restricts on $G$-equivariant affines to an equivalence; in fact, under all the identifications we are making here, it is the identity functor $\Spec(A)\to\Spec(A)$. Since the fpqc sheaf $\mathrm{Coind}_\mathrm{aff}(\mX)$ is fully determined by its values on all representables, this verifies the asserted description of it. For the description of the counit of the adjunction between $\mathrm{Res}_\mathrm{aff}$ and $\mathrm{Coind}_\mathrm{aff}$, note that the canonical map
$$
\varinjlim_{\Spec(A)\in \mathrm{Aff}_{/\mY}^{\mathrm BG}}\Spec(A)\to\mY
$$
is an equivalence for any $\mY\in \mathrm{Shv}_\mathrm{fpqc}^\mathrm{acc}(\mathrm{Aff}^{\mathrm BG})$. Since $\mathrm{Res}_\mathrm{aff}$ preserves colimits, and we have $\mathrm{Aff}_{/\mX}^{\mathrm BG}\simeq \mathrm{Aff}_{/\mathrm{Coind}_\mathrm{aff}(\mX)}$ by the preceding discussion, we obtain the desired formula for the counit functor $\mathrm{Res}_\mathrm{aff}\,\mathrm{Coind}_\mathrm{aff}(\mX)$.
\end{proof}

\subsection{Equivariant even periodization}\label{Section equivariant evp}

We can set up the equivairant version of even periodization by a direct imitation of Section \ref{Section EVP}.

To begin with, note that the full subcategory $(\CAlg^{\mathrm{evp}})^{\mathrm BG}\subseteq\CAlg^{\mathrm BG}$ inherits the fpqc topology of Construction \ref{Equivariant fpqc topology} enitrely like in the non-equivariant setting of Lemma \ref{Lemma cover preservation and lifting}. This makes $(\mathrm{Aff}^\mathrm{evp})^{\mathrm BG}$ into a Grothendieck site.

\begin{definition}\label{Def of even G-stacks}
Let $G$ be an $\mathbb E_1$-group object in finite anima. 
A \textit{$G$-equivariant even periodic spectral stack} is an accessible functor $\mX:(\CAlg^\mathrm{evp})^{\mathrm BG}\to \mathrm{Ani}$ which satisfies fpqc descent. We denote by
$$
\mathrm{SpStk}_G^\mathrm{evp} \,:=\,\mathrm{Shv}^\mathrm{acc}_\mathrm{fpqc}\big((\mathrm{Aff}^\mathrm{evp})^{\mathrm BG}\big)
$$ the $\i$-category of $G$-equivariant even periodic spectral stacks.
\end{definition}

\begin{prop}\label{Add G adjunctions}
Let $G$ be an $\mathbb E_1$-group in finite anima.
The subcategory inclusion $\varepsilon_G : (\CAlg^\mathrm{evp})^{\mathrm BG}\subseteq\CAlg^{\mathrm BG}$ induces a double adjunction
$$
\xymatrix{
 \mathrm{SpStk}^\mathrm{evp}_{G} \ar@<1.2ex>[rr]^{\varepsilon_{G!}\quad} \ar@<-1.2ex>[rr]_{\varepsilon_{G*}\quad}&  &
 \mathrm{Shv}_\mathrm{fpqc}^\mathrm{acc}(\mathrm{Aff}^{\mathrm BG}).\ar[ll]|-{\,\,\varepsilon_G^*}
}
$$
the left-most adjoint  $\varepsilon_{G!}$ of which is fully faithful. Its middle adjoint $\varepsilon_{G^*}$ is given by the restriction $\mX\mapsto \mX\vert_{(\CAlg^\mathrm{evp})^{\mathrm BG}}$.
\end{prop}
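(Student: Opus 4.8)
The statement is the $G$-equivariant analogue of the non-equivariant Construction \ref{Functoriality of ev} together with the fully faithfulness part of Proposition \ref{Even stacks inside stacks}\ref{Corollary 10, b}, so the plan is to carry over those proofs verbatim, using the $G$-equivariant fpqc topology of Construction \ref{Equivariant fpqc topology} in place of the ordinary one. First I would verify that the subcategory inclusion $\varepsilon_G : (\CAlg^\mathrm{evp})^{\mathrm BG}\hookrightarrow\CAlg^{\mathrm BG}$ is accessible, preserves fpqc covers, and has the cover lifting property for fpqc covers --- i.e.\ the $G$-equivariant version of Lemma \ref{Lemma cover preservation and lifting}. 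Accessibility of $(\CAlg^\mathrm{evp})^{\mathrm BG}$ and of the functor follows from accessibility of $\CAlg^\mathrm{evp}\subseteq\CAlg$ (Lemma \ref{Lemma cover preservation and lifting}) via \cite[Proposition 5.4.4.3]{HTT}, since $\mathrm BG$ is a small $\i$-category and $\Fun(\mathrm BG, -)$ preserves accessible $\i$-categories and accessible functors. Cover preservation is immediate, since by Construction \ref{Equivariant fpqc topology} the fpqc topology on $(\mathrm{Aff}^\mathrm{evp})^{\mathrm BG}$ is defined by restriction of the one on $\mathrm{Aff}^{\mathrm BG}$, which is in turn defined through the forgetful functor to $\mathrm{Aff}$. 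The cover lifting property reduces, exactly as in Lemma \ref{Lemma cover preservation and lifting}, to the observation that a faithfully flat $\E$-ring map out of an even periodic $\E$-ring lands in an even (hence, being complex periodic by base change, even periodic) $\E$-ring --- and this observation is insensitive to any $G$-action, since it only concerns the underlying $\E$-rings.

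Granting these three properties, the double adjunction is produced exactly as in Construction \ref{Functoriality of ev}: the presheaf-level double adjunction $\varepsilon_{G!}\dashv \varepsilon_G^*\dashv \varepsilon_{G*}$ on $\Fun\big(((\mathrm{Aff}^\mathrm{evp})^{\mathrm BG})^\mathrm{op},\mathrm{Ani}\big)$ and $\Fun((\mathrm{Aff}^{\mathrm BG})^\mathrm{op},\mathrm{Ani})$ descends to the $\i$-categories of accessible fpqc sheaves because $\varepsilon_G$ preserves covers and lifts them; here $\varepsilon_G^*$ agrees with its presheaf analogue and is therefore given by restriction $\mX\mapsto \mX\vert_{(\CAlg^\mathrm{evp})^{\mathrm BG}}$, while $\varepsilon_{G!}$ is the left Kan extension along $\varepsilon_G$ followed by fpqc sheafification (well defined on accessible presheaves, as in Remark \ref{sheafification issues}). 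It remains to prove fully faithfulness of $\varepsilon_{G!}$, equivalently that the unit $\mX\to \varepsilon_G^*\varepsilon_{G!}\mX$ is an equivalence for every $\mX\in\mathrm{SpStk}^\mathrm{evp}_G$. Following the proof of Proposition \ref{Even stacks inside stacks}\ref{Corollary 10, b}: both $\varepsilon_{G!}$ and $\varepsilon_G^*$ preserve small colimits (being left adjoints), and $\mathrm{SpStk}^\mathrm{evp}_G$ is generated under small colimits by the corepresentables $\Spec(A)\vert_{(\CAlg^\mathrm{evp})^{\mathrm BG}}$ for $A$ an even periodic $\E$-ring with a $G$-action, so it suffices to check the unit is an equivalence on such objects. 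There, by the equivariant analogue of Example \ref{Example for proof} --- namely that $(\CAlg^\mathrm{evp})^{\mathrm BG}_{A/}$ has $\Spec(A)$ as a final object whenever $A$ is $G$-equivariant even periodic --- one computes $\varepsilon_{G!}\big(\Spec(A)\vert_{(\CAlg^\mathrm{evp})^{\mathrm BG}}\big)\simeq \Spec(A)$ as an object of $\mathrm{Shv}^\mathrm{acc}_\mathrm{fpqc}(\mathrm{Aff}^{\mathrm BG})$, and the unit is then manifestly the identity.

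I do not expect any genuine obstacle here; the content is entirely formal transport of the non-equivariant arguments, with the single input being that the class of even periodic $\E$-rings is stable under faithfully flat base change (which makes the $G$-equivariant cover lifting property hold) and that $\Fun(\mathrm BG,-)$ preserves accessibility. The one place requiring a modicum of care is checking that the fpqc topology of Construction \ref{Equivariant fpqc topology} restricts correctly to $(\mathrm{Aff}^\mathrm{evp})^{\mathrm BG}$ and that the \v Cech nerve of a $G$-equivariant faithfully flat cover of a $G$-equivariant even periodic affine again consists of $G$-equivariant even periodic affines --- but this follows from the non-equivariant statement (seen already in the proof of Proposition \ref{Even stacks inside stacks 2}) applied to underlying $\E$-rings, together with the fact that limits and colimits in $\CAlg^{\mathrm BG}$ are computed on underlying $\E$-rings.
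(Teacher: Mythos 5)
Your proposal is correct and follows essentially the same route as the paper, whose proof simply says to repeat Construction \ref{Functoriality of ev}, Lemma \ref{Lemma cover preservation and lifting}, and Proposition \ref{Even stacks inside stacks}, since the presence of the $G$-action changes nothing; you have just spelled out that transport (accessibility via smallness of $\mathrm BG$, cover preservation/lifting checked on underlying $\E$-rings, and fully faithfulness via the unit on corepresentables) in detail.
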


\begin{proof}
Repeat the proofs of Lemma \ref {Functoriality of ev}, Construction \ref{Lemma cover preservation and lifting}, and
Proposition \ref{Even stacks inside stacks}. The extra presence of $G$-actions on $\E$-rings in sight changes nothing. 
\end{proof}

\begin{remark}
 Using Proposition \ref{Why affinization representation}, we can interpret the above-appearing ``functors of points with a $G$-action" as $\underline G_\mathbf S^\mathrm{aff}$-equivariant spectral stacks. From this perspective, we see that we may identify $\mathrm{SpStk}^\mathrm{evp}_G$ with a full subcategory of $\mathrm{SpStk}_{\underline G_\mathbf S^\mathrm{aff}}$. Indeed, when compared not against $G$-actions but against $\underline G_\mathbf S^\mathrm{aff}$-equivariance, $G$-equivariant even periodic stacks behave completely analogously to the non-equivariant story.
\end{remark}

\begin{corollary}\label{Gevp into evp}
Let $G$ be an $\mathbb E_1$-group in finite anima.
There is a canonical composable pair of adjunctions of $\i$-categories
$$
\xymatrix{
\mathrm{SpStk}^\mathrm{evp}_{G} \ar@<0.7ex>[rr]^{\varepsilon_{G!}\quad} &  &
 \mathrm{Shv}^\mathrm{acc}_\mathrm{fpqc}(\mathrm{Aff}^{\mathrm BG})\ar@<0.7ex>[ll]^{\varepsilon_G^*\quad}
 \ar@<0.7ex>[rr]^{\,\,\,\quad\mathrm{Res}_\mathrm{aff}} &  &
 \mathrm{SpStk}^{\mathrm BG}.\ar@<0.7ex>[ll]^{\qquad\mathrm{Coind}_\mathrm{aff}}
}
$$
The left adjoint $\mathrm{Res}_\mathrm{aff}\,\varepsilon_{G!}$ of the composite adjunction is fully faithful.
\end{corollary}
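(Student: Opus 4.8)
The statement is Corollary~\ref{Gevp into evp}, which is essentially a formal bookkeeping assembly of the adjunctions already established. The plan is to chain together the double adjunction of Proposition~\ref{Add G adjunctions} with the double adjunction of Corollary~\ref{Equivariance fin}, and then to identify the composite left adjoint $\mathrm{Res}_\mathrm{aff}\,\varepsilon_{G!}$ as fully faithful by recognizing it as a composite of two fully faithful functors. The existence of the composable pair of adjunctions is immediate: Proposition~\ref{Add G adjunctions} provides $\varepsilon_{G!}\dashv \varepsilon_G^*$ (together with a further right adjoint $\varepsilon_{G*}$, which we may discard), and Corollary~\ref{Equivariance fin} provides $\mathrm{Res}_\mathrm{aff}\dashv \mathrm{Coind}_\mathrm{aff}$ (together with a further left adjoint $\mathrm{Ind}_\mathrm{aff}$, also discarded). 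Composing adjunctions is functorial, so $\mathrm{Res}_\mathrm{aff}\,\varepsilon_{G!}\dashv \varepsilon_G^*\,\mathrm{Coind}_\mathrm{aff}$, which is the asserted composite adjunction.

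For full faithfulness, I would argue as follows. By Proposition~\ref{Add G adjunctions}, the functor $\varepsilon_{G!}:\mathrm{SpStk}^\mathrm{evp}_G\to \mathrm{Shv}^\mathrm{acc}_\mathrm{fpqc}(\mathrm{Aff}^{\mathrm BG})$ is fully faithful, i.e.\ the unit $\mathrm{id}\to \varepsilon_G^*\varepsilon_{G!}$ is an equivalence. By Corollary~\ref{Equivariance fin} (itself a consequence of Propositions~\ref{Why affinization representation} and~\ref{Compare G and G^aff}), the functor $\mathrm{Res}_\mathrm{aff}:\mathrm{Shv}^\mathrm{acc}_\mathrm{fpqc}(\mathrm{Aff}^{\mathrm BG})\to \mathrm{SpStk}^{\mathrm BG}$ is fully faithful as well. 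A composite of fully faithful functors is fully faithful: if $F:\mathcal A\to \mathcal B$ and $G':\mathcal B\to \mathcal C$ are both fully faithful, then for $X,Y\in\mathcal A$ the composite map $\Map_{\mathcal A}(X,Y)\to \Map_{\mathcal B}(FX,FY)\to\Map_{\mathcal C}(G'FX,G'FY)$ is an equivalence, being a composite of two equivalences. Applying this with $F=\varepsilon_{G!}$ and $G'=\mathrm{Res}_\mathrm{aff}$ shows that $\mathrm{Res}_\mathrm{aff}\,\varepsilon_{G!}$ is fully faithful, completing the proof.

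I do not anticipate a genuine obstacle here; the only care needed is to make sure the variances and the direction of the adjunctions line up, i.e.\ that $\varepsilon_{G!}$ really lands in the source $\mathrm{Shv}^\mathrm{acc}_\mathrm{fpqc}(\mathrm{Aff}^{\mathrm BG})$ of $\mathrm{Res}_\mathrm{aff}$ (which it does, by construction) and that the two adjunctions being composed share this middle $\i$-category. One might also note for the record that, just as in the non-equivariant case (Definition~\ref{Def of evp localization} and Proposition~\ref{Even stacks inside stacks}), one can define an \emph{equivariant even periodization} functor $\mX\mapsto \mX^\mathrm{evp}$ on $\mathrm{SpStk}^{\mathrm BG}$ as the composite $\mathrm{Res}_\mathrm{aff}\,\varepsilon_{G!}\,\varepsilon_G^*\,\mathrm{Coind}_\mathrm{aff}$, i.e.\ as the comonad of the composite adjunction, and that by the full faithfulness just established its essential image is the $\i$-category of $G$-equivariant even periodic spectral stacks, viewed inside $\mathrm{SpStk}^{\mathrm BG}$. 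This is the equivariant analogue of the colocalization picture, and the proof that this composite indeed deserves the name ``even periodization'' will be taken up in the next subsection.
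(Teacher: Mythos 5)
Your proposal is correct and matches the paper's own (one-line) proof, which simply combines Corollary \ref{Equivariance fin} and Proposition \ref{Add G adjunctions}: the two adjunctions compose, and the composite left adjoint $\mathrm{Res}_\mathrm{aff}\,\varepsilon_{G!}$ is fully faithful as a composite of the two fully faithful functors supplied by those results. Your closing remark about the comonad $\mathrm{Res}_\mathrm{aff}\,\varepsilon_{G!}\,\varepsilon_G^*\,\mathrm{Coind}_\mathrm{aff}$ is exactly how the paper proceeds in Definition \ref{Def of G-evp localization}.
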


\begin{proof}
Combine
Propositions \ref{Equivariance fin} and \ref{Add G adjunctions}.
\end{proof}

\begin{remark} By unpacking all the definitions, we find that the fully faithful left adjoint of Corollary \ref{Gevp into evp} participates in the commutative diagram
$$
\begin{tikzcd}
\mathrm{SpStk}^\mathrm{evp}_G \arrow{r}{\varepsilon_{G!}} \arrow{d}{} & \mathrm{Shv}_\mathrm{fpqc}^\mathrm{acc} \arrow{d}{}
\arrow{r}{\mathrm{Res}_\mathrm{aff}} \arrow{d}{} & \mathrm{SpStk}^{\mathrm BG} \arrow{d}{}
\\
\mathrm{SpStk} \arrow{r}{=} & \mathrm{SpStk} \arrow{r}{=} & \mathrm{SpStk},
\end{tikzcd}
$$
in which the vertical maps are the respective forgetful functors to the $\i$-category of nonconnective spectral stacks.
\end{remark}

In light of Corollary \ref{Gevp into evp}, we may identify $G$-equivariant even periodic spectral stacks with a full subcategory of nonconnective spectral stacks, namely its essential image under the fully faithful functor $\mathrm{Res}_\mathrm{aff}\, \varepsilon_{G!}$. It  follows from the same result that this is a colocalizing subcategory, suggesting the following version of Definition \ref{Def of even stacks} in the equivariant setting.

\begin{definition}\label{Def of G-evp localization}
Let $G$ be an $\mathbb E_1$-group in finite anima and $\mX$ a nonconnective spectral stack with a $G$-action.
The \textit{$G$-equivariant even periodization of $\mX$} is given by
$$
\mX^{\mathrm{evp}_G}\,:=\, \mathrm{Res}_\mathrm{aff}\,\varepsilon_{G!}\,\varepsilon_G^*\,\mathrm{Coind}_\mathrm{aff}(\mX).
$$
\end{definition}

It is immediate from the definition that the $G$-equivariant even periodization $\mX\mapsto \mX^{\mathrm{evp}_G}$ is functorial and that there exists a canonical map $\mX^\mathrm{evp}\to\mX$ for all $\mX\in \mathrm{SpStk}^{\mathrm BG}$.

\begin{prop}\label{GEVP as a colimit}
Let $G$ be an $\mathbb E_1$-group in finite anima.
 Then the equivariant even periodization of a nonconnective spectral stack with a $G$-action $\mX$ can be expressed as
$$
\mX^{\mathrm{evp}_G}\,\, \simeq \varinjlim_{\Spec(A)\in (\mathrm{Aff}^\mathrm{evp})^{\mathrm BG}_{/\mX}}\Spec(A),
$$
with the colimit taken in $\mathrm{SpStk}^{\mathrm BG}$, and where  $(\mathrm{Aff}^\mathrm{evp})^{\mathrm BG}_{/\mX}:= (\mathrm{Aff}^\mathrm{evp})^{\mathrm BG}\times_{\mathrm{SpStk}^{\mathrm BG}}\mathrm{SpStk}^{\mathrm BG}_{/\mX}.$
\end{prop}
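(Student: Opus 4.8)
The statement is the $G$-equivariant analogue of Proposition~\ref{EVP as a colimit}, and the natural strategy is to transport the non-equivariant proof through the adjunctions assembled in Corollary~\ref{Gevp into evp}. Concretely, I would start from Definition~\ref{Def of G-evp localization}, which unfolds $\mX^{\mathrm{evp}_G}$ as $\mathrm{Res}_\mathrm{aff}\,\varepsilon_{G!}\,\varepsilon_G^*\,\mathrm{Coind}_\mathrm{aff}(\mX)$. The first step is to observe that, because $\mathrm{Res}_\mathrm{aff}$ is fully faithful (Proposition~\ref{Compare G and G^aff}, via Corollary~\ref{Gevp into evp}) and compatible with the forgetful functors to $\mathrm{SpStk}$, it suffices to identify $\varepsilon_{G!}\,\varepsilon_G^*\,\mathrm{Coind}_\mathrm{aff}(\mX)$ inside $\mathrm{Shv}^\mathrm{acc}_\mathrm{fpqc}(\mathrm{Aff}^{\mathrm BG})$ as the colimit $\varinjlim \Spec(A)$ over $(\mathrm{Aff}^\mathrm{evp})^{\mathrm BG}_{/\mX}$, and then carry that colimit across $\mathrm{Res}_\mathrm{aff}$ using that $\mathrm{Res}_\mathrm{aff}$ preserves colimits.

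The second step is the heart of the matter and is a verbatim repetition of the argument in Proposition~\ref{EVP as a colimit}: the $\i$-category $\mathrm{SpStk}_G^\mathrm{evp} = \mathrm{Shv}^\mathrm{acc}_\mathrm{fpqc}\big((\mathrm{Aff}^\mathrm{evp})^{\mathrm BG}\big)$ is generated under small colimits by its full subcategory $(\mathrm{Aff}^\mathrm{evp})^{\mathrm BG}$, so the canonical map
$$
\varinjlim_{\Spec(A)\in (\mathrm{Aff}^\mathrm{evp})^{\mathrm BG}_{/\mX}}\Spec(A)\big\vert_{(\CAlg^\mathrm{evp})^{\mathrm BG}}\;\to\;\mathrm{Coind}_\mathrm{aff}(\mX)\big\vert_{(\CAlg^\mathrm{evp})^{\mathrm BG}}
$$
is an equivalence, where I am using the identification $(\mathrm{Aff}^\mathrm{evp})^{\mathrm BG}_{/\mX}\simeq (\mathrm{Aff}^\mathrm{evp})^{\mathrm BG}_{/\mathrm{Coind}_\mathrm{aff}(\mX)}$ coming from the counit description in Corollary~\ref{Equivariance fin} (the overcategory of equivariant even periodic affines over $\mX$ agrees with that over its equivariant functor of points). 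Applying the colimit-preserving functor $\varepsilon_{G!}$ and then $\mathrm{Res}_\mathrm{aff}$, and using that on $G$-equivariant affines both $\varepsilon_{G!}$ and $\mathrm{Res}_\mathrm{aff}$ act as the identity $\Spec(A)\mapsto\Spec(A)$ (Propositions~\ref{Add G adjunctions} and \ref{Compare G and G^aff}, exactly as in the proof of Proposition~\ref{Even stacks inside stacks}), yields the asserted formula with the colimit now taken in $\mathrm{SpStk}^{\mathrm BG}$.

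The main obstacle — or rather, the point requiring genuine care rather than new ideas — is bookkeeping the three different ambient $\i$-categories: functors of points with a $G$-action $\mathrm{SpStk}^{\mathrm BG}$, accessible fpqc sheaves on $\mathrm{Aff}^{\mathrm BG}$, and $\underline G_\mathbf S^\mathrm{aff}$-equivariant stacks, together with the identifications from Propositions~\ref{Action = equivariance}, \ref{Why affinization representation}, and \ref{Compare G and G^aff}. In particular one must check that the colimit appearing in the statement, which is indexed over the overcategory in $\mathrm{SpStk}^{\mathrm BG}$, matches the colimit that naturally appears in $\mathrm{Shv}^\mathrm{acc}_\mathrm{fpqc}(\mathrm{Aff}^{\mathrm BG})$ after transporting along $\mathrm{Coind}_\mathrm{aff}$ — this is precisely the content of the equivalence of overcategories in Corollary~\ref{Equivariance fin}, and once that is invoked the rest is formal. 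No new technical input (descent, accessibility, finiteness of $G$) is needed beyond what has already been established in Section~6; everything is assembled from the adjunctions of Corollary~\ref{Gevp into evp} and the colimit-generation statement of Proposition~\ref{Add G adjunctions}.
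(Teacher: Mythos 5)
Your proposal is correct and follows essentially the same route as the paper's own proof: generation of $\mathrm{SpStk}_G^\mathrm{evp}$ under colimits by $(\mathrm{Aff}^\mathrm{evp})^{\mathrm BG}$ gives the equivalence onto $\mathrm{Coind}_\mathrm{aff}(\mX)\vert_{(\CAlg^\mathrm{evp})^{\mathrm BG}}$, and then the colimit-preserving fully faithful functor $\mathrm{Res}_\mathrm{aff}\,\varepsilon_{G!}$, which sends restricted equivariant affines to affines, transports the colimit to $\mathrm{SpStk}^{\mathrm BG}$. Your extra remark making explicit the identification of overcategories over $\mX$ and over $\mathrm{Coind}_\mathrm{aff}(\mX)$ is a point the paper leaves implicit, but it is the same argument.
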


\begin{proof}
The $\i$-category $\mathrm{SpStk}^\mathrm{evp}_G$ is generated under colimits by its subcategory of affines $(\mathrm{Aff}^\mathrm{evp})^{\mathrm BG}$, so that the canonical map
$$
\varinjlim_{\Spec(A)\in (\mathrm{Aff}^\mathrm{evp})^{\mathrm BG}_{/\mX}}\Spec(A)\vert_{(\CAlg^\mathrm{evp})^{\mathrm BG}}\to\mathrm{Coind}_\mathrm{aff}(\mX)\vert_{\CAlg^\mathrm{evp}}
$$
is an equivalence in $\mathrm{SpStk}_G^\mathrm{evp}$, where recall from Corollary \ref{Equivariance fin} that $\mathrm{Coind}_\mathrm{aff}(\mX)$ is the $G$-equivariant functor of points of $\mX\in \mathrm{SpStk}^{\mathrm BG}$. The desired formula now follows because $\mathrm{Res}_\mathrm{aff}\,\varepsilon_!$ is a left adjoint functor and hence commutes with colimits, and takes 
$$
\Spec(A)\vert_{(\CAlg^\mathrm{evp})^{\mathrm BG}}\mapsto \Spec(A)
$$
for any even periodic $\E$-ring with a $G$-action $A$.
\end{proof}

Let $\mX$ be a nonconnective spectral stack with a $G$-action. The underlying spectral stack functor $\mathrm{SpStk}^{\mathrm BG}\to\mathrm{SpStk}$ commutes with colimits, therefore the colimit formula for $\mX^{\mathrm{evp}_G}$ of Proposition \ref{GEVP as a colimit} remains valid
on the level of underlying spectral stacks. Since all the $\E$-rings $A$ involved in it are even, we have on the level of underlying spectral stacks a canonical map
\begin{equation}\label{Comparison map with and withoug G}
\mX^{\mathrm{evp}_G}\,\,\simeq \varinjlim_{\Spec(A)\in (\mathrm{Aff}^\mathrm{evp})^{\mathrm BG}_{/\mX}}\Spec(A)\to \varinjlim_{\Spec(A)\in \mathrm{Aff}^\mathrm{evp}_{/\mX}}\Spec(A)\simeq \,\mX^\mathrm{evp}
\end{equation}
in $\mathrm{SpStk}^\mathrm{evp}$, but there is in general no reason to expect it to be an equivalence. It does happen in particularly good cases though, such as in the following direct equivariant analogue of Proposition \ref{Computing the evening}.

\begin{prop}\label{Computing the Gevening}
Let $G$ be an $\mathbb E_1$-group in finite anima.
Let $\Spec(B)\to \mX$ be a map in $\mathrm{SpStk}^{\mathrm BG}$ 
which is periodically eff on the underlying nonconnective spectral stacks, in the sense of Definition \ref{Definition periodically eff}.
Then the canonical $G$-equivariant map
$$
\varinjlim_{\,\,\,\,\mathbf{\Delta}^\mathrm{op}}\Spec(B^\bullet)\to \mX^{\mathrm{evp}_G}
$$
is an equivalence where $B^\bullet = \sO\big(\Spec(B)^{\times_{\mX}[\bullet]}\big)$.
\end{prop}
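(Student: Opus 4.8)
\textbf{Proof plan for Proposition \ref{Computing the Gevening}.}

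The strategy is to reduce the $G$-equivariant statement to the already-proven non-equivariant statement Proposition \ref{Computing the evening}, by working entirely inside the $\i$-category $\mathrm{Shv}_\mathrm{fpqc}^\mathrm{acc}(\mathrm{Aff}^{\mathrm BG})$ and using that the relevant colimits and \v{C}ech nerves are detected on underlying non-equivariant stacks. First I would record that since $\Spec(B)\to \mX$ is periodically eff on underlying spectral stacks, the restriction $\Spec(B)\vert_{(\CAlg^\mathrm{evp})^{\mathrm BG}}\to \mathrm{Coind}_\mathrm{aff}(\mX)\vert_{(\CAlg^\mathrm{evp})^{\mathrm BG}}$ is an fpqc cover in $\mathrm{SpStk}^\mathrm{evp}_G$. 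The key point to check here is that the \v{C}ech nerve of this cover, formed in $\mathrm{SpStk}^{\mathrm BG}$ (equivalently in $\mathrm{Shv}_\mathrm{fpqc}^\mathrm{acc}(\mathrm{Aff}^{\mathrm BG})$ via $\mathrm{Coind}_\mathrm{aff}$), is levelwise an even periodic $G$-equivariant affine, namely $\Spec(B^\bullet)$ with its induced $G$-action, where $B^\bullet = \sO(\Spec(B)^{\times_{\mX}[\bullet]})$. This follows because the forgetful functor $\mathrm{SpStk}^{\mathrm BG}\to\mathrm{SpStk}$ preserves limits (so the iterated fiber products $\Spec(B)^{\times_{\mX}[\bullet]}$ are computed on underlying stacks, where they are affine by the periodically eff hypothesis and Definition \ref{Definition periodically eff}), and the induced $G$-action on an affine is again of the form $\Spec$ of a $G$-equivariant $\E$-ring by Corollary \ref{Equivariance fin} (restriction of $\mathrm{Res}_\mathrm{aff}$ to equivariant affines is the identity). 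Each $B^\bullet$ is even periodic because it is so after forgetting the action.

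Next, since $\Spec(B)\vert_{(\CAlg^\mathrm{evp})^{\mathrm BG}}\to \mathrm{Coind}_\mathrm{aff}(\mX)\vert_{(\CAlg^\mathrm{evp})^{\mathrm BG}}$ is an fpqc cover in $\mathrm{SpStk}^\mathrm{evp}_G$ with \v{C}ech nerve $\Spec(B^\bullet)$, descent gives
$$
\varinjlim_{\,\,\,\,\mathbf{\Delta}^\mathrm{op}}\Spec(B^\bullet)\vert_{(\CAlg^\mathrm{evp})^{\mathrm BG}}\,\simeq\, \mathrm{Coind}_\mathrm{aff}(\mX)\vert_{(\CAlg^\mathrm{evp})^{\mathrm BG}}
$$
in $\mathrm{SpStk}^\mathrm{evp}_G$. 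Applying the colimit-preserving fully faithful embedding $\mathrm{Res}_\mathrm{aff}\,\varepsilon_{G!} : \mathrm{SpStk}^\mathrm{evp}_G\to \mathrm{SpStk}^{\mathrm BG}$ of Corollary \ref{Gevp into evp}, and using that it sends $\Spec(A)\vert_{(\CAlg^\mathrm{evp})^{\mathrm BG}}\mapsto \Spec(A)$ for even periodic $G$-equivariant $A$ (as in the proof of Proposition \ref{GEVP as a colimit}), we obtain
$$
\varinjlim_{\,\,\,\,\mathbf{\Delta}^\mathrm{op}}\Spec(B^\bullet)\,\simeq\, \mathrm{Res}_\mathrm{aff}\,\varepsilon_{G!}\,\varepsilon_G^*\,\mathrm{Coind}_\mathrm{aff}(\mX)\,=\,\mX^{\mathrm{evp}_G}
$$
in $\mathrm{SpStk}^{\mathrm BG}$, which is exactly the claimed equivalence, and by construction it is compatible with the canonical map to $\mX$.

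The main obstacle is the descent step: verifying that $\Spec(B)\vert_{(\CAlg^\mathrm{evp})^{\mathrm BG}}\to \mathrm{Coind}_\mathrm{aff}(\mX)\vert_{(\CAlg^\mathrm{evp})^{\mathrm BG}}$ is genuinely an fpqc cover in $\mathrm{SpStk}^\mathrm{evp}_G$, i.e.\ that for every $\mathcal Y \in (\mathrm{Aff}^\mathrm{evp})^{\mathrm BG}_{/\mX}$ the base change $\mathcal Y\times_{\mX}\Spec(B)\to \mathcal Y$ is an fpqc cover \emph{of equivariant affines}. Here one uses that such a $\mathcal Y$ maps to $\mX$, hence its underlying affine maps to the underlying $\mX$, so by the hypothesis of periodic effness (Definition \ref{Definition periodically eff}) the base change is an fpqc cover of even periodic affines non-equivariantly; the $G$-equivariant structure is then transported along because the forgetful functor $\mathrm{Aff}^{\mathrm BG}\to \mathrm{Aff}$ creates fpqc covers by the very definition of the equivariant fpqc topology (Construction \ref{Equivariant fpqc topology}), and it preserves the pullbacks in question since it preserves limits. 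Once this is in place, everything else is formal bookkeeping with the adjunctions of Corollary \ref{Gevp into evp} and the fact that $\mathrm{Coind}_\mathrm{aff}$ does not affect the underlying stack at the level of $\mathrm{Aff}_{/\mX}$.
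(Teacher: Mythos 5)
Your proposal is correct and follows essentially the same route as the paper's proof: show the restriction to $(\CAlg^\mathrm{evp})^{\mathrm BG}$ is an fpqc cover with \v{C}ech nerve $\Spec(B^\bullet)$, apply descent in $\mathrm{SpStk}^\mathrm{evp}_G$, then transport the resulting colimit identification through the colimit-preserving fully faithful embedding $\mathrm{Res}_\mathrm{aff}\,\varepsilon_{G!}$ of Corollary \ref{Gevp into evp} and the definition of $\mX^{\mathrm{evp}_G}$. The extra verifications you spell out (that the equivariant \v{C}ech nerve is levelwise an equivariant even periodic affine, and that covers are detected on underlying stacks by the definition of the equivariant fpqc topology) are exactly what the paper compresses into "by the definition of periodically eff maps".
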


\begin{proof}
By the definition of periodically eff maps, it follows that the restriction
$$
\Spec(B)\vert_{(\CAlg^\mathrm{evp})^{\mathrm BG}}\to\mathrm{Coind}_\mathrm{aff}(\mX)\vert_{(\CAlg^\mathrm{evp})^{\mathrm BG}}
$$
is an fpqc cover in $\mathrm{SpStk}_G^\mathrm{evp}$, as well as that its \v{C}ech nerve is given by
$$
\Spec(B)^{\times_{\mX}[\bullet]}\,\simeq\, \Spec(B^\bullet)
$$
for a cosimplicial object $B^\bullet$ in $(\CAlg^\mathrm{evp})^{\mathrm B\T}$. It follows that the induced map
$$
\varinjlim_{\,\,\,\,\mathbf{\Delta}^\mathrm{op}}\Spec(B^\bullet)\vert_{(\CAlg^\mathrm{evp})^{\mathrm BG}}\to \mathrm{Coind}_\mathrm{aff}(\mX)\vert_{(\CAlg^\mathrm{evp})^{\mathrm BG}}
$$
is an equivalence in $(\mathrm{SpStk}^\mathrm{evp})^{\mathrm BG}$. The desired equivalence is now obtained  by applying the colimit-preserving fully faithful embedding $\mathrm{Res}_\mathrm{aff}\,\varepsilon_{G!} :\mathrm{SpStk}^\mathrm{evp}_G\to\mathrm{SpStk}^{\mathrm BG}$ of Corollary \ref{Gevp into evp}, and recalling from Definition \ref{Def of G-evp localization} that the $G$-equivairant even periodization is given by the composite $\mX^{\mathrm{evp}_G}\simeq \mathrm{Res}_\mathrm{aff}\,\varepsilon_{G!}\,\varepsilon_!(\mathrm{Coind}_\mathrm{aff}(\mX)\vert_{\CAlg^\mathrm{evp}})$.
\end{proof}

\begin{corollary}
Let $G$ be an $\mathbb E_1$-group in finite anima.
Let $\mX$ be a nonconnective spectral stack with a $G$-action which admits a $G$-equivariant periodically eff map from an affine. Then the  comparison map \eqref{Comparison map with and withoug G} induces a $G$-equivariant equivalence
$$
\mX^{\mathrm{evp}_G}\,\simeq\, \mX^\mathrm{evp}.
$$
In particular, the underlying nonconnective spectral stack of $\mX^{\mathrm{evp}_G}$ is in this case given by the non-equivariant even periodization $\mX^\mathrm{evp}$.
\end{corollary}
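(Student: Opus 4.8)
The plan is to deduce this corollary directly from Proposition \ref{Computing the Gevening} together with Proposition \ref{Computing the evening}, by comparing the two simplicial presentations.

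First I would invoke the hypothesis: since $\mX$ admits a $G$-equivariant periodically eff map $\Spec(B)\to \mX$ from an affine, this map is in particular periodically eff on the underlying nonconnective spectral stacks. Thus Proposition \ref{Computing the Gevening} applies and gives a $G$-equivariant equivalence $\varinjlim_{\mathbf\Delta^\mathrm{op}}\Spec(B^\bullet)\to \mX^{\mathrm{evp}_G}$, where $B^\bullet = \sO(\Spec(B)^{\times_{\mX}[\bullet]})$. On the other hand, the underlying map $\Spec(B)\to\mX$ in $\mathrm{SpStk}$ is periodically eff, so Proposition \ref{Computing the evening} gives an equivalence $\varinjlim_{\mathbf\Delta^\mathrm{op}}\Spec(B^\bullet)\to\mX^\mathrm{evp}$ in $\mathrm{SpStk}$, \emph{with the same cosimplicial $\E$-ring $B^\bullet$} (the Čech nerve is computed the same way, since the forgetful functor $\mathrm{SpStk}^{\mathrm BG}\to\mathrm{SpStk}$ preserves limits and colimits, in particular finite limits and geometric realizations).

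Next I would identify the comparison map \eqref{Comparison map with and withoug G} with the composite of these two equivalences. Unwinding the colimit formula of Proposition \ref{GEVP as a colimit} and the construction of \eqref{Comparison map with and withoug G}, the map $\mX^{\mathrm{evp}_G}\to\mX^\mathrm{evp}$ on underlying spectral stacks is precisely the one induced on the geometric realizations $\varinjlim_{\mathbf\Delta^\mathrm{op}}\Spec(B^\bullet)$ by the identity on the cosimplicial diagram — that is, it fits into a commuting triangle with the two equivalences above. Since two of the three maps in this triangle are equivalences, so is the third, giving $\mX^{\mathrm{evp}_G}\simeq\mX^\mathrm{evp}$; and since Proposition \ref{Computing the Gevening} produced a $G$-equivariant equivalence, the identification is $G$-equivariant, proving the first claim. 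The statement that the underlying nonconnective spectral stack of $\mX^{\mathrm{evp}_G}$ is $\mX^\mathrm{evp}$ is then immediate.

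The main obstacle I anticipate is the bookkeeping needed to check that the map \eqref{Comparison map with and withoug G} genuinely coincides with the map induced by the identity of cosimplicial objects under the two presentations — i.e.\ that the diagram
\begin{equation*}
\begin{tikzcd}
\varinjlim_{\mathbf\Delta^\mathrm{op}}\Spec(B^\bullet) \arrow{r}{\sim}\arrow{d}{=} & \mX^{\mathrm{evp}_G}\arrow{d}{\eqref{Comparison map with and withoug G}}\\
\varinjlim_{\mathbf\Delta^\mathrm{op}}\Spec(B^\bullet)\arrow{r}{\sim} & \mX^\mathrm{evp}
\end{tikzcd}
\end{equation*}
commutes in $\mathrm{SpStk}$. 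This amounts to tracing through the naturality of the colimit formulas of Propositions \ref{GEVP as a colimit} and \ref{EVP as a colimit} along the forgetful functor $(\mathrm{Aff}^\mathrm{evp})^{\mathrm BG}_{/\mX}\to\mathrm{Aff}^\mathrm{evp}_{/\mX}$, through which the Čech nerve of the equivariant cover $\Spec(B)\to\mX$ maps to the Čech nerve of the underlying cover; this is routine but requires care in matching up the several adjunctions $\varepsilon_{G!},\mathrm{Res}_\mathrm{aff},\mathrm{Coind}_\mathrm{aff}$ from Corollary \ref{Gevp into evp}.
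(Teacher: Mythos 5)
Your proposal is correct and is essentially the paper's own argument: both use the $G$-equivariant periodically eff cover to produce the same Čech-nerve presentation $\varinjlim_{\mathbf\Delta^\mathrm{op}}\Spec(B^\bullet)$ of $\mX^{\mathrm{evp}_G}$ (via Proposition \ref{Computing the Gevening}) and of $\mX^\mathrm{evp}$ (via Proposition \ref{Computing the evening}, using that the forgetful functor $\mathrm{SpStk}^{\mathrm BG}\to\mathrm{SpStk}$ preserves the relevant limits and colimits), and then identify the two. The compatibility of \eqref{Comparison map with and withoug G} with these presentations, which you flag as the remaining bookkeeping, is exactly the point the paper treats implicitly, together with the observation that $\mX^\mathrm{evp}$ carries a $G$-action by functoriality of even periodization so that the comparison map is $G$-equivariant.
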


\begin{proof}
The $G$-action on $\mX$ is encoded as a functor $\mathrm{B}G\to \mathrm{SpStk}$, from which the underlying object $\mX$ is obtained as the composite with the quotient map of anima $*\to */G\simeq \mathrm{B}G$. The composite functor
$$
\mathrm BG\to \mathrm{SpStk}\xrightarrow{(-)^\mathrm{evp}}\mathrm{SpStk}^\mathrm{evp}\subseteq\mathrm{SpStk}
$$
therefore exhibits a $G$-action on the even periodization $\mX^\mathrm{evp}$. The  comparison map \eqref{Comparison map with and withoug G} thus naturally extends to a $G$-equivariant map $\mX^{\mathrm{evp}_G}\to \mX^{\mathrm{evp}}$.
Let $\Spec(B)\to \mX$ be a $G$-equivariant periodically eff map, as in the statement of Proposition \ref{Computing the Gevening}. By forgetting the $G$-action, the map of underlying nonconnective spectral stacks $\Spec(B)\to \mX$ is still periodically eff. It follows by Proposition \ref{Computing the evening} that the induced map
$$
\varinjlim_{\,\,\,\,\mathbf\Delta^\mathrm{op}}\Spec(B^\bullet)\to \mX^\mathrm{evp}$$
is an equivalence of nonconnective spectral stacks.
But this map is by construction also $G$-equivariant, and since  both limits and colimits are preserved under the forgetful functor $\mathrm{SpStk}^{\mathrm BG}\to \mathrm{SpStk}$, its left-hand side may be fully interpreted as taking place in $\mathrm{SpStk}^{\mathrm BG}$. The desired identification with the $G$-equivariant even periodization now follows directly from Proposition \ref{Computing the Gevening}.
\end{proof}

\begin{variant}\label{Variant: G works with I}
Everything in this section admits direct analogues in the $I$-adic formal setting of Section \ref{Section FAG}. In particular, we have for any $\E$-group object in finite anima $G$ an $\i$-category of \textit{$G$-equivariant even periodic $I$-adic formal stacks} defined in our usual paradigm as accessible sheaves 
$$
\mathrm{FSpStk}^\mathrm{evp}_{I, G}\,:=\, \mathrm{Shv}_\mathrm{fpqc}^\mathrm{acc}\big((\mathrm{FAff}_I^\mathrm{evp})^{\mathrm BG} \big).
$$
Analogously to the discussion in Corollary \ref{Gevp into evp} above, there is a canonical adjunction
$$
\mathrm{SpStk}^{\mathrm{evp}}_{I, G}\rightleftarrows\mathrm{FSpStk}_I^\mathrm{BG}
$$
whose corresponding monad is the endofunctor we take to be the $I$-adic formal version of \textit{$G$-equivariant even periodization} $\mathfrak X\mapsto \mathfrak X^{\mathrm{evp}_G}$. With the obvious definition of $I$-completely $G$-equivariantly periodically eff maps, intertwining Definitions \ref{Def of G-evp localization} and \ref{Definition periodically eff, I-adic}, there are analogous simplicial presentations for $\mathfrak X^{\mathrm{evp}_G}$ to Proposition \ref{GEVP as a colimit}, as well as comparisons with $I$-completions as in Corollary \ref{evp & completion}.
\end{variant}

\subsection{The coarse quotient of a $\mathbf T$-action}
Given a nice enough  stack $\mX$ with a $G$-action, we might try to define the \textit{coarse quotient} $\mX/\!/G$ by setting it on the level of affines to be $\Spec(A)/\!/G:=\Spec(A^{\mathrm hG})$, and then attempt to extend along colimits. The issue is that this procedure is generally not compatible with descent, and so is unlikely to produce anything more structured than a prestack.

\begin{remark}[Comparison with GIT quotients]
From the perspective of the coarse quotient, Mumford's geometric invariant theory
essentially amounts to one solution of the same issue in
 in the quasi-projective setting. Indeed, the input data for it includes, other than the expected scheme $X$ with a $G$-action, also a $G$-equivariant ample line bundle $\mathcal L$ on $X$, traditionally called a \textit{linearized line bundle}. The latter is used to obtain a distinguished collection of affine opens $X_s\subseteq X$, taken to be the non-vanishing loci $X_s=\{s\ne 0\}$ of the invariant sections $s\in \Gamma(X; \mathcal L^{\otimes N})$ for some $N>\!>0$. The \textit{GIT quotient} is then defined in terms of them, see \cite[Theorem 1.10]{GIT}, as the colimit
 $$
 X/\!/_\mathcal LG\,:=\,\varinjlim_s \Spec(\sO(X_s)^G),
 $$
 with the indexing poset induced by the inclusions $X_s\subseteq X$.
In the special case of a projective scheme $X\simeq \Proj(R)$ and the ample line bundle $\mathcal L=\mathcal O_X(1)$, this reproduces the  more familiar formula $X/\!/_{\mathcal O_X(1)}G\simeq \Proj(R^G).$ But the more general quasi-projective setting illustrates much better how the choice of the linearized line bundle $\mL$ is there to mitigate the issue of the incompatibility of the naive coarse quotient with colimits. This workaround of course comes at a cost: the GIT quotient $X/\!/_{\mathcal L} G$ is only the quotient of the \textit{semi-stable locus} $X^\mathrm{ss}(\mathcal L)\subseteq X$ by the $G$-action, rather than the quotient of $X$ itself.
 \end{remark}

Nevertheless, in the present context, it is possible to define a variant of the coarse quotient for $\T$-actions on even periodic spectral stacks which outputs  \textit{formal} spectral stacks. For this purpose, and unless explicitly stated otherwise throughout this section, the homotopy fixed-points $A^{\mathrm h\T}$
of an $\E$-ring $A$ with a $\T$-action will be equipped with the $I_\mathrm{aug}$-adic topology, where $I_\mathrm{aug}=\ker(\pi_0(A^{\mathrm h\mathbf T})\to \pi_0(A))$ is the augmentation ideal. Here is our main result:

\begin{theorem}\label{Existence of coarse quotients in SAG}
The functor $(\Aff^\mathrm{evp})^{\mathrm B\T}\to \mathrm{FAff}$, given by $\Spec(A)\mapsto \Spf(A^{\mathrm B\T})$, admit an essentially unique small-colimit-preserving extension
$
\mathrm{SpStk}^\mathrm{evp}_{\mathbf T}\to \mathrm{FSpStk}.
$
\end{theorem}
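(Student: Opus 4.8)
The strategy is the standard one for extending a functor defined on affines to all (sheafy) stacks, now in the equivariant setting and with the twist that the target is the formal $\i$-category $\mathrm{FSpStk}$. Recall that by Proposition~\ref{Why affinization representation} (applied to $G = \T$, which is indeed an $\mathbb E_1$-group in finite anima) and Proposition~\ref{Add G adjunctions}, the $\i$-category $\mathrm{SpStk}^\mathrm{evp}_\T$ is the $\i$-category of accessible fpqc sheaves on the site $(\mathrm{Aff}^\mathrm{evp})^{\mathrm B\T}$; in particular it is generated under small colimits by its full subcategory $(\mathrm{Aff}^\mathrm{evp})^{\mathrm B\T}$ of $\T$-equivariant even periodic affines. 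By the universal property of accessible presheaves (Remark~\ref{Remark access why}, \cite[Remark A.4]{Dirac II}) and the fact that $\mathrm{FSpStk}$ admits all small colimits, the functor $\Spec(A)\mapsto \Spf(A^{\mathrm h\T})$ admits an essentially unique small-colimit-preserving extension to $\mathcal P^\mathrm{acc}((\mathrm{Aff}^\mathrm{evp})^{\mathrm B\T})$; what remains is to show that this extension factors through the localization given by fpqc sheafification, i.e.\ that the augmented \v Cech nerve of a $\T$-equivariant fpqc cover $\Spec(B)\to\Spec(A)$ of even periodic affines is sent to a colimit diagram in $\mathrm{FSpStk}$.

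First I would unwind what the \v Cech nerve looks like. For a $\T$-equivariant faithfully flat map $A\to B$ of even periodic $\E$-rings (so $B$ is automatically even periodic by Lemma~\ref{Lemma cover preservation and lifting}), the cosimplicial cobar construction $B^\bullet = B^{\otimes_A(\bullet+1)}$ consists of even periodic $\E$-rings with a $\T$-action, and one must show that
$$
\Spf(A^{\mathrm h\T})\,\simeq\,\varinjlim_{\mathbf\Delta^\mathrm{op}}\Spf((B^\bullet)^{\mathrm h\T})
$$
in $\mathrm{FSpStk}$. The key input here is Theorem~\ref{Theoremi H} (the strengthened unipotence result from \cite{MNN}), which for a complex periodic $\E$-ring with $\T$-action $C$ identifies $\Mod_C(\Sp^{\mathrm B\T})\simeq \Mod_{C^{\mathrm h\T}}^\mathrm{cplt}$ compatibly with the symmetric monoidal structures, where completion is $I_\mathrm{aug}$-adic. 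Applying this along the cosimplicial diagram, together with faithfully flat descent for $\T$-equivariant modules over even periodic rings (in the form $\Mod_A(\Sp^{\mathrm B\T})\simeq \varprojlim_{\mathbf\Delta}\Mod_{B^\bullet}(\Sp^{\mathrm B\T})$, which follows from ordinary flat descent of modules applied pointwise with a $\T$-action, cf.\ the discussion after Proposition~\ref{Even filtration is even periodic filtration}), translates descent for $\T$-equivariant modules into descent for $I_\mathrm{aug}$-complete modules over the fixed-point rings. This exhibits $\Spf(A^{\mathrm h\T})$ as the colimit of $\Spf((B^\bullet)^{\mathrm h\T})$ in $\mathrm{FSpStk}$ --- here one uses Theorem~\ref{Descent for complex periodic formal QCoh}, which gives fpqc descent for $\mathfrak{QC}\mathrm{oh}$ on complex periodic adic $\E$-rings, so that the relevant formal affines genuinely glue in $\mathrm{FSpStk}$ rather than merely in presheaves, after checking that the transition maps $(B^m)^{\mathrm h\T}\to(B^n)^{\mathrm h\T}$ are adically faithfully flat (via Corollary~\ref{Lemma ff comparison: cplx per} or \ref{adic fpqc = I-complete fpqc}, using that everything in sight is even periodic with bounded torsion when restricted to the relevant ideal).

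Having verified that the extension kills covers, I would conclude by the usual argument: the colimit-preserving functor $\mathcal P^\mathrm{acc}((\mathrm{Aff}^\mathrm{evp})^{\mathrm B\T})\to\mathrm{FSpStk}$ factors through the localization $L\colon \mathcal P^\mathrm{acc}\to\mathrm{SpStk}^\mathrm{evp}_\T$ that is fpqc sheafification, yielding the desired functor $\mathrm{SpStk}^\mathrm{evp}_\T\to\mathrm{FSpStk}$, and this factorization is again unique among small-colimit-preserving functors since $L$ is a colimit-preserving localization and $(\mathrm{Aff}^\mathrm{evp})^{\mathrm B\T}$ generates $\mathrm{SpStk}^\mathrm{evp}_\T$ under colimits. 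The main obstacle I expect is the descent step: making precise that Theorem~\ref{Theoremi H} is sufficiently natural in $C$ to apply along a cosimplicial diagram, and that the passage $C\mapsto \Spf(C^{\mathrm h\T})$ genuinely takes the cobar resolution to a colimit in $\mathrm{FSpStk}$ --- this requires care with the adic topologies (one must check $I_\mathrm{aug}$ is compatible with base change along $B^\bullet$, i.e.\ that the augmentation ideal of $(B^n)^{\mathrm h\T}$ is generated by that of $A^{\mathrm h\T}$) and with the fact, emphasized in the excerpt, that $\mathfrak{QC}\mathrm{oh}$ descent for adic rings is only available in the complex periodic case, which is exactly where we are. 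A secondary subtlety is verifying that $\Spf(A^{\mathrm h\T})$ is an object of $\mathrm{FSpStk}$ at all for $A$ even periodic, which follows from Corollary~\ref{Spf factors through SpStk} together with $A^{\mathrm h\T}$ being $I_\mathrm{aug}$-complete --- itself a consequence of Theorem~\ref{Theoremi H} and \cite[Theorem 7.3.4.1]{SAG} applied to the completeness of modules.
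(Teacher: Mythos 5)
Your proposal is correct and follows essentially the same route as the paper: the heart of the matter in both cases is that a $\T$-equivariant fpqc cover $A\to B$ of even periodic $\E$-rings induces an adically faithfully flat map $A^{\mathrm h\T}\to B^{\mathrm h\T}$ (with the \v{C}ech nerve identified, via the symmetric monoidality of the unipotence equivalence of Theorem \ref{Thanks, Akhil!}, with the completed cobar construction), which is proved exactly as you indicate by reducing modulo the augmentation ideal using $\pi_*(A^{\mathrm h\T})/I_\mathrm{aug}\simeq\pi_*(A)$ and invoking the criterion of Proposition \ref{Lemma ff comparison}, after which existence and uniqueness of the extension follow from sheafified left Kan extension and generation of $\mathrm{SpStk}^\mathrm{evp}_{\T}$ under small colimits by equivariant even periodic affines. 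The only divergences are organizational: the paper packages cover-preservation through the fully faithful functor into $\mathrm{FAff}_{/\w{\G}{}^{\mathcal Q}_{\mM}}$ of Proposition \ref{FF of hT} (and also proves reflection of covers, needed later for Corollary \ref{Coarse quotient is ff}), whereas your detour through equivariant module descent and Theorem \ref{Descent for complex periodic formal QCoh} at the gluing step is unnecessary, since once adic faithful flatness of the transition maps is established the \v{C}ech colimit statement in $\mathrm{FSpStk}$ holds by the definition of the fpqc topology on $\mathrm{FAff}$.
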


\begin{definition}
Let $\mX$ be a $\T$-equivariant even periodic spectral stack. The \textit{coarse quotient of $\mX$ by its $\T$-action} is the formal spectral stack $\mX/\!/\T$ obtained as the image of $\mX$ under the functor of Theorem \ref{Existence of coarse quotients in SAG}.
\end{definition}

\begin{remark}
The coarse quotient may be written more explicitly as the colimit
$$
\mX/\!/\T\, \simeq \, \varinjlim_{\Spec(A)\in (\mathrm{Aff}^{\mathrm {evp}})_{/\mX}^{\mathrm B\T}} \Spf(A^{\mathrm h\T}),
$$
computed in the $\i$-category of formal spectral stacks. 
\end{remark}

As noted above, the main difficulty involved in Theorem \ref{Existence of coarse quotients in SAG} is showing that the prescribed functor on affines preserves fpqc covers, i.e.\ takes $\T$-equivariant fpqc covers in the sense of spectral algebraic geometry to fpqc covers of formal spectral algebraic geometry. We will prove this assertion below in Proposition \ref{Prop end of proof of Thm on coarse quotients existing}, but it will require establishing some groundwork first.

The key result we make use of in establishing Theorem \ref{Existence of coarse quotients in SAG} is the following unipotence result, which its authors suggest may also be viewed as a form of Koszul duality.

\begin{theorem}[{\cite[Theorem 1.3]{MNN}}]\label{Thanks, Akhil!}
Let $A$ be an even  periodic $\E$-ring. Then the homotopy fixed-point functor $M\mapsto M^{\mathrm h\mathbf T}$ gives rise to an equivalence of symmetric monoidal $\i$-categories
$$
\Mod_A^{\mathrm B\mathbf T}\simeq \Mod_{A^{\mathrm{B\T}}}^\mathrm{cplt}.
$$
Its inverse functor is given by $N\mapsto N\o_{A^{\mathrm B\mathbf T}}A$, equipped with a canonical $\T$-action.
\end{theorem}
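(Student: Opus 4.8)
\textbf{Proof proposal for Theorem \ref{Thanks, Akhil!}.}
The plan is to deduce this from the unipotence statement already available in the literature, namely \cite[Theorem 1.3]{MNN}, while carefully upgrading it to cover weak $2$-periodicity rather than only strong $2$-periodicity and to identify the target with \emph{complete} modules. First I would recall that \cite{MNN} establishes, for an even periodic $\E$-ring $A$ in the classical (strongly periodic) sense, that the $\T$-action on $\Mod_A$ is \emph{unipotent}, which by the Koszul-duality formalism of \emph{loc.~cit.}\ means precisely that the global sections functor $\Mod_A^{\mathrm B\T} \to \Mod_{A^{\mathrm h\T}}$ is an equivalence onto the subcategory of $I_{\mathrm{aug}}$-complete modules, where $I_{\mathrm{aug}} = \ker(\pi_0(A^{\mathrm h\T}) \to \pi_0(A))$. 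The reason completeness enters is that $A^{\mathrm h\T} \simeq A^{\Sigma^\infty_+ \mathrm B\T}$-modules see the filtration by powers of the augmentation ideal, and the essential image of $(-)^{\mathrm h\T}$ consists of those modules on which this filtration is separated and exhaustive. The symmetric monoidality of the equivalence is immediate since $(-)^{\mathrm h\T}$ is lax symmetric monoidal and an equivalence, hence strong symmetric monoidal, and the completed tensor product on $\Mod_{A^{\mathrm h\T}}^{\mathrm{cplt}}$ is the unique symmetric monoidal structure making the localization $\Mod_{A^{\mathrm h\T}} \to \Mod_{A^{\mathrm h\T}}^{\mathrm{cplt}}$ monoidal.

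The second step is to remove the strong-periodicity hypothesis. Given an even periodic (weakly $2$-periodic) $\E$-ring $A$, Lemma \ref{Trivialization lemma} applied to the line bundle $\Sigma^2(A)$ produces a faithfully flat map $A \to B$ with $B$ even strongly periodic; by Proposition \ref{Even stacks inside stacks 2} we may moreover arrange the cobar resolution $B^\bullet$ to consist of even strongly periodic $\E$-rings, each carrying a compatible $\T$-action coinduced from that on $A$ (concretely, the $\T$-action on $B^\bullet$ is the base-change of the one on $A$ along the $\T$-fixed map $A \to B$). Then $\Mod_A^{\mathrm B\T} \simeq \varprojlim_{\mathbf\Delta} \Mod_{B^\bullet}^{\mathrm B\T}$ by faithfully flat descent for modules with a $\T$-action, and applying the strongly periodic case termwise together with the compatibility of completed base-change with homotopy fixed-points (the $\T$-equivariant analogue of Lemma \ref{Lemma completed basehcange complete}, or rather its non-adic incarnation applied to the augmentation-ideal topology) identifies the limit with $\varprojlim_{\mathbf\Delta} \Mod_{(B^\bullet)^{\mathrm h\T}}^{\mathrm{cplt}} \simeq \Mod_{A^{\mathrm h\T}}^{\mathrm{cplt}}$, where the last equivalence uses Theorem \ref{Descent for complex periodic formal QCoh} (fpqc descent for complete modules over complex periodic adic $\E$-rings) once one checks that $A^{\mathrm h\T} \to (B^\bullet)^{\mathrm h\T}$ is adically faithfully flat --- which follows from $A \to B^\bullet$ being faithfully flat and the homotopy-fixed-point spectral sequences degenerating in the even periodic setting, so that $\pi_*((B^\bullet)^{\mathrm h\T})$ is computed from $\pi_*(A^{\mathrm h\T})$ by completed base-change along $\pi_0(A) \to \pi_0(B^\bullet)$.

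The identification of the inverse functor is the final, more formal step: the candidate inverse $N \mapsto N \otimes_{A^{\mathrm h\T}} A$ is visibly symmetric monoidal and, by the projection formula, sends the unit $A^{\mathrm h\T}$ to $A$ with its tautological $\T$-action; since both composites with $(-)^{\mathrm h\T}$ are computed to be naturally equivalent to the identity (one direction is the completeness statement $N \simeq (N \otimes_{A^{\mathrm h\T}} A)^{\mathrm h\T}$ for $N$ complete, which is exactly the content of unipotence; the other is $(M^{\mathrm h\T}) \otimes_{A^{\mathrm h\T}} A \simeq M$ for $M \in \Mod_A^{\mathrm B\T}$, which holds because $\Mod_A^{\mathrm B\T}$ is generated under colimits by $A$ itself in the unipotent case), the two functors are mutually inverse. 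I expect the main obstacle to be the second step --- specifically, verifying that one may reduce along a faithfully flat cover by a strongly periodic $\E$-ring \emph{$\T$-equivariantly}, and that completed base-change for the augmentation-ideal topology commutes with homotopy fixed-points. This is where one genuinely needs the descent machinery for complete modules developed around Theorem \ref{Descent for complex periodic formal QCoh}, rather than a purely formal manipulation; everything else is bookkeeping with lax monoidal structures and the Barr--Beck--Lurie theorem.
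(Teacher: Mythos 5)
The first thing to say is that the paper does not prove this statement at all: it is imported as a black-box citation, \cite[Theorem 1.3]{MNN}, and the bracketed attribution in the theorem environment is the entire "proof" (the genuinely new equivariant statements in the paper are the globalizations, Proposition \ref{T-equiv vs QFG on QCoh} and its consequences, which are deduced \emph{from} this theorem). So your proposal is by construction a different route; the only content it adds beyond MNN is the reduction from weak to strong $2$-periodicity, and whether that reduction is even needed depends on MNN's conventions (if their argument, or statement, already only uses invertibility of $\Sigma^2 A$, the descent step is superfluous, which is presumably why the paper cites the result without comment). If one does want the reduction, a cheaper variant is Zariski-local: $\pi_2(A)$ trivializes Zariski-locally over $\pi_0(A)$, exactly as the paper exploits in the proof of Lemma \ref{Lemma hT is even periodic}. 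Note also that in this statement the $\T$-action on $A$ is trivial ($A^{\mathrm B\T}$ is a cotensor), so your remarks about coinducing the action onto the cover are moot; the version with a genuine $\T$-action on $A$ is the introduction's formulation and is handled in the paper only later, via the globalized statements.

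Within your descent step there is one point that is not bookkeeping and is currently under-justified: to identify $\varprojlim_{\mathbf\Delta}\Mod^{\mathrm{cplt}}_{(B^\bullet)^{\mathrm h\T}}$ with $\Mod^{\mathrm{cplt}}_{A^{\mathrm h\T}}$ via Theorem \ref{Descent for complex periodic formal QCoh}, you need the cosimplicial ring $(B^{\otimes_A[\bullet]})^{\mathrm h\T}$ to agree with the completed cobar construction $(B^{\mathrm h\T})^{\widehat\otimes_{A^{\mathrm h\T}}[\bullet]}$, i.e.\ compatibility of $(-)^{\mathrm h\T}$ with (completed) flat base change. Homotopy fixed points do not commute with the bar construction in general, and appealing to "the $\T$-equivariant analogue of Lemma \ref{Lemma completed basehcange complete}" does not settle this, since that lemma is about connective covers, not fixed points; in the paper this base-change formula is obtained as a \emph{consequence} of the present theorem (via its symmetric monoidality, see the proofs of Proposition \ref{T-equiv vs QFG on QCoh} and Proposition \ref{Prop end of proof of Thm on coarse quotients existing}), so you may not quote those results without circularity. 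The fix is available but must be spelled out: for even periodic $A$, $B$ and faithfully flat $A\to B$, all the rings $B^{\otimes_A[n]}$ are even periodic, the homotopy fixed-point spectral sequences collapse, and one checks on homotopy groups (using flatness and Proposition \ref{Lemma ff comparison} with the augmentation ideal, which is locally generated by a regular element) both that $A^{\mathrm h\T}\to B^{\mathrm h\T}$ is adically faithfully flat — which you do argue — and that the canonical map $(B^{\otimes_A[n]})^{\mathrm h\T}\to$ the $n$-fold completed tensor product is an equivalence. With that addition, and with the strongly periodic input taken from MNN together with the cover supplied by Lemma \ref{Trivialization lemma}, your argument goes through; the identification of the inverse is then formal, as you say, since $\Mod^{\mathrm{cplt}}_{A^{\mathrm h\T}}$ is generated under colimits by its unit.
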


\begin{remark}
The conclusion of Theorem \ref{Thanks, Akhil!} remains  valid if the even periodicity assumption is weakened to complex periodicity. This can be seen as a corollary of our version of the result over $\mM$ in Proposition \ref{T-equiv vs QFG on QCoh}, or can be argued directly, see  \cite[Lemma 4.7]{Meyer-Wagner}. But since we will not explicitly require anything more than the even periodic case, we have chosen to cite the original version due to Mathew-Naumann-Noel.
\end{remark}

\begin{remark}\label{Remark inequivalent Postnikov towers}
Though the equivalence of $\i$-categories of Theorem \ref{Thanks, Akhil!} is symmetric monoidal, it is ``not $t$-exact". Of course, this needs to be appropriately interpreted, since all the $\E$-rings involved are periodic, hence can not be connective and so their module $\i$-categories do not carry canonical $t$-structures. But such $t$-structures do exist on the level of connective covers $\tau_{\ge 0}(A)$ and $\tau_{\ge 0}(A^{\mathrm h\T})$ respectively. Indeed, the Postnikov towers on the two sides of the equivalence of Theorem \ref{Thanks, Akhil!} are respectively given by the filtered spectrum with a $\T$-action $\tau_{\ge *}(M)$, and the filtered spectrum $\tau_{\ge *}(M^{\mathrm h\T})$ for a $\T$-equivariant $A$-module $M$. Since the canonical map $\tau_{\ge 0}(M^{\mathrm h\T})\to \tau_{\ge 0}(M)^{\mathrm h\T}$ is seldom an equivalence, \textit{the two sides of the equivalence  of Theorem \ref{Thanks, Akhil!} give rise to different Postnikov filtrations}. This will ultimately prove to be one of its key utilities, such as for Theorem \ref{Thanks, Akhil!}
\end{remark}

Our first step towards towards Theorem \ref{Existence of coarse quotients in SAG} is to globalize the equivalence of Theorem \ref{Thanks, Akhil!}, from an even periodic affine $\Spec(A)$ to the chromatic base stack $\mM\simeq \Spec(\mathbf S)^\mathrm{evp}$. This will involve a canonical formal spectral stack over $\mM$, the \textit{universal Quillen formal group}. This is the universal object with respect to the universal property of $\mM$ as the moduli stack of oriented formal group, the primary perspective from which we considered it in \cite{ChromaticCartoon}. For our purposes here, it will be convenient to express it in terms of a rather explict construction.

\begin{definition}[{\cite[Section 4.1]{Elliptic 2}}]\label{Def of QFG}
The \textit{Quillen formal group} of a complex periodic $\E$-ring $A$ is the formal spectral stack $\w{\G}{}^{\mathcal Q}_A:=\Spf(A^{\mathrm B\mathbf T})$.
\end{definition}

\begin{cons}
The chromatic base stack $\mM$ is by Corollary \ref{Even localization of the sphere} equivalent to the even periodization $\Spec(\mathbf S)^\mathrm{evp}$. It may therefore
 be expresses in terms of even periodic $\E$-rings as the colimit
$$
\mM\,\,\simeq \varinjlim_{A\in\CAlg^{\mathrm{evp}}}\Spec(A)
$$
in the $\i$-category of spectral stacks. It supports the \textit{universal Quillen formal group}, given in terms of this colimit presentation as
\begin{equation}\label{Equation for universal QFG}
\w{\G}{}^{\mathcal Q}_{\mM} \,\,\simeq \varinjlim_{A\in \CAlg^\mathrm{evp}}\w{\G}{}^{\mathcal Q}_A.
\end{equation}
It is universal in the sense that for any complex periodic $\E$-ring $A$, its Quillen formal group is obtained by pullback of spectral stacks
\begin{equation}\label{Equation univ prop of univ QFG}
\begin{tikzcd}
\w{\G}{}^{\mathcal Q}_A \ar{r}\ar{d} & \w{\G}{}^{\mathcal Q}_{\mM}\ar{d}\\
\Spec(A) \ar{r} & \mM
\end{tikzcd}
\end{equation}
along the essentially unique map $\Spec(A)\to \mM$. The evident zero-sections $\Spec(A)\to \w{\G}{}^{\mathcal Q}_A$, induced from the quotient map $*\to */\mathbf T\simeq \mathrm B \mathbf T$, gives rise by passage to the colimit to the \textit{universal zero-section} $\mM\to \w{\G}{}^{\mathcal Q}_\mM$.
\end{cons}

We may now give the desired extension of  Theorem \ref{Thanks, Akhil!} to the chromatic base stack.

\begin{prop}\label{T-equiv vs QFG on QCoh}
There is a canonical symmetric monoidal equivalence of $\i$-categories
$$
\QCoh(\mM)^{\mathrm B\mathbf T}\,\simeq \, \mathfrak{QC}\mathrm{oh}(\w{\G}{}_{\mM}^{\mathcal Q}).
$$
\end{prop}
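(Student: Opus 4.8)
The plan is to obtain this as a descent statement, deducing the global equivalence from the affine-level equivalence of Theorem \ref{Thanks, Akhil!} by the same ``build both sides as colimits of affines, then check the cube is a limit of $\i$-categories'' strategy used throughout Section \ref{Section 3.4} and Section \ref{Section FAG}. First I would use Corollary \ref{Even localization of the sphere} to write $\mM\simeq\Spec(\mathbf S)^\mathrm{evp}$, and more precisely invoke the colimit presentation
$$
\mM\,\simeq\varinjlim_{A\in\CAlg^\mathrm{evp}}\Spec(A)
$$
of \eqref{Equation for universal QFG}. Since $\QCoh$ takes colimits of spectral stacks to limits of $\i$-categories (Proposition \ref{Def of QCoh}), and the functor $\i$-category construction $(-)^{\mathrm B\mathbf T}=\Fun(\mathrm B\mathbf T,-)$ commutes with limits, we get
$$
\QCoh(\mM)^{\mathrm B\mathbf T}\,\simeq\,\varprojlim_{A\in\CAlg^\mathrm{evp}}\Mod_A^{\mathrm B\mathbf T},
$$
the limit taken in $\CAlg(\mathrm{Pr^L})$. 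On the other side, from \eqref{Equation for universal QFG} we have $\w{\G}{}^{\mathcal Q}_{\mM}\simeq\varinjlim_{A\in\CAlg^\mathrm{evp}}\Spf(A^{\mathrm B\mathbf T})$, a colimit of formal spectral stacks; the key input that makes $\mathfrak{QC}\mathrm{oh}$ send this to a limit of $\i$-categories is Theorem \ref{Descent for complex periodic formal QCoh}, fpqc descent for complete modules on complex periodic formal affines. Indeed every $A^{\mathrm B\mathbf T}$ for $A\in\CAlg^\mathrm{evp}$ is an $I_\mathrm{aug}$-adic $\E$-ring whose Quillen formal group maps to $\mM$, hence is complex periodic in the appropriate adic sense, so $\mathfrak{QC}\mathrm{oh}$ is defined on this diagram and the presentation of $\w{\G}{}^{\mathcal Q}_{\mM}$ as a colimit of complex periodic formal affines together with Theorem \ref{Descent for complex periodic formal QCoh} gives
$$
\mathfrak{QC}\mathrm{oh}(\w{\G}{}^{\mathcal Q}_{\mM})\,\simeq\,\varprojlim_{A\in\CAlg^\mathrm{evp}}\Mod_{A^{\mathrm B\mathbf T}}^\mathrm{cplt}.
$$

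It then remains to identify the two diagrams $A\mapsto\Mod_A^{\mathrm B\mathbf T}$ and $A\mapsto\Mod_{A^{\mathrm B\mathbf T}}^\mathrm{cplt}$ on $\CAlg^\mathrm{evp}$ compatibly, i.e.\ to promote the objectwise equivalence of Theorem \ref{Thanks, Akhil!} to an equivalence of functors $\CAlg^\mathrm{evp}\to\CAlg(\mathrm{Pr^L})$. The natural way to do this is to exhibit both as instances of a single construction: the homotopy fixed-points functor $M\mapsto M^{\mathrm h\mathbf T}$ is lax symmetric monoidal and natural in the $\E$-ring $A$ with a $\mathbf T$-action, so $A\mapsto (M\mapsto M^{\mathrm h\mathbf T})$ assembles into a natural transformation of the two $\CAlg(\mathrm{Pr^L})$-valued functors; Theorem \ref{Thanks, Akhil!} (with its explicit inverse $N\mapsto N\otimes_{A^{\mathrm B\mathbf T}}A$, likewise natural) says each component is an equivalence, and a natural transformation which is objectwise an equivalence is an equivalence of functors. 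Passing to limits over $A\in\CAlg^\mathrm{evp}$ yields the desired symmetric monoidal equivalence $\QCoh(\mM)^{\mathrm B\mathbf T}\simeq\mathfrak{QC}\mathrm{oh}(\w{\G}{}^{\mathcal Q}_{\mM})$. One should also record compatibility of this equivalence with the two structure maps (to $\mM$ and along the zero section), which follows by naturality, though this may not be needed for the bare statement.

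\textbf{Main obstacle.} The routine part is the diagram-chase with colimits and limits; the genuine content is justifying that $\mathfrak{QC}\mathrm{oh}$ actually converts the colimit presentation \eqref{Equation for universal QFG} of $\w{\G}{}^{\mathcal Q}_{\mM}$ into the matching limit of $\i$-categories. This is exactly the subtle point flagged repeatedly in Section \ref{Section FAG}: without a connectivity or complex-periodicity hypothesis, $\mathfrak{QC}\mathrm{oh}$ need not preserve limits, so one must be careful that the indexing diagram $A\mapsto\Spf(A^{\mathrm B\mathbf T})$ lands in complex periodic formal affines and that \eqref{Equation for universal QFG} is genuinely a colimit in $\mathrm{FSpStk}$ (not merely in presheaves) — this is where Theorem \ref{Descent for complex periodic formal QCoh} is essential, and where I would spend the most care. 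A secondary subtlety is checking that the indexing category $\CAlg^\mathrm{evp}$ (rather than a small cofinal subdiagram) behaves well; here one reduces to a small subdiagram using accessibility exactly as in Remark \ref{Remark access why} and Lemma \ref{Lemma cover preservation and lifting}.
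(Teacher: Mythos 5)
Your proposal matches the paper's proof essentially step for step: the same colimit presentation of $\mM$ over $\CAlg^\mathrm{evp}$, the same appeal to Theorem \ref{Descent for complex periodic formal QCoh} (after noting $A^{\mathrm B\mathbf T}$ is even, hence complex, periodic) to make $\mathfrak{QC}\mathrm{oh}$ convert the presentation of $\w{\G}{}^{\mathcal Q}_{\mM}$ into a limit, and the same conclusion by promoting Theorem \ref{Thanks, Akhil!} to a natural equivalence of $\CAlg(\mathrm{Pr^L})$-valued diagrams, where the paper checks the naturality exactly as you indicate, via the symmetric monoidality giving $(M\otimes_A B)^{\mathrm h\mathbf T}\simeq M^{\mathrm h\mathbf T}\,\widehat{\otimes}_{A^{\mathrm B\mathbf T}}B^{\mathrm B\mathbf T}$. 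The size issue you flag is dealt with in the paper by a footnote replacing the indexing category with the $\mathrm{MUP}$-cobar diagram, in line with your accessibility remark.
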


\begin{proof}
Note that the $\E$-ring $A^{\mathrm B\T}$ is even periodic, and so in particular complex periodic, whenever $A$ is. It follows that the colimit formula \eqref{Equation for universal QFG} for the universal Quillen formal group takes place in the full subcategory $\mathrm{FSpStk}^{\mathbf C\mathrm p}\subseteq\mathrm{FSpStk}$ spanned by formal affines whose underlying $\E$-rings are complex periodic. The composite functor
$$
(\mathrm{FSpStk}^{\mathbf C\mathrm p})^\mathrm{op}\hookrightarrow\mathrm{FSpStk}^\mathrm{op}\xrightarrow{\mathfrak{QC}\mathrm{oh}} \CAlg(\mathrm{Pr^L})
$$
preserves small limits as consequence of the fpqc descent for complete modules over adic $\E$-rings of Theorem \ref{Descent for complex periodic formal QCoh}. The right-hand side $\i$-category in question may therefore be written as\footnote{Technically the limit in question is indexed by \textit{accessible} and not \textit{small} indexing $\i$-category. But this is only an aesthetic choice, as we could have rewritten it as a colimit over an actual small index category, such as using the functor $\mathrm{MUP}^\bullet:\mathbf{\Delta}\to\CAlg^\mathrm{evp}$, the cosimplicial cobar construction $\mathrm{MUP}^\bullet :=\mathrm{MUP}^{\o_{\mathbf S}[\bullet]}$ of the $\E$-ring map $\mathbf S\to\mathrm{MUP}$, for any choice of $\E$-ring structure on periodic complex bordism. Such a replacement is possible everywhere throughout this proof.}
$$
\mathfrak{QC}\mathrm{oh}(\w{\G}{}_{\mM}^{\mathcal Q})\, \,\simeq  \varprojlim_{A\in\CAlg^\mathrm{evp}}\Mod_{A^{\mathrm B\T}}^\mathrm{cplt},
$$
with completeness for $A^{\mathrm B\T}$-modules on the right being as always understood with respect to the augmentation ideal. Since the functors $\mX\mapsto \QCoh(\mX)$ and $\mC\mapsto \mC^{\mathrm B\T}\simeq \Fun(\mathrm B\T, \mC)$ both commute with all small limits, we can likewise write the left-hand side as a limit
$$
\QCoh(\mM)^{\mathrm B\T}\,\simeq \, \QCoh\big(\varinjlim_{A\in\CAlg^\mathrm{evp}}\Spec(A)\big)^{\mathrm B\T}\, \simeq \, \varprojlim_{A\in\CAlg^\mathrm{evp}}\Mod_A^{\mathrm B\T}.
$$
Since the two sides can therefore be viewed as limits of the functors $\CAlg^\mathrm{evp}\to\CAlg(\mathrm{Pr^L})$, given respectively by $A\mapsto \Mod_A^{\mathrm B\T}$ and $A\mapsto \Mod_{A^{\mathrm B\T}}^\mathrm{cplt}$. it suffices to exhibit a natural equivalence between them. We claim that this is
accomplished by Theorem \ref{Thanks, Akhil!}. To show this, it suffices to verify that the homotopy fixed-points $M\mapsto M^{\mathrm h\T}$, viewed as functors $\Mod_A^{\mathrm B\T}\to \Mod_{A^{\mathrm B\T}}^\mathrm{cplt}$, are natural in the even periodic $\E$-ring $A$. But this in turn also follows from Theorem \ref{Thanks, Akhil!}. Indeed, the homotopy-fixed point functors are thanks to it symmetric monoidal, and so we have for any map of even periodic $\E$-rings $A\to B$ and $A$-module $M$ canonical $B$-module equivalences
$$
(M\o_A B)^{\mathrm h\T}\,\simeq\, M^{\mathrm h\mathbf T}\,\widehat{\o}_{A^{\mathrm h\T}}\,B^{\mathrm h\T}\,\simeq\, M^{\mathrm h\mathbf T}\,\widehat{\o}_{A^{\mathrm B\T}}\,B^{\mathrm B\T}
$$
and showing the desired naturality.
Here we have used that, since the $\T$-actions on $A$ and $B$ are trivial, their homotopy fixed-points are given by cotensoring with $\mathrm B\T$.
\end{proof}

\begin{remark}\label{Underlying objects as zero-section pullback}
In terms of the equivalence of Proposition \ref{T-equiv vs QFG on QCoh}, the forgetful functor $\QCoh(\mM)^{\mathrm B\T}\to \QCoh(\mM)$ is identified with the quasi-coherent pullback
$$
s^*:\mathfrak{QC}\mathrm{oh}(\w{\G}{}^{\mathcal Q}_{\mM})\to \mathfrak{QC}\mathrm{oh}(\mM)\simeq \QCoh(\mM)
$$
along the universal zero-section $s:\mM\to \w{\G}{}^{\mathcal Q}_{\mM}$. Here we have identified the chromatic base stack $\mM$ with a formal spectral stack using the canonical fully faithful embedding $\mathrm{SpStk}\hookrightarrow\mathrm{FSpStk}$
discussed in Remark \ref{Remark ff of SpStk into FSpStk}. Recall that this is induced by passing to accessible sheaves from the inclusion $\CAlg\hookrightarrow\CAlg_\mathrm{ad}$ of $\E$-rings into adic $\E$-rings by equipping them with the $0$-adic -- i.e.\ trivial -- topology.
\end{remark}

The next step towards proving Theorem \ref{Existence of coarse quotients in SAG} is to pass from Proposition \ref{T-equiv vs QFG on QCoh}, a result about quasi-coherent sheaves, to one about respective affines and formal affines. 

\begin{prop}\label{FF of hT}
The functor $\Spec(A)\mapsto \Spf(A^{\mathrm h\T})$ defines a fully faithful embedding
$$
(\mathrm{Aff}^{\mathbf C\mathrm p})^{\mathrm B\T}\hookrightarrow \mathrm{FAff}_{/\w{\G}{}^{\mathcal Q}_{\mM}},
$$
the essential image consists of those maps from formal affines $\Spf(B)\to \w{\G}{}^{\mathcal Q}_{\mM}$ which are formally affine morphisms.
\end{prop}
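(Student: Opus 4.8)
The plan is to deduce this from the quasi-coherent comparison of Proposition~\ref{T-equiv vs QFG on QCoh}, in the same spirit as how the non-formal statement \eqref{Relative spec def} and its formal analogue (Remark~\ref{Remark T-adic prep}) identify relative affines with commutative algebra objects in quasi-coherent sheaves. First I would recall that, via Proposition~\ref{Action = equivariance} together with Remark~\ref{affines} and the discussion identifying $\QCoh(\mathrm B\underline{\T}_\mathbf{S})\simeq \Sp^{\mathrm B\T}$, a complex periodic affine with a $\T$-action $\Spec(A)$ with its $\T$-equivariant structure corresponds to a commutative algebra object in $\QCoh(\mM)^{\mathrm B\T}$; more precisely, the $\i$-category $(\mathrm{Aff}^{\mathbf C\mathrm p})^{\mathrm B\T}$ of complex periodic affines with a $\T$-action is, by Lemma~\ref{Key property} (which guarantees $\CAlg_A^{\mathbf C\mathrm p}\simeq \CAlg_A$ for $A$ complex periodic, so that the overcategories behave as in the non-periodic case) and the colimit presentation $\mM\simeq\varinjlim_{A\in\CAlg^\mathrm{evp}}\Spec(A)$, equivalent to $\CAlg(\QCoh(\mM)^{\mathrm B\T})^{\mathrm{op}}$. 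Concretely, an object of the latter is a compatible family of even periodic $\E$-$A$-algebras with $\T$-action over the cobar resolution of $\mathbf S\to\mathrm{MUP}$, and this is exactly the data of a $\T$-equivariant complex periodic affine over $\mM$.

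Next I would use Proposition~\ref{T-equiv vs QFG on QCoh}, which gives a symmetric monoidal equivalence $\QCoh(\mM)^{\mathrm B\T}\simeq \mathfrak{QC}\mathrm{oh}(\w{\G}{}^{\mathcal Q}_{\mM})$ implemented by homotopy fixed points; passing to commutative algebra objects, this yields
$$
\CAlg(\QCoh(\mM)^{\mathrm B\T})^{\mathrm{op}}\,\simeq\,\CAlg(\mathfrak{QC}\mathrm{oh}(\w{\G}{}^{\mathcal Q}_{\mM}))^{\mathrm{op}}.
$$
Now the right-hand side is, by the formal relative spectrum equivalence recalled in Remark~\ref{Remark T-adic prep} applied to the base $\mathfrak X = \w{\G}{}^{\mathcal Q}_{\mM}$, equivalent to $\mathrm{FSpStk}^{\mathrm{faff}}_{/\w{\G}{}^{\mathcal Q}_{\mM}}$, the $\i$-category of formally affine morphisms into $\w{\G}{}^{\mathcal Q}_{\mM}$. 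Composing these three equivalences gives a fully faithful embedding $(\mathrm{Aff}^{\mathbf C\mathrm p})^{\mathrm B\T}\hookrightarrow \mathrm{FAff}_{/\w{\G}{}^{\mathcal Q}_{\mM}}$ with the asserted essential image. It remains to check that this composite is the functor $\Spec(A)\mapsto\Spf(A^{\mathrm h\T})$ of the statement: I would unwind the identifications on affines, where the colimit presentations of $\mM$ and $\w{\G}{}^{\mathcal Q}_{\mM}$ (formula~\eqref{Equation for universal QFG}) trivialize, and observe that the relative Spec over $\w{\G}{}^{\mathcal Q}_{\mM}$ of the commutative algebra $A^{\mathrm h\T}$ (obtained from $A$ via the fixed-point equivalence) is by definition $\Spf(A^{\mathrm h\T})$, with its map to $\w{\G}{}^{\mathcal Q}_{\mM} = \Spf(\mathbf S^{\mathrm B\T})\circ(\dots)$ induced by $\mathbf S^{\mathrm B\T}\to A^{\mathrm h\T}$. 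The pullback square \eqref{Equation univ prop of univ QFG} shows the structure map lands over the correct point of $\mM$.

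\textbf{Main obstacle.} The step I expect to require the most care is verifying that passage to $\CAlg(-)$ preserves the relevant fully faithfulness and, crucially, that the equivalence of Proposition~\ref{T-equiv vs QFG on QCoh} genuinely induces an equivalence on commutative algebra objects with the correct compatibilities — in particular that the completion appearing in $\mathfrak{QC}\mathrm{oh}(\w{\G}{}^{\mathcal Q}_{\mM})$ (with respect to the augmentation ideal, as in the proof of Proposition~\ref{T-equiv vs QFG on QCoh}) matches the adic topology on $A^{\mathrm h\T}$ used to form $\Spf(A^{\mathrm h\T})$. This is where I would lean on Theorem~\ref{Thanks, Akhil!}: its inverse functor $N\mapsto N\otimes_{A^{\mathrm B\T}}A$ is monoidal, so it carries $\CAlg$ to $\CAlg$, and the completeness condition on $A^{\mathrm h\T}$-modules is precisely completeness with respect to the augmentation ideal $I_\mathrm{aug} = \ker(\pi_0(A^{\mathrm h\T})\to\pi_0(A))$, which is exactly the topology fixed by convention for homotopy fixed points in this section. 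The identification of the essential image with formally affine morphisms then follows formally, as in Remark~\ref{Remark T-adic prep}, from the relative formal spectrum being an (anti)equivalence onto formally affine maps. One should also note that $A^{\mathrm h\T}$ has a canonical adic $\E$-ring structure here because $A$ is even periodic (so $\pi_*(A^{\mathrm h\T})$ is a power series ring over $\pi_*(A^{\mathrm h\T})/I_\mathrm{aug}\cong\pi_*(A)$ in one variable, finitely generated ideal of definition), which is what makes $\Spf(A^{\mathrm h\T})$ a legitimate formal affine in the sense of Definition~\ref{Definition 5.1.3 of formal spectrum}.
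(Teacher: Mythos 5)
Your route is the same as the paper's: invoke Proposition \ref{T-equiv vs QFG on QCoh}, pass to commutative algebra objects, interpret both sides through relative (formal) Spec, and unwind on affines to see that the functor is $\Spec(A)\mapsto \Spf(A^{\mathrm h\T})$; the delicate points you single out (monoidality of the fixed-point equivalence of Theorem \ref{Thanks, Akhil!}, and the match between the augmentation-ideal completeness and the adic topology used to form $\Spf(A^{\mathrm h\T})$) are exactly the ones the paper's argument rests on. However, your first step asserts something false: $(\mathrm{Aff}^{\mathbf C\mathrm p})^{\mathrm B\T}$ is \emph{not} equivalent to $\CAlg(\QCoh(\mM)^{\mathrm B\T})^{\mathrm{op}}$. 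The latter is $(\mathrm{SpStk}^{\mathrm{aff}}_{/\mM})^{\mathrm B\T}$, the $\i$-category of $\T$-equivariant \emph{affine morphisms} to $\mM$, and the total space of such a relative affine need not be affine: the unit algebra $\sO_{\mM}$ (with trivial action) corresponds to $\mM$ itself, which is geometric but not affine, so $(\mathrm{Aff}^{\mathbf C\mathrm p})^{\mathrm B\T}\simeq(\mathrm{Aff}_{/\mM})^{\mathrm B\T}$ is only a proper full subcategory. The paper uses precisely this weaker containment, proves the equivalence $(\mathrm{SpStk}^{\mathrm{aff}}_{/\mM})^{\mathrm B\T}\simeq \mathrm{FSpStk}^{\mathrm{faff}}_{/\w{\G}{}^{\mathcal Q}_{\mM}}$, and then restricts to the affines.

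This slip does not affect full faithfulness — a fully faithful embedding followed by equivalences is still fully faithful — but it does invalidate your claim that the essential image "follows formally" from composing three equivalences: were your chain correct, every formally affine morphism to $\w{\G}{}^{\mathcal Q}_{\mM}$ (for instance its identity map, whose source is not a formal affine) would come from a complex periodic affine, which is absurd. With the corrected first step, the forward inclusion of the image into formally affine morphisms with formal affine source is clear, but the converse now needs an argument you have not supplied: given a formally affine $\Spf(B)\to \w{\G}{}^{\mathcal Q}_{\mM}$, one must check that the corresponding $\T$-equivariant relative affine over $\mM$ — recovered via the inverse equivalence, i.e.\ by zero-section pullback as in Remark \ref{Underlying objects as zero-section pullback} — actually has affine total space, so that it lies in $(\mathrm{Aff}^{\mathbf C\mathrm p})^{\mathrm B\T}$. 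Repair the first step to a full-subcategory statement and address this converse, and your argument coincides with the paper's proof.
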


\begin{proof}
The equivalence of Proposition \ref{T-equiv vs QFG on QCoh} induces an equivalence between the $\i$-categories of commutative algebra objects
$$
\CAlg(\QCoh(\mM))^{\mathrm B\T}\,\simeq \,\CAlg(\QCoh(\mM)^{\mathrm B\T})\,\simeq \,\CAlg(\mathfrak{QC}\mathrm{oh}(\w{\G}{}^{\mathcal Q}_{\mM})).
$$
By passing to the opposite $\i$-categories, this may be viewed in terms of relatively affine spectral stacks and relatively formally affine formal spectral stacks respectively as
\begin{equation}\label{Equiv on relative (f)affines}
(\mathrm{SpStk}^\mathrm{aff}_{/\mM})^{\mathrm B\T}\, \simeq \, \mathrm{FSpStk}^\mathrm{faff}_{/\w{\G}{}^{\mathcal Q}_{\mM}}.
\end{equation}
Because the chromatic base stack $\mM$ is geometric, we have $\mathrm{Aff}_{/\mM}\subseteq \mathrm{SpStk}_{/\mM}^\mathrm{aff}$ -- but of course, that is nothing but $\mathrm{Aff}^{\mathbf C\mathrm p}$, since the existence of a map from an affine into $\mM$ is equivalent to its underlying $\E$-ring being complex periodic. By unwinding the proof of Proposition \ref{T-equiv vs QFG on QCoh}, we find that a map of spectral stacks $\Spec(A)\to \mM$ with a $\T$-action is sent under \eqref{Equiv on relative (f)affines} to the map of formal spectral stacks $\Spf(A^{\mathrm h\T})\to \w{\G}{}^{\mathcal Q}_{\mM}$. 
\end{proof}

We will need two more auxiliary lemmas concerning homotopy fixed-points of circle actions on even periodic $\E$-rings.

\begin{lemma}\label{Lemma hT is even periodic}
Let $A$ be an even periodic $\E$-ring with a $\T$-action. Then the homotopy fixed-point $\E$-ring $A^{\mathrm h\T}$ is also even periodic.
\end{lemma}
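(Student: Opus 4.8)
The statement to prove is Lemma~\ref{Lemma hT is even periodic}: if $A$ is an even periodic $\E$-ring with a $\T$-action, then $A^{\mathrm h\T}$ is even periodic. The plan is to deduce this directly from the unipotence equivalence of Theorem~\ref{Thanks, Akhil!}, namely the symmetric monoidal equivalence $\Mod_A^{\mathrm B\T}\simeq \Mod_{A^{\mathrm B\T}}^\mathrm{cplt}$ with inverse $N\mapsto N\o_{A^{\mathrm B\T}}A$. The key input is that the $A$-module $\Sigma^2(A)$, equipped with its natural $\T$-equivariant structure, is invertible in $\Mod_A^{\mathrm B\T}$ (indeed $\Sigma^2(A)$ is invertible in $\Mod_A$ by weak $2$-periodicity of $A$ as recalled in Remark~\ref{Remark weak vs strong 2-periodicity remark}, and invertibility is detected on underlying objects since the equivariant symmetric monoidal structure is computed pointwise). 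Transporting across the equivalence, the $A^{\mathrm h\T}$-module $\Sigma^2(A)^{\mathrm h\T}$ is invertible in $\Mod_{A^{\mathrm h\T}}^\mathrm{cplt}$, and hence also in $\Mod_{A^{\mathrm h\T}}$ after inverting completion (or simply because an object that becomes invertible after completion along a finitely generated ideal, and whose underlying module we can track, is invertible). Since $\Sigma^2(A)^{\mathrm h\T}\simeq \Sigma^2(A^{\mathrm h\T})$ canonically (the $\T$-action commutes with suspension), this shows $\Sigma^2(A^{\mathrm h\T})$ is an invertible $A^{\mathrm h\T}$-module, which by Remark~\ref{Remark weak vs strong 2-periodicity remark} is exactly weak $2$-periodicity of $A^{\mathrm h\T}$.

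It then remains to show $A^{\mathrm h\T}$ is \emph{even}, i.e.\ $\pi_{\mathrm{odd}}(A^{\mathrm h\T})=0$. Here I would use the homotopy fixed-point spectral sequence $\mathrm H^s(\mathrm B\T;\pi_t(A))\Rightarrow \pi_{t-s}(A^{\mathrm h\T})$, i.e.\ $E_2^{s,t}=\pi_t(A)[[u]]$ concentrated in even $s$ with $|u|=(2,0)$. Since $A$ is even, $\pi_t(A)=0$ for $t$ odd, so the entire $E_2$-page is concentrated in total degree $t-s$ even. The spectral sequence therefore degenerates for parity reasons and $\pi_{\mathrm{odd}}(A^{\mathrm h\T})=0$; the relevant convergence is guaranteed because the filtration is complete (this is the $I_\mathrm{aug}$-completeness that is built into the statement, and the spectral sequence is conditionally convergent and collapses). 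Combining the two parts, $A^{\mathrm h\T}$ is even and weakly $2$-periodic, hence even periodic.

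The main obstacle I anticipate is the bookkeeping around \emph{which} ring structure and \emph{which} topology one uses on $A^{\mathrm h\T}$, and making the invertibility transport argument airtight: Theorem~\ref{Thanks, Akhil!} gives an equivalence of symmetric monoidal $\i$-categories, and one must check that the Picard-group element $[\Sigma^2(A)]$ is carried to $[\Sigma^2(A^{\mathrm h\T})]$ rather than to some twist. This should follow formally because the forgetful functor $\Mod_A^{\mathrm B\T}\to\Mod_A$ is identified under the equivalence with the zero-section pullback $s^*$ (Remark~\ref{Underlying objects as zero-section pullback}), and $s^*$ sends $\Sigma^2(A^{\mathrm h\T})$, viewed as $\Sigma^2$ of the unit of $\Mod_{A^{\mathrm h\T}}^\mathrm{cplt}$, to $\Sigma^2$ of the unit of $\Mod_A$, which is $\Sigma^2(A)$; invertibility of an object in $\Mod_{A^{\mathrm h\T}}^\mathrm{cplt}$ can then be checked after applying $s^*$ together with the completeness hypothesis, exactly as one checks invertibility of complete modules fibrewise. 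An alternative, entirely computational route avoiding the equivalence altogether is to run the homotopy fixed-point spectral sequence directly: evenness is immediate as above, and weak $2$-periodicity follows since $\pi_{2*}(A^{\mathrm h\T})\cong \pi_{2*}(A)[[u]]$ as a graded ring with $u$ in degree $2$, whence multiplication by (a lift of) the periodicity generator $\beta\in\pi_2(A)$ together with $u$ exhibits $\pi_{n+2}(A^{\mathrm h\T})\cong \pi_n(A^{\mathrm h\T})$; I would present the conceptual argument as primary and remark on this computational check as confirmation.
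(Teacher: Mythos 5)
Your primary argument has a genuine gap. First, the step ``transport invertibility of $\Sigma^2(A)$ to conclude weak $2$-periodicity of $A^{\mathrm h\T}$'' proves nothing: for \emph{any} $\E$-ring $R$ the module $\Sigma^2(R)$ is invertible in $\Mod_R$ (with inverse $\Sigma^{-2}(R)$), so invertibility of $\Sigma^2(A^{\mathrm h\T})$ is automatic and cannot characterize weak $2$-periodicity. The actual content of weak $2$-periodicity is that $\Sigma^2(A)$ is a \emph{line bundle}, i.e.\ that $\pi_2(A)$ is an invertible (flat) $\pi_0(A)$-module and multiplication induces $\pi_n(A)\otimes_{\pi_0(A)}\pi_2(A)\simeq\pi_{n+2}(A)$; this is a statement about homotopy groups, and it does not transport formally along the unipotence equivalence, precisely because that equivalence is not $t$-exact (Remark \ref{Remark inequivalent Postnikov towers}): $\pi_*(M)$ and $\pi_*(M^{\mathrm h\T})$ are related only through the homotopy fixed-point spectral sequence, so one is forced back to the computation anyway. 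Second, applying the unipotence equivalence to $A$ equipped with its (possibly nontrivial) $\T$-action is using the strengthened statement of Theorem \ref{Theoremi H}; the version actually proved and quoted in the body, Theorem \ref{Thanks, Akhil!}, concerns a trivial action (fixed points $A^{\mathrm B\T}$), and the nontrivial-action case is obtained in the paper by working over the base $R=A^{\mathrm h\T}$ \emph{after} this lemma is known. So your conceptual route is circular within the paper's development.

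Your ``alternative computational route'' is essentially the paper's proof and should be promoted to the main argument, but as you state it there is a missing reduction: you invoke a global periodicity generator $\beta\in\pi_2(A)$, which exists only when $A$ is \emph{strongly} even periodic, whereas for a weakly $2$-periodic $A$ the module $\pi_2(A)$ need not be free. The paper handles this by noting that even periodicity can be checked Zariski-locally over $\pi_0(A)$ and that the invertible module $\pi_2(A)$ trivializes Zariski-locally, so one may assume $A$ strongly even periodic. After this reduction, evenness of $A$ makes the homotopy fixed-point spectral sequence collapse for parity reasons (exactly as in your evenness argument), giving $\pi_*(A^{\mathrm h\T})\simeq\pi_*(A)[\![t]\!]$ with $t$ in degree $-2$ (not $+2$), from which strong even periodicity of $A^{\mathrm h\T}$, and hence even periodicity Zariski-locally and globally, is immediate. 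With the Zariski-local reduction added and the transport-of-invertibility argument dropped, your computation coincides with the paper's argument.
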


\begin{proof}
Since even periodicity may be checked Zariski-locally, and since any even periodic $\E$-ring is Zariski-locally strongly even periodic (since the line bundle $\pi_2(A)$ trivializes Zariski-locally over $\pi_0(A)$), we may assume without loss of generality that $A$ is strongly even periodic. But since $A$ is even, its  homotopy fixed-point spectral
sequence  collapses and we have $\pi_*(A^{\mathrm h\T})\simeq \pi_*(A)[\![t]\!]$, from which it is clear that $A^{\mathrm h\T}$ is also strongly even periodic.
\end{proof}

\begin{lemma}\label{Lemma on pullbacks of twisted Quillen FGs}
Let $A$ be an even periodic $\E$-ring with a $\T$-action. There is a canonical map of formal spectral stacks $\Spf(A^{\mathrm h\T})\to \w{\G}{}^{\mathcal Q}_{\mM}$ which fits into a pullbacks square of formal spectral stacks
$$
\begin{tikzcd}
\Spec(A)\arrow{r}{} \arrow{d}{} &\Spf(A^{\mathrm h\T}) \arrow{d}{} \\
\mM \arrow{r}{} & \w{\G}{}^{\mathcal Q}_{\mM},
\end{tikzcd}
$$
\end{lemma}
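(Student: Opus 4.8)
The plan is to extract this pullback square as a corollary of the fully faithful embedding of Proposition \ref{FF of hT}, together with the universal property of the Quillen formal group. First I would note that $A$, being even periodic with a $\T$-action, is in particular complex periodic, so that $\Spec(A)\to\Spec(\mathbf S)$ factors canonically through $\mM$; equivalently $\Spec(A)\in \mathrm{Aff}^{\mathbf C\mathrm p}$, and the $\T$-action makes it an object of $(\mathrm{Aff}^{\mathbf C\mathrm p})^{\mathrm B\T}$. Applying the functor of Proposition \ref{FF of hT} produces a formally affine morphism $\Spf(A^{\mathrm h\T})\to \w{\G}{}^{\mathcal Q}_{\mM}$, which is precisely the canonical map asserted in the statement. (By Lemma \ref{Lemma hT is even periodic}, $A^{\mathrm h\T}$ is itself even periodic, so this is a formal affine in the relevant sense, and the $I_{\mathrm{aug}}$-adic topology on $\pi_0(A^{\mathrm h\T})$ is the one making $\Spf(A^{\mathrm h\T})=\w{\G}{}^{\mathcal Q}_A$.)

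Next I would identify the pullback $\mM\times_{\w{\G}{}^{\mathcal Q}_{\mM}}\Spf(A^{\mathrm h\T})$, the fiber product being taken along the universal zero-section $s:\mM\to \w{\G}{}^{\mathcal Q}_{\mM}$ on the one side and along the map just constructed on the other. The cleanest route is to unwind the equivalence \eqref{Equiv on relative (f)affines} from the proof of Proposition \ref{FF of hT}: under the symmetric monoidal equivalence $\QCoh(\mM)^{\mathrm B\T}\simeq \mathfrak{QC}\mathrm{oh}(\w{\G}{}^{\mathcal Q}_{\mM})$ of Proposition \ref{T-equiv vs QFG on QCoh}, pullback along $s$ corresponds to the forgetful functor to $\QCoh(\mM)$, as recorded in Remark \ref{Underlying objects as zero-section pullback}. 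So the relatively formally affine object $\Spf(A^{\mathrm h\T})\to\w{\G}{}^{\mathcal Q}_{\mM}$, which corresponds to the commutative algebra object $\sO_{\Spf(A^{\mathrm h\T})}\in\CAlg(\mathfrak{QC}\mathrm{oh}(\w{\G}{}^{\mathcal Q}_{\mM}))$ with its $\T$-equivariant image being the algebra object corresponding to $A$, pulls back along $s$ to the relatively affine object over $\mM$ classified by the underlying algebra of $A$ with trivial equivariant structure forgotten — which is exactly $\Spec(A)\to\mM$. Concretely: $s^*(\sO_{\Spf(A^{\mathrm h\T})})\simeq A$ as an $\sO_{\mM}$-algebra, by Theorem \ref{Thanks, Akhil!} applied pointwise (the inverse functor there is $N\mapsto N\otimes_{A^{\mathrm h\T}}A$, so pulling the algebra $A^{\mathrm h\T}$ back along the zero section returns $A$).

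Therefore the base change of the formally affine morphism $\Spf(A^{\mathrm h\T})\to\w{\G}{}^{\mathcal Q}_{\mM}$ along $s:\mM\to\w{\G}{}^{\mathcal Q}_{\mM}$ is $\Spec(A)\to\mM$, which is the assertion of the square. Alternatively — and this is the more elementary argument I would present if one prefers to avoid the full strength of Proposition \ref{FF of hT} — one can reduce to the universal case: by construction $\w{\G}{}^{\mathcal Q}_A=\Spf(A^{\mathrm h\T})$ is the Quillen formal group of $A$, and the universal property square \eqref{Equation univ prop of univ QFG} already exhibits $\w{\G}{}^{\mathcal Q}_A\simeq \Spec(A)\times_{\mM}\w{\G}{}^{\mathcal Q}_{\mM}$; composing this with the zero-section being compatible with pullback (the zero-section of $\w{\G}{}^{\mathcal Q}_A$ is the pullback of the universal zero-section, by \eqref{Equation univ prop of univ QFG}) yields the desired Cartesian square with the vertical maps being the structure map to $\mM$ and the composite $\Spf(A^{\mathrm h\T})\to\w{\G}{}^{\mathcal Q}_{\mM}$. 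The main subtlety to be careful about — the ``hard part'' — is checking that the two \emph{a priori} different maps $\Spf(A^{\mathrm h\T})\to\w{\G}{}^{\mathcal Q}_{\mM}$ (the one from Proposition \ref{FF of hT} and the one from the universal property \eqref{Equation univ prop of univ QFG} of $\w{\G}{}^{\mathcal Q}_{\mM}$) agree; this is a compatibility of two descriptions of the same colimit-defined object, and it follows by unwinding the colimit presentation \eqref{Equation for universal QFG} and observing that both are induced from the tautological cone $\w{\G}{}^{\mathcal Q}_A\to\w{\G}{}^{\mathcal Q}_{\mM}$ on the $A$-component. I would spell out this identification and then conclude.
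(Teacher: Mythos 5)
Your proposal contains a decisive conflation that undermines both of your routes in places: you repeatedly identify $\Spf(A^{\mathrm h\T})$ with the Quillen formal group $\w{\G}{}^{\mathcal Q}_A$. By Definition \ref{Def of QFG}, $\w{\G}{}^{\mathcal Q}_A=\Spf(A^{\mathrm B\T})$ is built from the cotensor, i.e.\ the fixed points of the \emph{trivial} $\T$-action, whereas in the lemma the action on $A$ is arbitrary and $\Spf(A^{\mathrm h\T})$ is only a twisted form of $\w{\G}{}^{\mathcal Q}_A$ (hence the lemma's name). Consequently your ``more elementary alternative'' fails: the universal-property square \eqref{Equation univ prop of univ QFG} does \emph{not} exhibit $\Spf(A^{\mathrm h\T})\simeq\Spec(A)\times_{\mM}\w{\G}{}^{\mathcal Q}_{\mM}$ unless the action is trivial, and no bookkeeping about compatibility of zero-sections or agreement of two candidate maps (which you flag as the ``hard part'') can substitute for the genuinely nontrivial input. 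That input is the unipotence theorem, Theorem \ref{Thanks, Akhil!}, applied to $R:=A^{\mathrm h\T}$ with its trivial action: it yields $A\simeq A^{\mathrm h\T}\otimes_{R^{\mathrm B\T}}R$, and this is what ``untwists'' the fiber over the zero-section. The paper's proof is organized around exactly this: it factors the square through $\Spec(R)\to\Spf(R^{\mathrm B\T})$, proves the upper square Cartesian from the MNN identification together with a comparison of $\Spf$ against $\Spec$ for the common augmentation-ideal topology, and proves the lower square Cartesian from \eqref{Equation univ prop of univ QFG}.

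Your first route, via Proposition \ref{FF of hT} and Remark \ref{Underlying objects as zero-section pullback}, is legitimate in the paper's logical order and rests on the same unipotence input (through Proposition \ref{T-equiv vs QFG on QCoh}), so it is closer in spirit; but as written it still has a gap at the decisive point. You pass from the algebra-level statement $s^*(\text{pushforward algebra of }\Spf(A^{\mathrm h\T}))\simeq A$ to a Cartesian square of formal stacks by implicitly invoking base-change compatibility of the relative \emph{formal} spectrum along $s$, a statement the paper never establishes (relative formal Spec is only sketched in Remark \ref{Remark T-adic prep}), and this is exactly where the completion subtleties live: one must know that $\mM\times_{\w{\G}{}^{\mathcal Q}_{\mM}}\Spf(A^{\mathrm h\T})$ is computed by the uncompleted base change $A^{\mathrm h\T}\otimes_{R^{\mathrm B\T}}R$, which is what the paper's auxiliary diagrams comparing $\Spf$ with $\Spec$ are for. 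Relatedly, your parenthetical quoting the inverse functor as $N\mapsto N\otimes_{A^{\mathrm h\T}}A$ is the strengthened, nontrivial-action statement (Theorem \ref{Theoremi H} of the introduction), not the body's Theorem \ref{Thanks, Akhil!}, which concerns trivial actions with inverse $N\mapsto N\otimes_{A^{\mathrm B\T}}A$; deriving the strengthened form is essentially the content of this lemma, so you cannot quote it here. To repair the proposal: delete the ``elementary'' alternative, fix the $A^{\mathrm h\T}$ versus $A^{\mathrm B\T}$ conflation, and either supply the base-change statement for formally affine morphisms or argue affine-locally as the paper does.
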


\begin{proof}
If we denote $R=A^{\mathrm h\T}$ -- which is also an even periodic $\E$-ring by Lemma \ref{Lemma hT is even periodic} -- and equip it with the trivial $\T$-action, the universal property of homotopy-fixed points gives rise to a canonical $\T$-equivariant map $R\to A$. From it, we obtain the canonical map to the universal Quillen formal group as
$$
\Spf(A^{\mathrm h\T})\to \Spf(R^{\mathrm h\T})\simeq \Spf(R^{\mathrm B\T})\simeq \w{\G}{}^{\mathcal Q}_R\to \w{\G}{}^{\mathcal Q}_\mM.
$$
It follows from the construction of this canonical map that it appears as the right horizontal edge of the outer square in the commutative diagram
\begin{equation}\label{Pullback diagram, first of many}
\begin{tikzcd}
\Spec(A)\arrow{r}{} \arrow{d}{} &\Spf(A^{\mathrm h\T}) \arrow{d}{} \\
\Spec(R)\arrow{r}{} \arrow{d}{} &\Spf(R^{\mathrm B\T}) \arrow{d}{} \\
\mM \arrow{r}{}& \w{\G}{}^{\mathcal Q}_{\mM},
\end{tikzcd}
\end{equation}
in the $\i$-category $\mathrm{FSpStk}$. We must show that the outer square in this diagram is a pullback square.

Let us show first that the upper square is. For that, consider the commutative diagram
$$
\begin{tikzcd}
\Spec(A)\arrow{r}{} \arrow{d}{} &\Spf(A^{\mathrm h\T}) \arrow{d}{} \arrow{r}{} & \Spec(A^{\mathrm h\T})\arrow{d}{} \\
\Spec(R)\arrow{r}{} &\Spf(R^{\mathrm B\T})  \arrow{r}{} & \Spec(R^{\mathrm B\T})
\end{tikzcd}
$$
in which the right square is Cartesian since both of the formal spectra are equipped with the same adic topology (that is, given by the augmentation ideal). Thus the left square of this second diagram is Cartesian if and only if its outer square is, i.e.\ if the canonical map of $A^{\mathrm h\T}\o_A R\to A$ is an equivalence. That is indeed the case by Theorem  \ref{Thanks, Akhil!}, proving that the upper square of the diagram \eqref{Pullback diagram, first of many} is Cartesian.

It follows by the pasting law of pullbacks that outer square of \eqref{Pullback diagram, first of many} is Cartesian if and only if the lower square is. To verify the latter, we make use of yet another commutative diagram
$$
\begin{tikzcd}
\Spec(R)\arrow{r}{} \arrow{d}{} &\Spf(R^{\mathrm B\T}) \arrow{d}{} \arrow{r} & \Spec(R)\arrow{d}{}\\
\mM \arrow{r}{}& \w{\G}{}^{\mathcal Q}_{\mM}\arrow{r}{} & \mM.
\end{tikzcd}
$$
Its outer square is evidently Cartesian, so the left square being a pullback is equivalent to the right one being. But that follows, after noting that $\Spf(R^{\mathrm B\T})\simeq \w{\G}{}^{\mathcal Q}_R$, from the universal property of the universal Quillen formal group $\w{\G}{}^{\mathcal Q}_\mM$, or more specifically, from the pullback square \eqref{Equation univ prop of univ QFG}.
\end{proof}

At this point we can tackle the heart of the proof of Theorem \ref{Existence of coarse quotients in SAG}: showing that the prescribed functor of affines preserves fpqc covers. In fact, we prove slightly more.

\begin{prop}\label{Prop end of proof of Thm on coarse quotients existing}
The fully faithful embedding $\Spec(A)\mapsto \Spf(A^{\mathrm h\T})$ of Proposition \ref{FF of hT} reflects fpqc covers. Its restriction to the subcategory $(\mathrm{Aff}^\mathrm{evp})^{\mathrm B\T}\subseteq (\mathrm{Aff}^{\mathbf C\mathrm p})^{\mathrm B\T}$ also preserves them.
\end{prop}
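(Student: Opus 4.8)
The plan is to verify the two assertions separately, both relying on the pullback square from Lemma \ref{Lemma on pullbacks of twisted Quillen FGs} together with Proposition \ref{FF of hT}.

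First I would establish that the embedding reflects fpqc covers. Suppose $\Spec(A')\to \Spec(A)$ is a $\mathbf T$-equivariant map of complex periodic affines such that the induced map $\Spf(A'^{\mathrm h\T})\to \Spf(A^{\mathrm h\T})$ of formal affines over $\w{\G}{}^{\mathcal Q}_{\mM}$ is an fpqc cover. I want to conclude that $\Spec(A')\to \Spec(A)$ is an fpqc cover. Using the two pullback squares provided by Lemma \ref{Lemma on pullbacks of twisted Quillen FGs} for $A$ and for $A'$, the zero-section map $\mM \to \w{\G}{}^{\mathcal Q}_{\mM}$ base-changes $\Spf(A^{\mathrm h\T})\to \w{\G}{}^{\mathcal Q}_{\mM}$ to $\Spec(A)\to \mM$, and likewise for $A'$. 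Hence $\Spec(A')\to \Spec(A)$ is obtained from $\Spf(A'^{\mathrm h\T})\to \Spf(A^{\mathrm h\T})$ by the base-change along $\mM\to \w{\G}{}^{\mathcal Q}_{\mM}$ (composed appropriately; one uses the pasting law to see the relevant square is Cartesian). Since faithful flatness and affineness of morphisms are stable under base-change, $\Spec(A')\to \Spec(A)$ is an affine faithfully flat map, i.e.\ an fpqc cover. Concretely one can also argue on homotopy groups: for $A$ even periodic with a $\T$-action, $\pi_*(A^{\mathrm h\T})\cong \pi_*(A)[\![t]\!]$ by the collapsing homotopy fixed-point spectral sequence (Lemma \ref{Lemma hT is even periodic}), and an $I_\mathrm{aug}$-adically faithfully flat map of such rings induces, modulo the augmentation ideal, a faithfully flat map $\pi_*(A)\to\pi_*(A')$; this is the ``reflecting'' direction but phrased via Lemma \ref{Lemma ff comparison}.

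Next I would prove that the restriction to $(\mathrm{Aff}^\mathrm{evp})^{\mathrm B\T}$ preserves fpqc covers. So let $\Spec(A')\to \Spec(A)$ be a $\mathbf T$-equivariant fpqc cover of even periodic affines with $\mathbf T$-action. By Lemma \ref{Lemma hT is even periodic}, both $A^{\mathrm h\T}$ and $A'^{\mathrm h\T}$ are again even periodic. I must show $\Spf(A'^{\mathrm h\T})\to \Spf(A^{\mathrm h\T})$ is an fpqc cover in $\mathrm{FAff}$, which by Definition \ref{Def of adic fpqc} means it is affine and faithfully flat. Affineness is immediate since a map of formal affines is automatically formally affine. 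For faithful flatness I would invoke Lemma \ref{Lemma ff comparison} (with $R=\mathbf S$, or rather the relevant even periodic base, and $I=I_\mathrm{aug}$, which is regular locally principal because $\pi_0(A^{\mathrm h\T})$ is the power series ring over $\pi_0(A)$ in one variable $t$): the map $A^{\mathrm h\T}\to A'^{\mathrm h\T}$ is adically faithfully flat if and only if $\pi_0(A^{\mathrm h\T})/I_\mathrm{aug} \to \pi_0(A'^{\mathrm h\T})/I_\mathrm{aug}$, i.e.\ $\pi_0(A)\to \pi_0(A')$, is faithfully flat. But that last map is faithfully flat precisely because $\Spec(A')\to \Spec(A)$ is an fpqc cover of even periodic affines (faithful flatness of even periodic $\E$-rings in the sense of Definition \ref{List of rings} is exactly faithful flatness on $\pi_0$ together with the automatic graded base-change). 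One also needs to check the bounded $I_\mathrm{aug}$-torsion hypothesis in Lemma \ref{Lemma ff comparison}, which holds because $\pi_n(A^{\mathrm h\T})\cong \pi_n(A)[\![t]\!]$ is $t$-torsion-free, hence has trivial $I_\mathrm{aug}$-power torsion.

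The main obstacle I anticipate is the careful bookkeeping around which square is genuinely Cartesian: one must assemble the two instances of the Lemma \ref{Lemma on pullbacks of twisted Quillen FGs} pullback square (for $A$ and $A'$) and apply the pasting law for pullbacks to extract that $\Spec(A')\to \Spec(A)$ is the base-change of $\Spf(A'^{\mathrm h\T})\to \Spf(A^{\mathrm h\T})$ along the zero-section, and to make sure the adic topologies match up under this base-change (the fibre product of formal affines over $\w{\G}{}^{\mathcal Q}_{\mM}$ carries the expected adic topology). Once that compatibility is nailed down, both the reflection and the preservation statements follow formally from stability of affine faithfully flat maps under base-change, plus the even periodic case of Lemma \ref{Lemma ff comparison} to translate ``$\Spf(A^{\mathrm h\T})\to\Spf(A'^{\mathrm h\T})$ fpqc cover'' into the condition on $\pi_0$.
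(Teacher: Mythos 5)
Your proof is correct and is essentially the paper's own argument: the "reflects" half is verbatim the paper's (pull back along the universal zero-section $\mM\to \w{\G}{}^{\mathcal Q}_{\mM}$ using Lemma \ref{Lemma on pullbacks of twisted Quillen FGs} and stability of affine faithfully flat maps under base-change), and the "preserves" half likewise runs through the $\pi_0$-criterion of Proposition \ref{Lemma ff comparison}. The only divergence is one of precision: where you invoke the collapsing homotopy fixed-point spectral sequence over a vaguely specified ``relevant even periodic base'' and implicitly assume the same generator $t$ cuts out both augmentation ideals, the paper fixes the base as $R^{\mathrm B\T}$ with $R=A^{\mathrm h\T}$, chooses the orientation $t\in\pi_{-2}(R^{\mathrm B\T})$ there, and uses Theorem \ref{Thanks, Akhil!} to get $\pi_*(A^{\mathrm h\T})/t\simeq \pi_*(A)$ and its analogue for the cover, which supplies exactly the compatibility of augmentation-ideal generators (and the regular, bounded-torsion hypotheses) that your sketch leaves implicit.
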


\begin{proof}
Let $A\to B$ be a $\T$-equivariant map of complex periodic $\E$-rings with $\T$-actions.
We must show that it is faithfully flat if and only if the corresponding  map  of homotopy fixed-points $A^{\mathrm h\T}\to B^{\mathrm h\T}$ is adically faithfully flat with respect to the augmentation ideal.

If $A^{\mathrm h\T}\to B^{\mathrm h\T}$ is adically faithfully flat, that means that the map of formal spectra $\Spf(B^{\mathrm h\T})\to \Spf(A^{\mathrm h\T})$ is faithfully flat.
We have seen from Lemma \ref{Lemma on pullbacks of twisted Quillen FGs} that the map of affines $\Spec(B)\to\Spec(A)$ may be obtained from it by pullback along the universal zero-section $\mM\to \w{\G}{}^{\mathcal Q}_{\mM}$. Since faithfully flat maps are closed under pullbacks, we see that $A\to B$ is faithfully flat.

Conversely, suppose that $A\to B$ is faithfully flat, and additionally assume that both $A$ and $B$ are even periodic. 
If we let $R=A^{\mathrm h\T}$, which is even periodic by Lemma \ref{Lemma hT is even periodic}, then $A\to B$ may be viewed as a map of $\E$-algebra objects in $\Mod_R^{\mathrm B\T}$, and $A^{\mathrm h\T}\to B^{\mathrm h\T}$ is a morphism in $\CAlg_{R^{\mathrm B\T}}^\mathrm{cplt}$.
Let $t\in \pi_{-2}(R^{\mathrm B\T})$ be a choice of complex orientation, i.e.\ a choice of generator for the augmentation ideal. Then the equivalence
$
A\simeq A^{\mathrm h\T}\o_{R^{\mathrm B\T}}R$ from Theorem \ref{Thanks, Akhil!} induces (since $t$ is a unit in $\pi_*(R^{\mathrm B\T})$, allowing us to identify the $\i$-categorical and $1$-categorical quotients with respect to it) an equivalence of $\pi_0(R)$-modules
\begin{equation}\label{Quotient by t formula}
\pi_*(A^{\mathrm h\T})\o_{\pi_0(R^{\mathrm B\T})}\pi_0(R) \,\simeq\,\pi_*(A^{\mathrm h\T})/t\,\simeq\, \pi_*(A),
\end{equation}
and likewise for $B$. In particular, the map $\pi_0(A^{\mathrm h\T})\to \pi_0(B^{\mathrm h\T})$ induces a faithfully flat map after base-change along $\pi_0(R^{\mathrm B\T})\to \pi_0(R^{\mathrm B\T})/t\simeq \pi_0(R)$. To verify that $A^{\mathrm h\T}\to B^{\mathrm h\T}$ is adically faithfully flat, it therefore suffices by  Proposition \ref{Lemma ff comparison} to show that the canonical $\pi_0(B^{\mathrm h\T})$-module map
$$\pi_1(A^{\mathrm h\T})\o_{\pi_0(A^{\mathrm h\T})}\pi_0(B^{\mathrm h\T})\to \pi_1(B^{\mathrm h\T})$$
induces an isomorphism upon $t$-completion. Of course, that is equivalent to it inducing an isomorphism upon quotienting out by $t$, which follows from the formula \eqref{Quotient by t formula} and its $B$-variant.
\end{proof}

\begin{proof}[Proof of Theorem \ref{Existence of coarse quotients in SAG}] By Proposition \ref{Prop end of proof of Thm on coarse quotients existing},
the fully faithful functor
$$
\upsilon:(\mathrm{Aff}^\mathrm{evp})^{\mathrm B\T}\to \mathrm{FAff}^\mathrm{faff}_{/\w{\G}{}^{\mathcal Q}_{\mM}}\subseteq \mathrm{FAff}_{/\w{\G}{}^{\mathcal Q}_{\mM}},
$$
given by   $\Spec(A)\mapsto \Spf(A^{\mathrm h\T})$, induces -- by the Definition \ref{Def of even G-stacks} of the $\i$-category of $\T$-equivariant even periodic spectral stacks -- a double adjunction upon the $\i$-categories of accessible fpqc sheaves
$$
\xymatrix{
 \mathrm{SpStk}^\mathrm{evp}_\T \ar@<1.2ex>[rr]^{\upsilon_!\quad} \ar@<-1.2ex>[rr]_{\upsilon_*\quad}&  &
 \mathrm{FSpStk}_{/\w{\G}{}^{\mathcal Q}_{\mM}}.\ar[ll]|-{\upsilon^*}
}
$$
The composite of the left-most adjoint $\upsilon_!$ with the forgetful functor $\mathrm{FSpStk}_{/\w{\G}{}^{\mathcal Q}_{\mM}}\to \mathrm{FSpStk}$ is the desired colimit-preserving extension. It is essentially unique because its domain $\i$-category $\mathrm{SpStk}_\T^\mathrm{evp}$ is generated under colimits by the full subcategory $(\mathrm{Aff}^\mathrm{evp})^{\mathrm B\T}$.
\end{proof}

As a corollary of the proof, we obtain an indication that passage to the coarse quotient loses little information.

\begin{corollary}\label{Coarse quotient is ff}
The coarse quotient functor
$
 \mathrm{SpStk}^\mathrm{evp}_\T \to
 \mathrm{FSpStk}_{/\w{\G}{}^{\mathcal Q}_{\mM}}
 $ is fully faithful. 
\end{corollary}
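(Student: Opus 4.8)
The plan is to deduce Corollary \ref{Coarse quotient is ff} directly from the construction carried out in the proof of Theorem \ref{Existence of coarse quotients in SAG}. Recall that the coarse quotient functor $\mathrm{SpStk}^\mathrm{evp}_\T \to \mathrm{FSpStk}_{/\w{\G}{}^{\mathcal Q}_{\mM}}$ was obtained there as the left adjoint $\upsilon_!$ in the double adjunction induced on accessible fpqc sheaves by the functor of sites
$$
\upsilon:(\mathrm{Aff}^\mathrm{evp})^{\mathrm B\T}\to \mathrm{FAff}^\mathrm{faff}_{/\w{\G}{}^{\mathcal Q}_{\mM}}\subseteq \mathrm{FAff}_{/\w{\G}{}^{\mathcal Q}_{\mM}},\qquad \Spec(A)\mapsto \Spf(A^{\mathrm h\T}).
$$
The standard mechanism here (as used repeatedly in the excerpt, e.g. in Proposition \ref{Even stacks inside stacks} or Proposition \ref{complex periodic stacks}) is that the left adjoint $\upsilon_!$ of such a double adjunction $\upsilon_! \dashv \upsilon^* \dashv \upsilon_*$ coming from a morphism of sites is fully faithful precisely when the unit $\mathrm{id}\to \upsilon^*\upsilon_!$ is an equivalence, and this in turn follows once one knows that $\upsilon$ itself is fully faithful, preserves covers, and satisfies the cover-lifting property — so that no sheafification is lost upon left Kan extending.

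Concretely, first I would invoke Proposition \ref{FF of hT}, which already tells us that $\upsilon$ is fully faithful with essential image the formally affine morphisms into $\w{\G}{}^{\mathcal Q}_{\mM}$. Next, Proposition \ref{Prop end of proof of Thm on coarse quotients existing} supplies the two remaining site-theoretic facts: $\upsilon$ preserves fpqc covers (this is the ``also preserves them'' clause, valid on the subcategory $(\mathrm{Aff}^\mathrm{evp})^{\mathrm B\T}$), and it reflects fpqc covers, which gives the cover-lifting property — a faithfully flat map $\Spf(B)\to \Spf(A^{\mathrm h\T})$ with $\Spf(A^{\mathrm h\T})$ in the image of $\upsilon$ arises from a $\T$-equivariant faithfully flat map $\Spec(A)\to\Spec(B)$ of complex periodic $\E$-rings, and since faithful flatness plus even periodicity is preserved (cf.\ Lemma \ref{Lemma cover preservation and lifting}, Lemma \ref{Lemma hT is even periodic}), the target $B$ is again even periodic. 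Thus $\upsilon$ is a fully faithful, cover-preserving, cover-lifting morphism of sites. Then the formal argument — reduce to corepresentables, on which $\upsilon_!(\Spec(A)\vert_{(\CAlg^\mathrm{evp})^{\mathrm B\T}})\simeq \Spf(A^{\mathrm h\T})$ by construction, and $\upsilon^*\Spf(A^{\mathrm h\T})$ recovers the restricted representable by full faithfulness of $\upsilon$ — shows the unit $\mathrm{id}\to \upsilon^*\upsilon_!$ is an equivalence, hence $\upsilon_!$ is fully faithful. Since the coarse quotient functor is $\upsilon_!$ (the subsequent forgetful functor $\mathrm{FSpStk}_{/\w{\G}{}^{\mathcal Q}_{\mM}}\to\mathrm{FSpStk}$ is only applied to land in $\mathrm{FSpStk}$, but the statement of the corollary keeps the slice), we are done.

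The main obstacle is essentially bookkeeping rather than mathematical depth: one must be careful that the fpqc topology on $(\mathrm{Aff}^\mathrm{evp})^{\mathrm B\T}$ is exactly the restriction along $\upsilon$ of the topology on $\mathrm{FAff}_{/\w{\G}{}^{\mathcal Q}_{\mM}}$ — this is the content of Proposition \ref{Prop end of proof of Thm on coarse quotients existing} together with the fact that covers in a slice $\mathrm{FAff}_{/\w{\G}{}^{\mathcal Q}_{\mM}}$ are detected on underlying formal affines — and that the essential image of $\upsilon$ being a full subcategory closed under the relevant pullbacks is what lets the Čech nerves of covers be computed inside $(\mathrm{Aff}^\mathrm{evp})^{\mathrm B\T}$. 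Once those compatibilities are in place, full faithfulness of $\upsilon_!$ is automatic, so the proof is short.
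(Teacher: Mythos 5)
Your argument is correct and is essentially the paper's own proof: both identify the coarse quotient with $\upsilon_!$, reduce the claim that the unit $\mathrm{id}\to\upsilon^*\upsilon_!$ is an equivalence to the case of $\T$-equivariant even periodic affines (since $\upsilon_!$ and $\upsilon^*$ preserve small colimits and $\mathrm{SpStk}^\mathrm{evp}_\T$ is generated under colimits by such affines), and conclude from the full faithfulness of $\Spec(A)\mapsto\Spf(A^{\mathrm h\T})$ established in Proposition \ref{FF of hT}. Your additional discussion of cover-lifting is harmless but not needed beyond what Proposition \ref{Prop end of proof of Thm on coarse quotients existing} already supplies in setting up the double adjunction in Theorem \ref{Existence of coarse quotients in SAG}.
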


\begin{proof}
In the notation from the proof of Theorem \ref{Existence of coarse quotients in SAG}] above, since $\mX/\!/\T\,\simeq\, \upsilon_!\mX$, we must show that the the unit transformation $\mathrm{id}\to\upsilon^*\upsilon_!$ of the upper adjunction is a natural equivalence. Since both functors $\upsilon^*$ and $\upsilon_!$ are left adjoints and therefore commute with small colimits, and the $\i$-category $\mathrm{SpStk}^\mathrm{evp}_\T$ is by defintion generated under small colimits by $\T$-equivariant even periodic affines, this reduces to showing that the functor $\upsilon_!$ is fully faithful on such affines, which in turn holds by Proposition \ref{FF of hT}.
\end{proof}

\begin{remark}
 Recall that  no information is lost in the passage $\mX\mapsto\mX/\underline{\T}_{\mathbf S}$ from a given $\T$-equivariant spectral stacks $\mX$ to the corresponding quotient stacks $\mX/\underline{\T}_{\mathbf S}$ if the structure map $\mX/\underline{\T}_{\mathbf S}\to \mathrm B\underline{\T}_{\mathbf S}$ is kept track of. Corollary \ref{Coarse quotient is ff} show that the same is true for the passage to the coarse quotient $\mX\mapsto \mX/\!/\T$, so long as we keep track of the structure map $\mX/\!/\T\to \w{\G}{}^{\mathcal Q}_{\mM}$. The  universal Quillen formal group is appearing here in the role of the coarse quotient $\Spec(\mathbf S)^\mathrm{evp}/\!/\T$, see Example \ref{Coarse quotient examples QFG}, in analogy with the classifying stack $\mathrm B\underline{\T}_{\mathbf S}\simeq \Spec(\mathbf S)/\underline{\T}_{\mathbf S}$.
\end{remark}

\begin{exun}\label{Coarse quotient examples QFG}
Let $\Spec(A)$ be an even periodic affine. Its coarse quotient with respect to the trivial $\T$-action is by definition given by
$$
\Spec(A)/\!/\T\,\simeq \, \Spf(A^{\mathrm B\T})\,\simeq\,\w{\G}{}^{\mathcal Q}_A,
$$
coinciding with the Quillen formal group from Definition \ref{Def of QFG}. More generally, the coarse quotient of the trivial $\T$-action on the chromatic base stack recovers the universal Quillen formal group
$
\mM/\!/\T\, \simeq \,\w{\G}{}^{\mathcal Q}_\mM.
$
\end{exun}

In light of the identification of the universal Quillen formal group as the coarse quotient of $\mM$ along the trivial $\T$-action, the globalization of Theorem \ref{Thanks, Akhil!} to the chromatic base stack $\mM$ in
Proposition \ref{T-equiv vs QFG on QCoh} admits an obvious generalization to arbitrary $\T$-equivariant even periodic spectral stacks. It provides another piece of evidence that the passage to the coarse quotient is not wasteful, and that it encodes much of the same information as the $\i$-categorical quotient stack.

\begin{prop}\label{QCoh doesn't care}
Let $\mX$ be a $\T$-equivariant even periodic spectral stack. There is a canonical symmetric monodial equivalence of $\i$-categories
$$
\QCoh(\mX/\underline{\T}_{\mathbf S})\, \simeq \, \mathfrak{QC}\mathrm{oh}(\mX/\!/\T).
$$
\end{prop}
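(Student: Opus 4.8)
The plan is to deduce this from the affine-level statement already at hand, namely Proposition~\ref{T-equiv vs QFG on QCoh}, by a descent argument, in exactly the same spirit as the proof of that proposition but with $\mM$ replaced by a general $\T$-equivariant even periodic spectral stack $\mX$. First I would invoke Proposition~\ref{GEVP as a colimit} (applied with $G=\T$, and using that even periodic $\T$-equivariant stacks are generated under colimits by the affines $(\mathrm{Aff}^\mathrm{evp})^{\mathrm B\T}$) to write $\mX$ as a colimit $\mX\simeq \varinjlim_{\Spec(A)\in (\mathrm{Aff}^\mathrm{evp})^{\mathrm B\T}_{/\mX}}\Spec(A)$ in $\mathrm{SpStk}^\mathrm{evp}_\T\simeq (\mathrm{SpStk}^\mathrm{evp})^{\mathrm B\T}$, where each $A$ is an even periodic $\E$-ring with a $\T$-action. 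The quotient stack functor $\mX\mapsto \mX/\underline{\T}_{\mathbf S}$ preserves colimits, so the left-hand side becomes $\varinjlim_A \Spec(A)/\underline{\T}_{\mathbf S}$, and since $\QCoh$ takes colimits of spectral stacks to limits of $\i$-categories (Proposition~\ref{Def of QCoh}) together with the identification $\QCoh(\Spec(A)/\underline{\T}_{\mathbf S})\simeq \Mod_A^{\mathrm B\T}$ (descent along $\Spec(A)\to \Spec(A)/\underline{\T}_{\mathbf S}$, i.e.\ the bar resolution), we get $\QCoh(\mX/\underline{\T}_{\mathbf S})\simeq \varprojlim_A \Mod_A^{\mathrm B\T}$.

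On the other side, by definition the coarse quotient functor sends $\mX$ to $\varinjlim_A \Spf(A^{\mathrm h\T})$ computed in $\mathrm{FSpStk}_{/\w{\G}{}^{\mathcal Q}_{\mM}}$ (Theorem~\ref{Existence of coarse quotients in SAG} and the remark following it). All the adic $\E$-rings $A^{\mathrm h\T}$ are even periodic by Lemma~\ref{Lemma hT is even periodic}, hence complex periodic, so the colimit lands in $\mathrm{FSpStk}^{\mathbf C\mathrm p}$; applying $\mathfrak{QC}\mathrm{oh}$ and using the fpqc descent for complete modules over complex periodic adic $\E$-rings from Theorem~\ref{Descent for complex periodic formal QCoh} (which guarantees $\mathfrak{QC}\mathrm{oh}$ takes these colimits to limits), together with $\mathfrak{QC}\mathrm{oh}(\Spf(A^{\mathrm h\T}))\simeq \Mod_{A^{\mathrm h\T}}^\mathrm{cplt}$, yields $\mathfrak{QC}\mathrm{oh}(\mX/\!/\T)\simeq \varprojlim_A \Mod_{A^{\mathrm h\T}}^\mathrm{cplt}$. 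Both sides are therefore limits over the \emph{same} indexing $\i$-category $(\mathrm{Aff}^\mathrm{evp})^{\mathrm B\T}_{/\mX}$ of two functors into $\CAlg(\mathrm{Pr^L})$, valued respectively at $\Spec(A)\mapsto \Mod_A^{\mathrm B\T}$ and $\Spec(A)\mapsto \Mod_{A^{\mathrm h\T}}^\mathrm{cplt}$, so it suffices to produce a natural symmetric monoidal equivalence between these two functors. This is precisely provided by the homotopy fixed-point functor $M\mapsto M^{\mathrm h\T}$ via Theorem~\ref{Thanks, Akhil!} (Mathew--Naumann--Noel), and the naturality in $A$ was already checked in the proof of Proposition~\ref{T-equiv vs QFG on QCoh}: for a $\T$-equivariant map $A\to B$ of even periodic $\E$-rings and an $A$-module $M$ with $\T$-action, symmetric monoidality of $(-)^{\mathrm h\T}$ gives canonical equivalences $(M\o_A B)^{\mathrm h\T}\simeq M^{\mathrm h\T}\,\widehat{\o}_{A^{\mathrm h\T}}\,B^{\mathrm h\T}$. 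Passing to the limit then gives the desired equivalence $\QCoh(\mX/\underline{\T}_{\mathbf S})\simeq \mathfrak{QC}\mathrm{oh}(\mX/\!/\T)$, which is symmetric monoidal since it is a limit of symmetric monoidal equivalences.

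The main subtlety — what I expect to be the chief point requiring care rather than a genuine obstacle — is bookkeeping with the indexing category and making sure the two colimit presentations are taken over literally the same diagram, so that the limits of $\i$-categories match up term by term. One has to be careful that the colimit defining $\mX/\!/\T$ is indexed by $(\mathrm{Aff}^\mathrm{evp})^{\mathrm B\T}_{/\mX}$ in exactly the sense of Proposition~\ref{GEVP as a colimit} (via $\mathrm{Coind}_\mathrm{aff}$), and that $\mX\in \mathrm{SpStk}^\mathrm{evp}_\T$ is already even periodic so no further even periodization is needed. A secondary point is that the limit in the descent argument for $\mathfrak{QC}\mathrm{oh}$ is, strictly speaking, indexed over an accessible rather than a small $\i$-category; as in the footnote to the proof of Proposition~\ref{T-equiv vs QFG on QCoh}, this can be circumvented by replacing the diagram with a small cofinal one (e.g.\ a cobar/\v{C}ech-type cosimplicial replacement using a $\T$-equivariant periodically eff atlas when one exists, and in general by the standard argument that $\mathrm{SpStk}^\mathrm{evp}_\T$ is generated under small colimits by $(\mathrm{Aff}^\mathrm{evp})^{\mathrm B\T}$), and is purely a set-theoretic formality with no mathematical content.
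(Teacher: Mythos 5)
Your proposal is correct and follows essentially the same route as the paper: write $\mX$ as a small colimit of $\T$-equivariant even periodic affines, identify $\QCoh(\mX/\underline{\T}_{\mathbf S})$ and $\mathfrak{QC}\mathrm{oh}(\mX/\!/\T)$ as the corresponding limits of $\Mod_{A}^{\mathrm B\T}$ and $\Mod_{A^{\mathrm h\T}}^{\mathrm{cplt}}$ respectively, and conclude via the natural equivalence of these two functors already established in the proof of Proposition \ref{T-equiv vs QFG on QCoh}. The extra bookkeeping you flag (matching indexing diagrams, smallness of the limit) is handled the same way in the paper, so there is nothing to add.
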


\begin{proof}
By the definition of the $\i$-category $\mathrm{SpStk}_{\T}^\mathrm{evp}$, we may write $\mX$ as a small colimit of $\T$-equivariant even periodic affines $\mX\simeq \varinjlim_i \Spec(A_i)$. The left and right hand sides from the equivalence in the statement of the Proposition may respectively be identified as the limits of presentably symmetric monoidal $\i$-categories
$$
\QCoh(\mX/\underline{\T}_{\mathbf S})\, \simeq \, \varprojlim_i \Mod_{A_i}^{\mathrm B\T}, \qquad\quad \mathfrak{QC}\mathrm{oh}(\mX/\!/\T)\,\simeq \,\varprojlim_i \Mod_{A^{\mathrm h\T}_i}^\mathrm{cplt}.
$$
Since we have shown  in the proof of Proposition \ref{T-equiv vs QFG on QCoh}  that the two relevant functors
$\CAlg^\mathrm{evp}\to\CAlg(\mathrm{Pr^L})$, given respectively by $A\mapsto \Mod_A^{\mathrm B\T}$ and $A\mapsto \Mod_{A^{\mathrm B\T}}^\mathrm{cplt}$, are naturally equivalent, the claim follows.
\end{proof}

\begin{remark}\label{If only special fiber}
As consequence of the description of the essential image in Proposition \ref{FF of hT}, we find that the coarse quotient more precisely takes values in the $\i$-category $\mathrm{FSpStk}^\mathrm{ad}_{/\w{\G}{}^{\mathcal Q}_{\mM}}$ of adic formal spectral stacks over the universal Quillen formal group, in the sense of Remark \ref{Remark T-adic prep}. In fact, we conjecture that Theorem \ref{Existence of coarse quotients in SAG}, which is to say the construction of the coarse quotient $\mX\mapsto \mX/\!/\T$, extends to $\T$-equivariant \textit{complex periodic} spectral stacks as well, and provides an equivalence of $\i$-categories
$$
(\mathrm{SpStk}^{\mathbf C\mathrm p})^{\mathrm B\T}\, \simeq \, \mathrm{FSpStk}^\mathrm{ad}_{/\w{\G}{}^{\mathcal Q}_{\mM}}.
$$
Under a formal analogy between the $\i$-category $\mathrm{FSpStk}^\mathrm{ad}_{/\w{\G}{}^{\mathcal Q}_{\mM}}$, which we might refer to as the \textit{$\T$-adic formal setting}, and the more familiar setting of $p$-adic formal geometry, the universal zero-section $\mM\to \w{\G}{}^{\mathcal Q}_{\mM}$ corresponds to the special point $\Spec(\mathbf F_p)\to \Spf(\mathbf Z_p)$. Recall that a $p$-adic formal scheme $X$ may be viewed as a mixed-characteristic thickening of its \textit{special fiber} $X_0$, a scheme over $\mathbf F_p$ defined by a pullback square
$$\begin{tikzcd}
X_0\arrow{r}{}\arrow{d}{} & X\arrow{d}{}\\
\Spec(\mathbf F_p) \arrow{r}{} & \Spf(\Z_p).
\end{tikzcd}$$
Likewise in the $\T$-adic setting, we have for any $\T$-equivariant even periodic spectral stack $\mX$ a pullback of formal spectral stacks
$$
\begin{tikzcd}
\mX \arrow{r}{} \arrow{d}{} & \mX/\!/\T \arrow{d}{} \\
\mM \arrow{r}{} & \w{\G}{}^{\mathcal Q}_\mM.
\end{tikzcd}
$$
Extracting the underlying even periodic stack $\mX$ from its coarse quotient $\mX/\!/\T$ may thus be seen as the $\T$-adic version of the special fiber. Perhaps in some world of analytic spectral stacks, we expect there should also exist a ``generic point" $\{\eta\}\to \w{\G}{}^{\mathcal Q}_{\mM}$, for which any even periodic $\E$-ring $A$ with a $\T$-action would fit into a diagram of two pullback squares
$$
\begin{tikzcd}
\Spec(A) \arrow{r}{} \arrow{d}{} & \Spf(A^{\mathrm h\T}) \arrow{d}{} &  \mathrm{Spa}(A^{\mathrm t\T})\arrow{l}{} \arrow{d}{} \\
\mM \arrow{r}{} & \w{\G}{}^{\mathcal Q}_\mM & \{\eta\}\arrow{l}{},
\end{tikzcd}
$$
where $A^{\mathrm t\T}$ denotes the Tate construction.
But it is not quite clear how to set up such a generic point, nor even what precise kind of analytic geometry to adopt, and
we do not pursue this any further in this paper.
\end{remark}

Equipped with the coarse quotient, as constructed in the last subsection, we can finally pin-point a spectral-algebro-geometric origin to the homotopy fixed-point variant of the even filtration from \cite{HRW}, akin to Proposition \ref{Voila the even filtration}.

\begin{prop}\label{Voila the even filtration on hT}
Let $A$ be an $\E$-ring with a $\T$-action. Let $\mathfrak X:=\Spec(A)^{\mathrm{evp}_\T}/\!/\T$ denote the coarse quotient of the $\T$-equivariant even periodization. There is a canonical equivalence of filtered $\E$-rings
$$
\Gamma(\mathfrak X^\mathrm{cn};\, \tau_{\ge 2*}(\mathfrak O_{\mathfrak X}))\, \simeq \, \mathrm{fil}^{\ge *}_{\mathrm{ev}, \mathrm h\T}(A).
$$
\end{prop}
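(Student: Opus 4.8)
The plan is to unwind both sides and reduce to a limit over $\T$-equivariant even periodic $\E$-algebras. By Proposition \ref{GEVP as a colimit} (in its $\T$-equivariant version, $G = \T$), the $\T$-equivariant even periodization is the colimit
$$
\Spec(A)^{\mathrm{evp}_\T}\,\simeq\, \varinjlim_{\Spec(B)\in(\mathrm{Aff}^\mathrm{evp})^{\mathrm B\T}_{/\Spec(A)}}\Spec(B),
$$
taken in $\mathrm{SpStk}^{\mathrm B\T}$, where the indexing $\i$-category is equivalently $((\CAlg^\mathrm{evp})^{\mathrm B\T}_{A/})^\mathrm{op}$. Since the coarse quotient functor $\mX\mapsto\mX/\!/\T$ of Theorem \ref{Existence of coarse quotients in SAG} preserves small colimits and sends $\Spec(B)\mapsto\Spf(B^{\mathrm h\T})$, we obtain
$$
\mathfrak X\,=\,\Spec(A)^{\mathrm{evp}_\T}/\!/\T\,\simeq\,\varinjlim_{B\in(\CAlg^\mathrm{evp})^{\mathrm B\T}_{A/}}\Spf(B^{\mathrm h\T})
$$
in $\mathrm{FSpStk}$.

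Next I would apply the formal-spectral analogue of the identification \eqref{Postnikov filtration on functions formula} from Remark \ref{Postnikov filtration on functions}: since $\mathfrak O :\mathrm{FSpStk}^\mathrm{op}\to\CAlg$ sends colimits to limits and $\mathfrak O(\Spf(C))\simeq C$ for an adic $\E$-ring $C$, and since the sheaf-level Postnikov filtration $\tau_{\ge 2*}$ on $\mathfrak{QC}\mathrm{oh}$ is defined by descent (Corollary \ref{Acn theorem QCoh} guarantees it agrees with the one coming from $\QCoh(\mathfrak X^\mathrm{cn})$ in the relevant range, and limits in $\mathrm{FilCAlg}$ are computed objectwise), we get
$$
\Gamma(\mathfrak X^\mathrm{cn};\, \tau_{\ge 2*}(\mathfrak O_{\mathfrak X}))\,\simeq\, \varprojlim_{B\in(\CAlg^\mathrm{evp})^{\mathrm B\T}_{A/}}\tau_{\ge 2*}(B^{\mathrm h\T}).
$$
Here it is essential to use the $I_\mathrm{aug}$-adic topology on each $B^{\mathrm h\T}$, as fixed in the conventions of this subsection, so that $\Spf(B^{\mathrm h\T})$ is the correct formal affine; I would also note that $B^{\mathrm h\T}$ is even periodic by Lemma \ref{Lemma hT is even periodic}, so its double-speed Postnikov tower behaves as expected.

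It then remains to match the right-hand side with $\mathrm{fil}^{\ge *}_{\mathrm{ev}, \mathrm h\T}(A) = \varprojlim_{B\in(\CAlg^\mathrm{ev})^{\mathrm B\T}_{A/}}\tau_{\ge 2*}(B^{\mathrm h\T})$ from \cite[Construction 2.1.14]{HRW}; that is, the limit is unaffected by restricting the indexing category along the inclusion $(\CAlg^\mathrm{evp})^{\mathrm B\T}_{A/}\subseteq(\CAlg^\mathrm{ev})^{\mathrm B\T}_{A/}$. This is the one genuinely nontrivial point, and I expect it to be the main obstacle. The argument should parallel the proof of Proposition \ref{Even filtration is even periodic filtration}: base-change the $\T$-equivariant diagram along the ($\T$-trivially-equivariant) eff map $A\to A\otimes_{\mathbf S}\mathrm{MUP}$, which is periodically eff, use the $\T$-equivariant eff-descent for the filtration $B\mapsto\tau_{\ge 2*}(B^{\mathrm h\T})$ — i.e.\ the homotopy-fixed-point variant of Theorem \ref{Eff descent citation result} from \cite[Corollary 2.2.17]{HRW}, whose proof goes through verbatim with the cotensor by $\mathrm B\T$ inserted — to rewrite both limits over the cobar resolution $A\otimes_{\mathbf S}\mathrm{MUP}^\bullet$, and then observe (as in Lemma \ref{Key property}) that any even $\E$-algebra under a complex periodic $\E$-ring $A\otimes_{\mathbf S}\mathrm{MUP}^\bullet$ is automatically even periodic, so that the two indexing categories $(\CAlg^\mathrm{ev})^{\mathrm B\T}_{A\otimes_{\mathbf S}\mathrm{MUP}^\bullet/}$ and $(\CAlg^\mathrm{evp})^{\mathrm B\T}_{A\otimes_{\mathbf S}\mathrm{MUP}^\bullet/}$ coincide. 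Chasing the canonical comparison map through these identifications, exactly as in Proposition \ref{Even filtration is even periodic filtration}, then shows it is an equivalence, completing the proof. A minor subtlety to check along the way is that the $I_\mathrm{aug}$-adic topologies are compatible with this base-change, which follows since $(B\otimes_{\mathbf S}\mathrm{MUP})^{\mathrm h\T}$ has augmentation ideal generated by that of $B^{\mathrm h\T}$, a consequence of the collapse of the homotopy fixed-point spectral sequence in the even case.
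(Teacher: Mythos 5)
Your proposal is correct and follows essentially the same route as the paper's proof: express $\Spec(A)^{\mathrm{evp}_\T}$ via the colimit formula of Proposition \ref{GEVP as a colimit}, push it through the colimit-preserving coarse quotient of Theorem \ref{Existence of coarse quotients in SAG}, identify the Postnikov filtration of $\mathfrak O_{\mathfrak X}$ with $\varprojlim_{B}\tau_{\ge 2*}(B^{\mathrm h\T})$, and then use $\T$-equivariant eff descent along $A\to A\otimes_{\mathbf S}\mathrm{MUP}$ together with the argument of Proposition \ref{Even filtration is even periodic filtration} to pass between the even and even periodic indexing categories. The extra checks you flag (even periodicity of $B^{\mathrm h\T}$, compatibility of the augmentation-ideal topologies) are harmless elaborations of steps the paper leaves implicit.
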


\begin{proof}
Specializing Proposition \ref{GEVP as a colimit} for $G=\T$ to the affine case, we get the $\T$-equivariant even periodization $\Spec(A)^{\mathrm{evp}_\T}$ expressed as the colimit
$$
\Spec(A)^{\mathrm{evp}_\T}\,\, \simeq \varinjlim_{B\in (\CAlg^\mathrm{evp})^{\mathrm B\T}_{A/}}\Spec(B).
$$
By Theorem \ref{Existence of coarse quotients in SAG}, the coarse quotient functor $\mathfrak Y\mapsto \mathfrak Y/\!/\T$ commutes with small colimits. It follows (to be precise, after replacing the relevant colimits by equivalent ones with small indexing categories by use of accessibility), we obtain the expression
$$
\mathfrak X\,\, \simeq \varinjlim_{B\in (\CAlg^\mathrm{evp})^{\mathrm B\T}_{A/}}\Spf(B^{\mathrm h\T}).
$$
By the formal spectral algebraic geometry variant of the identification \eqref{Postnikov filtration on functions formula} from Remark \ref{Postnikov filtration on functions}, we find the Postnikov tower of the structure sheaf $\mathfrak O_{\mathfrak X}$ in $\mathfrak{QC}\mathrm{oh}(\mathfrak X^\mathrm{cn})$ given by
\begin{equation}\label{Limit formula for Postnikov tower on //T}
\Gamma(\mathfrak X^\mathrm{cn};\, \tau_{\ge 2*}(\mathfrak O_{\mathfrak X}))\,\simeq \varprojlim_{B\in (\CAlg^\mathrm{evp})^{\mathrm B\T}_{A/}}\tau_{\ge 2*}(B^{\mathrm h\T}).
\end{equation}
Contrast this with the $\mathrm \T$-equivariant homotopy fixed-point version of the even filtration
from \cite[Construction 2.1.14]{HRW}, which may be described as the limit
\begin{equation}\label{Even filtration on hT}
\mathrm{fil}^{\ge *}_{\mathrm{ev}, \mathrm h\mathbf T}(A) \,:=\varprojlim_{B\in (\CAlg^\mathrm{ev})_{A/}^{\mathrm B\mathbf T}} \tau_{\ge 2*}(B^{\mathrm h\mathbf T}).
\end{equation}
By \cite[Corollary 2.2.17, (2)]{HRW}, the functor $A\mapsto \mathrm{fil}^{\ge *}_{\mathrm{ev}, \mathrm h\T}(A)$ satisfies a $\T$-equivariant version of eff descent, completely analogous to the non-equivariant version recalled above as Proposition \ref{Eff descent citation result}.
Noting that $A\to A\otimes_\mathbf S \mathrm{MUP}$ is an eff cover, as well as $\T$-equivariant for any $A\in \CAlg^{\mathrm B\T}$, we can now repeat the same argument as we used in Proposition  \ref{Even filtration is even periodic filtration} to show that the $\T$-equivariant even filtration \eqref{Even filtration on hT} remains unchanged if pass in the index of the limit to the subcategory $(\CAlg^\mathrm{evp})^{\mathrm B\T}_{A/}\subseteq(\CAlg^{\mathrm{ev}})^{\mathrm B\T}_{/A}$.
This results precisely in the right-hand side of the Postnikov tower \eqref{Limit formula for Postnikov tower on //T}, and since the comparison map between the two is clearly a map of filtered $\E$-rings, this completes the proof.
\end{proof}

\begin{remark}
A version of the construction $A\mapsto \Spec(A)^\mathrm{evp}_{\T}/\!/\T$ appeared in \cite{Arpon's talk}, reporting on joint work with Devalapurkar, Hahn, and Yuan. Defined there in terms of even $\E$-rings, as opposed to even periodic ones, their $\Pi(A)^{\mathbf T}$ is precisely the underlying classical formal stack 
$(\mathrm{Spec}(A)^{\mathrm{evp}_\T}/\!/\T)^\heart$.
\end{remark}

\begin{remark}
What makes
Proposition \ref{Voila the even filtration on hT} work is that the symmetric monoidal equivalence of $\i$-categories of Proposition \ref{QCoh doesn't care} fails to be $t$-exact --  see Remark \ref{Remark inequivalent Postnikov towers}. Indeed, if we tried using the quotient stack $\mX:=\Spec(A)^\mathrm{evp}/\underline{\T}_{\mathbf S}$ instead of the coarse quotient, the Postnikov filtration in $\QCoh(\mX^\mathrm{cn})$ of the structure sheaf would give rise to the filtered $\E$-ring
$$
\Gamma(\mX^\mathrm{cn};\, \tau_{\ge 2*}(\mO_{\mX}))\,\varprojlim_{B\in (\CAlg^\mathrm{evp})^{\mathrm B\T}_{A/}}\tau_{\ge 2*}(B)^{\mathrm h\T}.
$$
In comparison with \eqref{Limit formula for Postnikov tower on //T}, we see that $\tau_{2*}$ and the homotopy fixed-points have switched places, and we in particular do not obtain the desired filtration.
\end{remark}

\begin{exun}
Let $k\to A$ be a chromatically quasi-lci map of connective $\E$-rings in the sense\footnote{This notion is actually very natural from the perspective of this paper as well.
A map of $\E$-rings $A\to B$ is chromatically quasi-lci precisely when the induced map $\Spec(A)\times\mM\to \Spec(B)\times \mM$ is a map of even spectral stacks, and the map of underlying classical stacks 
$(\Spec(A)\times \mM)^\heart\to (\Spec(B)\times \mM)^\heart$ is quasi-lci, i.e.\ its algebraic cotangent complex has Tor-amplitude in $[0, 1]$. It in particular follows from \eqref{EVPL vs CPL} that in this case $\Spec(A)^\mathrm{evp}\simeq \Spec(A)\times \mM$ and similarly for $B$.} of \cite[Definition 4.1.11]{HRW}. Then if $\mX =\Spec(\mathrm{THH}(A/k)^{\mathrm{evp}_\T})$, the formal spectral stack $\mX/\!/\T$ satisfies $\mathfrak O(\mX/\!/\T)\simeq \mathrm{TC}^-(A/k)$ and there is an quivalence of filtered $\E$-rings
$$
\Gamma((\mX/\!/\T)^\mathrm{cn};\, \tau_{\ge 2*}(\mathfrak  O_{\mX/\!/\T}))\,\simeq \, \mathrm{fil}_{\mathrm{ev}, \mathrm h\T}^{\ge *}(\mathrm{THH}(A/k))\, \simeq \,\mathrm{fil}_\mathrm{mot}^{\ge *}(\mathrm{TC}^-(A/k))
$$
with the motivic filtration on negative topological cyclic homology of $A$ over $k$. Indeed, the latter is defined in \cite[Definition 4.2.1]{HRW} to be this special case of the even filtration. Similarly the motivic filtration on topological Hochschild homology itself is
$$
\Gamma(\mX^\mathrm{cn};\, \tau_{\ge 2*}(\mathfrak  O_{\mX}))\, \simeq \, \mathrm{fil}^{\ge *}_\mathrm{ev}(\mathrm{THH}(A/k))\, \simeq \,\mathrm{fil}_\mathrm{mot}^{\ge *}(\mathrm{THH}(A/k)).
$$
\end{exun}

\begin{remark}\label{Remark TP}
If we had a theory of the special fiber $\eta$ as indicated in Remark \ref{If only special fiber}, we could likewise obtain the motivic filtration on $\mathrm{TP}(A/k)$ by taking the Postnikov filtration in quasi-coherent sheaves on the generic fiber $\Spf(\mathrm{TC}^-(A/k))_\eta\simeq \mathrm{Spa}(\mathrm{TP}(A/k))$.
\end{remark}

\subsection{Connection to prismatization}
Prismatic cohomology is some sense precedes the introduction of the algebraic notion of  a \textit{prism} and the corresponding prismatic site in \cite{BS22}. Though it did not yet bear the prismatic name, the notation $\widehat{\Prism}_R$ already appears in \cite{BMS2}. There, Bhatt-Morrow-Scholze obtain it from topological Hochschild homology, showcasing a striking example of how working over the sphere spectrum $\mathbf S$ can lead to interesting arithmetic results on the classical level. In retrospect, their fundamental result may be summarized as follows, where here and throughout in this section, quasi-regular semiperfectoid rings are always taken to be classical and in the $p$-complete setting.

\begin{theorem}[{\cite[Theorem 1.12]{BMS2}}]\label{BMS result}
Let $R$ be a  quasi-regular semiperfectoid ring. Then $\mathrm{TC}(R)^\wedge_p$ is an even $\E$-ring, and
there is a canonical and natural isomorphism of graded rings
$$
\pi_{2*}(\mathrm{TC}^-(R)^\wedge_p)\, \simeq \,\mathrm{Fil}^*_{\mathcal N}\,\widehat{\Prism}_R\{*\},
$$
on the right-hand side of which is found the Breuil-Kisin-twisted  Nygaard filtration on the Nygaard-complete prismatic cohomology of $R$.
\end{theorem}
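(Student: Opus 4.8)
The plan is to deduce Theorem \ref{BMS result} (which I will call the restated Bhatt-Morrow-Scholze identification) by combining the input result on topological Hochschild homology from \cite{BMS2} with the definition of $\widehat{\Prism}_R$ via $\mathrm{TC}^-$ given there. The key point is that the even $\E$-ring structure on $\mathrm{TC}^-(R)^\wedge_p$, for $R$ quasiregular semiperfectoid, is \emph{already} a consequence of the evenness of $\mathrm{THH}(R)^\wedge_p$ together with the homotopy fixed-point spectral sequence. First I would recall from \cite{BMS2} that for such $R$ the $p$-complete topological Hochschild homology $\mathrm{THH}(R)^\wedge_p$ is concentrated in even degrees (this is the genuinely hard input of \textit{loc.\,cit.}, and I would simply cite it). Then the $\T$-homotopy fixed-point spectral sequence
$$
E_2^{s,t} \;=\; \mathrm H^{-s}(\mathrm B\T;\, \pi_t(\mathrm{THH}(R)^\wedge_p)) \;\Longrightarrow\; \pi_{s+t}(\mathrm{TC}^-(R)^\wedge_p)
$$
has $E_2$-page concentrated in even total degree, hence degenerates for parity reasons, yielding that $\mathrm{TC}^-(R)^\wedge_p$ is even. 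Moreover the filtration by powers of the polynomial generator of $\mathrm H^*(\mathrm B\T)$ gives a complete, exhaustive filtration on $\pi_{2*}(\mathrm{TC}^-(R)^\wedge_p)$, and this is precisely the Nygaard filtration once one identifies $\pi_{2*}$ with $\widehat{\Prism}_R$.

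Second, I would invoke the definition of prismatic cohomology adopted in \cite{BMS2}: for $R$ quasiregular semiperfectoid, $\widehat{\Prism}_R$ is \emph{defined} (up to the Breuil--Kisin twist bookkeeping) as $\pi_0$ of the $p$-completed $\mathrm{TC}^-$ after inverting the appropriate periodicity element, or equivalently as the associated graded-free piece; the Nygaard filtration $\mathrm{Fil}^\bullet_{\mathcal N}$ is defined as the image of the $\mathrm{TC}^-$-filtration, and the twist $\widehat{\Prism}_R\{n\}$ records the $n$-th Breuil--Kisin twist, which in topological terms is $\pi_{2n}$ of $\mathrm{TC}^-$ viewed as a module over $\pi_0$. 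With these definitions the isomorphism $\pi_{2*}(\mathrm{TC}^-(R)^\wedge_p) \simeq \mathrm{Fil}^*_{\mathcal N}\,\widehat{\Prism}_R\{*\}$ is close to tautological; the content is the compatibility of this abstract definition with the site-theoretic prismatic cohomology of \cite{BS22}, but that compatibility is established in \cite{BMS2} (and re-proven in \cite{BS22}) and I would cite it rather than reprove it. Naturality in $R$ follows from functoriality of $\mathrm{THH}$, $\mathrm{TC}^-$, and the homotopy fixed-point spectral sequence; the multiplicativity of the isomorphism as a map of graded rings follows from the fact that all the constructions involved ($\mathrm{THH}$, $\mathrm{TC}^-$, the HFPSS, and the passage to $\pi_{2*}$ of an even $\E$-ring) are lax symmetric monoidal.

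The main obstacle — and the reason this is properly a \emph{citation} rather than a self-contained argument in this paper — is the evenness of $\mathrm{THH}(R)^\wedge_p$ for quasiregular semiperfectoid $R$. This is a substantial computation in \cite{BMS2} resting on the theory of relative $\mathrm{THH}$ over $\mathrm{THH}(\mathbf Z_p)$, the flatness of $\pi_*\mathrm{THH}(R;\mathbf Z_p)$ over $\pi_*\mathrm{THH}(\mathbf Z_p;\mathbf Z_p)$, and ultimately the structure of the cotangent complex of $R$ over its tilt. In the present paper this input is used only as a black box: the role of Theorem \ref{BMS result} here is motivational, serving to situate Theorem \ref{Theoremi Nygaard} as a globalization of the $\mathrm{TC}^-$-description of Nygaard-complete prismatic cohomology. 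Accordingly I would present the proof as a short synthesis: (i) cite evenness of $\mathrm{THH}(R)^\wedge_p$; (ii) run the HFPSS to get evenness of $\mathrm{TC}^-(R)^\wedge_p$ and its filtration; (iii) identify $\pi_{2*}$ with the Breuil--Kisin-twisted Nygaard-filtered $\widehat{\Prism}_R$ by the definition in \cite{BMS2}; (iv) note functoriality and multiplicativity. No genuinely new mathematics is required beyond organizing these steps.
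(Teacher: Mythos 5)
This statement appears in the paper purely as a citation of \cite[Theorem 1.12]{BMS2} — no proof is given or intended — and your treatment of it as a black-box citation, together with your sketch of how \textit{loc.\,cit.}\ establishes it (evenness of $\mathrm{THH}(R)^\wedge_p$ as the hard input, parity degeneration of the $\T$-homotopy fixed-point spectral sequence to get evenness of $\mathrm{TC}^-(R)^\wedge_p$ and its filtration, and the essentially definitional identification of $\pi_{2*}$ with the Breuil--Kisin-twisted Nygaard filtration, plus lax symmetric monoidality for multiplicativity and naturality), matches both the paper's usage and the actual structure of the argument in \cite{BMS2}. The only small imprecision is in step (iii): in \cite{BMS2} it is $\pi_0(\mathrm{TP}(R)^\wedge_p)$ (i.e.\ after inverting the periodicity class, not $\mathrm{TC}^-$ itself) that plays the role of $\widehat{\Prism}_R$, with $\pi_{2n}(\mathrm{TC}^-(R)^\wedge_p)$ supplying the Nygaard pieces $\mathrm{Fil}^n_{\mathcal N}\widehat{\Prism}_R\{n\}$, but since you are citing rather than reproving the result this does not affect your conclusion.
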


Our goal for the rest of this subsection is to globalize this result, away from both the quasi-regular semiperfectoid and affineness assumptions. We will show how the geometry of the free loop space in spectral algebraic geometry, together with even periodization, gives rise to a natural even periodic enhancement of a prismatization stack, which geometrically encodes prismatic cohomology.

Such prismatization stacks are the mixed-characteristic analogues, relating to prismatic cohomology and $p$-adic Hodge theory, of the better-known \textit{de Rham stack} $X^\mathrm{dR}$ of Simpson, which in characteristic zero possesses the same relationship with de Rham cohomology and Hodge theory.
They were defined, in a number of variants $X^{\mathbbl{\Delta}}, X^{\mathcal N}, X^\mathrm{Syn}$,  by Bhatt-Lurie and Drinfeld, see \cite{Drinfeld}, \cite{BLb}, \cite{Bhatt F-gauges}. In this paper, we are mostly concerned with the middle one. 

The \textit{filtered prismatization} is a functor $X\mapsto X^{\mathcal N}$ from $p$-adic formal schemes to formal stacks, defined in \cite{Bhatt F-gauges} via transmutation. For a quasi-syntomic $p$-adic formal scheme $X$, it may by \cite[Theorem 5.5.10, Remarks 5.5.16 \& 5.5.18]{Bhatt F-gauges} also be expressed in terms of the prismatic cohomology of quasi-regular semiperfectoid rings as the colimit (of classical formal stacks)
\begin{equation}\label{Bhatt's formula}
X{}^{\mathcal N} \simeq \varinjlim_{R\in X_\mathrm{qrsp}}\Spf\big(\bigoplus_{n\in \mathbf Z}\operatorname{Fil}^n_{\mathcal N}\Prism_R\{n\}\big)/\mathbf G_m,
\end{equation}
over the category $X_\mathrm{qrsp}$ of (classical, i.e.\ static) quasi-regular semiperfectoid rings $R$  with a map $\Spec(R)\to X$.
We need to consider a slight variant of this construction:

\begin{definition}\label{Def of X^hat N}
The \textit{Nygaard-complete prismatization} of a quasi-syntomic $p$-adic formal scheme $X$ is the classical formal stack given by
$$
X{}^{\widehat{\mathcal N}} \simeq \varinjlim_{R\in X_\mathrm{qrsp}}\Spf\big(\bigoplus_{n\in \mathbf Z}\operatorname{Fil}^n_{\mathcal N}\,\widehat{\Prism}_R\{n\}\big)/\mathbf G_m.
$$
\end{definition}

\begin{remark}\label{Remark simpler presentation for Nygaardization}
According to \cite[Definition 8.12]{BLb}, a classical $p$-adic formal scheme $X$ is quasi-syntomic if and only precisely when each  affine open subscheme   of $X$ admits a cover by a quasi-regular semiperfectoid affine in the quasi-syntomic topology.
As consequence, a choice of Zariski cover by affines on $X$, and of quasi-regular semiperfectoid covers of said affines, allows us to replace the colimit appearing in the formula \ref{Bhatt's formula} for $X^{\mathcal N}$ with a much smaller and more convenient colimit. The same holds for the Nygaard complete prismatization $X^{\widehat{\mathcal N}}$, which ultimately follows from the corresponding statement for $X^{\mathcal N}$ and the fact that the canonical map of filtered rings $\mathrm{Fil}_{\mathcal N}^*\Prism_R\to\mathrm{Fil}_{\mathcal N}^* \widehat{\Prism}_R$ induces an isomorphism on the associated graded rings, see \cite[Definition 5.8.5]{BLa}.
\end{remark}

\begin{remark}\label{Thanks, Deven!}
The completion  map of filtered rings $\mathrm{Fil}_{\mathcal N}^*\Prism_R\to\mathrm{Fil}_{\mathcal N}^* \widehat{\Prism}_R$, natural in the quasi-regular semiperfectoid ring $R$, induces a canonical comparison map of formal stacks $X^{\widehat {\mathcal N}}\to X^{\mathcal  N}.$ According to 
\cite[Remark 2.5 \& Lemma 2.8]{Deven}, it induces an equivalence upon the $\i$-categories of quasi-coherent sheaves. In particular, we may identify
$$
\QCoh(X^{\widehat{\mathcal N}})\, \simeq \, \QCoh(X^\mathcal N)\, \simeq \, \mathrm{Gauge}_{\mathbbl{\Delta}}(X)
$$
with the $\i$-category of \textit{prismatic gauges over $X$} in the sense of \cite{Bhatt F-gauges}.
\end{remark}

In what follows, we let $\mathscr L\mX$ denote the free loop space on $\mX$. This is equivalently the cotensor $\mathscr L\mX\simeq \mX^{\mathbf T}$ of $\mX$ with the anima $\T$, or equivalently still, the self-intersection of the diagonal $\mathscr L\mX\simeq\mX\times_{\mX\times \mX}\mX$.
Loop-rotation equips $\mathscr L\mX$ with a canonical $\T$-action. Note that if $\mX=\Spec(A)$ is an affine, so are the free loop space $\mathscr LX\simeq \Spec(\mathrm{THH}(A))$ and its coarse quotient $\mathscr LX/\!/\T\simeq \Spf(\mathrm{TC}^-(A))$. The formal version works analogously, and in the next theorem statement, we are considering $\mathscr L$ as an endofunctor on formal spectral stacks.

\begin{theorem}\label{Main theorem on filtered prismatization}
Let $X$ be a quasi-syntomic $p$-adic formal scheme. Let
$$
\mathfrak X\, :=\, (\mathscr LX)^\mathrm{evp}/\!/\T 
$$
be the coarse quotient under the loop-roation action of the even periodization of the free loop space over $X$, formed in the $\i$-category $\mathrm{FSpStk}_p$ of $p$-adic formal spectral stacks. Its underlying classical formal stack may be identified as
$$
\mathfrak X^\heart \, \simeq \, X^{\widehat{\mathcal N}}
$$
with the Nygaard-complete filtered prismatization of $X$.
\end{theorem}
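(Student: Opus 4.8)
The strategy is to reduce the identification of $\mathfrak X^\heart$ to the local case of a quasi-regular semiperfectoid ring $R$, where everything can be computed explicitly via topological Hochschild homology, and then to match the resulting colimit presentation with Definition \ref{Def of X^hat N}. First I would observe that since $X$ is quasi-syntomic, Remark \ref{Remark simpler presentation for Nygaardization} lets us present $X \simeq \varinjlim_{R \in X_\mathrm{qrsp}} \Spf(R)$ as a colimit of formal spectra of quasi-regular semiperfectoid rings (possibly after restricting to a small cofinal subdiagram coming from a fixed Zariski cover and chosen qrsp covers, as in \cite[Theorem 5.5.10]{Bhatt F-gauges}). The free loop space functor $\mathscr L$, being a cotensor with $\T$, preserves colimits of formal spectral stacks, so $\mathscr LX \simeq \varinjlim_{R} \mathscr L\Spf(R) \simeq \varinjlim_R \Spf(\mathrm{THH}(R)^\wedge_p)$. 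Even periodization and the coarse quotient both preserve small colimits --- the former by Proposition \ref{Even stacks inside stacks, I-adic version}\ref{Corollary 10, a, I-ad} (in the $p$-adic formal incarnation, Variant \ref{Variant: G works with I}), the latter by Theorem \ref{Existence of coarse quotients in SAG} --- and so does the passage to underlying classical stacks. Thus $\mathfrak X^\heart \simeq \varinjlim_R \big( (\mathscr L\Spf(R))^\mathrm{evp}/\!/\T \big)^\heart$, and it suffices to treat a single qrsp ring $R$.

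The local computation is where I would invoke Theorem \ref{BMS result}: for $R$ quasi-regular semiperfectoid, $\mathrm{THH}(R)^\wedge_p$ has bounded $p$-torsion and its $p$-completed base change along $\mathbf S \to \mathrm{MUP}$ is even (this is part of the Bhatt--Morrow--Scholze package, or follows from evenness of $\mathrm{TC}^-(R)^\wedge_p$ and flat descent), so the hypotheses of the $p$-adic analogue of Proposition \ref{Computing the evening} (namely Proposition \ref{Computing the evening, I-adic}, applied through Variant \ref{Variant: G works with I}) are met. This gives an $\T$-equivariant simplicial presentation of $(\mathscr L\Spf(R))^{\mathrm{evp}_\T}$ via the cobar construction on $\mathrm{THH}(R)^\wedge_p \to (\mathrm{THH}(R) \otimes_\mathbf S \mathrm{MUP})^\wedge_p$. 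Applying the coarse quotient and Proposition \ref{Voila the even filtration on hT} (again in its $p$-adic formal and $\T$-equivariant form), the structure sheaf of $(\mathscr L\Spf(R))^\mathrm{evp}/\!/\T$ carries a Postnikov filtration whose global sections realize the $\T$-equivariant even filtration $\mathrm{fil}^{\ge *}_{\mathrm{ev}, \mathrm h\T, p}(\mathrm{THH}(R))$. By the $p$-complete refinement of Theorem \ref{BMS result} --- i.e.\ the identification $\pi_{2*}\mathrm{TC}^-(R)^\wedge_p \simeq \mathrm{Fil}^*_{\mathcal N}\,\widehat{\Prism}_R\{*\}$ together with the evenness of $\mathrm{THH}(R)^\wedge_p$, which forces $\mathrm{fil}^{\ge *}_{\mathrm{ev},\mathrm h\T,p}(\mathrm{THH}(R))$ to be the double-speed Postnikov filtration on $\mathrm{TC}^-(R)^\wedge_p$ --- the underlying classical formal stack is
$$
\big((\mathscr L\Spf(R))^\mathrm{evp}/\!/\T\big)^\heart \,\simeq\, \Spf_{\mathrm B\mathbf G_m}\big(\pi_{2*}\mathrm{TC}^-(R)^\wedge_p\big)\,\simeq\, \Spf\big(\textstyle\bigoplus_{n\in\mathbf Z}\mathrm{Fil}^n_{\mathcal N}\,\widehat{\Prism}_R\{n\}\big)/\mathbf G_m,
$$
using Proposition \ref{Heart of evp of affine} (or rather its coarse-quotient/formal analogue coming from the identification of $\mathfrak X^\heart$ with the relative $\Spec$ over $\mathrm B\mathbf G_m$ of $\pi_{2*}$ of the Postnikov-filtered structure sheaf). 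This is exactly the $R$-indexed term in Definition \ref{Def of X^hat N}.

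Finally I would assemble the pieces: the equivalences constructed for each $R$ are natural in $R \in X_\mathrm{qrsp}$ --- naturality of the even filtration, of the coarse quotient, and of the BMS comparison are all available --- so passing to the colimit over $X_\mathrm{qrsp}$ yields $\mathfrak X^\heart \simeq \varinjlim_R \Spf\big(\bigoplus_n \mathrm{Fil}^n_{\mathcal N}\,\widehat{\Prism}_R\{n\}\big)/\mathbf G_m = X^{\widehat{\mathcal N}}$. The main obstacle I anticipate is not any single step but the \emph{bookkeeping of the $\T$-equivariance and the $p$-adic grading through the colimit}: one must be careful that the $\mathbf G_m$-action appearing on the prismatization side (recording the grading by Breuil--Kisin twists) is genuinely the one produced by the coarse-quotient construction, i.e.\ the Rees-algebra grading of the double-speed Postnikov filtration on $\mathrm{TC}^-$, and that the Nygaard \emph{completion} (as opposed to the uncompleted $\Prism_R$) is exactly what even periodization sees --- this is the reason the theorem produces $X^{\widehat{\mathcal N}}$ rather than $X^{\mathcal N}$, and matches the ``Nygaard-decompletion requires genuine equivariance'' caveat in the introduction. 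A secondary technical point is justifying that $\mathscr L$, even periodization, and $(-)/\!/\T$ all commute with the relevant colimits \emph{simultaneously} in the $p$-adic formal setting $\mathrm{FSpStk}_p$, which requires citing the $I$-adic versions (Section \ref{Subsection I-adic context} and Variant \ref{Variant: G works with I}) rather than their non-formal counterparts; this is routine but must be stated carefully to avoid the accessibility subtleties flagged in Remark \ref{What _is_ Shv_acc here?}.
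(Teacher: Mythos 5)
Your reduction to a single quasi-regular semiperfectoid ring rests on the claim that $\mathscr L$ commutes with the colimit $X\simeq\varinjlim_{R\in X_\mathrm{qrsp}}\Spf(R)$, justified by saying that the cotensor with $\T$ preserves colimits. That justification is backwards: the cotensor is a limit construction, $\mathscr L\mathcal X\simeq \mathcal X\times_{\mathcal X\times\mathcal X}\mathcal X$, and it does not commute with colimits of (formal) spectral stacks. More importantly, the assertion itself fails in exactly this situation: for a quasi-syntomic cover $R\to S$ with $S$ qrsp, the induced map $\mathrm{THH}(R)^\wedge_p\to\mathrm{THH}(S)^\wedge_p$ is only $p$-adically \emph{eff} in the sense of Definition \ref{Def eff} (and its $I$-adic variant), not $p$-completely faithfully flat --- already for $R=\mathbf Z_p$ the odd torsion in $\pi_*\mathrm{THH}(\mathbf Z_p)^\wedge_p$ dies in $\pi_*\mathrm{THH}(S)^\wedge_p$ --- so $\varinjlim_{\mathbf\Delta^\mathrm{op}}\Spf(\mathrm{THH}(S^\bullet)^\wedge_p)$ does not recover $\mathscr L\Spf(R)$ as a formal spectral stack, and the qrsp-indexed colimit does not compute $\mathscr LX$. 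Eff maps only become honest fpqc covers \emph{after} even periodization (Proposition \ref{Computing the evening}, Corollary \ref{EVP over MU in the presence of an even eff cover}, and their $p$-adic analogues), which is why the paper first reduces to affines by Zariski descent (where $\mathrm{THH}$ genuinely localizes), fixes one qrsp cover $R\to S$, and extracts a simplicial presentation of $(\mathscr L\Spf(R))^\mathrm{evp}$ --- not of $\mathscr L\Spf(R)$ --- from the eff map. To keep your global qrsp-colimit formulation you would have to show that the comparison map becomes an equivalence after applying $(-)^\mathrm{evp}$, and proving that is precisely the eff-descent argument your proposal skips.

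There is a second, smaller gap in the local step. Proposition \ref{Voila the even filtration on hT} only identifies the \emph{global sections} of the Postnikov filtration on the coarse quotient with $\mathrm{fil}^{\ge *}_{\mathrm{ev},\mathrm h\T}$; to compute the underlying classical stack you need a sheaf-level (simplicial) presentation of $(\mathscr L\Spf(R))^\mathrm{evp}/\!/\T$, i.e.\ an identification of $\pi_0$ of $\big((\mathrm{THH}(S^\bullet)\otimes_{\mathrm{MU}}\mathrm{MUP})^\wedge_p\big)^{\mathrm h\T}$ after completion with $\bigoplus_{n}\pi_{2n}(\mathrm{TC}^-(S^\bullet)^\wedge_p)$, carrying the correct $\mathbf G_m$-grading. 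This is where the paper invokes \cite[Theorem 7.43]{MNN} to show that homotopy $\T$-fixed points commute with $-\otimes_{\mathrm{MU}}\mathrm{MUP}$ up to $(p,I)$-completion; your appeal to a ``coarse-quotient analogue of Proposition \ref{Heart of evp of affine}'' silently assumes exactly this base-change statement. Your closing remarks correctly flag the Rees grading and Nygaard completeness as the delicate points, but the content that resolves them is this fixed-point base-change together with the completion bookkeeping, not the Bhatt--Morrow--Scholze identification alone.
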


\begin{proof}
Since topological Hochschild homology preserves Zariski localizations -- particularly clear from its presentation $\mathrm{THH}(A)\simeq \mathbf T\otimes_{\CAlg} A$ as the tensor with the anima $\T$ in the $\i$-category of $\E$-rings, but see also \cite[Lemma 4.2]{BZN} -- the free loop space functor preserves Zariski covers. As consequence, its $p$-complete version preserves Zariski covers of $p$-adic formal spectral schemes, allowing us by Remark \ref{Remark simpler presentation for Nygaardization} to reduce to the case when $X$ is a $p$-adic formal affine.

We therefore assume that $X\simeq \Spf(R)$ for a quasi-syntomic classical ring $R$. Following \cite[Proof of Theorem 5.0.3]{HRW}, we select a particular quasi-syntomic cover of $R$ by a quasi-regular semiperfectoid $S$. Namely, start with the polynomial ring in $R$-many vairables $\mathbf Z[\mathbf N]^{\o_{\mathbf Z} R}$ and set
$$
S = (\mathbf Z[\mathbf N[\tfrac 1p]]^{\otimes_{\mathbf Z} R}\otimes_{\mathbf Z[\mathbf N]^{\otimes_{\mathbf Z}R}}R)^\wedge_p
$$ to be the $p$-completion of the base-change along the canonical surjection $\mathbf Z[\mathbf N]^{\otimes_{\mathbf Z} R}\to R$, in which the $x$-th map $\mathbf Z[\mathbf N]\to R$ picks out the element $x\in R$. As shown in  \cite[Proof of Theorem 5.0.3]{HRW} (by way of considering the analogous construction where $\mathbf Z$ is replaced by the sphere spectrum $\mathbf S$), the induced map $\mathrm{THH}(R)^\wedge_p\to \mathrm{THH}(S)^\wedge_p$ is $p$-adically eff. Note that the canonical map
$$
\mathrm{MU}\to \pi_0(\mathrm{MU})\simeq\mathbf{Z}\to R\to \mathrm{THH}(R)
$$
allows us to view this as a map of $p$-adic $\E$-algebras over $\mathrm{MU}$. It is $p$-adically eff, so by (the $p$-adic formal variant of) Corollary \ref{EVP over MU in the presence of an even eff cover} (and using the formula \eqref{Evp of aff wrt MUP} to express sharing), we obtain a simplicial formula for the even periodization
\begin{equation}\label{evp of loops via qrsp}
\left(\mathscr L\Spf(R)\right)^\mathrm{evp}\,\, \simeq \varinjlim_{\phantom{{}^\mathrm{op}}\mathbf\Delta^\mathrm{op}} \Spf((\mathrm{THH}(S^\bullet)\o_{\mathrm{MU}}\mathrm{MUP})^\wedge_p)/\mathbf G_m
\end{equation}
where $S^\bullet =R^{\otimes_R[\bullet]}$ is the cosimplicial cobar construction for the map $R\to S$.
Furthermore by (the $p$-analogue of) Proposition \ref{Computing the Gevening}, the map  $\mathrm{THH}(R)^\wedge_p\to \mathrm{THH}(S)^\wedge_p$ is $\T$-equivariant, and so \eqref{evp of loops via qrsp} exhibits the $\T$-equivariant even periodization of $\mathscr L\Spf(R)$. We may therefore pass to the coarse quotient. Since this operation respects small colimits by Theorem \ref{Existence of coarse quotients in SAG}, we obtain the stack $\mathfrak X$ from the statement of the Theorem expressed as
$$
\mathfrak X\,:=\,\left(\mathscr L\Spf(R)\right)^\mathrm{evp}/\!/\T\, \simeq \,\varinjlim_{\phantom{{}^\mathrm{op}}\mathbf\Delta^\mathrm{op}} \Spf\big(((\mathrm{THH}(S^\bullet)\o_{\mathrm{MU}}\mathrm{MUP})^\wedge_p)^{\mathrm h\T}\big)/\mathbf G_m
$$
We wish to show that the canonical map
\begin{equation}\label{TC Thh comparison}
\mathrm{TC}^-(S^\bullet)\o_\mathrm{MU}\MUP\to (\mathrm{THH}(S^\bullet)\o_\mathrm{MU}\mathrm{MUP})^{\mathrm h\T}
\end{equation}
induces an equivalence upon $(p, I)$-adic completion, where (by the evenness of $\mathrm{TC}^-(S^\bullet)$)
$$
I=I_\mathrm{aug}\subseteq\pi_*(\mathrm{TC}^-(S^\bullet)) \simeq \pi_0(\mathrm{TC}^-(S)\o_{\mathrm{MU}}\MUP)
$$
is the augmentation ideal, with respect to which the formal spectra in the defintion of the coarse quotient are always taken. For this, recall  that $I$-adic completion coincides with the Bousfield localization along $\mathrm{TC}^-(S^\bullet)\to \mathrm{THH}(S^\bullet)$. It therefore suffices to show that the map \eqref{TC Thh comparison} induces an equivalence upon base-change along $\mathrm{TC}^-(S^\bullet)\to \mathrm{THH}(S^\bullet)$. Said base-change identifies the domain of the map \eqref{TC Thh comparison} with $\mathrm{THH}(S^\bullet)\o_\mathrm{MU}\mathrm{MUP}$. The base-change of the codomain is also identified with
$$
(\mathrm{THH}(S^\bullet)\o_\mathrm{MU}\mathrm{MUP})^{\mathrm h\T}\o_{\mathrm{TC}^-(S^\bullet)}\mathrm{THH}(S^\bullet)\,\simeq \,\mathrm{THH}(S^\bullet)\o_\mathrm{MU}\mathrm{MUP}
$$
as a special case of \cite[Theorem 7.43]{MNN} (a generalization of Theorem \ref{Thanks, Akhil!} which the periodicity assumption at the cost of having to complete at the augmentation ideal in $\pi_*$ as opposed to in $\pi_0$), applied to the $\T$-equivariant $\mathrm{THH}(S^\bullet)$-module $\mathrm{THH}(S^\bullet)\o_\mathrm{MU}\mathrm{MUP}$. By chasing through all the identifications, we find that the base-change in question of the map \eqref{TC Thh comparison} gives the identity on $\mathrm{THH}(S^\bullet)\o_{\mathrm{MU}}\MUP$, and hence the map \eqref{TC Thh comparison} induces an equivalence upon $I$-completion. By further $p$-completing to arrive at the $(p, I)$-completion, we obtain an equivalence
$$
(((\mathrm{THH}(S^\bullet)\o_{\mathrm{MU}}\mathrm{MUP})^\wedge_p)^{\mathrm h\T})^\wedge_I\, \simeq \, (\mathrm{TC}^-(S^\bullet)\o_{\mathrm{MU}}\mathrm{MUP})^\wedge_{(p, I)},
$$
which is furthermore compatible with the grading that comes from $\mathrm{MUP}\simeq \bigoplus_{n\in \mathbf Z}\Sigma^{2n}(\mathrm{MU})$. The corresponding map of fromal affines is therefore $\G_m$-equivariant, and so we may rewrite \eqref{evp of loops via qrsp} in the form
$$
\mathfrak X
\,\,\simeq 
\varinjlim_{\phantom{{}^\mathrm{op}}\mathbf\Delta^\mathrm{op}} \Spf((\mathrm{TC}^-(S^\bullet)\o_{\mathrm{MU}}\mathrm{MUP})^\wedge_{(p, I)})/\mathbf G_m.
$$
Since the underlying classical formal stack functor $\mathfrak X\mapsto \mathfrak X^\heart$ preserves all small colimits, we find that it is given by
$$
\mathfrak X^\heart
\,\,\simeq 
\varinjlim_{\phantom{{}^\mathrm{op}}\mathbf\Delta^\mathrm{op}} \Spf(\pi_0((\mathrm{TC}^-(S^\bullet)\o_{\mathrm{MU}}\mathrm{MUP})^\wedge_{(p, I)}))/\mathbf G_m.
$$
Note that $\mathrm{TC}^-(S^\bullet)\o_{\mathrm{MU}}\mathrm{MUP}$ is a smash product of three even $\E$-rings, and the map $\mathrm{MU}\to\mathrm{MUP}$ is eff. It follows by \cite[Proposition 2.2.8]{HRW} (stated there in the $p$-adic case, but which extends to the present $(p, I)$-adic case just as well) that there is a natural isomorphism of graded rings
\begin{eqnarray*}
\pi_*\big((\mathrm{TC}^-(S^\bullet)\o_{\mathrm{MU}}\mathrm{MUP})^\wedge_{(p, I)}\big) &\simeq &
\pi_*(\mathrm{TC}^-(S^\bullet))^\wedge_{(p, I)}\widehat{\o}_{\pi_*(\mathrm{MU})^\wedge_{(p, I)}}\pi_*(\mathrm{MUP})^\wedge_{(p, I)}\\
 &\simeq &
\big(\pi_*(\mathrm{TC}^-(S^\bullet)){\o}_{\pi_*(\mathrm{MU})}\pi_*(\mathrm{MUP})\big)^\wedge_{(p, I)}
\end{eqnarray*}
which specializes, in light of the identification $\pi_*(\mathrm{MUP})\simeq \pi_*(\mathrm{MU})[t^{\pm 1}]$ with $t$ in graded degree $2$, on $\pi_0$ to
$$
\pi_0\big((\mathrm{TC}^-(S^\bullet)\o_{\mathrm{MU}}\mathrm{MUP})^\wedge_{(p, I)}\big)\,\simeq \,\bigoplus_{n\in \mathbf Z}\pi_{2n}(\mathrm{TC}^-(S^\bullet)^\wedge_{(I, p)})\, \simeq \, \bigoplus_{n\in \mathbf Z}\pi_{2n}(\mathrm{TC}^-(S^\bullet){}^\wedge_p),
$$
where the coproducts are taken inside the $\i$-category of $(p, I)$-complete modules, and where the second isomorphism results from observing that $\mathrm{TC}^-(S^\bullet)$ is already $I$-complete. We can therefore return to the underlying classical formal stack of $\mathfrak X$, and rewrite it as
$$
\mathfrak X^\heart
\,\,\simeq 
\varinjlim_{\phantom{{}^\mathrm{op}}\mathbf\Delta^\mathrm{op}} \Spf\big(\bigoplus_{n\in \mathbf Z}\pi_{2n}(\mathrm{TC}^-(S^\bullet){}^\wedge_p)\big)/\mathbf G_m \simeq 
\varinjlim_{\phantom{{}^\mathrm{op}}\mathbf{\Delta}^\mathrm{op}}\Spf(S^\bullet)^{\widehat{\mathcal N}},
$$
having used Bhatt-Morrow-Scholze result, recalled above as Theorem \ref{BMS result} for the second \linebreak equivalence.
Since $\Spf(R)\simeq \varinjlim_{\mathbf{\Delta}^\mathrm{op}}\Spf(S^\bullet)$, we may identify the above colimit with the Nygaard-complete filtered prismatization of $\Spf(R)$ by Remark \ref{Remark simpler presentation for Nygaardization}.
\end{proof}

\begin{remark}\label{Previous reamrk 6.4.6}
The equivalence of
Theorem \ref{Main theorem on filtered prismatization} sheds some light on why we needed to take the coarse quotient in the setting of formal geometry. It has to do with how, in the non-filtered setting, the prismatization of a quasi-regular semiperfectoid $p$-adic affine $\Spf(R)$ is given by the formal affine $\Spf(\Prism_R)$, but taken with respect to the $(p, I)$-adic topology as opposed to merely $p$-adic, where $I\subseteq\Prism_R$ is the prismatic ideal. It is this second completion parameter that is built into our spectral enhancement at the level of the passage to the coarse quotient.
\end{remark}

\begin{remark}
The issue of $(p, I)$-adic completeness, highlighted in Remark \ref{Previous reamrk 6.4.6}, should not be confused with that of \textit{Nygaard-completeness}. The latter concerns the difference between $X^{\widehat{\mathcal N}}$ as given in Definition \ref{Def of X^hat N} and $X^{\mathcal N}$ of \cite{Bhatt F-gauges}. For instance, the initial prism $\Prism_R$ over a quasi-regular semiperfectorid ring $R$ is \textit{always} $(I, p)$-complete, but it is typically \textit{not} complete with respect to its Nygaard filtration $\mathrm{Fil}_{\mathcal N}^*\,\Prism_R$ until we pass to its Nygaard completion $\widehat{\Prism}_R$. See also Remark \ref{Accounts of failure}.
\end{remark}

\begin{corollary}\label{QCoh deformation corollary}
Let $X$ be as in Theorem \ref{Main theorem on filtered prismatization}.
Then the $\i$-category $\QCoh(\mathscr L X)^{\mathrm h\T}$ of $\T$-equivariant quasi-coherent sheaves on the spectral free loop space $\mathscr LX$ is the generic fiber of a $1$-parameter deformation, with the $\i$-category of prismatic gauges $\mathrm{Gauge}_{\mathbbl{\Delta}}(X)$ as its special fiber.
\end{corollary}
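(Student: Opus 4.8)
The plan is to assemble this from three ingredients already in place. First, Proposition \ref{QCoh doesn't care} gives a canonical symmetric monoidal equivalence $\QCoh(\mathscr L X/\underline{\T}_{\mathbf S})\simeq \mathfrak{QC}\mathrm{oh}(\mathscr L X/\!/\T)$ for any $\T$-equivariant even periodic spectral stack; but here $\mathscr L X$ is not itself even periodic, so I must first pass to its even periodization. Using the $G=\T$ equivariant even periodization from Section \ref{Section equivariant evp} (in its $p$-adic formal incarnation, Variant \ref{Variant: G works with I}), we have $\QCoh(\mathscr L X)^{\mathrm h\T}\simeq \QCoh((\mathscr LX)^{\mathrm{evp}_\T})^{\mathrm h\T}$ by Corollary \ref{Corollary unshear for modules} (or really its $p$-adic formal, loop-space variant: the map $\mathrm{THH}(R)^\wedge_p\to \mathscr L X$ factors through $\mathrm{MU}$, and $(\mathscr LX)^{\mathrm{evp}}\to \mathscr LX$ then induces a symmetric monoidal equivalence on quasi-coherent sheaves, as in the proof of Theorem \ref{Main theorem on filtered prismatization} where everything is expressed over $\mathrm{MU}$ and $\mathrm{MUP}$). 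Thus the left-hand $\i$-category $\QCoh(\mathscr LX)^{\mathrm h\T}$ is identified with $\mathfrak{QC}\mathrm{oh}\big((\mathscr LX)^{\mathrm{evp}}/\!/\T\big) = \mathfrak{QC}\mathrm{oh}(\mathfrak X)$, where $\mathfrak X$ is the formal spectral stack of Theorem \ref{Main theorem on filtered prismatization}.

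Second, the deformation structure: the augmentation ideal $I_{\mathrm{aug}}\subseteq \pi_0(\sO_{\mathfrak X})$ (equivalently, the $t$-coordinate on $\w{\G}{}^{\mathcal Q}_{\mM}$ pulled back along the structure map $\mathfrak X\to \w{\G}{}^{\mathcal Q}_{\mM}$) is a regular element of homotopical degree $0$, giving $\mathfrak X$ the structure of a formal $1$-parameter family over $\Spf(\sO(\mathscr LX/\!/\T))$, in the sense of Remark \ref{If only special fiber}: there is a pullback square with $\mathscr LX^{\mathrm{evp}}$ over $\mM$ on one side and $\mathfrak X$ over $\w{\G}{}^{\mathcal Q}_{\mM}$ on the other. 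Setting $t=0$ --- i.e.\ pulling back along the universal zero-section $\mM\to \w{\G}{}^{\mathcal Q}_{\mM}$, which by Remark \ref{Underlying objects as zero-section pullback} realizes the forgetful functor --- recovers the $\T$-equivariant structure and, on underlying classical objects, the special fiber $\mathfrak X^\heart\simeq X^{\widehat{\mathcal N}}$ by Theorem \ref{Main theorem on filtered prismatization}. Third, by Remark \ref{Thanks, Deven!} (the completion comparison of \cite{Deven}) we have $\QCoh(X^{\widehat{\mathcal N}})\simeq \QCoh(X^{\mathcal N})\simeq \mathrm{Gauge}_{\mathbbl{\Delta}}(X)$, the $\i$-category of prismatic gauges; and since the underlying classical formal stack functor is compatible with quasi-coherent sheaves (via $\QCoh(\mathfrak X^\heart)\simeq \Mod_{\pi_0(\mathfrak O_{\mathfrak X})}(\mathfrak{QC}\mathrm{oh}(\mathfrak X^\mathrm{cn}))$, the formal analogue of the discussion in Section \ref{Section 3.4}), setting $t=0$ on $\mathfrak{QC}\mathrm{oh}(\mathfrak X)$ produces $\mathfrak{QC}\mathrm{oh}(\mathfrak X^\heart)\simeq \mathrm{Gauge}_{\mathbbl{\Delta}}(X)$. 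Putting these together: $\QCoh(\mathscr LX)^{\mathrm h\T}\simeq \mathfrak{QC}\mathrm{oh}(\mathfrak X)$ is a $1$-parameter deformation whose generic fiber (invert $t$, i.e.\ restrict away from the zero-section, which recovers $\QCoh(\mathscr LX)^{\mathrm h\T}$ itself --- the ``generic fiber'' language reflecting the analogy of Remark \ref{If only special fiber}) is the equivariant loop-space category, and whose special fiber ($t=0$) is $\mathrm{Gauge}_{\mathbbl{\Delta}}(X)$.

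The main obstacle I anticipate is making precise the sense in which $\mathfrak{QC}\mathrm{oh}(\mathfrak X)$ is ``a $1$-parameter deformation with generic fiber $\QCoh(\mathscr LX)^{\mathrm h\T}$'': since $t$ lies in $\pi_0$ and $\mathfrak X$ is already $(p,I_{\mathrm{aug}})$-complete, the ``generic fiber'' is not literally a $t$-inverted localization of $\mathfrak X$ within $\mathrm{FSpStk}_p$ (that would be the Tate-construction / analytic generic point $\{\eta\}$ which, as Remark \ref{If only special fiber} and Remark \ref{Remark TP} note, is not yet rigorously available). The cleanest formulation --- and the one I would adopt --- is to work at the level of the defining diagram: $\mathfrak X\simeq \varinjlim_{\mathbf\Delta^\mathrm{op}}\Spf((\mathrm{TC}^-(S^\bullet)\otimes_{\mathrm{MU}}\mathrm{MUP})^\wedge_{(p,I)})/\mathbf G_m$ from the proof of Theorem \ref{Main theorem on filtered prismatization}, whose quasi-coherent sheaves are $\varprojlim_{\mathbf\Delta}\Mod^{\mathrm{cplt},\mathrm{gr}}_{(\mathrm{TC}^-(S^\bullet)\otimes_{\mathrm{MU}}\mathrm{MUP})^\wedge_{(p,I)}}$; here $t\in\pi_0$ is an honest non-zero-divisor, base-change along $t\mapsto 0$ lands in $\bigoplus_n\pi_{2n}(\mathrm{TC}^-(S^\bullet){}^\wedge_p)$-graded modules which cosimplicially assemble to $\mathrm{Gauge}_{\mathbbl{\Delta}}(X)$, and $t$-inverting returns $\Mod_{\mathrm{THH}(S^\bullet)\otimes_{\mathrm{MU}}\mathrm{MUP}}$, cosimplicially assembling (via the even-periodization/shearing identifications and Proposition \ref{QCoh doesn't care}) to $\QCoh(\mathscr LX)^{\mathrm h\T}$. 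So the corollary is really a formal consequence of unwinding the proof of Theorem \ref{Main theorem on filtered prismatization} one categorical level up, together with Remark \ref{Thanks, Deven!}; the only genuine content beyond bookkeeping is ensuring the cosimplicial descent (fpqc descent for complex periodic complete modules, Theorem \ref{Descent for complex periodic formal QCoh}) is compatible with the $t$-filtration, which it is since each $\mathrm{TC}^-(S^\bullet)\otimes_{\mathrm{MU}}\mathrm{MUP}$ is even and the structure maps are $p$-adically eff, hence flat after base-change along $t$.
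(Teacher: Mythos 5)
Your first ingredient is correct and matches the paper: via Proposition \ref{QCoh doesn't care} together with unshearing over $\mathrm{MU}$ one identifies $\QCoh(\mathscr LX)^{\mathrm h\T}\simeq \QCoh((\mathscr LX)^\mathrm{evp})^{\mathrm h\T}\simeq \mathfrak{QC}\mathrm{oh}(\mathfrak X)$, and Remark \ref{Thanks, Deven!} correctly supplies $\QCoh(X^{\widehat{\mathcal N}})\simeq\mathrm{Gauge}_{\mathbbl{\Delta}}(X)$. The gap is in your second ingredient: you deform along the wrong parameter. You take the augmentation-ideal generator $t\in\pi_0(\mathfrak O_{\mathfrak X})$ (the coordinate pulled back from $\w{\G}{}^{\mathcal Q}_{\mM}$), but killing $t$ corresponds to pullback along the universal zero-section, i.e.\ it returns $(\mathscr LX)^\mathrm{evp}$ and hence the \emph{non-equivariant} category $\QCoh(\mathscr LX)$, not $\mathrm{Gauge}_{\mathbbl{\Delta}}(X)$; and inverting $t$ lands in the Tate/$\mathrm{TP}$ direction (the conjectural generic point of Remark \ref{If only special fiber}), not back in $\QCoh(\mathscr LX)^{\mathrm h\T}$. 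Concretely, your claim that base-change of $(\mathrm{TC}^-(S^\bullet)\otimes_{\mathrm{MU}}\mathrm{MUP})^\wedge_{(p,I)}$ along $t\mapsto 0$ yields the Nygaard Rees ring $\bigoplus_n\pi_{2n}(\mathrm{TC}^-(S^\bullet)^\wedge_p)$ is false: killing $t$ gives $\mathrm{THH}(S^\bullet)\otimes_{\mathrm{MU}}\mathrm{MUP}$ (completed), while the Rees ring is $\pi_0$ of the periodic ring and is reached in the Postnikov direction. Your fallback — declaring the generic fiber to be the category itself "by analogy" — does not produce an actual one-parameter deformation with the asserted fibers.

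The paper's deformation runs in the Bott/Postnikov direction instead. The total space is $\mathfrak{QC}\mathrm{oh}(\mathfrak X^\mathrm{cn})$: since $\mathfrak X$ is even periodic, the Bott map $\beta:\Sigma^2(\omega)\to\mathfrak O_{\mathfrak X^\mathrm{cn}}$ with $\omega$ a line bundle gives (after making shearing symmetric monoidal over $\mathrm{MU}$ via \cite{Sanath}) a $\QCoh(\mathbf A^1/\mathbf G_m)$-module structure on $\mathfrak{QC}\mathrm{oh}(\mathfrak X^\mathrm{cn})$, which is exactly the one-parameter deformation structure. The generic fiber is the Lurie tensor with $\QCoh(\mathbf G_m/\mathbf G_m)$, i.e.\ $\Mod_{\mathfrak O_{\mathfrak X^\mathrm{cn}}[\beta^{-1}]}(\mathfrak{QC}\mathrm{oh}(\mathfrak X^\mathrm{cn}))\simeq\mathfrak{QC}\mathrm{oh}(\mathfrak X)\simeq\QCoh(\mathscr LX)^{\mathrm h\T}$ — a literal $\beta$-localization, so the analytic generic point you worried about is never needed. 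The special fiber is the tensor with $\QCoh(\{0\}/\mathbf G_m)$, i.e.\ $\Mod_{\mathfrak O_{\mathfrak X^\mathrm{cn}}/\beta}$, and evenness gives the cofiber sequence $\Sigma^2(\omega)\xrightarrow{\beta}\mathfrak O_{\mathfrak X^\mathrm{cn}}\to\mathfrak O_{\mathfrak X^\heart}$, so this is $\mathfrak{QC}\mathrm{oh}(\mathfrak X^\heart)\simeq\QCoh(X^{\widehat{\mathcal N}})\simeq\mathrm{Gauge}_{\mathbbl{\Delta}}(X)$ by Theorem \ref{Main theorem on filtered prismatization}, Proposition \ref{Prop 5.2.5}, and Remark \ref{Thanks, Deven!}. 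So your proposal needs its deformation parameter replaced by $\beta$ (equivalently, the even/Postnikov filtration on the connective cover), not the $\T$-adic parameter $t$.
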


\begin{proof}
If we denote $\mathfrak X\, :=\, (\mathscr LX)^\mathrm{evp}/\!/\T$ as in Theorem \ref{Main theorem on filtered prismatization}, we claim that $\mathfrak{QC}\mathrm{oh}(\mathfrak X^\mathrm{cn})$ is the total space of the deformation. Indeed, the spectral stack $\mathfrak X$ is even periodic, so on its connective cover we have by a global version of Construction \ref{Const of Bott map} a canonical Bott map $\beta :\Sigma^2(\omega)\to \mathfrak O_{\mathfrak X^\mathrm{cn}}$  in $\mathfrak{QC}\mathrm{oh}(\mathfrak X^\mathrm{cn})$. Since $\omega$ is a line bundle, this gives up to shearing (which we can make symmetric monoidal by noting that everything is happening over $\mathrm{MU}$ and using \cite{Sanath}) a $\QCoh(\mathbf A^1/\mathbf G_m)$-module structure on the $\i$-category $\mathfrak{QC}\mathrm{oh}(\mathfrak X^\mathrm{cn})$, which is precisely the structure of a 1-parameter deformation.

Its generic fiber is given by the relative Lurie tensor product
$$
\mathfrak{QC}\mathrm{oh}(\mathfrak X^\mathrm{cn})\otimes_{\QCoh(\mathbf A^1/\mathbf G_m)}\QCoh(\mathbf G_m/\mathbf G_m)\, \simeq \, \Mod_{\mathfrak O_{\mathfrak X^\mathrm{cn}}[\beta^{-1}]}(\mathfrak{QC}\mathrm{oh}(\mathfrak X^\mathrm{cn}))\, \simeq \, \mathfrak{QC}\mathrm{oh}(\mathfrak X),
$$
where we have used that the canonical map $f:\mathfrak X\to \mathfrak X^\mathrm{cn}$ is a formally affine morphism, and $f_*(\mathfrak O_{\mathfrak X})\simeq \mathfrak O_{\mathfrak X}[\beta^{-1}]$. From Proposition \ref{QCoh doesn't care} we obtain further equivalences\footnote{We are writing $\QCoh$, but to be precise, we should still be writing $\mathfrak{QC}\mathrm{oh}$, though only taken with respect to $p$-completions. That is to say, this is the usual $\QCoh$ of $p$-adic formal spectral geometry.}
$$
\mathfrak{QC}\mathrm{oh}(\mathfrak X)\,\simeq \, \QCoh((\mathscr LX)^\mathrm{evp}/\underline{\mathbf T}_S)\, \simeq \, \QCoh((\mathscr LX)^\mathrm{evp})^{\mathrm h\T}
$$
Since everything is taking place over $\Spec(\mathbf Z)\simeq \Spec(\pi_0(\mathrm{MU}))\to\Spec(\mathrm{MU})$, we see by Theorem \ref{Corollary unshear for modules} that shearing and unshearing give a symmetric monoidal identification
$$
\QCoh((\mathscr LX)^\mathrm{evp})\, \simeq \, \QCoh(\mathscr LX),
$$
and since the $\T$-action on $(\mathscr LX)^\mathrm{evp}$ is induced by passage to even periodization from that on $\mathscr LX$, this equivalence is also $\T$-equivariant. The generic fiber of the $\i$-categorical deformation in question is therefore indeed given by $\QCoh(\mathscr LX)^{\mathrm h\T}$.

The special fiber is given similarly as
$$
\mathfrak{QC}\mathrm{oh}(\mathfrak X^\mathrm{cn})\otimes_{\QCoh(\mathbf A^1/\mathbf G_m)}\QCoh(\{0\}/\mathbf G_m)\, \simeq \, \Mod_{\mathfrak O_{\mathfrak X^\mathrm{cn}}/\beta}(\mathfrak{QC}\mathrm{oh}(\mathfrak X^\mathrm{cn}))\, \simeq \, \mathfrak{QC}\mathrm{oh}(\mathfrak X^\heart),
$$
where we have once again used that $\mathfrak X^\heart\to\mathfrak X^\mathrm{cn}$ is affine, and that  the complex periodicity of $\mathfrak X$, we have a canonical cofiber sequence
$$
\Sigma^2(\omega)\xrightarrow{\beta}\mathfrak O_{\mathfrak X^\mathrm{cn}}\to \pi_0(\mathfrak O_{\mathfrak X^\mathrm{cn}})\, \simeq \, \mathfrak O_{\mathfrak X^\heart}
$$
in the $\i$-category $\mathfrak{QC}\mathrm{oh}(\mathfrak X^\mathrm{cn})$. Since $\mathfrak X^\heart$ is a classical formal stack, we can invoke Theorem \ref{Prop 5.2.5} to identify the special fiber in question with
$$
\mathfrak{QC}\mathrm{oh}(\mathfrak X^\heart)\,\simeq \,\QCoh(\mathfrak X^\heart),
$$
and as such by the conclusion of Theorem \ref{Main theorem on filtered prismatization} and Remark \ref{Thanks, Deven!} (which is to say, the results of \cite{Deven}, with the $\i$-category of prismatic gauges $\mathrm{Gauge}_{\mathbbl{\Delta}}(X)$.
\end{proof}

\begin{remark}
Since the formal spectral stack $\mathfrak X$ in Theorem \ref{Main theorem on filtered prismatization} is even periodic by construction, it is in particular complex periodic. As  such, it admits an essentially unique map to the chromatic base stack $\mM$. This map $\mathfrak X\to \mM$ induces upon passage to the underlying classical formal stacks a map
$$
X^{\widehat{\mathcal N}}\simeq \mathfrak X^\heart\to \mM^\heart\simeq \mM^\heart_\mathrm{FG}
$$
into the classical moduli space of formal groups. That is to say, it exhibits a canonical formal group on the Nygaard-complete filtered prismatization of $X$. It follows by the work of Manam  \cite[Theorem 0.1]{Deven} that this formal group coincides with (the pullback along the canonical map $X^{\widehat{\mathcal N}}\to X^{\mathcal N}$ of) the one exhibited by Drinfeld in \cite{Drinfeld FG}. We should point out how non-trivial this is; the Drinfeld formal group encodes the multiplication law for the ``prismatic first Chern classes", i.e.\ it concerns $\mathrm H^2_{\mathbbl \Delta}(\mathrm B\mathbf G_m)$. In contrast, the Quillen formal group encodes the multiplication law for the topological first Chern class of the even cohomology theory $\mathrm{TC}^-(R)^\wedge_p$, i.e.\ it concerns $(\mathrm{TC}^-(R)^\wedge_p)^2(\mathrm{BU}(1))$.
Them agreeing is therefore a surprising phenomenon of a motivic nature.
\end{remark}

The conclusion of Theorem \ref{Main theorem on filtered prismatization} may be summarized as asserting that  $(\mathscr LX)^\mathrm{evp}/\!/\T$ provides an \textit{even periodic enhancement} of the Nygaard-complete filtered prismatization $X^{\widehat{\mathcal N}}$ for any quasi-syntomic $p$-adic formal scheme $X$. Observe however that unlike the definition of prismatization itself, the building blocks of this enhancement: the free loop space, even periodization, and the coarse quotient, are in no way special to the $p$-complete setting. They require no knowledge of anything about quasi-regular semiperfectoid rings, nor prisms, nor Cartier-Witt divisors, for instance. As such, we are tempted to make the following provisional definition:

\begin{definition}\label{Def of Ncfp}
Let $\mathfrak X$ be any formal spectral stack. Its \textit{Nygaard-complete filtered prismatization} is the classical formal stack
$$
\mathfrak X{}^{\widehat{\mathcal N}} \, \simeq \, (\left(\mathscr L\mathfrak X\right)^{\mathrm{evp}_\T}/\!/\T)^\heart.
$$
\end{definition}

\begin{remark}
This should likely be accompanied with some kind of quasi-syntomicity requirement on $\mathfrak X$, e.g.\ that it is chromatically quasi-lci in the sense of \cite{BMS2}. Under such an assumption, we would for instance have that the $\E$-ring of functions $\sO(\mathfrak X^{\widehat{\mathcal N}})$ agrees with the negative topological cyclic homology $\mathrm{TC}^-(\mathfrak X)$, which is to say, with the $\E$-ring of functions $\sO(\mathscr L\mathfrak X/\T)$.
\end{remark}

\begin{remark}\label{Agreement with R^N}
In forthcoming work of Devalapurkar-Hahn-Raksit-Yuan, as announced for instance in \cite{Arpon's talk}, the authors define a stack $R^{\widehat{\mathcal N}}$ for a chromatically quasi-syntomic $\E$-ring $R$. Though their construction does not use even periodic $\E$-rings but rather even $\E$-rings, arguments along the lines of Proposition \ref{Even filtration is even periodic filtration} show that this agrees with Definition \ref{Def of Ncfp}.
That is, there is a canonical equivalence of classical formal stacks
 $\Spf(R)^{\widehat{\mathcal N}}\simeq \widehat{R^{\mathcal N}}$, and so our notion of Nygaard-complete filtered prismatization agrees with that of Devalapurkar-Hahn-Raksit-Yuan. Also announced in \cite{Arpon's talk} was the computation of the stack $\widehat{\mathbf S^{\mathcal N}}$, which agrees with our Example \ref{Exun1end} below.
\end{remark}

Definition \ref{Def of Ncfp} should perhaps be taken as a \textit{preliminary} approach to prismatization outside the setting of $p$-adic geometry, analogously to how the Bhatt-Morrow-Scholze result Theorem \ref{BMS result} offered a narrow window into prismatic cohomology prior to the actual development of prisms. In particular, we offer no suggestion as to what ``even periodic prisms" might be. Nonetheless, this approach offers the following tantalizing computations:

\begin{exun}\label{Exun1end}
Noting that $\mathscr L\Spec(\mathbf S)\simeq \Spec(\mathbf S)$, we find that the Nygaard-complete filtered prismatization of the sphere spectrum 
$$
\Spec(\mathbf S)^{\widehat{\mathcal N}}\,\simeq\, (\Spec(\mathbf S)^{\mathrm{evp}}/\!/\T)^\heart \, \simeq \, (\mM/\!/\T)^\heart\, \simeq \, (\w{\G}{}^{\mathcal Q}_{\mM})^\heart\, \simeq \, \widehat{\G}{}^{\mathrm{uni}}_{\mM^\heart}
$$
is the universal classical formal group (always assumed to be smooth and $1$-dimensional) over the classical stack of formal groups $\mM^\heart\simeq \mM^\heart_\mathrm{FG}$. This of course also coincides with the Nygaard-complete filtered prismatization of the chromatic base stack $\mM$.
\end{exun}

\begin{exun}
The same kind of computation as in Example \ref{Exun1end} (when implicitly working $(p)$-locally for any choice of prime $p$) identifies the Nygaard-complete filtered prismatization of the $L_n$-local sphere 
$$
\Spec(L_n\mathbf S)^{\widehat{\mathcal N}}\,\simeq \,\widehat{\G}{}^{\mathrm{uni}}_{\mM^{\heart, \le n}},
$$
with the universal classical formal group of height $\le n$.
\end{exun}

\begin{exun}\label{Exun1endend}
For Morava E-theory $E_n=E(\kappa, \w{\G}_0)$, i.e.\ the Lubin-Tate spectrum of  formal group $\w{\G}_0$ of height $n$ over a perfect field $\kappa$, we consider the formal affine $\Spf(E_n)$ with respect to the adic topology given by the Landweber ideal. Then since $L_{K(n)}\mathrm{THH}(E_n)\simeq E_n$, the Nygaard-complete filtered prismatization
$$
\Spf(E_n){}^{\widehat{\mathcal N}}\, \simeq \, \widehat{\G}{}^\mathrm{uni.def.}
$$
is the universal classical deformation of $\w{\G}_0$.
\end{exun}

\begin{remark}\label{Accounts of failure}
Though we are able to construct  \textit{ad hoc} even periodic enhancements in some specific cases, we have been generally unable to find a way of using even periodization to give rise to a natural
spectral enhancement of the prismatization $X^{\mathbbl\Delta}$, or  some kind of Nygaard-complete variant $X^{\widehat{\mathbbl\Delta}}$ thereof, analogously to what  Theorem \ref{Main theorem on filtered prismatization} does for $X^{\widehat{\mathcal N}}$. The fundamental issue is roughly that we would like to do something like restricting to the open point $*\simeq \mathbf G_m/\mathbf G_m\subseteq\mathbf A^1/\mathbf G_m$, but our objects only live over the formal locus $\widehat{\mathbf A}^1/\mathbf G_m$, from which the open point is inaccessible. The traditional way to get around such an issue in the analogous $p$-adic formal setting is to tap into rigid analytic geometry for access to a generic fiber construction. It is therefore unsurprising that, with the conjectural theory of the geometric fiber $\eta$, as hypothesized in Remark \ref{If only special fiber}, at our disposal, the issue would disappear.
The desired even periodic enhancement of an analytic version of $X^{\widehat{\mathbbl \Delta}}$ would then be given by
the generic fiber of the coarse quotient $((\mathscr LX)^\mathrm{evp}/\!/\T)_\eta$; see also   Remark \ref{Remark TP} and \cite[Remark 2.13]{Deven} for related discussion.
Alternatively, instead of (or perhaps concurrently to) tapping into analytic geometry, the fundamental issue could also be sidestepped by ``decompleting" -- see \cite[Section 1]{Deven} for some work in this direction. 

A (perhaps different) way this could be achieved would be to systematically build in \textit{genuine $\T$-equivariance} of $\mathscr LX$, replacing the role played by the homotopy fixed-points $A^{\mathrm h\T}$ in the preceding sections with the categorical fixed-points $A^{\T}$. Computational results of Atiyah-Segal type suggest that this tends to have the effect of ``decompleting" rings in question, i.e.\ of replacing formal power series rings with polynomial ones. This should also connect to equivariant formal groups in the sense of \cite{Greenlees}, and such a role of genuine $\mathbf T$-equivariant structures on complex oriented ring spectra is discussed in \cite[Section 1.4]{Sanath}. A closely-related construction (without a periodicity assumption) is carried out in the upcoming work of Devalapurkar, Hahn, Raksit, and Yuan, announced in \cite{Jeremy's talk}.

\end{remark}

\subsection{Prismatization over the rationals}
According to
Definition
\ref{Def of Ncfp}
of
the previous subsection, we can make sense of the Nygaard-complete filtered prismatization of various algebro-geometric objects that fall outside the scope of $p$-adic formal geometry. We have already considered some ramifications of this for the sphere spectrum and related objects in Examples \ref{Exun1end} - \ref{Exun1end}. As something of a sanity-check, let us verify that this recovers a familiar construction when working over the rationals.

Here we let $X^{\mathrm{dR}, +}$ denote the \textit{filtered de Rham stack of $X$}, following the terminology of \cite[Definition 2.3.5]{Bhatt F-gauges}. This object was introduced by Simpson in \cite{Simpson} under the name of the \textit{Hodge stack of $X$} and denoted $X_\mathrm{Hod}$, as it is  known in most of the literature. It comes with a canonical map $X^{ \mathrm{dR}, +}\to\mathbf A^1/\mathbf G_m$, which encodes the Hodge filtration on de Rham cohomology of $X$; see \cite[Theorem 2.3.6]{Bhatt F-gauges} for a very clear exposition.

\begin{remark}
Though we will not need it in what follows, let us describe the relative functor of points of the filtered de Rham stack $X^{\mathrm{dR}, +} : (\mathrm{Aff}_{/(\mathbf A^1/\mathbf G_m)})^\mathrm{op}\to \mathrm{Ani}$. If we identify a map $\Spec(R)\to \mathbf A^1/\mathbf G_m$ with the pair of an invertible $R$-module $L$ and an $R$-linear map $s:R\to L$, then we have
$$
X^{\mathrm{dR}, +}(\Spec(R)\to \mathbf A^1/\mathbf G_m)\, :=\, X(\mathrm{cofib}(\mathrm{Nil}(R)\otimes_R L\xrightarrow{s} R)).
$$
It is clear from this that the pullback of $X^{\mathrm{dR}, +}$ along the open point $*\simeq \mathbf G_m/\mathbf G_m\to \mathbf A^1/\mathbf G_m$, which amounts to restricting to the case where $s:R\to L$ is an equivalence of $R$-modules,
 recovers the perhaps more familiar de Rham space $X^\mathrm{dR}$, given by
 $$
X^{\mathrm{dR}}(R)\, :=\, X(R/\mathrm{Nil}(R))\, \simeq \, X(R^\mathrm{red}).
 $$
 This is the ``generic fiber" of $X^{\mathrm{dR}, +}$. Its ``special fiber" -- the pullback along the closed point $*\to \mathrm B\mathbf G_m\simeq \{0\}/\mathbf G_m\to \mathbf A^1/\mathbf G_m$ -- recovers the $-1$-shifted derived tangent bundle $\widehat { T}[-1]X$, completed at the zero section, which Bhatt in \cite{Bhatt F-gauges} denotes by $X^\mathrm{Hod}$ and Simpson in \cite{Simpson} by $X_\mathrm{Dol}$.
\end{remark}

\begin{theorem}\label{Char 0 prism}
Let  $X\to \Spec(\mathbf Q)$ be a quasi-lci and locally of finite type map of classical schemes. There is a canonical equivalence of classical formal stacks $X^{\widehat{\mathcal N}}\,\simeq\, X^{\mathrm{dR}, +}$.
\end{theorem}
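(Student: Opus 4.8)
The plan is to use the defining equivalence $X^{\widehat{\mathcal N}}\simeq((\mathscr LX)^{\mathrm{evp}_\T}/\!/\T)^\heart$ of Definition \ref{Def of Ncfp}, and to compute the right-hand side directly over $\mathbf Q$ by exploiting the rational collapse of all the relevant constructions. First I would reduce to the affine case: both the free loop space functor $\mathscr L$, the even periodization, the coarse quotient, and the underlying classical stack functor all preserve Zariski covers and small colimits (the first by the $\mathrm{THH}\simeq\T\otimes_{\CAlg}(-)$ description, the rest by Theorem \ref{Existence of coarse quotients in SAG}, Proposition \ref{GEVP as a colimit}, and the fact that $(-)^\heart$ commutes with colimits), while $X^{\mathrm{dR},+}$ is also Zariski-local on $X$. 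So assume $X=\Spec(R)$ with $R$ a quasi-lci finite type $\mathbf Q$-algebra. Then $\mathscr L X\simeq\Spec(\mathrm{HH}(R/\mathbf Q))$, and since $R$ is an $\E$-algebra over $\mathbf Q$, the HKR theorem gives $\pi_*\mathrm{HH}(R/\mathbf Q)\simeq\bigwedge^*_R L_{R/\mathbf Q}[*]$ — and this $\E$-ring is \emph{even} precisely because $R$ is quasi-lci (the cotangent complex has Tor-amplitude in $[0,1]$, so only even homotopy groups survive in the exterior algebra after reindexing; this is exactly the observation underlying the chromatically-quasi-lci condition in \cite{HRW} specialized to $\mathbf Q$).

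The next step: since $\mathrm{HH}(R/\mathbf Q)$ is even and is an $\mathbf Q$-algebra — hence trivially an $\mathrm{MU}$-algebra via $\mathrm{MU}\to\mathbf Z\to\mathbf Q\to\mathrm{HH}(R/\mathbf Q)$, but more to the point a $\mathbf Q$-algebra — I would invoke Example \ref{Rational even localization}: the even periodization of $\Spec$ of an even $\mathbf Q$-algebra $A$ is $\G_{m,A}[\pm 2]/\G_{m,A}$, i.e.\ $\Spec(A[\beta^{\pm1}])/\G_m$ with $|\beta|=2$. Applying this to $A=\mathrm{HH}(R/\mathbf Q)$ and taking the $\T$-equivariant refinement (which works since $A\to A[\beta^{\pm1}]$ is $\T$-equivariantly periodically eff, the $\T$-action being loop rotation), I get $(\mathscr LX)^{\mathrm{evp}_\T}\simeq\Spec(\mathrm{HH}(R/\mathbf Q)[\beta^{\pm1}])/\G_m$ with a residual $\T$-action. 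Now pass to the coarse quotient: $\Spec(\mathrm{HH}(R/\mathbf Q)[\beta^{\pm1}])/\!/\T\simeq\Spf\big((\mathrm{HH}(R/\mathbf Q)[\beta^{\pm1}])^{\mathrm h\T}\big)$, and the homotopy fixed points are computed by the collapsing homotopy-fixed-point spectral sequence (evenness!), giving $\pi_*\big((\mathrm{HH}(R/\mathbf Q)[\beta^{\pm1}])^{\mathrm h\T}\big)\simeq\pi_*(\mathrm{HH}(R/\mathbf Q)[\beta^{\pm1}])[\![t]\!]$ with $|t|=-2$; this is negative cyclic homology $\mathrm{HC}^-(R/\mathbf Q)$ after shearing, whose classical truncation carries exactly the Hodge-filtered de Rham complex $\bigoplus_n\mathrm{Fil}^n_{\mathrm{Hod}}\mathrm{dR}_{R/\mathbf Q}$ in graded degree. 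Taking $\pi_0$ of the $(p,I)$-adic — here just $I_{\mathrm{aug}}$-adic, since we work rationally — completion and quotienting by $\G_m$, I would identify $((\mathscr LX)^{\mathrm{evp}_\T}/\!/\T)^\heart\simeq\Spf\big(\bigoplus_{n\in\mathbf Z}\mathrm{Fil}^n_{\mathrm{Hod}}\mathrm{dR}_{R/\mathbf Q}\big)/\G_m$, which is precisely the presentation of the filtered de Rham (Hodge) stack $X^{\mathrm{dR},+}$ via the filtered de Rham complex (cf.\ \cite[Theorem 2.3.6]{Bhatt F-gauges} and \cite{Simpson}).

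To make the identification with $X^{\mathrm{dR},+}$ clean and functorial rather than merely numerical, I would instead argue via quasi-coherent sheaves and the universal property: Corollary \ref{QCoh deformation corollary} (its rational analogue) exhibits $\mathfrak{QC}\mathrm{oh}(\mathfrak X^\mathrm{cn})$ as a $1$-parameter $\QCoh(\mathbf A^1/\G_m)$-linear deformation with special fiber $\QCoh(\mathfrak X^\heart)$ and generic fiber $\QCoh(\mathscr LX)^{\mathrm h\T}\simeq\QCoh(\mathrm{HH}(R/\mathbf Q))^{\mathrm h\T}$; over $\mathbf Q$ the latter is the $S^1$-equivariant derived category which is well-known (by HKR / $\mathrm{HC}^-$) to be the $\QCoh$ of the filtered de Rham stack, and one matches up the $\mathbf A^1/\G_m$-module structures. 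Then descent along the quasi-syntomic (here: quasi-lci) covers $S^\bullet$ and the fact that $X^{\mathrm{dR},+}$ satisfies the same descent lets one upgrade the affine-level equivalence to the stated equivalence of classical formal stacks, naturally in $X$. \textbf{The main obstacle} I anticipate is not the homotopy-group bookkeeping — that collapses cleanly over $\mathbf Q$ — but rather pinning down the precise comparison between the Hodge filtration on $\mathrm{dR}_{R/\mathbf Q}$ as it emerges from the Postnikov filtration on $\tau_{\ge2*}\mathfrak O_{\mathfrak X}$ (cf.\ Proposition \ref{Voila the even filtration on hT}, which identifies $\Gamma(\mathfrak X^\mathrm{cn};\tau_{\ge2*}\mathfrak O_{\mathfrak X})\simeq\mathrm{fil}^{\ge*}_{\mathrm{ev},\mathrm h\T}(\mathrm{THH}(R))$, and over $\mathbf Q$ this is the HKR/de Rham filtration by \cite[Section 4]{HRW}) and the Hodge filtration as built into Simpson's stack $X^{\mathrm{dR},+}$ via its map to $\mathbf A^1/\G_m$ — i.e.\ checking that the two $\G_m$-gradings are the same one, including the correct Breuil–Kisin-type twists, which over $\mathbf Q$ are trivial but must still be tracked through the shearing equivalence of \cite{Sanath}.
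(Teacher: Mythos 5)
There is a genuine gap, and it sits at the very first computational step. You claim that $\mathrm{HH}(R/\mathbf Q)$ is even because $R$ is quasi-lci, and then apply Example \ref{Rational even localization} to $A=\mathrm{HH}(R/\mathbf Q)$. This is false: if $L_{R/\mathbf Q}$ has Tor-amplitude in $[0,1]$, then $\wedge^n L_{R/\mathbf Q}[n]$ is concentrated in degrees $[n,2n]$, not in degree $2n$, so odd homotopy survives. Already for $R=\mathbf Q[x,x^{-1}]$ one has $\pi_1\mathrm{HH}(R/\mathbf Q)\simeq\Omega^1_{R/\mathbf Q}\ne 0$ (and $\mathrm{HC}^-(R/\mathbf Q)$ is likewise not even, since $H^1_{\mathrm{dR}}\neq 0$). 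Evenness of relative Hochschild homology requires the relative cotangent complex to sit in degree $1$, i.e.\ a quasi-regular \emph{quotient}, not a general quasi-lci map. This is exactly why the paper does not work over $\mathbf Q$ directly: it chooses a polynomial algebra $S$ with a quasi-regular quotient surjection $S\to R$, uses that $\mathrm{HH}(R/\mathbf Q)\to\mathrm{HH}(R/S)$ is an even eff cover with \v{C}ech nerve $\mathrm{HH}(R/S^\bullet)$, and computes $(\mathscr L\Spec(R))^{\mathrm{evp}}$ by the simplicial formula of Corollary \ref{EVP over MU in the presence of an even eff cover}, landing on $\varinjlim_{\mathbf\Delta^{\mathrm{op}}}\Spf\big(\bigoplus_n\pi_{2n}\mathrm{HC}^-(R/S^\bullet)\big)/\mathbf G_m$. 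Your gesture at ``descent along $S^\bullet$'' appears only as an afterthought about functoriality, but without it the even periodization cannot be computed at all, since the hypothesis of Example \ref{Rational even localization} is simply not met.

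A second, related problem is your endgame identification: $\Spf\big(\bigoplus_n\mathrm{Fil}^n_{\mathrm{Hod}}\mathrm{dR}_{R/\mathbf Q}\big)/\mathbf G_m$ is \emph{not} a presentation of $X^{\mathrm{dR},+}$ for a general quasi-lci (e.g.\ smooth) $R$ — for smooth $R$ the filtered de Rham stack is a quotient by a formal groupoid and is not a formal affine modulo $\mathbf G_m$. The correct statement, and the one the paper uses, is relative: for the quasi-regular quotients $S^\bullet\to R$ one has $\pi_{2n}\mathrm{HC}^-(R/S^\bullet)\simeq\widehat{\mathrm L\Omega}{}^{\ge n}_{R/S^\bullet}$ (Lemma \ref{Lemma 6.5.2}), Bhatt's comparison identifies this Hodge filtration with the adic filtration on the completion $\widehat S^\bullet$ along $\ker(S^\bullet\to R)$, and then the pullback square \eqref{Pullback square dR +} together with \'etale descent of $X^{\mathrm{dR},+}$ along $\Spec(S)\times\mathbf A^1/\mathbf G_m\to\Spec(S)^{\mathrm{dR},+}$ identifies the geometric realization with $\Spec(R)^{\mathrm{dR},+}$. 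So both the evenness input and the de Rham-stack comparison have to be routed through a smooth presentation; the purely absolute-over-$\mathbf Q$ computation you propose does not go through.
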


For the proof of Theorem \ref{Char 0 prism}, we require a prerequisite result. We denote by
\footnote{
Though the notation ${\mathrm L\Omega}_{A/k}$ for the relative derived de Rham complex is rather standard, used for instance in \cite{BMS2} and \cite{Antieau B-filtration}, another commonly-used notation is
$\mathrm{dR}_{A/k}$, as used for instance in \cite{Bhatt completions}, \cite{BLa}, \cite{Arpon}. It should be noted that this \textit{is not} merely the complex of differential forms. That is to say, for a smooth $k$-algebra, it is not merely the graded complex of differential forms $\Omega^*_{A/k} = \Lambda^*_A(\Omega^1_{A/k})$, but rather (the $A$-module spectrum presented by) the chain complex obtained from it via the de Rham differential $\mathrm d :\Omega^*_{A/k}\to \Omega^{*+1}_{A/k}$.} $\widehat{\mathrm L\Omega}_{A/k}$ the \textit{Hodge-completed derived de Rham complex} of a commutative $k$-algebra $A$, with the respective Hodge-filtered pieces denoted $\widehat{\mathrm L\Omega}_{A/k}^{\ge *}$. See \cite[Construction 4.1]{Bhatt completions}, \cite[Construction E.2]{BLa}, or \cite[Recollection 5.3.8]{Arpon} for a careful discussion.
Recall also from  \cite[Definition 4.1.1]{HRW} that a map of commutative rings $S\to R$ is called a \textit{quasi-regular quotient} if it is surjective and its algebraic cotangent complex $L_{S/R}^\mathrm{alg}$ is concentrated in homotopical degrees $[0,1]$ (or equivalently, in degree $1$, see \cite[Proposition 4.1.3]{HRW}).

\begin{lemma}\label{Lemma 6.5.2}
    Let $S\to R$ be a quasi-regular quotient of commutative rings. There is a canonical isomorphism $\pi_{2n}(\mathrm{HC}^-(R/S)) \, \simeq \, \widehat{\mathrm L\Omega}{}^{\ge 2n}_{R/S}$ for all $n$.
\end{lemma}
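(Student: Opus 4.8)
The statement identifies the homotopy groups of negative cyclic homology $\mathrm{HC}^-(R/S)$ with the Hodge-completed derived de Rham complex of $R$ over $S$. The natural approach is to use the motivic/HKR filtration on $\mathrm{HC}^-$. Recall that for any map of commutative rings $S \to R$, there is a complete filtration on $\mathrm{HC}^-(R/S)$ — the \emph{HKR filtration}, or equivalently the relative even filtration $\mathrm{fil}^{\ge *}_{\mathrm{ev}/S}$ of $\mathrm{HH}(R/S)$ passed to homotopy fixed points — whose associated graded pieces are (shifts of) the Hodge-completed derived de Rham complex $\widehat{\mathrm L\Omega}^{\ge *}_{R/S}$. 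Concretely, I would invoke the identification $\mathrm{gr}^n(\mathrm{fil}^{\ge *}_{\mathrm{ev}, \mathrm h\T}(\mathrm{HH}(R/S))) \simeq \Sigma^{2n}\,\widehat{\mathrm L\Omega}^{\ge n}_{R/S}[-n]$ or an equivalent statement from \cite{HRW} (or the classical HKR theorem in the Hodge-complete form of \cite{Antieau B-filtration}). The key input I would take from the hypothesis is that $S \to R$ being a quasi-regular quotient forces the algebraic cotangent complex $L^\mathrm{alg}_{R/S}$ to live in homological degree $1$, so that $\mathrm{HH}(R/S)$ is \emph{even}: $\pi_*(\mathrm{HH}(R/S)) \simeq \Lambda^*_R(\pi_1\mathrm{HH}(R/S))$ with $\pi_1 \simeq L^\mathrm{alg}_{R/S}[-1]$ concentrated in even degrees after the appropriate regrading. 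This evenness is exactly what makes the even filtration degenerate and collapses the HKR spectral sequence.

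The steps, in order: (i) Use the quasi-regular quotient hypothesis and \cite[Proposition 4.1.3]{HRW} to conclude $\mathrm{HH}(R/S)$ is an even $\mathbb{E}_\infty$-ring, hence so is $\mathrm{HC}^-(R/S) = \mathrm{HH}(R/S)^{\mathrm h\T}$ even (the homotopy fixed point spectral sequence for a trivial-weight-reasoning even spectrum collapses, giving $\pi_*(\mathrm{HC}^-(R/S)) \simeq \pi_*(\mathrm{HH}(R/S))[\![t]\!]$ with $|t| = -2$, analogous to Lemma \ref{Lemma hT is even periodic}). (ii) Because $\mathrm{HH}(R/S)$ is even, the even filtration on it agrees with its double-speed Postnikov filtration, so $\mathrm{fil}^{\ge *}_{\mathrm{ev}, \mathrm h\T}(\mathrm{HH}(R/S)) \simeq \tau_{\ge 2*}(\mathrm{HC}^-(R/S))$; this is the $\T$-equivariant analogue of the trivial-filtration remark in the evenness case. (iii) Identify this Postnikov/even filtration on $\mathrm{HC}^-(R/S)$ with the Hodge filtration on the Hodge-completed derived de Rham complex: the associated graded of the double-speed Postnikov filtration is $\bigoplus_n \Sigma^{2n}\pi_{2n}(\mathrm{HC}^-(R/S))$, and the HKR comparison identifies $\pi_{2n}(\mathrm{HC}^-(R/S))$ with $\widehat{\mathrm L\Omega}^{\ge 2n}_{R/S}$. (iv) Conclude by reading off degree $2n$.

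The main obstacle is step (iii): pinning down precisely that the Postnikov filtration on $\mathrm{HC}^-(R/S)$ — which one gets for free from evenness — coincides, \emph{including the Hodge completion}, with the HKR/Hodge filtration. One has to be careful that the HKR filtration on $\mathrm{HC}^-$ is by design Hodge-\emph{complete} (the homotopy fixed points introduce the completion, which is why $\pi_{2n}$ sees $\widehat{\mathrm L\Omega}^{\ge 2n}$ rather than $\mathrm L\Omega^{\ge 2n}$), whereas the Postnikov filtration is automatically complete; reconciling these requires either citing \cite[Section 5.3]{Arpon} or \cite[Construction E.2]{BLa} for the precise form of the HKR filtration on $\mathrm{HC}^-$, or reproving the relevant degeneration directly from the structure of $L^\mathrm{alg}_{R/S}$. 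Once one knows $\mathrm{HH}(R/S)$ is even, this becomes bookkeeping: the exterior algebra $\Lambda^*_R(L^\mathrm{alg}_{R/S}[-1])$ with the $t$-adic filtration from $\mathrm{HC}^-$ matches the de Rham complex $\widehat{\mathrm L\Omega}_{R/S} = \bigoplus \Lambda^j_R(L^\mathrm{alg}_{R/S})[-j]$ with its Hodge filtration after regrading, and the completions agree because the de Rham differential raises Hodge degree. A convenient alternative for step (iii), avoiding explicit HKR manipulations, is to reduce to the case $R = S/(x_1,\dots,x_r)$ a genuine regular quotient by base change and \'etale descent, where $\mathrm{HH}(R/S) \simeq R \otimes_{R \otimes_S R} R$ is a Koszul complex and the computation $\pi_{2n}(\mathrm{HC}^-(R/S)) \simeq \widehat{\mathrm L\Omega}^{\ge 2n}_{R/S}$ is classical; then invoke quasi-syntomic descent (exactly the $I$-complete eff descent of Section \ref{Section 2}, or its de Rham analogue) to pass to a general quasi-regular quotient.
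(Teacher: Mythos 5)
Your proposal is correct and takes essentially the same route as the paper: evenness of $\mathrm{HH}(R/S)$ for a quasi-regular quotient (HRW, Prop.\ 4.2.5) collapses the $\mathrm h\T$-even filtration to the double-speed Postnikov filtration of $\mathrm{HC}^-(R/S)$, and the identification of that filtration with Antieau's motivic/HKR filtration (HRW, Thm.\ 5.0.2, combined with Antieau's Theorem 1.1) matches its associated graded, namely $\Sigma^{2n}\pi_{2n}(\mathrm{HC}^-(R/S))$, with the Hodge-filtered pieces of $\widehat{\mathrm L\Omega}_{R/S}$. The completion issue you flag in step (iii) is absorbed by precisely that citation, so the Koszul/descent detour you sketch as an alternative is not needed.
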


\begin{proof}
It was shown in \cite[Theorem 5.0.2]{HRW}
that the $\mathrm h\T$-version of the even filtration on $\mathrm{HH}(A/k)$ agrees with Antieau's motivic filtration on $\mathrm{HC}^-(A/k)$ from
\cite{Antieau B-filtration} for any quasi-lci map of commutative rings $k\to A$. The latter filtration is designed by \cite[Theorem 1.1]{Antieau B-filtration} to have its associated graded given by the pieces of the Hodge filtration on $\widehat{\mathrm{L}\Omega}_{A/k}$. Consequently, we have a canonical identification
\begin{equation}\label{gr of B-filtr}
\mathrm{gr}^*_{\mathrm{ev}, \mathrm h\T}(\mathrm{HH}(A/k))\,\simeq\, \Sigma^{2*}(\widehat{\mathrm L\Omega}^{\ge *}_{A/k}).
\end{equation}
Since a quasi-regular quotient is quasi-lci by definition, this applies to the map $S\to R$. 
Thanks to it being the quasi-regular quotient, the relative Hochschild homology $\mathrm{HH}(R/S)$ is even by
\cite[Proposition 4.2.5]{HRW}, and so the even filtration by definition reduces to the double-speed Postnikov filtration. That is to say, we have
$$
\mathrm{fil}_{\mathrm{ev}, \mathrm h\T}^{\ge *}(\mathrm{HH}(R/S))\, \simeq\, \tau_{\ge 2*}(\mathrm{HC}^-(R/S)).
$$
Since $\mathrm{HC}^-(R/S)$, as homotopy $\T$-fixed-points of the even $\E$-ring $\mathrm{HH}(R/S)$, is even itself, the associated graded of its double-speed Postnikov filtration is canonically identified with $\Sigma^{2*}\pi_{2*}(\mathrm{HC}^-(R/S))$. The desired result follows by comparison with \eqref{gr of B-filtr}.
\end{proof}

\begin{proof}[Proof of Thoerem \ref{Char 0 prism}]
As in the proof of Theorem \ref{Main theorem on filtered prismatization}, we may use Zariski descent to reduce the affine case $X=\Spec(R)$ for a classical quasi-lci finite-type $\mathbf Q$-algebra $R$. That means that there exists a polynomial $\mathbf Q$-algebra (on finitely-many variables) $S$ and a $\mathbf Q$-algebra surjection $S\to R$ which is a quasi-regular quotient.
As shown in
\cite[Proof of Theorem 5.0.2]{HRW} (or more precisely, \cite[Corollary 4.2.6]{HRW}), the map
\begin{equation}\label{THH = HH over Q}
\mathrm{THH}(R)\,\simeq\, \mathrm{HH}(R/\mathbf Q)\to \mathrm{HH}(R/S)
\end{equation}
is an even eff cover, whose \v{C}ech nerve is given by the cosimplicial $R$-algebra $\mathrm{HH}(R/S^\bullet)$ where $S^\bullet =S^{\otimes_{\mathbf Q}[\bullet]}$.
Then we may argue as in the proof of Theorem \ref{Main theorem on filtered prismatization} to arrive at the simplicial formula for the Nygaard-complete filtered prismatization
$$
\Spf(R){}^{\widehat{\mathcal N}}\,:=\,\,(\left(\mathscr L\Spf(R)\right)^\mathrm{evp}/\!/\T)^\heart
\,\,\simeq 
\varinjlim_{\phantom{{}^\mathrm{op}}\mathbf\Delta^\mathrm{op}} \Spf\big(\bigoplus_{n\in \mathbf Z}\pi_{2n}(\mathrm{HC}^-(R/S^\bullet))\big)/\mathbf G_m.
$$
We may rewrite the summands inside the formal spectrum on the right-hand side through the equivalence $\pi_{2n}(\mathrm{HC}^-(R/S^\bullet)) \, \simeq \, \widehat{\mathrm L\Omega}{}^{\ge 2n}_{R/S^\bullet}$ of Lemma \ref{Lemma 6.5.2}. That is to say, the formal spectrum is taken with respect to the Rees algebra of the Hodge filtration on the completed derived de Rham complex $\widehat{\mathrm L\Omega}_{R/S^\bullet}$.
 The map of $\mathbf Q$-algebras $S^\bullet\to R$ is surjective, so \cite[Proposition 4.16]{Bhatt completions} applies to it to identify the Hodge filtration on $\widehat{\mathrm{L}\Omega}_{R/S^\bullet}$ with the adic filtration on the derived completion $\widehat S^\bullet$ of $S^\bullet$ along the kernel of the map $S^\bullet\to S\to R$. If we let $\mathrm{Rees}(\widehat S^\bullet)$ denote the Rees construction of the adic filtration on $\widehat{S}^\bullet$, we thus obtain an identification
$$
\Spf(R){}^{\widehat{\mathcal N}} \, \simeq \,
\varinjlim_{\phantom{{}^\mathrm{op}}\mathbf\Delta^\mathrm{op}} \Spf(\mathrm{Rees}(\widehat S^\bullet))/\mathbf G_m.
$$
For any finitely generated ideal $I\subseteq A$ in a classical ring $A$, there is, ostensibly by the construction of the filtered de Rham stack, a canonical pullback square
\begin{equation}\label{Pullback square dR +}
\begin{tikzcd}
\Spf(\mathrm{Rees}(A^\wedge_I))/\mathbf G_m \arrow[d] \arrow[r] & \Spec(A/I)^{\mathrm{dR}, +} \arrow[d] \\
\Spec(A)\times \mathbf A^1/\mathbf G_m\arrow[r]  & \Spec(A)^{\mathrm{dR}, +},
\end{tikzcd}
\end{equation}
 of stacks over $\mathbf A^1/\mathbf G_m$,
where the Rees construction on the upper left corner is taken with respect to the $I$-adic filtration of the derived completion  $A^\wedge_I$. Let us apply this to the ring $S$ and the kernel of the surjection $S\to R$, to  obtain a Cartesian diagram
$$
\begin{tikzcd}
\mathrm{Spf}(\mathrm{Rees}(\widehat S))/\mathbf G_m \arrow[d] \arrow[r] & \Spec(R)^{\mathrm{dR}, +} \arrow[d] \\
\Spec(S)\times {\mathbf A}^1/\mathbf G_m\arrow[r]  & \Spec(S)^{\mathrm{dR}, +}.
\end{tikzcd}
$$
Thanks to the quasi-smoothness of $S$, the lower horizontal map is an \'etale cover, and thus in particular an fpqc cover. the upper horizontal arrow is likewise, and so the claim follows from noting that $\Spf(\mathrm{Rees}(\widehat S^\bullet))/\mathbf G_m$ is its \v{C}ech nerve. \end{proof}

\begin{remark}
By base-change along the open point $*\simeq \mathbf G_m/\mathbf G_m\to \mathbf A^1/\mathbf G_m$, the pullback square  \eqref{Pullback square dR +}
assumes a likely more familiar form
$$
\begin{tikzcd}
\mathrm{Spf}(A^\wedge_I) \arrow[d] \arrow[r] & \Spec(A/I)^{\mathrm{dR}} \arrow[d] \\
\Spec(A)\arrow[r]  & \Spec(A)^{\mathrm{dR}},
\end{tikzcd}
$$
the usual way in which de Rham spaces encode completions (if $A\to A/I$ is quasi-regular and $I$ finitely generated -- which is the context we use this above -- this is a special case of the much more general \cite[Theorem 18.2.3.1]{SAG}). The content of \eqref{Pullback square dR +} is that the filtered de Rham completion also keeps track of the adic filtration on said formal completions.
\end{remark}

Relatively little of the proof of Theorem \ref{Char 0 prism} made use of the assumption of working over $\mathbf Q$. The key result we used, the identification of Hodge filtration on the relative derived de Rham complex and the adic filtration on formal completions of \cite[Proposition 4.16]{Bhatt completions} does require working with \textit{$\mathbf Q$-algebras}. But the only place in the proof where we used that $X$ is quasi-lci over $\mathbf Q$ itself was in \eqref{THH = HH over Q}, that is to say, to identify $\mathrm{THH}(R)\simeq \mathrm{HH}(R/\mathbf Q)$.
If we replace it with the relative Hochschild homology over $k$, which is to say if we consider the free loop space over $\Spec(k)$ (as opposed to over the absolute base $\Spec(\mathbf S)$)
$$
\mathscr L_{/k}X\,\simeq \, \underline{\mathrm{Map}}_{\mathrm{Stk}_{/\Spec(k)}^\heart}(\underline{\mathbf T}_k, X)  \,\simeq \, X\times_{X\times_{\Spec(k)}X}X,
$$
we may repeat the same proof to obtain the following generalization of Thoerem \ref{Char 0 prism}:

\begin{corollary}\label{Char 0 prism over k}
Let  $X\to \Spec(k)$ be a quasi-lci and locally of finite type presentation of classical schemes over $\mathbf Q$. There is a canonical equivalence of classical stacks
$$
((\mathscr L_{/k}X)^\mathrm{evp}/\!/\T)^\heart\,\simeq\, X^{\mathrm{dR}, +}.
$$
\end{corollary}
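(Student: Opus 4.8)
<br>

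The plan is to simply observe that the proof of Theorem \ref{Char 0 prism} is entirely ``relative'' in nature: nowhere did we use anything special about the absolute base $\Spec(\mathbf S)$ beyond the identification $\mathrm{THH}(R)\simeq \mathrm{HH}(R/\mathbf Q)$ arising from the fact that $X$ is an $\mathbf Q$-scheme. First I would record the basic compatibility: for a scheme $X$ over $\Spec(k)$, the relative free loop space $\mathscr L_{/k}X\simeq X\times_{X\times_{\Spec(k)}X}X$ is affine whenever $X\simeq \Spec(R)$ is, with $\mathscr L_{/k}\Spec(R)\simeq \Spec(\mathrm{HH}(R/k))$, and its loop-rotation $\mathbf T$-action is the standard cyclic action on relative Hochschild homology. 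This is the exact formal-geometry analogue of the statement used at the start of the proof of Theorem \ref{Main theorem on filtered prismatization}, and Zariski descent for $\mathscr L_{/k}(-)$ follows as there from $\mathrm{HH}(-/k)$ preserving Zariski localizations.

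Next, I would reduce to the affine case $X = \Spec(R)$ via Zariski descent, exactly as in the proof of Theorem \ref{Char 0 prism}, and then choose (using that $R$ is a finite-type quasi-lci $k$-algebra, hence a quasi-regular quotient of a polynomial $k$-algebra $S$) a surjection $S \to R$ which is a quasi-regular quotient. The crucial replacement is that the map
$$
\mathrm{HH}(R/k)\to \mathrm{HH}(R/S)
$$
is still an even eff cover with \v Cech nerve $\mathrm{HH}(R/S^\bullet)$ where $S^\bullet = S^{\otimes_k[\bullet]}$, again by \cite[Corollary 4.2.6]{HRW}, since that result is stated relative to an arbitrary base. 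From here the argument is verbatim that of Theorem \ref{Char 0 prism}: Corollary \ref{EVP over MU in the presence of an even eff cover} (in the $I$-adic formal version, with $I$ the augmentation ideal) gives a simplicial presentation of $(\mathscr L_{/k}\Spec(R))^\mathrm{evp}$; passing to the coarse quotient via Theorem \ref{Existence of coarse quotients in SAG}, taking $\pi_0$, and applying Lemma \ref{Lemma 6.5.2} identifies $\big((\mathscr L_{/k}\Spec(R))^\mathrm{evp}/\!/\T\big)^\heart$ with $\varinjlim_{\mathbf\Delta^\mathrm{op}}\Spf(\mathrm{Rees}(\widehat S^\bullet))/\mathbf G_m$, the Rees construction of the Hodge filtration on $\widehat{\mathrm L\Omega}_{R/S^\bullet}$, which by \cite[Proposition 4.16]{Bhatt completions} is the adic filtration on the derived completion of $S^\bullet$ along $\ker(S^\bullet \to R)$. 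Finally the pullback square \eqref{Pullback square dR +} (applied to $S$ and $\ker(S\to R)$) together with quasi-smoothness of $S$ exhibits $\Spf(\mathrm{Rees}(\widehat S^\bullet))/\mathbf G_m$ as the \v Cech nerve of an \'etale (hence fpqc) cover of $\Spec(R)^{\mathrm{dR},+}$, giving $((\mathscr L_{/k}\Spec(R))^\mathrm{evp}/\!/\T)^\heart \simeq \Spec(R)^{\mathrm{dR},+}$.

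The only genuine point to verify — and the place where one must be slightly careful rather than merely invoke the previous proof — is that \cite[Proposition 4.16]{Bhatt completions}, the identification of the Hodge filtration on the relative derived de Rham complex with the adic filtration on formal completions, still applies here. This result genuinely requires working with $\mathbf Q$-algebras, which is why the hypothesis that $X$ lies over $\mathbf Q$ is retained; the map $S^\bullet \to R$ is still a surjection of $\mathbf Q$-algebras, so the proposition does apply, and the derived de Rham complex $\widehat{\mathrm L\Omega}_{R/S^\bullet}$ (relative to $S^\bullet$, not to $\mathbf Q$ or $k$) is what appears, matching Lemma \ref{Lemma 6.5.2}. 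Thus the hard part is purely bookkeeping: tracking which base ($\mathbf Q$, $k$, or $S^\bullet$) each construction is relative to and confirming the cited results are stated with enough generality — they are. No new obstacle arises, and the statement follows.
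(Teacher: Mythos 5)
Your proposal is correct and is essentially the paper's own argument: the paper proves this corollary precisely by observing that the proof of Theorem \ref{Char 0 prism} goes through verbatim once $\mathrm{THH}(R)\simeq \mathrm{HH}(R/\mathbf Q)$ is replaced by $\mathrm{HH}(R/k)$ (i.e.\ the free loop space is taken relative to $\Spec(k)$), with the hypothesis that everything lives over $\mathbf Q$ retained only so that \cite[Proposition 4.16]{Bhatt completions} applies to the surjections $S^\bullet\to R$. Your bookkeeping of the bases (the eff cover $\mathrm{HH}(R/k)\to\mathrm{HH}(R/S)$ with \v{C}ech nerve $\mathrm{HH}(R/S^{\otimes_k[\bullet]})$, and the de Rham complexes taken relative to $S^\bullet$) matches exactly what the paper intends.
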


We obtain a characteristic zero analogue of the deformation result Corollary \ref{QCoh deformation corollary}. Here let us denote
$$
\mathrm{DMod}^\mathrm{fil}(X)\,:=\, \QCoh(X^{\mathrm{dR}, +}),
$$
motivated by the fact that this indeed recovers the usual notion of filtered D-modules when $X$ is smooth.

\begin{corollary}
Let $X$ and $k$ be as in Corollary \ref{Char 0 prism over k}.
The $\i$-category $\QCoh(\mathscr L_{/k} X)^{\mathrm h\T}$ of $\T$-equivariant quasi-coherent sheaves on the free loop space $\mathscr L_{/k}X$ is the generic fiber of a $1$-parameter deformation, with the $\i$-category of filtered $D$-modules $\mathrm{DMod}^\mathrm{fil}(X)$ as its special fiber.
\end{corollary}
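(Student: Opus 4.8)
The plan is to run the proof of Corollary \ref{QCoh deformation corollary} \emph{mutatis mutandis}, replacing the $p$-adic formal stack $(\mathscr LX)^\mathrm{evp}/\!/\T$ with the characteristic-zero stack $\mathfrak X := (\mathscr L_{/k}X)^\mathrm{evp}/\!/\T$ supplied by Corollary \ref{Char 0 prism over k}. First I would observe that $\mathfrak X$ is an even periodic formal spectral stack by construction, so on its connective cover $\mathfrak X^\mathrm{cn}$ the global Bott map of Construction \ref{Const of Bott map} gives a line bundle $\omega := \Sigma^{-2}\tau_{\ge 2}(\mathfrak O_{\mathfrak X^\mathrm{cn}})$ together with a map $\beta\colon \Sigma^2(\omega)\to \mathfrak O_{\mathfrak X^\mathrm{cn}}$ in $\mathfrak{QC}\mathrm{oh}(\mathfrak X^\mathrm{cn})$. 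Since everything takes place over $\Spec(k)$ and hence over $\Spec(\mathbf Z)\simeq \Spec(\pi_0(\mathrm{MU}))\to \Spec(\mathrm{MU})$, symmetric monoidal shearing from \cite{Sanath} upgrades $\beta$ to a $\QCoh(\mathbf A^1/\mathbf G_m)$-module structure on $\mathfrak{QC}\mathrm{oh}(\mathfrak X^\mathrm{cn})$, i.e.\ exhibits it as the total space of a $1$-parameter deformation.

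Next I would identify the two fibers by base change along the open and closed points of $\mathbf A^1/\mathbf G_m$. For the generic fiber, base change along $*\simeq \mathbf G_m/\mathbf G_m\to \mathbf A^1/\mathbf G_m$ inverts $\beta$, giving $\mathfrak{QC}\mathrm{oh}(\mathfrak X)$ since $\mathfrak X\to\mathfrak X^\mathrm{cn}$ is formally affine with $f_*(\mathfrak O_{\mathfrak X})\simeq \mathfrak O_{\mathfrak X^\mathrm{cn}}[\beta^{-1}]$. By Proposition \ref{QCoh doesn't care} this is $\QCoh((\mathscr L_{/k}X)^\mathrm{evp}/\underline{\T}_{\mathbf S})\simeq \QCoh((\mathscr L_{/k}X)^\mathrm{evp})^{\mathrm h\T}$, and since we work over $\Spec(\mathbf Z)\to\Spec(\mathrm{MU})$, Corollary \ref{Corollary unshear for modules} gives a $\T$-equivariant symmetric monoidal identification $\QCoh((\mathscr L_{/k}X)^\mathrm{evp})\simeq \QCoh(\mathscr L_{/k}X)$, so the generic fiber is $\QCoh(\mathscr L_{/k}X)^{\mathrm h\T}$. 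For the special fiber, base change along $*\to\mathrm B\mathbf G_m\simeq\{0\}/\mathbf G_m\to\mathbf A^1/\mathbf G_m$ quotients by $\beta$; using the even-periodicity cofiber sequence $\Sigma^2(\omega)\xrightarrow{\beta}\mathfrak O_{\mathfrak X^\mathrm{cn}}\to \pi_0(\mathfrak O_{\mathfrak X^\mathrm{cn}})\simeq \mathfrak O_{\mathfrak X^\heart}$ and that $\mathfrak X^\heart\to\mathfrak X^\mathrm{cn}$ is formally affine, the special fiber is $\mathfrak{QC}\mathrm{oh}(\mathfrak X^\heart)$. Since $\mathfrak X^\heart$ is a classical formal stack, Proposition \ref{Prop 5.2.5} identifies this with $\QCoh(\mathfrak X^\heart)$, which by Corollary \ref{Char 0 prism over k} is $\QCoh(X^{\mathrm{dR}, +}) = \mathrm{DMod}^\mathrm{fil}(X)$.

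The main obstacle, as in the $p$-adic case, is to be careful that Corollary \ref{Char 0 prism over k} is applicable: one must check that the free loop space over $\Spec(k)$ still admits the even eff cover needed to run the argument --- concretely, that for a Zariski-affine $X = \Spec(R)$ with $R$ a quasi-lci finite-type $k$-algebra, one can choose a polynomial $k$-algebra $S$ surjecting onto $R$ as a quasi-regular quotient, so that $\mathrm{THH}(R/k)\simeq\mathrm{HH}(R/k)\to \mathrm{HH}(R/S)$ is an even eff (and $\T$-equivariant) cover. This is exactly the reduction already carried out in the proof of Corollary \ref{Char 0 prism over k}, so no new input is required; the remaining steps are formal manipulations with the deformation-to-the-normal-cone picture for $\mathbf A^1/\mathbf G_m$ that are identical to Corollary \ref{QCoh deformation corollary}. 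A minor point to track is that all the $\mathfrak{QC}\mathrm{oh}$ versus $\QCoh$ replacements are licensed: on the total space and generic fiber by the complex periodicity of $\mathfrak X$ together with Theorem \ref{Descent for complex periodic formal QCoh}, and on the special fiber by Proposition \ref{Prop 5.2.5}.
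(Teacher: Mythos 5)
Your proposal is correct and is essentially the paper's own argument: the paper identifies the total space as quasi-coherent sheaves on the connective cover of $(\mathscr L_{/k}X)^{\mathrm{evp}}/\!/\T$ (whose heart is $X^{\mathrm{dR},+}$ by Corollary \ref{Char 0 prism over k}) and then simply repeats the proof of Corollary \ref{QCoh deformation corollary}, exactly as you do. Your fleshed-out fiber identifications and the remarks on when $\mathfrak{QC}\mathrm{oh}$ may be replaced by $\QCoh$ are the same steps the paper leaves implicit.
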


\begin{proof}
The total space of this deformation is given by $\QCoh((X^{\mathrm{dR}, +})^\mathrm{cn})$. The statement in question follows by repeating the proof of Corollary \ref{QCoh deformation corollary}.
\end{proof}

This is closely related to the main results of \cite{BZN}, as well as to the filtered loop spaces and the universal HKR theorem of \cite{Filtered circle}, specifically to the synthetic version discussed in \cite{Alice and Tasos} and the explicit connection to the filtered de Rham stack in \cite{Tasos Hodge}.

\newpage

\end{document}